\numberwithin{equation}{chapter}
\numberwithin{figure}{chapter}
\newtheorem{thm}{\bf Theorem}[chapter]
\newtheorem{prop}[thm]{\bf Proposition}
\newtheorem{conj}[thm]{\bf Conjecture}
\newtheorem{cor}[thm]{\bf Corollary}
\newtheorem{lem}[thm]{\bf Lemma}
\theoremstyle{definition}
\newtheorem{rem}[thm]{\bf Remark}
\newtheorem{ex}[thm]{\bf Example}
\newtheorem{defn}[thm]{\bf Definition}
\newtheorem{prob}[thm]{\bf Problem}
\newtheorem{const}[thm]{\bf Construction}
\def\notes{\medskip \par\noindent{\centerline{\bf Notes}}\par\nopagebreak\medskip\par\nopagebreak}
\def\bbC{\mathbb{C}}
\def\bbP{\mathbb{P}}
\def\bbQ{\mathbb{Q}}
\def\bbR{\mathbb{R}}
\def\bbT{\mathbb{T}}
\def\bbZ{\mathbb{Z}}
\def\rmsf{\mathrm{sf}}
\def\bfC{\mathbf{C}}
\def\bfF{\mathbf{F}}
\def\bfG{\mathbf{G}}
\def\bfX{\mathbf{X}}
\def\bfY{\mathbf{Y}}
\def\bfSigma{\mathbf{\Sigma}}
\def\bfUpsilon{\mathbf{\Upsilon}}
\def\bfa{\mathbf{a}}
\def\bfb{\mathbf{b}}
\def\bfc{\mathbf{c}}
\def\bfe{\mathbf{e}}
\def\bfg{\mathbf{g}}
\def\bfp{\mathbf{p}}
\def\bfn{\mathbf{n}}
\def\bfm{\mathbf{m}}
\def\bfu{\mathbf{u}}
\def\bfv{\mathbf{v}}
\def\bfw{\mathbf{w}}
\def\bfx{\mathbf{x}}
\def\bfy{\mathbf{y}}
\def\bfz{\mathbf{z}}
\def\bfzero{\mathbf{0}}
\def\rmid{\mathrm{id}}
\def\frakd{\mathfrak{d}}
\def\frakD{\mathfrak{D}}
\def\frakg{\mathfrak{g}}
\def\frakj{\mathfrak{j}}
\def\frakp{\mathfrak{p}}
\def\frakq{\mathfrak{q}}
\def\frakS{\mathfrak{S}}
\def\fraks{\mathfrak{s}}
\def\calA{\mathcal{A}}
\def\calC{\mathcal{C}}
\def\calF{\mathcal{F}}
\def\calG{\mathcal{G}}
\def\calL{\mathcal{L}}
\def\calO{\mathcal{O}}
\def\calP{\mathcal{P}}
\def\calY{\mathcal{Y}}
\def\scrL{\mathscr{L}}
\def\scrH{\mathscr{H}}
\def\haty{\hat{y}}
\def\yhat{\hat{y}}
\def\ve{\varepsilon}
\def\sgn{\mathrm{sgn}}
\def\Li2{\mathrm{Li_2}}
\def\tL{\tilde{L}}
\def\op{\mathrm{op}}
\def\id{\mathrm{id}}
\def\diag{\mathrm{diag}}
\def\ve{\varepsilon}
\def\trop{\mathrm{trop}}
\def\rmpr{\mathrm{pr}}
\def\rmAd{\mathrm{Ad}}
\def\l{\ell}
\def\d{\delta}
\newcommand{\overunder}[2]{
\!\!\begin{array}{c}
\scriptstyle{#1}\\[-9pt]
-\!\!\!-\\[-.1in]
\scriptstyle{#2}
\end{array}
}
\newcommand{\rp}{\color{black}}
  \name{Tomoki Nakanishi}
  \address{Nagoya University\\
   Graduate School of Mathematics\\Chikusa-ku, Nagoya, 464-8602, Japan.}
  \email{nakanisi@math.nagoya-u.ac.jp}
\begin{document}

 \frontmatter
  \InsertVolumeTitle

\fontsize{11pt}{12.5pt}\selectfont

\begin{preface}
This is a reasonably self-contained exposition of 
the fascinating interplay between
cluster algebras and the dilogarithm in the past two decades.

The theme of the monograph is the dilogarithm identity associated with each period in a cluster pattern of cluster algebra theory.
They provide a vast generalization of  Euler's and Abel's identities. We present several proofs, variations, and generalizations of them
with various methods and techniques.
The quantum dilogarithm identities are also treated from the unified point of view with the classical ones.

Let us give a brief overview of the subject.
The dilogarithm 
 is  defined by a power series or an integral expression as
\begin{align*}
\mathrm{Li}_2(x)=\sum_{n=1}^{\infty} {\frac{x^n}{n^2}}
=-\int_0^x \frac{\log (1-y)}{y}\, dy,
\end{align*}
and it was studied first by Euler \cite{Euler68}.
The function has had a rich history since then and is related to various fields of mathematics and physics \cite{Lewin81, Kirillov95, Zagier07}.
One of the main features of the dilogarithm is that it satisfies a wide variety of functional equations.
 We call them \emph{dilogarithm identities} (DIs).

In the 1980s,  it was discovered 
by Bazhanov, Kirillov, and Reshetikhin  \cite{Kirillov86,
Bazhanov89}
that the dilogarithm plays a key role
in studying some mathematical physics models known as the \emph{integrable lattice models}.
Meanwhile, 
Zamolodchikov  \cite{Zamolodchikov91b} introduced
systems of functional/algebraic equations called the \emph{$Y$-systems} (of type ADE)
in the study of the \emph{factorizable $S$-matrix models}.
Also,
the periodicity of the solutions of the $Y$-systmes was conjectured \cite{Zamolodchikov91}.
Then, the $Y$-systems were generalized by Kuniba-Nakanishi \cite{Kuniba92} and Ravanini-Tateo-Valleriani \cite{Ravanini93}
based on root systems.
Following these developments, a family of DIs associated with these $Y$-systems
was conjectured by Gliozzi-Tateo \cite{Gliozzi95}.
 Remarkably, these conjectural DIs provide a vast generalization of the celebrated Euler's and Abel's identities
based on  root systems,
where the latter identities correspond to the simplest case of types $A_1$ and $A_2$,
respectively.
This conjecture got the immediate attention by several experts at that time.
However, 
the only special case of type $A$ were proved by Gliozzi-Tateo \cite{Gliozzi96} and Frenkel-Szenes,
\cite{Frenkel95},
 and the rest were left as mysteries.
This was a situation in B.C. (= Before Cluster algebra).

The scene changed 
after the introduction of \emph{cluster algebras} (in ``A.D.'') by  Fomin-Zelevinsky \cite{Fomin02} around 2000.
Cluster algebras are a certain class of commutative algebras.
The original motivation was
to study the structure of the coordinate rings of certain algebraic varieties
  in Lie theory (e.g., the Grassmannian, the double Bruhat cells).
 Remarkably, the above $Y$-systems were recognized as
 a part of the cluster algebra structure.
 Based on this observation, the periodicity  of the $Y$-systems of type $ADE$
  was proved by Fomin-Zelevinsky \cite{Fomin03b}.
  Then, using this result, the DIs for the $Y$-systems of type $ADE$ were proved
  by Chapoton \cite{Chapoton05}.
    This was the first  important step to connect the DIs
   and cluster algebras.
  However,  to prove the DIs for the $Y$-systems  in full generality, 
 it was necessary to wait for further technical development of cluster algebra theory,
 especially the separation formula  \cite{Fomin07} and the categorification results \cite{Amiot09,Plamondon10b}.
  Then, based on these new results, the periodicity and  DIs for $Y$-systems
  were proved in full generality by
Keller  \cite{Keller10} and  Nakanishi \cite{Nakanishi09}, respectively.
Similar DIs for variants of $Y$-systems  that had been conjectured in B.C.
 were also proved by the  same method \cite{Inoue10a,Inoue10b,Nakanishi10b}.
After these developments, it was recognized by \cite{Nakanishi10c}
  that
   \emph{a DI is associated with each period in a cluster pattern},
  and all above DIs are special instances of such DIs.
  These DIs play the role of the ``leitmotif'' throughout the entire text.
  
  Next, we describe the contents of the text.
  
  Part I is the foundation of the whole text and presents the above results in detail.
  In Chapter 1, we introduce the dilogarithm and its variations.
  Then, we present Euler's and  Abel's identities, which are the two most basic DIs of all.
  Following \cite{Faddeev94}, we also refer to Abel's identity as the \emph{pentagon identity}.
  In Chapter 2, a quick course on cluster algebras
is given.
   In Chapter 3,   
   by reversing the history in an updated point of view,
   we first formulate and prove
   the \emph{DI associated with a period of a $Y$-pattern} (or a cluster pattern) (Theorem \ref{thm:DI1}).
This is the leitmotif throughout the entire text, as already mentioned.
Then, in Chapter 4, we formulate the $Y$-systems  in terms of $Y$-patterns
in cluster algebra theory.
We also observe the similarity between the $Y$-systems and the actions of the
 Coxeter elements on the root systems.
Then, combining them with the results in Chapters 2 and 3,
the DIs for $Y$-systems are proved.
In the proof, we use the \emph{tropicalization} as a key technique.

In the rest, consisting of three parts, we present further developments of the interrelation between
cluster algebras and DIs from various points of view,
which are closely connected with each other.

In Part II, the classical mechanical method is presented.
In Chapter 5 
each mutation of $y$-variables in a $Y$-pattern is  decomposed into two parts,
the tropical and the nontropical parts
(the Fock-Goncharov decomposition).
After introducing the mutation-compatible Poisson bracket,
the Euler dilogarithm appears as
the Hamiltonian of a mutation.
This gives a more intrinsic and direct connection between cluster algebras and the dilogarithm.
In Chapter 6
 the canonical coordinates (in the sense of classical mechanics)
are introduced.
Then, by the Legendre transformation, the modified Rogers dilogarithm
appears as the Lagrangian.
Based on this picture, we give an alternative proof of Theorem \ref{thm:DI1}.
The DIs are now interpreted as the invariance of the action integral under the periodicity.
Moreover, this statement is regarded as a version of Noether's theorem in classical mechanics.

In Part III, the cluster scattering diagram method is presented.
This is based on the recent development of cluster algebra theory
by the scattering diagram method by Gross-Hacking-Keel-Kontsevich
\cite{Gross14}.
In Chapter 7 an algebraic formulation of the dilogarithm is given.
For a given skew-symmetric matrix, a certain group $G$ is defined.
Then, a family of elements in $G$ called the \emph{dilogarithm elements}
are introduced. They satisfy the \emph{pentagon relation} in the group $G$, which is regarded
as an algebraic version of the pentagon identity of the dilogarithm.
In Chapter 8  a quick course on cluster scattering diagrams is given.
In Chapter 9
we extend Theorem \ref{thm:DI1}
to the \emph{DI associated with a loop in a cluster scattering diagram (CSD)}.
We give two proofs. The first one is the extension of the method in Part II.
The second one is solely based on the pentagon relation.
This also gives yet another proof of Theorem \ref{thm:DI1}; moreover,
it shows the \emph{infinite reducibility} of these DIs.

In Part IV, the quantum dilogarithm identities (QDIs) are presented.
In Chapter 10 we introduce the \emph{quantum dilogarithm} 
and show the pentagon relation
following Faddeev, Kashaev, and Volkov \cite{Faddeev93,Faddeev94}.
In Chapter 11 the quantum mutation of $y$-variables is given
following Fock-Goncharov \cite{Fock03}.
The quantum dilogarithm naturally appears as the Hamiltonian
of the quantum mutation.
Thus, it is indeed the quantization of the classical mechanical 
picture in Part II.
At the same time, the algebraic picture in Part III also has the
quantization. Namely, we have the \emph{quantum dilogarithm elements}
and the pentagon relation among them.
In Chapter 12 we present the QDI associated with a period of a $Y$-pattern,
which is completely parallel to Theorem \ref{thm:DI1}.
Also, we have the quantum version of the 
DI associated with a loop in a CSD in Part III.
However, not much is known about the structure of quantum cluster scattering diagrams (QCSDs).
We present some examples of QDIs and open problems.

In the beginning, we mentioned that the exposition is
``reasonably self-contained".
What we mean is as follows.
\begin{itemize}
\item
The text is aimed at researchers and graduate students interested in the interrelation between cluster algebras and the dilogarithm.
We expect the reader is already familiar with one of the two subjects to some extent, but not necessarily both.
Naturally, we assume the reader is a newcomer to both.

\item
To make  the text in reasonable length,
we skip the proofs of the standard facts on cluster algebras
and cluster scattering diagrams, which are collected in Chapters \ref{ch:quick1}
and \ref{ch:CSD1}, respectively.
The proofs and extensive explanations are found
in the monograph by the author \cite{Nakanishi22a}, which we regard as the companion
of this text.
\item Other than the above, we give the proofs for \emph{most} facts
unless they are too complicated or too far from the context.
\end{itemize}

In \cite{Zagier07}  Zagier impressively described the nature of the dilogarithm as follows:
``\emph{Almost all of its appearances in mathematics, and almost all the formulas relating to it, have something of the fantastical in them as if this function alone among all others possessed a sense of humor.}"
I completely agree with it and wish to add  words:
``\emph{The function is so shy that it hides its algebraic nature under the mask of the integral.}''
I hope that the reader may share this feeling through this exposition.

I thank 
Michael Gekhtman,
Rei Inoue,
Osamu Iyama,
Rinat Kashaev,
Bernhard Keller, 
Atuso Kuniba,
Dylan Rupel,
Salvatore Stella,
Junji Suzuki,
Roberto Tateo,
and
Andrei Zelevinsky
for fruitful collaboration in various works related to this fascinating subject.
I thank Ivan Ip for useful communications.
This work is supported in part by JSPS grant No. JP22H01114.

\bigskip

    \begin{flushright}
      Tomoki Nakanishi\\
      \textit{March, 2025, Nagoya}   
    \end{flushright}

\end{preface}


\tableofcontents
\mainmatter

\part{Dilogarithm, cluster algebras, and $Y$-systems}
\label{part:dilog1}
\chapter{Prologue: Mysterious dilogarithm identities}
\label{ch:prologue}

Cluster algebras were introduced around 2000 by Fomin and Zelevinsky.
Our story began in B.C. (= Before Cluster algebra).
After the series of works by
 Bazhanov, Kirillov, and Reshetikhin in the study of some mathematical physics problems in the 1980s,
mysterious dilogarithm identities 
(the DIs for $Y$-systems)
were conjectured
by Gliozzi and Tateo in the 1990s.
This introductory chapter presents this historical background.

\section{Euler Dilogarithm}

To begin with,
we present some basic facts about the dilogarithm,
its variants, and dilogarithm identities.
The subject has a long and rich history started by Euler \cite{Euler68}, and it is related to various fields of mathematics and physics.
We will concentrate on the material we are going to treat here.
We recommend the references by Lewin \cite{Lewin81}, Kirillov \cite{Kirillov95}, and Zagier \cite{Zagier07} for further information.

First of all, the \emph{dilogarithm}\index{dilogarithm} is defined as a power series
\begin{align}
\label{eq:Li1}
\mathrm{Li}_2(x)=\sum_{n=1}^{\infty} {\frac{x^n}{n^2}},
\end{align}
where the notation $\mathrm{Li}$ stands for the \emph{logarithmic integral} by the reason we see  very soon.
Here, we also call it the \emph{Euler dilogarithm}\index{Euler dilogarithm}\index{dilogarithm!Euler ---} to distinguish it from its variants.
The power series \eqref{eq:Li1} has the convergent radius 1, so it defines a complex analytic function
on the disk $|x|<1$. 
More generally, for any positive integer $k$,
the {\em polylogarithm of order $k$}\index{polylogarithm} is defined by
\begin{align}
\mathrm{Li}_k(x)=\sum_{n=1}^{\infty} {\frac{x^n}{n^k}}.
\end{align}

By \eqref{eq:Li1}, we have
\begin{align}
\label{eq:dLx1}
x \frac{d}{dx} \mathrm{Li}_2(x)=\sum_{n=1}^{\infty} {\frac{x^n}{n}}
=\mathrm{Li}_1(x)
=-\log(1-x)
\quad
(|x|<1).
\end{align}
Thus, we obtain an integral expression
\begin{align}
\label{eq:Li2}
\mathrm{Li}_2(x)=-\int_0^x \frac{\log (1-y)}{y}\, dy,
\end{align}
by which $\mathrm{Li}_2(x)$ is analytically continued to the whole complex plain $\bbC$.
We have to be careful about the branches, though.
There is obvious multivaluedness around $x=1$ by the branches of $\log(1-x)$ in the integrand.
On the other hand, the function is smooth at $x=0$ as a convergent power series.
Nevertheless, if the integration contour starts from $x=0$, goes around $x=1$, say, {\rp counterclockwise},
and comes back to $x=0$, then the integration \eqref{eq:Li2} picks up an
additional term
$2\pi \sqrt{-1}/y$ in the integral. Since it has a simple pole at $y=0$, the function $\mathrm{Li}_2(x)$ now has multivaluedness also around $x=0$.
For our purpose, however, it is enough to concentrate on the interval $(-\infty, 1]$ in the real line $\bbR$,
where the expression \eqref{eq:Li2} is regarded as  an ordinary real integral.
Then, there is no such issue of multivaluedness.
From now on, we consider $\mathrm{Li}_2(x)$ only on  the interval $(-\infty, 1]$.

The following two special values are important:
\begin{align}
\label{eq:sp1}
\mathrm{Li}_2(0)=0,
\quad
\mathrm{Li}_2(1)=\zeta(2)=\frac{\pi^2}{6}=1.6449\dots,
\end{align}
where $\zeta(s)$ is the Riemann zeta function.
The first one is obvious, while the second one is the famous answer to
the Basel problem by Euler.

\begin{figure}
\centering
\includegraphics[width=75mm]{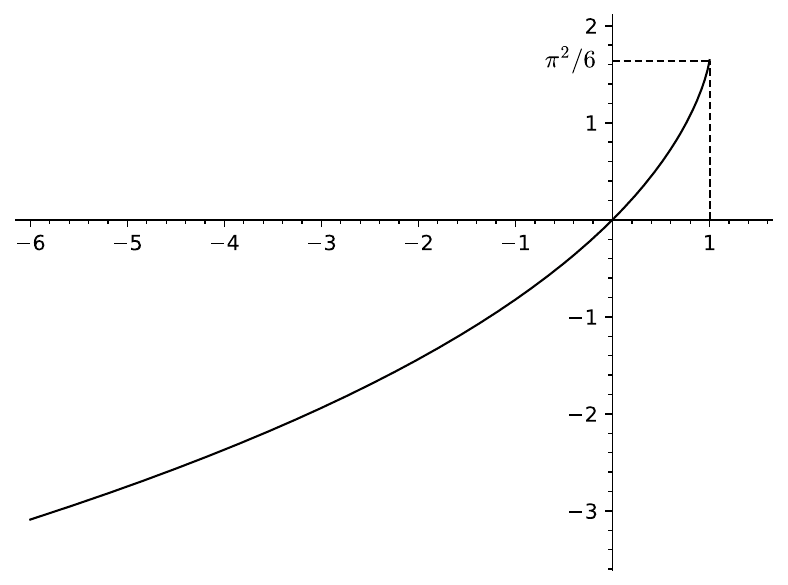}
\vskip-5pt
\caption{The graph of the Euler dilogarithm $\mathrm{Li}_2(x)$.}
\label{fig:Li1}
\end{figure}

The function $\mathrm{Li}_2(x)$ is monotonically increasing on $(-\infty, 1]$,
because its derivative $-\log(1-x)/x$ is positive on  $(-\infty, 1)$.
The graph is presented in Figure \ref{fig:Li1},
where we see nothing spectacular.
However, the function is known to have  one very specific feature
that is not expected from its definition.
Namely, \emph{it satisfies various functional identities.}
Here, we vaguely call them  {\em dilogarithm identities}\index{dilogarithm identity}\index{DI|see{dilogarithm identity}} (DIs).

Let us present the two most basic examples of DIs.
The first one is  due to Euler:  (\emph{Euler's identity})\index{Euler's identity}
\begin{align}
\label{eq:euler1}
\Li2(x)+ \mathrm{Li}_2(1-x) = \frac{\pi^2}{6}-\log x \log(1-x)
\quad
(0\leq x \leq 1).
\end{align}
Here, we understand that,
at $x=0$ and $1$, 
\begin{align}
\log 0 \log 1 := \lim_{x\rightarrow 0^+}\log x \log(1-x)=0.
\end{align}
The second one is regarded as a two-variable extension of \eqref{eq:euler1}:
\begin{align}
\label{eq:pent1}
\begin{split}
&
\Li2(x)+ \mathrm{Li}_2(y)
+ \Li2\biggl(\frac{1-x}{1-xy}\biggr)
+ \Li2(1-xy)
+ \Li2\biggl(\frac{1-y}{1-xy}\biggr)
\\
 =\ &\frac{\pi^2}{2}-\log x  \log(1-x)
 -\log y  \log(1-y)
 \\
 &\qquad
-\log\biggl(\frac{1-x}{1-xy}\biggr)
\log\biggl(\frac{1-y}{1-xy}\biggr)
\quad
(0\leq x, y < 1).
\end{split}
\end{align}
This identity was discovered independently by Spence (1809), Abel (1826, published in 1881), Hill (1830), Kummer (1840), and Schaeffer (1846)
in various equivalent forms \cite{Lewin81,Pirio21}.
It is usually called  \emph{Abel's identity}\index{Abel's identity}, or the \emph{five-term relation}\index{five-term relation}.
Recently, it is also called the \emph{pentagon identity}\index{pentagon!identity} (e.g., \cite{Faddeev94}),
due to a hidden relation to the geometry of a pentagon.
To see some clue, let us set the variables appearing in the left-hand side (LHS) of \eqref{eq:pent1} as
\begin{align}
\label{eq:X1}
X_1=x,
\quad
X_2=y,
\quad
X_3=\frac{1-x}{1-xy},
\quad
X_4={1-xy},
\quad
X_5=\frac{1-y}{1-xy},
\quad
\end{align}
and also $X_{i+5}=X_i$.
 Then, for any $i\in \bbZ$, the following recurrence  relation holds:
\begin{align}
\label{eq:X2}
X_{i+2}=\frac{1-X_{i}}{1-X_i X_{i+1}}.
\end{align}
Moreover, the identity \eqref{eq:pent1} is written in the cyclic form
\begin{align}
\begin{split}
\sum_{i=1}^5 \Li2(X_i)&=
\frac{\pi^2}{2}
- \log X_1 \log X_3
- \log X_2 \log X_4
- \log X_3 \log X_5
\\
&\qquad
- \log X_4 \log X_1
- \log X_5 \log X_2.
\end{split}
\end{align}
Thus, there is a cyclic symmetry of order 5 behind the identity \eqref{eq:pent1}.

We will prove the identities \eqref{eq:euler1} and \eqref{eq:pent1} very soon in some alternative forms.
Before that, let us play a little with them.
First, by using \eqref{eq:euler1}, one can rewrite \eqref{eq:pent1} as follows:
\begin{align}
\label{eq:pent2}
\begin{split}
&\quad\
\Li2(x)+ \mathrm{Li}_2(y)
- \Li2\biggl(\frac{x(1-y)}{1-xy}\biggr)
-  \Li2(xy)
-  \Li2\biggl(\frac{y(1-x)}{1-xy}\biggr)
\\
&= \log\biggl(\frac{1-x}{1-xy}\biggr)
\log\biggl(\frac{1-y}{1-xy}\biggr),
\end{split}
\end{align}
where the constant term $\pi^2/2$ disappeared.
Next, we set
\begin{align}
X=\frac{x(1-y)}{1-xy},
\quad
Y=\frac{y(1-x)}{1-xy}.
\end{align}
Then, we have
\begin{align}
\frac{X}{1-Y}=x,
\quad
\frac{Y}{1-X}=y,
\quad
\frac{XY}{(1-X)(1-Y)}=xy.
\end{align}
Putting them into \eqref{eq:pent2}, we obtain another alternative
expression of the pentagon identity.
\begin{align}
\label{eq:pent3}
\begin{split}
&\quad\
\Li2\biggl(\frac{X}{1-Y}\biggr)
+ \Li2\biggl(\frac{Y}{1-X}\biggr)
- \Li2(X)
-  \Li2\biggl(\frac{XY}{(1-X)(1-Y)}\biggr)
-  \Li2(Y)
\\
&= \log(1-X)\log(1-Y).
\end{split}
\end{align}
This is the expression obtained by Abel \cite{Lewin81}.

\section{Rogers dilogarithm}
\label{sec:Rogers1}

The following variant or the symmetrization of the Euler dilogarithm was
introduced by Rogers \cite{Rogers07}:
\begin{align}
\label{eq:L1}
L(x):=&\ \Li2(x) + \frac{1}{2}\log x  \log(1-x)
\\
\label{eq:L2}
=& \ -\frac{1}{2}\int_{0}^x
\biggl\{
\frac{\log(1-y)}{y}
+
\frac{\log y}{1-y}
\biggr\}
\, dy.
\end{align}
It is called the \emph{Rogers dilogarithm}\index{Rogers dilogarithm}\index{dilogarithm!Rogers ---} (also called  \emph{Rogers' $L$-function}\index{Rogers' $L$-function}).
We concentrate on the interval $[0,1]$, so that   there is no
issue of multivaluedness again.
By \eqref{eq:sp1}, we have the following important special values:
\begin{align}
\label{eq:sp2}
L(0)=0,
\quad
L(1)=\frac{\pi^2}{6}.
\end{align}
The function $L(x)$ is monotonically increasing on $[0,1]$.
The graph of $L(x)$ is presented in Figure \ref{fig:Li2}.
We observe that it is close to the linear function $y=(\pi^2/6)x$.

\begin{figure}
\centering
\includegraphics[width=60mm]{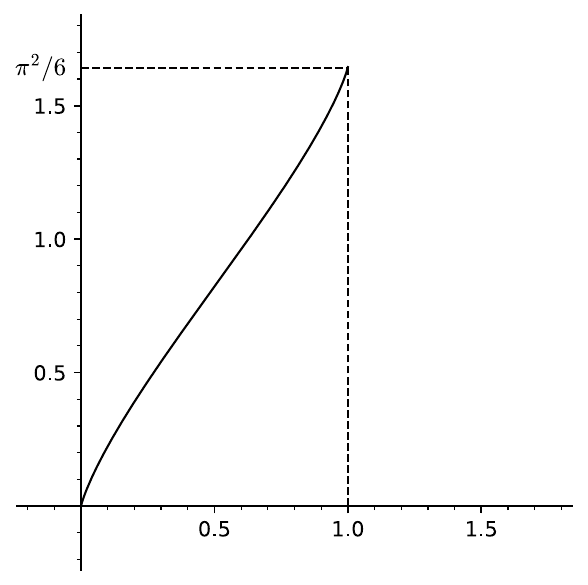}
\vskip-5pt
\caption{The graph of the Rogers dilogarithm $L(x)$.}
\label{fig:Li2}
\end{figure}

A special feature of the function $L(x)$ is
that \emph{it cleans up some DIs}, and it is the main reason why the function was introduced by Rogers.
For examples, the identities \eqref{eq:euler1} and  \eqref{eq:pent1}
are rewritten in the following form:
\begin{gather}
\label{eq:euler2}
L(x)+ L(1-x) = \frac{\pi^2}{6}
\quad
(0\leq x \leq 1),
\\
\label{eq:pent4}
\begin{split}
L(x)+ L(y)
+L\biggl(\frac{1-x}{1-xy}\biggr)
+ L(1-xy)
+ L\biggl(\frac{1-y}{1-xy}\biggr)
 = \frac{\pi^2}{2} 
 \\
\quad
(0\leq x, y < 1).
 \end{split}
\end{gather}
Observe that the log terms in the RHSs disappear in both identities.
Using \eqref{eq:euler2}, the pentagon identity \eqref{eq:pent4} is also written as
\begin{gather}
\label{eq:pent5}
\begin{split}
L(x)+ L(y)
-L\biggl(\frac{x(1-y)}{1-xy}\biggr)
- L(xy)
- L\biggl(\frac{y(1-x)}{1-xy}\biggr)=0
 \\
\quad
(0\leq x, y < 1),
 \end{split}
\end{gather}
where the constant term also disappeared.
This is the counterpart of the form \eqref{eq:pent2}.

Now we prove  Euler's identity in the form \eqref{eq:euler2}.
Of course, it is quickly proved by the integration by parts.
However, we prefer the following method. (The reason will be clear very soon.)
The proof consists of two steps.

\begin{itemize}
\item
{\bf Step 1 (Constancy)}. \emph{Verify the {constancy} of the LHS}.
Namely,
\begin{align}
\frac{d}{dx}(\mathrm{LHS})=0.
\end{align}
This is true due to
  the symmetry of the integrand of  \eqref{eq:L2} between $y$ and  $1-y$.
\item
{\bf Step 2 (Constant term)}.
\emph{Determine the constant value of the LHS}. To do it, we simply set $x=0$
in \eqref{eq:euler2}.
Then, by \eqref{eq:sp2}, we obtain $\mathrm{LHS}=L(0)+L(1)=\pi^2/6$.
This is why we stressed the importance of \eqref{eq:sp2}.
\end{itemize}

We apply the same method to prove the pentagon identity in the form \eqref{eq:pent4}.
First, we prove the constancy
(good exercise for  undergraduate calculus!)
\begin{align}
\frac{\partial}{\partial x}(\mathrm{LHS})=\frac{\partial}{\partial y}(\mathrm{LHS})=0.
\end{align}
Next, we set $x=y=0$.
Then, we have $\mathrm{LHS}=2L(0)+3L(1)=3\pi^2/6=\pi^2/2$.
This also clarifies the meaning of the constant $\pi^2/2$.

The point of the method is that we do not carry out  
any integration for the integral \eqref{eq:L2}; rather,
we differentiate it
and do some algebraic manipulation for its
integrand.
In other words, the structure behind these DIs are \emph{algebraic}
rather than \emph{analytic}.
So, we call the above method the \emph{algebraic method}\index{algebraic method}.
We are going to apply it to prove various much more complicated DIs.

The readers may already start  enjoying
the ``sense of humor" of the dilogarithm  
in the words of Zagier, quoted in the preface.

\section{Modified Rogers dilogarithm}

Here, we introduce yet another variant of the dilogarithm,
which especially fits our problem.
Let
\begin{align}
\label{eq:L3}
\tL(x):=&\  L\biggl(\frac{x}{1+x}\biggr) \quad (0\leq x)
\\
\label{eq:L4}
=& \ \frac{1}{2}\int_{0}^x
\biggl\{
\frac{\log(1+y)}{y}
-
\frac{\log y}{1+y}
\biggr\}
\, dy
\\
\label{eq:L5}
=&\ {}- \Li2(-x)  - \frac{1}{2}\log x  \log(1+x).
\end{align}
The importance of the function $\tL(x)$ has been gradually recognized
since the 1990s
 through the study of DIs, as we explain very soon.
 A little surprisingly,
the expressions \eqref{eq:L4} and \eqref{eq:L5} seem
less well-known in the literature
until recently  (e.g., \cite{Nakanishi10c,Nakanishi16});
they are
quickly and
transparently
  proved by the algebraic method in the previous section.
The function $\tL(x)$ does not have a name in the literature.
Since this is inconvenient, here we call it the \emph{modified Rogers dilogarithm}\index{modified Rogers dilogarithm}\index{dilogarithm!modified Rogers ---}.

Again, by \eqref{eq:sp2},  we have the following important special values:
\begin{align}
\label{eq:sp3}
\tL(0)=0,
\quad
\tL(\infty):=\lim_{x\rightarrow \infty} \tL(x)= \frac{\pi^2}{6}.
\end{align}
The function is  monotonically increasing on $[0,\infty)$,
and its graph is presented in Figure \ref{fig:Li3}.

\begin{figure}
\centering
\includegraphics[width=90mm]{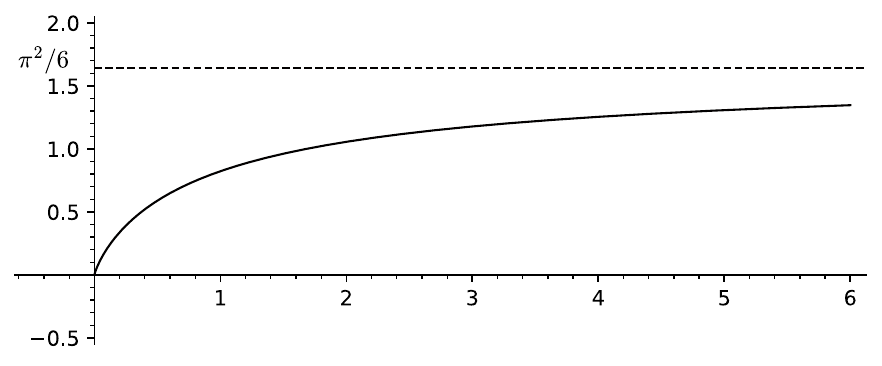}
\vskip-5pt
\caption{The graph of the modified Rogers dilogarithm $\tL(x)$.}
\label{fig:Li3}
\end{figure}

Using the identity
\begin{align}
\frac{x}{1+x} + \frac{x^{-1}}{1+x^{-1}} =
\frac{x}{1+x} + \frac{1}{1+x} =
1,
\end{align}
one can  rewrite Euler's identity \eqref{eq:euler2} 
as
\begin{gather}
\label{eq:euler3}
\tL(y)+ \tL(y^{-1}) = \frac{\pi^2}{6}
\quad
(0\leq y),
\end{gather}
where we use the variable $y$  instead of $x$ to fit the context appearing later.

To rewrite the pentagon identity \eqref{eq:pent4} with $\tL(x)$,
we introduce independent variables $y_1$ and $y_2$, and we set
\begin{align}
\label{eq:x1}
\begin{split}
&Y_1=y_1,
\quad
Y_2=y_2(1+y_1),
\quad
Y_3=y_1^{-1}(1+y_2+y_1y_2),
\\
&
Y_4=y_1^{-1}y_2^{-1}(1+y_2),
\quad
Y_5=y_2^{-1},
\end{split}
\end{align}
and $Y_{i+5}=Y_i$ ($i\in \bbZ$).
They satisfy the recurrence relation
\begin{align}
\label{eq:x2}
Y_{i+2}=Y_i^{-1}(1+Y_{i+1}).
\end{align}
Moreover, if we set
\begin{align}
X_i=\frac{Y_i}{1+Y_i} \quad (i\in \bbZ),
\end{align}
we have
\begin{align}
\begin{split}
\frac{1-X_i}{1-X_i X_{i+1}}
&=
\frac{1+Y_{i+1}}{(1+Y_i)(1+Y_{i+1})-Y_i Y_{i+1}}
\\
&=
\frac{Y_i^{-1}(1+Y_{i+1})}{Y_i^{-1}(1+Y_i+ Y_{i+1})}
=
\frac{Y_{i+2}}{1+Y_{i+2}}=X_{i+2}.
\end{split}
\end{align}
This is the recurrence relation in  \eqref{eq:X2}.
Thus, with variables $Y_1$, \dots, $Y_5$,
and $y_1$, \dots, $y_5$, the pentagon identity \eqref{eq:pent4}
is rewritten as
\begin{gather}
\label{eq:pent6}
\begin{split}
\sum_{i=1}^5 \tL(Y_i)&=
\tL(y_1)+ \tL(y_2(1+y_1))
+\tL(y_1^{-1}(1+y_2+y_1y_2))
\\
&\qquad + \tL(y_1^{-1}y_2^{-1}(1+y_2))
+ \tL(y_2^{-1}) = \frac{\pi^2}{2} 
\quad (0<  y_1,\,y_2).
 \end{split}
\end{gather}
Note that in the limit $y_1,\, y_2 \rightarrow 0$,
we have
$Y_1,\, Y_2 \rightarrow 0$ and $Y_3,\, Y_4,\, Y_5 \rightarrow \infty$.
This is consistent with the constant $\pi^2/2=3 \pi^2/6$
in the RHS.
Thanks to Euler's identity \eqref{eq:euler3},
the pentagon identity \eqref{eq:pent6} is also written without a constant term as
\begin{gather}
\label{eq:pent7}
\begin{split}
&\ \tL(Y_1)+\tL(Y_2)
-\tL(Y_3)-\tL(Y_4)
-\tL(Y_5)
\\
= &\
\tL(y_1)+ \tL(y_2(1+y_1))
-\tL(y_1(1+y_2+y_1y_2)^{-1})
\\
&\qquad - \tL(y_1y_2(1+y_2)^{-1})
- \tL(y_2) =0.
 \end{split}
\end{gather}
This is the counterpart of the form \eqref{eq:pent5}.

Though
we do not see why  the variables $Y_i$ in \eqref{eq:x1} work well at this moment,
they further reveals  the algebraic structure  \eqref{eq:x2},
which is simpler than \eqref{eq:X2},  behind the
pentagon identity.

The identities \eqref{eq:euler3}, \eqref{eq:pent6}, and \eqref{eq:pent7}  in these forms
are the prototypical examples of DIs we are going to study.

\section{Periodicities and DIs for $Y$-systems}

So far, we have  seen only two DIs, namely,
 Euler's identity and the pentagon (Abel's) identity.
Here, we introduce a family of infinitely many DIs, 
which we call the \emph{DIs for $Y$-systems}.
It turns out that
they are  a vast generalization of the above  two identities
in view of \emph{root systems}.

In the series of works
\cite{Kirillov86,
Bazhanov89, Kirillov90, Kirillov89,Bazhanov90},
 Bazhanov, Kirillov, and Reshetikhin studied
some mathematical physics problems on
certain
statistical models on the two-dimensional lattice
by the \emph{thermodynamic Bethe ansatz (TBA) method}.
The \emph{Yang-Baxter equation} \cite{Takhtajan79, Kulish81} and
the \emph{quantum groups}  \cite{Drinfeld85, Jimbo85, Chari95a}
are behind the scenes as 
their algebraic background.
As a part of the results, they naturally 
 reached a remarkable conjecture on a family of (non-functional) dilogarithm identities.
 Let us present the conjecture without explaining its mathematical physics meaning
 because we are interested in the mathematical aspect of the identities in this text.
 
 \begin{figure}
 \begin{center}
 \begin{tikzpicture}[scale=1]
 \draw (0,0) circle (2pt);
 \draw (0.7,0) circle (2pt);
 \draw (2.8,0) circle (2pt);
 \draw (3.5,0) circle (2pt);
 \draw (0.08,0) --(0.62,0);
 \draw (0.78,0) --(1.32,0);
 \draw (2.18,0) --(2.72,0);
 \draw (2.88,0) --(3.42,0);
 \draw [fill] (1.55,0) circle (0.5pt);
 \draw [fill] (1.7,0) circle (0.5pt);
 \draw [fill] (1.85,0) circle (0.5pt);
 \draw (0,-0.4) node {\small $1$};
 \draw (0.7,-0.4) node {\small $2$};
 \draw (2.8,-0.4) node {\small $r-1$};
 \draw (3.5,-0.4) node {\small $r$};
 \draw (-0.9,0) node {$A_r$};
 \draw (6,0) circle (2pt);
 \draw (6.7,0) circle (2pt);
 \draw (8.8,0) circle (2pt);
 \draw (9.5,0.4) circle (2pt);
 \draw (9.5,-0.4) circle (2pt);
 \draw (6.08,0) --(6.62,0);
 \draw (6.78,0) --(7.32,0);
 \draw (8.18,0) --(8.72,0);
 \draw (8.87,0.04) --(9.44,0.36);
 \draw (8.87,-0.04) --(9.44,-0.36);
 \draw [fill] (7.55,0) circle (0.5pt);
 \draw [fill] (7.7,0) circle (0.5pt);
 \draw [fill] (7.85,0) circle (0.5pt);
 \draw (6,-0.4) node {\small $1$};
 \draw (6.7,-0.4) node {\small $2$};
 \draw (8.8,-0.4) node {\small $r-2$};
 \draw (9.5,-0.7) node {\small $r$};
 \draw (9.5,0.7) node {\small $r-1$};
 \draw (5.1,0) node {$D_r$};
 \draw (0,-1.8) circle (2pt);
 \draw (0.7,-1.8) circle (2pt);
 \draw (1.4,-1.8) circle (2pt);
 \draw (2.1,-1.8) circle (2pt);
 \draw (2.8,-1.8) circle (2pt);
 \draw (1.4,-1.1) circle (2pt);
 \draw (0.08,-1.8) --(0.62,-1.8);
 \draw (0.78,-1.8) --(1.32,-1.8);
 \draw (1.48,-1.8) --(2.02,-1.8);
 \draw (2.18,-1.8) --(2.72,-1.8);
 \draw (1.4,-1.72) --(1.4,-1.18);
 \draw (0,-2.2) node {\small $1$};
 \draw (0.7,-2.2) node {\small $2$};
 \draw (1.4,-2.2) node {\small $3$};
 \draw (2.1,-2.2) node {\small \rp$4$};
 \draw (2.8,-2.2) node {\small \rp$5$};
 \draw (1.7,-1.1) node {\small \rp$6$};
 \draw (-0.9,-1.8) node {$E_6$};
 \draw (0,-3.6) circle (2pt);
 \draw (0.7,-3.6) circle (2pt);
 \draw (1.4,-3.6) circle (2pt);
 \draw (2.1,-3.6) circle (2pt);
 \draw (2.8,-3.6) circle (2pt);
 \draw (3.5,-3.6) circle (2pt);
 \draw (4.2,-3.6) circle (2pt);
 \draw (2.8,-3.6) circle (2pt);
 \draw (2.8,-2.9) circle (2pt);
 \draw (0.08,-3.6) --(0.62,-3.6);
 \draw (0.78,-3.6) --(1.32,-3.6);
 \draw (1.48,-3.6) --(2.02,-3.6);
 \draw (2.18,-3.6) --(2.72,-3.6);
 \draw (2.88,-3.6) --(3.42,-3.6);
 \draw (3.58,-3.6) --(4.12,-3.6);
 \draw (2.8,-3.52) --(2.8,-2.98);
 \draw (0,-4) node {\small $1$};
 \draw (0.7,-4) node {\small $2$};
 \draw (1.4,-4) node {\small $3$};
 \draw (2.1,-4) node {\small $4$};
 \draw (2.8,-4) node {\small $5$};
 \draw (3.5,-4) node {\small $6$};
 \draw (4.2,-4) node {\small $7$};
 \draw (3.1,-2.9) node {\small $8$};
 \draw (-0.9,-3.6) node {$E_8$};
 \draw (6,-1.8) circle (2pt);
 \draw (6.7,-1.8) circle (2pt);
 \draw (7.4,-1.8) circle (2pt);
 \draw (8.1,-1.8) circle (2pt);
 \draw (8.8,-1.8) circle (2pt);
 \draw (9.5,-1.8) circle (2pt);
 \draw (7.4,-1.1) circle (2pt);
 \draw (6.08,-1.8) --(6.62,-1.8);
 \draw (6.78,-1.8) --(7.32,-1.8);
 \draw (7.48,-1.8) --(8.02,-1.8);
 \draw (8.18,-1.8) --(8.72,-1.8);
 \draw (8.88,-1.8) --(9.42,-1.8);
 \draw (7.4,-1.72) --(7.4,-1.18);
 \draw (6,-2.2) node {\small $1$};
 \draw (6.7,-2.2) node {\small $2$};
 \draw (7.4,-2.2) node {\small $3$};
 \draw (8.1,-2.2) node {\small $4$};
 \draw (8.8,-2.2) node {\small $5$};
 \draw (9.5,-2.2) node {\small $6$};
 \draw (7.7,-1.1) node {\small $7$};
 \draw (5.1,-1.8) node {$E_7$};
 \end{tikzpicture}
 \end{center}
\vskip-10pt
\caption{Simply-laced Dynkin diagrams of finite type.}
\label{fig:Dynkin1}
\end{figure}
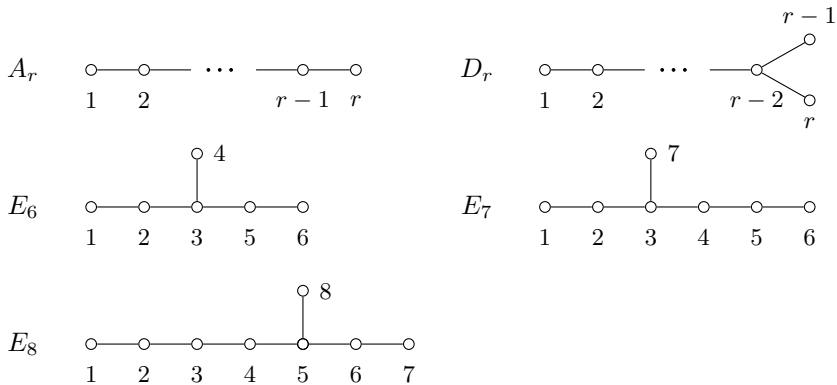

Let $X$ be a simply-laced (connected) Dynkin diagram of finite type
\cite{Bourbaki02, Kac90},
namely, $X=A_r$ ($r\geq 1$), $D_r$ ($r\geq 4$), $E_6$, $E_7$, or $E_8$
as in Figure \ref{fig:Dynkin1}.
The number $r$ of the vertices of $X$ is called the \emph{rank}\index{rank!of Dynkin diagram}.
Let $I=\{1,\, \dots, \, r\}$ be the index set for the vertices of $X$
as in the figure.
Let $\ell\geq 2$ be any integer,
which we call the \emph{level}.
For any pair $(X,\ell)$, 
there is  an integrable lattice model in two dimensions
called the \emph{restricted solid-on-solid (RSOS) model of type $X$ with level $\ell$} \cite{Jimbo88,Bazhanov89,Bazhanov90}.
Let   $\{y_m^{(a)} \in \bbR_{>0} \mid a\in I;\, m=1,\, \dots,\, \ell-1 \}$
be  a positive real 
solution of the following system of algebraic relations:
\begin{align}
\label{eq:ysys1}
(y_m^{(a)})^2 = 
\frac{\displaystyle\prod_{b\in I:\, b\sim a} (1+y_m^{(b)})}{(1+y_{m-1}^{(a)}{}^{-1})(1+y_{m+1}^{(a)}{}^{-1})},
\end{align}
where $b\sim a$ means that $b$ is adjacent to $a$ in the Dynkin diagram $X$.
For example, for $X=E_6$, we have $3 \sim 2$, 4, {\rp 6}.
Also, we set $y_{0}^{(a)}{}^{-1}=y_{\ell}^{(a)}{}^{-1}=0$ if they appear 
in the denominator of the RHS.

Remarkably, the special value of the modified Rogers dilogarithm $\tL(y_m^{(a)})$  emerges in the calculation
of a certain thermodynamic quantity for the RSOS model.
Motivated by this background, the following summation formula was conjectured
 by Kirillov, Bazhanov, and Reshetikhin:
\begin{conj}[\cite{Kirillov89, Bazhanov90}]
\label{conj:KBR1}
For the above $y_m^{(a)}$'s,
the following equality holds:
\begin{align}
\label{eq:y2}
\sum_{a=1}^r\sum_{m=1}^{\ell-1} \tL(y_m^{(a)}) = \frac{ r (\ell-1)  h}{h+\ell} 
\frac{\pi^2}{6},
\end{align}
where  $h=h(X)$ is the Coxeter number of $X$, namely,
$h=r+1$ for $A_r$, $2r-2$ for $D_r$, and 12, 18, 30 for $E_6$, $E_7$, $E_8$.
\end{conj}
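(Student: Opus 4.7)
The plan is to realize Conjecture \ref{conj:KBR1} as a specialization, to the constant solution, of the dilogarithm identity associated with a period of a $Y$-pattern (Theorem \ref{thm:DI1}).

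First, I would embed the stationary system (\ref{eq:ysys1}) into the genuinely dynamical $Y$-system
\begin{align*}
y_m^{(a)}(u-1)\, y_m^{(a)}(u+1) = \frac{\prod_{b \sim a} (1 + y_m^{(b)}(u))}{(1 + y_{m-1}^{(a)}(u)^{-1})(1 + y_{m+1}^{(a)}(u)^{-1})},
\end{align*}
whose positive constant-in-$u$ solutions are exactly the solutions of (\ref{eq:ysys1}). Using the bipartite structure of the simply-laced Dynkin diagram $X$, I would rewrite this recursion as an alternating sequence of cluster mutations in a $Y$-pattern, where the two half-step operators $\mu_+$ and $\mu_-$ mutate all vertices of each bipartite color in turn. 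Zamolodchikov's periodicity conjecture---proved for $ADE$ by Fomin--Zelevinsky and in full generality by Keller---then guarantees that this pattern is periodic, with one period comprising exactly $N = r(\ell-1)(h+\ell)$ individual mutations.

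Next I would apply Theorem \ref{thm:DI1} to this period, yielding a dilogarithm identity of the form
\begin{align*}
\sum_{k=1}^N \epsilon_k\, \tL(y_k) = c \cdot \frac{\pi^2}{6},
\end{align*}
where $y_k$ is the $y$-variable mutated at the $k$-th step, $\epsilon_k \in \{\pm 1\}$ is its tropical sign, and $c$ is a non-negative integer read off from the tropicalized $Y$-pattern. On the constant solution of (\ref{eq:ysys1}) each $y_m^{(a)}$ occurs exactly $h + \ell$ times among the $y_k$ (once per time step in its color class), so the identity collapses to
\begin{align*}
(h+\ell) \sum_{a=1}^{r}\sum_{m=1}^{\ell-1}\tL(y_m^{(a)}) = c \cdot \frac{\pi^2}{6},
\end{align*}
and the conjectured formula (\ref{eq:y2}) follows once we show $c = r(\ell-1)\, h$.

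The main obstacle is twofold. First, one must verify that on a positive constant solution every tropical sign $\epsilon_k$ equals $+1$, so that no cancellations spoil the clean collapse above; this is a manifestation of the sign-coherence of $c$-vectors and ultimately rests on categorification results in cluster algebra theory. Second, one must compute the integer $c$ and match it with $r(\ell-1)\, h$, which is an essentially combinatorial count tied to the positive roots of $X$; it is most transparently obtained via the tropicalization of the $Y$-system and its link to the action of the Coxeter element on the root system, as foreshadowed at the end of Chapter 4 in the excerpt. Together, these two ingredients---periodicity plus controlled tropical data---turn the abstract period DI of Theorem \ref{thm:DI1} into the explicit summation formula (\ref{eq:y2}).
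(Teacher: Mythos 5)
Your overall strategy is exactly the paper's: embed the constant system \eqref{eq:ysys1} into the $Y$-system of type $(X,A_{\ell-1})$, realize it as a periodic sequence of mutations in a $Y$-pattern via the bipartite quiver $Q(X,A_{\ell-1})$, invoke the periodicity theorem, apply Theorem \ref{thm:DI1}, and then observe that on the constant solution each $y_m^{(a)}$ is repeated $h+\ell$ times so that the identity collapses to \eqref{eq:y2}. This is precisely how the paper disposes of the conjecture (remark (5) after Conjecture \ref{conj:DIY1}, together with Theorems \ref{thm:Yperiod1} and \ref{thm:YDI1}).

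However, your handling of the tropical signs contains a genuine error. The identity you write down, $\sum_k \epsilon_k\,\tL(y_k)=c\cdot\frac{\pi^2}{6}$ with the tropical signs as coefficients and a nonzero constant, is not any of the three forms \eqref{eq:DI1}--\eqref{eq:DI3}: the form with signed coefficients, \eqref{eq:DI3}, has argument $y_{k_s}(s)^{\varepsilon_s}$ and constant term $0$, while the form with unit coefficients and argument $y_{k_s}(s)$ is \eqref{eq:DI1}, whose constant is $N_-\frac{\pi^2}{6}$. Consequently the ``obstacle'' you identify --- verifying that every tropical sign on the period equals $+1$ --- is both unnecessary and false. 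The tropical signs do not multiply the dilogarithm terms in the relevant identity \eqref{eq:DI1}; they enter only through the constant term, which by the tropical limit $\bfy\to\bfzero$ equals $\frac{\pi^2}{6}$ times the number $N_-$ of mutation steps with tropical sign $-1$. The Coxeter-element analysis of the tropical $Y$-system shows the signs are $+1$ only in the initial region $0\le s\le \ell-1$ and are $-1$ for $\ell\le s\le h+\ell-1$, giving $N_-=hr(\ell-1)$ over the full period (and $N_+=\ell r(\ell-1)$). If all signs were $+1$ as you assert, the constant would be $N_-=0$ and your identity would collapse to $\sum_{a,m}\tL(y_m^{(a)})=0$, which is absurd. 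The fix is simply to use \eqref{eq:DI1} as stated --- no sign verification is needed for the collapse --- and to carry out the count of negative tropical signs, which is where the factor $h$ in the numerator of \eqref{eq:y2} actually comes from. (A minor additional point: the $Y$-pattern realizes only the even sector of the $Y$-system, so the collapse is $(h+\ell)\sum_{a,m}\tL(y_m^{(a)})=hr(\ell-1)\frac{\pi^2}{6}$ with the half-constant $hrr'$ of Theorem \ref{thm:YDI1}, not the full constant $2hrr'$ of \eqref{eq:Y3}; either bookkeeping yields \eqref{eq:y2}.)
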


\begin{rem}
(1).
To be precise, the original conjecture was given in terms of the
solution of another system of algebraic relations called the \emph{$Q$-systems},
and the equality is given by the Rogers dilogarithm $L(x)$.
It was translated as above by Kuniba-Nakanishi-Suzuki \cite{Kuniba94a}.
\par
(2).
Later, it was shown by Nahm-Keegan   \cite{Nahm09} that there exists a unique positive real solution of \eqref{eq:ysys1}.
\end{rem}

Remarkably,
for $X=A_r$ and any $\ell$, Kirillov \cite{Kirillov89} gave the explicit expression of $y_m^{(a)}$
and proved the equality based on it.
 Unfortunately, this approach does not work for the other cases,
and they were left as conjectures in B.C.

After Conjecture \ref{conj:KBR1}, 
a  functional extension (affinization) of the system of the
algebraic relation \eqref{eq:ysys1}
called the \emph{$Y$-system}
was introduced by Zamolodchikov \cite{Zamolodchikov91} for $\ell=2$,
and by Kuniba-Nakanishi \cite{Kuniba92} and Ravanini-Tateo-Valleriani \cite{Ravanini93} for general $\ell$.
Here, we present the version in \cite{Ravanini93}, where the symmetry 
of the numerator and denominator in \eqref{eq:ysys1} is  manifest.

\begin{defn}[$Y$-system]
\label{defn:Ysystem1}\index{$Y$-system}
Let $(X, X')$ be a pair of simply-laced Dynkin diagrams of finite type with
  index sets $I=\{1,\, \dots,\, r \}$ and $I'=\{1,\, \dots,\, r' \}$ for their vertices, respectively.
We introduce variables $Y_{a,a'}(u)$ for $(a,a')\in I\times I'$
and $u\in \bbZ$,
and consider the following system of functional/algebraic relations
(the \emph{$Y$-system of type $(X, X')$}):
\begin{align}
\label{eq:Ysys1}
Y_{a,a'}(u+1) Y_{a,a'}(u-1)= 
\frac{\displaystyle\prod_{b\in I:\, b\sim a} (1+Y_{b,a'}(u))}
{\displaystyle\prod_{b'\in I':\, b'\sim a'} (1+Y_{a,b'}(u)^{-1})},
\end{align}
where the symbol $\sim$ denotes  the adjacency
in  $X$ in the numerator
and in  $X'$ in the denominator.
The product is 1 if it is empty.
\end{defn}

In view of the $Y$ system \eqref{eq:Ysys1},
 we identify the former system \eqref{eq:ysys1} 
 with the \emph{constant $Y$-system}\index{constant $Y$-system}\index{$Y$-system!constant ---}
 of type $(X, A_{\ell-1})$.
 Namely, it is satisfied by the \emph{constant solution}
 $y_{m}^{(a)}=Y_{a,m}(u)$  of the $Y$-system
 with respect to $u$.
In the original context in mathematical physics, $u$ is a \emph{complex} parameter.
However, for our purpose, we may restrict $u$ as an \emph{integer parameter}.
Then, the relation \eqref{eq:Ysys1}
is regarded as a discrete dynamical system with 
the discrete-time $u$ and a given initial condition at $u=0$ and $1$.

Observe that
$Y$-systems of types $(X, X')$ and $(X', X)$
are equivalent
under the correspondence $Y_{a,a'}(u)^{-1}
\leftrightarrow Y_{a',a}(u)$.
We call it the \emph{level-rank duality}\index{level-rank duality}\index{duality!level-rank}
in view of the original context of the constant $Y$-system \eqref{eq:ysys1}.

The following conjecture was given by Zamolodchikov, Ravanini-Tateo-Valleriani, 
and Gliozzi-Tateo, again motivated by the underlying mathematical physics 
models:
\begin{conj}
\label{conj:DIY1}
Let $\{ Y_{a,a'} (u) \in \bbR_{>0}   \mid (a,a')\in I\times I',  u\in \bbZ \}$ be any  positive real
solution of the $Y$-system of type $(X, X')$.
Let $r,\, r'$ and $h,\, h'$ be the ranks and the Coxeter numbers of $X$ and $X'$, respectively.
\par
(1) Periodicity \cite{Ravanini93} (\cite{Zamolodchikov91} for $X'=A_1$).
\begin{align}
\label{eq:Y2}
Y_{a,a'}(u+2(h+h'))=Y_{a,a'}(u).
\end{align}
\par
(2) Dilogarithm identity  \cite{Gliozzi95}.
\begin{align}
\label{eq:Y3}
\sum_{u=0}^{2(h+h')-1} \sum_{(a,a')\in I\times I'}
\tL(Y_{a,a'}(u)) & = 2 { r r' h} \frac{\pi^2}{6}.
\end{align}
\end{conj}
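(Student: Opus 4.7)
The plan is to realize the $Y$-system of type $(X, X')$ as a periodic orbit in a $Y$-pattern and then deduce both statements from Theorem \ref{thm:DI1}. Since $X$ and $X'$ are simply laced Dynkin diagrams of finite type, their vertex sets admit bipartitions $I = I_+ \sqcup I_-$ and $I' = I'_+ \sqcup I'_-$. Using these I would build a skew-symmetric exchange matrix $B$ on the index set $I \times I'$ whose entries encode the adjacencies in $X$ with a positive sign and in $X'$ with a negative sign, twisted according to the product bipartition. The composite bipartite mutations $\mu_+$ and $\mu_-$ on the resulting $Y$-pattern then realize the recursion \eqref{eq:Ysys1} as a shift $u \mapsto u+1$, so that any positive real solution $\{Y_{a,a'}(u)\}$ is a specialization of the $y$-variables of this $Y$-pattern.

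Second, I would establish the periodicity \eqref{eq:Y2} with period $2(h + h')$. The key tool, developed in Chapter 4 of the text, is tropicalization. Under the bipartite mutation sequence, the tropical $y$-variables execute (up to bipartite signs) the action of a product of Coxeter-type elements on the root lattice attached to the generalized cluster category of type $(X, X')$. Because the Coxeter elements of $X$ and $X'$ have orders $h$ and $h'$, the corresponding bipartite order works out to $2(h+h')$ in $u$. Combined with the separation formula and the categorification inputs from \cite{Amiot09, Plamondon10b}, this promotes the tropical periodicity to the full nontropical one and yields (1); this is essentially the content of Keller's periodicity theorem \cite{Keller10}.

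Third, I would apply Theorem \ref{thm:DI1} to the period just obtained. The theorem attaches to a period of a $Y$-pattern a DI of the form $\sum_k \varepsilon_k \tL(\hat{y}_k) = N_+ \cdot \pi^2/6$, where the sum runs over elementary mutations in one period, $\varepsilon_k \in \{\pm 1\}$ is the tropical sign of the $k$-th mutation, and $N_+$ counts positive tropical signs. For our bipartite sequence a direct computation shows that all tropical signs along one full period are $+1$ (a reflection of the positivity of Coxeter orbits on positive roots), so the identity collapses to an unsigned sum, and regrouping mutations by the value of $u$ produces the left-hand side of \eqref{eq:Y3}. The coefficient $2 r r' h$ can then be read off from $N_+$; its apparent asymmetry in $h$ and $h'$ is reconciled with the level-rank duality by termwise application of Euler's identity \eqref{eq:euler3}, which gives the companion identity for $(X', X)$ and $2 r r' h'$.

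The hard part is clearly the periodicity in (1), because it depends on the full categorical machinery of cluster categories for the generalized types $(X, X')$, the Auslander--Reiten translation, and the separation formula. By contrast, once periodicity is in hand and the tropical signs are verified to be uniformly positive, the dilogarithm identity (2) follows essentially formally from Theorem \ref{thm:DI1} together with elementary bookkeeping, which is why the text organizes the argument by first proving (1) in Chapter 4 and deducing (2) afterward.
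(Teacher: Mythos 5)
Your overall strategy matches the paper's: realize the $Y$-system via the bipartite quiver $Q(X,X')$ and the composite mutations $\mu_\pm$, prove periodicity by tropicalization and Coxeter-element dynamics, and then invoke Theorem \ref{thm:DI1}. However, there is a genuine error in your third step that breaks the computation of the constant term. You claim that ``all tropical signs along one full period are $+1$,'' so that the identity collapses to an unsigned sum whose constant is read off from $N_+$. This is false. The analysis of the tropical $Y$-system (Section \ref{sec:tropical1}) shows that the $c$-vectors are positive only in the region $0\le s\le h'-1$, where the dynamics is governed by the Coxeter element of the \emph{vertical} diagram $X'$; at $s=h'$ they flip sign (since $\alpha(a;h')=-\alpha_{\omega'(a')}$), and throughout $h'\le s\le h+h'-1$ they are negative, governed by the Coxeter element of the \emph{horizontal} diagram $X$. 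Per half period this gives $N_+=\tfrac12 h'rr'$ and $N_-=\tfrac12 hrr'$, and it is $N_-$ (not $N_+$) that appears as the constant in the form \eqref{eq:DI1} of the DI for $\tL(y_{k_s}(s))$. If all signs were $+1$ as you assert, then $N_-=0$ and \eqref{eq:DI1} would yield a right-hand side of $0$, contradicting \eqref{eq:Y3}; the ``positivity of Coxeter orbits on positive roots'' (Proposition \ref{prop:alpha1}(a)) holds only for $0\le k\le h-1$ within each region and does not persist across the transition at $s=h'$. The sign split between the two regions is precisely the mechanism that produces the asymmetric constant $2rr'h$ rather than $2rr'(h+h')$, so it cannot be waved away.

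A secondary point: the sum produced by Theorem \ref{thm:DI1} runs only over the mutated $y$-variables, i.e.\ the even sector $\calY_+$, whereas \eqref{eq:Y3} sums over all $(a,a')$ and all $u$. The $Y$-system decouples into two isomorphic sectors, so the full sum is twice the even-sector sum $hrr'\tfrac{\pi^2}{6}$; your phrase ``regrouping mutations by the value of $u$'' glosses over this factor of $2$, which should be made explicit to land on the stated constant.
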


Let us call the conjectural identity \eqref{eq:Y3} the \emph{DI for the $Y$-system
of type $(X, X')$}\index{dilogarithm identity!for $Y$-system}.
Some remarks are in order.

\begin{enumerate}
\item
The sum in \eqref{eq:Y3} is
 over the fundamental domain of the periodicity in \eqref{eq:Y2}.

\item
The general solution of  the $Y$-system \eqref{eq:Ysys1}
is expressed as subtraction-free rational functions in the initial variables $Y_{a,a'}(u)$ at
$u=0$ and $1$.
Thus, the equality \eqref{eq:Y3} is regarded as a functional
identity with respect to the initial variables $Y_{a,a'}(0)$ and $Y_{a,a'}(1)$
that take values in $\bbR_{>0}$.
\item
Alternatively, the initial variables $Y_{a,a'}(u)$ can be taken at
$u=-1$ and $0$. 
\item
By the level-rank duality,
the identity  \eqref{eq:Y3} is equivalent to 
the identity
\begin{align}
\label{eq:Y4}
\sum_{u=0}^{2(h+h')-1} \sum_{(a,a')\in I\times I'}
\tL(Y_{a,a'}(u)^{-1}) & = 2 { r r' h'} \frac{\pi^2}{6}.
\end{align}
Alternatively,
 two identities \eqref{eq:Y3} and \eqref{eq:Y4} are shown to be equivalent 
by Euler's identity \eqref{eq:euler3}
because
there are $2rr'(h+h')$ terms in
the sums of \eqref{eq:Y3}  and \eqref{eq:Y4}, respectively.

\item
The original conjectural (non-functional) identity \eqref{eq:y2} follows from the DI  \eqref{eq:Y3}.
Indeed,  $\{ y_m^{(a)}\in \bbR_{>0}\} $ satisfying \eqref{eq:ysys1} is 
a constant solution $y_m^{(a)}=Y_{a,m}(u)$ of the $Y$-system \eqref{eq:Ysys1}
of type $(X, A_{\ell-1})$ as mentioned above.
We have $r'=\ell-1$ and $h'=\ell$ therein.
Then, in the LHS of \eqref{eq:Y3},
the sum over $u$ is replaced with
the multiplication by $2(h+\ell)$. 
This is exactly the equality \eqref{eq:y2}.

\end{enumerate}

Let us see the two simplest examples for Conjecture \ref{conj:DIY1}.

\begin{ex}
 $(X, X')=(A_1,A_1)$.
We have $r=r'=1$ and $h=h'=2$.
We set $Y(u):=Y_{1,1}(u)$.
Then,
the $Y$-system \eqref{eq:Ysys1} looks as
\begin {align}
\label{eq:YY1}
Y(u+1)Y(u-1)=1.
\end{align}
Thus, we have the periodicity
\begin{align}
\label{eq:Y4p}
Y(u+4)=Y(u+2)^{-1}=Y(u).
\end{align}
Meanwhile,
\eqref{eq:Y2} claims the periodicity
\begin{align}
\label{eq:Y4p2}
Y(u+8)=Y(u).
\end{align}
Thus, the periodicity \eqref{eq:Y2} holds.
Next, the identity \eqref{eq:Y3} claims that
\begin{align}
\label{eq:Y5}
\sum_{u=0}^{7}  \tL(Y(u)) = 4 \frac{\pi^2}{6}.
\end{align}
By \eqref{eq:Y4p},
this  is reduced to
\begin{align}
\label{eq:Y51}
2\{   \tL(Y(0))  +  \tL(Y(0)^{-1}) +   \tL(Y(1))  +  \tL(Y(1)^{-1})\}
= 2 \frac{\pi^2}{6}.
\end{align}
Moreover,
since $Y(0)$ and $Y(1)$ are independent variables,
the identity \eqref{eq:Y51} decomposes into two identical identities
\begin{align}
  \tL(Y(0))  +  \tL(Y(0)^{-1}) =   \tL(Y(1))  +  \tL(Y(1))^{-1}) = \frac{\pi^2}{6}.
\end{align}
This is Euler's identity in the form \eqref{eq:euler3}.
\end{ex}

\begin{ex}
 $(X, X')=(A_2,A_1)$.
 We have $r=2$, $r'=1$ and $h=3$, $h'=2$.
We set $Y_1(u):=Y_{1,1}(u)$ and $Y_2(u):=Y_{2,1}(u)$.
Then,
the $Y$-system \eqref{eq:Ysys1} looks as
\begin {align}
\label{eq:YY2}
Y_1(u+1)Y_1(u-1)&=1+Y_2(u),
\\
Y_2(u+1)Y_2(u-1)&=1+Y_1(u).
\end{align}
The system decouples into two independent systems,
one for $Y_a(u)$'s with even $a+u$ (the \emph{even sector}),
and the other for $Y_a(u)$'s with odd $a+u$ (the \emph{odd sector}).
Let us concentrate on the even sector,
and we set
\begin{align}
Y(u):=
\begin{cases}
Y_1(u) & \text{$u$ is odd,}
\\
Y_2(u) & \text{$u$ is even.}
\end{cases}
\end{align}
Then, the $Y$-system on the even sector is unified into a single relation
\begin{align}
Y(u+1)Y(u-1)=1+Y(u).
\end{align}
We have already encountered this recurrence relation in \eqref{eq:x2},
where we identify $Y_i$ therein with $Y(i-1)$.
Moreover, a general solution was already given in \eqref{eq:x1},
which yields the periodicity
\begin{align}
\label{eq:Yper1}
Y(u+5)=Y(u).
\end{align}
Meanwhile,
\eqref{eq:Y2} claims the periodicity
\begin{align}
\label{eq:Y6}
Y(u+10)=Y(u).
\end{align}
Thus, the periodicity \eqref{eq:Y2} holds.
Next, the identity \eqref{eq:Y3} restricting to the even sector claims
that
\begin{align}
\label{eq:Y7}
\sum_{u=0}^{9}  \tL(Y(u)) = 6 \frac{\pi^2}{6},
\end{align}
where the RHS of \eqref{eq:Y3} was divided by 2.
By the periodicity \eqref{eq:Yper1},
it is reduced to
\begin{align}
\sum_{u=0}^{4}  \tL(Y(u))  =  3 \frac{\pi^2}{6}.
\end{align}
This is the pentagon identity in the form \eqref{eq:pent6}.
\end{ex}

Thus, we saw that the two simplest  cases  of the conjectural DIs  \eqref{eq:Y3}
are  Euler's and the pentagon (Abel's) identities.
In other words, DIs  \eqref{eq:Y3}
yield a vast generalization of these classic  identities
in view of root systems.
This was indeed a remarkable and novel discovery in the long history of the dilogarithm.

Of course, we need to prove Conjecture \ref{conj:DIY1}.
The conjecture consists of two parts.
The first part \eqref{eq:Y2}
 seems easier.
Indeed,  for very  low ranks, it is
proved by explicitly solving the $Y$-system.
Also,
 for higher ranks,
it is possible to test and confirm the periodicity
  \emph{numerically}
with randomly chosen initial values.
In general, however, the problem turned out to be not easy.
It  was proved only in the case $(X,X')=(A_r, A_1)$ by Gliozzi-Tateo \cite{Gliozzi96}
and Frenkel-Szenes \cite{Frenkel95} by giving the explicit solution (with different expressions to each other) of the $Y$-system.
 Similarly, the second part \eqref{eq:Y2} was proved only 
 in the same case by them \cite{Gliozzi96,Frenkel95}
 using their explicit solutions.
 This was the situation in B.C.

The scene changed after the introduction of cluster algebras (in ``A.D.").
It was recognized that there is a cluster algebraic structure behind
the $Y$-system \eqref{eq:Ysys1}.
Using the structure, Conjecture \ref{conj:DIY1}
and their variants for other $Y$-systems were proved in full generality.
So, we may say that
 it is exactly the cluster algebra structure
that is responsible for Conjecture \ref{conj:DIY1}.
We are going to reveal this hidden and exciting relationship in the rest of Part I.

\section{Reduction problem}
\label{sec:reduction1}

Let us conclude this introductory chapter by presenting the following important result
by Wojtkowiak \cite{Wojtkowiak96}.

\begin{thm}[{\cite[Theorem 4.4]{Wojtkowiak96}}]
\label{thm:reduction1}
Any functional identity of the dilogarithm with rational functions of one variable
as arguments is reduced to the trivial one by the successive application of
the pentagon identity.
\end{thm}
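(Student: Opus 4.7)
The plan is to work in the \emph{pre-Bloch group} $\calP(F)$ for $F=\bbC(x)$: the free abelian group on symbols $[r]$ with $r\in F\setminus\{0,1\}$, modulo the formal five-term (pentagon) relation. A functional DI $\sum_i c_i\,\mathrm{Li}_2(r_i(x))=P(x)$, where $P(x)$ is a polynomial expression in logarithms of rational functions, corresponds to an element $\xi=\sum_i c_i[r_i]\in\calP(F)$, and the theorem amounts to the assertion that $\xi=0$ in $\calP(F)$, i.e.\ reduces to $0$ by pentagon moves.

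The first step is to reformulate the hypothesis in a cleaner form. Differentiating both sides in $x$ and using $\frac{d}{dx}\mathrm{Li}_2(r)=-\log(1-r)\cdot r'/r$, the given identity is equivalent, modulo purely logarithmic relations, to the vanishing of the Koszul element
\begin{align*}
\nu(\xi):=\sum_i c_i\,(r_i)\wedge(1-r_i)\in\wedge^2 F^{\times}\otimes\bbQ.
\end{align*}
Since every pentagon element lies in the kernel of $\nu$, pentagon moves do not affect $\nu(\xi)$; the hypothesis of the theorem therefore forces $\nu(\xi)=0$.

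The heart of the argument is an inductive reduction. Order the rational arguments $r_i(x)$ by a well-founded complexity measure, for instance the lexicographic pair consisting of $\deg r_i+\deg(1-r_i)$ together with the multiset of irreducible factors appearing in the numerators and denominators of $r_i$ and $1-r_i$. Whenever this measure is positive, select an auxiliary rational function $s(x)$ and apply the five-term relation to the pair $(r(x),s(x))$, replacing $[r]$ by the combination $-[s]-[(1-r)/(1-rs)]-[1-rs]-[(1-s)/(1-rs)]$ (up to an absorbed constant). Choose $s$ so that at least one of the four companion arguments exhibits cancellation of common factors and the overall complexity of the resulting expression strictly decreases. The vanishing of $\nu(\xi)$ supplies the compatibilities among the tame symbols of the $r_i$ at each point of $\bbP^1(\bbC)$ needed to guarantee that such an $s$ exists. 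Iterating reduces $\xi$ to an element of $\calP(\bbC)$ with only constant arguments.

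The main obstacle is precisely this inductive step: producing an explicit auxiliary $s(x)$ that provably decreases the complexity in every case, and tracking how the complexity changes across all five terms of the pentagon simultaneously. Once $\xi$ has been reduced to $\calP(\bbC)$, any specialization $x\mapsto x_0$ yields the same numerical identity; because the original DI was functional and our pentagon reductions have absorbed every non-constant contribution, the residual numerical identity is forced to be the trivial one $0=0$, completing the reduction.
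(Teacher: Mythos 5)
The paper does not prove Theorem \ref{thm:reduction1}; it is quoted verbatim as an external result of Wojtkowiak, so there is no in-text proof to compare against. Your framework — passing to the pre-Bloch group of $\bbC(x)$, observing that the identity forces the vanishing of $\sum_i c_i\, r_i\wedge(1-r_i)$ in $\bigwedge^2 \bbC(x)^{\times}\otimes\bbQ$, and then arguing by induction on a complexity measure using tame symbols at the points of $\bbP^1(\bbC)$ — is indeed the standard (and essentially Wojtkowiak's) strategy, so the skeleton is right.

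However, the proposal has a genuine gap, and you name it yourself: the entire content of the theorem lives in the inductive step, and you do not carry it out. Saying that one should ``select an auxiliary rational function $s(x)$ \dots so that the overall complexity strictly decreases'' and that ``the vanishing of $\nu(\xi)$ supplies the compatibilities \dots needed to guarantee that such an $s$ exists'' is an assertion of the theorem, not a proof of it. The difficulty is real: applying the five-term relation to $(r,s)$ replaces one argument by four new ones, each of whose divisors of zeros and poles (and of $1-(\cdot)$) must be controlled simultaneously, and a naive choice of $s$ typically \emph{increases} any degree-based measure. One must extract from $\nu(\xi)=0$, via the residue/tame-symbol maps $\bigwedge^2\bbC(x)^{\times}\to \bbC(x)^{\times}\to\bbZ$ at each place, a concrete pairing of places that dictates the choice of $s$, and then verify the strict decrease case by case; none of that is present. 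A second, smaller gap is the last paragraph: after reducing to an element of $\calP(\bbC)$ with constant arguments, it does not follow that ``the residual numerical identity is forced to be the trivial one $0=0$'' — constant dilogarithm relations (e.g.\ $\mathrm{Li}_2(1)=\pi^2/6$) are nontrivial and are not themselves consequences of pentagon moves; the correct statement is that the constant part is absorbed into the elementary (logarithmic and constant) terms of the identity, which requires a separate sentence of justification rather than the claim that it vanishes.
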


Based on this fact, there is a conjecture that (or a question if)  the same is true for 
the DIs with
rational functions of \emph{more than one variable}
as arguments (e.g.,  \cite[\S2.A]{Zagier07}).
Let us call it the \emph{reduction problem}\index{reduction!problem}.
There are several case studies on the problem (e.g., \cite{Kirillov95}).
No counterexample is known, 
but there is also no systematic study of the problem, as far as we know.
For
the DIs for $Y$-systems (and all DIs considered in this text),
we will answer the question (in a little unexpected way) later in Section \ref{sec:construction1}.

\notes
Many  DIs
are presented in   \cite{Lewin81} and  \cite{Kirillov95}.
Most of them seem to be of a different kind from the one we consider in this text.
We do not know how wide the cluster algebraic structure covers general DIs.

There is
another class of  mathematical physics  models called the \emph{factorizable $S$-matrix models} (\cite{Klassen90, Zamolodchikov91b,Ravanini93}),
which are also based on the Yang-Baxter equation.
The dilogarithm sums  \eqref{eq:y2}  and the corresponding $Y$-systems appeared
also in the study of their thermodynamic property.

As for $Y$-systems,
see the review \cite{Kuniba10} for  the background  in mathematical physics
and further information.
There are several variants of $Y$-systems other than the
one presented here,
and the conjectures on their periodicities and DIs for them were also given  in B.C.,
for example,  
 for the nonsimply-laced $Y$-systems \cite{Kuniba93a, Kuniba92,Kuniba94a}
and  the sine-Gordon $Y$-systems \cite{Tateo95,Gliozzi96}.

\chapter{Quick course on cluster algebras}
\label{ch:quick1}

Let us return in A.D.~(= after cluster algebra).
Cluster algebras  introduced by Fomin-Zelevinsky \cite{Fomin02}
are a class of commutative algebras
originated in Lie theory.
In this chapter, we present several basic notions and fundamental results on cluster algebras.
All contents are well-known in cluster algebra theory.
We skip proofs
because the readers can apply these results to the forthcoming problems
without knowing the proofs.
Proofs of all results  in this chapter  except for Theorem \ref{thm:finite1} 
are conveniently found in the companion monograph
 \cite{Nakanishi22a}.
 More specific instruction is found in Notes of the chapter.

\section{Semifield}

Let us begin with the notion of a semifield following \cite{Fomin02}.

\begin{defn}[Semifield]
A multiplicative abelian group $\mathbb{P}$
equipped with 
 a binary operation
$\oplus$
 is called a \emph{semifield}\index{semifield} if 
the following properties hold:
For any $a,\, b,\, c\in \mathbb{P}$,
\begin{align}
\label{1eq:sf1}
a\oplus b &= b\oplus a,\\
(a\oplus b)\oplus c &= a\oplus (b\oplus c),\\
\label{1eq:sf3}
(a\oplus b) c &= ac\oplus bc.
\end{align}
The operation $\oplus$ is called the \emph{addition\/} in $\mathbb{P}$.
Note that there is no \emph{subtraction\/} in $\mathbb{P}$.
\end{defn}

For example, the set of positive real numbers $\bbR_{>0}$ is a semifield
by the usual multiplication and addition.

The following two examples are especially important in cluster algebra theory.

\begin{ex}
\label{1ex:sf1}
(a) \emph{Universal semifield}\index{universal semifield}\index{semifield!universal ---} 
$\mathbb{Q}_{\mathrm{sf}}(\bfu)$.
Let $\bfu=(u_1,\dots, u_n)$ be an $n$-tuple of 
 variables.
Let $\bbQ(\bfu)$ be the  rational function field of $\bfu$
over $\bbQ$.
We say that a
rational function $f(\bfu)\in \bbQ(\bfu)$ 
has a \emph{subtraction-free expression}\index{subtraction-free expression} 
if it is expressed as
$f(\bfu)=p(\bfu)/q(\bfu)$, where both $p(\bfu)$ and $q(\bfu)$
are \emph{nonzero} polynomials in $\bfu$ whose coefficients
are \emph{nonnegative} integers.
For example, $f(\bfu)=u_1^2-u_1+1=(u_1^3+1)/(u_1+1)$ has a subtraction-free expression.
Let   $\mathbb{Q}_{\mathrm{sf}}(\bfu)$
be the set of all rational functions in $\bfu$
 having subtraction-free expressions.
Then, $\mathbb{Q}_{\mathrm{sf}}(\bfu)$
is a semifield by the usual 
multiplication and addition in $\bbQ(\bfu)$.
\par
(b) \emph{Tropical semifield}\index{tropical!semifield}\index{semifield!tropical ---}
 $\mathrm{Trop}(\bfu)$.
Let $\bfu=(u_1,\dots, u_n)$ be an $n$-tuple of 
 variables.
Let $\mathrm{Trop}(\bfu)$ be the set of all
Laurent monomials in $\bfu$ with coefficient 1,
which is a multiplicative abelian group by the usual
multiplication.
We define the addition $\oplus$ by
\begin{align}
\label{1eq:ts1}
\prod_{i=1}^n u_i^{a_i} \oplus
\prod_{i=1}^n u_i^{b_i}
:=
\prod_{i=1}^n u_i^{\min(a_i,b_i)} .
\end{align}
Then,  $\mathrm{Trop}(\bfu)$ becomes a semifield.
The addition $\oplus$ is called the \emph{tropical sum}\index{tropical!sum}.
\end{ex}

\begin{lem}[\cite{Fomin03a}]
\label{lem:domain1}
Let $\bbZ\bbP$ be the group algebra of $\bbP$ (as a multiplicative group).
Then, $\bbZ\bbP$ is a domain, i.e., there is no zero divisor other than 0.
\end{lem}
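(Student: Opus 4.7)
The plan is to reduce the claim to the classical fact that the integral group algebra of any torsion-free abelian group is a domain. Thus the real content to be verified is that $\bbP$, regarded as an abelian group under multiplication, is torsion-free. The addition $\oplus$, which does not enter the construction of $\bbZ\bbP$ directly, is precisely what makes this reduction go through.

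For the torsion-freeness, I would suppose $a^n = 1$ in $\bbP$ for some $a \in \bbP$ and some integer $n \geq 1$, and set
\[
s := 1 \oplus a \oplus a^2 \oplus \cdots \oplus a^{n-1} \in \bbP.
\]
Using the distributivity \eqref{1eq:sf3}, the commutativity \eqref{1eq:sf1} and associativity of $\oplus$, together with $a^n = 1$, one finds
\[
a s = a \oplus a^2 \oplus \cdots \oplus a^{n-1} \oplus a^n = 1 \oplus a \oplus \cdots \oplus a^{n-1} = s.
\]
Since $s$ lies in the multiplicative \emph{group} $\bbP$, it is invertible; cancelling $s$ yields $a = 1$, so $\bbP$ has no non-trivial torsion.

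With $\bbP$ torsion-free, the rest is standard. The natural map $\bbP \hookrightarrow \bbP \otimes_\bbZ \bbQ$ is injective, and the target $\bbQ$-vector space admits a total order compatible with its group structure (choose a $\bbQ$-basis together with a well-ordering of it, then take the lexicographic order on coordinates). Pulling this back endows $\bbP$ with a total order $<$ such that $g < h$ implies $fg < fh$ for all $f \in \bbP$. Given any two non-zero elements $x = \sum_i c_i g_i$ and $y = \sum_j d_j h_j$ of $\bbZ\bbP$ (with distinct $g_i$, distinct $h_j$, and non-zero integer coefficients), put $g_* := \max_i g_i$ and $h_* := \max_j h_j$. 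Compatibility of the order forces the product $g_* h_*$ to arise uniquely in the expansion of $xy$, with coefficient $c_{g_*} d_{h_*} \neq 0$, so $xy \neq 0$. The main obstacle is the torsion-freeness step; the telling point is recognising that the semifield addition can be used to build the geometric-sum witness $s$, converting an alleged $n$-th root of unity into the relation $as = s$ inside an honest group where cancellation is legitimate.
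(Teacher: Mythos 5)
Your proof is correct and follows exactly the argument behind the cited reference: the geometric-sum witness $s=1\oplus a\oplus\cdots\oplus a^{n-1}$ with $as=s$ is the standard way to show a semifield's multiplicative group is torsion-free, and the group ring of a torsion-free abelian group over $\bbZ$ is then a domain via the ordering/leading-term argument you give. The paper itself omits the proof (deferring to \cite{Fomin03a} and the companion monograph), but your route is the intended one, with the second half merely spelled out in more detail than usual.
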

Thanks to the lemma, we have the fraction field $\bbQ\bbP$ of $\bbZ\bbP$.

\section{Seeds and  mutations}

Here, we introduce  the most fundamental notions in cluster algebras,
namely, \emph{seed} and \emph{mutation}.

\begin{defn}[Skew-symmetrizable matrix]
\label{defn:skew1}
An integer  square matrix $B=(b_{ij})_{i,j=1}^n$
is said to be \emph{skew-symmetrizable}\index{skew-symmetrizable matrix}
if there is a diagonal matrix $D=\mathrm{diag}(d_1,\dots,d_n)$
whose diagonal entries $d_i$ are positive rational numbers
such that $DB$ is skew-symmetric,
i.e,
\begin{align}
\label{1eq:ss1}
d_i b_{ij}= - d_jb_{ji}.
\end{align}
The above matrix $D$  is called a \emph{(left) skew-symmetrizer} \index{skew-symmetrizer}
of $B$,
which is not unique to $B$.
\end{defn}

For example, any skew-symmetric matrix is skew-symmetrizable with
a skew-symmetrizer $D=I$.
For any skew-symmetrizable matrix $B$, $b_{ii}=0$  holds by
\eqref{1eq:ss1}.

\begin{defn}[Seed/$Y$-seed]
Let $n$ be any positive integer,
and  let $\bbP$ be any semifield.
Let $\calF$  be a field that is isomorphic to
the rational function field of $n$-variables
with coefficients in the field $\bbQ\bbP$.
\begin{itemize}
\item
A \emph{(labeled) seed\index{seed}\index{seed!labeled ---} 
with coefficients in $\bbP$
(or a seed in $\calF$)}
is a triplet
$\Sigma=(\bfx,\bfy,B)$
such that
$\bfx=(x_1,\dots,x_n)$ is an
$n$-tuple of algebraically independent and generating elements
in $\calF$,
$\bfy=(y_1,\dots,y_n)$ is an
$n$-tuple of any  elements
in $\bbP$,
and $B=(b_{ij})_{i,j=1}^n$
is an $n\times n$ skew-symmetrizable (integer) matrix.
\item In the above, a pair $(\bfy, B)$ is called
a \emph{$Y$-seed in $\bbP$}\index{$Y$-seed}.
\item
We call $\bfx$, $\bfy$, and $B$
the \emph{cluster}\index{cluster},
the \emph{coefficient tuple}\index{coefficient!tuple},
and the \emph{exchange matrix}\index{exchange matrix} 
of $\Sigma$, respectively.
The variables $x_i$ and $y_i$ are  called
a \emph{cluster variable}\index{cluster!variable}
 and a \emph{coefficient}\index{coefficient}, respectively.
In this text, we mainly call them  \emph{$x$-}\index{$x$-variable} and  \emph{$y$-variables}\index{$y$-variable}, respectively.
\item
We call $n$, $\bbP$, and $\calF$ the \emph{rank}\index{rank!of cluster pattern},
the \emph{coefficient semifield}\index{semifield!coefficient},
and  the \emph{ambient field}\index{ambient field}
of a seed $\Sigma$, respectively, 
and also of the forthcoming cluster patterns, cluster algebras, etc.
\end{itemize}
\end{defn}

For any seed $\Sigma=(\bfx,\bfy,B)$,
we attach an $n$-tuple 
$\hat\bfy=(\hat{y}_1,\dots,\hat{y}_n)$
of elements in the ambient field $\calF$ defined by
\begin{align}
\label{2eq:yhat1}
\hat{y}_i
= y_i \prod_{j=1}^n x_j^{b_{ji}}.
\end{align}
They are called  \emph{$\hat{y}$-variables}\index{$\yhat$-variables}
and play an important role in cluster algebra theory.

For any integer $a$,
we define 
\begin{align}
[a]_+:=\max(a,0).
\end{align}
We have a  useful equality
\begin{align}
\label{1eq:pos1}
a&=[a]_+ - [-a]_+.
\end{align}

\begin{defn}[Seed mutation]
\label{2defn:mut1}
For any seed $\Sigma=(\bfx,\bfy,B)$
and $k=1,\, \dots,\, n$,
we define a new seed 
$\Sigma'=(\bfx',\bfy',B')$
of the same kind
by the following rule:
\begin{align}
\label{2eq:xmut1}
x'_i
&=
\begin{cases}
\displaystyle
x_k^{-1}\Biggl(\, \prod_{j=1}^n x_j^{[-b_{jk}]_+}
\Biggr)
\frac{
 1+\hat{y}_k}
 {1\oplus y_k}
 & i=k,
\\
x_i
&i\neq k,
\end{cases}
\\
\label{2eq:ymut1}
y'_i
&=
\begin{cases}
\displaystyle
y_k^{-1}
& i=k,
\\
y_i y_k^{[b_{ki}]_+} (1\oplus y_k)^{-b_{ki}}
&i\neq k,
\end{cases}
\\
\label{2eq:bmut1}
b'_{ij}&=
\begin{cases}
-b_{ij}
&
\text{$i=k$ or $j=k$,}
\\
b_{ij}+
b_{ik} [b_{kj}]_+
+
[-b_{ik}]_+b_{kj}
&
i,\, j\neq k,
\end{cases}
\end{align}
where $\hat{y}_k$ in \eqref{2eq:xmut1} is a $\hat{y}$-variable defined in  \eqref{2eq:yhat1}.
The seed $\Sigma'$ is called the \emph{mutation of $\Sigma$ in direction $k$}\index{mutation!of seed},
and denoted by $\mu_k(\Sigma)=\mu_k(\bfx,\bfy,B)$.
\end{defn}

One can easily check the following facts.
\begin{enumerate}
\item
Any skew-symmetrizer $D$ of $B$ is also a skew-symmetrizer of $B'$.
Therefore, $B'$ is also skew-symmetrizable.
\item
We have $\Sigma=\mu_k(\Sigma')$. Namely, each mutation $\mu_k$ is involutive.
\item
$x'_1$, \dots, $x'_n$ are algebraically independent and generating elements
in $\calF$. Thus, $\Sigma'$ is indeed a seed.
\item We have $\det B' = \det B$. In particular, $B'$ is nonsingular if and only if $B$ is so.
\item 
We say that a square matrix is \emph{indecomposable}\index{indecomposable matrix} if it is not a direct sum of two square matrices
up to the simultaneous change of the row and column indices.
Then, $B'$ is indecomposable if and only if $B$ is indecomposable.
\end{enumerate}

The following fact exhibits a certain duality between $x$- and $y$-variables.
\begin{prop}[{\cite[Prop.~3.9]{Fomin07}}]
\label{prop:yhat1}
The $\haty$-variables in 
\eqref{2eq:yhat1} mutate as $y$-variables in $\calF$. Namely, we have
\begin{align}
\label{2eq:yhatmut1}
\haty'_i
&=
\begin{cases}
\displaystyle
\haty_k^{-1}
& i=k,
\\
\haty_i \haty_k^{[b_{ki}]_+} (1+ \haty_k)^{-b_{ki}}
&i\neq k.
\end{cases}
\end{align}
\end{prop}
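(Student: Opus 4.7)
The plan is a direct computation: substitute the mutation formulas \eqref{2eq:xmut1}--\eqref{2eq:bmut1} into the definition $\hat{y}'_i = y'_i \prod_j (x'_j)^{b'_{ji}}$, and verify that the $x$-factors reorganize into $\hat{y}_i \hat{y}_k^{[b_{ki}]_+}$ times a rational factor that combines with the $(1\oplus y_k)^{-b_{ki}}$ coming from $y'_i$ to produce exactly $(1+\hat{y}_k)^{-b_{ki}}$. There is no conceptual obstacle; the only delicate point is bookkeeping of exponents.

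First I would dispose of the case $i=k$. Since $b'_{jk}=-b_{jk}$ for all $j$, and in particular $b'_{kk}=-b_{kk}=0$, the factor $(x'_k)^{b'_{kk}}$ equals $1$ and one needs not invoke the complicated formula for $x'_k$. Then
\begin{align*}
\hat{y}'_k = y_k^{-1} \prod_{j\neq k} x_j^{-b_{jk}} = \Bigl(y_k \prod_{j} x_j^{b_{jk}}\Bigr)^{-1} = \hat{y}_k^{-1},
\end{align*}
using $b_{kk}=0$ again to reinsert the $j=k$ factor.

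Next, for $i\neq k$, I would compute the exponent of each $x_l$ in $\prod_j (x'_j)^{b'_{ji}}$. For $j\neq k$ we have $x'_j=x_j$ and $b'_{ji}=b_{ji}+b_{jk}[b_{ki}]_+ + [-b_{jk}]_+ b_{ki}$, while the factor $(x'_k)^{b'_{ki}}=(x'_k)^{-b_{ki}}$ contributes $x_k^{b_{ki}} \prod_l x_l^{-[-b_{lk}]_+ b_{ki}} \bigl((1+\hat{y}_k)/(1\oplus y_k)\bigr)^{-b_{ki}}$. The crucial observation is that for $l\neq k$ the summand $[-b_{lk}]_+ b_{ki}$ in the mutated $b'_{li}$ cancels with the factor $-[-b_{lk}]_+ b_{ki}$ from $(x'_k)^{-b_{ki}}$, so the total exponent of $x_l$ reduces to $b_{li}+b_{lk}[b_{ki}]_+$. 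Combining with the $x_k^{b_{ki}}$ factor (and noting $b_{kk}=0$ so the $l=k$ term of $\prod_l x_l^{b_{lk}[b_{ki}]_+}$ is trivial), one obtains
\begin{align*}
\prod_{j} (x'_j)^{b'_{ji}} = \Bigl(\prod_l x_l^{b_{li}}\Bigr)\Bigl(\prod_l x_l^{b_{lk}}\Bigr)^{[b_{ki}]_+} \Bigl(\frac{1+\hat{y}_k}{1\oplus y_k}\Bigr)^{-b_{ki}}.
\end{align*}

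Finally I would multiply by $y'_i = y_i y_k^{[b_{ki}]_+}(1\oplus y_k)^{-b_{ki}}$ and recognize $y_i \prod_l x_l^{b_{li}} = \hat{y}_i$ and $y_k \prod_l x_l^{b_{lk}}=\hat{y}_k$. The two $(1\oplus y_k)$ factors cancel exactly, leaving $\hat{y}'_i = \hat{y}_i \hat{y}_k^{[b_{ki}]_+}(1+\hat{y}_k)^{-b_{ki}}$, as required. The main (and only) obstacle is keeping the exponent bookkeeping straight; once the cancellation of the $[-b_{lk}]_+ b_{ki}$ terms is seen, everything else is mechanical.
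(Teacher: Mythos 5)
Your proposal is correct and follows essentially the same route as the paper's own argument: a direct substitution of the mutation formulas into $\hat{y}'_i = y'_i \prod_j (x'_j)^{b'_{ji}}$, with the key cancellation of the $[-b_{jk}]_+ b_{ki}$ terms against the contribution of $(x'_k)^{-b_{ki}}$ and the cancellation of the $(1\oplus y_k)^{\mp b_{ki}}$ factors. The paper only writes out the $i\neq k$ case explicitly; your treatment of the trivial $i=k$ case via $b'_{kk}=0$ is a fine addition.
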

For example, for $i\neq k$,
by using the fact $b_{kk}=0$, we have
\begin{align*}
\hat{y}'_i
&=
y'_i
\prod_{j=1}^n x'_j{}^{b'_{ji}}\\
&=y_i y_k^{[b_{ki}]_+} (1\oplus y_k)^{-b_{ki}}
\Biggl(
\prod_{\scriptstyle j=1\atop \scriptstyle  j\neq k}^n x_j^{b_{ji}+
b_{jk} [b_{ki}]_+
+
[-b_{jk}]_+b_{ki}}
\Biggr)
\\
&\quad\ \times
\Biggl(
x_k^{-1}
\Biggl(\,
\prod_{j=1}^n x_j^{[-b_{jk}]_+}
\Biggr)
\frac{
 1+\hat{y}_k}
 {1\oplus y_k}
\Biggr)^{-b_{ki}}
\\
&=\hat{y}_i\hat{y}_k^{[b_{ki}]_+}
(1+\hat{y}_k)^{-b_{ki}}.
\end{align*}

We define the action of the symmetric group $S_n$  on seeds.
\begin{defn}[$S_n$-action]
\label{defn:saction1}
\index{$S_n$-action (on seeds)}
For a seed $\Sigma=(\bfx, \bfy, B)$ and a permutation $\nu$ of $\{ 1,\, \dots,\, n\}$,
we define the action of $\nu$ on $\Sigma$ by
\begin{align}
\label{1eq:saction1}
\nu\Sigma = (\nu \bfx, \nu \bfy, \nu B),
\end{align}
where
$\nu \bfx=\bfx'$, 
$\nu \bfy=\bfy'$, 
$\nu B=B'$ are defined by
\begin{align}
x'_i=x_{\nu^{-1}(i)},
\quad
y'_i=y_{\nu^{-1}(i)},
\quad
b'_{ij}=b_{\nu^{-1}(i)\nu^{-1}(j)}.
\quad
\end{align}
This is a left action of the symmetric group $S_n$ of degree $n$; namely,
we have $\tau (\nu \Sigma)=\tau\nu (\Sigma)$
for $\nu, \, \tau\in S_n$.
 
 One can easily confirm that
 the action is compatible with mutations in the following sense:
 \begin{align}
     \label{eq:comp1}
   \mu_{\nu(k)}(  \nu \Sigma_t)&=\nu (\mu_k (\Sigma_t)).
   \end{align}
\end{defn}

\section{Cluster patterns and $Y$-patterns}

Let $\bbT_n$ be the \emph{$n$-regular tree} graph\index{$n$-regular tree};
 Namely,  it is a connected tree graph such that exactly $n$ edges are attached to each vertex of $\bbT_n$.
Moreover,  edges are labeled by $1$, \dots, $n$
so that the edges attached to each vertex 
have different labels.
By abusing the notation,
the set of vertices of $\bbT_n$ is also denoted by 
 $\bbT_n$.
 See Figure \ref{fig:regular1}.
 We say that a pair of vertices in $t$ and $t'$ in $\bbT_n$
 are $k$-\emph{adjacent}, or $t'$ is $k$-\emph{adjacent to $t$}\index{$k$-adjacent},
 if they are connected with an edge labeled by $k$.
 
 \begin{figure}
 \begin{center}
 \begin{tikzpicture}
 \draw (0,0)--(4,0);
 \draw [fill] (0,0) circle (1.5pt);
 \draw [fill] (1,0) circle (1.5pt);
 \draw [fill] (2,0) circle (1.5pt);
 \draw [fill] (3,0) circle (1.5pt);
 \draw [fill] (4,0) circle (1.5pt);
 \draw [fill] (-0.7,0) circle (0.5pt);
 \draw [fill] (-0.5,0) circle (0.5pt);
 \draw [fill] (-0.3,0) circle (0.5pt);
 \draw [fill] (4.7,0) circle (0.5pt);
 \draw [fill] (4.5,0) circle (0.5pt);
 \draw [fill] (4.3,0) circle (0.5pt);
 \draw (0.5,0.3) node {2};
 \draw (1.5,0.3) node {1};
 \draw (2.5,0.3) node {2};
 \draw (3.5,0.3) node {1};
 \draw (2,-0.5) node {$n=2$};
  \draw (1.5,1.5)--(2.5,1.5);
   \draw [fill] (1.5,1.5) circle (1.5pt);
 \draw [fill] (2.5,1.5) circle (1.5pt);
 \draw (2,1.8) node {1};
  \draw (2,1) node {$n=1$};
\draw (7,0.75)--(8,0.75);
\draw (6,0.25)--(7,0.75);
\draw (6,1.25)--(7,0.75);
\draw (9,0.25)--(8,0.75);
\draw (9,1.25)--(8,0.75);
\draw (9,1.25)--(10,0.9);
\draw (9,1.25)--(10,1.6);
\draw (9,0.25)--(10,0.6);
\draw (9,0.25)--(10,-0.1);
 \draw [fill] (7,0.75) circle (1.5pt);
 \draw [fill] (8,0.75) circle (1.5pt);
 \draw [fill] (6,0.25) circle (1.5pt);
 \draw [fill] (6,1.25) circle (1.5pt);
 \draw [fill] (9,0.25) circle (1.5pt);
 \draw [fill] (9,1.25) circle (1.5pt);
 \draw [fill] (10,1.6) circle (1.5pt);
 \draw [fill] (10,0.9) circle (1.5pt);
 \draw [fill] (10,0.6) circle (1.5pt);
 \draw [fill] (10,-0.1) circle (1.5pt);
 \draw [fill] (5.7,0.25) circle (0.5pt);
 \draw [fill] (5.5,0.25) circle (0.5pt);
 \draw [fill] (5.3,0.25) circle (0.5pt);
 \draw [fill] (5.7,1.25) circle (0.5pt);
 \draw [fill] (5.5,1.25) circle (0.5pt);
 \draw [fill] (5.3,1.25) circle (0.5pt);
 \draw [fill] (10.7,1.25) circle (0.5pt);
 \draw [fill] (10.5,1.25) circle (0.5pt);
 \draw [fill] (10.3,1.25) circle (0.5pt);
 \draw [fill] (10.7,0.25) circle (0.5pt);
 \draw [fill] (10.5,0.25) circle (0.5pt);
 \draw [fill] (10.3,0.25) circle (0.5pt);
  \draw (7.5,1.05) node {1};
  \draw (6.5,1.25) node {2};
  \draw (6.5,0.7) node {3};
  \draw (8.5,1.25) node {2};
  \draw (8.5,0.7) node {3};
  \draw (9.6,1.7) node {1};
  \draw (9.6,1.25) node {3};
  \draw (9.6,0.7) node {1};
  \draw (9.6,0.25) node {2};
 \draw (8,-0.5) node {$n=3$};
\end{tikzpicture}
\end{center}
\vskip-10pt
\caption{The $n$-regular  trees $\bbT_n$
 for $n=1$, $2$, $3$.}
\label{fig:regular1}
\end{figure}
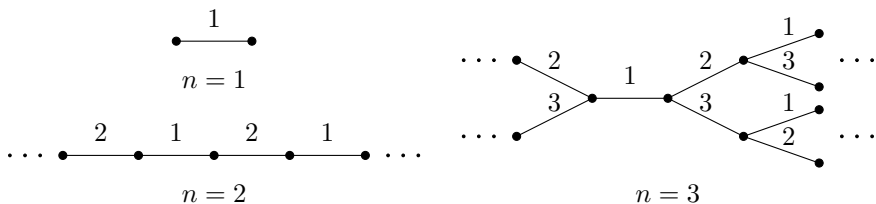

\begin{defn}[Cluster pattern/$Y$-pattern/$B$-pattern]
Fix the rank $n$,
the coefficient semifield $\bbP$,
and the ambient field $\calF$ for seeds.
\begin{itemize}
\item
A collection of seeds $\mathbf{\Sigma}=\{ \Sigma_t
=(\bfx_t,\bfy_t,B_t)
\}_{ t\in \bbT_n}
$
 indexed by $\bbT_n$ 
is called a \emph{cluster pattern}\index{cluster!pattern}  with coefficients in $\bbP$
if,
for any vertices $t,\,t'\in \bbT_n$ that are $k$-adjacent,
the equality $\Sigma_{t'}=\mu_k( \Sigma_t)$ holds.
\item
The collection of $Y$-seeds
 $\bfUpsilon=\{ \Upsilon_t=(\bfy_t,B_t) \}
_{t\in \bbT_n}
$ extracted from a cluster pattern $\bfSigma$ is called 
a \emph{$Y$-pattern}\index{$Y$-pattern} in $\bbP$,
or the  \emph{$Y$-pattern}
of $\bfSigma$.
\item
The collection of exchange matrices
 $\mathbf{B}=\{ B_t\}
_{t\in \bbT_n}
$ extracted from a cluster pattern $\bfSigma$ is called 
a \emph{$B$-pattern},
or the \emph{$B$-pattern}\index{$B$-pattern} of $\bfSigma$.
\end{itemize}
\end{defn}
We arbitrarily choose a distinguished
 vertex (the \emph{initial vertex})\index{initial!vertex} $t_0$ in $\bbT_n$. 
Any cluster pattern
 $\mathbf{\Sigma}=\{ \Sigma_t
=(\bfx_t,\bfy_t,B_t)\}_{t\in \bbT_n }$ is uniquely determined from
the \emph{initial seed}\index{initial!seed} $\Sigma_{t_0}$ at  the initial vertex $t_0$
by repeating mutations along $\bbT_n$.

In the above, if $B_t$ is skew-symmetric for some $t$, it is skew-symmetric for any $t$.
Such a cluster pattern, $Y$-pattern, or a $B$-pattern is said to be \emph{skew-symmetric}\index{skew-symmetric!pattern}.
 Similarly, if $B_t$ is nonsingular for some $t$, it is nonsingular for any $t$.
Such a cluster pattern, $Y$-pattern, or a $B$-pattern is said to be \emph{nonsingular}\index{nonsingular (pattern)}.

For  a seed $\Sigma_t=(\bfx_t,\bfy_t,B_t)$
in a cluster pattern $\mathbf{\Sigma}$,
we use the notation
\begin{align}
\label{2eq:xyt1}
\bfx_t=(x_{1;t},\dots,x_{n;t}),
\quad
\bfy_t=(y_{1;t},\dots,y_{n;t}),
\quad
B_t=(b_{ij;t})_{i,j=1}^n.
\end{align}
For the initial seed $\Sigma_{t_0}=(\bfx_{t_0},\bfy_{t_0},B_{t_0})$, we often drop the indices $t_0$ as
\begin{align}
\label{2eq:xyinit1}
\bfx_{t_0}=\bfx=(x_{1},\dots,x_{n}),
\quad
\bfy_{t_0}=\bfy=(y_{1},\dots,y_{n}),
\quad
B_{t_0}=B=(b_{ij})_{i,j=1}^n,
\end{align}
if there is no confusion.
We also use similar notations for $\hat{y}$-variables $\hat{\bfy}_t
=(\hat{y}_{1;t}, \dots, \hat{y}_{n;t})$
and the initial
$\hat{y}$-variables $\hat{\bfy}_{t_0}=\hat{\bfy}
=(\hat{y}_1,\dots,\hat{y}_n)$.

Every $x$-variable $x_{i;t}$ is regarded as
a rational function of  the initial
$x$-variables $\bfx$ by repeating the mutations \eqref{2eq:xmut1}.
The following stronger fact served as the guiding principle for the formulation of cluster algebras by Fomin-Zelevinsky \cite{Fomin02}.
\begin{thm}[{Laurent phenomenon \cite[Thm.~3.1]{Fomin02}}]\index{Laurent!phenomenon}
\label{thm:Laurent1}
Every $x$-variable $x_{i;t}$ is expressed as a Laurent polynomial in $\bfx$ with
  coefficients in $\bbZ\bbP$.
\end{thm}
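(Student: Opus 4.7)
The plan is to prove the Laurent phenomenon by induction on the graph-theoretic distance $d = d(t_0, t)$ in $\bbT_n$. The base case $d = 0$ is immediate from $x_{i;t_0} = x_i$. For the inductive step, choose $t'$ that is $k$-adjacent to $t$ with $d(t_0, t') = d - 1$. For $i \neq k$, we have $x_{i;t} = x_{i;t'}$, so the induction hypothesis applies directly. For $i = k$, the exchange relation \eqref{2eq:xmut1} expresses $x_{k;t}$ as a Laurent polynomial in $\bfx_{t'}$ with coefficients in $\bbZ\bbP$, since the numerator $1 + \hat y_{k;t'}$ is a binomial in $\bfx_{t'}$ and $(1 \oplus y_{k;t'})^{-1}$ lies in $\bbP$. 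By the induction hypothesis, each $x_{j;t'}$ is itself a Laurent polynomial in $\bfx$, so substitution yields a rational function of $\bfx$.

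The fundamental difficulty is that the composition of two Laurent polynomials is generally only a rational function, not a Laurent polynomial: upon substituting $x_{j;t'} = N_j / M_j$ with a Laurent monomial $M_j$ and a Laurent polynomial $N_j$, nontrivial denominators involving the $N_j$ appear. The theorem asserts that these all cancel, and capturing this cancellation is the essence of the proof. I would invoke the \emph{Caterpillar Lemma} of Fomin--Zelevinsky, which reduces Laurent-ness of $x_{k;t}$ in $\bfx$ to its \emph{simultaneous} Laurent-ness in a finite collection of clusters attached along the path from $t_0$ to $t$. The reduction rests on two algebraic inputs: (i) each cluster variable produced by an exchange relation is an irreducible element of the Laurent polynomial ring $\bbZ\bbP[\bfx_{t'}^{\pm 1}]$, where we use Lemma \ref{lem:domain1} to pass to the fraction field $\bbQ\bbP$ and obtain a unique-factorization environment; and (ii) the two exchange monomials $\prod_j x_{j;t'}^{[b_{jk;t'}]_+}$ and $\prod_j x_{j;t'}^{[-b_{jk;t'}]_+}$ share no common variable, so the two summands of the exchange relation are coprime in $\bbZ\bbP[\bfx_{t'}^{\pm 1}]$.

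The main obstacle is verifying the coprimality and irreducibility conditions that make the Caterpillar Lemma applicable, specifically showing that under two consecutive mutations $\mu_\ell\mu_k$, and the three-step sequence $\mu_k\mu_\ell\mu_k$ with $\ell\neq k$, the numerator polynomials produced in two neighboring clusters $\bfx_{t'}$ and $\bfx_{t''}$ share no irreducible factor with the cluster variables being cleared from the denominator. This is the combinatorial heart of the argument and is where the precise form of the exchange relation --- in particular the separation of the two monomial terms by the sign pattern $[\pm b_{jk;t'}]_+$ --- is decisively used. Once coprimality is established for these neighboring mutations, the Caterpillar Lemma propagates Laurent-ness iteratively along the entire path from $t_0$ to $t$, and the outer induction on $d$ closes.
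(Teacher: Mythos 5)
The paper does not prove this theorem at all: it is stated with a citation to Fomin--Zelevinsky and the Notes refer the reader to the companion monograph \cite[\S I.3.1]{Nakanishi22a} for the proof. So there is no in-paper argument to compare against; the relevant benchmark is the original Fomin--Zelevinsky proof, and your outline follows it faithfully --- induction on the distance in $\bbT_n$, reduction of the composition problem to the Caterpillar Lemma, and the two algebraic inputs (irreducibility of cluster variables in $\bbZ\bbP[\bfx_{t'}^{\pm1}]$, using Lemma \ref{lem:domain1} to get a sensible factorization environment, and coprimality of the two exchange monomials). Your identification of why naive induction fails --- substitution of Laurent polynomials into a Laurent polynomial produces only a rational function --- is exactly the right framing.

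That said, as it stands the proposal is a roadmap rather than a proof. The step you yourself flag as ``the combinatorial heart'' --- verifying, for the sequences $\mu_\ell\mu_k$ and $\mu_k\mu_\ell\mu_k$, that the numerator polynomials arising in neighboring clusters share no irreducible factor with the cluster variables being cleared from the denominators --- is asserted to be where the sign pattern $[\pm b_{jk;t'}]_+$ is ``decisively used,'' but is not carried out. This is precisely the nontrivial content of the theorem: it requires computing $x_{k;t''}$ explicitly after the three-step sequence, checking that the relevant exchange polynomials remain coprime after specializing one variable, and handling the interaction with the semifield coefficients (the factor $(1\oplus y_k)^{-1}\in\bbP$ is harmless for Laurent-ness but must be tracked through the coprimality argument). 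One small caution: coprimality of the two monomial summands of a single exchange binomial (which you do justify) shows only that the binomial is not divisible by any $x_j$; the Caterpillar Lemma needs the stronger statement that exchange polynomials attached to \emph{different} consecutive edges are coprime, which does not follow from the observation you give and is the part that must actually be computed.
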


Now, we give the main notion in cluster algebra theory.

\begin{defn}[Cluster algebra]
For a cluster pattern $\mathbf{\Sigma}$,
the associated \emph{cluster algebra}\index{cluster!algebra} $\calA(\bfSigma)$
is the $\bbZ\bbP$-subalgebra of the ambient field $\calF$
generated by all $x$-variables $x_{i,t}$ ($i=1,\, \dots,\, n;\, t\in \bbT_n$).
\end{defn}

We vaguely refer to the totality of the objects and notions related to cluster algebras and/or  cluster patterns as
 \emph{cluster algebraic structure}.
In this text, we focus on cluster patterns rather than cluster algebras.
Moreover, we mainly work with $Y$-patterns.
In fact, for the most part, we do not use $x$-variables.
Nevertheless, we include $x$-variables in the presentation here,
because 
it provides a better perspective on how  cluster algebra theory
is related to the dilogarithm.

\begin{defn}
\label{defn:finite1}
We say that a cluster pattern is \emph{of finite type}\index{finite type!cluster pattern}
if there are only finitely many distinct seeds.
We say that a $Y$-pattern is \emph{of finite type}\index{finite type!$Y$-pattern}
if it is the $Y$-pattern of a cluster pattern of finite type.
\end{defn}

\begin{figure}
 \begin{center}
 \begin{tikzpicture}[scale=1]
 \draw (0,0) circle (2pt);
 \draw (0.7,0) circle (2pt);
 \draw (2.8,0) circle (2pt);
 \draw (3.5,0) circle (2pt);
 \draw (0.08,0) --(0.62,0);
 \draw (0.78,0) --(1.32,0);
 \draw (2.18,0) --(2.72,0);
 \draw (2.8,0.07) --(3.5,0.07);
 \draw (2.8,-0.07) --(3.5,-0.07);
 \draw (3.05,0.2) --(3.25,0);
 \draw (3.05,-0.2) --(3.25,0);
 \draw [fill] (1.55,0) circle (0.5pt);
 \draw [fill] (1.7,0) circle (0.5pt);
 \draw [fill] (1.85,0) circle (0.5pt);
 \draw (0,-0.4) node {\small $1$};
 \draw (0.7,-0.4) node {\small $2$};
 \draw (2.8,-0.4) node {\small \rp$r-1$};
 \draw (3.5,-0.4) node {\small \rp$r$};
 \draw (-0.9,0) node {$B_{\rp r}$};
 \draw (6,0) circle (2pt);
 \draw (6.7,0) circle (2pt);
 \draw (7.4,0) circle (2pt);
 \draw (8.1,0) circle (2pt);
 \draw (6.08,0) --(6.62,0);
 \draw (7.48,0) --(8.02,0);
  \draw (6.7,0.07) --(7.4,0.07);
 \draw (6.7,-0.07) --(7.4,-0.07);
 \draw (6.95,0.2) --(7.15,0);
 \draw (6.95,-0.2) --(7.15,0);
 \draw (6,-0.4) node {\small $1$};
 \draw (6.7,-0.4) node {\small $2$};
 \draw (7.4,-0.4) node {\small $3$};
 \draw (8.1,-0.4) node {\small $4$};
 \draw (5.1,0) node {$F_4$};
 \draw (0,-1.5) circle (2pt);
 \draw (0.7,-1.5) circle (2pt);
 \draw (2.8,-1.5) circle (2pt);
 \draw (3.5,-1.5) circle (2pt);
 \draw (0.08,-1.5) --(0.62,-1.5);
 \draw (0.78,-1.5) --(1.32,-1.5);
 \draw (2.18,-1.5) --(2.72,-1.5);
 \draw (2.8,-1.43) --(3.5,-1.43);
 \draw (2.8,-1.57) --(3.5,-1.57);
 \draw (3.25,-1.3) --(3.05,-1.5);
 \draw (3.25,-1.7) --(3.05,-1.5);
 \draw [fill] (1.55,-1.5) circle (0.5pt);
 \draw [fill] (1.7,-1.5) circle (0.5pt);
 \draw [fill] (1.85,-1.5) circle (0.5pt);
 \draw (0,-1.9) node {\small $1$};
 \draw (0.7,-1.9) node {\small $2$};
 \draw (2.8,-1.9) node {\small \rp$r-1$};
 \draw (3.5,-1.9) node {\small \rp$r$};
 \draw (-0.9,-1.5) node {$C_{\rp r}$};
 \draw (6,-1.5) circle (2pt);
 \draw (6.7,-1.5) circle (2pt);
 \draw (6.08,-1.5) --(6.62,-1.5);
  \draw (6,-1.43) --(6.7,-1.43);
 \draw (6,-1.57) --(6.7,-1.57);
 \draw (6.25,-1.3) --(6.45,-1.5);
 \draw (6.25,-1.7) --(6.45,-1.5);
 \draw (6,-1.9) node {\small $1$};
 \draw (6.7,-1.9) node {\small $2$};
 \draw (5.1,-1.5) node {$G_2$};
 \end{tikzpicture}
 \end{center}
\vskip-10pt
\caption{Nonsimply-laced Dynkin diagrams of finite type}
\label{fig:Dynkin2}
\end{figure}
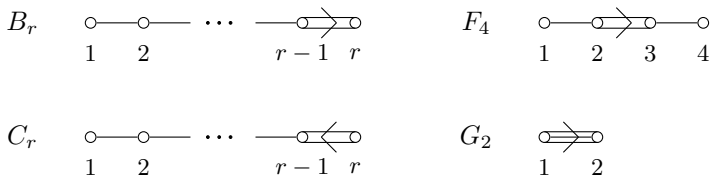

For any skew-symmetrizable integer matrix $B=(b_{ij})$,
we define a symmetrizable integer matrix $A(B)=(a_{ij})$ of the same size
by
\begin{align}
a_{ij}=
\begin{cases}
2 & i= j,
\\
-|b_{ij}| & i\neq j.
\end{cases}
\end{align}
The matrix $A(B)$ is called the \emph{Cartan counterpart}\index{Cartan counterpart} of $B$.
It is well known that the (indecomposable) Cartan matrices of finite type
are classified by the Dynkin diagrams of types $A_{\rp r}$, $B_{\rp r}$, $C_{\rp r}$, $D_{\rp r}$,
$E_6$, $E_7$, $E_8$, $F_4$, and $G_2$.
The simply-laced ones (types $ADE$) already appeared in Figure \ref{fig:Dynkin1}.
So, we only present the nonsimply-laced ones in Figure \ref{fig:Dynkin2}.
The following theorem shows a strong similarity
between cluster algebras and root systems.

\begin{thm}
[Finite type classification {\cite[Thm.~1.4]{Fomin03a}}]
\label{thm:finite1}\index{finite type!classification}
Suppose that the exchange matrices  of a cluster pattern $\bfSigma$ 
are indecomposable.
Then,
a cluster pattern $\bfSigma$ is of finite type 
if and only if it contains an exchange matrix $B_t$ such that
its Cartan counterpart  $A(B_t)$ is a Cartan matrix of finite type.
Moreover,
 the Dynkin type of $A(B_t)$ is uniquely determined from the underlying $B$-pattern.
\end{thm}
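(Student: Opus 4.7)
The plan is to break the statement into three assertions and treat them separately: sufficiency of the Cartan-type condition, its necessity, and uniqueness of the Dynkin type. The rank $2$ case serves as the engine. A skew-symmetrizable $2\times 2$ exchange matrix is determined up to sign by the product $bc:=|b_{12}b_{21}|$, and a direct mutation computation (tracking the three distinct matrices in the mutation orbit and the associated cluster variables) shows that the cluster pattern is of finite type precisely when $bc\leq 3$, which corresponds exactly to Cartan counterparts of types $A_1\times A_1$, $A_2$, $B_2=C_2$, and $G_2$. This disposes of both directions and uniqueness in rank $2$.

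For the ``if'' direction in higher rank, I would show that when some $A(B_t)$ is a finite type Cartan matrix, the cluster variables are in bijection with the \emph{almost positive roots} of the corresponding root system (positive roots together with negative simple roots). The bridge is the denominator vector of each $x_{i;t'}$ when written as a Laurent polynomial in $\bfx_t$ (Theorem \ref{thm:Laurent1}): one verifies by induction on the distance from $t$ in $\bbT_n$ that the denominator vectors exhaust the almost positive roots and are pairwise distinct. Since this set has cardinality $|\Phi_{\geq -1}|=r(h+2)/2<\infty$, only finitely many seeds occur.

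The ``only if'' direction is the main obstacle. Arguing contrapositively, suppose no Cartan counterpart $A(B_t)$ appearing in the $B$-pattern is of finite type. The strategy is to propagate a ``defect'' through the $B$-pattern until some $2\times 2$ principal submatrix at some vertex acquires entries with $|b_{ij;t'}b_{ji;t'}|\geq 4$; this forces the restricted rank $2$ subpattern, and hence the full pattern, to be of infinite type by the base case. Making this precise requires the combinatorics of the diagram (weighted signed graph) associated with $B_t$: one must show that non-Dynkin diagrams are not only non-finite-type at the given vertex but remain so throughout the mutation class in a controlled way, so that a bad rank $2$ slice can always be located. This relies on a case-by-case analysis of how mutation acts on diagrams just outside the Dynkin list (extended Dynkin and minimal indefinite configurations), and is the conceptually and technically hardest part of the theorem.

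For the uniqueness of the Dynkin type, the key remark is that the cluster complex (the simplicial complex on cluster variables whose facets are clusters) is manifestly determined by the $B$-pattern, and for finite type patterns it coincides with the complex of almost positive roots. Since distinct Dynkin diagrams of finite type produce non-isomorphic such complexes, already distinguished by the pair $(r,h)$ and the total number of cluster variables $r(h+2)/2$, the Dynkin type is pinned down. The upshot is that sufficiency and uniqueness reduce to finite combinatorial models once the bijection with almost positive roots is established, while necessity must exclude an infinite family of non-Dynkin $B$-patterns using only local rank $2$ information; this last reduction is where I expect essentially all of the work to lie.
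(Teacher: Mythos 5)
First, a point of reference: the paper does not prove this theorem. It is quoted from Fomin--Zelevinsky \cite[Thm.~1.4]{Fomin03a}, the surrounding text explicitly says its proof is skipped, and the Notes at the end of the chapter defer to \cite{Gekhtman10} and \cite[Ch.5]{Fomin16b}. So there is no in-paper argument to compare against; I can only assess your outline against the standard Fomin--Zelevinsky proof it is evidently modelled on. Your overall architecture (rank $2$ base case via $|b_{12}b_{21}|\leq 3$; sufficiency via denominator vectors and almost positive roots; necessity via reduction to a bad rank $2$ slice, i.e.\ the ``2-finiteness'' criterion) is the correct and standard route.

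Two concrete gaps. The first you acknowledge yourself: for the ``only if'' direction, the statement ``one must show that non-Dynkin diagrams \dots remain so throughout the mutation class in a controlled way'' is precisely the content of the hard implication (2-finite $\Rightarrow$ finite-type Cartan counterpart), and you give no argument for it; as written this direction is a restatement of the goal, not a proof. The second gap is in the uniqueness argument and would actually fail: the cluster complexes of types $B_r$ and $C_r$ ($r\geq 3$) are isomorphic, and these types share the same rank $r$, the same Coxeter number $h=2r$, and hence the same count $r(h+2)/2$ of cluster variables, so none of the invariants you invoke separates them. To distinguish $B_r$ from $C_r$ one must use data of the $B$-pattern that the cluster complex forgets, e.g.\ the common skew-symmetrizer $D$ (which is preserved under mutation and is not proportional between the two types), or simply observe that the Cartan counterparts $A(B_t)$ occurring in a single mutation class all share a symmetrizer and hence a Dynkin type. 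Without such an argument the last sentence of the theorem is not established.
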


\section{Examples: Cluster patterns of types $A_1$ and $A_2$}

Let us present the two simplest examples of cluster patterns.

\begin{ex}[{Type $A_1$}]
Consider the simplest example with $n=1$.
The initial seed is given by $\Sigma_{t_0}=\Sigma=(x_1,y_1, B=(0))$.
The Cartan counterpart $A(B)=(2)$ of $B$ is the Cartan matrix of type $A_1$.
So, the corresponding cluster pattern $\bfSigma$  is called a cluster pattern of type $A_1$.
The only  seed in $\bfSigma$ other than $\Sigma$ is $\Sigma'=\mu_1(\Sigma)
=(x'_1,y'_1, B'=(0))$,
where $\haty_1= y_1$, and 
\begin{align}
x'_1 = \frac{1+y_1}{1\oplus y_1}x_1^{-1},
\quad
y'_1 = y_1^{-1}.
\end{align}
\end{ex}

\begin{ex}[{Type $A_2$}]
\label{ex:typeA22}
Consider the next simplest example with $n=2$.
We consider the following arrangement of
a cluster pattern on $\bbT_2$.
\begin{align}
\label{eq:seedp1}
\cdots
\
\overunder{2}
\
\Sigma_{t_{-2}}
\
\overunder{1}
\
\Sigma_{t_{-1}}
\
\overunder{2}
\
\Sigma_{t_{0}}
\
\overunder{1}
\
\Sigma_{t_{1}}
\
\overunder{2}
\
\Sigma_{t_{2}}
\
\overunder{1}
\
\cdots.
\end{align}
Let $t_0$ be the initial vertex. Below we use simplified notations such as $
\Sigma_{t_s}=\Sigma_s$, $B_{t_s}=B_s$, $\bfx_{t_s}=\bfx_s$, $ \bfy_{t_s}=\bfy_s$, etc.
We consider the
 initial seed $\Sigma_{0}=\Sigma=(\bfx,\bfy, B)$ with
\begin{align}
\label{eq:BA2}
B=\begin{pmatrix}
0 & -1
\\
1 & 0
\end{pmatrix}
,
\quad
A(B)=\begin{pmatrix}
2 & -1
\\
-1 & 2
\end{pmatrix}
.
\end{align}
The matrix $A(B)$ is the Cartan matrix of type $A_2$.
Thus, the corresponding cluster pattern $\bfSigma$ is called a cluster pattern of type $A_2$.
The initial $\hat{y}$-variables
\eqref{2eq:yhat1} are given by
\begin{align}
\label{eq:yhat2}
\hat{y}_1=y_1 x_2,
\quad
\hat{y}_2=y_2 x_1^{-1}.
\end{align}
By the mutations \eqref{2eq:bmut1},
the exchange matrices are given by
\begin{align}
\label{eq:Bs1}
B_{s}=
\begin{cases}
B & \mbox{$s$: even},\\
-B & \mbox{$s$: odd}.\\
\end{cases}
\end{align}
Accordingly, we have, 
for even $s$,
\begin{gather}
\begin{cases}
\displaystyle
x_{1;{s+1}}=x_{1;s}^{-1}\frac{ 1+\hat{y}_{1;s}}{1\oplus y_{1;s}},
\\
x_{2;{s+1}}=x_{s},
\end{cases}
\quad
\begin{cases}
y_{1;{s+1}}=y_{1;s}^{-1},
\\
y_{2;{s+1}}=y_{2;s} (1\oplus y_{1;s}),
\end{cases}
\end{gather}
and,
for odd $s$,
\begin{gather}
\label{1eq:sodd1}
\begin{cases}
x_{1;{s+1}}=x_{1;s},
\\
\displaystyle
x_{2;{s+1}}=x_{2;s}^{-1}\frac{ 1+\hat{y}_{2;s}}{1\oplus y_{2;s}},
\end{cases}
\quad
\begin{cases}
y_{1;{s+1}}=y_{1;s} (1\oplus y_{2;s}),
\\
y_{2;{s+1}}=y_{2;s}^{-1}.
\end{cases}
\end{gather}
Let us calculate the mutations of $x$- and $y$-variables.
To make the effort minimal,
we first calculate $y$-variables, 
then  calculate $x$-variables
with the help of \eqref{2eq:yhatmut1} and \eqref{eq:yhat2}.
Then,
we have the following result: (If you have never calculated it by yourself before,
we strongly recommend doing it now!)
\begin{alignat}{3}
\label{eq:A2mut1}
&
\begin{cases}
\displaystyle
 x_{1;1}=x_1^{-1}\frac{1+ \hat{y}_1}{1\oplus y_1},
\\
  x_{2;1}=x_2,
  \end{cases}
  &\quad
  &
  \begin{cases}
 y_{1;1}=y_1^{-1},\\ 
   y_{2;1}=y_2 (1\oplus y_1),
 \end{cases}
 \\
 &
\begin{cases}
 \displaystyle
 x_{1;2}=x_1^{-1}\frac{1+ \hat{y}_1}{1\oplus y_1},
\\
 \displaystyle
  x_{2;2}=x_2^{-1}\frac{1+ \hat{y}_2+\hat{y}_1\hat{y}_2}{1\oplus y_2\oplus y_1y_2},
  \end{cases}
  &\quad
  &
  \begin{cases}
 y_{1;2}=y_1^{-1}(1\oplus y_2\oplus y_1y_2),\\ 
   y_{2;2}=y_2^{-1} (1\oplus y_1)^{-1},
 \end{cases}
 \\
  &
\begin{cases}
 \displaystyle
 x_{1;3}=x_1 x_2^{-1}\frac{1+ \hat{y}_2}{1\oplus y_2},
\\
 \displaystyle
  x_{2;3}=x_2^{-1}\frac{1+ \hat{y}_2+\hat{y}_1\hat{y}_2}{1\oplus y_2\oplus y_1y_2},
  \end{cases}
  &\quad
  &
  \begin{cases}
 y_{1;3}=y_1(1\oplus y_2\oplus y_1y_2)^{-1},\\ 
   y_{2;3}=y_1^{-1}y_2^{-1} (1\oplus y_2),
 \end{cases}
 \\
  &
\begin{cases}
 \displaystyle
 x_{1;4}=x_1 x_2^{-1} \frac{1+ \hat{y}_2}{1\oplus y_2},
\\
 \displaystyle
  x_{2;4}=x_1,
  \end{cases}
  &\quad
  &
  \begin{cases}
 y_{1;4}=y_2^{-1},\\ 
   y_{2;4}=y_1y_2 (1\oplus y_2)^{-1},
 \end{cases}
  \\
  \label{eq:A2mut2}
  &
\begin{cases}
 \displaystyle
 x_{1;5}=x_2,
\\
 \displaystyle
  x_{2;5}=x_1,
  \end{cases}
  &\quad
  &
  \begin{cases}
 y_{1;5}=y_2,\\ 
   y_{2;5}=y_1.
 \end{cases}
 \end{alignat}
 We observe the following properties in the above.
 \begin{itemize}
 \item
 (Laurent phenomenon).
 As claimed in Theorem \ref{thm:Laurent1},
 all $x$-variables are expressed as Laurent polynomials in the initial $x$-variables $\bfx$
 due to the systematic reduction of rational functions starting at $s=3$.
 \item 
 (Periodicity/Synchronicity).
 Even though  $x$- and $y$-variables
 mutate differently, they look closely related.
 In particular,
 they enjoy the same (half) periodicity at $s=5$.
Moreover, together with the result \eqref{eq:Bs1},
we have the following (half) periodicity of seeds
\begin{align}
\label{eq:pentp1}
\Sigma_{5}=\tau_{12}\Sigma_0,
\end{align}
where $\tau_{12}$ is the transposition of 1 and 2.
This periodicity is interpreted as the periodicity of the triangulations of a pentagon
under the alternating flips of diagonals \cite[\S12.3]{Fomin03a}.
Thus, we call it the \emph{pentagon periodicity}\index{pentagon!periodicity} of the cluster pattern $\bfSigma$.
\end{itemize}
\end{ex}

\section{Separation formulas and tropicalization}
\label{sec:separation1}

In \cite{Fomin07} it was clarified that
there is an important structure in seeds that is compatible with the tropicalization.

\subsection{Separation formulas}
We first introduce  related notions
called $C$-matrices, $G$-matrices, and $F$-polynomials.
Let $\mathbf{\Sigma}$ be any cluster pattern,
and let $\mathbf{B}$ be the $B$-pattern of $\mathbf{\Sigma}$.
Let $t_0\in \mathbb{T}_n$ be a given initial vertex.

We  introduce a collection of 
matrices $\bfC=\bfC^{t_0}=\{ C_t\}_{ t\in \bbT_n}$
called a \emph{$C$-pattern}\index{$C$-pattern},
which is uniquely determined  from  $\mathbf{B}$ and $t_0$, therefore, eventually only from $t_0$ and $B_{t_0}$.

\begin{defn}[$C$-matrices]
\label{2defn:Cmat1}
\index{$C$-matrix}
The \emph{$C$-matrices $C_t=(c_{ij;t})_{i,j=1}^n$ $(t\in \bbT_n)$
of a cluster pattern $\mathbf{\Sigma}$ with an initial vertex $t_0$}
are $n\times n$
integer matrices 
uniquely determined 
by the following initial condition
and the mutation rule:
\begin{align}
\label{2eq:Cmat1}
C_{t_0}&=I,
\\
 \label{2eq:cmutmat2}
 c_{ij;t'}&=
 \begin{cases}
 -c_{ik;t}
 &
 j= k,
 \\
 c_{ij;t} + c_{ik;t} [b_{kj;t}]_+
 + [-c_{ik;t}]_+ b_{kj;t}
 &
 j \neq k,
 \end{cases}
 \end{align}
where $t$ and $t'$ are $k$-adjacent.
Each column vector $\bfc_{i;t}=(c_{ji;t})_{j=1}^n$ of a matrix $C_t$ is called
a \emph{$c$-vector}\index{$c$-vector}.
\end{defn}

The mutation \eqref{2eq:cmutmat2}, in particular, implies
\begin{align}
\label{2eq:ck1}
\bfc_{k;t'}=-\bfc_{k;t}.
\end{align}
One can easily check that the mutation \eqref{2eq:cmutmat2} is involutive,
using \eqref{1eq:pos1}, \eqref{2eq:bmut1}, and \eqref{2eq:ck1}.

Under the same assumption of $C$-matrices,
we  introduce another collection of 
matrices $\bfG=\bfG^{t_0}=\{ G_t\}_{ t\in \bbT_n}$
called a \emph{$G$-pattern}\index{$G$-pattern},
which is  uniquely determined  from $\mathbf{B}$, $t_0$,
and $\bfC^{t_0}$ defined above,
therefore, eventually only from $t_0$ and $B_{t_0}$.

\begin{defn}[$G$-matrices]
\label{defn:Gmat1}
The \emph{$G$-matrices\index{$G$-matrix} $G_t=(g_{ij;t})_{i,j=1}^n$ $(t\in \bbT_n)$
of a cluster pattern $\mathbf{\Sigma}$  with an initial vertex $t_0$}
are $n\times n$
integer matrices  uniquely determined 
by the following initial condition
and the mutation rule:
\begin{align}
\label{2eq:Gmat1}
G_{t_0}&=I,
\\
\label{2eq:gmut1}
 g_{ij;t'}&=
 \begin{cases}
 \displaystyle
 -g_{ik;t}
 + \sum_{\ell=1}^n g_{i\ell;t} [-b_{\ell k;t}]_+
 -  \sum_{\ell=1}^nb_{i\ell;t_0}  [-c_{\ell k;t}]_+ 
 &
 j= k,
 \\
 g_{ij;t}  &
 j \neq k,
 \end{cases}
 \end{align}
where $t$ and $t'$ are $k$-adjacent.
Each column vector $\bfg_{i;t}=(g_{ji;t})_{j=1}^n$ of a matrix $G_t$ is called
a \emph{$g$-vector}\index{$g$-vector}.
\end{defn}

The following relation
between $C$- and $G$-matrices
can be  proved by 
 the induction on $t$ along $\bbT_n$ from $t_0$:
 (\emph{the first duality}) \index{duality!first}\index{first duality}
\begin{align}
\label{2eq:dual0}
G_t B_t = B_{t_0} C_t.
\end{align}
Using this relation, 
one can   show that the mutation \eqref{2eq:gmut1} is involutive.

Let 
$\mathbf{y}=(y_1,\dots,y_n)$  be an $n$-tuple of  variables.
(They are just formal variables at this moment,
but they will be related with the initial $y$-variables $\mathbf{y}_{t_0}$
of $\mathbf{\Sigma}$ soon. So, the notation is compatible.)
Under the same assumption of $C$-matrices,
we  introduce a collection of 
rational functions  in $\bfy$ having subtraction-free expressions
 $\bfF=\bfF^{t_0}=\{ F_{i;t}(\bfy) \}_{ i=1,\, \dots,\, n;\, t\in \bbT_n}$
 called an \emph{$F$-pattern}\index{$F$-pattern},
 which is  uniquely determined  from $\mathbf{B}$, $t_0$,
and $\bfC^{t_0}$, therefore, eventually only from $t_0$ and $B_{t_0}$.

\begin{defn}[$F$-polynomials]
\label{2defn:Fpoly1}
\index{$F$-polynomial}
The \emph{$F$-polynomials $F_{i;t}(\bfy)\in \bbQ_{\rmsf}(\bfy)$ $(i=1,\, \dots,\, n;\, t\in \bbT_n)$
of a cluster pattern $\mathbf{\Sigma}$  with an initial vertex $t_0$}
 are uniquely determined 
by the following initial condition
and the mutation rule:
\begin{align}
\label{2eq:Finit1}
F_{i;t_0}(\bfy)&=1,
\\
 \label{2eq:Fmut1}
 F_{i;t'}(\bfy)&=
 \begin{cases}
\frac
 {
  \displaystyle
 M_{k;t}(\bfy)
 }
{ \displaystyle
 F_{k;t}(\bfy)
 }
   &
 i= k,
 \\
 F_{i;t}(\bfy)  &
 i \neq k,
 \end{cases}
 \end{align}
where $t$ and $t'$ are $k$-adjacent,
and $M_{k,t}(\bfy)$ is a rational function of $\bfy$ (actually a polynomial in $\bfy$ as we see below)
defined by 
\begin{align}
\label{2eq:M1}
 M_{k;t}(\bfy)
 &=
    \prod_{j=1}^{n}
  y_j^{[c_{jk;t}]_+}
    \prod_{j=1}^{n}
  F_{j;t}(\bfy)^{[b_{jk;t}]_+}
+
    \prod_{j=1}^{n}
  y_j^{[-c_{jk;t}]_+}
    \prod_{j=1}^{n}
  F_{j;t}(\bfy)^{[-b_{jk;t}]_+}.
\end{align}
\end{defn}

We have
\begin{align}
 M_{k;t}(\bfy)
 =
  M_{k;t'}(\bfy).
\end{align}
Therefore, 
 the mutation 
\eqref{2eq:Fmut1} is involutive.

As the name suggests, rational functions $F_{i;t}(\bfy)
\in \bbQ_{\rmsf}(\bfy)$ 
are reduced to polynomials in $\bfy$.
This  is essentially a restatement of the Laurent phenomenon in
Theorem \ref{thm:Laurent1}.

\begin{prop}[{\cite[Prop.~3.6]{Fomin07}}]
\label{2prop:Fpoly1}
Any $F$-polynomial $F_{i;t}(\bfy)$ is a polynomial  in $\bfy$
with coefficients in $\bbZ$.
\end{prop}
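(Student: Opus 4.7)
The plan is to reduce the statement to the Laurent phenomenon (Theorem \ref{thm:Laurent1}) via an auxiliary cluster pattern, and then promote the resulting Laurent polynomial to a genuine polynomial.

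First, I would construct an auxiliary cluster pattern $\bfSigma^\circ$ having the same $B$-pattern as $\bfSigma$, but with coefficient semifield the tropical semifield $\mathrm{Trop}(\bfy)$ on $n$ formal generators $\bfy = (y_1, \ldots, y_n)$, and with the initial $y$-variables identified with $\bfy$. Because $\bbZ\,\mathrm{Trop}(\bfy) = \bbZ[y_1^{\pm 1}, \ldots, y_n^{\pm 1}]$, Theorem \ref{thm:Laurent1} places each $x$-variable of $\bfSigma^\circ$ in $\bbZ[\bfy^{\pm 1}][x_1^{\pm 1}, \ldots, x_n^{\pm 1}]$. Specialize via $\phi\colon x_j \mapsto 1$ and set $\tilde F_{i;t}(\bfy) := \phi(x_{i;t}^\circ) \in \bbZ[\bfy^{\pm 1}]$. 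Multiplying the mutation \eqref{2eq:xmut1} through by $x_{k;t}$ before specializing (to keep both sides inside the Laurent polynomial ring) and using the standard tropical identity $y_{k;t}^\circ = \prod_j y_j^{c_{jk;t}}$ together with $b_{kk;t}=0$, one computes
\begin{align*}
\tilde F_{k;t}(\bfy)\,\tilde F_{k;t'}(\bfy)
= \prod_j y_j^{[c_{jk;t}]_+} \prod_{j \neq k} \tilde F_{j;t}(\bfy)^{[b_{jk;t}]_+}
+ \prod_j y_j^{[-c_{jk;t}]_+} \prod_{j \neq k} \tilde F_{j;t}(\bfy)^{[-b_{jk;t}]_+},
\end{align*}
which is exactly $\tilde F_{k;t}(\bfy)\,\tilde F_{k;t'}(\bfy) = M_{k;t}(\bfy)$ from \eqref{2eq:M1}. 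Since $\tilde F_{i;t_0}(\bfy) = 1 = F_{i;t_0}(\bfy)$, uniqueness of the recursion forces $F_{i;t}(\bfy) = \tilde F_{i;t}(\bfy) \in \bbZ[\bfy^{\pm 1}]$ for all $i$, $t$.

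Second, I would upgrade from Laurent polynomial to genuine polynomial by induction on the distance from $t_0$ in $\bbT_n$, strengthening the hypothesis to $F_{j;t}(\bfy) \in \bbZ[\bfy]$ with $F_{j;t}(\bfzero) = 1$ for every $j$. For a $k$-adjacent step $t \to t'$, write $F_{k;t'}(\bfy) = \bfy^{-\bfv} P(\bfy)$ with $\bfv \in \bbZ_{\geq 0}^n$ and $P \in \bbZ[\bfy]$ not divisible by any $y_j$. The identity $F_{k;t}(\bfy) P(\bfy) = \bfy^{\bfv} M_{k;t}(\bfy)$ holds in the UFD $\bbZ[\bfy]$; since by the induction $F_{k;t}(\bfy)$ is not divisible by any $y_j$ (it has constant term $1$), and $M_{k;t}(\bfy)$ is not divisible by any $y_j$ (by the evaluation below), unique factorization forces $\bfv = \bfzero$, giving $F_{k;t'}(\bfy) \in \bbZ[\bfy]$.

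The main obstacle is propagating the constant-term-one hypothesis, which boils down to showing $M_{k;t}(\bfzero) = 1$. Evaluating \eqref{2eq:M1} at $\bfy = \bfzero$ and using $F_{j;t}(\bfzero) = 1$, the two summands reduce to products of factors of the form $0^{[\pm c_{jk;t}]_+}$, which collectively yield $1$ precisely when the $c$-vector $\bfc_{k;t}$ is nonzero and has either all entries $\geq 0$ or all entries $\leq 0$. This is the content of the \emph{sign-coherence of $c$-vectors}, a deep structural theorem of cluster algebras that I would invoke as an external input. Granted sign-coherence, one obtains $F_{k;t'}(\bfzero) = M_{k;t}(\bfzero)/F_{k;t}(\bfzero) = 1$, closing the induction and completing the proof.
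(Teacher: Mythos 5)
Your first step (Laurent polynomiality of $F_{i;t}$ in $\bfy$ via the Laurent phenomenon with principal coefficients) is sound, but the second step has a serious logical problem: you invoke the sign-coherence of $c$-vectors to get $M_{k;t}(\bfzero)=1$ and hence propagate the constant-term-one hypothesis. Sign-coherence is Theorem \ref{thm:sign1} of this text, an ``advanced result'' proved only by Gross--Hacking--Keel--Kontsevich via scattering diagrams, and the paper explicitly records that it is \emph{equivalent} to the unit constant property of $F$-polynomials (Theorem \ref{thm:Fpol1}). Proposition \ref{2prop:Fpoly1}, by contrast, is an elementary 2007 fact of Fomin--Zelevinsky that sits at the foundation of the theory: the polynomiality of the $F_{i;t}$ is needed to even state and use the separation formulas, the tropicalization $\pi_{\trop}(F_{i;t})=1$, and the very equivalence between sign-coherence and the unit constant term, all of which enter the known proofs of sign-coherence. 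So your argument either is circular or, at best, derives a basic structural fact from a vastly deeper theorem whose own proof presupposes it. Note also that what you end up establishing along the way ($F_{i;t}(\bfzero)=1$) is precisely Theorem \ref{thm:Fpol1}, which cannot be an ingredient of a proof of Proposition \ref{2prop:Fpoly1} at this stage of the development.

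The intended route, which the paper signals by calling the proposition ``essentially a restatement of the Laurent phenomenon,'' is to use the \emph{sharper} form of the Laurent phenomenon: for principal coefficients, each $x_{i;t}$ lies in $\bbZ[\bfy][x_1^{\pm1},\dots,x_n^{\pm1}]$, i.e.\ the coefficients are genuine polynomials (not Laurent polynomials) in $\bfy$. This polynomiality in the coefficients is built into the original Caterpillar-Lemma proof of the Laurent phenomenon (it is \cite[Prop.~11.2]{Fomin02}, reformulated as \cite[Prop.~3.6]{Fomin07}), and specializing $x_1=\cdots=x_n=1$ then yields $F_{i;t}(\bfy)\in\bbZ[\bfy]$ directly, with no divisibility bootstrap and no appeal to sign-coherence. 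If you want a self-contained argument, the work must go into that refinement of the Laurent phenomenon, not into the constant term.
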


We stress that $C$- and  $G$-matrices and $F$-polynomials
depend not only on $t$ but also on the choice of the initial vertex $t_0$.

Now, we are ready to present key formulas in cluster algebra theory.

\begin{thm}[{Separation formulas \cite[Prop. 3.15, Cor. 6.13]{Fomin07}}]
\index{separation formula}
\label{2thm:sep1}
Let $\bfx$, $\bfy$, $\hat\bfy$ be the
initial variables at $t_0$.
Then, the following formulas hold.
\begin{align}
\label{eq:sep1}
x_{i;t}&=
\biggl(\,
\prod_{j=1}^n
x_j^{g_{ji;t}}
\biggr)
\frac{
F_{i;t}(\hat{\bfy})
}
{
F_{i;t}{\vert _{\bbP}}({\bfy})
}
,\\
\label{eq:sep2}
y_{i;t}&=
\biggl(\,
\prod_{j=1}^n
y_j^{c_{ji;t}}
\biggr)
\prod_{j=1}^n
F_{j;t}{\vert _{\bbP}}(\bfy)^{b_{ji;t}},
\end{align}
where $F_{i;t}{\vert _{\bbP}}({\bfy})\in \bbP$
is obtained from the $F$-polynomial $F_{i;t}(\bfy)$
 by replacing $+$ in
a subtraction-free expression of
$F_{i;t}({\bfy})$ with $\oplus$ for $\bbP$.
\end{thm}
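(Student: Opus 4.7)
The plan is a joint induction on the tree distance $d(t_0,t)$ in $\bbT_n$, proving both formulas simultaneously at each vertex. The base case $t=t_0$ is immediate since $G_{t_0}=C_{t_0}=I$ and $F_{i;t_0}(\bfy)=1$, so both formulas reduce to the identities $x_i=x_i$ and $y_i=y_i$.

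For the inductive step, take $t,t'\in\bbT_n$ that are $k$-adjacent with $d(t_0,t')=d(t_0,t)+1$, and assume both formulas at $t$. I first establish \eqref{eq:sep2} at $t'$. The case $i=k$ is direct: combining $y_{k;t'}=y_{k;t}^{-1}$, $\bfc_{k;t'}=-\bfc_{k;t}$, $b_{jk;t'}=-b_{jk;t}$ for $j\neq k$, $b_{kk;t'}=0$, and $F_{j;t'}=F_{j;t}$ for $j\neq k$, the claimed formula at $t'$ is exactly the reciprocal of the inductive one at $t$. For $i\neq k$, substitute the inductive hypothesis into the mutation $y_{i;t'}=y_{i;t}\,y_{k;t}^{[b_{ki;t}]_+}(1\oplus y_{k;t})^{-b_{ki;t}}$. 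The central computational step is the tropical identity
\begin{align*}
1\oplus y_{k;t}=\frac{M_{k;t}\vert_\bbP(\bfy)}{\displaystyle\prod_{j=1}^n y_j^{[-c_{jk;t}]_+}\prod_{j=1}^n F_{j;t}\vert_\bbP(\bfy)^{[-b_{jk;t}]_+}},
\end{align*}
which follows by applying $a=[a]_+-[-a]_+$ to the exponents in the inductive formula for $y_{k;t}$ and matching the two monomial terms appearing in \eqref{2eq:M1}. After substitution, the $C$-matrix mutation rule \eqref{2eq:cmutmat2} and the $F$-polynomial mutation rule \eqref{2eq:Fmut1} reassemble the expression into \eqref{eq:sep2} at $t'$.

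Next, establish \eqref{eq:sep1} at $t'$. The case $i\neq k$ is immediate since $x_{i;t'}=x_{i;t}$, $g_{ji;t'}=g_{ji;t}$, and $F_{i;t'}=F_{i;t}$. For $i=k$, substitute the inductive hypothesis for the $x_{j;t}$ and the just-proved \eqref{eq:sep2} for $y_{k;t}$ into the mutation rule \eqref{2eq:xmut1}. By Proposition \ref{prop:yhat1}, $\hat\bfy$-variables mutate as $y$-variables in the universal semifield (with $\oplus$ replaced by genuine $+$); applying the $y$-version of the separation argument in that setting yields the parallel identity
\begin{align*}
1+\hat y_{k;t}=\frac{M_{k;t}(\hat\bfy)}{\displaystyle\prod_{j=1}^n \hat y_j^{[-c_{jk;t}]_+}\prod_{j=1}^n F_{j;t}(\hat\bfy)^{[-b_{jk;t}]_+}}.
\end{align*}
The $F$-polynomial bookkeeping collapses cleanly via \eqref{2eq:Fmut1}: the combined $\hat\bfy$-factors in the numerator produce $M_{k;t}(\hat\bfy)/F_{k;t}(\hat\bfy)=F_{k;t'}(\hat\bfy)$, and the tropical factors in the denominator produce $M_{k;t}\vert_\bbP(\bfy)/F_{k;t}\vert_\bbP(\bfy)=F_{k;t'}\vert_\bbP(\bfy)$, yielding the desired ratio $F_{k;t'}(\hat\bfy)/F_{k;t'}\vert_\bbP(\bfy)$.

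The principal obstacle is the bookkeeping of the $x$-exponents. After the above cancellations, the exponent of $x_i$ in the assembled expression must equal $g_{ik;t'}$ as prescribed by \eqref{2eq:gmut1}. The first two terms $-g_{ik;t}+\sum_\ell g_{i\ell;t}[-b_{\ell k;t}]_+$ come transparently from the factor $x_{k;t}^{-1}\prod_\ell x_{\ell;t}^{[-b_{\ell k;t}]_+}$ in \eqref{2eq:xmut1} via the inductive $x$-formula. The subtle third term $-\sum_\ell b_{i\ell;t_0}[-c_{\ell k;t}]_+$ arises from re-expressing the factors $\hat y_j^{[-c_{jk;t}]_+}$ in the denominator of $1+\hat y_{k;t}$ as monomials in the initial $\bfx$ via $\hat y_j=y_j\prod_\ell x_\ell^{b_{\ell j;t_0}}$, which contributes exactly $-\sum_j b_{ij;t_0}[-c_{jk;t}]_+$ to the exponent of $x_i$. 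This matches \eqref{2eq:gmut1} and closes the induction.
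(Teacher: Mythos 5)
The paper deliberately omits a proof of Theorem \ref{2thm:sep1} (it is one of the standard facts deferred to \cite{Fomin07} and the companion monograph), so there is no in-text argument to compare against; your joint induction is essentially the standard proof of \cite[Prop.~3.13, Cor.~6.3]{Fomin07}. The argument is correct: the base case, the tropical identity for $1\oplus y_{k;t}$ obtained by splitting exponents with $a=[a]_+-[-a]_+$ and recognizing $M_{k;t}\vert_{\bbP}(\bfy)$, the parallel identity for $1+\hat y_{k;t}$ via Proposition \ref{prop:yhat1}, the collapse of the $F$-ratios through \eqref{2eq:Fmut1}, and in particular the correct identification of the third term of \eqref{2eq:gmut1} as coming from rewriting $\hat y_j^{-[-c_{jk;t}]_+}y_j^{[-c_{jk;t}]_+}$ as an $\bfx$-monomial via \eqref{2eq:yhat1} — which is exactly the point most expositions gloss over. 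The only item you leave implicit is that $F_{i;t}\vert_{\bbP}$ is well defined, i.e.\ independent of the chosen subtraction-free expression; this follows because evaluation is a semifield homomorphism out of the universal semifield $\bbQ_{\rmsf}(\bfy)$, and is worth a sentence.
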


Thanks to the above formulas, the study of cluster patterns is reduced to
the study of $C$- and $G$-matrices, and $F$-polynomials.

\begin{ex}[Type $A_2$]
\label{ex:typeA23}
Let us consider the cluster pattern of type $A_2$
in Example \ref{ex:typeA22}.
By comparing the result therein with the separation formulas 
\eqref{eq:sep1} and \eqref{eq:sep2},
one can quickly read off $C$- and $G$-matrices and $F$-polynomials
 as follows:
\begin{alignat}{5}
  \label{eq:A2mut3}
C_{0}&=
\begin{pmatrix}
1 & 0\\
0 & 1
\end{pmatrix},
\
&
G_{0}&=
\begin{pmatrix}
1 & 0\\
0 & 1
\end{pmatrix},
\
&
&
\begin{cases}
F_{1;0}(\bfy)=1,\\
F_{2;0}(\bfy)=1,\\
\end{cases}
\\
C_{1}&=
\begin{pmatrix}
-1 & 0\\
0 & 1
\end{pmatrix},
\
&
G_{1}&=
\begin{pmatrix}
-1 & 0\\
0 & 1
\end{pmatrix},
\
&&
\begin{cases}
F_{1;1}(\bfy)=1+y_1,\\
F_{2;1}(\bfy)=1,\\
\end{cases}
\\
C_{2}&=
\begin{pmatrix}
-1 & 0\\
0 & -1
\end{pmatrix},
\quad
&
G_{2}&=
\begin{pmatrix}
-1 & 0\\
0 & -1
\end{pmatrix},
\quad
&&
\begin{cases}
F_{1;2}(\bfy)=1+y_1,\\
F_{2;2}(\bfy)=1+y_2+y_1y_2,
\end{cases}
\\
C_{3}&=
\begin{pmatrix}
1 & -1\\
0 & -1
\end{pmatrix},
\
&
G_{3}&=
\begin{pmatrix}
1 & 0\\
-1 & -1
\end{pmatrix},
\
&&
\begin{cases}
F_{1;3}(\bfy)=1+y_2,\\
F_{2;3}(\bfy)=1+y_2+y_1y_2,\\
\end{cases}
\\
C_{4}&=
\begin{pmatrix}
0 & 1\\
-1 & 1
\end{pmatrix},
\
&
G_{4}&=
\begin{pmatrix}
1 & 1\\
-1 & 0
\end{pmatrix},
\
&&
\begin{cases}
F_{1;4}(\bfy)=1+y_2,\\
F_{2;4}(\bfy)=1,\\
\end{cases}
\\
  \label{eq:A2mut4}
C_{5}&=
\begin{pmatrix}
0 & 1\\
1 & 0
\end{pmatrix},
\
&
G_{5}&=
\begin{pmatrix}
0 & 1\\
1 & 0
\end{pmatrix},
\
&&
\begin{cases}
F_{1;5}(\bfy)=1,\\
F_{2;5}(\bfy)=1.\\
\end{cases}
\end{alignat}
 One can observe the following nontrivial properties in the above results.
 \begin{enumerate}
 \item (Unit constant property). Every $F$-polynomial has
 a constant term 1.
 \item (Laurent positivity). Every $F$-polynomial has
 no negative coefficient.
 \item (Sign-coherence of $c$-vectors).
 Every $c$-vector is a nonzero vector,
 and its nonzero components have the same sign.
\item (Second duality).
The following relation between $C$- and $G$-matrices holds:
\begin{align}
\label{1eq:dual1}
G_t^T C_t=I,
\end{align}
where $M^T$ is the transpose of a matrix $M$.
(The general form is given later in \eqref{eq:dual2}.)
\end{enumerate}
\end{ex}

\subsection{Tropicalization}
\label{sec:tropicalization1}
The
 \emph{tropicalization}\index{tropicalization} is a new and common idea in recent mathematics.
 See \cite{Speyer04} for the background of the notion and the terminology.
Remarkably, the tropicalization is naturally built into cluster algebra theory.

Let us
consider  the semifields $\mathbb{Q}_{\mathrm{sf}}(\bfy)$ and
$\mathrm{Trop}(\bfy)$ with $\bfy=(y_1,\dots,y_n)$ as common  variables.
We have a unique semifield homomorphism
\begin{align}
\label{1eq:trophom1}
\pi_{\mathrm{trop}}: \mathbb{Q}_{\mathrm{sf}}(\bfy)
\rightarrow \mathrm{Trop}(\bfy)
\end{align}
such that $\pi_{\mathrm{trop}}(y_i)=y_i$ for any $i=1$, \dots, $n$.
We call it the \emph{tropicalization (homomorphism)}\index{tropicalization!homomorphism}.
For example, for $\bfy=(y_1,y_2,y_3)$,
\begin{align}
\pi_{\mathrm{trop}}
\left(
\frac{3y_1y_2^2y_3^2 + 2y_1^2y_2y_3}{3y_2^2 + y_1^2 y_2^2+
y_1y_2^3y_3}
\right)
=
\frac{y_1y_2y_3}{y_2^2}=y_1y_2^{-1}y_3.
\end{align}
Roughly speaking, it extracts the ``leading monomial" of 
 $f(\bfy)\in \mathbb{Q}_{\mathrm{sf}}(\bfy)$.

To apply the tropicalization,
we especially consider a cluster pattern whose coefficient semifield
 is $\bbP=\mathbb{Q}_{\mathrm{sf}}(\bfy)$,
 $\bfy=(y_1,\dots,y_n)$,
and its initial $y$-variables $\bfy_{t_0}$ 
coincide with $\bfy$.
We call such coefficients  \emph{free coefficients}\index{free!coefficient}
or \emph{free $y$-variables}\index{free!$y$-variable} at $t_0$.
We also call its $Y$-pattern a \emph{free $Y$-pattern}\index{free!$Y$-pattern}.
(This terminology is not common, and it is used in \cite{Nakanishi22a}.)
Each free $y$-variable $y_{i;t}$ is  an
element of $\mathbb{Q}_{\mathrm{sf}}(\bfy)$.
Thus, we may apply the tropicalization 
$\pi_{\mathrm{trop}}$  in \eqref{1eq:trophom1} to $y_{i;t}$.
Then, we have the following result.
\begin{prop}[{\cite[Prop.~5.2]{Fomin07}}]
\label{prop:tropF1}
We have
\begin{align}
\label{eq:ty1}
\pi_{\mathrm{trop}}(y_{i;t}) & 
=
{\rp y^{\bfc_{i;t}}:=}
\prod_{j=1}^n
y_j^{c_{ji;t}},
\\
\label{eq:tF1}
\pi_{\mathrm{trop}}(F_{i;t}(\bfy)) & = 1.
\end{align}
\end{prop}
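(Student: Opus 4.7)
The plan is to prove both identities simultaneously by induction on the distance from $t_0$ in $\bbT_n$, handling \eqref{eq:tF1} first at each step so that it is available for the derivation of \eqref{eq:ty1}. The base case at $t = t_0$ is immediate since $C_{t_0} = I$, $F_{i;t_0}(\bfy) = 1$, and the initial $y$-variables coincide with the free generators $y_i$ of $\bbQ_{\mathrm{sf}}(\bfy)$.

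For the inductive step, let $t, t'$ be $k$-adjacent with $t'$ farther from $t_0$. The claim $\pi_{\mathrm{trop}}(F_{i;t'}) = 1$ is trivial for $i \neq k$, and for $i = k$ I would use the exchange relation $F_{k;t'} F_{k;t} = M_{k;t}$ coming from \eqref{2eq:Fmut1}. Applying the semifield homomorphism $\pi_{\mathrm{trop}}$ to \eqref{2eq:M1} and invoking the inductive hypothesis $\pi_{\mathrm{trop}}(F_{j;t}) = 1$, both summands collapse to Laurent monomials, and by \eqref{1eq:ts1} their tropical sum is $\prod_j y_j^{\min([c_{jk;t}]_+, [-c_{jk;t}]_+)} = 1$, since $\min([c]_+,[-c]_+) = 0$ for every integer $c$. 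Hence $\pi_{\mathrm{trop}}(F_{k;t'}) = 1$.

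Turning to \eqref{eq:ty1}, the case $i = k$ is immediate from $y_{k;t'} = y_{k;t}^{-1}$ together with the column rule $\bfc_{k;t'} = -\bfc_{k;t}$ in \eqref{2eq:ck1}. For $i \neq k$, apply $\pi_{\mathrm{trop}}$ to the mutation \eqref{2eq:ymut1}; the only nontrivial factor is $\pi_{\mathrm{trop}}(1 + y_{k;t}) = 1 \oplus \prod_j y_j^{c_{jk;t}}$. Here I would invoke the \emph{sign-coherence of $c$-vectors} (observed in Example \ref{ex:typeA23} and a central structural theorem of cluster algebra theory): all entries $c_{jk;t}$ share a common sign $\varepsilon_k \in \{+,-\}$, so the tropical sum equals $1$ when $\varepsilon_k = +$ and equals $\prod_j y_j^{c_{jk;t}}$ when $\varepsilon_k = -$. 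Combining the two cases with the elementary identity $[a]_+ - a = [-a]_+$ reproduces exactly the $C$-matrix mutation rule \eqref{2eq:cmutmat2} for $\bfc_{i;t'}$, closing the induction.

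The main obstacle is the sign-coherence of $c$-vectors: without it, $1 \oplus \prod_j y_j^{c_{jk;t}}$ would pick up $\min(0, c_{jk;t})$ coordinatewise, mixing exponents from $0$ and from $c_{jk;t}$, and the clean match with \eqref{2eq:cmutmat2} would break down. Since sign-coherence is a theorem that can be quoted from earlier in the development, the remainder of the argument is a routine induction using only the semifield-homomorphism property of $\pi_{\mathrm{trop}}$ and the tropical sum formula \eqref{1eq:ts1}.
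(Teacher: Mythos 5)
Your induction is the right skeleton, and the treatment of \eqref{eq:tF1} is exactly right: applying $\pi_{\trop}$ to \eqref{2eq:M1} with the inductive hypothesis collapses $M_{k;t}$ to $\prod_j y_j^{[c_{jk;t}]_+}\oplus\prod_j y_j^{[-c_{jk;t}]_+}=\prod_j y_j^{\min([c_{jk;t}]_+,[-c_{jk;t}]_+)}=1$. The problem is your diagnosis of the step for \eqref{eq:ty1} with $i\neq k$. Sign-coherence is not needed there, and your claim that without it ``the clean match with \eqref{2eq:cmutmat2} would break down'' is wrong. The coordinatewise tropical sum gives
\begin{align*}
\pi_{\trop}(1+y_{k;t})=1\oplus\prod_j y_j^{c_{jk;t}}=\prod_j y_j^{\min(0,\,c_{jk;t})}=\prod_j y_j^{-[-c_{jk;t}]_+},
\end{align*}
so that tropicalizing \eqref{2eq:ymut1} yields the exponent $c_{ji;t}+c_{jk;t}[b_{ki;t}]_+ + [-c_{jk;t}]_+\,b_{ki;t}$ for each $j$ separately --- which is precisely the entrywise formula \eqref{2eq:cmutmat2}. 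The $C$-matrix mutation rule is written with $[-c_{ik;t}]_+$ applied entry by entry for exactly this reason; no common sign of the entries is required.

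Beyond being unnecessary, the appeal to sign-coherence damages the logical architecture. This proposition is a basic consequence of the mutation rules (it is due to Fomin--Zelevinsky and sits in Section \ref{sec:separation1}), whereas sign-coherence is Theorem \ref{thm:sign1} in the ``advanced results'' Section \ref{sec:advanced1}, proved only via scattering diagrams; the text even remarks after Theorem \ref{thm:Fpol1} that \eqref{eq:tF1} is strictly \emph{weaker} than the unit constant property (equivalently, sign-coherence), e.g.\ $\pi_{\trop}(y_1+y_2)=1$. Importing Theorem \ref{thm:sign1} here is therefore at best a gratuitous dependence on a much deeper theorem and at worst circular. Replace that step with the one-line computation above (your two sign-coherent cases are then just special instances of it), and the proof is complete and self-contained.
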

Namely, in the  separation formula \eqref{eq:sep2},
the monomial  with powers given by the $c$-vector $\bfc_{i;t}$ is
the \emph{tropical part}\index{tropical part!of $y$-variable} of the free $y$-variable  $y_{i;t}$,
while the product of $F$-polynomials is the \emph{nontropical part}\index{nontropical part!of $y$-variable}.
In other words, the tropicalization
is the operation of ``forgetting $F$-polynomials" in  \eqref{eq:sep2}.

For a cluster pattern 
whose coefficient semifield
 is $\bbP=\rm{Trop}(\bfy)$ and $\bfy_{t_0}=\bfy$,
 the coefficients are called \emph{principal coefficients}\index{principal!coefficients} at $t_0$.
 As we have seen, the principal coefficients at $t_0$ are obtained by applying 
 the tropicalization 
to the  {free coefficients} at $t_0$.
So, we also call them  \emph{tropical $y$-variables}\index{tropical!$y$-variable}.

As for the separation formula \eqref{eq:sep1} for $x$-variables,
 the above interpretation of the tropicalization \emph{with respect to
the initial $x$-variables $\bfx$} does not hold
because the $F$-polynomial $F_{i;t}(\hat{\bfy})$  therein is
a polynomial in $\hat{\bfy}$, not in $\bfx$.
Nevertheless, in view of the duality structure between the separation  formulas
\eqref{eq:sep1} and \eqref{eq:sep2},
we regard 
the monomial 
\begin{align}
{\rp x^{\bfg_{i;t}}:=}
\prod_{j=1}^n
x_j^{g_{ji;t}}
\end{align}
as
the \emph{tropical part} of the $x$-variable  $x_{i;t}$\index{tropical part!of $x$-variable},
in the sense that  it is formally obtained by ``forgetting $F$-polynomials (= setting
$F$-polynomials to $1$)" in \eqref{eq:sep1}.
Correspondingly, we regard the factor $
F_{i;t}(\hat{\bfy})/
F_{i;t}{\vert _{\bbP}}({\bfy})
$ in \eqref{eq:sep1}
the \emph{nontropical part} of the $x$-variable  $x_{i;t}$\index{nontropical part!of $x$-variable}.

\section{Advanced results}
\label{sec:advanced1}

The results below are the consequences of
Theorems \ref{thm:sign1} and \ref{thm:Lpos1}.
Both theorems had been important and long-standing conjectures
until they were proved by Gross-Hacking-Keel-Kontsevich \cite{Gross14}
by the scattering diagram method,
which we explain later in Chapter \ref{ch:CSD1}.
Thus, they are regarded as advanced results.

\subsection{Sign-coherence of $c$-vectors and duality}
\label{sec:sign1}

We continue considering the situation for Theorem \ref{2thm:sep1}.
\begin{defn}
We say that a vector $v\in \bbZ^n$ is \emph{positive} (resp. \emph{negative})\index{positive!(vector)}\index{negative (vector)}
if it is a nonzero vector and all nonzero components are positive (resp.~negative).
\end{defn}

The following theorem was conjectured by Fomin-Zelevinsky \cite{Fomin07}
and proved  by Gross-Hacking-Keel-Kontsevich \cite{Gross14}
in full generality
after several affirmative partial results
(e.g., \cite{Derksen10,Plamondon10b,Nagao10}).

\begin{thm}[{Sign-coherence of $c$-vectors \cite[Cor.~5.5]{Gross14}}]
\label{thm:sign1}\index{sign-coherence}
Every $c$-vector $c_{i;t}$ is either positive or negative.
\end{thm}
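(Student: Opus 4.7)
The plan is to follow the cluster scattering diagram (CSD) method of Gross-Hacking-Keel-Kontsevich, which the paper will develop in Chapter~\ref{ch:CSD1}. The first observation is that, by the mutation rule \eqref{2eq:cmutmat2}, the $C$-pattern depends only on the underlying $B$-pattern and on $t_0$, not on the coefficient semifield $\bbP$. Hence it suffices to prove the theorem for the cluster pattern with principal coefficients at $t_0$. In that setting, Proposition~\ref{prop:tropF1} identifies the tropical $y$-variable $y_{i;t}$ with the Laurent monomial $\prod_{j} y_j^{c_{ji;t}}$, so sign-coherence of $\bfc_{i;t}$ is equivalent to the statement that this monomial has all nonzero exponents of the same sign.

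Next, I would build the CSD $\frakD_B$ attached to the initial matrix $B=B_{t_0}$: a family of codimension-one rational-polyhedral walls in $\bbR^n$, each carrying a wall-function, consistent around every codimension-two joint. The structural input I need is that $\frakD_B$ carries a distinguished collection of full-dimensional \emph{cluster chambers} $\{\calC_t\}_{t\in \bbT_n}$, one for each seed, with $\calC_t$ spanned by the $g$-vectors $\bfg_{1;t},\dots,\bfg_{n;t}$, and with $\calC_{t_0}$ equal to the positive orthant. Via the first duality \eqref{2eq:dual0}, the $c$-vector $\bfc_{i;t}$ is, up to sign, the primitive inward normal to the wall that separates $\calC_t$ from its $i$-th neighbor. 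Consequently, sign-coherence of $c$-vectors reduces to the \emph{sign-coherence of cluster chambers}: every $\calC_t$ lies entirely in either $\bbR^n_{\geq 0}$ or $\bbR^n_{\leq 0}$. This in turn I would deduce from the stronger statement that every wall of $\frakD_B$ is itself sign-coherent (contained in one closed orthant-halfspace); induction along $\bbT_n$ starting at $\calC_{t_0}$ then propagates sign-coherence to every cluster chamber, one mutation at a time.

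The main obstacle, and the deep technical heart of \cite{Gross14}, is to prove sign-coherence of the walls during the inductive construction of $\frakD_B$. The diagram is built order by order from prescribed ``incoming'' walls along the coordinate hyperplanes, with new walls inserted at each codimension-two joint precisely so as to enforce the consistency relation, which amounts to a commutator identity in a completed Lie algebra controlled by the pentagon relation for dilogarithm elements (to be introduced in Chapter~\ref{ch:CSD1}). One must verify that this perturbative procedure only ever produces walls lying within a single half-space. A purely combinatorial attack via the $C$-matrix recursion \eqref{2eq:cmutmat2} alone cannot work, because that recursion does not manifestly preserve sign-coherence at each mutation; this is exactly why the statement remained conjectural until the CSD technology became available.
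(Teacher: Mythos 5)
You should first be aware that the paper itself contains no proof of Theorem \ref{thm:sign1}: it is quoted as a known result, with the proof deferred to \cite{Gross14} and the companion monograph, and the text explicitly notes that \cite{Gross14} in fact proves the equivalent unit-constant property of $F$-polynomials (Theorem \ref{thm:Fpol1}), the equivalence with sign-coherence being due to Fomin--Zelevinsky. So you are supplying an argument the paper deliberately omits, and your choice of the cluster scattering diagram route is the historically correct one; your opening reductions (independence of the $C$-pattern from $\bbP$, reduction to principal coefficients) are fine.

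However, the core of your reduction is wrong. You claim that sign-coherence of $c$-vectors ``reduces to the sign-coherence of cluster chambers: every $\calC_t$ lies entirely in either $\bbR^n_{\geq 0}$ or $\bbR^n_{\leq 0}$.'' This is false already in type $A_2$: by \eqref{eq:A2mut3}--\eqref{eq:A2mut4} the $G$-cone $\sigma(G_3)$ is spanned by $(1,-1)$ and $(0,-1)$, hence is contained in neither closed orthant, yet its $c$-vectors $(1,0)$ and $(-1,-1)$ are perfectly sign-coherent. Since the $G$-cones tile the support of the $G$-fan, for $n\geq 2$ most of them cannot possibly sit inside the two orthants, and the ``stronger statement'' that every wall is contained in one closed orthant-halfspace already fails for the incoming walls $\bfe_i^{\perp}$. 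The mechanism that actually delivers sign-coherence in the CSD framework is different: by definition every wall of $\frakD(B)$ carries a normal vector $\bfn\in N^{+}_{\rmpr}$, i.e.\ a \emph{positive} vector, and the hard content of \cite{Gross14} (mutation invariance of the CSD) is that each facet $\sigma_i(G_t)$ of a cluster chamber is supported on walls whose common normal vector equals $\pm\bfc_{i;t}$ up to a positive scalar; positivity of wall normals then forces $\bfc_{i;t}$ to be positive or negative. (In the paper's own later development this identification appears as Lemmas \ref{lem:Gcone1} and \ref{lem:GC1}, but there it is \emph{derived from} sign-coherence via the second duality, so you cannot simply cite it; note also that the normality you attribute to the first duality \eqref{2eq:dual0} really comes from the second duality \eqref{eq:dual2}, whose proof in the literature assumes Theorem \ref{thm:sign1} — a circularity you would need to break by establishing the chamber structure directly from mutation invariance, as GHKK do.) Your inductive propagation along $\bbT_n$ is the right shape, but the invariant to propagate is ``the facet normals of $\calC_t$ are the $c$-vectors up to sign,'' not any orthant containment of chambers or walls; as written, the step from your chamber statement to the theorem does not go through.
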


This is one of the cornerstones 
of the cluster algebra theory
because many nontrivial and important results follow from it.
Here, we limit ourselves to the ones that we will use in the text.

It was shown by  \cite{Fomin07} that the  theorem is
equivalent to the following one.
(In fact, \cite{Gross14} proved Theorem \ref{thm:Fpol1} instead of Theorem \ref{thm:sign1})

\begin{thm}[{Unit constant property \cite[Cor.~5.5]{Gross14}}]
\label{thm:Fpol1}\index{unit constant property}
Every $F$-polynomial $F_{i;t}(\bfy)$ has constant term 1.
\end{thm}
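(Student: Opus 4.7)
The plan is to derive Theorem \ref{thm:Fpol1} from Theorem \ref{thm:sign1} by induction on the graph distance in $\bbT_n$ from the initial vertex $t_0$, exploiting the explicit mutation formula \eqref{2eq:Fmut1} for $F$-polynomials together with the monomial $M_{k;t}(\bfy)$ in \eqref{2eq:M1}.

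For the base case, the initial $F$-polynomials satisfy $F_{i;t_0}(\bfy) = 1$ by \eqref{2eq:Finit1}, so their constant term is $1$. For the inductive step, suppose $t$ and $t'$ are $k$-adjacent with $t$ closer to $t_0$, and assume $F_{j;t}(\bfy)$ has constant term $1$ for all $j$. For $i \neq k$ we have $F_{i;t'}=F_{i;t}$, so nothing needs to be checked. For $i=k$, I would analyze the polynomial
\begin{align*}
M_{k;t}(\bfy) =
\Bigl(\prod_{j=1}^n y_j^{[c_{jk;t}]_+}\Bigr)
\prod_{j=1}^n F_{j;t}(\bfy)^{[b_{jk;t}]_+}
+ \Bigl(\prod_{j=1}^n y_j^{[-c_{jk;t}]_+}\Bigr)
\prod_{j=1}^n F_{j;t}(\bfy)^{[-b_{jk;t}]_+}
\end{align*}
by evaluating it at $\bfy = \bfzero$. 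The sign-coherence Theorem \ref{thm:sign1} applied to $\bfc_{k;t}$ gives a dichotomy: either all $c_{jk;t}\geq 0$ with at least one strictly positive, or all $c_{jk;t}\leq 0$ with at least one strictly negative. In the positive case, $[c_{jk;t}]_+ = c_{jk;t}$ and $[-c_{jk;t}]_+ = 0$ for all $j$, so the first monomial vanishes at $\bfy=\bfzero$ while the second monomial equals $1$; thus $M_{k;t}(\bfzero) = \prod_j F_{j;t}(\bfzero)^{[-b_{jk;t}]_+} = 1$ by the inductive hypothesis. The negative case is symmetric and yields the same conclusion.

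To conclude, I would combine this with the mutation rule $F_{k;t'}(\bfy) = M_{k;t}(\bfy)/F_{k;t}(\bfy)$ from \eqref{2eq:Fmut1}. Since $F_{k;t'}(\bfy)$ is a polynomial in $\bfy$ (Proposition \ref{2prop:Fpoly1}) and $F_{k;t}(\bfy)$ has constant term $1$ by the inductive hypothesis, evaluation at $\bfy=\bfzero$ gives $F_{k;t'}(\bfzero) = M_{k;t}(\bfzero)/F_{k;t}(\bfzero) = 1/1 = 1$, completing the induction.

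The main obstacle is not the inductive bookkeeping but securing the dichotomy on $\bfc_{k;t}$ in the first place; without sign-coherence both monomials $\prod_j y_j^{[c_{jk;t}]_+}$ and $\prod_j y_j^{[-c_{jk;t}]_+}$ could simultaneously carry a positive exponent, forcing $M_{k;t}(\bfzero)=0$ and breaking the argument. Thus the whole proof reduces to invoking Theorem \ref{thm:sign1}, and the equivalence claim in the statement preceding the theorem amounts precisely to the observation that conversely, once the unit constant property is known, the sign-coherence of $c$-vectors can be recovered from the tropicalization formula \eqref{eq:ty1} and the mutation \eqref{2eq:ymut1} of $y$-variables.
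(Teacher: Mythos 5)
Your argument is correct as far as it goes: the induction on the distance from $t_0$ in the tree $\bbT_n$, the evaluation of the polynomial identity $F_{k;t'}(\bfy)F_{k;t}(\bfy)=M_{k;t}(\bfy)$ at $\bfy=\bfzero$, and the use of sign-coherence of $\bfc_{k;t}$ to kill exactly one of the two monomials in \eqref{2eq:M1} while reducing the other to $\prod_j F_{j;t}(\bfzero)^{[\pm b_{jk;t}]_+}=1$ are all sound (the nonvanishing of $\bfc_{k;t}$ needed for the surviving monomial to be $1$ and the other to vanish is built into the paper's definition of a positive/negative vector). However, be aware that this is not the route the cited source takes, and the paper itself gives no proof here — it defers to \cite{Gross14} and the companion monograph, and explicitly remarks that \cite{Gross14} proved Theorem \ref{thm:Fpol1} \emph{directly} (by scattering diagram methods) and that Theorem \ref{thm:sign1} is then obtained from it via the Fomin--Zelevinsky equivalence. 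What you have written is precisely the other direction of that equivalence (essentially Proposition 5.6 of \cite{Fomin07}). Within this text, where Theorem \ref{thm:sign1} is stated first as an established black box, your derivation is a legitimate proof of Theorem \ref{thm:Fpol1}; but as a self-contained proof it would be circular, since the only known proofs of sign-coherence pass through the unit constant property (or through the equivalent chamber structure of the cluster scattering diagram). Your closing sentence shows you are aware of the equivalence, so the main thing to make explicit is that you are proving one implication of a known equivalence, conditional on the deep input, rather than giving an independent argument.
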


Note that the property \eqref{eq:tF1} is weaker than  Theorem \ref{thm:Fpol1}.
For example, we have $\pi_{\trop}(y_1+y_2)=1$.

Thanks to Theorem \ref{thm:sign1},
the following notion is well-defined.

\begin{defn}[Tropical sign]
\label{defn:tropsign1}\index{tropical!sign}
To each $c$-vector $c_{i;t}$, we assign the 
\emph{tropical sign} $\ve_{i;t}\in \{1, -1\}$ by
\begin{align}
\ve_{i;t}=
\begin{cases}
1 & \text{$c_{i;t}$ is positive},
\\
-1 & \text{$c_{i;t}$ is negative}.
\end{cases}
\end{align}
\end{defn}

We stress that the  sign $\ve_{i;t}$ depends  on
the choice of the initial vertex $t_0$.

The following duality between $C$- and $G$-matrices 
was shown by Nakanishi-Zelevinsky \cite{Nakanishi11a} under the assumption of  Theorem \ref{thm:sign1}.

\begin{thm}
[{\emph{Second duality} \cite[Thm.~1.2]{Nakanishi11a}}]
\label{thm:dual2}\index{second duality}\index{duality!second}
Let $D$ be a common skew-symmetrizer of $B_{t}$'s.
Then, the following equality holds:
\begin{align}
\label{eq:dual2}
D^{-1} G_t^T DC_t=I,
\end{align}
or, equivalently,
\begin{align}
\label{eq:dual3}
D^{-1} C_t^T DG_t=I.
\end{align}
\end{thm}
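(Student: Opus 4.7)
The plan is to establish the equivalent invariant $G_t^T D C_t = D$ by induction on the distance of $t$ from $t_0$ in $\bbT_n$. At $t=t_0$ the identity is trivial since $C_{t_0}=G_{t_0}=I$. It suffices to show the invariant is preserved whenever $t$ and $t'$ are $k$-adjacent.

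The first move is to use Theorem \ref{thm:sign1} to repackage both mutations as right-multiplications $C_{t'}=C_t J$ and $G_{t'}=G_t K$, where the matrices $J=J_k^\ve(B_t)$ and $K=K_k^\ve(B_t)$ depend only on $B_t$ and on the tropical sign $\ve:=\ve_{k;t}\in\{\pm 1\}$. Sign-coherence makes one of $[c_{ik;t}]_+$ or $[-c_{ik;t}]_+$ vanish uniformly in $i$, so \eqref{2eq:cmutmat2} collapses for $j\ne k$ to $c_{ij;t'}=c_{ij;t}+c_{ik;t}[\ve b_{kj;t}]_+$; accordingly $J$ differs from $I$ only in its $k$-th row, with $J_{kk}=-1$ and $J_{kj}=[\ve b_{kj;t}]_+$ for $j\ne k$. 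For $G$ the analogous simplification is more delicate: when $\ve=-1$, the term $-\sum_\ell b_{i\ell;t_0}[-c_{\ell k;t}]_+$ in \eqref{2eq:gmut1} equals $(B_{t_0}C_t)_{ik}$, and the first duality \eqref{2eq:dual0} rewrites this as $(G_tB_t)_{ik}$; combining with the identity $[-b]_++b=[b]_+$ then merges the two sums into $\sum_\ell g_{i\ell;t}[-\ve b_{\ell k;t}]_+$. Thus $K$ differs from $I$ only in its $k$-th column, with $K_{kk}=-1$ and $K_{jk}=[-\ve b_{jk;t}]_+$ for $j\ne k$.

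Given these factorized mutation forms, the inductive step reduces to a single matrix identity, namely $K_k^\ve(B_t)^T D J_k^\ve(B_t)=D$: indeed, assuming $G_t^T D C_t=D$,
\begin{align*}
G_{t'}^T D C_{t'}=K^T(G_t^T D C_t)J=K^T D J.
\end{align*}
A direct calculation shows that $K^T D J-D$ is supported on the $k$-th column, with $j$-th entry $d_j[-\ve b_{jk;t}]_+-d_k[\ve b_{kj;t}]_+$. This vanishes thanks to skew-symmetrizability $d_kb_{kj;t}=-d_jb_{jk;t}$, which by a short sign-case analysis forces $d_k[\ve b_{kj;t}]_+=d_j[-\ve b_{jk;t}]_+$. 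The induction therefore closes.

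The genuine difficulty lies in the $\ve=-1$ branch of the $G$-mutation: the raw formula \eqref{2eq:gmut1} mixes $B_t$ with the initial exchange matrix $B_{t_0}$, which on the face of it obstructs the factorization $G_{t'}=G_tK(B_t)$. Invoking first duality at this precise point is the key maneuver — it trades $B_{t_0}C_t$ for $G_tB_t$, converting the apparent global dependence on $B_{t_0}$ into a local right-multiplication by a matrix built from $B_t$ alone. Everything else is straightforward linear algebra.
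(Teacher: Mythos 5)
Your proof is correct and takes essentially the same route as the argument the paper defers to (\cite{Nakanishi11a}, \cite[Thm.~I.4.24]{Nakanishi22a}): use sign-coherence (and, for the $\ve=-1$ branch, the first duality \eqref{2eq:dual0}) to package both mutations as right-multiplications $C_{t'}=C_tJ$, $G_{t'}=G_tK$ by elementary matrices, and then verify $K^TDJ=D$ from skew-symmetrizability. One cosmetic slip: $K^TDJ-D$ is supported on the $k$-th \emph{row}, not column — its $(k,j)$ entry is $d_j[-\ve b_{jk;t}]_+-d_k[\ve b_{kj;t}]_+$ — but this does not affect the vanishing argument or the induction.
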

We have the following easy consequences of
Theorem \ref{thm:dual2}.
\begin{prop}
\label{prop:CG1}
(1) (Unimodularity \cite[Prop.~4.2]{Nakanishi11a})\index{unimodularity}.
\begin{align}
\label{eq:uni1}
|C_t|=|G_t|= \pm 1.
\end{align}
\par (2)
\cite[Eq.~(2.9)]{Nakanishi11a}).
The matrix $B_t$ is determined by $B_{t_0}$ and $C_t$
as follows:
\begin{align}
\label{eq:db1}
DB_t= C_t^T (DB_{t_0}) C_t.
\end{align}
\end{prop}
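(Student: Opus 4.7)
For part (1), the plan is a one-line determinant computation. Starting from Theorem~\ref{thm:dual2} in the form $D^{-1} G_t^T D C_t = I$, I take determinants. Since $D$ is invertible, $\det(D^{-1})\det(D)=1$, leaving
\begin{align*}
\det(G_t)\det(C_t) = 1.
\end{align*}
Because both $C_t$ and $G_t$ are integer matrices by Definitions~\ref{2defn:Cmat1} and \ref{defn:Gmat1}, their determinants are integers whose product is $1$, which forces each to lie in $\{+1,-1\}$ with equal sign. This yields \eqref{eq:uni1}.

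For part (2), the plan is to combine the first duality \eqref{2eq:dual0} with the second duality. From $D^{-1}G_t^T D C_t = I$, transposing (and using that $D$ is diagonal, hence self-transposed) gives the convenient rewriting
\begin{align*}
C_t^T D G_t = D.
\end{align*}
Now multiply both sides of \eqref{2eq:dual0}, namely $G_t B_t = B_{t_0} C_t$, on the left by $C_t^T D$:
\begin{align*}
C_t^T D G_t B_t = C_t^T D B_{t_0} C_t.
\end{align*}
Substituting $C_t^T D G_t = D$ into the left side collapses it to $D B_t$, which is precisely \eqref{eq:db1}.

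There is no real obstacle here beyond correctly manipulating the duality identity: the mild subtlety is recognizing that the two forms \eqref{eq:dual2} and \eqref{eq:dual3} of the second duality are transposes of one another (using that $D=D^T$), and then choosing the form that pairs naturally with the left-multiplication structure of the first duality $G_t B_t = B_{t_0} C_t$. Once that pairing is identified, both statements follow immediately without any induction along $\bbT_n$ or any appeal to sign-coherence directly.
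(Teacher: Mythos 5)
Your proof is correct and follows exactly the route the paper intends: the proposition is presented as an ``easy consequence'' of Theorem~\ref{thm:dual2} together with the first duality \eqref{2eq:dual0}, and your determinant computation for (1) and the left-multiplication by $C_t^T D$ for (2) are precisely those consequences. The only ingredients you implicitly use --- that $C_t$ and $G_t$ are integer matrices and that $D$ is diagonal (hence symmetric) --- are supplied by Definitions~\ref{2defn:Cmat1}, \ref{defn:Gmat1} and the statement of Theorem~\ref{thm:dual2}, so there is no gap.
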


\subsection{Laurent positivity, detropicalization, and synchronicity}

The following theorem,
which is another cornerstone of the cluster algebra theory,
 was also conjectured by Fomin-Zelevinsky \cite{Fomin02,Fomin07}
and proved by Gross-Hacking-Keel-Kontsevich \cite{Gross14}
in full generality
after several affirmative partial results
(e.g., \cite{Musiker09,Kimura12,Lee15}).

\begin{thm}
[{{Laurent positivity \cite[Cor.~0.4]{Gross14}}}]
\label{thm:Lpos1}\index{Laurent!positivity}
Every $F$-poly\-no\-mi\-al $F_{i;t}(\bfy)$
has no negative coefficient.
\end{thm}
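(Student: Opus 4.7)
My plan is to prove the positivity via the cluster scattering diagram (CSD) method of Gross--Hacking--Keel--Kontsevich, which will also yield Theorem \ref{thm:sign1} as a byproduct. First, I would attach to the initial exchange matrix $B_{t_0}$ a scattering diagram $\frakD$ in $\bbR^n$: a collection of codimension-one walls, each decorated by a \emph{wall function} that is a formal power series in a single monomial $y^{\bfn}$ with constant term $1$ and nonnegative integer coefficients. The walls are assembled from dilogarithm-type automorphisms whose prototypical pentagon relation, presented in Part III of the text, ensures that $\frakD$ may be chosen \emph{consistent}: the ordered product of wall-crossing automorphisms around any closed loop is the identity.

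Next, I would match the combinatorial data of $\frakD$ with the cluster pattern. Each seed $\Sigma_t$ corresponds to a maximal chamber of $\frakD$, and the columns of $G_t$ point to the rays bounding it. The $F$-polynomial $F_{i;t}(\bfy)$ is then to be expressed as a sum
\begin{align*}
F_{i;t}(\bfy) = \sum_{\gamma} c_\gamma\, y^{\bfm(\gamma)},
\end{align*}
where $\gamma$ ranges over \emph{broken lines} whose asymptotic incoming direction is dictated by the $g$-vector $\bfg_{i;t}$ and whose endpoint lies in the initial chamber. Each coefficient $c_\gamma$ is a product of monomial coefficients read off from the wall functions at the successive bends of $\gamma$. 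Because every wall function has nonnegative coefficients, every $c_\gamma$ is nonnegative, and the theorem follows. A compatible tropicalization argument, applied to the same broken-line formula, simultaneously yields sign-coherence of $c$-vectors, so the two cornerstone results are proved in a single package.

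The principal obstacle is constructing $\frakD$ so that consistency and the positivity of wall functions hold at once. The construction proceeds order by order in the $\bbZ_{\geq 0}^n$-grading of walls: at each stage new walls must be added to cancel the discrepancies produced at lower orders, and one must verify that these correction walls can always be chosen with positive wall functions. The key algebraic input is that commutators and conjugates of positive wall-crossing automorphisms admit factorizations into further positive wall-crossings; this, in turn, is an algebraic consequence of the pentagon relation and its iterates. Once this positivity-preserving induction is established and the broken-line formula for $F$-polynomials is verified, Theorem \ref{thm:Lpos1} follows. The remaining step I would expect to be bookkeeping is translating the broken-line sum back into the recursion of Definition \ref{2defn:Fpoly1} to confirm the identification, which is straightforward once the CSD machinery of Chapter \ref{ch:CSD1} is in place.
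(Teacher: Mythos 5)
The paper gives no proof of Theorem \ref{thm:Lpos1}: it is stated as an advanced result quoted from Gross--Hacking--Keel--Kontsevich \cite{Gross14}, with the proof deferred to the companion monograph (see the Notes to Chapter \ref{ch:quick1}). Your sketch is an accurate outline of that scattering-diagram argument, and it aligns well with the machinery the text develops later in Part III: the positivity of wall functions that you single out as the key algebraic input is exactly Theorem \ref{3thm:pos1}, whose proof via Construction \ref{const:csd1} and the Ordering Lemma (Proposition \ref{prop:ordering1}) does rest on the pentagon relation, as you predict; and the chamber/\,$G$-cone correspondence you invoke is Theorem \ref{thm:CSD2}. Two caveats are worth recording. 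First, the broken-line and theta-function formalism, which carries the entire positivity argument, is nowhere developed in this text (Chapter \ref{ch:CSD1} stops short of theta functions), so your plan imports a substantial body of results from \cite{Gross14} rather than from anything proved here. Second, the step you dismiss as bookkeeping --- matching the broken-line sum to the recursion of Definition \ref{2defn:Fpoly1} --- is in fact one of the harder parts of the GHKK argument: it requires the mutation invariance of the consistent scattering diagram under change of initial chamber and the identification of cluster monomials with theta functions, which is a theorem, not a verification. Neither caveat invalidates the plan, but as written it is a roadmap through \cite{Gross14} rather than a proof that could be completed from the material in this text alone.
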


In accordance with the $S_n$-action on seeds in Definition \ref{defn:saction1},
we define the $S_n$-action on $C$- and $G$-matrices and $F$-polynomials as
 \begin{alignat}{3}
 \label{2eq:sigmax1}
C'&=\nu C_t, &\quad
c'_{ij}&=c_{i\nu^{-1}(j);t},\\
 \label{2eq:sigmag1}
G'&=\nu G_t, &\quad
g'_{ij}&=g_{i\nu^{-1}(j);t},\\
 \label{2eq:sigmaF1}
\bfF'&=\nu \bfF_t, &\quad
F'_{i}&=F_{\nu^{-1}(i);t}.
\end{alignat}
{\rp This is compatible with the separation formulas \eqref{eq:sep1} and \eqref{eq:sep2}.}
The following theorem was proved by Cao-Huang-Li \cite{Cao17} 
for $\nu=\rmid$ using
 Theorems \ref{thm:sign1}, \ref{thm:Fpol1}, \ref{thm:dual2},  \ref{thm:Lpos1}
and the separation formulas all together.
It was generalized to any permutation $\nu$ by Nakanishi
\cite{Nakanishi19}.

\begin{thm}
[{Detropicalization \cite[Thm.~2.5]{Cao17},
\cite[Thm.~4.7]{Nakanishi19}}]
\label{thm:detrop1}\index{detropicalization}
For  any  $t,\,t'\in \bbT_n$
 and any permutation $\nu\in S_n$,
 the following facts hold:
\begin{align}
\label{eq:detrop3}
G_{t}= \nu G_{t'} \quad & \Longrightarrow \quad
\bfF_{t}=\nu\bfF_{t'},\quad
 \bfx_t=\nu \bfx_{t'},
\\
\label{eq:detrop4}
C_{t}=\nu C_{t'} \quad & \Longrightarrow \quad
\bfF_{t}=\nu\bfF_{t'},
\quad \bfy_t=\nu \bfy_{t'}.
\end{align}
\end{thm}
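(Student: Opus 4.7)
The argument combines the separation formulas \eqref{eq:sep1}--\eqref{eq:sep2} with the three cornerstone results of Section \ref{sec:advanced1}: sign-coherence (Theorem \ref{thm:sign1}), the unit constant property (Theorem \ref{thm:Fpol1}), and Laurent positivity (Theorem \ref{thm:Lpos1}). Since the $F$-polynomials and the $C$- and $G$-matrices depend only on $t_0$ and $B_{t_0}$ (not on $\bbP$), it suffices to work in the principal-coefficient setting $\bbP = \mathrm{Trop}(\bfy)$ with $\bfy_{t_0}=\bfy$; any $F$-identity obtained there transfers automatically to arbitrary coefficients, whereupon the separation formulas propagate the $x$- and $y$-identities universally.

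I would treat the two implications largely in parallel. In the principal setting, \eqref{eq:ty1} gives $y_{i;t} = \prod_j y_j^{c_{ji;t}}$, so the hypothesis $C_t = \nu C_{t'}$ already yields $\bfy_t = \nu \bfy_{t'}$ immediately, leaving only the $F$-polynomial identity. For \eqref{eq:detrop3}, the separation formula reduces to $x_{i;t} = \bigl(\prod_j x_j^{g_{ji;t}}\bigr) F_{i;t}(\hat\bfy)$, so if I can show that $x_{i;t}$ is uniquely determined by its $g$-vector $\bfg_{i;t}$, then $G_t = \nu G_{t'}$ forces $\bfx_t = \nu \bfx_{t'}$, and factoring out the leading $g$-vector monomial then gives $\bfF_t = \nu \bfF_{t'}$.

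The technical core is therefore a \emph{rigidity} claim: $x_{i;t}$ is pinned down by $\bfg_{i;t}$ alone. To prove it, I would expand $x_{i;t}$ as a polynomial in $\bfy$ with Laurent coefficients in $\bfx$: by Theorem \ref{thm:Fpol1} the polynomial $F_{i;t}$ has constant term $1$, so $\prod_j x_j^{g_{ji;t}}$ occurs in $x_{i;t}$ with coefficient $1$ and no $\bfy$-factor, while by Theorem \ref{thm:Lpos1} every other monomial carries a nontrivial $\bfy$-factor with non-negative coefficient. This \emph{pointedness}, together with the injectivity of the $g$-vector map on cluster variables (a consequence of the $g$-vector fan structure guaranteed by all three cornerstone theorems), makes the $g$-vector a complete invariant. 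For the $C$-matrix case, the second duality \eqref{eq:dual3} pairs $C$- and $G$-data through $B_{t_0}$ and the skew-symmetrizer $D$; the hypothesis $C_t = \nu C_{t'}$ translates (modulo a $D$-twist coming from the fact that $\nu$ need not commute with $D$) into a constraint on the $g$-vectors that, via the same pointedness argument, still forces $\bfF_t = \nu \bfF_{t'}$. The separation formula \eqref{eq:sep2} then yields $\bfy_t = \nu \bfy_{t'}$ in arbitrary coefficients.

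The main obstacle is this pointedness/rigidity step. Although intuitively compelling, it rests essentially on all three deep results of Gross--Hacking--Keel--Kontsevich: without the non-negativity of $F$-polynomial coefficients from Theorem \ref{thm:Lpos1}, cancellation could destroy the pointed structure, and without sign-coherence (Theorem \ref{thm:sign1}) the $g$-vector fan structure underlying the injectivity claim would break down. A secondary technical nuisance is the case $\nu \neq \id$, where the interaction between the column-permutation action of $\nu$ and the diagonal skew-symmetrizer $D$ in the second duality requires careful bookkeeping to transport the hypothesis between the $G$-matrix and $C$-matrix formulations.
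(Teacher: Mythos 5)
First, a caveat: the text does not actually prove Theorem \ref{thm:detrop1} here --- it is one of the facts quoted in Chapter \ref{ch:quick1} with the proof deferred to the companion monograph --- so your argument can only be measured against the toolkit the paper says the proof uses, namely Theorems \ref{thm:sign1}, \ref{thm:Fpol1}, \ref{thm:dual2}, \ref{thm:Lpos1} and the separation formulas. Your framing is consistent with that toolkit: the reduction to principal coefficients, the observation that an $F$-polynomial identity established there transfers to arbitrary $\bbP$ and then propagates to $x$- and $y$-variables via \eqref{eq:sep1}--\eqref{eq:sep2}, and the use of the second duality to pass from the $C$-hypothesis to the $G$-hypothesis are all sound (the compatibility $d_{\nu(i)}=d_i$ that worries you can in fact be extracted from the hypothesis itself, since $G_{t'}^{-1}G_t$ equals $D^{-1}PD$ for the permutation matrix $P$ of $\nu$ and must be a unimodular integer matrix, as must its inverse).

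The genuine gap is the central ``rigidity'' step. Pointedness --- constant term $1$ from Theorem \ref{thm:Fpol1} and non-negative coefficients from Theorem \ref{thm:Lpos1} --- shows only that the $g$-vector can be read off \emph{from} the cluster variable as the exponent of its unique $\bfy$-free monomial; that is the easy direction. What the theorem requires is the converse: two cluster variables with the same $g$-vector coincide. You assert this as ``the injectivity of the $g$-vector map on cluster variables, a consequence of the $g$-vector fan structure,'' but this is circular: that injectivity is precisely the $x$-variable half of the statement being proved, and the fan property of the $G$-cones does not deliver it, because a fan only constrains how \emph{distinct} cones intersect and so does not exclude two different seeds producing literally the same $G$-cone. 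Nothing in the pointedness argument prevents two distinct pointed expressions $x^{\bfg}\bigl(1+\sum_{\bfv>0}c_{\bfv}\hat{y}^{\bfv}\bigr)$ and $x^{\bfg}\bigl(1+\sum_{\bfv>0}c'_{\bfv}\hat{y}^{\bfv}\bigr)$ from sharing the leading exponent $\bfg$. The published proofs close this hole with a genuinely global input --- the proper Laurent monomial property of cluster monomials with respect to an arbitrary reference cluster (the route of Cao--Huang--Li and of the companion monograph), or the realization of cluster variables as theta functions on the cluster scattering diagram --- and some ingredient of this kind is indispensable. Until it is supplied, the implication $G_{t}=\nu G_{t'}\Rightarrow \bfx_t=\nu\bfx_{t'}$, and hence everything downstream of it, remains unproved.
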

In other words, the $\bfx_t$ and $\bfy_t$ are uniquely
determined by their tropical parts $G_t$ and $C_t$, respectively.

One can complete Theorem \ref{thm:detrop1}
as the following equivalences of periodicities.

\begin{thm}[{Synchronicity \cite[Thm.~5.3]{Nakanishi19}}]
\index{synchronicity}
\label{1thm:synchro1}
Let $\bfSigma$ be
 a cluster pattern with {free coefficients} at $t_0$.
 Then,
for  given  $t,\,t'\in \bbT_n$ and a given permutation $\nu\in S_n$,
the following five conditions are equivalent:
\begin{itemize}
\item[(a).] $G_{t}=\nu G_{t'}$.
\item[(b).] $C_{t}=\nu C_{t'}$.
\item[(c).] $\bfx_{t}=\nu \bfx_{t'}$.
\item[(d).] $\bfy_{t}=\nu \bfy_{t'}$.
\item[(e).] $\Sigma_{t}=\nu \Sigma_{t'}$.
\end{itemize}
\end{thm}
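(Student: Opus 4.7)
My plan is to prove the five equivalences by closing a cycle of implications among the conditions (a)--(e), relying on three main ingredients already available: detropicalization (Theorem \ref{thm:detrop1}), second duality (Theorem \ref{thm:dual2} and Proposition \ref{prop:CG1}), and the tropicalization homomorphism $\pi_{\mathrm{trop}}$ (Proposition \ref{prop:tropF1}). The trivial directions are (e)$\Rightarrow$(c),(d) by the definition of a seed, together with (a)$\Rightarrow$(c) and (b)$\Rightarrow$(d), which are precisely the content of Theorem \ref{thm:detrop1}.

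For (a)$\Leftrightarrow$(b), the second-duality identity $D^{-1} G_t^T D C_t = I$ expresses each of $C_t$ and $G_t$ as a function of the other through the common skew-symmetrizer $D$. Writing the $S_n$-action in matrix form as $\nu M = M P^{-1}$, with $P$ the permutation matrix of $\nu$, a direct substitution shows that $G_t = \nu G_{t'}$ yields $C_t = \nu C_{t'}$ and vice versa; the mild caveat is the compatibility $PD = DP$, which is automatic in the skew-symmetric case and is forced in the skew-symmetrizable case by either side of the equivalence. Next, the implication (d)$\Rightarrow$(b) is short: applying $\pi_{\mathrm{trop}}$ to the equality $y_{i;t} = y_{\nu^{-1}(i);t'}$ in $\bbQ_{\mathrm{sf}}(\bfy)$ and using \eqref{eq:ty1} yields $\prod_j y_j^{c_{ji;t}} = \prod_j y_j^{c_{j\nu^{-1}(i);t'}}$, whose exponent comparison forces $\bfc_{i;t} = \bfc_{\nu^{-1}(i);t'}$ and hence $C_t = \nu C_{t'}$. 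Finally, once (a) and (b) are known, Proposition \ref{prop:CG1}(2) supplies $DB_t = C_t^T (DB_{t_0}) C_t$, and substituting $C_t = \nu C_{t'}$ gives $B_t = \nu B_{t'}$ by a short permutation-matrix conjugation, closing the implication to (e).

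The remaining step (c)$\Rightarrow$(a) is the main obstacle. The natural approach uses the $\bbZ^n$-grading on $\calF$ defined by $\deg(x_j)=e_j$ and $\deg(y_j)=-\bfb_j^{\mathrm{col}}$, under which the $\hat y_j$'s are homogeneous of degree zero; consequently the numerator $\prod_j x_j^{g_{ji;t}} F_{i;t}(\hat\bfy)$ of the separation formula \eqref{eq:sep1} is homogeneous of degree $\bfg_{i;t}$. In the principal-coefficient case the denominator $F_{i;t}|_{\bbP}(\bfy)$ is $1$ by \eqref{eq:tF1}, so $x_{i;t}^{\mathrm{princ}}$ itself is homogeneous of degree $\bfg_{i;t}$ and its $g$-vector is read off as its $\bbZ^n$-degree, whence $\bfx_t^{\mathrm{princ}} = \nu\bfx_{t'}^{\mathrm{princ}}$ immediately gives $G_t = \nu G_{t'}$. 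In the free-coefficient case, however, the denominator $F_{i;t}(\bfy)$ is inhomogeneous in the same grading, so $x_{i;t}$ itself is inhomogeneous and the argument must be refined. The fix is to combine Theorem \ref{thm:Fpol1} (unit constant property) with Theorem \ref{thm:Lpos1} (Laurent positivity) to single out $\prod_j x_j^{g_{ji;t}}$ as a distinguished leading monomial in the $\bbZ\bbP[\bfx^{\pm}]$-Laurent expansion of $x_{i;t}^{\mathrm{free}}$, equivalently to reduce the free case to the principal case by observing that $G$-matrices and $F$-polynomials depend only on the $B$-pattern and are therefore shared between the free and principal cluster patterns built on the same $B_{t_0}$.
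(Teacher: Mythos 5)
First, a point of reference: the paper does not actually prove Theorem \ref{1thm:synchro1}. It is one of the quoted results whose proof is deferred to the companion monograph \cite{Nakanishi22a}, and the Notes of Chapter \ref{ch:quick1} single out (c) $\Longrightarrow$ (a) as the essential step. Your proposal therefore supplies a proof where the text supplies none, and its architecture --- detropicalization for (a)$\Rightarrow$(c) and (b)$\Rightarrow$(d), second duality for (a)$\Leftrightarrow$(b), tropicalization of the free $y$-variables for (d)$\Rightarrow$(b), and a grading argument for (c)$\Rightarrow$(a) --- is the standard one and is consistent with the cited sources. The (d)$\Rightarrow$(b) step in particular is exactly where the free-coefficient hypothesis enters, matching the Remark following the theorem.

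Two steps need repair. The closing clause of your (c)$\Rightarrow$(a) argument is not a valid reduction: the fact that $G$-matrices and $F$-polynomials depend only on the $B$-pattern does nothing to transfer the \emph{hypothesis} $\bfx_t=\nu\bfx_{t'}$ from the free pattern to the principal one. You must run the grading argument directly in the free case, and your first formulation almost does this; the clean completion is to clear denominators in the separation formula \eqref{eq:sep1}, rewriting $x_{i;t}=x_{\nu^{-1}(i);t'}$ as
\[
\Bigl(\,\prod_j x_j^{g_{ji;t}}\Bigr)F_{i;t}(\hat\bfy)\,F_{\nu^{-1}(i);t'}(\bfy)
=\Bigl(\,\prod_j x_j^{g_{j\nu^{-1}(i);t'}}\Bigr)F_{\nu^{-1}(i);t'}(\hat\bfy)\,F_{i;t}(\bfy),
\]
then grading by total degree in $\bfy$ (each $\hat y_j$ has $\bfy$-degree one, and $\bfx$, $\bfy$ are algebraically independent for free coefficients) and comparing the degree-zero parts; the unit constant property (Theorem \ref{thm:Fpol1}) makes both degree-zero parts equal to the bare $x$-monomials, giving $G_t=\nu G_{t'}$. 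Laurent positivity is not actually needed for this step. Second, the compatibility $PD=DP$ that you invoke in (a)$\Leftrightarrow$(b) and again when passing to $B_t=\nu B_{t'}$ is not free: Proposition \ref{prop:compat1} derives it from the periodicity of $Y$-seeds, which is part of what is being proved, so quoting it here would be circular. You should either establish compatibility directly from (a) or (b) (e.g., from \eqref{eq:db1} together with the indecomposable-block argument in the proof of Proposition \ref{prop:compat1}), or sidestep it where possible: once (a) and (b) are both in hand, the first duality \eqref{2eq:dual0} gives $G_{t'}QB_t=B_{t_0}C_{t'}Q=G_{t'}B_{t'}Q$ for the permutation matrix $Q$, hence $B_t=\nu B_{t'}$ with no reference to $D$ at all.
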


\begin{rem}
The assumption of \emph{free} coefficients guarantees the implication
from the condition (d) to the other conditions. The rest of the implications hold
for any coefficients. 
In particular,
the implication (a) $\Longleftrightarrow$ (c) implies that the periodicity of 
$x$-variables is independent of their coefficients.
\end{rem}

\notes

Here is a bibliographic guide for learning the basics of cluster algebra theory.
The foundation of cluster algebra theory was
established by a series of papers ``Cluster Algebras I--IV" by Fomin-Zelevinsky together with Berenstein \cite{Fomin02,Fomin03a,Berenstein05,Fomin07}.
Also, the papers on $Y$-systems \cite{Fomin03b}
and the quantum cluster algebras \cite{Berenstein05b}
 may be included in the series.
Other foundational works on cluster algebras and cluster varieties are given
by Fock-Goncharov \cite{Fock03,Fock03b,Fock05,Fock07}.
A  comprehensive treatment is given in the ongoing writing
\cite{Fomin16b}.
Reviews, monographs, and textbooks with different emphases are available 
(e.g., \cite{Carter06}, \cite{Keller08}, \cite{Gekhtman10}, \cite{Keller12}, \cite{Marsh13}, \cite{Williams12},
\cite{Plamondon16}, \cite{Glick17}, \cite{Nakanishi22a}).

Proofs of all results in this chapter  except for Theorem \ref{thm:finite1}  
are conveniently found in the companion monograph  \cite{Nakanishi22a}
as specified below.
\begin{itemize}
\item
Lemma \ref{lem:domain1}:
See  \cite[Prop.~I.1.11]{Nakanishi22a}.
\item
Proposition \ref{prop:yhat1}:
See  \cite[Prop.~I.2.6]{Nakanishi22a}.
\item
Eq.~\eqref{eq:comp1}:
See  \cite[Prop.~I.2.9]{Nakanishi22a}.

\item
Theorem \ref{thm:Laurent1}:
See  \cite[\S I.3.1]{Nakanishi22a}.
\item
Eq.~\eqref{2eq:dual0}:
See \cite[Props. I.4.10,
II.1.17]{Nakanishi22a}.
\item
Proposition \ref{2prop:Fpoly1}: See \cite[Thm.~I.3.23]{Nakanishi22a}.
\item
Theorem \ref{2thm:sep1} and Proposition \ref{prop:tropF1}:
See \cite[Thms. I.4.16, I.4.14]{Nakanishi22a}.
\item
Theorems \ref{thm:sign1} and \ref{thm:Fpol1}:
See \cite[\S II.3.2]{Nakanishi22a}.
\item
Theorem \ref{thm:dual2}:
See
\cite[Thm.~I.4.24]{Nakanishi22a}.

\item
Proposition
\ref{prop:CG1}:
See \cite[Props.~II.2.3, II.2.6]{Nakanishi22a}.

\item
Theorem \ref{thm:Lpos1}:
See \cite[\S II.6.4, \S III.7.3]{Nakanishi22a}.
\item
Theorem 
\ref{thm:detrop1}:
See \cite[Thm.~II.7.1]{Nakanishi22a}.
\item
Theorem
\ref{1thm:synchro1}:
The essential part of the proof   is the implication (c) $\Longrightarrow$ (a),
and it is found in \cite[Thm.~II.7.2]{Nakanishi22a}.
\end{itemize}
As for the proof of Theorem \ref{thm:finite1},
see \cite{Gekhtman10} or \cite[Ch.5]{Fomin16b}.

\chapter{Dilogarithm identities in $Y$-patterns}
\label{ch:dilogarithm1}

We present the dilogarithm identity associated with 
any period of a free $Y$-pattern.
This is the \emph{leitmotif} throughout the text,
and we will give several proofs and variations of the identity with
various methods, techniques, and perspectives.
Here, we give the first proof (actually two alternative proofs)
via the constancy condition by Frenkel-Szenes.
We call it the \emph{algebraic method}
because it is most directly related to the algebraic structure of
seeds and mutations in a cluster pattern compared with other methods.

\section{Periodicity}
\label{sec:periodicity1}

Let us introduce the central idea of the whole text, namely,
\emph{periodicity} in a cluster pattern and a $Y$-pattern.
Consider a cluster pattern $\bfSigma$ with \emph{free coefficients} at $t_0$ and its $Y$-pattern.
Consider sequences of mutations therein
\begin{align}
\label{eq:mseq1}
&\Sigma(0) 
\
{\buildrel {k_0} \over \rightarrow}
\
\Sigma(1) 
\
{\buildrel {k_1} \over \rightarrow}
\
\cdots
\
{\buildrel {k_{P-1}} \over \rightarrow}
\
\Sigma(P),
\\
\label{eq:mseq2}
&\Upsilon(0) 
\
{\buildrel {k_0} \over \rightarrow}
\
\Upsilon(1) 
\
{\buildrel {k_1} \over \rightarrow}
\
\cdots
\
{\buildrel {k_{P-1}} \over \rightarrow}
\
\Upsilon(P),
\end{align}
where 
$\buildrel {k} \over \rightarrow$ stands for the mutation in direction $k$.
The seed $\Sigma(0)$ is not necessarily the initial seed $\Sigma_{t_0}$.
We set
$\Sigma(s)=(\bfx(s), \bfy(s), B(s))$, 
$\Upsilon(s)=(\bfy(s),B(s))$.

\begin{defn}[Periodicity]
\label{defn:period1}
For a permutation $\nu\in S_n$,
we say that 
sequences of mutations \eqref{eq:mseq1} and \eqref{eq:mseq2} are
\emph{$\nu$-periodic}\index{$\nu$-periodic}
if
\begin{align}
\label{eq:sigmap1}
\Sigma(P)=\nu \Sigma(0),
\\
\label{eq:sigmap2}
\Upsilon(P)=\nu \Upsilon(0)
\end{align}
hold, respectively.
\end{defn}
Thanks to the synchronicity in Theorem \ref{1thm:synchro1},
two conditions \eqref{eq:sigmap1} and \eqref{eq:sigmap2}
are equivalent.
Thus, we can use the two notions interchangeably.
Below, we focus on the periodicity of  (free) $Y$-patterns.
Since there is no confusion with $+$ in the ambient field $\calF$,
we write the addition $\oplus$ in $\bbQ_{\rmsf}(\bfy)$ as $+$,
which is indeed the ordinary addition of rational functions by definition.

We give a necessary condition  for a permutation $\nu$
having
 the $\nu$-periodicity.
 
\begin{defn}
 We say that a permutation $\nu\in S_n$ is {\em compatible with 
 a diagonal matrix $D=\mathrm{diag}(d_1,\dots,d_n)$}\index{compatible!(for permutation)}
 if 
 \begin{align}
 \label{eq:comd1}
 d_{\nu(i)}=d_i 
 \quad (i=1,\, \dots,\  n).
 \end{align}
 \end{defn}

\begin{prop}[{\cite[Prop.~4.3]{Nakanishi16}}]
\label{prop:compat1}
Suppose that the sequence \eqref{eq:mseq2} is $\nu$-periodic.
Let $D$ be any common skew-symmetrizer for the
exchange matrices $B(s)$ of  \eqref{eq:mseq2}.
Then, $\nu$ is compatible with $D$.
\end{prop}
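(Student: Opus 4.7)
The plan is to combine two independent constraints extracted from the $\nu$-periodicity $\Upsilon(P) = \nu \Upsilon(0)$: the exchange matrix condition $B(P) = \nu B(0)$ together with the hypothesis that $D$ is a common skew-symmetrizer, and the $y$-variable condition $\bfy(P) = \nu \bfy(0)$ together with the free-coefficient assumption from the section setup. Separately each yields only partial information on $D$, but together they pin down $d_{\nu(i)} = d_i$ via a cycle-length argument.

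For the first constraint, since $D$ skew-symmetrizes both $B(0)$ and $B(P)$, the relations $d_i b_{ij}(0) = -d_j b_{ji}(0)$ and $d_i b_{ij}(P) = -d_j b_{ji}(P)$ both hold. Substituting $b_{ij}(P) = b_{\nu^{-1}(i),\nu^{-1}(j)}(0)$ in the second and relabeling $(k,\ell) = (\nu^{-1}(i),\nu^{-1}(j))$ yields $d_{\nu(k)} b_{k\ell}(0) = -d_{\nu(\ell)} b_{\ell k}(0)$. Dividing this by the original $B(0)$-relation at $(k,\ell)$, valid whenever $b_{k\ell}(0) \neq 0$, shows that the ratio $r_i := d_{\nu(i)}/d_i$ is constant along every edge of the adjacency graph of $B(0)$, hence constant on each of its connected components.

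For the second constraint, a direct check from the mutation rule \eqref{2eq:bmut1} shows that block-diagonality with respect to the partition of $\{1,\dots,n\}$ into connected components of the adjacency graph is preserved under every mutation, so all $B(s)$ share the same block decomposition as $B(0)$. Consequently, by induction on $s$ from the $y$-mutation rule \eqref{2eq:ymut1}, each $y_i(s)$ is a subtraction-free rational function in only those initial free variables $y_j$ whose index $j$ lies in the same block as $i$. The equality $y_i(P) = y_{\nu^{-1}(i)}$ in $\bbQ_\rmsf(\bfy)$ then forces the single variable $y_{\nu^{-1}(i)}$ to belong to the same block as $i$, so $\nu$ permutes each connected component to itself.

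Combining the two, within each component $C$ the ratio $r_i$ is a positive constant $r_C$ and $\nu|_C$ is a permutation of $C$. For any $\nu$-cycle of length $m$ contained in $C$, iterating $d_{\nu(i)} = r_C d_i$ yields $d_i = r_C^m d_i$, forcing $r_C = 1$ and hence $d_{\nu(i)} = d_i$ for every $i$. The main obstacle is the second step: one must carefully verify that mutations preserve the block decomposition and that the resulting restriction on $y$-variable dependencies, together with the algebraic independence of the initial free $y$-variables, excludes $\nu$ swapping distinct components.
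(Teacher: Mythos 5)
Your proof is correct and follows essentially the same route as the paper: both arguments first use the preservation of the block decomposition under mutation and the non-mixing of $y$-variables across blocks to show that $\nu$ permutes each indecomposable block to itself, and then exploit the uniqueness (up to a positive scalar) of the skew-symmetrizer on an indecomposable block together with $B(P)=\nu B(0)$. The only cosmetic difference is the final step, where the paper invokes the uniqueness of the minimal integer skew-symmetrizer to conclude $\nu D_0=D_0$ directly, while you run the equivalent ratio-and-cycle-length argument $r_C^{m}=1\Rightarrow r_C=1$.
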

\begin{proof}
Without loss of generality, one may assume that $B(0)$ is decomposed into
a block diagonal form such that each block
is indecomposable.
By \eqref{2eq:bmut1},
mutations preserve the block diagonal form of exchange matrices.
Moreover,
by \eqref{2eq:ymut1},
mutations do not mix $y$-variables whose indices belong to different blocks.
On the other hand,
due to
 the $\nu$-periodicity,
we have
$y_{i}(P)
=y_{\nu^{-1}(i)}(0)$.
Thus,
$\nu$ only permutes the indices in the same block of $B(0)$.
Therefore, we may further assume that $B(0)$ is indecomposable.
Then, it is easy to see that its skew-symmetrizer $D$ is unique up to a positive multiplicative constant.
In particular, there is a unique minimal integer skew-symmetrizer
$D_0=\mathrm{diag}(d'_1,\dots,d'_n)$.
Due to the $\nu$-periodicity,
we have
$B(P)
=\nu B(0)$.
It follows that
 the matrix $\nu D_0=\mathrm{diag}(d'_{\nu^{-1}(1)},\dots,d'_{\nu^{-1}(n)})$
 is also the minimal integer skew-symmetrizer of $B(0)$.
Due to the uniqueness $\nu D_0=D_0$, we have $d'_{\nu^{-1}(i)}=d'_i$ for any $i$.
Therefore, $d_{\nu(i)}=d_i$ holds.
\end{proof}

\section{Tropical limit}

Let $\bfy(s)=(y_1(s),\dots,y_n(s))$.
Then, each $y$-variable $y_i(s)$ is 
a rational function of the initial $y$-variables $\bfy_{t_0}=\bfy=(y_1,\dots,y_n)$
having a subtraction-free expression.
In particular, if $\bfy$ takes values in 
$\bbR_{>0}^n$, all $y$-variables $y_i(s)$ also take values in $\bbR_{>0}$.
More formally speaking, for any $\bfa=(a_1,\dots,a_n)\in \bbR_{>0}^n$,
we have a specialization homomorphism 
\begin{align}
\label{eq:special1}
\pi_{\bfa}: \bbQ_{\mathrm{sf}}(\bfy)\rightarrow \bbR_{>0},
\quad
y_i \mapsto a_i.
\end{align}
Since the function $f(\bfy)\in \bbQ_{\mathrm{sf}}(\bfy)$ does not  have any pole
in $\bbR_{>0}^n$
 due to the subtraction-free property,
 the map is defined.

Let us write the $C$-matrix and the $F$-polynomials for $\Upsilon(s)$
as $C(s)$ and $F_i(s)$,
where we drop the argument $\bfy$ for the $F$-polynomials for simplicity.
Let $\varepsilon_i(s)$ denote the tropical sign of $y_i(s)$
in Definition \ref{defn:tropsign1}.

The following fact is a consequence of 
the results in Section \ref{sec:sign1}.

\begin{lem}
\label{lem:limit1}
In the limit $\bfy \rightarrow \bfzero$,
each $y$-variable $y_i(s)$  converges to 0 
if $\varepsilon_i(s)=1$
and
diverges to $\infty$ if $\varepsilon_i(s)=-1$
as a function of $\bfy$ on $\bbR_{>0}^n$.
\end{lem}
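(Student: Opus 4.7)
The plan is to read off the behavior directly from the separation formula \eqref{eq:sep2}, which for free coefficients (so $\bbP = \bbQ_{\rmsf}(\bfy)$ and $F_{j}(s)|_\bbP(\bfy) = F_j(s)(\bfy)$ as ordinary rational functions) gives
\begin{align*}
y_i(s) = \biggl(\,\prod_{j=1}^n y_j^{c_{ji}(s)}\biggr)\prod_{j=1}^n F_j(s)(\bfy)^{b_{ji}(s)}.
\end{align*}
The strategy is to show that the second factor tends to $1$ as $\bfy \to \bfzero$, so that the asymptotic behavior is governed entirely by the monomial tropical part $\prod_j y_j^{c_{ji}(s)}$, whose sign pattern is precisely encoded by $\varepsilon_i(s)$.

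First, I would invoke Theorems \ref{thm:Fpol1} and \ref{thm:Lpos1}: every $F$-polynomial $F_j(s)(\bfy)$ is a polynomial in $y_1, \dots, y_n$ with nonnegative integer coefficients whose constant term equals $1$. Consequently, for $\bfy \in \bbR_{>0}^n$, we have $F_j(s)(\bfy) \ge 1$, and moreover $F_j(s)(\bfy) \to 1$ as $\bfy \to \bfzero$ by continuity. Hence the nontropical factor $\prod_{j=1}^n F_j(s)(\bfy)^{b_{ji}(s)}$ tends to $1$ in this limit, regardless of the signs of the exponents $b_{ji}(s)$.

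Next, I would apply Theorem \ref{thm:sign1} (sign-coherence) to the $c$-vector $\bfc_i(s) = (c_{ji}(s))_{j=1}^n$, which is either positive or negative, with tropical sign $\varepsilon_i(s)$ by Definition \ref{defn:tropsign1}. If $\varepsilon_i(s) = 1$, then every $c_{ji}(s) \ge 0$ with at least one strictly positive; since $y_j \to 0^+$, the monomial $\prod_j y_j^{c_{ji}(s)} \to 0$. If $\varepsilon_i(s) = -1$, then every $c_{ji}(s) \le 0$ with at least one strictly negative, so the same monomial diverges to $+\infty$. Combining with the previous paragraph yields the claimed limit behavior of $y_i(s)$.

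There is no real obstacle here: the statement is essentially an immediate consequence of the separation formula combined with the two deep inputs (sign-coherence and the unit constant property) that were recorded as Theorems \ref{thm:sign1} and \ref{thm:Fpol1}. The only point that deserves a line of care is distinguishing the symbolic $+$ in $\bbQ_{\rmsf}(\bfy)$ from the tropical $\oplus$, so that $F_j(s)|_{\bbP}(\bfy)$ really is evaluated as an honest polynomial (not as its tropicalization) along the path $\bfy \to \bfzero$ in $\bbR_{>0}^n$.
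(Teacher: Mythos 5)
Your proof is correct and follows essentially the same route as the paper: apply the separation formula \eqref{eq:sep2}, use the unit constant property (Theorem \ref{thm:Fpol1}) to see the nontropical factor tends to $1$, and use sign-coherence (Theorem \ref{thm:sign1}) to read the limit off the tropical monomial. The extra appeal to Laurent positivity and the remark about $\oplus$ versus $+$ are harmless refinements of the same argument.
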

\begin{proof}
In the current notation, the separation formula 
\eqref{eq:sep2} takes the form
\begin{align}
\label{eq:sep3}
y_{i}(s)&=
\biggl(\,
\prod_{j=1}^n
y_j^{c_{ji}(s)}
\biggr)
\prod_{j=1}^n
F_{j}(s)^{b_{ji}(s)}.
\end{align}
In the limit $\bfy \rightarrow \bfzero$, the nontropical part
converges to 1 thanks to Theorem \ref{thm:Fpol1}.
Meanwhile, thanks to Theorem \ref{thm:sign1},
the tropical part  converges to 0 
if $\varepsilon_i(s)=1$
and
diverges to $\infty$ if $\varepsilon_i(s)=-1$. 
\end{proof}

It is natural to call the limit $\bfy \rightarrow \bfzero$
the \emph{tropical limit}\index{tropical!limit}.

\section{DI associated with a period of a $Y$-pattern}
\label{sec:DI1}

Let us formulate the DI associated with a period of a $Y$-pattern.
For this purpose,
it is  convenient to choose a common skew-symmetrizer 
for $B(s)$ ($s=0$, \dots, $P-1$)
of the form
\begin{align}
\label{eq:Dd1}
D=\mathrm{diag}(\delta_1^{-1}, \dots, \delta_n^{-1})
\quad (\delta_i \in \bbZ_{>0}).
\end{align}
Such a skew-symmetrizer always (and not uniquely) exists.
For example, take any (rational positive) skew-symmetrizer $D=\mathrm{diag}(d_1,\dots,d_n)$ of $B$,
then divide it by the least common multiple of the numerators
of $d_1$, \dots, $d_n$.

Let us further introduce notations.
We  write $\varepsilon_s:=\varepsilon_{k_s}(s)$
$(s=0,\,\dots,\,P-1)$,
where $k_s$ is the direction of the mutation at time $s$ in \eqref{eq:mseq2}.
Then, we define integers $N_+$ and $N_-$ by
\begin{align}
\label{eq:Npm1}
N_+ & :=\sum_{s=0}^{P-1} \d_{k_s} \frac{1+\varepsilon_s}{2},
\quad
N_-  :=\sum_{s=0}^{P-1} \d_{k_s} \frac{1-\varepsilon_s}{2},
\end{align}
so that we have
\begin{align}
\label{eq:NN1}
N_+ +N_-& =\sum_{s=0}^{P-1} \d_{k_s}.
\end{align}
In other words,
$N_{\pm}$ is the total number of $s\in \{0,\,\dots,\,P-1\}$ such that
$\varepsilon_s = \pm1$ weighted by $\delta_{k_s}$.

The following theorem is the \emph{leitmotif} throughout the text.
\begin{thm}[{\cite[Thms.~6.1, 6.8]{Nakanishi10c}}]
\label{thm:DI1}
Suppose that a sequence of mutation \eqref{eq:mseq2}
is $\nu$-periodic
and that the initial  $y$-variables $\bfy$ take values in $\bbR_{>0}^n$.
 Then, the following
DIs hold:
\begin{align}
\label{eq:DI1}
\sum_{s=0}^{P-1}
\d_{k_s} \tilde L(y_{k_s}(s))
&= N_-\frac{\pi^2}{6} ,
\\
\label{eq:DI2}
\sum_{s=0}^{P-1}
\d_{k_s} \tilde L(y_{k_s}(s)^{-1})
&=N_+\frac{\pi^2}{6} ,
\\
\label{eq:DI3}
\sum_{s=0}^{P-1}
\varepsilon_s
\d_{k_s} \tilde L(y_{k_s}(s)^{\varepsilon_s})
&=0,
\end{align}
where $\tilde L$ is the modified Rogers dilogarithm in
\eqref{eq:L3}.
Moreover, the above three identities are equivalent to each other
under   Euler's identity \eqref{eq:euler3}.
\end{thm}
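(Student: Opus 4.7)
The plan is to adapt the two-step algebraic method (constancy plus boundary evaluation) that was used in Section~2 for Euler's and Abel's identities. I treat \eqref{eq:DI3} as the primary target, since each summand $\tilde L(y_{k_s}(s)^{\varepsilon_s})$ is finite throughout $\bbR_{>0}^n$ (including the tropical limit), and then derive \eqref{eq:DI1} and \eqref{eq:DI2} from \eqref{eq:DI3} using Euler's identity \eqref{eq:euler3} together with the counts \eqref{eq:Npm1}--\eqref{eq:NN1}.

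\textbf{Step 1 (Constancy).} View $F(\bfy) := \sum_{s=0}^{P-1} \varepsilon_s \delta_{k_s}\tilde L(y_{k_s}(s)^{\varepsilon_s})$ as a smooth function of $\bfy \in \bbR_{>0}^n$, with each $y_{k_s}(s)$ regarded as a subtraction-free rational function of $\bfy$ via the separation formula \eqref{eq:sep2}. The goal is $\partial F/\partial y_i \equiv 0$ for all $i$. Using the integral form \eqref{eq:L4} and the chain rule, this reduces to a purely algebraic identity among $\log y_j(s)$ and $\log(1+y_j(s))$. The plan is to reorganize the derivative as a telescoping sum along the mutation path \eqref{eq:mseq2}: at each step $s \to s+1$ in direction $k_s$, I pair the contribution $\varepsilon_s\delta_{k_s}\,d\tilde L(y_{k_s}(s)^{\varepsilon_s})$ with the change of a suitable ``boundary'' quantity attached to the seed $\Upsilon(s)$. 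The weighting by a skew-symmetrizer of the form \eqref{eq:Dd1} is essential here, as it is precisely what makes the local one-step identity symmetric in the input and output $y$-variables; Proposition~\ref{prop:compat1} combined with the $\nu$-periodicity \eqref{eq:sigmap2} then guarantees that the boundary terms at $s=0$ and $s=P$ cancel, leaving zero.

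\textbf{Step 2 (Tropical limit).} Once constancy is established, I evaluate $F$ by sending $\bfy \to \bfzero$. By Lemma~\ref{lem:limit1}, $y_{k_s}(s) \to 0$ when $\varepsilon_s = 1$ and $y_{k_s}(s) \to \infty$ when $\varepsilon_s = -1$, so $y_{k_s}(s)^{\varepsilon_s} \to 0$ in either case. Since $\tilde L(0) = 0$ by \eqref{eq:sp3}, every summand of $F$ vanishes in the limit, yielding $F \equiv 0$, which is \eqref{eq:DI3}. Applying Euler's identity \eqref{eq:euler3} to the summands with $\varepsilon_s = -1$ (respectively $\varepsilon_s = 1$) converts \eqref{eq:DI3} into \eqref{eq:DI1} (respectively \eqref{eq:DI2}) using $N_+ + N_- = \sum_s \delta_{k_s}$, and simultaneously establishes the mutual equivalence of the three identities claimed in the last sentence of the theorem.

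The main obstacle is Step~1: making the telescoping rigorous. The difficulty is that each $y_{k_s}(s)$ depends on $\bfy$ through the entire chain of preceding mutations, so the naive partial derivative $\partial y_{k_s}(s)/\partial y_i$ has no closed form. The cleanest route, I expect, is to eliminate $\bfy$ and instead localize at each mutation: prove a single one-step identity stating that under a mutation $\Upsilon(s) \to \Upsilon(s+1)$ in direction $k$, the differential of $\delta_k \varepsilon \tilde L(y_k^{\varepsilon})$ (for either sign $\varepsilon$) equals an exact one-form built out of the $\hat y$-variables at $\Upsilon(s)$. Summing along \eqref{eq:mseq2} and invoking $\nu$-periodicity then yields constancy without ever writing $y_{k_s}(s)$ explicitly as a function of $\bfy$. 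Whether the ``second alternative proof'' refines this telescoping or instead avoids differentiation altogether --- for instance by inducting on the length $P$ via a pentagon-type reduction --- is something I would decide only after carrying Step~1 out in detail.
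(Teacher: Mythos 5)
Your overall architecture is the paper's: establish constancy of the dilogarithm sum, then read off the constant in the tropical limit $\bfy\to\bfzero$, then pass between \eqref{eq:DI1}--\eqref{eq:DI3} via Euler's identity and $N_++N_-=\sum_s\delta_{k_s}$. Your Step~2 is correct and complete (targeting \eqref{eq:DI3} first, where the constant is $0$, is a harmless reordering of the paper's presentation, which evaluates \eqref{eq:DI1} directly using $\tilde L(\infty)=\pi^2/6$ for the terms with $\varepsilon_s=-1$). The equivalence of the three identities is also handled correctly.

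The gap is in Step~1, and it is exactly the hard part. First, the reduction of $\partial F/\partial y_i\equiv 0$ to ``a purely algebraic identity among $\log y_j(s)$ and $\log(1+y_j(s))$'' is not automatic: from \eqref{eq:L4}, $d\tilde L(f)=\tfrac12\bigl(\log(1+f)\,d\log f-\log f\,d\log(1+f)\bigr)$ is an \emph{antisymmetric} combination, and the sum over $s$ does not vanish as a literal identity among products of logarithms --- it vanishes only modulo symmetric tensors. The paper's mechanism for this is the Frenkel--Szenes constancy criterion (Theorem~\ref{thm:const2}): constancy follows once one proves the purely algebraic relation $\sum_{s}\delta_{k_s}\,y_{k_s}(s)\wedge(1+y_{k_s}(s))=0$ in $\bigwedge^2\bbQ_{\rmsf}(\bfy)$, and the passage from ``symmetric'' to ``derivative vanishes'' needs the two-variable evaluation trick \eqref{eq:sym2}--\eqref{eq:sym4}. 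Your proposal never identifies this wedge-product formulation. Second, your candidate one-step identity --- that $d\bigl(\delta_k\varepsilon\tilde L(y_k^{\varepsilon})\bigr)$ ``equals an exact one-form'' --- is vacuous as stated, since the differential of any function is exact; what is actually needed is a seed-attached potential $V(s)$ with $V(s+1)-V(s)=\delta_{k_s}\,y_{k_s}(s)\wedge(1+y_{k_s}(s))$, which is Proposition~\ref{prop:FG1} with $V_t$ as in \eqref{eq:Vt1}, built from $F$-polynomials and tropical $y$-variables. Its proof, and the periodicity $V(P)=V(0)$, require sign-coherence, the unit-constant property of $F$-polynomials, the separation formula, and synchronicity --- none of which your sketch invokes. (Your alternative of working with $\hat y$-variables, as in the element $W_t$ of the paper's closing Remark, only yields constancy directly when $B$ is nonsingular, so it does not bypass these inputs either.) Until one of these devices is supplied, the constancy claim is unproved.
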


They are functional identities with respect to the  initial $y$-variables
$\bfy$  in $\bbR_{>0}^n$.
We call them together 
 the \emph{DI associated with a period of a free $Y$-pattern (or a cluster pattern)}\index{dilogarithm identity!for period of $Y$-pattern}.

Let us closely look at the first DI  \eqref{eq:DI1}.
According to the algebraic method in Section \ref{sec:Rogers1},
the claim of the identity is separated into two parts,
namely, the \emph{constancy} and the \emph{constant term}.
Assuming the constancy, 
the constant term is immediately obtained
by considering the tropical limit $\bfy \rightarrow \bfzero$.
Thanks to  \eqref{eq:sp3} and Lemma \ref{lem:limit1},
only the terms 
in \eqref{eq:DI1} with $\varepsilon_s=-1$
contribute to the sum.
This yields the constant term $N_-(\pi^2/6)$.
The other DIs \eqref{eq:DI2} and \eqref{eq:DI3}
are equivalent to \eqref{eq:DI1}
 by  Euler's identity \eqref{eq:euler3} and \eqref{eq:NN1}.
Thus,
we are left to prove the constancy of the LHS of
 \eqref{eq:DI1}.
 We will give two alternative proofs in
 Sections \ref{sec:first1} and \ref{sec:second1}.

 \begin{rem}
 \label{rem:DI1}
 The constancy and the constant values of the above DIs do not depend on the choice of
 the initial vertex $t_0$
 because
 changing $t_0$ only causes the change of variables 
 of the identities.
In other words, even though 
 each tropical sign $\varepsilon_s$ depends
 on $t_0$,
 the sums $N_{\pm}$ are   independent of $t_0$.
\end{rem}

Let us see some examples of the DIs in Theorem \ref{thm:DI1}.

\begin{ex}
The involution property of mutations
\begin{align}
&\Upsilon(0) 
\
{\buildrel {k} \over \rightarrow}
\
\Upsilon(1) 
\
{\buildrel {k} \over \rightarrow}
\
\Upsilon(2)=\Upsilon(0)
\end{align}
is regarded as the simplest example of periodicity.
The corresponding DIs \eqref{eq:DI1} and \eqref{eq:DI2} are both written as
\begin{align}
\tilde L(y_k(0))+\tilde L(y_k(0)^{-1})=
\frac{\pi^2}{6}.
\end{align}
This is  Euler's identity \eqref{eq:euler3}.
Meanwhile, the last one \eqref{eq:DI3} is a trivial identity
\begin{align}
\tilde L(y_k(0))-\tilde L(y_k(0))=0.
\end{align}
\end{ex}

\begin{ex}
 Let us consider the pentagon periodicity
\eqref{eq:pentp1}
 for the  free $Y$-pattern of type $A_2$
 in Example \ref{ex:typeA22}.
The sequence of mutations  is written as
\begin{align}
\label{eq:A2seq1}
&\Upsilon(0) 
\
{\buildrel {1} \over \rightarrow}
\
\Upsilon(1) 
\
{\buildrel {2} \over \rightarrow}
\
\Upsilon(2)
\
{\buildrel {1} \over \rightarrow}
\
\Upsilon(3) 
\
{\buildrel {2} \over \rightarrow}
\
\Upsilon(4)
\
{\buildrel {1} \over \rightarrow}
\
\Upsilon(5)=\tau_{12} \Upsilon(0)
.
\end{align}
We choose a skew-symmetrizer $D=I$.
We set $\bfy(0)=\bfy_{t_0}=\bfy$.
By \eqref{eq:A2mut1}--\eqref{eq:A2mut2},
the $y$-variables $y_{k_s}(s)$ concerning the DI \eqref{eq:DI1} are
given by
\begin{align}
\label{eq:ys3}
\begin{split}
y_1(0)&=y_1,\
y_2(1)=y_2(1+y_1),\
y_1(2)=y_1^{-1}(1+y_2+y_1y_2),\\
y_2(3)&=y_1^{-1}y_2^{-1}(1+y_2),\
y_1(4)=y_2^{-1}.
\end{split}
\end{align}
Also, by \eqref{eq:A2mut3}--\eqref{eq:A2mut4},
we have
\begin{gather}
\varepsilon_0=
\varepsilon_1=1,
\quad
\varepsilon_2=
\varepsilon_3=
\varepsilon_4
=-1,
\\
N_+=2,
\quad
N_-=3.
\end{gather}
Then, 
the corresponding DI \eqref{eq:DI1}
is explicitly written as
\begin{align}
\tilde L(y_{1}(0))
+\tilde L(y_{2}(1))
+\tilde L(y_{1}(2))
+\tilde L(y_{2}(3))
+\tilde L(y_{1}(4))
&=\frac{\pi^2}{2}.
\end{align}
This is exactly the pentagon identity in the form \eqref{eq:pent6}.
In fact, the variables $Y_i$ in \eqref{eq:x1} are taken from \eqref{eq:ys3}.
Also, the DI without a constant term \eqref{eq:DI3} is the one \eqref{eq:pent7}.
\end{ex}

\begin{ex} 
\label{ex:BC1}
We give two more examples of rank 2 periodicity.
See \cite[\S2.3]{Nakanishi22a} for the calculation of the mutations.
\par
(a) Type $B_2$. Under the same parametrization of the 2-regular tree $\bbT_2$ in Example \ref{ex:typeA22},
we replace $B$ in \eqref{eq:BA2} with the following one and take the following choice of   a skew-symmetrizer $D$:
\begin{align}
\label{eq:BB2}
B=\begin{pmatrix}
0 & -1
\\
2 & 0
\end{pmatrix}
,
\quad
D=\begin{pmatrix}
1 & 0
\\
0 & 1/2
\end{pmatrix}
,
\quad
\d_1=1,\ \d_2=2.
\end{align}
The Cartan counterpart $A(B)$ is the Cartan matrix of type $B_2$.
A long and straightforward (but educational) calculation, which is similar to Example \ref{ex:typeA22},
yields the periodicity of the free $Y$-pattern
\begin{align}
\label{eq:B2seq1}
&\Upsilon(0) 
\
{\buildrel {1} \over \rightarrow}
\
\Upsilon(1) 
\
{\buildrel {2} \over \rightarrow}
\
\Upsilon(2)
\
{\buildrel {1} \over \rightarrow}
\
\Upsilon(3) 
\
{\buildrel {2} \over \rightarrow}
\
\Upsilon(4)
\
{\buildrel {1} \over \rightarrow}
\
\Upsilon(5)
\
{\buildrel {2} \over \rightarrow}
\
\Upsilon(6)=\Upsilon(0)
.
\end{align}
The $y$-variables $y_{k_s}(s)$ concerning the DI \eqref{eq:DI1} are
\begin{align}
\begin{split}
y_1(0)&=y_1,\
y_2(1)=y_2(1+y_1),\
y_1(2)=y_1^{-1}(1+y_2+y_1y_2)^2,\\
y_2(3)&=y_1^{-1}y_2^{-1}(1+2y_2+y_2^2+y_1y_2^2),\
y_1(4)=y_1^{-1}y_2^{-2}(1+y_2)^2,
\\
y_2(5)&=y_2^{-1}.
\end{split}
\end{align}
Also, we have
\begin{gather}
\varepsilon_0=
\varepsilon_1=1,
\quad
\varepsilon_2=
\cdots =
\varepsilon_5
=-1,
\\
N_+=3,
\quad
N_-=6.
\end{gather}

\par
(b) Type $G_2$. We replace $B$ and $D$ in \eqref{eq:BB2}
with
\begin{align}
\label{eq:BG2}
B=\begin{pmatrix}
0 & -1
\\
3 & 0
\end{pmatrix}
,
\quad
D=\begin{pmatrix}
1 & 0
\\
0 & 1/3
\end{pmatrix}
,
\quad
\d_1=1,\ \d_2=3.
\end{align}
The Cartan counterpart $A(B)$ is the Cartan matrix of type $G_2$.
Then,  we have the periodicity of  the free $Y$-pattern
\begin{align}
\label{eq:G2seq1}
&\Upsilon(0) 
\
{\buildrel {1} \over \rightarrow}
\
\Upsilon(1) 
\
{\buildrel {2} \over \rightarrow}
\
\Upsilon(2)
\
{\buildrel {1} \over \rightarrow}
\
\Upsilon(3) 
\
{\buildrel {2} \over \rightarrow}
\
\cdots
\
{\buildrel {1} \over \rightarrow}
\
\Upsilon(7)
\
{\buildrel {2} \over \rightarrow}
\
\Upsilon(8)=\Upsilon(0)
.
\end{align}
The $y$-variables $y_{k_s}(s)$ concerning the DI \eqref{eq:DI1} are
\begin{align}
\begin{split}
y_1(0)&=y_1,\
y_2(1)=y_2(1+y_1),\
y_1(2)=y_1^{-1}(1+y_2+y_1y_2)^3,\\
y_2(3)&=y_1^{-1}y_2^{-1}(1+3y_2+3y_2^2+y_2^3 + 3y_1y_2^2+2y_1y_2^3 +y_1^2 y_2^3),\\
y_1(4)&=y_1^{-2}y_2^{-3}(1+2y_2+y_2^2 +y_1y_2^2)^3,
\\
y_2(5)&=y_1^{-1}y_2^{-2}(1+3y_2 +3y_2^2+y_2^3 +y_1y_2^3),
\\
y_1(6)&=y_1^{-1}y_2^{-3}(1+y_2)^3,
\quad
y_2(7)=y_2^{-1}.
\end{split}
\end{align}
Also, we have
\begin{gather}
\varepsilon_0=
\varepsilon_1=1,
\quad
\varepsilon_2=
\cdots =
\varepsilon_7
=-1,
\\
N_+=4,
\quad
N_-=12.
\end{gather}

\end{ex}

\section{Constancy condition}

To prove the constancy of the DI 
 \eqref{eq:DI1},
we use the constancy condition given by Frenkel-Szenes \cite{Frenkel95},
following the idea of Chapoton \cite{Chapoton05}.

Let $G$ be any multiplicative abelian group.
Let $G\otimes G$ be its tensor product over $\mathbb{Z}$,
that is, the additive abelian group with the generators $f\otimes g$ ($f,\,g\in G$) and the relations
\begin{align}
\label{eq:tensor1}
(fg)\otimes h = f\otimes h + g\otimes h,
\quad
f\otimes (gh) = f\otimes g + f\otimes h.
\end{align}
We have
\begin{align}
1\otimes h = h \otimes 1 = 0,
\quad
f\otimes g^{-1}=f^{-1}\otimes g=-f\otimes g.
\end{align}
Let $S^2 G$ be the {\em symmetric subgroup} of
$G\otimes G$,
namely, the subgroup generated by all {\rp $f\otimes f$ ($f\in G)$}.
{\rp Thus, we have $f\otimes g + g \otimes f\in G$  ($f,\,g\in G)$.}
The {\em wedge product} of $G$ is defined by
$\bigwedge^2 G= G\otimes G/
S^2 G$.
The symbol $f\wedge g$ denote the equivalence class of $f\otimes g$ in
$\bigwedge^2 G$ as usual.

Let
$\mathcal{C}=\mathcal{C}(I,\bbR_{>0})$ be the set of all 
  differentiable  $\bbR_{>0}$-valued functions on the interval $I=[0,1]$ in $\mathbb{R}$.
  The set  $\mathcal{C}$ is equipped with a semifield structure 
  by the usual multiplication and the addition of functions.
Regarding it as 
a multiplicative abelian group, we have
 the wedge product 
$\bigwedge^2\mathcal{C}$ as defined above.

The following sufficient condition for the constancy of the dilogarithm sum
was given by Frenkel-Szenes \cite{Frenkel95},
whose idea originated in
the work of Bloch \cite{Bloch78}  on the Bloch-Wigner function.

\begin{thm}[{\cite[Proposition 1]{Frenkel95}}]
\label{thm:const2}
Let $f_1(u)$, \dots, $f_n(u) \in \mathcal{C}$.
Suppose that they satisfy the following  relation in $\bigwedge^2\mathcal{C}$:
\begin{align}
\label{eq:const1}
\text{\emph{(Constancy condition)}}\index{constancy condition}
\quad
\sum_{t=1}^r   f_t \wedge (1+f_t) = 0.
\end{align}
Then, the  sum of the modified Rogers dilogarithm
\begin{align}
\label{eq:sum2}
\sum_{t=1}^r
\tilde L
\left(
f_t(u)
\right)
\end{align}
is constant as a function of $u$ on $I$.
\end{thm}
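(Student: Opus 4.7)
The plan is to prove constancy by differentiating the sum \eqref{eq:sum2} in $u$ and recognizing the derivative as the image, under a suitable linear map, of the left-hand side of the constancy condition \eqref{eq:const1}.

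First I would compute $\tilde L'(x)$ directly from \eqref{eq:L4}, obtaining
\begin{align*}
\tilde L'(x)=\frac{1}{2}\Bigl(\frac{\log(1+x)}{x}-\frac{\log x}{1+x}\Bigr).
\end{align*}
Applying this with $x=f(u)$ and using $f'(u)/f(u)=(\log f)'(u)$ and $f'(u)/(1+f(u))=(\log(1+f))'(u)$ (since $(1+f)'=f'$), I would deduce
\begin{align*}
\frac{d}{du}\tilde L(f(u))=\frac{1}{2}\bigl[\log(1+f)\,(\log f)' - \log f\,(\log(1+f))'\bigr].
\end{align*}
This is the central identity linking the dilogarithm to the wedge product.

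Next I would introduce the $\mathbb{Z}$-bilinear, antisymmetric map $\Phi\colon \mathcal{C}\otimes \mathcal{C}\to \mathcal{C}(I,\bbR)$ defined on generators by
\begin{align*}
\Phi(f\otimes g):= \log f\,(\log g)' - \log g\,(\log f)'.
\end{align*}
Because the logarithm turns the multiplicative group structure of $\mathcal{C}$ into an additive one, the assignment is bilinear with respect to the relations \eqref{eq:tensor1}; antisymmetry of $\Phi$ under swapping $f$ and $g$ is immediate. Hence $\Phi$ descends to a well-defined homomorphism $\bigwedge^2\mathcal{C}\to \mathcal{C}(I,\bbR)$, which I will still call $\Phi$.

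Combining the two steps, the derivative of a single summand is precisely
\begin{align*}
\frac{d}{du}\tilde L(f_t(u))=-\frac{1}{2}\,\Phi\bigl(f_t\wedge(1+f_t)\bigr),
\end{align*}
so summing over $t$ and invoking \eqref{eq:const1} gives
\begin{align*}
\frac{d}{du}\sum_{t=1}^r \tilde L(f_t(u))=-\frac{1}{2}\,\Phi\Bigl(\sum_{t=1}^r f_t\wedge(1+f_t)\Bigr)=-\frac{1}{2}\,\Phi(0)=0,
\end{align*}
which proves constancy on the interval $I$. The only delicate point is confirming that $\Phi$ genuinely factors through the wedge product (i.e., that it respects both the tensor relations coming from multiplication and the symmetric subgroup), but this is a direct verification using only the product rule for logarithmic derivatives. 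The rest of the argument is a clean chain rule computation together with the definition of $\tilde L$.
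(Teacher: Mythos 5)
Your proof is correct. The core computation is the same as the paper's: you both start from the derivative formula
\begin{align*}
2\,\frac{d}{du}\tilde L(f(u))=\log(1+f)\,(\log f)'-\log f\,(\log(1+f))',
\end{align*}
and both arguments exploit the antisymmetry of this expression in the pair $(\log f,\log(1+f))$. Where you differ is in how the hypothesis $\sum_t f_t\wedge(1+f_t)=0$ is brought to bear. The paper lifts the wedge relation to the tensor product, writing $\sum_t f_t\otimes(1+f_t)$ explicitly as a sum of symmetric elements $g_i\otimes h_i+h_i\otimes g_i$, and then uses the two-variable evaluation homomorphism $\Psi_{u,v}(f\otimes g)=\log f(u)\cdot\log g(v)$ together with the trick of differentiating in $u$ (resp.\ $v$) and setting $v=u$; the antisymmetrized combination of the two resulting identities kills the symmetric right-hand side. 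You instead define the antisymmetrized logarithmic-derivative pairing $\Phi(f\otimes g)=\log f\,(\log g)'-\log g\,(\log f)'$ once and for all, verify that it is additive in each slot (via $\log(fg)=\log f+\log g$ and its derivative) and vanishes on the symmetric subgroup, and conclude that it factors through $\bigwedge^2\mathcal{C}$. This is a cleaner packaging of the same mechanism: it avoids choosing an explicit symmetric decomposition and the auxiliary two-variable function, at the cost of having to check well-definedness on the quotient, which as you note is immediate. Your sign bookkeeping ($\frac{d}{du}\tilde L(f_t)=-\frac{1}{2}\Phi(f_t\otimes(1+f_t))$) is also correct, so the conclusion follows.
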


\begin{proof}
By \eqref{eq:L4}, for each $t=1$, \dots, $r$, we have
\begin{align}
\label{eq:dL1}
\begin{split}
&\ 2
\frac{d}{du}
 \tilde{L}
\left( f_t(u)\right)
\\
=& \
 \frac{d}{du}
 \log f_t(u)
 \cdot
 \log (1+f _t(u))
-
\log f_t(u)\cdot \frac{d}{du}
\log (1+f_t(u)).
\end{split}
\end{align}
On the other hand, by the assumption \eqref{eq:const1},
there are some $k\geq 0$ and $g_i,\, h_i \in \mathcal{C}$ ($i=1$, \dots, $k$) such that
\begin{align}
\label{eq:sym1}
\sum_{t=1}^r
f_t \otimes (1+f_t)
=
\sum_{i=1}^k
( g_i \otimes h_i + h_i \otimes g_i).
\end{align}
For each $u$, $v\in I$, we have an additive group homomorphism
$\Psi_{u,v}:
\mathcal{C}\otimes\mathcal{C}
\rightarrow \mathbb{R}$,
$f\otimes g \mapsto \log f(u) \cdot
\log g(v)$.
Applying it on 
\eqref{eq:sym1}, we have
\begin{align}
\label{eq:sym2}
\begin{split}
&\ \sum_{t=1}^r
\log f_t(u)\cdot \log (1+f_t(v) )
\\
=&\
\sum_{i=1}^k
\big( \log g_i(u) \cdot \log h_i(v) + \log h_i(u) \cdot \log g_i(v)
\big).
\end{split}
\end{align}
Then, taking the derivative for $u$ and setting $v=u$, we have
\begin{align}
\label{eq:sym3}
\begin{split}
&\ \sum_{t=1}^r
\frac{d}{du}
 \log f_t(u)\cdot \log (1+f_t(u))\\
=&\
\sum_{i=1}^k
\left(\frac{d}{du}
  \log g_i(u) \cdot \log h_i(u) + \frac{d}{du}
 \log h_i(u) \cdot \log g_i(u)
\right).
\end{split}
\end{align}
Similarly,
taking the derivative for $v$ and setting $v=u$, we have
\begin{align}
\label{eq:sym4}
\begin{split}
&\
\sum_{t=1}^r
 \log f_t(u)\cdot\frac{d}{du}
 \log (1+f_t(u))
 \\
=&\
\sum_{i=1}^k
\left(  \log g_i(u) \cdot\frac{d}{du}
\log h_i(u) + \log h_i(u) \cdot\frac{d}{du}
 \log g_i(u)
\right).
\end{split}
\end{align}
Note that the RHSs of
\eqref{eq:sym3} and \eqref{eq:sym4} are identical.
Thus, by \eqref{eq:dL1},
\eqref{eq:sym3}, and \eqref{eq:sym4},
we obtain the equality
\begin{align}
\frac{d}{du}
\sum_{t=1}^r
 \tilde{L}
\left( f_t(u)\right)
=0.
\end{align}
\end{proof}

\begin{rem}
To be precise, the original condition in \cite{Frenkel95}
was given for the function
\begin{align}
\label{eq:const2}
g_t = \frac{f_t}{1+f_t},
\end{align}
and it has the form
\begin{align}
\label{eq:const3}
\sum_{t=1}^r g_t \wedge (1-g_t) = 0.
\end{align}
It was converted to
 the one \eqref{eq:const1}
by Chapoton \cite{Chapoton05}.
\end{rem}

To apply Theorem \ref{thm:const2}
to  the constancy
of the LHS of \eqref{eq:DI1},
we need to make some adjustments.

Firstly,
we  introduce a multiplicity $m_t \in \bbZ_{>0}$ to each function $f_t$ above.
Then, 
\eqref{eq:const1} and \eqref{eq:sum2} 
are written as
\begin{align}
\label{eq:const4}
\sum_{t=1}^r   m_t f_t \wedge (1+f_t) = 0
\end{align}
and 
\begin{align}
\label{eq:sum4}
\sum_{t=1}^r
m_t 
\tilde L
\left(
f_t(u)
\right).
\end{align}

Secondly, we regard the $y$-variables $y_{k_s}(s)$
in Theorem \ref{thm:DI1}
 as
functions of multi-variables $\bfy$ on $\bbR_{>0}^n$,
while $f_t$ in Theorem \ref{thm:const2} are
 functions of a single variable $u$ on $I=[0,1]$.
So,  following the argument of \cite{Frenkel95},
we recast Theorem \ref{thm:const2} in a suitable form
as follows.
 \begin{prop}
 Let $y_{k_s}(s)\in \bbQ_{\mathrm{sf}}(\bfy)$ $(s=0,\, \dots,\, P-1)$.
Suppose that  the following relation holds in $\bigwedge^2
\bbQ_{\mathrm{sf}}(\bfy)$:
\begin{align}
\label{eq:const51}
\sum_{s=0}^{P-1}  \d_{k_s}  y_{k_s}(s) \wedge (1+y_{k_s}(s)) = 0.
\end{align}
Then, the  sum of the modified Rogers dilogarithm
\begin{align}
\label{eq:sum21}
\sum_{s=0}^{P-1}
 \d_{k_s}
\tilde L
\left(
y_{k_s}(s)
\right)
\end{align}
is constant as a function of $\bfy$ on $\bbR_{>0}^n$.
\end{prop}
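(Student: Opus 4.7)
The plan is to reduce the multivariable statement to the single-variable criterion of Frenkel-Szenes (Theorem \ref{thm:const2}), together with the routine adaptation of that criterion to positive-integer multiplicities as indicated in \eqref{eq:const4}. A function on the connected open set $\bbR_{>0}^n$ is constant if and only if it is constant along every smooth path, so it suffices to verify constancy along arbitrary curves.

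First, I would record the multiplicity version of Theorem \ref{thm:const2}: if $f_1,\dots,f_r\in\calC$ and $m_1,\dots,m_r\in\bbZ_{>0}$ satisfy $\sum_{t=1}^r m_t\, f_t\wedge(1+f_t)=0$ in $\bigwedge^2\calC$, then $\sum_{t=1}^r m_t\tilde{L}(f_t(u))$ is constant on $[0,1]$. This follows at once from Theorem \ref{thm:const2} by either regarding the $t$-th term as $m_t$ identical copies of $f_t$ or, equivalently, by noting that the key identities \eqref{eq:sym3}--\eqref{eq:sym4} in its proof remain valid when each term on the left is weighted by $m_t$, since the output homomorphism $\Psi_{u,v}$ is additive.

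Next, pick any smooth curve $\gamma\colon[0,1]\to\bbR_{>0}^n$. Every $f(\bfy)\in\bbQ_{\mathrm{sf}}(\bfy)$ admits a subtraction-free expression $p(\bfy)/q(\bfy)$ with $p,q$ strictly positive on $\bbR_{>0}^n$, so $f\circ\gamma$ is a strictly positive smooth function on $[0,1]$. Hence the pointwise specializations \eqref{eq:special1} along $\gamma$ assemble into a semifield homomorphism
\[
\gamma^{*}\colon \bbQ_{\mathrm{sf}}(\bfy)\longrightarrow \calC,\qquad f(\bfy)\mapsto f\circ\gamma.
\]
Being a multiplicative group homomorphism, $\gamma^{*}$ induces an additive group homomorphism $\gamma^{*}_{\wedge}\colon\bigwedge^{2}\bbQ_{\mathrm{sf}}(\bfy)\to\bigwedge^{2}\calC$. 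Applying $\gamma^{*}_{\wedge}$ to the hypothesis \eqref{eq:const51} yields
\[
\sum_{s=0}^{P-1}\d_{k_s}\bigl(y_{k_s}(s)\circ\gamma\bigr)\wedge\bigl(1+y_{k_s}(s)\circ\gamma\bigr)=0
\]
in $\bigwedge^{2}\calC$. The multiplicity version of Theorem \ref{thm:const2} then says that $u\mapsto\sum_{s=0}^{P-1}\d_{k_s}\tilde{L}\bigl(y_{k_s}(s)(\gamma(u))\bigr)$ is constant on $[0,1]$, so the sum \eqref{eq:sum21} takes the same value at $\gamma(0)$ and $\gamma(1)$.

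Since $\bbR_{>0}^{n}$ is path-connected, any two of its points are joined by such a curve, and the sum \eqref{eq:sum21} is therefore constant on $\bbR_{>0}^{n}$. The only point that requires genuine attention is the passage from the wedge relation in $\bigwedge^{2}\bbQ_{\mathrm{sf}}(\bfy)$ to the one in $\bigwedge^{2}\calC$, which amounts to checking that $\gamma^{*}$ is well defined as a semifield map; this is guaranteed by the subtraction-free nature of the ambient semifield. I do not foresee any substantial obstacle beyond Theorem \ref{thm:const2} itself.
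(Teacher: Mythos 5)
Your proof is correct and follows essentially the same route as the paper's: specialize along a path in $\bbR_{>0}^n$ (the paper uses the linear interpolation $\bfa(u)=(1-u)\bfa_0+u\bfa_1$, you allow an arbitrary smooth curve), observe that specialization gives a semifield homomorphism into $\calC$ and hence a map on $\bigwedge^2$, push forward the relation \eqref{eq:const51}, and apply Theorem \ref{thm:const2} with the positive-integer multiplicities $\d_{k_s}$. Your explicit justification of the multiplicity version is exactly the adjustment the paper records in \eqref{eq:const4}--\eqref{eq:sum4}, so nothing is missing.
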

\begin{proof}
Let $\bfa_0$ and $\bfa_1$ be arbitrary points in $\bbR_{>0}^n$.
We linearly interpolate them as 
$\bfa(u)=(1-u)\bfa_0 + u \bfa_1$ ($u\in [0,1]$),
and apply the specialization homomorphism
$\pi_{\bfa(u)}\colon \bbQ_{\mathrm{sf}}(\bfy) \rightarrow \bbR_{>0}$
in \eqref{eq:special1} as
\begin{align}
f_s(u):=\pi_{\bfa(u)}(y_{k_s}(s))
\quad
(s=0,\,\dots,\, P-1).
\end{align}
Then, we have functions $f_s(u)\in \mathcal{C}=\mathcal{C}([0,1],\bbR_{>0})$.
Moreover, since $\pi_{\bfa(u)}$ induces a semifield homomorphism
$\bbQ_{\mathrm{sf}}(\bfy)\rightarrow \mathcal{C}$,
the relation \eqref{eq:const51} implies the relation
\begin{align}
\label{eq:const6}
\sum_{s=0}^{P-1}  \d_{k_s}  f_s \wedge (1+f_s) = 0
\quad
\text{in $\textstyle \bigwedge^2 \mathcal{C}$}.
\end{align}
Then, by Theorem \ref{thm:const2}
and the modification in \eqref{eq:sum4},
the sum \eqref{eq:sum21}  coincides for
the specializations at $\bfy=\bfa_0$ and $\bfy=\bfa_1$.
This is the desired constancy.
\end{proof}

In summary,
to complete the proof of Theorem  \ref{thm:DI1},
it is enough to prove the following claim.

\begin{prop}[Constancy {\cite[Prop.~6.3]{Nakanishi11c}}]
\label{prop:const2}
Suppose that a sequence of mutation \eqref{eq:mseq2}
is $\nu$-periodic.
Then, the following relation holds in $\bigwedge^2
\bbQ_{\mathrm{sf}}(\bfy)$:
\begin{align}
\label{eq:const5}
\sum_{s=0}^{P-1}  \d_{k_s}  y_{k_s}(s) \wedge (1+y_{k_s}(s)) = 0.
\end{align}
\end{prop}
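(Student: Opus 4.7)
The plan is to prove the Frenkel--Szenes constancy condition \eqref{eq:const5} via a telescoping argument built on a $\bigwedge^2\bbQ_{\rmsf}(\bfy)$-valued potential $\Omega(\Upsilon)$ on $Y$-seeds. I would seek $\Omega$ satisfying two properties: (i) a single-mutation formula
$$
\Omega(\mu_k\Upsilon)-\Omega(\Upsilon)\ =\ -\d_k\,y_k\wedge(1+y_k)
$$
in $\bigwedge^2\bbQ_{\rmsf}(\bfy)$; and (ii) $\Omega(\nu\Upsilon)=\Omega(\Upsilon)$ for every permutation $\nu$ compatible with the skew-symmetrizer $D$ in the sense of \eqref{eq:comd1}. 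Given both, the $\nu$-periodicity $\Upsilon(P)=\nu\Upsilon(0)$ combined with Proposition \ref{prop:compat1}, which guarantees that $\nu$ is $D$-compatible, yields $\Omega(\Upsilon(P))=\Omega(\Upsilon(0))$, so telescoping immediately gives
$$
\sum_{s=0}^{P-1}\d_{k_s}\,y_{k_s}(s)\wedge(1+y_{k_s}(s))\ =\ -\bigl(\Omega(\Upsilon(P))-\Omega(\Upsilon(0))\bigr)\ =\ 0,
$$
as required.

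The natural candidate, echoing the Fock--Goncharov Poisson form on the $y$-torus, is
$$
\Omega(\bfy,B)\ =\ \sum_{i<j}\d_j\,b_{ij}\,y_i\wedge y_j.
$$
It is well-defined: the skew-symmetrizer identity $\d_j b_{ij}=-\d_i b_{ji}$, combined with the antisymmetry of $\wedge$, makes each summand invariant under the swap $i\leftrightarrow j$, so the ordering $i<j$ is a mere convention. Property (ii) is then a direct reindexing: substituting $i=\nu(i')$, $j=\nu(j')$ in $\Omega(\nu\Upsilon)$ and using $\d_{\nu(j')}=\d_{j'}$ returns $\Omega(\Upsilon)$ term by term.

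The core of the proof is property (i). I would substitute the mutations \eqref{2eq:ymut1} and \eqref{2eq:bmut1} into $\Omega(\mu_k\Upsilon)$, expand each factor $y'_i=y_i\cdot y_k^{[b_{ki}]_+}\cdot(1+y_k)^{-b_{ki}}$ by the multiplicativity of $\wedge$, and split the sum by whether $k$ equals $i$, $j$, or neither. The rank-$2$ sanity check of type $A_2$ (Example \ref{ex:typeA22}, initial seed, $k=1$) reads $\Omega(\Upsilon)=-y_1\wedge y_2$ and $\Omega(\mu_1\Upsilon)=-y_1\wedge y_2-y_1\wedge(1+y_1)$, giving $\Omega(\mu_1\Upsilon)-\Omega(\Upsilon)=-\d_1\,y_1\wedge(1+y_1)$, consistent with (i).

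The main obstacle is the combinatorial bookkeeping of (i) in full generality. The three cases produce contributions of wedge types $y_i\wedge y_j$, $y_i\wedge y_k$, $y_i\wedge(1+y_k)$, and $y_k\wedge(1+y_k)$; the cross terms $y_i\wedge y_k$ and $y_i\wedge(1+y_k)$ with $i\ne k$ must cancel against the matrix-mutation correction $b'_{ij}-b_{ij}=b_{ik}[b_{kj}]_++[-b_{ik}]_+b_{kj}$ after applying the sign identity $a=[a]_+-[-a]_+$, while the pure $y_k\wedge(1+y_k)$ coefficient reduces to $-\d_k$ by using the skew-symmetrizability relation once more. Once these local cancellations are verified, property (ii), Proposition \ref{prop:compat1}, and the telescoping combine to finish the argument.
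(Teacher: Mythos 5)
Your overall architecture --- a $\bigwedge^2$-valued potential on $Y$-seeds whose single-mutation increment is $-\d_k\,y_k\wedge(1+y_k)$, plus Proposition \ref{prop:compat1} and telescoping --- is exactly the right shape (it is the shape of one of the standard proofs). The fatal gap is that your candidate potential does not satisfy the key formula (i), and your only check (type $A_2$, first mutation) is too degenerate to detect this. Already in rank $2$ the coefficient comes out wrong: for the type $B_2$ seed of Example \ref{ex:BC1}(a) one has $b_{12}=-1$, $\d_1=1$, $\d_2=2$, so $\Omega(\Upsilon)=\d_2b_{12}\,y_1\wedge y_2=-2\,y_1\wedge y_2$, while $\mu_1$ gives $y'_1=y_1^{-1}$, $y'_2=y_2(1+y_1)$, $b'_{12}=1$, hence $\Omega(\mu_1\Upsilon)=2\,y_1^{-1}\wedge\bigl(y_2(1+y_1)\bigr)=-2\,y_1\wedge y_2-2\,y_1\wedge(1+y_1)$ and $\Omega(\mu_1\Upsilon)-\Omega(\Upsilon)=-2\,y_1\wedge(1+y_1)\neq-\d_1\,y_1\wedge(1+y_1)$. (Your $A_2$ check succeeds only because there $\sum_{j\neq k}\d_jb_{kj}^2=1=\d_k$.) In rank $\geq 3$ the failure is not even a scalar: expanding $y'_i\wedge y'_j$ for $i,j\neq k$ produces cross terms $y_i\wedge(1+y_k)$ whose total coefficient in $\Omega(\mu_k\Upsilon)$ is $-\sum_{j}\lambda'_{ij}b_{kj}$ with $\lambda_{ij}=\d_jb_{ij}$; for type $A_3$ mutated at the middle vertex $k=2$ this equals $-b'_{13}b_{23}=-1\neq 0$, and $y_1\wedge(1+y_2)$ is a nonzero element of $\bigwedge^2\bbQ_{\rmsf}(\bfy)$ independent of $y_2\wedge(1+y_2)$ (the polynomials $y_1,y_2,y_3,1+y_2$ are multiplicatively independent). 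No diagonal reweighting of $y_i\wedge y_j$ removes these terms, so $\Omega(\mu_k\Upsilon)-\Omega(\Upsilon)$ is simply not a multiple of $y_k\wedge(1+y_k)$, and the telescoping collapses.

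The obstruction is structural: the $y$-mutation $y'_i=y_iy_k^{[b_{ki}]_+}(1+y_k)^{-b_{ki}}$ injects the non-monomial factor $(1+y_k)$ into \emph{every} $y'_i$, and in a quadratic expression these factors pair against each other. A potential that works must separate the tropical and nontropical parts of the $y$-variables; the one used in the text is $V_t=\sum_i\d_i\,F_{i;t}\wedge[y_{i;t}]+\tfrac12\sum_{i,j}\d_ib_{ji;t}\,F_{i;t}\wedge F_{j;t}$, built from the $F$-polynomials and the tropical monomials $[y_{i;t}]$ via the separation formula, and the identity $V_{t'}-V_t=\d_k\,y_{k;t}\wedge(1+y_{k;t})$ is proved using the $F$-polynomial mutation together with sign coherence. (The closely related Fock--Goncharov form $\tfrac12\sum_{i,j}\d_ib_{ji;t}\,x_{i;t}\wedge x_{j;t}$ does satisfy a clean mutation formula, but in the variables $x$ and $\hat y$, and one must assume $B$ nonsingular to transfer it to the $y$-variables.) Note also that even with a correct potential, your step (ii) is not a pure reindexing: concluding $V(P)=V(0)$ from $\Upsilon(P)=\nu\Upsilon(0)$ requires the periodicity of the $F$-polynomials and $C$-matrices, i.e.\ Theorems \ref{thm:detrop1} and \ref{1thm:synchro1}, in addition to Proposition \ref{prop:compat1}.
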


\begin{ex}
Let us directly check the consistency condition \eqref{eq:const5}
for the $y$-variables \eqref{eq:ys3} in the $Y$-pattern of type $A_2$.
We have
\begin{align}
\begin{split}
&\quad\
\sum_{s=0}^{4}    y_{k_s}(s) \wedge (1+y_{k_s}(s))
\\
&= y_1\wedge (1+ y_1)
+ y_2(1+y_1) \wedge (1+y_2 + y_1y_2)
\\
&\qquad
+y_1^{-1}(1+y_2 + y_1y_2)\wedge y_1^{-1}(1+y_1)(1+y_2)
\\
&\qquad
+y_1^{-1}y_2^{-1}(1+y_2)\wedge y_1^{-1}y_2^{-1}(1+y_2 +y_1y_2)
\\
&\qquad
+
y_2^{-1}\wedge y_2^{-1}(1+y_2).
\end{split}
\end{align}
It is easy to see that all terms cancel each other.
This also gives an alternative proof of the pentagon identity in the form \eqref{eq:pent6}.
\end{ex}

In the rest of this chapter, we present \emph{two alternative  proofs} of Proposition \ref{prop:const2}, both of which  crucially depend on 
the results on cluster patterns presented in Chapter \ref{ch:quick1}.

\section{First proof of constancy}
\label{sec:first1}

Here, we give the first proof of the constancy condition
\eqref{eq:const5}. 
The basic idea 
is to 
express the $y$-variables therein
by the initial $y$-variables
through the separation formula \eqref{eq:sep2}.
Then, we manipulate the resulting expression
by the change of the summation index with the help of the periodicity.
This approach is regarded as 
a ``brute force" method
(namely, straightforward but less efficient compared with the second one).

In view of Remark \ref{rem:DI1}, 
we may set the initial vertex $t_0$
so that  $\Upsilon(0)$ coincides with the
initial $Y$-seed $\Upsilon_{t_0}$  without
loss of generality.
Let us extend the {\rp $\nu$-periodic} sequence \eqref{eq:mseq2} in the both directions
as
\begin{align}
\label{eq:mseq4}
\cdots
\
{\buildrel {k_{-2}} \over \rightarrow}
\
\Upsilon(-1) 
\
{\buildrel {k_{-1}} \over \rightarrow}
\Upsilon(0) 
\
{\buildrel {k_0} \over \rightarrow}
\
\cdots
\
{\buildrel {k_{P-1}} \over \rightarrow}
\
\Upsilon(P)
\
{\buildrel {k_{P}} \over \rightarrow}
\
\Upsilon(P+1)
\
{\buildrel {k_{P+1}} \over \rightarrow}
\
\cdots
\end{align}
so that  $\Upsilon(s+P)={\rp \nu}\Upsilon(s)$ and $k_{s+P}=k_{\rp \nu(s)}$ hold for any $s\in \bbZ$.
The {\rp $\nu$}-periodicity $\bfy(P)={\rp\nu}\bfy(0)$ implies
 \begin{align}
 \label{eq:Fperiod1}
 F_i(P)=F_{\rp\nu^{-1}(i)}(0)=1
 \quad
 (i=1,\, \dots,\, n)
 \end{align}
  by Theorems \ref{thm:detrop1} and \ref{1thm:synchro1}.
For each $s\in \bbZ$,
we define $\lambda_+(s)$ and $\lambda_-(s)$ as the minimal positive numbers
such that $k_{s \pm \lambda_{\pm}(s)}=k_s$ hold, respectively.
Thus, we have
\begin{align}
\label{eq:lam1}
s'= s + \lambda_+(s)
\
\Longleftrightarrow
\
s=s'- \lambda_-(s').
\end{align}
 For each $s\in \bbZ$, we define the set
\begin{align}
\label{eq:Ts1}
T(s):=\{ t\in \bbZ \mid s\in (t-\lambda_-(t),t)\},
\end{align}
where $(a,b)$ is the open interval between $a$ and $b$.

For simplicity, let $[f]$ denote the image 
of  $f\in \bbQ_{\mathrm{sf}}(\bfy)$ under the tropicalization $\pi_{\trop}$
in \eqref{1eq:trophom1}.
For example, by \eqref{eq:ty1}, we have
\begin{align}
\label{eq:ytrop1}
[y_i(s)]=\prod_{j=1}^n y_j^{c_{ji}(s)},
\quad
[1+y_i(s)]=\prod_{j=1}^n y_j^{-[-c_{ji}(s)]_+}.
\end{align}
By \eqref{1eq:pos1}, we also have
\begin{align}
\label{eq:ytrop2}
\left[
\frac{y_i(s)}{1+y_i(s)}
\right]
&=
\prod_{j=1}^n y_j^{[c_{ji}(s)]_+},
\quad
\left[
\frac{1}{1+y_i(s)}
\right]
=
\prod_{j=1}^n y_j^{[-c_{ji}(s)]_+}.
\end{align}

We use the following formulas.
(The formulas \eqref{eq:Fmut2} and \eqref{eq:sep5}
are only used to prove 
\eqref{eq:sep6}.)

\begin{lem}
\label{lem:const1}
(a) (Mutations).
\begin{align}
\label{eq:Fmut2}
\begin{split}
F_{k_s}(s+\lambda_+(s)) F_{k_s}(s)
&=
\left[
\frac{y_{k_s}(s)}{1+y_{k_s}(s)}
\right]
\prod_{
t\in T(s)}
F_{k_t}(t)^{[b_{k_t k_s}(s)]_+}
\\
&\qquad
+
\left[
\frac{1}{1+y_{k_s}(s)}
\right]
\prod_{
t\in T(s)}
F_{k_t} (t)^{[-b_{k_t k_s}(s)]_+},
\end{split}
\\
\begin{split}
\label{eq:ymut2}
[y_{k_s}(s)][ y_{k_s}(s-\lambda_-(s))]
&=
\prod_{t\in (s-\lambda_-(s), s)}
\bigl(
[y_{k_t}(t)]^{[b_{k_t k_s}(t)]_+}
[1+y_{k_t}(t)]^{-b_{k_t k_s}(t)}
\bigr).
\end{split}
\end{align}
\par
(b)  (Separation formulas).
\begin{align}
\label{eq:sep4}
y_{k_s}(s)
&=
[y_{k_s}(s)]
\prod_{
t\in T(s)}
F_{k_t}(t)^{b_{k_t {k_s}}(s)}
\\
\label{eq:sep5}
&=
[y_{k_s}(s)]
\frac{
\prod_{
t\in T(s)}
F_{k_t}(t)^{[b_{k_t {k_s}}(s)]_+}
}
{
\prod_{
t\in T(s)}
F_{k_t}(t)^{[-b_{k_t {k_s}}(s)]_+}
},
\\
\label{eq:sep6}
1+y_{k_s}(s)
&=
[1+y_{k_s}(s)]
\frac{
F_{k_s}(s+\lambda_+(s))F_{k_s}(s)
}
{
\prod_{
t\in T(s)}
F_{k_t}(t)^{[-b_{k_t k_s}(s)]_+}
}.
\end{align}
(c) (Skew-symmetry).
\begin{align}
\label{eq:mskew1}
\d_{k_s}b_{k_tk_s}(s)=- \d_{k_t}b_{k_sk_t}(s).
\end{align}
\end{lem}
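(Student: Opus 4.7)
The plan is to derive all five formulas by unfolding the defining mutation rules from Chapter \ref{ch:quick1}, aided by the following combinatorial observation: under Assumption \ref{ass:period1}, for each $j\in\{1,\dots,n\}\setminus\{k_s\}$ there is a unique $t\in T(s)$ with $k_t=j$, and for this $t$ we have $F_{k_t}(s)=F_{k_t}(t)$. Indeed, an $F$-polynomial $F_j$ is modified only by mutations in direction $j$, and on the interval $t-\lambda_-(t)<u\le t$ no mutation in direction $k_t$ occurs (by the minimality of $\lambda_-(t)$); the same argument gives $F_{k_s}(s+1)=F_{k_s}(s+\lambda_+(s))$, since directions $k_{s+1},\dots,k_{s+\lambda_+(s)-1}$ are all different from $k_s$. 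Uniqueness of $t$ in the bijection follows from the minimality of $\lambda_-(t)$, and existence is given by taking $t$ to be the smallest integer $>s$ with $k_t=j$.

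Given this dictionary, \eqref{eq:Fmut2} is just the one-step $F$-polynomial mutation \eqref{2eq:Fmut1}--\eqref{2eq:M1} at $s\to s+1$: replace $F_{k_s}(s+1)$ by $F_{k_s}(s+\lambda_+(s))$, drop the $j=k_s$ factor in each product (since $b_{k_sk_s}(s)=0$), use the bijection to rewrite $F_j(s)=F_{k_t}(t)$ for $j\ne k_s$, and identify the tropical monomials $\prod_j y_j^{[\pm c_{jk_s}(s)]_+}$ with $[y_{k_s}(s)/(1+y_{k_s}(s))]$ and $[1/(1+y_{k_s}(s))]$ via \eqref{eq:ytrop2}. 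Formula \eqref{eq:ymut2} is obtained by iterating the tropicalization of \eqref{2eq:ymut1} from time $s-\lambda_-(s)$ up to time $s$: the very first step (in direction $k_s$) inverts $[y_{k_s}(s-\lambda_-(s))]$, while each subsequent step at time $t\in(s-\lambda_-(s),s)$ multiplies by $[y_{k_t}(t)]^{[b_{k_tk_s}(t)]_+}[1+y_{k_t}(t)]^{-b_{k_tk_s}(t)}$ via the second case of \eqref{2eq:ymut1}; rearranging yields \eqref{eq:ymut2}.

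For part (b), \eqref{eq:sep4} is the general separation formula \eqref{eq:sep2} applied at $t_0=0$ with the $F$-polynomial product rewritten through the same bijection, \eqref{eq:sep5} is the elementary identity $a=[a]_+-[-a]_+$, and \eqref{eq:sep6} is obtained by substituting \eqref{eq:sep5} into the RHS of \eqref{eq:Fmut2}: the identity $y_{k_s}(s)\prod_{t\in T(s)}F_{k_t}(t)^{[-b_{k_tk_s}(s)]_+}=[y_{k_s}(s)]\prod_{t\in T(s)}F_{k_t}(t)^{[b_{k_tk_s}(s)]_+}$ allows the two terms in \eqref{eq:Fmut2} to be factored as $[1+y_{k_s}(s)]^{-1}\prod_{t\in T(s)}F_{k_t}(t)^{[-b_{k_tk_s}(s)]_+}(1+y_{k_s}(s))$, from which \eqref{eq:sep6} is immediate. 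Finally, \eqref{eq:mskew1} is the defining relation \eqref{1eq:ss1} at $(i,j)=(k_t,k_s)$, since $D=\diag(\d_1^{-1},\dots,\d_n^{-1})$ is a common skew-symmetrizer of every $B(s)$ by property (1) following Definition \ref{2defn:mut1}. No individual step is conceptually hard; the main point requiring care is the bookkeeping of the bijection $\{1,\dots,n\}\setminus\{k_s\}\leftrightarrow T(s)$ and of the intervals on which each $F_j$ remains constant along the mutation sequence.
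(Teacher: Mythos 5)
Your proposal is correct and follows essentially the same route as the paper's (very terse) proof: each formula is obtained by unfolding the defining mutation/separation formulas \eqref{2eq:Fmut1}, \eqref{2eq:M1}, \eqref{2eq:ymut1}, \eqref{eq:sep2} together with \eqref{eq:ytrop2}, the bijection $\{1,\dots,n\}\setminus\{k_s\}\leftrightarrow T(s)$, and \eqref{1eq:ss1}. The only added value is that you make explicit the bookkeeping the paper leaves implicit — namely that $F_{k_t}(s)=F_{k_t}(t)$ for $t\in T(s)$ and $F_{k_s}(s+1)=F_{k_s}(s+\lambda_+(s))$, since $F_j$ changes only under mutations in direction $j$ — and this is exactly the right justification.
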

\begin{proof}
(a).
{\rp For each $s\in \bbZ$,
if $j\neq k_s$ ($j\in \{1,\dots, n\}$) does not appear in $T(s)$,
then $j\neq k_{s'}$ for any $s'\in \bbZ$ in the sequence \eqref{eq:mseq4}.
This implies that, for such $j$, we have $F_{j}(s')=F_{j}(0)=1$ for any $s'\in \bbZ$.
Keeping it in mind,}
the formula \eqref{eq:Fmut2}  is the  mutation formula \eqref{2eq:Fmut1} 
rewritten with the expressions \eqref{eq:ytrop2} and the set $T(s)$ in \eqref{eq:Ts1}.
The formula \eqref{eq:ymut2}  follows from the mutation formula \eqref{2eq:ymut1}
by applying successive mutations at $s-\lambda_-(s)$, \dots, $s-1$
and the tropicalization.
(b). The  formula \eqref{eq:sep4} is the separation formula \eqref{eq:sep2}.
The  formula \eqref{eq:sep5} is  equivalent to \eqref{eq:sep4}.
The  formula \eqref{eq:sep6} is obtained from \eqref{eq:sep5}  and \eqref{eq:Fmut2}.
(c). This follows from \eqref{1eq:ss1}.
\end{proof}

Now, we are ready to prove the constancy condition \eqref{eq:const5}.
We put the formulas \eqref{eq:sep4} and \eqref{eq:sep6}
in the LHS of \eqref{eq:const5},
then, separate it into three parts as follows.

The first part involves only tropical parts, namely,
\begin{align}
\label{eq:const7}
\sum_{s=0}^{P-1}  \d_{k_s}  [y_{k_s}(s)] \wedge [1+y_{k_s}(s)].
\end{align}
Recall the sign-coherence in Theorem \ref{thm:sign1}.
If the tropical sign $\varepsilon_s$ is 1, we have $[1+y_{k_s}(s)] =1$.
If $\varepsilon_s$ is $-1$, we have $ [1+y_{k_s}(s)] =[y_{k_s}(s)] $.
In either case, we have $[y_{k_s}(s)] \wedge [1+y_{k_s}(s)]=0$.
Therefore, the sum \eqref{eq:const7} vanishes.

The second part involves both tropical and nontropical parts.
It consists of the following four terms.
\begin{align}
(A)
&=
\sum_{s=0}^{P-1}  \d_{k_s}  [y_{k_s}(s)] \wedge F_{k_s}(s),
\\
\begin{split}
(B)
&=
\sum_{s=0}^{P-1}  \d_{k_s}  [y_{k_s}(s)] \wedge F_{k_s}(s+\lambda_+(s))
\\
&
= 
\sum_{s=0}^{P-1}  \d_{k_s}  [y_{k_s}(s-\lambda_-(s))] \wedge F_{k_s}(s),
\end{split}
\\
\label{eq:termC1}
\begin{split}
(C)
&=
- \sum_{s=0}^{P-1}  \d_{k_s}  [y_{k_s}(s)] \wedge
\Biggl(\,
 \prod_{
t\in T(s)}
F_{k_t}(t)^{[-b_{k_t k_s}(s)]_+}
\Biggr)
\\
&
= 
 \sum_{s=0}^{P-1}  \d_{k_s} 
 \Biggl(\,
  \prod_{t\in (s-\lambda_-(s),s)}
  [y_{k_t}(t)]^{-[b_{k_t k_s}(t)]_+} 
  \Biggr)
  \wedge 
F_{k_s}(s),
\end{split}
\\
\label{eq:termD1}
\begin{split}
(D)
&=
- \sum_{s=0}^{P-1}  \d_{k_s}  [1+y_{k_s}(s)] \wedge
\Biggl(\,
 \prod_{
t\in T(s)}
F_{k_t}(t)^{b_{k_t k_s}(s)}
\Biggr)
\\
&
= 
 \sum_{s=0}^{P-1}  \d_{k_s} 
 \Biggl(\,
  \prod_{t\in (s-\lambda_-(s),s)}
 [1+y_{k_t}(t)]^{b_{k_t k_s}(t)} 
  \Biggr)
  \wedge 
F_{k_s}(s).
\end{split}
\end{align}
In the above, we did the following manipulations.
\begin{itemize}
\item
For (B), we changed the index $s$ of the summation
based on \eqref{eq:lam1}.
{\rp The first sum with $s+\lambda_+(s)\leq P-1$
and the second sum with $s-\lambda_-(s)\geq 0$ clearly match.
When $s+\lambda_+(s)\geq P$ in the first sum,
we have $F_{k_s}(s+\lambda_+(s))= F_{k_s}(P)=1$ by \eqref{eq:Fperiod1},
so that it does not contribute to the sum.
Similarly,
when $s- \lambda_-(s)< 0$ in the second sum,
we have $F_{k_{s}}(s)=F_{k_{s}}(0)=1$,
so that it does not contribute to the sum.}
\item
For (C) and (D), we first exchanged the indices $s$ and $t$
and also  used the skew-symmetry \eqref{eq:mskew1}.
(This  clarifies the necessity of the factor $\d_{k_s}$
in  \eqref{eq:const5}.)
Then, we cut off $s\geq P$, because, if so,
then $F_{k_s}(s)=F_{k_s}(P)=1$ by \eqref{eq:Fperiod1}.
See Figure \ref{fig:st1}.
We also lifted the condition $t\geq 0$, because,
if $t<0$ happens, then $F_{k_s}(s)=F_{k_s}(0)=1$
for any $s\geq 0$ such that $t\in (s-\lambda_-(s),s)$;
thus, it does not contribute to the sum.
\end{itemize}

\begin{figure}
\begin{center}
\begin{tikzpicture}
%
\draw (-1,0)--(5,0);
\draw (0,0.2)--(0,-0.2);
\draw (4,0.2)--(4,-0.2);
\draw (3,0.1)--(3,-0.1);
\draw (1.5,0.4)--(3.5,0.4);
\draw [dotted] (3,0.5)--(3,0);
\draw [dotted] (1.5,0.4)--(1.5,0);
\draw [dotted] (3.5,0.4)--(3.5,0);
\node at (0,-0.4) {0};
\node at (4,-0.4) {$P$};
\node at (3,-0.4) {$t$};
\node at (1.5,-0.4) {$s-\lambda_-(s)$};
\node at (3.5,-0.4) {$s$};
\fill [black] (1.5,0.4) circle [radius=0.06];
\fill [black] (3.5,0.4) circle [radius=0.06];
\end{tikzpicture}
\end{center}
\vskip-10pt
\caption{Schematic diagram for the manipulations 
in \eqref{eq:termC1} and \eqref{eq:termD1}.
}
\label{fig:st1}
\end{figure}
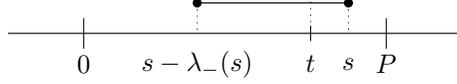

Then, thanks to the mutation formula \eqref{eq:ymut2},
 the sum $(A)+(B)+(C)+(D)$  vanishes.

The third part involves only nontropical parts,
and it requires more elaborated treatment.
It consists of the following three terms.
\begin{align}
(E)
&=
 \sum_{s=0}^{P-1}  \d_{k_s} 
\Biggl(\,
 \prod_{
t\in T(s)}
F_{k_t}(t)^{b_{k_t k_s}(s)}
\Biggr)
 \wedge
 F_{k_s}(s),
 \\
 (F)
&=
 \sum_{s=0}^{P-1}  \d_{k_s} 
\Biggl(\,
 \prod_{
t\in T(s)}
F_{k_t}(t)^{b_{k_t k_s}(s)}
\Biggr)
 \wedge
 F_{k_s}(s+\lambda_+(s)),
 \\
 \begin{split}
 (G)
&=
-
 \sum_{s=0}^{P-1}  \d_{k_s} 
\Biggl(\,
 \prod_{
t\in T(s)}
F_{k_t}(t)^{b_{k_t k_s}(s)}
\Biggr)
 \wedge
\Biggl(\,
 \prod_{
r \in T(s)}
F_{k_{r}}(r)^{[-b_{k_r k_s}(s)]_+}
\Biggr).
\end{split}
\end{align}
We do similar manipulations as before.

The first term $(E)$ is written as follows.
\begin{align*}
(E)
&=
 \sum_{
 \scriptstyle
 s, t=0
 \atop
 \scriptstyle
 s\in (t-\lambda_-(t),t)
 }^{P-1}  \d_{k_s} 
 b_{k_t k_s}(s)
F_{k_t}(t)
 \wedge
 F_{k_s}(s)
 \\
 &=
 \sum_{
 \scriptstyle
 s, t=0
 \atop
 \scriptstyle
 t \in (s-\lambda_-(s),s)
 }^{P-1}  \d_{k_s} 
 b_{k_t k_s}(t)
F_{k_t}(t)
 \wedge
 F_{k_s}(s)
 \\ 
 &=
 \frac{1}{2}
 \sum_{
 \scriptstyle
 s, t=0
 \atop
 \scriptstyle
(s-\lambda_-(s),s)\cap (t-\lambda_-(t),t)\neq \emptyset
 }^{P-1}  \d_{k_s} 
 b_{k_t k_s}(\min(s,t))
F_{k_t}(t)
 \wedge
 F_{k_s}(s).
\end{align*}
In the first line, we  cut off
$t\geq P$ as before.
In the second line, we exchanged the indices $s$ and $t$
and  also used the skew-symmetry \eqref{eq:mskew1}.
The third line is obtained by 
averaging the first and second ones
with factor $1/2$.
(The first one contributes to the case $s<t$,
while the second one contributes to the case $t<s$.)

Similarly, the second term $(F)$ is written as follows.
\begin{align*}
(F)
&=
 \sum_{
 \scriptstyle
 s, t=0
 \atop
 \scriptstyle
 s-\lambda_-(s) \in (t-\lambda_-(t),t)
 }^{P-1}  \d_{k_s} 
 b_{k_t k_s}(s-\lambda_-(s))
F_{k_t}(t)
 \wedge
 F_{k_s}(s)
 \\ 
 &=
 \frac{1}{2}
 \sum_{
 \scriptstyle
 s, t=0
 \atop
 \scriptstyle
(s-\lambda_-(s),s)\cap (t-\lambda_-(t),t)\neq \emptyset
 }^{P-1}  \d_{k_s} 
 b_{k_t k_s}(\max(s-\lambda_-(s),t-\lambda_-(t)))
 \\
 &\hskip150pt
 \times
F_{k_t}(t)
 \wedge
 F_{k_s}(s).
\end{align*}
In the second line, we took the average of the first line
and the one obtained from it by exchanging the indices $s$ and $t$.

Finally, the third term $(G)$ is written as follows.
\begin{align*}
 (G)
&=
-
 \sum_{s,t=0}^{P-1}  \d_{k_s} 
 \Biggl(\,
 \sum_{
r\in (s-\lambda_-(s),s)\cap  (t-\lambda_-(t),t)
 }
 b_{k_t k_r}(r)[b_{k_r k_s}(r)]_+
 \Biggr)
 F_{k_t}(t)
 \wedge
 F_{k_s}(s)
\\
&=
-
\frac{1}{2}
 \sum_{s,t=0}^{P-1}  \d_{k_s} 
 \Biggl(
 \sum_{
r\in (s-\lambda_-(s),s)\cap  (t-\lambda_-(t),t)
 }
 \\
&
\hskip50pt
 \biggl(
 b_{k_t k_r}(r)[b_{k_r k_s}(r)]_+
 +
[-b_{k_t k_r}(r)]_+  b_{k_r k_s}(r)
 \biggr)
 \Biggr)
 F_{k_t}(t)
 \wedge
 F_{k_s}(s).
\end{align*}
In the first line, 
we first exchange variables $s$ and $r$,
and also used the  skew-symmetry \eqref{eq:mskew1}.
Then,
we cut off $s,\, t\geq P$ as before.
Also, we lifted the condition $r\geq 0$,
because, if $r<0$ happens, then
 $F_{k_t}(t)=F_{k_s}(s)=1$
 for the relevant $t$ and $s$ as before.
 In the second line, we took the average of the first line
and the one obtained from it by exchanging the indices $s$ and $t$.

Meanwhile, 
the mutation of exchange matrices \eqref{2eq:bmut1} yields
the equality
\begin{align}
\begin{split}
& \ b_{k_t k_s}(\min(s,t))+
  b_{k_t k_s}(\max(s-\lambda_-(s),t-\lambda_-(t)))
  \\
  =&
 \sum_{
r\in (s-\lambda_-(s),s)\cap  (t-\lambda_-(t),t)
 }
 \biggl(
 b_{k_t k_r}(r)[b_{k_r k_s}(r)]_+
 +
[-b_{k_t k_r}(r)]_+  b_{k_r k_s}(r)
\biggr).
 \end{split}
\end{align}
Thus, the sum $(E)+(F)+(G)$ vanishes.
This completes the first proof of 
Proposition \ref{prop:const2}.

\section{Second proof of constancy}
\label{sec:second1}
Let us give an alternative proof of Proposition \ref{prop:const2},
which was inspired by the result of Fock-Goncharov \cite{Fock03,Fock07}.

Here we return to the usual notation for a $Y$-seed $(\bfy_t,B_t)$ and a $F$-polynomial $F_{i;t}$.
To each $t\in \bbT_n$, we assign the following element in
 $\bigwedge^2 \bbQ_{\mathrm{sf}}(\bfy)$:
\begin{align}
\label{eq:Vt1}
V_t:=
&\ \sum_{i=1}^n \d_i  F_{i;t} \wedge [y_{i;t}]
+\frac{1}{2}\sum_{i,j=1}^n  \d_i b_{ji;t} F_{i;t} \wedge F_{j;t}.
\end{align}
The following mutation formula holds.
\begin{prop}[{\cite[Prop.~6.7]{Nakanishi10c}}]
\label{prop:FG1}
Let $t,\, t'\in \bbT_n$ be $k$-adjacent.
Then, we have
\begin{align}
\label{eq:VV1}
V_{t'}-V_t = 
\d_k 
y_{k;t}\wedge (1+y_{k;t}).
\end{align}
\end{prop}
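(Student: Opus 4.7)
The plan is to verify \eqref{eq:VV1} by computing $V_{t'}-V_t$ from the mutation rules and matching it term by term against a bilinear expansion of $\d_k\,y_{k;t}\wedge(1+y_{k;t})$ obtained from the separation formulas. First, I would split each of the two sums defining $V_t$ (and $V_{t'}$) according to whether a running index equals $k$. Using $\d_i b_{ji;t}=-\d_j b_{ij;t}$, the ``mixed'' piece of the bilinear sum of $V_t$ collapses to $\sum_{j\neq k}\d_k b_{jk;t}\,F_{k;t}\wedge F_{j;t}$, and since $F_{i;t'}=F_{i;t}$ for $i\neq k$, the difference $V_{t'}-V_t$ decomposes cleanly into four groups:
\begin{align*}
\mathrm{(A)}&=\d_k F_{k;t'}\wedge[y_{k;t'}]-\d_k F_{k;t}\wedge[y_{k;t}],\\
\mathrm{(B)}&=\sum_{i\neq k}\d_i F_{i;t}\wedge([y_{i;t'}]-[y_{i;t}]),\\
\mathrm{(C)}&=\tfrac{1}{2}\sum_{i,j\neq k}\d_i(b_{ji;t'}-b_{ji;t})\,F_{i;t}\wedge F_{j;t},\\
\mathrm{(D)}&=\sum_{j\neq k}\d_k\bigl(b_{jk;t'}F_{k;t'}-b_{jk;t}F_{k;t}\bigr)\wedge F_{j;t}.
\end{align*}

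Next I would substitute the mutation rules. The exchange relation $F_{k;t}F_{k;t'}=M_{k;t}$ from \eqref{2eq:Fmut1} together with the tropical identity $[y_{k;t'}]=[y_{k;t}]^{-1}$ reduces (A) to $\d_k[y_{k;t}]\wedge M_{k;t}$. For (B), tropicalizing \eqref{2eq:ymut1} gives $[y_{i;t'}]=[y_{i;t}]\,[y_{k;t}]^{[b_{ki;t}]_+}\,[1+y_{k;t}]^{-b_{ki;t}}$, and the identity $\d_j[b_{kj;t}]_+=\d_k[-b_{jk;t}]_+$ (a consequence of skew-symmetry and positivity of the $\d_i$'s) normalizes (B) so that it carries a uniform $\d_k$ factor. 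For (C), the $B$-mutation \eqref{2eq:bmut1} combined with a symmetrization in $i\leftrightarrow j$ reduces it to $\sum_{i,j\neq k}\d_k b_{jk;t}[-b_{ik;t}]_+\,F_{i;t}\wedge F_{j;t}$. For (D), $b_{jk;t'}=-b_{jk;t}$ and wedge-additivity give $-\sum_{j\neq k}\d_k b_{jk;t}\,M_{k;t}\wedge F_{j;t}$. On the right-hand side, translating \eqref{eq:sep4} and \eqref{eq:sep6} into $\bbT_n$-notation yields
\[
y_{k;t}=[y_{k;t}]\prod_{j\neq k}F_{j;t}^{b_{jk;t}},\qquad
1+y_{k;t}=[1+y_{k;t}]\,M_{k;t}\prod_{j\neq k}F_{j;t}^{-[-b_{jk;t}]_+},
\]
so expanding $\d_k\,y_{k;t}\wedge(1+y_{k;t})$ bilinearly produces six terms. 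Sign-coherence (Theorem \ref{thm:sign1}) forces $[1+y_{k;t}]$ to be either $1$ or $[y_{k;t}]$, so the pure tropical summand $\d_k[y_{k;t}]\wedge[1+y_{k;t}]$ vanishes; the remaining five summands match (A)--(D) piece by piece.

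The main obstacle is purely bookkeeping: one must consistently invoke $[a]_+-[-a]_+=a$, the skew-symmetry rewrites noted above, and wedge-antisymmetry under relabelings $i\leftrightarrow j$ in (C), so that every sum acquires a uniform $\d_k$ prefactor and all $F$-polynomial factors land on the correct side of each wedge. Beyond this, the only substantive cluster-algebraic input is sign-coherence (Theorem \ref{thm:sign1}), used exactly once to kill the pure tropical piece on the right-hand side.
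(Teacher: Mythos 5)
Your proof is correct and is essentially the paper's own argument run in the opposite direction: the paper expands $\d_k\,y_{k;t}\wedge(1+y_{k;t})$ via the separation formulas \eqref{eq:ymut7}--\eqref{eq:ymut8} into a pure tropical piece (killed by sign-coherence), a mixed piece, and a pure nontropical piece, and matches these against the difference $V_{t'}-V_t$, exactly as you match your groups (A)--(D) against the six summands of the expanded right-hand side. All the ingredients you invoke (the $F$-polynomial exchange relation, the tropicalized $y$-mutation, the $B$-mutation with the $i\leftrightarrow j$ symmetrization, the identities $\d_j[b_{kj;t}]_+=\d_k[-b_{jk;t}]_+$ and $\d_ib_{ki;t}=-\d_kb_{ik;t}$, and the single use of Theorem \ref{thm:sign1}) are precisely the ones used in the paper's proof.
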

\begin{proof}
We have
the following formulas, which are parallel with the ones
in Lemma \ref{lem:const1}.
(The relation \eqref{eq:Fmut3} is only used to prove 
\eqref{eq:ymut8}.)
\begin{align}
\label{eq:Fmut3}
F_{k;t'} F_{k;t}
&=
\left[
\frac{y_{k;t}}{1+y_{k;t}}
\right]
\prod_{
i=1}^n
F_{i;t}^{[b_{i k;t}]_+}
+
\left[
\frac{1}{1+y_{k;t}}
\right]
\prod_{
i=1}^n
F_{i;t}^{[-b_{i k;t}]_+},
\\
[y_{i;t'}]&=
\begin{cases} [y_{k;t}]^{-1} & i=k,
\\
[y_{i;t}][y_{k;t}]^{[b_{ki;t}]_+} [1+y_{k;t}]^{-b_{ki;t}},
& i\neq k,
\end{cases}
\\
\label{eq:ymut7}
y_{k;t}&= 
[y_{k;t}]\prod_{i=1}^n F_{i;t}^{b_{ik;t}},
\\
\label{eq:ymut8}
1+y_{k;t}&= 
[1+y_{k;t}]
\frac{
F_{k;t'} F_{k;t}
}
{\prod_{i=1}^n F_{i;t}^{[-b_{ik;t}]_+}},
\\
\d_j b_{ij;t}&=-\d_i b_{ji;t}.
\end{align}
We put the expressions \eqref{eq:ymut7}
and \eqref{eq:ymut8} into the RHS of \eqref{eq:VV1}.
Then, separate it into three parts as before.

The first part consists of a single term
\begin{align}
\d_k [y_{k;t}]\wedge [1+y_{k;t}].
\end{align}
This vanishes by the same reason for the one \eqref{eq:const7}.

The second part consists of four terms
\begin{align}
(A)&=\d_k [y_{k;t}]\wedge F_{k;t},
\\
(B)&=\d_k [y_{k;t}]\wedge F_{k;t'},
\\
(C)&=- \d_k [y_{k;t}]\wedge \biggl(\,
 \prod_{i=1}^n F_{i;t}^{[-b_{ik;t}]_+}
 \biggr)
=
-
\sum_{i=1}^n  \d_i [b_{ki;t}]_+ [y_{k;t}]\wedge F_{i;t},
\\
(D)&=\d_k
\biggl(\,
  \prod_{i=1}^n F_{i;t}^{b_{ik;t}}
  \biggr)
   \wedge [1+y_{k;t}]
=
-
\sum_{i=1}^n  \d_i b_{ki;t} F_{i;t} \wedge  [1+y_{k;t}].
\end{align}
Meanwhile, we have
\begin{align*}
&\
 \sum_{i=1}^n \d_i  F_{i;t'} \wedge [y_{i;t'}]
-
 \sum_{i=1}^n \d_i  F_{i;t} \wedge [y_{i;t}]
 \\
=&\ 
 \sum_{ \scriptstyle
i=1
 \atop
 \scriptstyle
 i\neq k}^n \d_i  F_{i;t} \wedge [y_{k;t}]^{[b_{ki;t}]_+}
  [1+y_{k;t}]^{-b_{ki;t}}
 -   \d_k  F_{k;t'} \wedge [y_{k;t}] 
  -  \d_k  F_{k;t} \wedge [y_{k;t}]
  \\
=&\  (A)+(B)+(C)+(D),
\end{align*}
where we used the fact $b_{kk}=0$.

The third part consists of three terms
\begin{align}
(E)&=\d_k  \biggl(\,\prod_{i=1}^n F_{i;t}^{b_{ik;t}} 
\biggr)
\wedge F_{k;t}
=
\sum_{i=1}^n \d_k b_{ik;t} F_{i;t}\wedge F_{k;t},
\\
(F)&=\d_k \biggl(\, \prod_{i=1}^n F_{i;t}^{b_{ik;t}}\biggr)
 \wedge F_{k;t'}
=
\sum_{i=1}^n \d_k b_{ik;t} F_{i;t}\wedge F_{k;t'},
\\
\begin{split}
(G)&= - \d_k  \biggl(\,\prod_{i=1}^n F_{i;t}^{b_{ik;t}}\biggr)
 \wedge
 \biggl(\,
 \prod_{j=1}^n F_{j;t}^{[-b_{jk;t}]_+}
 \biggr)
=
 \sum_{i,j=1}^n \d_i [-b_{jk;t}]_+ b_{ki;t}
 F_{i;t}\wedge F_{j;t}.
 \end{split}
\end{align}
Meanwhile, we have
\begin{align*}
&\
\frac{1}{2}\sum_{i,j=1}^n  \d_i b_{ji;t'} F_{i;t'} \wedge F_{j;t'}
-
\frac{1}{2}\sum_{i,j=1}^n  \d_i b_{ji;t} F_{i;t} \wedge F_{j;t}
\\
=&\
\frac{1}{2}\sum_{
\scriptstyle i,j=1
\atop
\scriptstyle i,j\neq k}^n  \d_i
\left(
b_{jk;t}[b_{ki;t}]_+
+
[-b_{jk;t}]_+b_{ki;t}
\right)
 F_{i;t} \wedge F_{j;t}
 \\
 &\
+\frac{1}{2}
 \sum_{\scriptstyle i=1\atop\scriptstyle i\neq k }^n \d_i (-b_{ki;t} )F_{i;t}
 \wedge F_{k;t'}
 -\frac{1}{2}
 \sum_{\scriptstyle i=1\atop\scriptstyle i\neq k }^n \d_i b_{ki;t} F_{i;t}
 \wedge F_{k;t}
  \\
 &\
+\frac{1}{2}
 \sum_{\scriptstyle j=1\atop\scriptstyle j\neq k }^n \d_k (-b_{jk;t} )F_{k;t'}
 \wedge F_{j;t}
-\frac{1}{2}
 \sum_{\scriptstyle j=1\atop\scriptstyle j\neq k }^n \d_k b_{jk;t} F_{k;t}
 \wedge F_{j;t}
 \\
= &\
 (E)+(F)+(G).
\end{align*}
Therefore, the equality \eqref{eq:VV1} holds.
\end{proof}

Now let us prove Proposition   \ref{prop:const2}
using Proposition \ref{prop:FG1},
where we do not need to assume that $\Upsilon(0)=\Upsilon_{t_0}$
for the initial vertex $t_0$.
Let us go back to the sequence \eqref{eq:mseq2}
and assume the $\nu$-periodicity.
Let $V(s)$ denote the element $V_t$ in \eqref{eq:Vt1} corresponding to each $Y$-seed
$\Upsilon(s)$ therein.
Then, thanks to 
Theorem \ref{1thm:synchro1}, \eqref{eq:detrop4}, and Proposition \ref{prop:compat1},
we have the periodicity
\begin{align}
\label{eq:VV2}
V(P)=
V(0).
\end{align}
Meanwhile, by Proposition \ref{prop:FG1}, we have
\begin{align}
V(P)-V(0)=\sum_{s=0}^{P-1} \d_{k_s} y_{k_s}(s) \wedge
(1+ y_{k_s}(s) ).
\end{align}
This is 0 by \eqref{eq:VV2}.
This completes the second proof of 
Proposition \ref{prop:const2}.

Compared with the first proof, the second proof 
is more efficient; moreover, it exhibits the role of the periodicity
in Proposition   \ref{prop:const2} more transparently.
Meanwhile, the intrinsic meaning of the element $V_t$  is not clear.

In summary, we have  proved 
  Theorem \ref{thm:DI1} via Proposition   \ref{prop:const2}
 with two alternative proofs.
 We call them the \emph{algebraic method}\index{algebraic method}
because it is  directly related to the algebraic structure of
seeds and mutations in a cluster pattern.
We also note that both proofs
 depend on some advanced results on cluster algebras.
 Even though the idea of the proofs is clear,
we are not  completely satisfied with them because
   they do not clarify  \emph{any intrinsic reason why
DIs are relevant to cluster algebras}.
Thus, we continue to study Theorem \ref{thm:DI1}
in different perspectives in the following part, looking for an answer to this question.

\begin{rem}
In \cite[Lemma 6.3]{Fock03},
for a cluster pattern without coefficients,
the following element was considered
instead of $V_t$ in \eqref{eq:Vt1}:
\begin{align}
W_t&=
\frac{1}{2}
\sum_{i=1}^n \d_i x_{i;t}\wedge \hat y_{i;t}
=
\frac{1}{2}
\sum_{i,j=1}^n  \d_i b_{ji;t} x_{i;t} \wedge x_{j;t}.
\end{align}
Then,
 the  mutation formula 
\begin{align}
\label{eq:Wt1}
W_{t'}-W_t = 
\d_k 
\hat y_{k;t}\wedge (1+\hat y_{k;t}).
\end{align}
is shown without using the advanced results in
Chapter \ref{sec:advanced1} .
Moreover, when the underlying $B$-pattern is nonsingular,
the initial $\hat y$-variables are algebraically independent.
Then, the formula \eqref{eq:Wt1} implies Proposition   \ref{prop:const2}.
In the singular case, however, we still need the  results in
Chapter \ref{sec:advanced1} to resolve the issue of algebraic dependence.
\end{rem}

\notes
The constancy condition in Theorem \ref{thm:const2}
was formulated by  \cite{Frenkel95}
to prove the DI for the $Y$-system of type $(A_n,A_1)$.
It is parallel with the result on the Bloch-Wigner function by  \cite[Thm.~1.7, Remark~1.11]{Bloch78},
which is recapitulated in  \cite[Prop.~3]{Frenkel95}.
 
There have been several prototypical examples before Theorem \ref{thm:DI1}
was formulated in \cite{Nakanishi11c}.
The relation between the periodicity and
the constancy condition for the $Y$-system of type $(X,X')$ was
studied  in \cite{Caracciolo99}.
The crucial step was made by 
 \cite{Chapoton05},
 where 
the DIs for the $Y$-system of type $(X,A_1)$ was proved.
The consistency condition \eqref{eq:const1} was proved
using the explicit description of the solution of 
the above $Y$-system by \cite{Fomin03b}.
The evaluation of the constant term by taking the tropical limit
also originated in  \cite{Chapoton05}.
This method was generalized by \cite{Nakanishi09}
to prove the DI for the $Y$-system of general type $(X,X')$
in a more cluster algebraic setting,
where all ingredients of the method in this chapter
was essentially introduced.
There are  subsequent papers where the same method
was applied
to prove   the DIs for various $Y$-systems
 \cite{Inoue10a, Inoue10b, Nakanishi10b}.
After these works, it was  figured out in \cite{Nakanishi11c}
 that the periodicity
in a cluster pattern is the sole reason for these DIs.

The Bloch-Wigner function was  studied  in 
 \cite{Fock03,Fock07}
by a more abstract approach with algebraic $K$-theory.

\chapter{$Y$-systems and $Y$-patterns}

Now we come back to and prove the DIs for $Y$-systems 
in Chapter \ref{ch:prologue}.
We first formulate these $Y$-systems in terms of $Y$-patterns.
Then, we prove their periodicity using the tropicalization.
It turns out that the Coxeter elements
of root systems are behind the scenes and govern the dynamics.
By applying Theorem  \ref{thm:DI1},
we obtain the DIs for $Y$-systems,
where the counting of the constant term is also done by the tropicalization.

\section{Quivers and mutations}

It is often convenient and useful to represent
a \emph{skew-symmetric\/} integer matrix by a \emph{quiver}.

\begin{defn}[Quiver]
A (finite) \emph{quiver}\index{quiver}
 is a finite directed graph. Namely, it consists of a finite set of vertices
and a finite set of arrows between the vertices.
For a quiver with $n$ vertices, we assume that its vertices are labeled  with 1, \dots, $n$
without duplication.
The following arrows are called a \emph{loop} 
and   a \emph{$2$-cycle}, respectively:
\index{loop (for quiver)}\index{2-cycle (for quiver)} 
$$
\begin{xy}
(0,0)*\cir<2pt>{},
(30,0)*\cir<2pt>{},
(50,0)*\cir<2pt>{},
(0.5,1); (0.5,-1) **\crv{(5,10) &(20,0)&(5,-10)};
\ar@{->} (0.62,-1.2);(0.5,-1)
\ar@{->} (32,1);(48,1)
\ar@{<-} (32,-1);(48,-1)
\end{xy}
$$

\end{defn}

For a skew-symmetric integer matrix $B$, one can associate a quiver $Q(B)$ without loops and 2-cycles by the following rule:
\begin{itemize}
\item 
If $b_{ij}>0$, then assign $b_{ij}$ arrows from the vertex $i$ to the vertex $j$.
\end{itemize}
Since $b_{ii}=0$, there are no loops. Also,  there are no 2-cycles,
because $b_{ji}=-b_{ij}<0$ if $b_{ij}>0$.
For example,
\begin{align}
B=
\begin{pmatrix}
0 & -3 & 2\\
3 & 0 & 1\\
-2 & -2 & 0
\end{pmatrix},
\quad
Q(B)=
\raisebox{-15pt}{
\begin{xy}
(0,0)*\cir<2pt>{},
(20,0)*\cir<2pt>{},
(10,17)*\cir<2pt>{},
(0,-3)*{\text{\small1}},
(20,-3)*{\text{\small2}},
(7,17)*{\text{\small3}},
\ar@{<-} (2,1);(18,1)
\ar@{<-} (2,0);(18,0)
\ar@{<-} (2,-1);(18,-1)
\ar@{->} (19,2);(11,15)
\ar@{->} (1.5,1.8);(9.5,14.8)
\ar@{->} (0.5,2.2);(8.5,15.2)
\end{xy}
}
\end{align}

Conversely, one can recover a skew-symmetric integer matrix $B$ from a quiver without loops and 2-cycles by applying the above rule in the opposite direction.
It is clear that this correspondence is one-to-one.

For a quiver $Q$, the quiver obtained from $Q$ by changing  all arrows  of $Q$ to the 
ones in the opposite direction is called the \emph{opposite quiver}\index{opposite quiver} of $Q$,
and it is denoted by $Q^{\op}$.
If a skew-symmetric integer matrix $B$ corresponds to a quiver $Q$,
then $-B$ corresponds to $Q^{\op}$.

One can translate the matrix mutation \eqref{2eq:bmut1} into the quiver mutation as follows.

\begin{defn}[Quiver mutation]
\label{1defn:qmut1}
Let $Q$ be a quiver with $n$ vertices and without loops and 2-cycles.
For each $k=1$, \dots, $n$, we define a new quiver
$Q'=\mu_k(Q)$ by the following operation:
\begin{itemize}
\item 
For each  pair $i$, $j$ ($i\neq j$) such that $i$, $j\neq k$,
if
there are $p>0$ arrows from the vertex $i$ to the vertex $k$,
 and $q>0$ arrows from the vertex $k$ to the vertex $j$,
 then  add $pq$ arrows from the vertex $i$ to the vertex $j$.
\item Remove the resulting 2-cycles as many as possible.
\item Invert all arrows into and out of the vertex $k$.
\end{itemize}
The quiver $Q'$ is called the \emph{mutation of $Q$ at the vertex $k$}\index{mutation!of quiver}.
\end{defn}

Under the above identification,  a $Y$-seed $(\bfy, B)$  with a skew-symmetric matrix $B$
is replaced with $(\bfy, Q)$ for the corresponding quiver $Q$.
For the mutation $(\bfy', Q')=\mu_k(\bfy,Q)$ of a $Y$-seed,
the mutation of $y$-variables \eqref{2eq:ymut1}
is translated into  the following simple pictorial rule:
\begin{align}
\label{eq:ypic1}
y'_i 
=
\begin{cases}
y_k^{-1} & i=k,
\\
y_i,
&
\begin{xy}
(0,0)*\cir<2pt>{},
(10,0)*{\bullet},
(0,-3)*{\text{\small $i$}},
(10,-3)*{\text{\small $k$}},
\end{xy}
\ \text{(no arrow)},
\\
y_i  (1\oplus y_k)^{m}
&
\begin{xy}
(0,0)*\cir<2pt>{},
(10,0)*{\bullet},
(0,-3)*{\text{\small $i$}},
(10,-3)*{\text{\small $k$}},
(5,2)*{\text{\small $m$}},
\ar@{<-} (8,0);(2,0) 
\end{xy}
\ \text{($m$ arrows)},
\\
y_i  (1\oplus y_k^{-1})^{-m}
&
\begin{xy}
(0,0)*\cir<2pt>{},
(10,0)*{\bullet},
(0,-3)*{\text{\small $i$}},
(10,-3)*{\text{\small $k$}},
(5,2)*{\text{\small $m$}},
\ar@{->} (8,0);(2,0) 
\end{xy}
\ \text{($m$ arrows)}.
\end{cases}
\end{align}

\begin{ex}
\label{ex:QB1}
Let us consider the following quiver $Q$:
$$
Q=
\
\lower12pt
\hbox{
\begin{xy}
(0,0)*\cir<2pt>{},
(0,10)*\cir<2pt>{},
(10, 0)*\cir<2pt>{},
(10,10)*\cir<2pt>{},
(20, 0)*\cir<2pt>{},
(20,10)*\cir<2pt>{},
(0,13)*{\text{\small1}},
(10,13)*{\text{\small2}},
(20,13)*{\text{\small3}},
(0,-3)*{\text{\small4}},
(10,-3)*{\text{\small5}},
(20,-3)*{\text{\small6}},
\ar@{->} (2,10);(8,10) 
\ar@{->} (18,10);(12,10) 
\ar@{->} (8,0);(2,0) 
\ar@{->} (12,0);(18,0) 
\ar@{->} (0,2);(0,8) 
\ar@{->} (10,8);(10,2) 
\ar@{->} (20,2);(20,8) 
\end{xy}
}
$$
We consider  the sequence of mutations at the vertices $2$, $4$, $6$,
$1$, {\rp $3$, $5$} in this order.
In the first half,
the quivers mutate as follows,
where a black vertex  is  the one at which the mutation is
going to be applied:
$$
\begin{xy}
(0,0)*\cir<2pt>{},
(0,10)*\cir<2pt>{},
(10, 0)*\cir<2pt>{},
(10,10)*{\bullet},
(20, 0)*\cir<2pt>{},
(20,10)*\cir<2pt>{},
(0,13)*{\text{\small1}},
(10,13)*{\text{\small2}},
(20,13)*{\text{\small3}},
(0,-3)*{\text{\small4}},
(10,-3)*{\text{\small5}},
(20,-3)*{\text{\small6}},
(27.5,8)*{\text{\small2}}
\ar@{->} (2,10);(8,10) 
\ar@{->} (18,10);(12,10) 
\ar@{->} (8,0);(2,0) 
\ar@{->} (12,0);(18,0) 
\ar@{->} (0,2);(0,8) 
\ar@{->} (10,8);(10,2) 
\ar@{->} (20,2);(20,8) 
\ar@{->} (25,5);(30,5) 
\end{xy}
\quad
\begin{xy}
(0,0)*{\bullet},
(0,10)*\cir<2pt>{},
(10, 0)*\cir<2pt>{},
(10,10)*\cir<2pt>{},
(20, 0)*\cir<2pt>{},
(20,10)*\cir<2pt>{},
(0,13)*{\text{\small1}},
(10,13)*{\text{\small2}},
(20,13)*{\text{\small3}},
(0,-3)*{\text{\small4}},
(10,-3)*{\text{\small5}},
(20,-3)*{\text{\small6}},
(27.5,8)*{\text{\small4}}
\ar@{<-} (2,10);(8,10) 
\ar@{<-} (18,10);(12,10) 
\ar@{->} (8,0);(2,0) 
\ar@{->} (12,0);(18,0) 
\ar@{->} (0,2);(0,8) 
\ar@{<-} (10,8);(10,2) 
\ar@{->} (20,2);(20,8) 
\ar@{->} (2,8);(8,2) 
\ar@{->} (18,8);(12,2) 
\ar@{->} (25,5);(30,5) 
\end{xy}
\quad
\begin{xy}
(0,0)*\cir<2pt>{},
(0,10)*\cir<2pt>{},
(10, 0)*\cir<2pt>{},
(10,10)*\cir<2pt>{},
(20, 0)*{\bullet},
(20,10)*\cir<2pt>{},
(0,13)*{\text{\small1}},
(10,13)*{\text{\small2}},
(20,13)*{\text{\small3}},
(0,-3)*{\text{\small4}},
(10,-3)*{\text{\small5}},
(20,-3)*{\text{\small6}},
(27.5,8)*{\text{\small6}}
\ar@{<-} (2,10);(8,10) 
\ar@{<-} (18,10);(12,10) 
\ar@{<-} (8,0);(2,0) 
\ar@{->} (12,0);(18,0) 
\ar@{<-} (0,2);(0,8) 
\ar@{<-} (10,8);(10,2) 
\ar@{->} (20,2);(20,8) 
\ar@{->} (18,8);(12,2) 
\ar@{->} (25,5);(30,5) 
\end{xy}
\quad
\begin{xy}
(0,0)*\cir<2pt>{},
(0,10)*\cir<2pt>{},
(10, 0)*\cir<2pt>{},
(10,10)*\cir<2pt>{},
(20, 0)*\cir<2pt>{},
(20,10)*\cir<2pt>{},
(0,13)*{\text{\small1}},
(10,13)*{\text{\small2}},
(20,13)*{\text{\small3}},
(0,-3)*{\text{\small4}},
(10,-3)*{\text{\small5}},
(20,-3)*{\text{\small6}},
\ar@{<-} (2,10);(8,10) 
\ar@{<-} (18,10);(12,10) 
\ar@{<-} (8,0);(2,0) 
\ar@{<-} (12,0);(18,0) 
\ar@{<-} (0,2);(0,8) 
\ar@{<-} (10,8);(10,2) 
\ar@{<-} (20,2);(20,8) 
\end{xy}
$$
The last quiver is the opposite quiver $Q^{\op}$ of $Q$.
It is easy to see that the final result is independent of the order
of the mutations at 2, 4, 6.
Similarly, 
in the second half,
the quivers mutate as follows:
$$
\begin{xy}
(0,0)*\cir<2pt>{},
(0,10)*{\bullet},
(10, 0)*\cir<2pt>{},
(10,10)*\cir<2pt>{},
(20, 0)*\cir<2pt>{},
(20,10)*\cir<2pt>{},
(0,13)*{\text{\small1}},
(10,13)*{\text{\small2}},
(20,13)*{\text{\small3}},
(0,-3)*{\text{\small4}},
(10,-3)*{\text{\small5}},
(20,-3)*{\text{\small6}},
(27.5,8)*{\text{\small1}}
\ar@{<-} (2,10);(8,10) 
\ar@{<-} (18,10);(12,10) 
\ar@{<-} (8,0);(2,0) 
\ar@{<-} (12,0);(18,0) 
\ar@{<-} (0,2);(0,8) 
\ar@{<-} (10,8);(10,2) 
\ar@{<-} (20,2);(20,8) 
\ar@{->} (25,5);(30,5) 
\end{xy}
\quad
\begin{xy}
(0,0)*\cir<2pt>{},
(20,10)*{\bullet},
(0,10)*\cir<2pt>{},
(10, 0)*\cir<2pt>{},
(10,10)*\cir<2pt>{},
(20, 0)*\cir<2pt>{},
(0,13)*{\text{\small1}},
(10,13)*{\text{\small2}},
(20,13)*{\text{\small3}},
(0,-3)*{\text{\small4}},
(10,-3)*{\text{\small5}},
(20,-3)*{\text{\small6}},
(27.5,8)*{\text{\small3}}
\ar@{->} (2,10);(8,10) 
\ar@{<-} (18,10);(12,10) 
\ar@{<-} (8,0);(2,0) 
\ar@{<-} (12,0);(18,0) 
\ar@{->} (0,2);(0,8) 
\ar@{<-} (10,8);(10,2) 
\ar@{<-} (20,2);(20,8) 
\ar@{->} (8,8);(2,2) 
\ar@{->} (25,5);(30,5) 
\end{xy}
\quad
\begin{xy}
(0,0)*\cir<2pt>{},
(0,10)*\cir<2pt>{},
(10,10)*\cir<2pt>{},
(20, 0)*\cir<2pt>{},
(10,0)*{\bullet},
(20,10)*\cir<2pt>{},
(0,13)*{\text{\small1}},
(10,13)*{\text{\small2}},
(20,13)*{\text{\small3}},
(0,-3)*{\text{\small4}},
(10,-3)*{\text{\small5}},
(20,-3)*{\text{\small6}},
(27.5,8)*{\text{\small5}}
\ar@{->} (2,10);(8,10) 
\ar@{->} (18,10);(12,10) 
\ar@{->} (2,0);(8,0) 
\ar@{<-} (12,0);(18,0) 
\ar@{->} (0,2);(0,8) 
\ar@{<-} (10,8);(10,2) 
\ar@{->} (20,2);(20,8) 
\ar@{->} (8,8);(2,2) 
\ar@{->} (12,8);(18,2) 
\ar@{->} (25,5);(30,5) 
\end{xy}
\quad
\begin{xy}
(0,0)*\cir<2pt>{},
(0,10)*\cir<2pt>{},
(10, 0)*\cir<2pt>{},
(10,10)*\cir<2pt>{},
(20, 0)*\cir<2pt>{},
(20,10)*\cir<2pt>{},
(0,13)*{\text{\small1}},
(10,13)*{\text{\small2}},
(20,13)*{\text{\small3}},
(0,-3)*{\text{\small4}},
(10,-3)*{\text{\small5}},
(20,-3)*{\text{\small6}},
\ar@{->} (2,10);(8,10) 
\ar@{->} (18,10);(12,10) 
\ar@{->} (8,0);(2,0) 
\ar@{->} (12,0);(18,0) 
\ar@{->} (0,2);(0,8) 
\ar@{->} (10,8);(10,2) 
\ar@{->} (20,2);(20,8) 
\end{xy}
$$
The last quiver is the original quiver $Q$.
Thus, we have a periodicity of quivers.
Again, it is easy to see that  the final result is independent of the order
of the mutations at 1, 3, 5.
For $(\bfy',Q^{\op})=\mu_6\mu_4\mu_2(\bfy,Q)$,
by applying the pictorial rule \eqref{eq:ypic1},
we have
\begin{align}
\label{eq:ymutex1}
y'_i
=
\begin{cases}
y_i^{-1} & i =2,\, 4,\, 6,
\\
y_1(1\oplus y_2)(1\oplus y_4^{-1})^{-1} & i=1,
\\
y_3(1\oplus y_2)(1\oplus y_6^{-1})^{-1} & i=3,
\\
y_5(1\oplus y_4)(1\oplus y_6)(1\oplus y_2^{-1})^{-1} & i=5.
\end{cases}
\end{align}
Similarly, 
for $(\bfy'',Q)=\mu_5\mu_3\mu_1(\bfy',Q^{\op})$,
we have
\begin{align}
\label{eq:ymutex2}
y''_i
=
\begin{cases}
y'_i{}^{-1} & i =1,\, 3,\, 5,
\\
y'_2(1\oplus y'_1)(1\oplus y'_3)(1\oplus y'_5{}^{-1})^{-1} & i=2.
\\
y'_4(1\oplus y'_5)(1\oplus y'_1{}^{-1})^{-1} & i=4,
\\
y'_6(1\oplus y'_5)(1\oplus y'_3{}^{-1})^{-1} & i=6.
\end{cases}
\end{align}
\end{ex}

\section{$Y$-systems and   $Y$-patterns}
\label{sec:Ysystems1}

The readers may already notice
 the similarity between the mutation of $y$-variables 
\eqref{eq:ymutex1}, \eqref{eq:ymutex2}
and   $Y$-systems  \eqref{eq:Ysys1}.
Let us make this observation more precise.

In general, mutations are not commutative,
namely, $\mu_{k}\mu_{\ell}(\Sigma)\neq
\mu_{\ell}\mu_{k}(\Sigma)$.
However,  they commute under the following condition.

\begin{lem}[{e.g., \cite[\S 8]{Fomin07}}]
\label{lem:bcommut1}
For a seed $\Sigma=(\bfx,\bfy,B)$ and 
a pair $k$, $\ell$ ($k\neq \ell$),
suppose that 
\begin{align}
\label{1eq:bkl1}
b_{k \ell}=b_{ \ell k}=0
\end{align}
holds.
Then, for the seed $\Sigma'=(\bfx',\bfy',B')=\mu_k(\Sigma)$,
we also have
\begin{align}
\label{1eq:bkl2}
b'_{k \ell}=b'_{ \ell k}=0.
\end{align}
Moreover, we have
\begin{align}
\label{1eq:bkl3}
\mu_k\mu_{\ell}(\Sigma)
=
\mu_{\ell}\mu_{k}(\Sigma).
\end{align}
\end{lem}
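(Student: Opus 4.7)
The plan is to verify the three assertions in order, using only the explicit mutation formulas \eqref{2eq:xmut1}, \eqref{2eq:ymut1}, \eqref{2eq:bmut1} and the definition \eqref{2eq:yhat1} of $\hat y$-variables. The guiding principle is that $b_{k\ell}$ and $b_{\ell k}$ are exactly the ``interaction coefficients'' between the $k$-th and $\ell$-th directions in every mutation formula; when they vanish, mutations in directions $k$ and $\ell$ decouple.

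First I would dispatch \eqref{1eq:bkl2}: by the top case of \eqref{2eq:bmut1}, the entries $b'_{k\ell}$ and $b'_{\ell k}$ are simply $-b_{k\ell}=0$ and $-b_{\ell k}=0$. This is the only place the first assertion is needed, but it is also the key input for the second assertion, because it guarantees that after performing $\mu_k$ the hypothesis \eqref{1eq:bkl1} still holds for the pair $(k,\ell)$, so $\mu_\ell$ can be applied symmetrically on the other side of \eqref{1eq:bkl3}.

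For the commutativity \eqref{1eq:bkl3} I would compare the $B$-, $y$-, and $x$-components of $\mu_k\mu_\ell(\Sigma)$ and $\mu_\ell\mu_k(\Sigma)$ entry by entry, organizing the work by which of $\{k,\ell\}$ each index falls into. For the exchange matrix, the nontrivial case is entries $(i,j)$ with $i,j\notin\{k,\ell\}$: here the second-step mutation at $\ell$ uses $b^{(1)}_{i\ell}$ and $b^{(1)}_{\ell j}$, which by \eqref{2eq:bmut1} and the hypothesis $b_{k\ell}=b_{\ell k}=0$ equal $b_{i\ell}$ and $b_{\ell j}$, respectively; so the $k$- and $\ell$-contributions add symmetrically and the two orders agree. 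The entries with exactly one index in $\{k,\ell\}$ are handled by the same observation: the cross term $b^{(1)}_{k\ell}[b^{(1)}_{\ell j}]_+$ (and its partner) vanishes. For the $y$-variables, \eqref{2eq:ymut1} shows that $y_\ell$ is untouched by $\mu_k$ (the exponents $[b_{k\ell}]_+$ and $b_{k\ell}$ are $0$) and symmetrically $y_k$ is untouched by $\mu_\ell$; for $i\notin\{k,\ell\}$ the same decoupling argument as for $B$ applies. The $x$-variables need a short additional remark: since $b_{\ell k}=0$, the variable $x_\ell$ does not appear in the monomial $\prod_j x_j^{[-b_{jk}]_+}$ or in $\hat y_k$, so $\mu_\ell$ can be commuted past the formula for $x'_k$ without interference, and likewise for $x'_\ell$.

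The hardest step, and the one I would treat most carefully, is the $B$-matrix comparison for indices $(i,j)$ with $i,j\notin\{k,\ell\}$, because one must verify that the two iterated applications of \eqref{2eq:bmut1} produce the same combination of $b_{ik}$, $b_{kj}$, $b_{i\ell}$, $b_{\ell j}$ with the correct positive-part brackets. All the other cases reduce to the principle ``$b_{k\ell}=0$ kills the cross terms,'' but this one genuinely requires writing out both orders and checking that the $k$-contribution and the $\ell$-contribution appear additively in each. Once this is done, the rest is essentially a matter of substituting into the mutation formulas, and the two compositions coincide.
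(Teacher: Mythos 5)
Your proof is correct, and it is exactly the direct entry-by-entry verification from the mutation formulas \eqref{2eq:xmut1}--\eqref{2eq:bmut1} that the paper indicates (the text only remarks that \eqref{1eq:bkl3} ``can be easily verified by a direct calculation'' and defers the details to the companion monograph). Your case analysis — including the key observations that $b^{(1)}_{ik}=b_{ik}$, $b^{(1)}_{kj}=b_{kj}$ after $\mu_\ell$ and that $x_\ell$ is absent from both the monomial $\prod_j x_j^{[-b_{jk}]_+}$ and $\hat y_k$ — supplies precisely those details.
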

The equality \eqref{1eq:bkl3} can be easily verified
by a direct calculation (e.g., \cite[Prop.~2.9]{Nakanishi22a}).

Now we formulate the $Y$-system \eqref{eq:Ysys1}
of type $(X,X')$
in terms of a $Y$-pattern following Keller \cite{Keller08}.
We divide the formulation into three steps.

{\bf Step 1. Quiver $Q(X,X')$.}
At each vertex $a$ of $X$ we assign a sign $\kappa_a \in \{+, -\}$
so that any adjacent vertices in $X$ have the opposite sign.
(There are two possible choices, and either will do.)
We conveniently identify the signs $\pm$ with $\pm1$ as usual.
We do the same assignment of signs for $X'$.
Then, we consider the product graph $T=X \times X'$,
where $X$ lies horizontally and $X'$ lies vertically.
For each vertex $v=(a,a')$ of $T$, we set
a sign $\kappa_v=\kappa_{a,a'}:=\kappa_a \kappa_{a'}\in \{+, -\}$.
Moreover,
we set an orientation on each edge of $T$
by the following rule:
\begin{align}
\begin{matrix}
(++)&\leftarrow &(-+)
\\
\downarrow &&\uparrow
\\
(+-)&\rightarrow & (--)
\end{matrix}
\end{align}
The resulting quiver is denoted by $Q(X,X')$.
See Figure \ref{fig:prod1} for examples.
The quiver $Q$ in Example \ref{ex:QB1}
is  also an example for $Q(A_3,A_2)$.

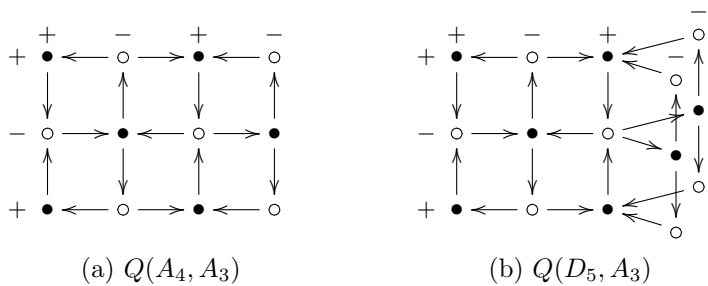
\begin{figure}
$$
\begin{xy}
(0,0)*{\bullet},
(0,10)*\cir<2pt>{},
(0,20)*{\bullet},
(10, 0)*\cir<2pt>{},
(10,10)*{\bullet},
(10,20)*\cir<2pt>{},
(20, 0)*{\bullet},
(20,10)*\cir<2pt>{},
(20,20)*{\bullet},
(30, 0)*\cir<2pt>{},
(30,10)*{\bullet},
(30,20)*\cir<2pt>{},
(0,23)*{\text{\small+}},
(10,23)*{\text{\small $-$}},
(20,23)*{\text{\small+}},
(30,23)*{\text{\small $-$}},
(-4,0)*{\text{\small+}},
(-4,10)*{\text{\small $-$}},
(-4,20)*{\text{\small+}},
(15,-8)*{\text{(a) $Q(A_4,A_3)$}},
\ar@{->} (8,20);(2,20) 
\ar@{->} (12,20);(18,20) 
\ar@{->} (28,20);(22,20) 
\ar@{->} (2,10);(8,10) 
\ar@{->} (18,10);(12,10) 
\ar@{->} (22,10);(28,10) 
\ar@{->} (8,0);(2,0) 
\ar@{->} (12,0);(18,0) 
\ar@{->} (28,0);(22,0) 
\ar@{->} (0,2);(0,8) 
\ar@{->} (10,8);(10,2) 
\ar@{->} (20,2);(20,8) 
\ar@{->} (30,8);(30,2) 
\ar@{->} (0,18);(0,12) 
\ar@{->} (10,12);(10,18) 
\ar@{->} (20,18);(20,12) 
\ar@{->} (30,12);(30,18) 
\end{xy}
\hskip50pt
\begin{xy}
(0,0)*{\bullet},
(0,10)*\cir<2pt>{},
(0,20)*{\bullet},
(10, 0)*\cir<2pt>{},
(10,10)*{\bullet},
(10,20)*\cir<2pt>{},
(20, 0)*{\bullet},
(20,10)*\cir<2pt>{},
(20,20)*{\bullet},
(29, -3)*\cir<2pt>{},
(29,7)*{\bullet},
(29,17)*\cir<2pt>{},
(32, 3)*\cir<2pt>{},
(32,13)*{\bullet},
(32,23)*\cir<2pt>{},
(0,23)*{\text{\small+}},
(10,23)*{\text{\small $-$}},
(20,23)*{\text{\small+}},
(32,26)*{\text{\small $-$}},
(29,20)*{\text{\small $-$}},
(-4,0)*{\text{\small+}},
(-4,10)*{\text{\small $-$}},
(-4,20)*{\text{\small+}},
(15,-8)*{\text{(b) $Q(D_5,A_3)$}},
\ar@{->} (8,20);(2,20) 
\ar@{->} (12,20);(18,20) 
\ar@{->} (27,18);(22,19.5) 
\ar@{->} (2,10);(8,10) 
\ar@{->} (18,10);(12,10) 
\ar@{->} (22,9.5);(27,8) 
\ar@{->} (8,0);(2,0) 
\ar@{->} (12,0);(18,0) 
\ar@{->} (27,-2);(22,-0.5) 
\ar@{->} (30,2.5);(22,0.5) 
\ar@{<-} (30,12.5);(22,10.5) 
\ar@{->} (30,22.5);(22,20.5) 
\ar@{->} (0,2);(0,8) 
\ar@{->} (10,8);(10,2) 
\ar@{->} (20,2);(20,8) 
\ar@{->} (29,5);(29,-1) 
\ar@{->} (32,11);(32,5) 
\ar@{->} (0,18);(0,12) 
\ar@{->} (10,12);(10,18) 
\ar@{->} (20,18);(20,12) 
\ar@{->} (29,9);(29,15) 
\ar@{->} (32,15);(32,21) 
\end{xy}
$$
\vskip-5pt
\caption{Examples of quiver $Q(X,X')$.
Black and white    vertices belong to $ V_+$ and $ V_-$, respectively.
}
\label{fig:prod1}
\end{figure}

{\bf Step 2. Composite mutations $\mu_+$ and  $\mu_-$.}
We consider the $Y$-pattern $\bfUpsilon=\{ \Upsilon_t\}$
of a cluster pattern with free coefficients
at $t_0$,
where
the initial $Y$-seed is given by $\Upsilon_{t_0}=\Upsilon=(\bfy, Q)$
with $Q=Q(X, X')$.
Let $V_+$ and $V_{-}$ be the sets of vertices  $v$ of the quiver $Q$ with sign
$\kappa_v$ being
 $+$
and $-$, respectively.
In  Figure \ref{fig:prod1}, the black and white vertices
belong to $V_+$ and $V_-$, respectively.
Let  $p=|V_+|$, and let $v_1$, \dots, $v_p$ be all vertices in $V_+$.
Similarly,
let  $q=|V_-|$, and let $u_1$, \dots, $u_q$ be all vertices in $V_-$.
Then, the following facts hold:
\begin {enumerate}
\item
By the construction of $Q$, there is no arrow between
the vertices $v$ and $v'$ with the same sign.
Therefore,
by Lemma \ref{lem:bcommut1},
the sequence of mutations
\begin{align}
\mu_+(\bfy,Q):=\mu_{v_p} \cdots \mu_{v_1}(\bfy,Q)
\end{align}
is independent of the order of the product.
\item
Let $(\bfy',Q')=\mu_+(\bfy,Q)$.
Then, $Q'=Q^{\op}$ holds in the same way as the quiver $Q'=\mu_6\mu_4\mu_2(Q)$
in Example \ref{ex:QB1}.
\item
For the above seed $(\bfy',Q^{\op})$,  we apply the sequence of mutations
\begin{align}
\mu_-:=\mu_{u_q} \cdots \mu_{u_1}.
\end{align}
Again, by Lemma \ref{lem:bcommut1},
it is independent of the order of the product.
\item
Let $(\bfy'',Q'')=\mu_-(\bfy',Q^{\op})$.
Then, $Q''=Q$ holds 
in the same way as the quiver $Q''=\mu_5\mu_3\mu_1(Q^{\op})$
in Example \ref{ex:QB1}.
Note that this periodicity is only for quivers,
and $\bfy''=\bfy$ does not hold.
\end{enumerate}

{\rp In the case $(X,X')=(A_1,A_1)$, 
one of $V_+$ or $V_-$ is empty.
If $V_+$ is empty, for example,
we regard $\mu_+=\rmid$. All formulas below are still applicable.}

{\bf Step 3. Identification with $Y$-system.}
For  the above $Y$-pattern,
 consider the following sequence of (composite) mutations
\begin{align}
\label{eq:Yseq1}
\begin{split}
\cdots
\
{\buildrel + \over \rightarrow}
\
(\bfy(-1), Q^{\op})
\
{\buildrel - \over \rightarrow}
&
\
(\bfy(0), Q)
\
{\buildrel + \over \rightarrow}
\
(\bfy(1), Q^{\op})
\
{\buildrel - \over \rightarrow}
\
(\bfy(2), Q)
\
{\buildrel + \over \rightarrow}
\
\cdots,
\end{split}
\end{align}
where $(\bfy(0), Q)=(\bfy, Q)$ is the initial $Y$-seed,
and
$\buildrel + \over \rightarrow$ and $\buildrel - \over \rightarrow$
represent $\mu_+$ and $\mu_-$, respectively.
Let us write $y$-variables 
as  $\bfy(s)=(y_{a,a'}(s))_{(a,a')\in I\times I'}$.
Also, let us write the addition $\oplus$ in $\bbQ_{\rmsf}(\bfy)$ as $+$
as in the previous chapter.
Then, in  the same way as \eqref{eq:ymutex1} and \eqref{eq:ymutex2},
we have,
for even $s$,
\begin{align}
\label{eq:yma1}
y_{a,a'}(s+1)=
\begin{cases}
y_{a,a'}(s)^{-1}
& \kappa_{a,a'}=+,
\\
y_{a,a'}(s)
\frac{
\displaystyle
\prod_{b\in I:\, b\sim a}(1+y_{b,a'}(s))
}
{
\displaystyle
\prod_{b'\in I':\, b'\sim a'}(1+y_{a,b'}(s)^{-1})
}
& \kappa_{a,a'}=-,
\\
\end{cases}
\end{align}
and, for odd $s$,
\begin{align}
\label{eq:yma2}
y_{a,a'}(s+1)=
\begin{cases}
\displaystyle
y_{a,a'}(s)
\frac{
\displaystyle
\prod_{b\in I:\, b\sim a}(1+y_{b,a'}(s))
}
{
\displaystyle
\prod_{b'\in I':\, b'\sim a'}(1+y_{a,b'}(s)^{-1})
}
& \kappa_{a,a'}=+,
\\
y_{a,a'}(s)^{-1}
& \kappa_{a,a'}=-,
\end{cases}
\end{align}
where the symbol $\sim$ denotes  the adjacency
in  $X$ in the numerator
and in  $X'$ in the denominator, respectively.
Now we separate the $y$-variables into the \emph{even} and the \emph{odd sectors}
$\calY_+$,  $\calY_-$ by
\begin{align}
\label{eq:ycal1}
\calY_{\pm} &:=\{ y_{a,a'}(s) \mid \kappa_{a,a'}(-1)^s=\pm1 \}.
\end{align}
Then, by \eqref{eq:yma1} and \eqref{eq:yma2},
 the variables in $\calY_+$ and  $\calY_-$ are related as
\begin{align}
\label{eq:yyinv1}
y_{a,a'}(s+1)=y_{a,a'}(s)^{-1}
\quad (y_{a,a'}(s)\in \calY_+,\  y_{a,a'}(s+1)\in \calY_-).
\end{align}
By using this relation,
both relations  \eqref{eq:yma1} and \eqref{eq:yma2} are
summarized into a single relation
\begin{align}
\label{eq:yma3}
y_{a,a'}(s+1)y_{a,a'}(s-1)=
\frac
{\displaystyle
\prod_{b\in I:\, b\sim a}(1+y_{b,a'}(s))}
{
\displaystyle
\prod_{b'\in I':\, b'\sim a'}(1+y_{a,b'}(s)^{-1})
},
\end{align}
where all $y$-variables are in the even sector $\calY_+$.
This is exactly the $Y$-system \eqref{eq:ysys1}
of type $(X,X')$.
On the other hand, by \eqref{eq:yyinv1} and \eqref{eq:yma3}, $y$-variables
in the odd sector $\calY_-$ satisfy the ``dual" relations
\begin{align}
\label{eq:yma4}
y_{a,a'}(s+1)y_{a,a'}(s-1)=
\frac
{
\displaystyle
\prod_{b'\in I':\, b'\sim a'}(1+y_{a,b'}(s))
}
{\displaystyle
\prod_{b\in I:\, b\sim a}(1+y_{b,a'}(s)^{-1})}
.
\end{align}

Meanwhile, we observe that the $Y$-system \eqref{eq:ysys1} itself
is decoupled into two sectors.
Namely,
let us separate the variables $Y_{a,a'}(u)$ therein into the even and the odd sectors
$Y_+$, $Y_-$ by
\begin{align}
\label{eq:Yp1}
Y_{\pm} &:=\{ Y_{a,a'}(u) \mid \kappa_{a,a'}(-1)^s=\pm1\}.
\end{align}
In contrast to \eqref{eq:yyinv1},
there is no relation between the variables in $Y_+$ and the ones in $Y_-$
in the  $Y$-system \eqref{eq:ysys1}.
Therefore,
the $Y$-system can be restricted on each sector.

In summary, the $Y$-system \eqref{eq:ysys1} restricted on the even sector $Y_+$
is ``embedded''
in the ambient $Y$-pattern,
where the variables $Y_{a,a'}(u)\in Y_+$ for the $Y$-system
are identified with the $y$-variables $y_{a,a'}(u)\in \calY_+$ 
in the sequence of mutations \eqref{eq:Yseq1}.

\section{DIs for $Y$-systems}

For each simply-laced Dynkin diagram $X$,
we introduce the following diagram automorphism $\omega:I \rightarrow I$
of $X$,
\begin{equation}
\label{eq:auto1}
\begin{alignedat}{2}
&\text{for $A_r$}
\quad
&&
\omega:  a \leftrightarrow r+1-a,
\\
&\text{for $D_r$ ($r$: odd)}
\quad
&&
\omega:   a \mapsto a\ (a\neq r-1,\ r),\
r-1\leftrightarrow r,
\\
&\text{for $E_6$}
\quad
&&
 \omega:  {\rp 6 \mapsto 6,\
1\leftrightarrow 5,\
2 \leftrightarrow 4, }
\\
&\text{otherwise}
\quad
&&
\omega=\id,
\end{alignedat}
\end{equation}
where we used the labels of $I$ in Figure \ref{fig:Dynkin1}.
The intrinsic meaning of $\omega$ will be clarified soon in \eqref{eq:long1}.
Note that $\omega$ is an involution for any $X$.

The following periodicity was proved for $(X,X')=(X,A_1)$
by Fomin-Zelevinsky \cite{Fomin03b, Fomin07},
and in general
by  Keller \cite{Keller08,Keller10} (for full periodicity)
and Inoue-Iyama-Kuniba-Nakanishi-Suzuki \cite{Inoue10c} (for half periodicity).

\begin{thm}[{\cite[Thm.~1.1]{Fomin03b}, \cite[Thm.~8.2]{Keller08}, \cite[Thm.\ 2.3]{Keller10}, \cite[Cor.\ 4.28]{Inoue10c}}]
\label{thm:Yperiod1}
For the sequence of mutations \eqref{eq:Yseq1},
the following periodicities hold:
\begin{alignat}{2}
\label{eq:half1}
&\text{(half periodicity)} 
&\quad y_{a,a'}(s+h+h')&=y_{\omega(a),\omega'(a')}(s),
\\
\label{eq:full1}
&\text{(full periodicity)} 
&\quad y_{a,a'}(s+2(h+h'))&=y_{a,a'}(s),
\end{alignat}
where $h$, $h'$ are the Coxeter numbers,
and $\omega$, $\omega'$ are the diagram automorphisms
\eqref{eq:auto1} of $X$ and $X'$, respectively.
(When  $\omega=\omega'=\id$,
the half periodicity \eqref{eq:half1} is already a full periodicity.)
\end{thm}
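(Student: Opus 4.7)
The plan is to reduce the periodicity of the $y$-variables to a statement about their tropical parts and then to recognize the tropical dynamics as the action of a Coxeter element on a finite root system. By the synchronicity theorem (Theorem~\ref{1thm:synchro1}), the $Y$-seed periodicity \eqref{eq:half1}--\eqref{eq:full1} is equivalent, via the implication (b)~$\Longleftrightarrow$~(d), to the corresponding periodicity of $C$-matrices along the composite sequence \eqref{eq:Yseq1}. Thus, writing $\nu = \omega \times \omega'$ for the permutation of $I \times I'$ induced by the two diagram automorphisms \eqref{eq:auto1}, it suffices to show
\begin{equation*}
C(s + h + h') \;=\; \nu\, C(s),
\qquad
C(s + 2(h+h')) \;=\; C(s),
\end{equation*}
after which \eqref{eq:half1} and \eqref{eq:full1} follow at once from the synchronicity implications (b)~$\Rightarrow$~(d) and (b)~$\Rightarrow$~(e).

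Next, I would set up the tropical identification. By sign-coherence (Theorem~\ref{thm:sign1}) and the $c$-vector mutation rule \eqref{2eq:cmutmat2}, each elementary mutation acts on the columns of $C_t$ by a \emph{signed reflection} in a root-system sense. Because $Q(X,X')$ is bipartite with sign assignment $\kappa_v$, the composite mutations $\mu_+$ and $\mu_-$ are products of commuting elementary mutations (Lemma~\ref{lem:bcommut1}), and the tropical action of each becomes an involution on the ambient lattice $\bbZ^{II'}$. Combined with the second duality \eqref{eq:db1}, one identifies this action with two involutions $\tau_+,\tau_- \in W$, each a product of mutually commuting simple reflections in the Weyl group attached to the Cartan counterpart $A(B(Q(X,X')))$; the product $c := \tau_- \tau_+$ is then a \emph{bipartite Coxeter element} of this simply-laced root system.

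The periodicity is then extracted from the classical theory of Coxeter elements. For a simply-laced bipartite Coxeter element of a finite root system, one has $c^{h_{\text{tot}}} = 1$ and $c^{h_{\text{tot}}/2} = w_0$, where $w_0 = -\mathrm{id}$ on the Cartan subalgebra twisted by the diagram automorphism $\omega$. For the product quiver $Q(X,X')$ the relevant Coxeter-type dynamics decouples (through the bipartite structure and the even/odd sector decomposition \eqref{eq:ycal1}) in such a way that the effective period on each sector is $h+h'$, and the $c^{(h+h')/?}$ power realizes precisely the diagram automorphism $\omega \times \omega'$ predicted by \eqref{eq:half1}. Squaring removes the automorphism and gives the full period $2(h+h')$. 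To pin down the constant term and the sign in \eqref{eq:half1} I would use the tropical limit of Lemma~\ref{lem:limit1} together with the explicit bipartite decomposition of $c$.

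The main obstacle is the last step: establishing that the tropical dynamics of $\mu_-\mu_+$ on $Q(X,X')$ coincides with the Coxeter element of a finite simply-laced root system and extracting the exact order $h+h'$ along with the correct twist by $\omega \times \omega'$. For $X'=A_1$ this is the Fomin--Zelevinsky result \cite{Fomin03b}, where the identification with the Coxeter element of $X$ is direct. For general $(X,X')$, one either follows Keller's route through the cluster category of the tensor product of Dynkin quivers (where periodicity follows from the Serre functor having finite order on this $2$-Calabi--Yau category) or the piecewise-linear orbit analysis of Inoue--Iyama--Kuniba--Nakanishi--Suzuki. Either approach reduces the claim to a finite calculation with Coxeter numbers and longest elements, which is verifiable case-by-case along the ADE~$\times$~ADE classification.
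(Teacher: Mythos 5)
Your first reduction is exactly the paper's: by Theorem \ref{thm:detrop1} and the synchronicity Theorem \ref{1thm:synchro1}, it suffices to prove the periodicity of the $C$-matrices, i.e., of the tropical $y$-variables. The gap is in the mechanism you propose for that tropical periodicity. You identify the tropical action of $\mu_-\mu_+$ with a bipartite Coxeter element of ``the Weyl group attached to the Cartan counterpart $A(B(Q(X,X')))$.'' But for general $(X,X')$ that Cartan counterpart is \emph{not} of finite type (already for $(A_2,A_2)$ the quiver is a $4$-cycle), so there is no finite root system, no Coxeter number $h_{\mathrm{tot}}$, and no identity $c^{h_{\mathrm{tot}}}=1$ to invoke. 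Moreover, the elementary $c$-vector mutation \eqref{2eq:cmutmat2} is a piecewise-linear map, not a reflection of the ambient lattice, so the composite $\mu_-\mu_+$ is not literally an element of any Weyl group. Your placeholder ``$c^{(h+h')/?}$'' is the symptom: the period $h+h'$ is a \emph{sum} of Coxeter numbers of two \emph{different} root systems, which cannot arise as the order (or half-order) of a single Coxeter element.

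What actually makes the argument work in the paper is the \emph{factorization property} of the tropical $Y$-system \eqref{eq:yma6}: the recursion for $\bfc_{a,a'}(s)$ contains two sums, one over the horizontal neighbours (restricted to negative tropical sign) and one over the vertical neighbours (restricted to positive sign). Starting from the initial condition \eqref{eq:cini1}, all $c$-vectors are positive, so only the vertical sum survives and the recursion coincides with the linear root recursion \eqref{eq:alpha2} for $X'$; by Proposition \ref{prop:alpha1} the $c$-vectors track the roots $\alpha'(a';k)$, stay positive for exactly $h'$ steps, and then \emph{all flip sign simultaneously} to $-\bfe_{a,\omega'(a')}$ at $s=h'$. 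From there only the horizontal sum survives, and the same argument with the Coxeter element of $X$ runs for $h$ more steps, landing on $\bfe_{\omega(a),\omega'(a')}$ at $s=h+h'$. The simultaneous sign flip (Proposition \ref{prop:alpha1}(a),(b)) is the crucial point that your sketch does not supply, and without it the decoupling you assert ``through the bipartite structure'' does not follow. Your closing deferral to Keller's categorical proof or to the piecewise-linear analysis of Inoue--Iyama--Kuniba--Nakanishi--Suzuki is legitimate as a citation, but as written the middle of your argument proposes a mechanism that is not available.
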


The periodicity \eqref{eq:full1}
 is equivalent to the one for the $Y$-systems 
in Conjecture \ref{conj:DIY1} under  the correspondence in the previous section.
Temporarily assume Theorem \ref{thm:Yperiod1}.
Then,
thanks to Theorem \ref{1thm:synchro1},
or by $Q(2(h+h'))=Q$ more directly,
the periodicity \eqref{eq:full1}
is lifted to the periodicity of $Y$-seeds
\begin{align}
\Upsilon(s+2(h+h'))=\Upsilon(s).
\end{align}
Thus, by  Theorem \ref{thm:DI1},
there is a DI associated with this periodicity.
Note that $y_{k_s}(s)$ appearing in the DI are 
variables in the even sector $\calY_+$.
Also, we take $D=I$ because the $Y$-pattern is skew-symmetric.
After evaluating the constant term,
we obtain the following DI.
\begin{thm}[{\cite[Thm.~2.8]{Nakanishi09}}]
\label{thm:YDI1}
Suppose that the initial $y$-variables $\bfy$ take values in $\bbR_{>0}^n$.
Then, the following DIs hold.
\begin{align}
\label{eq:YDI1}
\sum_{s=0}^{2(h+h')-1}
\sum_{
\scriptstyle
(a,a')\in I \times I'
\atop
\scriptstyle
 \kappa_{a,a'}(-1)^s=1}
\tilde L (y_{a,a'}(s))
=
hrr'
\frac{\pi^2}{6}.
\end{align}
\end{thm}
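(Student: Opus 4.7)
The plan is to apply Theorem \ref{thm:DI1} directly to the mutation sequence \eqref{eq:Yseq1} after expanding each composite mutation into its constituent individual mutations. Since $Q(X,X')$ is skew-symmetric we may take $D=I$, so every $\delta_{k_u}=1$. The full periodicity $\Upsilon(2(h+h'))=\Upsilon(0)$ follows from Theorem \ref{thm:Yperiod1} combined with Theorem \ref{1thm:synchro1}, and because the individual mutations inside each $\mu_{\pm}$ are at pairwise non-adjacent vertices in $V_{\pm}$ and hence commute by Lemma \ref{lem:bcommut1}, the expansion produces a well-defined $\id$-periodic sequence of length $P=(h+h')(|V_+|+|V_-|)=(h+h')rr'$.

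Next I would identify the sum in Theorem \ref{thm:DI1} with the left-hand side of \eqref{eq:YDI1}. At composite-time $s$ the mutation $\mu_{\kappa}$ with $\kappa=(-1)^s$ is applied precisely at the vertices $v=(a,a')\in V_{\kappa}$, and these are exactly the indices $(a,a')$ satisfying $\kappa_{a,a'}(-1)^s=1$; the corresponding $y$-variable mutated at that individual step is $y_{a,a'}(s)$. Hence
\begin{align*}
\sum_{u=0}^{P-1}\tilde L(y_{k_u}(u))
=
\sum_{s=0}^{2(h+h')-1}\ \sum_{\substack{(a,a')\in I\times I'\\ \kappa_{a,a'}(-1)^s=1}}\tilde L(y_{a,a'}(s)).
\end{align*}
By Theorem \ref{thm:DI1} this sum equals $N_-(\pi^2/6)$, and the theorem is then equivalent to the combinatorial identity $N_-=hrr'$; in view of \eqref{eq:NN1}, namely $N_++N_-=(h+h')rr'$, this is also equivalent to $N_+=h'rr'$.

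The final step, and the main obstacle, is counting $N_-$. By Theorem \ref{thm:sign1}, the tropical sign $\varepsilon_u$ is the sign of the $c$-vector attached to the individual mutation at time $u$, so $N_-$ equals the total number of $(s,v)$ pairs whose $c$-vector at that moment is negative. The cleanest way to perform this count is to observe that the tropicalization of the composite dynamics $\mu_-\mu_+$ is a linear action on $\bbZ^{rr'}$ realized by a bipartite Coxeter-type element associated with $(X,X')$; this is the same tropical mechanism underlying the proof of the periodicity \eqref{eq:full1} in \cite{Keller10, Inoue10c}, and it shows that the $c$-vectors traversed along one full period bijectively enumerate the roots of a root system built from $X$ and $X'$, with positive versus negative tropical signs reflecting positive versus negative roots. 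The level-rank duality — which swaps $(X,X')\leftrightarrow(X',X)$ and $V_+\leftrightarrow V_-$, hence $N_+\leftrightarrow N_-$ — then forces $N_+=h'rr'$ and $N_-=hrr'$. The crux is therefore the Coxeter/root-system identification: once the tropical $c$-vectors are matched with the root combinatorics, the counting is classical and runs in parallel for each Dynkin type, whereas setting up this identification uniformly in $(X,X')$ is the substantive work, essentially inherited from the proof of Theorem \ref{thm:Yperiod1} itself.
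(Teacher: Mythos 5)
Your setup coincides with the paper's: you expand $\mu_{\pm}$ into commuting individual mutations (Lemma \ref{lem:bcommut1}), take $D=I$, lift the periodicity of Theorem \ref{thm:Yperiod1} to a periodicity of $Y$-seeds via Theorem \ref{1thm:synchro1}, apply Theorem \ref{thm:DI1}, and correctly match the resulting sum with the left-hand side of \eqref{eq:YDI1}. All of this validly reduces the theorem to the single count $N_-=hrr'$, exactly as in the paper.

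The gap is in how you propose to obtain that count. Level-rank duality cannot do the job: interchanging $(X,X')$ with $(X',X)$ swaps $N_+$ and $N_-$, so it only yields $N_-(X,X')=N_+(X',X)$, equivalently $N_-(X,X')+N_-(X',X)=(h+h')rr'$, which is already contained in \eqref{eq:NN1} and does not determine the split (for instance $N_{\pm}=\frac{1}{2}(h+h')rr'$ satisfies the same symmetry). What actually pins down $N_-$ in the paper is a stronger, time-resolved statement that you assert only implicitly and never establish: solving the tropical $Y$-system \eqref{eq:yma6}, one finds that during the first $h'$ composite steps \emph{every} $c$-vector in the even sector is positive --- the dynamics factorizes into the vertical direction and is governed by the Coxeter element of $X'$, which by Proposition \ref{prop:alpha1}\,(a) stays among positive roots for $0\le k\le h'-1$ --- and during the next $h$ composite steps every $c$-vector is negative, governed by the horizontal Coxeter element of $X$. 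Counting the even-sector vertices in each block then gives $\frac{1}{2}h'rr'$ positive and $\frac{1}{2}hrr'$ negative signs over the half period, hence $N_-=hrr'$ over the full period. Without this block-wise sign constancy (the ``factorization property''), the bijection with roots that you invoke is not available, and the duality argument you substitute for it is insufficient to conclude.
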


Taking into account the following fine points,
this is indeed the DI \eqref{eq:Y3} for the $Y$-system:
\begin{itemize}
\item
The constant term of \eqref{eq:YDI1} is \emph{half} of
the one of \eqref{eq:Y3}  because 
the sum in \eqref{eq:YDI1}
is restricted to the even sector $\calY_+$,
which corresponds to the even sector $Y_+$
of the $Y$-system.
\item 
By \eqref{eq:yyinv1},
the initial $y$-variables $y_{a,a'}(0)$ are identified with the initial
variables of the $Y$-system in the even sector $Y_+$  as
\begin{align}
y_{a,a'}(0)=
\begin{cases}
Y_{a,a'}(0) & \kappa_{a,a'}=+,
\\
Y_{a,a'}(-1)^{-1} & \kappa_{a,a'}=-.
\end{cases}
\end{align}
\end{itemize}

\begin{cor}{\cite{Keller08, Keller10, Nakanishi09}}
Conjecture \ref{conj:DIY1} is true.
\end{cor}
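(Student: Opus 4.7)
The plan is to package the two preceding theorems into the two halves of Conjecture \ref{conj:DIY1} by routing everything through the embedding of the $Y$-system into the ambient free $Y$-pattern.

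\textbf{Part (1), periodicity.} By the construction in Section \ref{sec:Ysystems1}, the even sector $Y_+$ of any positive real solution $\{Y_{a,a'}(u)\}$ of the $Y$-system of type $(X,X')$ is identified with the even sector $\calY_+$ of the $y$-variables produced by the composite mutation sequence \eqref{eq:Yseq1} on the free $Y$-pattern with initial quiver $Q(X,X')$, via $Y_{a,a'}(u) \longleftrightarrow y_{a,a'}(u)$ for $(a,a',u)$ with $\kappa_{a,a'}(-1)^u = 1$. The full periodicity \eqref{eq:full1} of Theorem \ref{thm:Yperiod1} yields $y_{a,a'}(s+2(h+h')) = y_{a,a'}(s)$, and transferring through the identification gives $Y_{a,a'}(u+2(h+h')) = Y_{a,a'}(u)$ for all indices in $Y_+$. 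For the odd sector $Y_-$, observe that the time shift $Z_{a,a'}(u) := Y_{a,a'}(u+1)$ is again a positive real solution of the same $Y$-system (since \eqref{eq:Ysys1} is autonomous in $u$), and its even sector $Z_+$ is precisely $Y_-$. Applying the first step to $Z$ then establishes the periodicity on $Y_-$, completing \eqref{eq:Y2}.

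\textbf{Part (2), the DI.} Theorem \ref{thm:YDI1}, applied to the solution $\{Y_{a,a'}(u)\}$, delivers
\[
\sum_{u=0}^{2(h+h')-1}\ \sum_{(a,a') : \kappa_{a,a'}(-1)^u = 1} \tL(Y_{a,a'}(u)) \; = \; h r r' \cdot \frac{\pi^2}{6},
\]
which is exactly the contribution of $Y_+$ to \eqref{eq:Y3}. Applying the same theorem to the shifted solution $Z$ introduced above supplies the analogous identity with sum running over $Y_-$ and the same constant $h r r' \cdot \pi^2/6$. Adding the two produces
\[
\sum_{u=0}^{2(h+h')-1}\ \sum_{(a,a') \in I \times I'} \tL(Y_{a,a'}(u)) \; = \; 2 h r r' \cdot \frac{\pi^2}{6},
\]
which is \eqref{eq:Y3}. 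The dual identity \eqref{eq:Y4} then follows from Euler's identity \eqref{eq:euler3} together with the total term count $2 r r' (h+h')$ noted in the remarks after Conjecture \ref{conj:DIY1}.

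\textbf{Where the difficulty sits.} Essentially no new obstacle is encountered at this stage: the deep input is the periodicity of Theorem \ref{thm:Yperiod1}, which depends on the categorification results flagged in the preface, and the general DI framework of Theorem \ref{thm:DI1}, whose constancy half rests on Proposition \ref{prop:const2} and whose constant-term half rests on the tropical limit and sign-coherence. Once those are in hand, the corollary is a matter of bookkeeping: identifying the two sectors of the $Y$-system with two copies of the ambient even sector (related by the trivial time shift $u \mapsto u+1$) and adding the resulting contributions. The only point meriting a sentence of care in the write-up is that this time shift preserves positivity and the $Y$-system equations, which is immediate from \eqref{eq:Ysys1}.
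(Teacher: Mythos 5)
Your proposal is correct and follows essentially the same route as the paper: Conjecture \ref{conj:DIY1} is obtained by identifying the even sector $Y_+$ of the $Y$-system with the even sector $\calY_+$ of the ambient free $Y$-pattern and then invoking Theorems \ref{thm:Yperiod1} and \ref{thm:YDI1}. Your explicit handling of the odd sector via the time shift $u\mapsto u+1$ (which sends $Y_-$ to the even sector of a new positive solution) is just a clean way of making precise the symmetry the paper leaves implicit when it notes that \eqref{eq:YDI1} carries half the constant of \eqref{eq:Y3}.
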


In the rest of the chapter, we present the proofs of
Theorem \ref{thm:Yperiod1} and
the counting of the constant term
in Theorem  \ref{thm:YDI1}.
The main idea is to use the tropicalization.
By  Theorem \ref{thm:detrop1},
to prove the periodicity of  $y$-variables $y_{a,a'}(u)$,
it is enough to  prove the periodicity of \emph{tropical $y$-variables}
$[y_{a,a'}(u)]$.
Remarkably, 
the dynamics of tropical $y$-variables is governed by
the \emph{Coxeter elements} of the root systems.
This mechanism was found  for the $Y$-systems of type $(X,A_1)$ in \cite{Fomin03b, Fomin07},
and  generalized to the   $Y$-systems of type $(X,X')$ in \cite{Nakanishi09} .

\section{Coxeter elements}

\label{sec:Coxeter1}

This section summarizes the facts on the {Coxeter elements} we will use.
The presentation here is a minimal one.
See \cite{Bourbaki02, Humphreys90} for further information.
 
 Let $X$ be a simply-laced  Dynkin diagram of rank $r$.
Let $\Phi=\Phi(X)\subset V$ be the  {root system} corresponding to $X$,
where  $V$ is the ambient  real vector space  spanned by $\Phi$ with an inner product $(\cdot, \cdot)$.
Let $\alpha_1$, \dots, $\alpha_r \in \Phi$ be the   simple roots
parametrized by the index set $I=\{1,\, \dots,\,  r\}$ as in Figure \ref{fig:Dynkin1}.
We normalize them as $(\alpha_a,\alpha_a)=2$.
Then,
they satisfy, for $a\neq b$,
\begin{align}
\label{eq:Cartan1}
 (\alpha_a, \alpha_b)=
\begin{cases}
-1 & a\sim b,
\\
0 & a\not\sim b,
\end{cases}
\end{align}
where $\sim$ means the adjacency in the diagram $X$ as before.
Let $\Phi_{+}$ and $\Phi_{-}$ be the sets of
\emph{positive} and \emph{negative} roots, respectively,
so that we have $\Phi=\Phi_+\sqcup \Phi_-$.
The explicit description of $\Phi_+$ is found
in \cite{Bourbaki02}.

\begin{ex}[{\cite[Plates]{Bourbaki02}}]
(1) Type $A_r$.
The positive roots are given by
\begin{align}
[i]:=\alpha_i
\ (1\leq i \leq r),
\quad
[i,j]:=\alpha_i + \cdots +\alpha_j
\ (1\leq i < j \leq r).
\end{align}
Thus, we have $|\Phi_+|=r(r+1)/2$ and $|\Phi|=r(r+1)$.

(2) Type $D_r$.
The positive roots are given by
\begin{align}
\begin{split}
[i]&:=\alpha_i
\ (1\leq i \leq r),
\\
[i,j]&:=\alpha_i + \cdots +\alpha_j
\ (1\leq i < j \leq r-2),
\\
[i;r-1]&:=\alpha_i + \cdots +\alpha_{r-2}+\alpha_{r-1}
\ (1\leq i  \leq r-2),
\\
[i;r]&:=\alpha_i + \cdots +\alpha_{r-2}+\alpha_{r}
\ (1\leq i  \leq r-2),
\\
[i;r-1,r]&:=\alpha_i + \cdots +\alpha_{r-2}+\alpha_{r-1}+\alpha_{r}
\ (1\leq i  \leq r-2),
\\
\{i,j\}&:=\alpha_i + \cdots +\alpha_{j-1}+2\alpha_{j}+
\cdots +
2 \alpha_{r-2}+\alpha_{r-1}+ \alpha_r
\\
&\hskip150pt (1\leq i < j \leq r-2),
\end{split}
\end{align}
Thus, we have $|\Phi_+|=r(r-1)$ and $|\Phi|=2r(r-1)$.

(3) Types  $E_6$, $E_7$, $E_8$.
The explicit description of the positive roots is case-by-case.
For example, for type $E_6$, besides the ones from subdiagrams of type $A$ and $D$,
there are seven positive roots
\begin{align}
\begin{picture}(22,12)(0,4)
\put(0,0){\small 1}
\put(5,0){\small 1}
\put(10,0){\small 1}
\put(15,0){\small 1}
\put(20,0){\small 1}
\put(10,8){\small 1}
\end{picture}
\quad
\begin{picture}(22,12)(0,4)
\put(0,0){\small 1}
\put(5,0){\small 1}
\put(10,0){\small 2}
\put(15,0){\small 1}
\put(20,0){\small 1}
\put(10,8){\small 1}
\end{picture}
\quad
\begin{picture}(22,12)(0,4)
\put(0,0){\small 1}
\put(5,0){\small 2}
\put(10,0){\small 2}
\put(15,0){\small 1}
\put(20,0){\small 1}
\put(10,8){\small 1}
\end{picture}
\quad
\begin{picture}(22,12)(0,4)
\put(0,0){\small 1}
\put(5,0){\small 1}
\put(10,0){\small 2}
\put(15,0){\small 2}
\put(20,0){\small 1}
\put(10,8){\small 1}
\end{picture}
\quad
\begin{picture}(22,12)(0,4)
\put(0,0){\small 1}
\put(5,0){\small 2}
\put(10,0){\small 2}
\put(15,0){\small 2}
\put(20,0){\small 1}
\put(10,8){\small 1}
\end{picture}
\quad
\begin{picture}(22,12)(0,4)
\put(0,0){\small 1}
\put(5,0){\small 2}
\put(10,0){\small 3}
\put(15,0){\small 2}
\put(20,0){\small 1}
\put(10,8){\small 1}
\end{picture}
\quad
\begin{picture}(22,12)(0,4)
\put(0,0){\small 1}
\put(5,0){\small 2}
\put(10,0){\small 3}
\put(15,0){\small 2}
\put(20,0){\small 1}
\put(10,8){\small 2}
\end{picture}
\end{align}
 Here we only record the fact
$|\Phi|=72$, $126$, $240$ for $E_6$, $E_7$, $E_8$, respectively.
\end{ex}

To each simple root $\alpha_a$,
we assign a linear map 
$s_{a}:V \rightarrow V$ called a \emph{simple reflection}\index{simple reflection} by
 (under the  normalization  $(\alpha_a,\alpha_a)=2$)
\begin{align}
\label{eq:ref1}
s_{a}(v)=v- (v,\alpha_a)\alpha_a.
\end{align}
The group $W=W(X)$ generated by all simple reflections
is caleed the \emph{Weyl group} associated with the root system $\Phi=\Phi(X)$.
We have $W(\Phi)=\Phi$; it is indeed the defining condition of a root system.

Let $w_0$ be the \emph{longest element}\index{longest element (of Weyl group)} of $W$;
namely, it has a reduced expression of the longest length in $W$.
It is  known that $w_0$ is  unique,
and its length is $|\Phi_+|$ \cite[\S1.8]{Humphreys90}.
Explicitly, it is described as follows \cite[Plates]{Bourbaki02}:
\begin{align}
\label{eq:long1}
w_0(\alpha_a) = - \alpha_{\omega(a)},
\end{align}
where $\omega$ is the diagram automorphism defined in \eqref{eq:auto1}.
(This gives an intrinsic meaning of  $\omega$.)
We have $w_0^2=\id$.

Let us introduce some distinguished elements in $W$.
\begin{defn}
A \emph{Coxeter element}\index{Coxeter!element} $c\in W$ is a product of all simple reflections,
where each reflection appears exactly once and the order of the product is arbitrary.
\end{defn}

A Coxeter element $c$ \emph{does} depend on the order of the product.
Thus, it is not unique.
However, all Coxeter elements are known to be conjugate to each other in $W$ \cite[Prop.~3.16]{Humphreys90}.
Therefore, the following definition makes sense.
\begin{defn}
The \emph{Coxeter number}\index{Coxeter!number} $h=h(X)$ of the Dynkin diagram $X$
is the order of any Coxeter element $c$.
\end{defn}

The following fact due to Coxeter is well-known and already mentioned in
Conjecture \ref{conj:KBR1}.
\begin{thm}[{\cite[Prop.~3.18]{Humphreys90}}]
The following equality holds:
\begin{align}
\label{eq:Cox1}
|\Phi|=h r.
\end{align}
Explicitly, the Coxeter number is  given 
by 
$h=r+1$ for $A_r$, $2r-2$ for $D_r$, and 12, 18, 30 for $E_6$, $E_7$, $E_8$.
In particular,  $h$ is odd if and only if $X=A_r$ ($r$: even).
\end{thm}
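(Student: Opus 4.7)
The plan is to establish the fundamental identity $|\Phi|=hr$ first, from which both the explicit Coxeter numbers and the parity statement will follow immediately. My strategy follows the classical orbit-counting approach exploiting the bipartite structure of simply-laced Dynkin diagrams.

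First I would exploit the fact that every simply-laced Dynkin diagram is a tree and thus bipartite; indeed, the sign assignment $a \mapsto \kappa_a \in \{+,-\}$ introduced in Section \ref{sec:Ysystems1} provides exactly such a bipartition $I = I_+ \sqcup I_-$, with adjacent vertices receiving opposite signs. I would then choose the \emph{bipartite Coxeter element} $c := c_+ c_-$, where $c_{\pm} := \prod_{a \in I_\pm} s_a$. Because non-adjacent simple roots are orthogonal by \eqref{eq:Cartan1}, the corresponding simple reflections commute, so each $c_{\pm}$ is a well-defined involution independent of the order of the product. Since all Coxeter elements are conjugate, computing $h$ for this particular choice suffices.

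Next I would analyze the action of $c$ on the root system $\Phi$. The key lemma, which may be cited from \cite{Humphreys90} (or \cite{Bourbaki02}), is that the bipartite Coxeter element acts on $\Phi$ without fixed roots, and consequently every $c$-orbit in $\Phi$ has size exactly $h$. Granting this, the counting reduces to showing that there are precisely $r$ orbits. For this I would choose a system of $r$ orbit representatives: a standard choice is the set of simple roots $\{\alpha_1, \dots, \alpha_r\}$, or more refined, the set $\{\varepsilon_a \alpha_a\}$ with $\varepsilon_a = \kappa_a$. The argument that these representatives cover $\Phi$ without overlap can be extracted from the classical result that the $c$-orbit of $\alpha_a$ sweeps through a sequence of positive and negative roots related by alternating application of $c_+$ and $c_-$, and the union over $a$ exhausts $\Phi$. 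This yields $|\Phi| = h r$.

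Finally, the explicit values follow by counting positive roots from the realizations already tabulated in the excerpt: for $A_r$ one reads off $|\Phi| = r(r+1)$ giving $h = r+1$; for $D_r$ one gets $|\Phi| = 2r(r-1)$ giving $h = 2r-2$; for $E_6, E_7, E_8$ the quoted values $|\Phi| = 72, 126, 240$ yield $h = 12, 18, 30$ upon division by $r = 6, 7, 8$. The parity statement then reduces to arithmetic inspection: $h = 2r-2$ is always even, $12$, $18$, $30$ are all even, and $h = r+1$ is odd iff $r$ is even, which is the asserted characterization.

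The main obstacle is the fixed-point-free claim and the counting of $c$-orbits, which really lies at the heart of Coxeter-theoretic arguments and is not purely formal; in a streamlined exposition it is most economical to invoke \cite[Prop.~3.18]{Humphreys90} (as the statement itself already does) rather than reconstruct the proof from scratch, since the deeper structural theory of Coxeter elements is tangential to the dilogarithmic thread of the monograph.
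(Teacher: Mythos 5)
The paper offers no proof of this theorem at all: it is stated with the citation to \cite[Prop.~3.18]{Humphreys90} and used as a black box, so your closing decision to defer the hard part to that same reference is exactly what the author does, and your arithmetic deduction of the explicit values of $h$ from the tabulated $|\Phi|$ (and the parity statement) is correct and is all the "new" content the theorem's second and third sentences require.

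Two assertions in your sketch of the orbit-counting lemma are nevertheless wrong as stated. First, "no fixed roots, and consequently every $c$-orbit has size exactly $h$" is a non sequitur: since $c$ has order $h$, freeness of $c$ itself only forces orbit sizes to be divisors of $h$ greater than $1$; the actual theorem is that no root is fixed by any power $c^k$ with $1\leq k\leq h-1$ (proved via the rotation by $2\pi/h$ in the Coxeter plane), and that is what gives orbits of size exactly $h$. Second, the simple roots do \emph{not} form a system of $\langle c\rangle$-orbit representatives: already in type $A_2$, with $c=s_1s_2$ one has $c(\alpha_1)=s_1(\alpha_1+\alpha_2)=\alpha_2$, so both simple roots lie in the single orbit $\{\alpha_1,\alpha_2,-\alpha_1-\alpha_2\}$ of size $h=3$, and there are only $r=2$ orbits in total. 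Your "more refined" choice $\{\kappa_a\alpha_a\}$ is the correct classical one. Neither slip is fatal, because the identity $|\Phi|=hr$ already follows from "there are $r$ orbits, each of size $h$," which is precisely the content of the cited proposition and does not require exhibiting representatives; but if you keep the sketch, delete the simple-roots claim and strengthen the fixed-point statement to all nontrivial powers of $c$.
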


We are especially interested in the following choice of Coxeter elements.
We put the sign $\kappa_a$ to each vertex $a$ of $X$ as we did in Section \ref{sec:Ysystems1},
so that
any adjacent vertices in $X$ have opposite signs.
Then, for any $a$ and $b$ with $\kappa_a=\kappa_b$,
$s_a$ and $s_b$ commute.
So, we have the products $s_+$ and $s_-$ by
\begin{align}
s_{\pm}=\prod_{
a\in I;\
\kappa_a=\pm 1
} s_a,
\end{align}
where both are independent of the order of the product.
In the special case of $X=A_1$, one of $s_{\pm}$ is the identity.
Then, $s_- s_+$ and $s_+ s_-$ are Coxeter elements.
By \eqref{eq:Cartan1} and \eqref{eq:ref1},
we have
\begin{align}
\label{eq:aa2}
 s_{\kappa_a} (\alpha_a)=-\alpha_a,
 \quad
 s_{-{\kappa_a}} (\alpha_a)=\alpha_a +  \sum_{b\in I:\, b\sim a} \alpha_b.
 \end{align}
For any positive even integer $k$, the powers $(s_+ s_-)^{k/2}$
and $(s_- s_+)^{k/2}$ are defined in the ordinary way.
For any  positive  odd integer $k$, we define the powers as
\begin{align}
(s_+ s_-)^{k/2}:= \underbrace{s_- s_+ \cdots s_-}_{\text{$k$ terms}},
\quad
(s_-s_+)^{k/2}:= 
\underbrace{s_+ s_- \cdots s_+}_{\text{$k$ terms}}.
\end{align}

The following formula is regarded as the ``half periodicity'' of the Coxeter elements $s_-s_+$
and $s_+ s_-$.
\begin{prop}
[{\cite[V.~Exercise \S6.2]{Bourbaki02}, \cite[\S 3.19 Exercise 2]{Humphreys90}}]
The following formula holds.
\label{prop:Cw1}
\begin{align}
\label{eq:ssw1}
(s_-s_+)^{h/2}=(s_+s_-)^{h/2}=w_0.
\end{align}
Moreover, both products are reduced expressions of $w_0$.
\end{prop}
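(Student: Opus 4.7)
The plan is to prove both claims simultaneously by bounding the length of $w := (s_- s_+)^{h/2} \in W$ from above (by counting simple reflection factors in the given word) and from below (by computing its inversion set on $\Phi_+$), then concluding $w = w_0$ by uniqueness of the longest element.

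For the upper bound, let $r_\pm := |V_\pm|$, so $r_+ + r_- = r$. The composite $s_+ s_-$ involves exactly $r$ simple reflections. When $h$ is even, $(s_- s_+)^{h/2}$ is a word of $(h/2) r = hr/2$ simple reflections. The case of odd $h$ only occurs for $X = A_r$ with $r$ even, where the bipartite coloring forces $r_+ = r_- = r/2$; the convention in the statement then gives an alternating word with $((h+1)/2)(r/2) + ((h-1)/2)(r/2) = hr/2$ factors. By \eqref{eq:Cox1} this equals $|\Phi_+|$, so $\ell(w) \leq |\Phi_+|$.

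For the lower bound, I will show that $w$ sends every positive root to a negative root, which gives $\ell(w) \geq |\Phi_+|$ via the standard formula $\ell(w) = |\{\alpha \in \Phi_+ : w \alpha \in \Phi_-\}|$. The crucial input is the orbit structure of the Coxeter element $c = s_- s_+$ on $\Phi$: each orbit has size $h$, and therefore there are exactly $r$ orbits. Tracking signs along each orbit using the explicit formulas \eqref{eq:aa2}, one verifies that the ``half-way'' element $w$ interchanges the positive and negative portions of every orbit; equivalently, $w(\alpha_a) = -\alpha_{\omega(a)}$ for every simple root, matching the characterization \eqref{eq:long1} of $w_0$.

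Combining the two bounds yields $\ell(w) = |\Phi_+| = \ell(w_0)$, so $w = w_0$ by uniqueness of the longest element, and the given expression is automatically a reduced word. The argument for $(s_+ s_-)^{h/2}$ is identical after exchanging the roles of $+$ and $-$. The main obstacle is the orbit analysis in the lower bound: although each $c$-orbit manifestly has size dividing $h$, pinpointing precisely where each simple root lands after $h/2$ iterations of $c$ requires either case-by-case verification across Dynkin types or invocation of deeper results on Coxeter elements---for instance, the eigenvalue description $e^{2\pi\sqrt{-1}\,m_j/h}$ of $c$ via the exponents $m_j$, which makes $c^{h/2}$ act with eigenvalues $(-1)^{m_j}$ and, combined with the symmetry $m_j + m_{r+1-j} = h$ of the exponents, yields the desired sign pattern on all of $\Phi_+$.
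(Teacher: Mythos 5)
Your overall strategy (upper bound on $\ell(w)$ by counting letters, lower bound by showing $w(\Phi_+)\subset\Phi_-$, then invoking uniqueness of the longest element) is the standard skeleton, and your upper bound is complete and correct, including the parity bookkeeping for odd $h$ where $r_+=r_-=r/2$ forces the count $hr/2=|\Phi_+|$. This matches the only part the paper actually proves: the text cites the equality $(s_-s_+)^{h/2}=(s_+s_-)^{h/2}=w_0$ to the Bourbaki and Humphreys exercises and supplies just the length count $hr/2=|\Phi_+|$ to deduce reducedness.

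The genuine gap is the lower bound, which is the entire content of the cited exercise and which you assert rather than prove. Saying that ``tracking signs along each orbit using \eqref{eq:aa2} one verifies'' the half-way sign flip is not an argument, and the two escape routes you offer do not close it as stated. The eigenvalue route fails in exactly the interesting cases: $c^{h/2}$ having spectrum $\{(-1)^{m_j}\}$ determines it as $-\mathrm{id}$ only when every exponent is odd, and since $m_j+m_{r+1-j}=h$ forces $m_j\equiv m_{r+1-j}\pmod 2$ for $h$ even, the types with some even exponent are precisely those ($A_{r\ge 2}$, $D_r$ with $r$ odd, $E_6$) where $\omega\neq\mathrm{id}$ and $w_0\neq -\mathrm{id}$; there the spectrum alone does not tell you where individual roots land, so it does not yield $w(\alpha_a)=-\alpha_{\omega(a)}$. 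Moreover, when $h$ is odd the element $(s_-s_+)^{h/2}$ is not a power of $c$, so the eigenvalue description of $c$ is inapplicable. You would need an actual inductive sign-tracking argument (in the spirit of the paper's later Proposition \ref{prop:alpha1}, but without circularity, since that proposition is proved \emph{using} the present one) or a genuine case-by-case verification; as written, the key step is missing.
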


Indeed, they are reduced expressions because the number of the simple reflections
in each product is  $ hr/2=|\Phi_+|$.

We define a family of  roots $\alpha(a;k)\in \Phi$ ($a\in I,\ -1 \leq k \leq h$) by
\begin{align}
\alpha(a;-1)&=-\alpha_a.
\\
\label{eq:salpha2}
\alpha(a;k)&=
\begin{cases}
(s_+s_-)^{k/2}(\alpha_a)
&
\kappa_a=+,
\\
(s_-s_+)^{k/2}(\alpha_a)
&
\kappa_a=-
\end{cases}
\quad
(0\leq k \leq h).
\end{align}
The following result due to \cite{Fomin03b, Fomin07} is crucial to our problem.
So, we provide a proof.
(Our definition of $\alpha(a;k)$ is slightly different from the one in \cite{Fomin03b, Fomin07}.)
\begin{prop}
\label{prop:alpha1}
(a) { \cite[Prop.~2.5]{Fomin03b}}.
For $0\leq k \leq h-1$, $\alpha(a;k)$ is a positive root.
\par
(b).
\begin{align}
\label{eq:alpha1}
\alpha(a;h-1)=\alpha_{\omega(a)},
\quad
\alpha(a;h)=-\alpha_{\omega(a)}.
\end{align}
\par
(c)
{\cite[Eq.~(10.9)]{Fomin07}}.
For $0\leq k \leq h-1$, the following relation holds:
\begin{align}
\label{eq:alpha2}
\alpha(a;k+1)+\alpha(a;k-1)
=
\sum_{b\in I:\, b\sim a}
\alpha(b;k).
\end{align}
In particular, $\alpha(a; k)$ $(1\leq k \leq h)$ are uniquely determined
by the initial condition
\begin{align}
\label{eq:alpha3}
\alpha(a;-1)=- \alpha_a,
\quad
\alpha(a;0)= \alpha_a.
\end{align}
and the recurrence relation \eqref{eq:alpha2}.
\end{prop}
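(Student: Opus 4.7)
The plan is to establish part (c) first by a direct calculation in the root space, and then to leverage it together with the reduced expression of $w_0$ furnished by Proposition \ref{prop:Cw1} to deduce (a) and (b). For part (c), I focus on the representative case $\kappa_a = +$, $k = 2m$ (the other three parity combinations are symmetric). Factoring out the common alternating product,
\[
\alpha(a; 2m+1) + \alpha(a; 2m-1) = s_-(s_+s_-)^{m-1}(s_+s_- + 1)\alpha_a,
\]
the central task is to evaluate $(s_+s_- + 1)\alpha_a$ by \eqref{eq:aa2}. Since $\kappa_a = +$ gives $s_+\alpha_a = -\alpha_a$ and $s_-\alpha_a = \alpha_a + \sum_{b\sim a}\alpha_b$, while $s_+\alpha_b = \alpha_b + \sum_{c\sim b}\alpha_c$ for neighbors $b$ (which have $\kappa_b = -$), a short calculation gives $(s_+s_- + 1)\alpha_a = s_+\sum_{b\sim a}\alpha_b$. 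Combining this with the elementary identity $(s_+s_-)^{m-1}s_+ = s_+(s_-s_+)^{m-1}$ (proved by induction on $m$) produces $s_-(s_+s_-)^{m-1}(s_+s_- + 1)\alpha_a = (s_-s_+)^m\sum_{b\sim a}\alpha_b = \sum_{b\sim a}\alpha(b; 2m)$, which is the desired recursion. The uniqueness statement in (c) is then automatic, since \eqref{eq:alpha2} solves for $\alpha(a;k+1)$ in terms of $\alpha(a;k-1)$ and the $\alpha(b;k)$, allowing the initial data \eqref{eq:alpha3} to propagate.

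For part (a), the key observation is that Proposition \ref{prop:Cw1} asserts $(s_-s_+)^{h/2} = (s_+s_-)^{h/2} = w_0$ are reduced expressions of $w_0$: their total length $hr/2$ as words in simple reflections, after expanding each $s_\pm$ into commuting pieces, equals $|\Phi_+| = \ell(w_0)$, which forces reducedness. For $\kappa_a = +$ and $0 \leq k \leq h-1$, the element $w_k$ with $\alpha(a;k) = w_k\alpha_a$ is a left prefix of length $kr/2$ in such an expression (possibly after using the same identity $(s_+s_-)^{m}s_+ = s_+(s_-s_+)^m$ to convert between the two forms). Since the simple reflections within a single $s_\pm$ commute, the expansion of the $s_\pm$-block immediately following $w_k$ can be chosen so that $s_a$ occupies the first slot; then $w_k s_a$ is itself a prefix of a reduced expression of $w_0$, hence reduced. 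Therefore $\ell(w_k s_a) = \ell(w_k) + 1$, which by the standard Weyl-group criterion is equivalent to $w_k \alpha_a \in \Phi_+$; this gives $\alpha(a;k) \in \Phi_+$. The case $\kappa_a = -$ is symmetric.

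For part (b), the identity $\alpha(a;h) = -\alpha_{\omega(a)}$ is immediate from Proposition \ref{prop:Cw1} together with \eqref{eq:long1}. For $\alpha(a;h-1)$, write $\alpha(a;h) = s\,\alpha(a;h-1)$ where $s \in \{s_+,s_-\}$ is the reflection applied at the final step (determined by $\kappa_a$ and the parity of $h$); inverting gives $\alpha(a;h-1) = -s\,\alpha_{\omega(a)}$. By \eqref{eq:aa2} this simplifies to $\alpha_{\omega(a)}$ precisely when $\omega(a)$ lies in the sign class defining $s$. This compatibility between $\omega$ and $\kappa$ — the sign $\kappa_{\omega(a)}$ equals $\kappa_a$ when $h$ is even and differs when $h$ is odd (which occurs only for $X = A_r$ with $r$ even) — is a brief case-by-case check against \eqref{eq:auto1}, and I expect this bookkeeping to be the only real nuisance in the proof. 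As a sanity check, the positivity of $\alpha(a;h-1)$ established in (a) already forces this compatibility, so the case-check and part (a) corroborate each other.
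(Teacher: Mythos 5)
Your proof is correct, and it diverges from the paper's only in part (a). For (c) the paper runs an induction on $k$, applying $s_{\pm}$ to both sides of \eqref{eq:alpha2} with the base case $k=0$ being exactly the second identity of \eqref{eq:aa2}; your telescoping computation via $(s_+s_-+1)\alpha_a=s_+\sum_{b\sim a}\alpha_b$ carries the same content without the induction (just note the case $k=0$ separately, where the identity is \eqref{eq:aa2} itself). For (b) both arguments invert the last reflection and use the parity compatibility between $\omega$ and $\kappa$; your remark that the positivity from (a) already forces this compatibility — since $-s\,\alpha_{\omega(a)}$ would otherwise be the negative root $-(\alpha_{\omega(a)}+\sum_{b\sim\omega(a)}\alpha_b)$ — is in fact a complete substitute for the case check, which the paper merely asserts. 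The real difference is (a): the paper argues by contradiction, taking the first $k$ with $\alpha(a;k)<0$ and producing a simple root mid-block in the sequence of partial images, which violates the property of reduced expressions that one never has $\alpha_{a_i}=s_{a_{i+1}}\cdots s_{a_{j-1}}(\alpha_{a_j})$ (Humphreys, Thm.~1.7). You instead use the criterion $\ell(w s_a)=\ell(w)+1\Leftrightarrow w(\alpha_a)\in\Phi_+$ and exhibit $w_k s_a$ as a prefix of one of the two reduced expressions of $w_0$ from Proposition \ref{prop:Cw1} by commuting $s_a$ to the front of the next block. Both routes rest on the reducedness of $(s_{\pm}s_{\mp})^{h/2}$; yours is more direct, at the cost of checking (routine, and true in all four parity combinations of $k$ and $h$) that the block immediately following $w_k$ is always $s_{\kappa_a}$.
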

\begin{proof}
(a).
Let us concentrate on the case $h$ is even and $\kappa_a=+$.
(The other cases are similar.)
The following facts are known or checked easily:
{\rp
\begin{itemize}
\item[(i).]
If $h$ is even,  $\kappa_{\omega(a)}=\kappa_a$.
If $h$ is odd, $\kappa_{\omega(a)}=-\kappa_a$.
\item[(ii).]
A positive root $\alpha$ is sent to a negative root by
a simple reflection $s_a$ if and only if $\alpha=\alpha_a$  \cite[Prop.~1.4]{Humphreys90}.
 \item[(iii).]
For any reduced expression $w=s_{a_1}\cdots s_{a_m}$,
there is no pair $1\leq i<j\leq m$ such that
$\alpha_{a_i}=s_{a_{i+1}}\cdots s_{a_{j-1}}(\alpha_{a_j})$
\cite[Thm.~1.7]{Humphreys90}.
\end{itemize}
Since we assume $\kappa_a=+$, we have $\kappa_{\omega(a)}=+$ by (i).
Thus, we have $s_+(\alpha_{\omega(a)})=-\alpha_{\omega(a)}$ by \eqref{eq:aa2}.
Then, by \eqref{eq:long1}, the result \eqref{eq:ssw1} is rewritten as
\begin{align}
(s_+ s_-)^{(h-1)/2}(\alpha_a)=\alpha_{\omega(a)}.
\end{align}
Let $(s_+ s_-)^{(h-1)/2}=\cdots s_{b_3} s_{b_2}s_{b_1}$.
Suppose that the statement (a) fails.
Then,
by (ii), there is some  $p$ such that
$s_{b_p}\cdots s_{b_1}(\alpha_a)=\alpha_{b_{p+1}}$.
Since the elements in $s_+$ is commutative, we may assume that the leftmost element of $s_+$ is $s_a$.
Then, $ s_{b_p}\cdots s_{b_1}s_a$ is a  subproduct of $(s_- s_+)^{h/2}$.
By (iii), this contradicts the non-reducibility of $(s_-s_+)^{h/2}$ in  Proposition \ref{prop:Cw1}.}

(b). The second equality is true by Proposition \ref{prop:Cw1}.
The first equality follows from the second one.
For example, if $h$ is even, 
$\kappa_{\omega(a)}=\kappa_a$. Thus,
\begin{align}
\alpha(a;h-1)=s_{\kappa_a}(\alpha(a;h))=s_{\kappa_a}(-\alpha_{\omega(a)})=
\alpha_{\omega(a)}.
\end{align}

(c).
We prove it by the induction on $k$.
 The case $k=0$ is nothing but the second equality in
 \eqref{eq:aa2}.
Suppose that the relation \eqref{eq:alpha2} holds for some $k$ ($1\leq k \leq h-1$).
 Suppose that $\kappa_a=+$ and $k$ is even. (The other cases are similar.)
 Note that $\kappa_b=-$ for the one in the RHS of  \eqref{eq:aa2}.
 Then, we have
 \begin{align}
 \begin{split}
 s_+ (\alpha(a;k+1))&=\alpha(a;k+2),
 \quad
 s_+ (\alpha(a;k-1))=\alpha(a;k),
 \\
 s_+ (\alpha(b;k))&=\alpha({\rp b};k+1).
 \end{split}
 \end{align}
Thus, we obtain the relation \eqref{eq:alpha2} for $k+1$
from the one for $k$.
\end{proof}

\begin{ex}
\label{ex:Coxeter1}
Let us give examples to illustrate Proposition \ref{prop:alpha1}.
Let $\buildrel + \over \rightarrow$ and $\buildrel - \over \rightarrow$
represent $s_+$ and $s_-$, respectively.
We choose the sign $\kappa_1=+$ for all examples.
\par
(1) Type $A_5$. $h=6$.
We have the following sequences.
\begin{align}
\begin{split}
&
[1]
\buildrel - \over \rightarrow
[1,2]
\buildrel + \over \rightarrow
[2,3]
\buildrel - \over \rightarrow
[3,4]
\buildrel + \over \rightarrow
[4,5]
\buildrel - \over \rightarrow
[5],
\\
&
[2]
\buildrel + \over \rightarrow
[1,3]
\buildrel - \over \rightarrow
[1,4]
\buildrel + \over \rightarrow
[2,5]
\buildrel - \over \rightarrow
[3,5]
\buildrel + \over \rightarrow
[4],
\\
&
[3]
\buildrel - \over \rightarrow
[2,4]
\buildrel + \over \rightarrow
[1,5]
\buildrel - \over \rightarrow
[1,5]
\buildrel + \over \rightarrow
[2,4]
\buildrel - \over \rightarrow
[3].
\end{split}
\end{align}
The sequences for $[4]$ and $[5]$ are also found in  the above
 by reading them in reverse. 
As an example of the relation \eqref{eq:alpha2}, for $a=2$ and $k=3$,
we have $[3,5]+[1,4]=[3,4]+[1,5]$.
\par
(2) Type $A_6$. $h=7$.
We have the following sequences.
\begin{align}
\begin{split}
&
[1]
\buildrel - \over \rightarrow
[1,2]
\buildrel + \over \rightarrow
[2,3]
\buildrel - \over \rightarrow
[3,4]
\buildrel + \over \rightarrow
[4,5]
\buildrel - \over \rightarrow
[5,6]
\buildrel + \over \rightarrow
[6],
\\
&
[2]
\buildrel + \over \rightarrow
[1,3]
\buildrel - \over \rightarrow
[1,4]
\buildrel + \over \rightarrow
[2,5]
\buildrel - \over \rightarrow
[3,6]
\buildrel + \over \rightarrow
[4,6]
\buildrel - \over \rightarrow
[5],
\\
&
[3]
\buildrel - \over \rightarrow
[2,4]
\buildrel + \over \rightarrow
[1,5]
\buildrel - \over \rightarrow
[1,6]
\buildrel + \over \rightarrow
[2,6]
\buildrel - \over \rightarrow
[3,5]
\buildrel + \over \rightarrow
[4].
\end{split}
\end{align}
(3) Type $D_5$. $h=8$.
We have the following sequences.
\begin{align}
\begin{split}
&
[1]
\buildrel - \over \rightarrow
[1,2]
\buildrel + \over \rightarrow
[2,3]
\buildrel - \over \rightarrow
[3;4,5]
\buildrel + \over \rightarrow
[3;4,5]
\buildrel - \over \rightarrow
[2,3]
\buildrel + \over \rightarrow
[1,2]
\buildrel - \over \rightarrow
[1],
\\
&
[2]
\buildrel + \over \rightarrow
[1,3]
\buildrel - \over \rightarrow
[1;4,5]
\buildrel + \over \rightarrow
\{2,3\}
\buildrel - \over \rightarrow
\{2,3\}
\buildrel + \over \rightarrow
[1;4,5]
\buildrel - \over \rightarrow
[1,3]
\buildrel + \over \rightarrow
[2],
\\
&
[3]
\buildrel - \over \rightarrow
[2;4,5]
\buildrel + \over \rightarrow
\{1,3\}
\buildrel - \over \rightarrow
\{1,2\}
\buildrel + \over \rightarrow
\{1,2\}
\buildrel - \over \rightarrow
\{1,3\}
\buildrel + \over \rightarrow
[2;4,5]
\buildrel - \over \rightarrow
[3],
\\
&
[4]
\buildrel + \over \rightarrow
[3;4]
\buildrel - \over \rightarrow
[2;5]
\buildrel + \over \rightarrow
[1;5]
\buildrel - \over \rightarrow
[1;4]
\buildrel + \over \rightarrow
[2;4]
\buildrel - \over \rightarrow
[3;5]
\buildrel + \over \rightarrow
[5].
\end{split}
\end{align}
The automorphism $\omega$ is not trivial as given in \eqref{eq:auto1}.
As an example of the relation \eqref{eq:alpha2}, for $a=3$ and $k=3$,
we have $\{1,2\}+\{1,3\}=\{2,3\}+[1;5]+[1;4]$.
\par
(4) Type $D_6$. $h=10$.
We have the following sequences.
\begin{align}
\begin{split}
&
[1]
\buildrel - \over \rightarrow
[1,2]
\buildrel + \over \rightarrow
[2,3]
\buildrel - \over \rightarrow
[3,4]
\buildrel + \over \rightarrow
[4;5,6]
\buildrel - \over \rightarrow
[4;5,6]
\buildrel + \over \rightarrow
\cdots
\buildrel - \over \rightarrow
[1],
\\
&
[2]
\buildrel + \over \rightarrow
[1,3]
\buildrel - \over \rightarrow
[1,4]
\buildrel + \over \rightarrow
[2;5,6]
\buildrel - \over \rightarrow
\{3,4\}
\buildrel + \over \rightarrow
\{3,4\}
\buildrel - \over \rightarrow
\cdots
\buildrel + \over \rightarrow
[2],
\\
&
[3]
\buildrel - \over \rightarrow
[2,4]
\buildrel + \over \rightarrow
[1;5,6]
\buildrel - \over \rightarrow
\{1,4\}
\buildrel + \over \rightarrow
\{2,3\}
\buildrel - \over \rightarrow
\{2,3\}
\buildrel + \over \rightarrow
\cdots
\buildrel - \over \rightarrow
[3],
\\
&
[4]
\buildrel + \over \rightarrow
[3;5,6]
\buildrel - \over \rightarrow
\{2,4\}
\buildrel + \over \rightarrow
\{1,3\}
\buildrel - \over \rightarrow
\{1,2\}
\buildrel + \over \rightarrow
\{1,2\}
\buildrel - \over \rightarrow
\cdots
\buildrel + \over \rightarrow
[4],
\\
&
[5]
\buildrel - \over \rightarrow
[4;5]
\buildrel + \over \rightarrow
[3;6]
\buildrel - \over \rightarrow
[2;6]
\buildrel + \over \rightarrow
[1;5]
\buildrel - \over \rightarrow
[1;5]
\buildrel + \over \rightarrow
\cdots
\buildrel - \over \rightarrow
[5],
\\
&
[6]
\buildrel - \over \rightarrow
[4;6]
\buildrel + \over \rightarrow
[3;5]
\buildrel - \over \rightarrow
[2;5]
\buildrel + \over \rightarrow
[1;6]
\buildrel - \over \rightarrow
[1;6]
\buildrel + \over \rightarrow
\cdots
\buildrel - \over \rightarrow
[6].
\end{split}
\end{align}
The automorphism $\omega$ is trivial as given in \eqref{eq:auto1}.

(5) Type $E_6$. $h=12$.
We have the following sequences.
\begin{align}
\begin{split}
\begin{picture}(22,12)(0,4)
\put(0,0){\small 1}
\put(5,0){\small 0}
\put(10,0){\small 0}
\put(15,0){\small 0}
\put(20,0){\small 0}
\put(10,8){\small 0}
\end{picture}
\
\buildrel - \over \rightarrow
\begin{picture}(22,12)(0,4)
\put(0,0){\small 1}
\put(5,0){\small 1}
\put(10,0){\small 0}
\put(15,0){\small 0}
\put(20,0){\small 0}
\put(10,8){\small 0}
\end{picture}
\
&
\buildrel + \over \rightarrow
\begin{picture}(22,12)(0,4)
\put(0,0){\small 0}
\put(5,0){\small 1}
\put(10,0){\small 1}
\put(15,0){\small 0}
\put(20,0){\small 0}
\put(10,8){\small 0}
\end{picture}
\
\buildrel - \over \rightarrow
\begin{picture}(22,12)(0,4)
\put(0,0){\small 0}
\put(5,0){\small 0}
\put(10,0){\small 1}
\put(15,0){\small 1}
\put(20,0){\small 0}
\put(10,8){\small 1}
\end{picture}
\
\buildrel + \over \rightarrow
\begin{picture}(22,12)(0,4)
\put(0,0){\small 0}
\put(5,0){\small 0}
\put(10,0){\small 1}
\put(15,0){\small 1}
\put(20,0){\small 1}
\put(10,8){\small 1}
\end{picture}
\
\buildrel - \over \rightarrow
\begin{picture}(22,12)(0,4)
\put(0,0){\small 0}
\put(5,0){\small 1}
\put(10,0){\small 1}
\put(15,0){\small 1}
\put(20,0){\small 1}
\put(10,8){\small 0}
\end{picture}
\
\buildrel + \over \rightarrow
\begin{picture}(22,12)(0,4)
\put(0,0){\small 1}
\put(5,0){\small 1}
\put(10,0){\small 1}
\put(15,0){\small 1}
\put(20,0){\small 0}
\put(10,8){\small 0}
\end{picture}
\\
&
\buildrel - \over \rightarrow
\begin{picture}(22,12)(0,4)
\put(0,0){\small 1}
\put(5,0){\small 1}
\put(10,0){\small 1}
\put(15,0){\small 0}
\put(20,0){\small 0}
\put(10,8){\small 1}
\end{picture}
\
\buildrel + \over \rightarrow
\begin{picture}(22,12)(0,4)
\put(0,0){\small 0}
\put(5,0){\small 1}
\put(10,0){\small 1}
\put(15,0){\small 0}
\put(20,0){\small 0}
\put(10,8){\small 1}
\end{picture}
\
\buildrel - \over \rightarrow
\begin{picture}(22,12)(0,4)
\put(0,0){\small 0}
\put(5,0){\small 0}
\put(10,0){\small 1}
\put(15,0){\small 1}
\put(20,0){\small 0}
\put(10,8){\small 0}
\end{picture}
\
\buildrel + \over \rightarrow
\begin{picture}(22,12)(0,4)
\put(0,0){\small 0}
\put(5,0){\small 0}
\put(10,0){\small 0}
\put(15,0){\small 1}
\put(20,0){\small 1}
\put(10,8){\small 0}
\end{picture}
\
\buildrel - \over \rightarrow
\begin{picture}(22,12)(0,4)
\put(0,0){\small 0}
\put(5,0){\small 0}
\put(10,0){\small 0}
\put(15,0){\small 0}
\put(20,0){\small 1}
\put(10,8){\small 0}
\end{picture}
\ ,
%
\\
\begin{picture}(22,12)(0,4)
\put(0,0){\small 0}
\put(5,0){\small 1}
\put(10,0){\small 0}
\put(15,0){\small 0}
\put(20,0){\small 0}
\put(10,8){\small 0}
\end{picture}
\
\buildrel + \over \rightarrow
\begin{picture}(22,12)(0,4)
\put(0,0){\small 1}
\put(5,0){\small 1}
\put(10,0){\small 1}
\put(15,0){\small 0}
\put(20,0){\small 0}
\put(10,8){\small 0}
\end{picture}
\
&
\buildrel - \over \rightarrow
\begin{picture}(22,12)(0,4)
\put(0,0){\small 1}
\put(5,0){\small 1}
\put(10,0){\small 1}
\put(15,0){\small 1}
\put(20,0){\small 0}
\put(10,8){\small 1}
\end{picture}
\
\buildrel + \over \rightarrow
\begin{picture}(22,12)(0,4)
\put(0,0){\small 0}
\put(5,0){\small 1}
\put(10,0){\small 2}
\put(15,0){\small 1}
\put(20,0){\small 1}
\put(10,8){\small 1}
\end{picture}
\
\buildrel - \over \rightarrow
\begin{picture}(22,12)(0,4)
\put(0,0){\small 0}
\put(5,0){\small 1}
\put(10,0){\small 2}
\put(15,0){\small 2}
\put(20,0){\small 1}
\put(10,8){\small 1}
\end{picture}
\
\buildrel + \over \rightarrow
\begin{picture}(22,12)(0,4)
\put(0,0){\small 1}
\put(5,0){\small 1}
\put(10,0){\small 2}
\put(15,0){\small 2}
\put(20,0){\small 1}
\put(10,8){\small 1}
\end{picture}
\
\buildrel - \over \rightarrow
\begin{picture}(22,12)(0,4)
\put(0,0){\small 1}
\put(5,0){\small 2}
\put(10,0){\small 2}
\put(15,0){\small 1}
\put(20,0){\small 1}
\put(10,8){\small 1}
\end{picture}
\\
&
\buildrel + \over \rightarrow
\begin{picture}(22,12)(0,4)
\put(0,0){\small 1}
\put(5,0){\small 2}
\put(10,0){\small 2}
\put(15,0){\small 1}
\put(20,0){\small 0}
\put(10,8){\small 1}
\end{picture}
\
\buildrel - \over \rightarrow
\begin{picture}(22,12)(0,4)
\put(0,0){\small 1}
\put(5,0){\small 1}
\put(10,0){\small 2}
\put(15,0){\small 1}
\put(20,0){\small 0}
\put(10,8){\small 1}
\end{picture}
\
\buildrel + \over \rightarrow
\begin{picture}(22,12)(0,4)
\put(0,0){\small 0}
\put(5,0){\small 1}
\put(10,0){\small 1}
\put(15,0){\small 1}
\put(20,0){\small 1}
\put(10,8){\small 1}
\end{picture}
\
\buildrel - \over \rightarrow
\begin{picture}(22,12)(0,4)
\put(0,0){\small 0}
\put(5,0){\small 0}
\put(10,0){\small 1}
\put(15,0){\small 1}
\put(20,0){\small 1}
\put(10,8){\small 0}
\end{picture}
\
\buildrel + \over \rightarrow
\begin{picture}(22,12)(0,4)
\put(0,0){\small 0}
\put(5,0){\small 0}
\put(10,0){\small 0}
\put(15,0){\small 1}
\put(20,0){\small 0}
\put(10,8){\small 0}
\end{picture}
\ ,
\\
\begin{picture}(22,12)(0,4)
\put(0,0){\small 0}
\put(5,0){\small 0}
\put(10,0){\small 1}
\put(15,0){\small 0}
\put(20,0){\small 0}
\put(10,8){\small 0}
\end{picture}
\
\buildrel - \over \rightarrow
\begin{picture}(22,12)(0,4)
\put(0,0){\small 0}
\put(5,0){\small 1}
\put(10,0){\small 1}
\put(15,0){\small 1}
\put(20,0){\small 0}
\put(10,8){\small 1}
\end{picture}
\
&
\buildrel + \over \rightarrow
\begin{picture}(22,12)(0,4)
\put(0,0){\small 1}
\put(5,0){\small 1}
\put(10,0){\small 2}
\put(15,0){\small 1}
\put(20,0){\small 1}
\put(10,8){\small 1}
\end{picture}
\
\buildrel - \over \rightarrow
\begin{picture}(22,12)(0,4)
\put(0,0){\small 1}
\put(5,0){\small 2}
\put(10,0){\small 2}
\put(15,0){\small 2}
\put(20,0){\small 1}
\put(10,8){\small 1}
\end{picture}
\
\buildrel + \over \rightarrow
\begin{picture}(22,12)(0,4)
\put(0,0){\small 1}
\put(5,0){\small 2}
\put(10,0){\small 3}
\put(15,0){\small 2}
\put(20,0){\small 1}
\put(10,8){\small 1}
\end{picture}
\
\buildrel - \over \rightarrow
\begin{picture}(22,12)(0,4)
\put(0,0){\small 1}
\put(5,0){\small 2}
\put(10,0){\small 3}
\put(15,0){\small 2}
\put(20,0){\small 1}
\put(10,8){\small 2}
\end{picture}
\
\buildrel + \over \rightarrow
\begin{picture}(22,12)(0,4)
\put(0,0){\small 1}
\put(5,0){\small 2}
\put(10,0){\small 3}
\put(15,0){\small 2}
\put(20,0){\small 1}
\put(10,8){\small 2}
\end{picture}
\\
&
\hskip137pt
\buildrel - \over \rightarrow
\cdots
\
\buildrel - \over \rightarrow
\begin{picture}(22,12)(0,4)
\put(0,0){\small 0}
\put(5,0){\small 0}
\put(10,0){\small 1}
\put(15,0){\small 0}
\put(20,0){\small 0}
\put(10,8){\small 0}
\end{picture}
\ ,
\\
\begin{picture}(22,12)(0,4)
\put(0,0){\small 0}
\put(5,0){\small 0}
\put(10,0){\small 0}
\put(15,0){\small 0}
\put(20,0){\small 0}
\put(10,8){\small 1}
\end{picture}
\
\buildrel + \over \rightarrow
\begin{picture}(22,12)(0,4)
\put(0,0){\small 0}
\put(5,0){\small 0}
\put(10,0){\small 1}
\put(15,0){\small 0}
\put(20,0){\small 0}
\put(10,8){\small 1}
\end{picture}
\
&
\buildrel - \over \rightarrow
\begin{picture}(22,12)(0,4)
\put(0,0){\small 0}
\put(5,0){\small 1}
\put(10,0){\small 1}
\put(15,0){\small 1}
\put(20,0){\small 0}
\put(10,8){\small 0}
\end{picture}
\
\buildrel + \over \rightarrow
\begin{picture}(22,12)(0,4)
\put(0,0){\small 1}
\put(5,0){\small 1}
\put(10,0){\small 1}
\put(15,0){\small 1}
\put(20,0){\small 1}
\put(10,8){\small 0}
\end{picture}
\
\buildrel - \over \rightarrow
\begin{picture}(22,12)(0,4)
\put(0,0){\small 1}
\put(5,0){\small 1}
\put(10,0){\small 1}
\put(15,0){\small 1}
\put(20,0){\small 1}
\put(10,8){\small 1}
\end{picture}
\
\buildrel + \over \rightarrow
\begin{picture}(22,12)(0,4)
\put(0,0){\small 0}
\put(5,0){\small 1}
\put(10,0){\small 2}
\put(15,0){\small 1}
\put(20,0){\small 0}
\put(10,8){\small 1}
\end{picture}
\
\buildrel - \over \rightarrow
\begin{picture}(22,12)(0,4)
\put(0,0){\small 0}
\put(5,0){\small 1}
\put(10,0){\small 2}
\put(15,0){\small 1}
\put(20,0){\small 0}
\put(10,8){\small 1}
\end{picture}
\\
&
\hskip137pt
\buildrel + \over \rightarrow
\cdots
\
\buildrel + \over \rightarrow
\begin{picture}(22,12)(0,4)
\put(0,0){\small 0}
\put(5,0){\small 0}
\put(10,0){\small 0}
\put(15,0){\small 0}
\put(20,0){\small 0}
\put(10,8){\small 1}
\end{picture}
\ .
\end{split}
\end{align}
As an example of the relation \eqref{eq:alpha2}, for $a=3$ and $k=3$,
we have
\begin{align}
\begin{picture}(22,12)(0,4)
\put(0,0){\small 1}
\put(5,0){\small 2}
\put(10,0){\small 3}
\put(15,0){\small 2}
\put(20,0){\small 1}
\put(10,8){\small 1}
\end{picture}
\
+
\begin{picture}(22,12)(0,4)
\put(0,0){\small 1}
\put(5,0){\small 1}
\put(10,0){\small 2}
\put(15,0){\small 1}
\put(20,0){\small 1}
\put(10,8){\small 1}
\end{picture}
\
=
\,
\begin{picture}(22,12)(0,4)
\put(0,0){\small 0}
\put(5,0){\small 1}
\put(10,0){\small 2}
\put(15,0){\small 1}
\put(20,0){\small 1}
\put(10,8){\small 1}
\end{picture}
\
+
\begin{picture}(22,12)(0,4)
\put(0,0){\small 1}
\put(5,0){\small 1}
\put(10,0){\small 2}
\put(15,0){\small 1}
\put(20,0){\small 0}
\put(10,8){\small 1}
\end{picture}
\
+
\begin{picture}(22,12)(0,4)
\put(0,0){\small 1}
\put(5,0){\small 1}
\put(10,0){\small 1}
\put(15,0){\small 1}
\put(20,0){\small 1}
\put(10,8){\small 0}
\end{picture}
\
.
\end{align}
\end{ex}

\section{Dynamics of tropical $Y$-systems}
\label{sec:tropical1}

We prove
Theorems \ref{thm:Yperiod1} and \ref{thm:YDI1}
simultaneously by the \emph{tropicalization method}.
Here, we continue to use the notation $[y_{a,a'}(u)]$ for the tropical $y$-variables
in Chapter \ref{ch:dilogarithm1}.

By applying the tropicalization \eqref{1eq:trophom1},
the $Y$-system \eqref{eq:yma3}
has the  form
\begin{align}
\label{eq:yma5}
[y_{a,a'}(s+1)][y_{a,a'}(s-1)]=
\frac
{\displaystyle
\prod_{b\in I:\, b\sim a}[1+y_{b,a'}(s)]}
{
\displaystyle
\prod_{b'\in I':\, b'\sim a'}[1+y_{a,b'}(s)^{-1}]
},
\end{align}
which we call the \emph{tropical $Y$-system}\index{tropical!$Y$-system}.
Even though it is in the same form as \eqref{eq:yma3},
it is drastically simplified, as explained below.
As we see in Proposition \ref{prop:tropF1},
each tropical $y$-variable
$[y_{a,a'}(s)]$ is a monomial in the initial $y$-variables
$\bfy$ whose powers are given by the corresponding $c$-vector
$\bfc_{a,a'}(s)$. Let us write it simply as
\begin{align}
[y_{a,a'}(s)]=y^{\bfc_{a,a'}(s)}.
\end{align}
Recall the sign-coherence property of $\bfc_{a,a'}(s)$ in Theorem \ref{thm:sign1}.
Let $\varepsilon_{a,a'}(s)$ be the tropical sign of $\bfc_{a,a'}(s)$.
Then,
\begin{align}
[1+y_{a,a'}(s)]
&=
\begin{cases}
1 & \varepsilon_{a,a'}(s)=1,\\
y^{\bfc_{a,a'}(s)} & \varepsilon_{a,a'}(s)=-1,
\end{cases}
\\
[1+y_{a,a'}(s)^{-1}]^{-1}
&=
\begin{cases}
y^{\bfc_{a,a'}(s)} & \varepsilon_{a,a'}(s)=1,\\
1 & \varepsilon_{a,a'}(s)=-1.
\end{cases}
\end{align}
Thus, the tropical $Y$-system \eqref{eq:yma3} is equivalent to the following
\emph{piecewise-linear} system
for $c$-vectors:
\begin{align}
\label{eq:yma6}
\bfc_{a,a'}(s+1)
+
\bfc_{a,a'}(s-1)
=
\sum_{
\scriptstyle
b\in I: \, b\sim a
\atop
\scriptstyle
\varepsilon_{b,a'}(s)=-1
} \bfc_{b,a'}(s)
+
\sum_{
\scriptstyle
b'\in I':\,  b'\sim a'
\atop
\scriptstyle
\varepsilon_{a,b'}(s)=1
} \bfc_{a,b'}(s),
\end{align}
where the sum is zero if it it empty.
It is piecewise-linear because it involves the signs of $c$-vectors.
Do not confuse the tropical sign $\varepsilon_{a,a'}(s)$ with the sign $\kappa_{a,a'}$
for the graph $X\times X'$. 
Observe the similarity to the relation \eqref{eq:alpha2}.

We 
will solve the tropical $Y$-system 
\eqref{eq:yma6} 
with the help of Proposition \ref{prop:alpha1}.
Recall that we concentrate on $c$-vectors in the even sector.
Let $ \bfe_{a,a'}\in \bbZ^{I\times I'}$ be the unit vector whose component is 1 only for
the index $(a,a')$ and 0 otherwise.

{\bf Step I:} The initial condition at $s=-1$, $0$.

 We have $y_{a,a'}(-1)=y_{a,a'}(0)^{-1}$ for $\kappa_{a,a}=-$.
Thus,  the initial conditions for \eqref{eq:yma6}
is given by
\begin{align}
\label{eq:cini1}
\bfc_{a,a'}(-1) &=- \bfe_{a,a'}\quad (\kappa_{a,a}=-),
\quad
\bfc_{a,a'}(0) =\bfe_{a,a'}\quad (\kappa_{a,a}=+).
\end{align}
Let us compare it with \eqref{eq:alpha3}.

{\bf Step II:} The positive region $0\leq s \leq h'-1$.

We determine $\bfc_{a,a'}(1)$ by \eqref{eq:yma6}
with the initial condition \eqref{eq:cini1}.
Since $\bfc_{a,a'}(0)$ is positive,  the first term
in the RHS of  \eqref{eq:yma6} vanishes.
Then, it has the same form as the recurrence relation \eqref{eq:alpha2}
of $\alpha(a;k)$ for the vertical diagram $X'$.
Let $ \alpha'_{a'}$ and $\alpha'(a';k)$ be the corresponding ones  in $X'$
to $\alpha_a$ and $\alpha(a;k)$ in $X$.
Then, we have
\begin{align}
\bfc_{a,a'}(1)=\sum_{ b'\in I'} c_{b'} \bfe_{a,b'},
\quad
\text{where $ \alpha'(a';1)=\sum_{ b'\in I'} c_{b'} \alpha'_{b'}$}.
\end{align}
By \eqref{eq:aa2}, $\bfc_{a,a'}(1)$ is positive.
So, the same formula holds for $\bfc_{a,a'}(2)$
by replacing $\alpha'(a';1)$ with $\alpha'(a';2)$.
Moreover, by Proposition \ref{prop:alpha1} (a), 
$\bfc_{a,a'}(2)$ is positive if $h'\geq 3$.
The same process continues until $s=h'-1$, where
\begin{align}
\label{eq:cini2}
\bfc_{a,a'}(h'-1)= \bfe_{a,\omega'(a')},
\quad
\bfc_{a,a'}(h')= - \bfe_{a,\omega'(a')}
\end{align}
by \eqref{eq:alpha1}.

{\bf Step III:} The negative region $h'\leq s \leq h+h-1$.

We determine $\bfc_{a,a'}(h'+1)$ by \eqref{eq:yma6}
and with the initial condition \eqref{eq:cini2}.
Since $\bfc_{a,a'}(h')$ is negative,  the second term
in the RHS of  \eqref{eq:yma6}  vanishes.
Then, it has the same form as the recurrence relation \eqref{eq:alpha2}
of $\alpha(a;k)$ for the horizontal diagram $X$.
Thus, we have
\begin{align}
\bfc_{a,a'}(h'+1)=- \sum_{ b\in I} c_{b} \bfe_{b,a'},
\quad
\text{where $ \alpha(a;1)=\sum_{ b\in I} c_{b} \alpha_{b}$}.
\end{align}
Again, the  process continues until $s=h'+h-1$, where
\begin{align}
\bfc_{a,a'}(h'+h-1)=-  \bfe_{\omega(a),\omega'(a')},
\quad
\bfc_{a,a'}(h'+h)= \bfe_{\omega(a),\omega'(a')}.
\end{align}

Let us collect the desired results obtained in the above process.
\begin{itemize}
\item
We have the  periodicity
\begin{align}
\bfc_{a,a'}(h'+h)=\bfc_{\omega(a),\omega'(a')}(0).
\end{align}
\item
The number of the positive and negative $c$-vectors 
$c_{a,a'}(s)$
 (in the even sector)
for $0\leq s\leq h'+h-1$ are counted as
\begin{align}
N_+=\frac{1}{2} h' rr',
\quad
N_-=\frac{1}{2} h rr'.
\end{align}
This is clear when $h'$ and $h$ are even.
When $h'$ is odd, for example, note that $r'$ is even.
Then,  the formula still holds.
\end{itemize}
This completes the proof of Theorems \ref{thm:Yperiod1} and \ref{thm:YDI1}.

\begin{ex}
Consider the case $(X, X')=(A_3,A_2)$ in Example  \ref{ex:QB1}.
We have $h=4$ and $h'=3$.
Let $Q$ be the quiver therein
with the following assignment of signs:
$$
Q=
\
\lower12pt
\hbox{
\begin{xy}
(0,10)*\cir<2pt>{},
(0,0)*{\bullet},
(10, 10)*{\bullet},
(10,0)*\cir<2pt>{},
(20,10)*\cir<2pt>{},
(20,0)*{\bullet},
(0,13)*{\text{\small $-$}},
(10,13)*{\text{\small +}},
(20,13)*{\text{\small $-$}},
(-4,0)*{\text{\small $-$}},
(-4,10)*{\text{\small $+$}},
\ar@{->} (2,10);(8,10) 
\ar@{->} (18,10);(12,10) 
\ar@{->} (8,0);(2,0) 
\ar@{->} (12,0);(18,0) 
\ar@{->} (0,2);(0,8) 
\ar@{->} (10,8);(10,2) 
\ar@{->} (20,2);(20,8) 
\end{xy}
}
$$
\vskip5pt

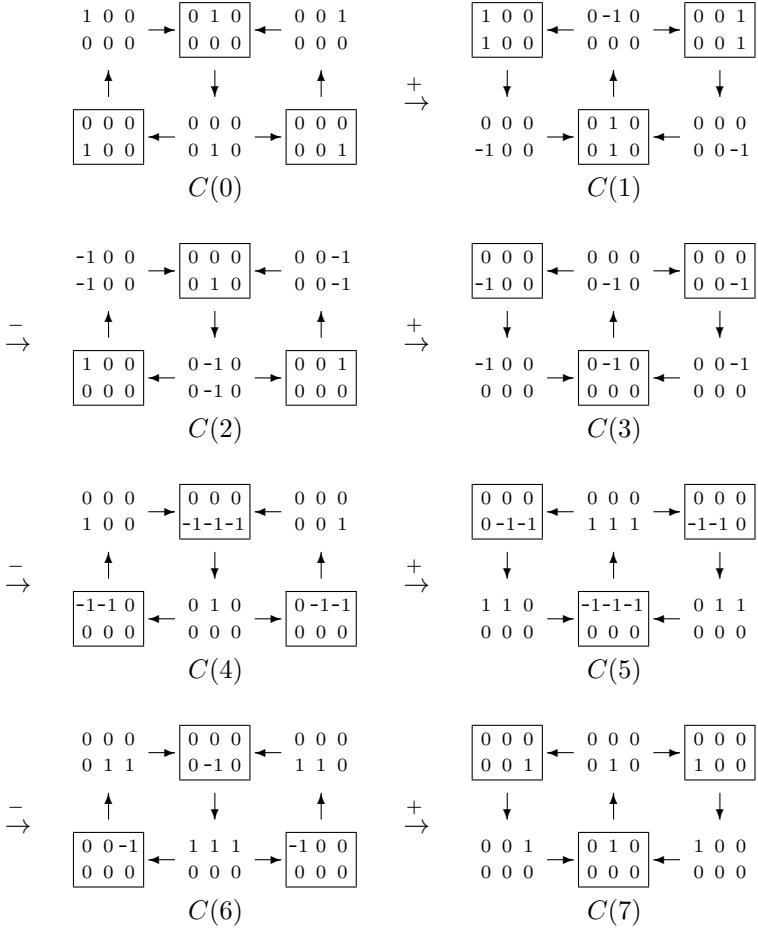
\begin{figure}[t]
\begin{center}
\begin{picture}(255,340)(-15,-10)
\put(0,270)
{
\put(0,48){$\scriptstyle 1$}
\put(8,48){$\scriptstyle 0$}
\put(16,48){$\scriptstyle  0$}
\put(0,38){$\scriptstyle 0$}
\put(8,38){$\scriptstyle 0$}
\put(16,38){$\scriptstyle  0$}
\dottedline{3}(5,35)(5,55)
\dottedline{3}(13,35)(13,55)
\put(10,20){\vector(0,1){10}}
\put(25,45){\vector(1,0){10}}
\put(37, 35){\framebox(26,20)[c]}
\put(40,48){$\scriptstyle 0$}
\put(48,48){$\scriptstyle 1$}
\put(56,48){$\scriptstyle 0$}
\put(40,38){$\scriptstyle 0$}
\put(48,38){$\scriptstyle 0$}
\put(56,38){$\scriptstyle 0$}
\dottedline{3}(45,35)(45,55)
\dottedline{3}(53,35)(53,55)
\put(50,30){\vector(0,-1){10}}
\put(75,45){\vector(-1,0){10}}
\put(80,48){$\scriptstyle 0$}
\put(88,48){$\scriptstyle 0$}
\put(96,48){$\scriptstyle 1$}
\put(80,38){$\scriptstyle 0$}
\put(88,38){$\scriptstyle 0$}
\put(96,38){$\scriptstyle 0$}
\dottedline{3}(85,35)(85,55)
\dottedline{3}(93,35)(93,55)
\put(90,20){\vector(0,1){10}}
\put(-3, -5){\framebox(26,20)[c]}
\put(0,8){$\scriptstyle 0$}
\put(8,8){$\scriptstyle 0$}
\put(16,8){$\scriptstyle  0$}
\put(0,-2){$\scriptstyle 1$}
\put(8,-2){$\scriptstyle 0$}
\put(16,-2){$\scriptstyle  0$}
\dottedline{3}(5,-5)(5,15)
\dottedline{3}(13,-5)(13,15)
\put(35,5){\vector(-1,0){10}}
\put(40,8){$\scriptstyle 0$}
\put(48,8){$\scriptstyle 0$}
\put(56,8){$\scriptstyle 0$}
\put(40,-2){$\scriptstyle 0$}
\put(48,-2){$\scriptstyle 1$}
\put(56,-2){$\scriptstyle 0$}
\dottedline{3}(45,-5)(45,15)
\dottedline{3}(53,-5)(53,15)
\put(65,5){\vector(1,0){10}}
\put(77, -5){\framebox(26,20)[c]}
\put(80,8){$\scriptstyle 0$}
\put(88,8){$\scriptstyle 0$}
\put(96,8){$\scriptstyle 0$}
\put(80,-2){$\scriptstyle 0$}
\put(88,-2){$\scriptstyle 0$}
\put(96,-2){$\scriptstyle 1$}
\dottedline{3}(85,-5)(85,15)
\dottedline{3}(93,-5)(93,15)
\put(40,-17){$C(0)$}
}
\put(150,270)
{
\put(-3, 35){\framebox(26,20)[c]}
\put(0,48){$\scriptstyle 1$}
\put(8,48){$\scriptstyle 0$}
\put(16,48){$\scriptstyle  0$}
\put(0,38){$\scriptstyle 1$}
\put(8,38){$\scriptstyle 0$}
\put(16,38){$\scriptstyle  0$}
\dottedline{3}(5,35)(5,55)
\dottedline{3}(13,35)(13,55)
\put(10,30){\vector(0,-1){10}}
\put(35,45){\vector(-1,0){10}}
\put(40,48){$\scriptstyle 0$}
\put(46,48){-$\scriptstyle 1$}
\put(56,48){$\scriptstyle 0$}
\put(40,38){$\scriptstyle 0$}
\put(48,38){$\scriptstyle 0$}
\put(56,38){$\scriptstyle 0$}
\dottedline{3}(45,35)(45,55)
\dottedline{3}(53,35)(53,55)
\put(50,20){\vector(0,1){10}}
\put(65,45){\vector(1,0){10}}
\put(77, 35){\framebox(26,20)[c]}
\put(80,48){$\scriptstyle 0$}
\put(88,48){$\scriptstyle 0$}
\put(96,48){$\scriptstyle 1$}
\put(80,38){$\scriptstyle 0$}
\put(88,38){$\scriptstyle 0$}
\put(96,38){$\scriptstyle 1$}
\dottedline{3}(85,35)(85,55)
\dottedline{3}(93,35)(93,55)
\put(90,30){\vector(0,-1){10}}
\put(0,8){$\scriptstyle 0$}
\put(8,8){$\scriptstyle 0$}
\put(16,8){$\scriptstyle  0$}
\put(-2,-2){-$\scriptstyle 1$}
\put(8,-2){$\scriptstyle 0$}
\put(16,-2){$\scriptstyle  0$}
\dottedline{3}(5,-5)(5,15)
\dottedline{3}(13,-5)(13,15)
\put(25,5){\vector(1,0){10}}
\put(37, -5){\framebox(26,20)[c]}
\put(40,8){$\scriptstyle 0$}
\put(48,8){$\scriptstyle 1$}
\put(56,8){$\scriptstyle 0$}
\put(40,-2){$\scriptstyle 0$}
\put(48,-2){$\scriptstyle 1$}
\put(56,-2){$\scriptstyle 0$}
\dottedline{3}(45,-5)(45,15)
\dottedline{3}(53,-5)(53,15)
\put(75,5){\vector(-1,0){10}}
\put(80,8){$\scriptstyle 0$}
\put(88,8){$\scriptstyle 0$}
\put(96,8){$\scriptstyle 0$}
\put(80,-2){$\scriptstyle 0$}
\put(88,-2){$\scriptstyle 0$}
\put(94,-2){-$\scriptstyle 1$}
\dottedline{3}(85,-5)(85,15)
\dottedline{3}(93,-5)(93,15)
\put(40,-17){$C(1)$}
\put(-29,15){$\rightarrow$}
\put(-28,23){$\scriptstyle +$}
}
\put(0,180)
{
\put(-2,48){-$\scriptstyle 1$}
\put(8,48){$\scriptstyle 0$}
\put(16,48){$\scriptstyle  0$}
\put(-2,38){-$\scriptstyle 1$}
\put(8,38){$\scriptstyle 0$}
\put(16,38){$\scriptstyle  0$}
\dottedline{3}(5,35)(5,55)
\dottedline{3}(13,35)(13,55)
\put(10,20){\vector(0,1){10}}
\put(25,45){\vector(1,0){10}}
\put(37, 35){\framebox(26,20)[c]}
\put(40,48){$\scriptstyle 0$}
\put(48,48){$\scriptstyle 0$}
\put(56,48){$\scriptstyle 0$}
\put(40,38){$\scriptstyle 0$}
\put(48,38){$\scriptstyle 1$}
\put(56,38){$\scriptstyle 0$}
\dottedline{3}(45,35)(45,55)
\dottedline{3}(53,35)(53,55)
\put(50,30){\vector(0,-1){10}}
\put(75,45){\vector(-1,0){10}}
\put(80,48){$\scriptstyle 0$}
\put(88,48){$\scriptstyle 0$}
\put(94,48){-$\scriptstyle 1$}
\put(80,38){$\scriptstyle 0$}
\put(88,38){$\scriptstyle 0$}
\put(94,38){-$\scriptstyle 1$}
\dottedline{3}(85,35)(85,55)
\dottedline{3}(93,35)(93,55)
\put(90,20){\vector(0,1){10}}
\put(-3, -5){\framebox(26,20)[c]}
\put(0,8){$\scriptstyle 1$}
\put(8,8){$\scriptstyle 0$}
\put(16,8){$\scriptstyle  0$}
\put(0,-2){$\scriptstyle 0$}
\put(8,-2){$\scriptstyle 0$}
\put(16,-2){$\scriptstyle  0$}
\dottedline{3}(5,-5)(5,15)
\dottedline{3}(13,-5)(13,15)
\put(35,5){\vector(-1,0){10}}
\put(40,8){$\scriptstyle 0$}
\put(46,8){-$\scriptstyle 1$}
\put(56,8){$\scriptstyle 0$}
\put(40,-2){$\scriptstyle 0$}
\put(46,-2){-$\scriptstyle 1$}
\put(56,-2){$\scriptstyle 0$}
\dottedline{3}(45,-5)(45,15)
\dottedline{3}(53,-5)(53,15)
\put(65,5){\vector(1,0){10}}
\put(77, -5){\framebox(26,20)[c]}
\put(80,8){$\scriptstyle 0$}
\put(88,8){$\scriptstyle 0$}
\put(96,8){$\scriptstyle 1$}
\put(80,-2){$\scriptstyle 0$}
\put(88,-2){$\scriptstyle 0$}
\put(96,-2){$\scriptstyle 0$}
\dottedline{3}(85,-5)(85,15)
\dottedline{3}(93,-5)(93,15)
\put(40,-17){$C(2)$}
\put(-29,15){$\rightarrow$}
\put(-28,23){$\scriptstyle -$}
}
\put(150,180)
{
\put(-3, 35){\framebox(26,20)[c]}
\put(0,48){$\scriptstyle 0$}
\put(8,48){$\scriptstyle 0$}
\put(16,48){$\scriptstyle  0$}
\put(-2,38){-$\scriptstyle 1$}
\put(8,38){$\scriptstyle 0$}
\put(16,38){$\scriptstyle  0$}
\dottedline{3}(0,45)(20,45)%
\put(10,30){\vector(0,-1){10}}
\put(35,45){\vector(-1,0){10}}
\put(40,48){$\scriptstyle 0$}
\put(48,48){$\scriptstyle 0$}
\put(56,48){$\scriptstyle 0$}
\put(40,38){$\scriptstyle 0$}
\put(46,38){-$\scriptstyle 1$}
\put(56,38){$\scriptstyle 0$}
\dottedline{3}(40,45)(60,45)
\put(50,20){\vector(0,1){10}}
\put(65,45){\vector(1,0){10}}
\put(77, 35){\framebox(26,20)[c]}
\put(80,48){$\scriptstyle 0$}
\put(88,48){$\scriptstyle 0$}
\put(96,48){$\scriptstyle 0$}
\put(80,38){$\scriptstyle 0$}
\put(88,38){$\scriptstyle 0$}
\put(94,38){-$\scriptstyle 1$}
\dottedline{3}(80,45)(100,45)
\put(90,30){\vector(0,-1){10}}
\put(-2,8){-$\scriptstyle 1$}
\put(8,8){$\scriptstyle 0$}
\put(16,8){$\scriptstyle  0$}
\put(0,-2){$\scriptstyle 0$}
\put(8,-2){$\scriptstyle 0$}
\put(16,-2){$\scriptstyle  0$}
\dottedline{3}(0,5)(20,5)
\put(25,5){\vector(1,0){10}}
\put(37, -5){\framebox(26,20)[c]}
\put(40,8){$\scriptstyle 0$}
\put(46,8){-$\scriptstyle 1$}
\put(56,8){$\scriptstyle 0$}
\put(40,-2){$\scriptstyle 0$}
\put(48,-2){$\scriptstyle 0$}
\put(56,-2){$\scriptstyle 0$}
\dottedline{3}(40,5)(60,5)
\put(75,5){\vector(-1,0){10}}
\put(80,8){$\scriptstyle 0$}
\put(88,8){$\scriptstyle 0$}
\put(94,8){-$\scriptstyle 1$}
\put(80,-2){$\scriptstyle 0$}
\put(88,-2){$\scriptstyle 0$}
\put(96,-2){$\scriptstyle 0$}
\dottedline{3}(80,5)(100,5)
\put(40,-17){$C(3)$}
\put(-29,15){$\rightarrow$}
\put(-28,23){$\scriptstyle +$}
}
\put(0,90)
{
\put(0,48){$\scriptstyle 0$}
\put(8,48){$\scriptstyle 0$}
\put(16,48){$\scriptstyle  0$}
\put(0,38){$\scriptstyle 1$}
\put(8,38){$\scriptstyle 0$}
\put(16,38){$\scriptstyle 0$}
\dottedline{3}(0,45)(20,45)%
\put(10,20){\vector(0,1){10}}
\put(25,45){\vector(1,0){10}}
\put(37, 35){\framebox(26,20)[c]}
\put(40,48){$\scriptstyle 0$}
\put(48,48){$\scriptstyle 0$}
\put(56,48){$\scriptstyle 0$}
\put(38,38){-$\scriptstyle 1$}
\put(46,38){-$\scriptstyle 1$}
\put(54,38){-$\scriptstyle 1$}
\dottedline{3}(40,45)(60,45)
\put(50,30){\vector(0,-1){10}}
\put(75,45){\vector(-1,0){10}}
\put(80,48){$\scriptstyle 0$}
\put(88,48){$\scriptstyle 0$}
\put(96,48){$\scriptstyle 0$}
\put(80,38){$\scriptstyle 0$}
\put(88,38){$\scriptstyle 0$}
\put(96,38){$\scriptstyle 1$}
\dottedline{3}(80,45)(100,45)
\put(90,20){\vector(0,1){10}}
\put(-3, -5){\framebox(26,20)[c]}
\put(-2,8){-$\scriptstyle 1$}
\put(6,8){-$\scriptstyle 1$}
\put(16,8){$\scriptstyle  0$}
\put(0,-2){$\scriptstyle 0$}
\put(8,-2){$\scriptstyle 0$}
\put(16,-2){$\scriptstyle  0$}
\dottedline{3}(0,5)(20,5)
\put(35,5){\vector(-1,0){10}}
\put(40,8){$\scriptstyle 0$}
\put(48,8){$\scriptstyle 1$}
\put(56,8){$\scriptstyle 0$}
\put(40,-2){$\scriptstyle 0$}
\put(48,-2){$\scriptstyle 0$}
\put(56,-2){$\scriptstyle 0$}
\dottedline{3}(40,5)(60,5)
\put(65,5){\vector(1,0){10}}
\put(77, -5){\framebox(26,20)[c]}
\put(80,8){$\scriptstyle 0$}
\put(86,8){-$\scriptstyle 1$}
\put(94,8){-$\scriptstyle 1$}
\put(80,-2){$\scriptstyle 0$}
\put(88,-2){$\scriptstyle 0$}
\put(96,-2){$\scriptstyle 0$}
\dottedline{3}(80,5)(100,5)%
\put(40,-17){$C(4)$}
\put(-29,15){$\rightarrow$}
\put(-28,23){$\scriptstyle -$}
}
\put(150,90)
{
\put(-3, 35){\framebox(26,20)[c]}
\put(0,48){$\scriptstyle 0$}
\put(8,48){$\scriptstyle 0$}
\put(16,48){$\scriptstyle  0$}
\put(0,38){$\scriptstyle 0$}
\put(6,38){-$\scriptstyle 1$}
\put(14,38){-$\scriptstyle  1$}
\dottedline{3}(0,45)(20,45)%
\put(10,30){\vector(0,-1){10}}
\put(35,45){\vector(-1,0){10}}
\put(40,48){$\scriptstyle 0$}
\put(48,48){$\scriptstyle 0$}
\put(56,48){$\scriptstyle 0$}
\put(40,38){$\scriptstyle 1$}
\put(48,38){$\scriptstyle 1$}
\put(56,38){$\scriptstyle 1$}
\dottedline{3}(40,45)(60,45)
\put(50,20){\vector(0,1){10}}
\put(65,45){\vector(1,0){10}}
\put(77, 35){\framebox(26,20)[c]}
\put(80,48){$\scriptstyle 0$}
\put(88,48){$\scriptstyle 0$}
\put(96,48){$\scriptstyle 0$}
\put(78,38){-$\scriptstyle 1$}
\put(86,38){-$\scriptstyle 1$}
\put(96,38){$\scriptstyle 0$}
\dottedline{3}(80,45)(100,45)
\put(90,30){\vector(0,-1){10}}
\put(0,8){$\scriptstyle 1$}
\put(8,8){$\scriptstyle 1$}
\put(16,8){$\scriptstyle  0$}
\put(0,-2){$\scriptstyle 0$}
\put(8,-2){$\scriptstyle 0$}
\put(16,-2){$\scriptstyle  0$}
\dottedline{3}(0,5)(20,5)
\put(25,5){\vector(1,0){10}}
\put(37, -5){\framebox(26,20)[c]}
\put(38,8){-$\scriptstyle 1$}
\put(46,8){-$\scriptstyle 1$}
\put(54,8){-$\scriptstyle 1$}
\put(40,-2){$\scriptstyle 0$}
\put(48,-2){$\scriptstyle 0$}
\put(56,-2){$\scriptstyle 0$}
\dottedline{3}(40,5)(60,5)
\put(75,5){\vector(-1,0){10}}
\put(80,8){$\scriptstyle 0$}
\put(88,8){$\scriptstyle 1$}
\put(96,8){$\scriptstyle 1$}
\put(80,-2){$\scriptstyle 0$}
\put(88,-2){$\scriptstyle 0$}
\put(96,-2){$\scriptstyle 0$}
\dottedline{3}(80,5)(100,5)
\put(40,-17){$C(5)$}
\put(-29,15){$\rightarrow$}
\put(-28,23){$\scriptstyle +$}
}
\put(0,0)
{
\put(0,48){$\scriptstyle 0$}
\put(8,48){$\scriptstyle 0$}
\put(16,48){$\scriptstyle  0$}
\put(0,38){$\scriptstyle 0$}
\put(8,38){$\scriptstyle 1$}
\put(16,38){$\scriptstyle 1$}
\dottedline{3}(0,45)(20,45)%
\put(10,20){\vector(0,1){10}}
\put(25,45){\vector(1,0){10}}
\put(37, 35){\framebox(26,20)[c]}
\put(40,48){$\scriptstyle 0$}
\put(48,48){$\scriptstyle 0$}
\put(56,48){$\scriptstyle 0$}
\put(40,38){$\scriptstyle 0$}
\put(46,38){-$\scriptstyle 1$}
\put(56,38){$\scriptstyle 0$}
\dottedline{3}(40,45)(60,45)
\put(50,30){\vector(0,-1){10}}
\put(75,45){\vector(-1,0){10}}
\put(80,48){$\scriptstyle 0$}
\put(88,48){$\scriptstyle 0$}
\put(96,48){$\scriptstyle 0$}
\put(80,38){$\scriptstyle 1$}
\put(88,38){$\scriptstyle 1$}
\put(96,38){$\scriptstyle 0$}
\dottedline{3}(80,45)(100,45)
\put(90,20){\vector(0,1){10}}
\put(-3, -5){\framebox(26,20)[c]}
\put(0,8){$\scriptstyle 0$}
\put(8,8){$\scriptstyle 0$}
\put(14,8){-$\scriptstyle  1$}
\put(0,-2){$\scriptstyle 0$}
\put(8,-2){$\scriptstyle 0$}
\put(16,-2){$\scriptstyle  0$}
\dottedline{3}(0,5)(20,5)
\put(35,5){\vector(-1,0){10}}
\put(40,8){$\scriptstyle 1$}
\put(48,8){$\scriptstyle 1$}
\put(56,8){$\scriptstyle 1$}
\put(40,-2){$\scriptstyle 0$}
\put(48,-2){$\scriptstyle 0$}
\put(56,-2){$\scriptstyle 0$}
\dottedline{3}(40,5)(60,5)
\put(65,5){\vector(1,0){10}}
\put(77, -5){\framebox(26,20)[c]}
\put(78,8){-$\scriptstyle 1$}
\put(88,8){$\scriptstyle 0$}
\put(96,8){$\scriptstyle 0$}
\put(80,-2){$\scriptstyle 0$}
\put(88,-2){$\scriptstyle 0$}
\put(96,-2){$\scriptstyle 0$}
\dottedline{3}(80,5)(100,5)%
\put(40,-17){$C(6)$}
\put(-29,15){$\rightarrow$}
\put(-28,23){$\scriptstyle -$}
}
\put(150,0)
{
\put(-3, 35){\framebox(26,20)[c]}
\put(0,48){$\scriptstyle 0$}
\put(8,48){$\scriptstyle 0$}
\put(16,48){$\scriptstyle  0$}
\put(0,38){$\scriptstyle 0$}
\put(8,38){$\scriptstyle 0$}
\put(16,38){$\scriptstyle  1$}
\dottedline{3}(5,35)(5,55)
\dottedline{3}(13,35)(13,55)
\put(10,30){\vector(0,-1){10}}
\put(35,45){\vector(-1,0){10}}
\put(40,48){$\scriptstyle 0$}
\put(48,48){$\scriptstyle 0$}
\put(56,48){$\scriptstyle 0$}
\put(40,38){$\scriptstyle 0$}
\put(48,38){$\scriptstyle 1$}
\put(56,38){$\scriptstyle 0$}
\dottedline{3}(45,35)(45,55)
\dottedline{3}(53,35)(53,55)
\put(50,20){\vector(0,1){10}}
\put(65,45){\vector(1,0){10}}
\put(77, 35){\framebox(26,20)[c]}
\put(80,48){$\scriptstyle 0$}
\put(88,48){$\scriptstyle 0$}
\put(96,48){$\scriptstyle 0$}
\put(80,38){$\scriptstyle 1$}
\put(88,38){$\scriptstyle 0$}
\put(96,38){$\scriptstyle 0$}
\dottedline{3}(85,35)(85,55)
\dottedline{3}(93,35)(93,55)
\put(90,30){\vector(0,-1){10}}
\put(0,8){$\scriptstyle 0$}
\put(8,8){$\scriptstyle 0$}
\put(16,8){$\scriptstyle  1$}
\put(0,-2){$\scriptstyle 0$}
\put(8,-2){$\scriptstyle 0$}
\put(16,-2){$\scriptstyle  0$}
\dottedline{3}(5,-5)(5,15)
\dottedline{3}(13,-5)(13,15)
\put(25,5){\vector(1,0){10}}
\put(37, -5){\framebox(26,20)[c]}
\put(40,8){$\scriptstyle 0$}
\put(48,8){$\scriptstyle 1$}
\put(56,8){$\scriptstyle 0$}
\put(40,-2){$\scriptstyle 0$}
\put(48,-2){$\scriptstyle 0$}
\put(56,-2){$\scriptstyle 0$}
\dottedline{3}(45,-5)(45,15)
\dottedline{3}(53,-5)(53,15)
\put(75,5){\vector(-1,0){10}}
\put(80,8){$\scriptstyle 1$}
\put(88,8){$\scriptstyle 0$}
\put(96,8){$\scriptstyle 0$}
\put(80,-2){$\scriptstyle 0$}
\put(88,-2){$\scriptstyle 0$}
\put(96,-2){$\scriptstyle 0$}
\dottedline{3}(85,-5)(85,15)
\dottedline{3}(93,-5)(93,15)
\put(40,-17){$C(7)$}
\put(-29,15){$\rightarrow$}
\put(-28,23){$\scriptstyle +$}
}
\end{picture}
\end{center}
\caption{Tropical $Y$-system for $(X,X')=(A_3,A_2)$,
where $h+h'=7$.
The framed ones are the $c$-vectors in the even sector.
}
\label{fig:higher}
\end{figure}

\noindent
We take $Q$ as the initial quiver for the corresponding $Y$-pattern.
We arrange each $c$-vector in   a matrix form with
horizontal and vertical indices in $I\times I'$.
For example, the matrix
\begin{align}
\begin{matrix}
0 & 0 & 0\\
0 & 1 & 1
\end{matrix}
\end{align}
represents the $c$-vector $\bfc=\bfe_{2,2}+\bfe_{3,2}$.
Here, we calculate the mutation of $c$-vectors
directly  by the mutation rule \eqref{2eq:Cmat1} 
in the ambient $Y$-pattern,
rather than by the tropical $Y$-system
\eqref{eq:yma6}.
The rule \eqref{2eq:Cmat1} is translated
into the following pictorial rule, which is very efficient:
\begin{align}
\bfc'_k&=-\bfc_k,
\\
\label{eq:ypic2}
\bfc'_i 
&=
\begin{cases}
\bfc_i + \bfc_k&
\begin{xy}
(0,0)*\cir<2pt>{},
(10,0)*{\bullet},
(0,-3)*{\text{\small $i$}},
(10,-3)*{\text{\small $k$}},
(22,0)*{\text{\small and $\varepsilon_k=-$}},
\ar@{<-} (8,0);(2,0) 
\end{xy},
\\
\bfc_i + \bfc_k
&
\begin{xy}
(0,0)*\cir<2pt>{},
(10,0)*{\bullet},
(0,-3)*{\text{\small $i$}},
(10,-3)*{\text{\small $k$}},
(22,0)*{\text{\small and $\varepsilon_k=+$}},
\ar@{->} (8,0);(2,0) 
\end{xy},
\\
\bfc_i
&
\text{othterwise}
\end{cases}
\quad
\text{($i\neq k$)}.
\end{align}
All $c$-vectors from $s=0$ to $s=7$ are presented in Figure
\ref{fig:higher}.
The desired periodicity is found at $s=h+h'=7$.
\end{ex}
As we see in the above example,
and  it is also clear from the analysis in the general case,
in the \emph{positive region} $0\leq s \leq h'-1$,
a nontrivial mutation occurs only ``in the  vertical direction''
by the coordination of tropical signs and arrows.
(This is indicated in the dotted vertical lines in Figure
\ref{fig:higher}.)
Similarly,
in the \emph{negative region} $h'\leq s \leq h'+h-1$,
a nontrivial mutation occurs only ``in the horizontal direction''.
We call this factorization into the vertical and horizontal directions the \emph{factorization property}\index{factorization property}
of the tropical $Y$-system \eqref{eq:yma5}.
Such a simplification does not occur in the nontropical $Y$-system \eqref{eq:yma3}.

In summary,
 we use the formulation of the $Y$-systems by $Y$-patterns, together with the tropicalization,
 to prove the periodicity and the DIs for $Y$-systems.
The tropicalization reveals that
 the Coxeter elements are behind the scene and govern the dynamics of the $Y$-systems.
 This is why the Coxeter number is relevant for the periodicity.
 
 \notes
 The cluster algebraic structure for the $Y$-systems  \eqref{eq:ysys1} of type $(X,A_1)$
was recognized first by \cite{Fomin03b} and formulated by $Y$-patterns  by \cite{Fomin07}.
It was generalized to the type $(X,X')$ by \cite{Keller08}
using the quiver $Q(X,X')$.

There is a ``twin" system to   $Y$-systems
called  \emph{$T$-systems}, which were also introduced and  studied together with $Y$-systems in B.C.~\cite{Kuniba94a}.
$T$-systems are satisfied by  $x$-variables for  the same sequence of mutations \eqref{eq:Yseq1}
\cite{DiFrancesco09a,Inoue10c}.

After the results by \cite{Gliozzi96,Frenkel95} in B.C.\   mentioned in
Chapter \ref{ch:prologue},
the periodicity of $Y$-systems was  proved by 
\cite{Fomin03b} for type $(X,A_1)$ by a cluster algebraic method.
The method consists of the tropicalization and the construction of $F$-polynomials (with the assistance of a computer).
The role of the Coxeter elements was also clarified.
The periodicity of $Y$-systems for type $(X,X')$
 was proved by 
\cite{Keller08, Keller10}
based on the \emph{categorification} of  cluster patterns by
triangulated categories.
The refinement to the half periodicity was given
in \cite{Inoue10c}, where the periodicity of
associated $T$-systems was also proved.
Using the ideas and results of \cite{Caracciolo99, Fomin03b, Fomin07, Chapoton05, Keller08},
the DIs for the $Y$-systems
of type $(X,X')$ were proved by \cite{Nakanishi09}.

By further developing the above methods and ideas,
together with the detropicalization by cluster categories by \cite{Plamondon10b},
the periodicity and DIs for more complicated $Y$-systems were
proved.
For the $Y$-systems of nonsimply-laced types 
they were proved by \cite{Inoue10a, Inoue10b}.
For the  sine-Gordon and reduced sine-Gordon $Y$-systems
they were proved
by \cite{Nakanishi10b} in a special case and  by \cite{Nakanishi12c}
in full generality.

Meanwhile, the periodicity of $Y$-systems of type $(A_r, A_{r'})$
was also proved  by \cite{Volkov07} with  the determinant solution
and by \cite{Szenes09} using the flatness of a graph connection.
Thus, the type $A$  is the special case where several ideas in mathematics intersect
as in Lie theory.

%

\part{Classical mechanical method}

\chapter{Fock-Goncharov decomposition and dilogarithm}
\label{ch:Fock1}

We introduce two new ideas for $Y$-patterns.
The first one is the decomposition of a mutation of $Y$-seeds into
tropical and nontropical parts based on the idea of 
Fock-Goncharov.
The second one is the mutation-compatible Poisson bracket
introduced by Gekhtman-Shapiro-Vainshtein.
By combining two ideas,  the Euler dilogarithm 
emerges as the Hamiltonian
for the nontropical part of each mutation.

\section{Fock-Goncharov decomposition}
\label{sec:Fock1}

Let us introduce the following notion.

\begin{defn}[Skew-symmetric decomposition]
For a skew-symmetriz\-able (integer) matrix $B=(b_{ij})_{i,j=1}^n$,
we consider a decomposition
\begin{align}
\label{eq:BDO1}
B=\Delta \Omega,
\end{align}
where $\Delta=\mathrm{diag}(\delta_1,\dots,\delta_n)$ is a diagonal matrix with positive integer diagonals
and $\Omega$ is a skew-symmetric rational matrix.
We call it a \emph{skew-symmetric decomposition}\index{skew-symmetric!decomposition} of $B$.
\end{defn}

For a skew-symmetrizer $D$ of the form \eqref{eq:Dd1} for $B$,
we set
\begin{align}
\Delta=D^{-1},
\quad
\Omega = DB.
\end{align}
Then, we have a skew-symmetric decomposition of $B$.
Conversely, for a skew-symmetric decomposition
of $B$, $D=\Delta^{-1}$ is a skew-symmetrizer of $B$
 of the form \eqref{eq:Dd1} such that $DB=\Omega$.
Thus, choosing a skew-symmetrizer of the form   \eqref{eq:Dd1}
is equivalent to choosing a skew-symmetric decomposition.

We start  by observing that there is some flexibility in writing the mutation
 \eqref{2eq:xmut1}--\eqref{2eq:bmut1}.
Namely, Let $\varepsilon\in \{1, -1\}$.
Then, by \eqref{1eq:pos1}, one can easily check that the RHSs of the following expressions
are independent of $\varepsilon$:
\begin{align}
\label{2eq:xmut5}
x'_i
&=
\begin{cases}
\displaystyle
x_k^{-1}\Biggl(\, \prod_{j=1}^n x_j^{[-\varepsilon b_{jk}]_+}
\Biggr)
\frac{
 1+\hat{y}_k^{\varepsilon}}
 {1\oplus y_k^{\varepsilon}}
 & i=k,
\\
x_i
&i\neq k,
\end{cases}
\\
\label{2eq:ymut5}
y'_i
&=
\begin{cases}
\displaystyle
y_k^{-1}
& i=k,
\\
y_i y_k^{[\varepsilon b_{ki}]_+} (1\oplus y_k^{\varepsilon})^{-b_{ki}}
&i\neq k,
\end{cases}
\\
\label{2eq:bmut5}
b'_{ij}&=
\begin{cases}
-b_{ij}
&
\text{$i=k$ or $j=k$,}
\\
b_{ij}+
b_{ik} [\varepsilon b_{kj}]_+
+
[-\varepsilon b_{ik}]_+b_{kj}
&
i,j\neq k.
\end{cases}
\end{align}
The case $\varepsilon=1$ yields
the original mutation formulas.
We call them the \emph{$\varepsilon$-expressions}\index{$\varepsilon$-expression}
of the mutation.
Similarly, the mutations of $C$- and $G$-matrices
in Section \ref{sec:separation1}
have the following  {$\varepsilon$-expressions}:
\begin{align}
 \label{2eq:cmutmat3}
 c_{ij;t'}&=
 \begin{cases}
 -c_{ik;t}
 &
 j= k,
 \\
 c_{ij;t} + c_{ik;t} [\varepsilon b_{kj;t}]_+
 + [- \varepsilon c_{ik;t}]_+ b_{kj;t}
 &
 j \neq k,
 \end{cases}
\\
 \label{2eq:gmut2}
 g_{ij;t'}&=
 \begin{cases}
 \displaystyle
 -g_{ik;t}
 + \sum_{\ell=1}^n g_{i\ell;t} [-\varepsilon b_{\ell k;t}]_+
 -  \sum_{\ell=1}^nb_{i\ell;t_0}  [-\varepsilon c_{\ell k;t}]_+ 
 &
 j= k,
 \\
 g_{ij;t}  &
 j \neq k.
 \end{cases}
 \end{align}
 To check  the $\varepsilon$-independence of \eqref{2eq:gmut2},
 we also use the formula \eqref{2eq:dual0}.

Now, we are back in the situation in Section \ref{sec:periodicity1}.
Namely,
we consider a cluster pattern $\bfSigma$ with \emph{free coefficients} at $t_0$.
Again, we concentrate on the $Y$-pattern only,
and we write  $\oplus$ in $\bbQ_{\rmsf}(\bfy)$ as $+$.
Consider a sequence of mutations therein
\begin{align}
\label{eq:mseq5}
&\Upsilon=\Upsilon(0) 
\
{\buildrel {k_0} \over \rightarrow}
\
\Upsilon(1) 
\
{\buildrel {k_1} \over \rightarrow}
\
\cdots
\
{\buildrel {k_{P-1}} \over \rightarrow}
\
\Upsilon(P),
\end{align}
where $\Upsilon(s)=(\bfy(s),B(s))$.
Here, we especially assume that
\emph{the $Y$-seed $\Upsilon(0)$ coincides with the initial 
$Y$-seed $\Upsilon_{t_0}=\Upsilon=(\bfy,B)$}.
The $\varepsilon$-expression of the mutation  $\mu_{k_s}$ for $y$-variables is given by
\begin{align}
\label{eq:ymut6}
y_i(s+1)
&=
\begin{cases}
\displaystyle
y_{k_s}(s)^{-1}
& i=k_s,
\\
y_i(s) y_{k_s}(s) ^{[\varepsilon b_{k_s i}(s)]_+} (1+ y_{k_s}(s)^{\varepsilon})^{-b_{k_s i}(s)}
&i\neq k_s.
\end{cases}
\end{align}

Originally, all $y$-variables $y_i(s)$ ($s=0$, \dots, $P$) belong to the common semifield $\bbQ_{\rmsf}(\bfy)$
for the initial $y$-variables $\bfy$,
and the formula \eqref{eq:ymut6} gives relations among them.
Here, we introduce an alternative veiwpoint.
For each $s=0$, \dots, $P$, we regard the $y$-variables $\bfy(s)=(y_1(s), \dots,y_n(s))$   as
formal variables and consider the  universal semifield 
$\bbP(s):=\bbQ_{\rmsf}(\bfy(s))$.
Then, we interpret the mutation \eqref{eq:ymut6} as a semifield isomorphism
($s=0$, \dots, $P-1$)
\begin{align}
\begin{matrix}
\label{eq:mu1}
\mu{(s)} \colon &\bbP(s+1)& \rightarrow & \bbP(s)
\\
&
y_i(s+1)
&
\mapsto
&
\begin{cases}
\displaystyle
y_{k_s}(s)^{-1}
& i=k_s,
\\
y_i(s) y_{k_s}(s) ^{[\varepsilon b_{k_s i}(s)]_+} (1+ y_{k_s}(s)^{\varepsilon})^{-b_{k_s i}(s)}
&i\neq k_s.
\end{cases}
\end{matrix}
\end{align}

Now taking  advantage of the $\varepsilon$-expression,
we set $\varepsilon$ therein to the tropical sign $\varepsilon_s:=\varepsilon_{k_s}(s)$
in Section \ref{sec:DI1}.
Accordingly,
we introduce  a semifield isomorphism  ($s=0$, \dots, $P-1$)
\begin{align}
\label{eq:tau1}
\begin{matrix}
\tau{(s)}\colon & \bbP(s+1) & \rightarrow & \bbP(s)
\\
&
y_i(s+1)
&
\mapsto
&
\begin{cases}
\displaystyle
y_{k_s}(s)^{-1}
& i=k_s,
\\
y_i(s) y_{k_s}(s) ^{[\varepsilon_s b_{k_s i}(s)]_+} 
&i\neq k_s
\end{cases}
\end{matrix}
\end{align}
and a semifield automorphism   ($s=0$, \dots, $P-1$)
\begin{align}
\label{eq:rho1}
\begin{matrix}
\rho{(s)} \colon & \bbP(s) & \rightarrow &  \bbP(s)
\\
&
y_i(s)
&
\mapsto
&
y_i(s) (1+ y_{k_s}(s)^{\varepsilon_s})^{-b_{k_s i}(s)}.
\end{matrix}
\end{align}
Then, we have a decomposition of the mutation $\mu{(s)}$
\begin{align}
\label{eq:FG1}
\mu{(s)}=
\rho{(s)}\circ \tau{(s)}.
\end{align}
We call it the \emph{Fock-Goncharov decomposition}\index{Fock-Goncharov decomposition!of mutation} $\mu{(s)}$.
Note that this decomposition fully depends on the choice of the initial vertex $t_0$,
because the tropical sign $\varepsilon_s$ depends on $t_0$.
Also, it crucially relies on the sign-coherence property in Theorem
\ref{thm:sign1}.

Let us clarify the meaning of this decomposition.
Let $C(s)$ be the  $C$-matrix for $\Upsilon(s)$,
and let $\bfc_i(s)$ be its $i$th $c$-vector.
We do the same specialization $\varepsilon=\varepsilon_s$ for the $C$-matrix mutation
\eqref{2eq:cmutmat3}.
By the sign-coherence property, we have
\begin{align}
[-\varepsilon_s c_{\ell k_s}(s)]_+ = 0
\quad
(1 \leq\ell \leq n).
\end{align}
Thus, the mutation \eqref{2eq:cmutmat3}  is simplified
as
\begin{align}
 \label{2eq:cmutmat4}
\bfc_i(s+1)&=
 \begin{cases}
-\bfc_{k_s}(s)
 &
 i= k_s,
 \\
\bfc_i(s) +  [\varepsilon_s b_{k_s i}(s)]_+\bfc_{k_s}(s)
 &
 i \neq k_s.
 \end{cases}
\end{align}
We recognize it as the log-version of  \eqref{eq:tau1}.
By this reason, we call $\tau{(s)}$ and $\rho{(s)}$
the \emph{tropical part}\index{tropical part!of mutation} and the \emph{nontropical part}\index{nontropical part!of mutation} of $\mu{(s)}$,
respectively.

Next,
we introduce  the  composite mutation
$(s=0,\, \dots,\, P-1)$
\begin{align}
\label{eq:mus01}
\mu(s;0)&:=
\mu(0)\circ \mu(1)\circ \cdots \circ \mu(s)\colon
\bbP(s+1) \rightarrow\bbP(0).
\end{align}
Let us define its Fock-Goncharov decomposition.

Consider   a semifield isomorphism ($s=0,\, \dots,\, P-1$)
\begin{align}
\label{eq:taus01}
\tau(s;0)&:=
\tau(0)\circ \tau(1)\circ \cdots \circ \tau(s)\colon
\bbP(s+1)\rightarrow \bbP(0).
\end{align}
The map $\tau{(s;0)}$ is identified with the \emph{tropical part}\index{tropical part!of composite mutation} of  $\mu{(s;0)}$
as follows.

\begin{prop}
[e.g., {\cite[Prop.~II.4.3]{Nakanishi22a}}]
\label{prop:tauy1}
The following formula holds:
\begin{align}
\label{eq:taus1}
\tau(s;0)(y_i(s+1))=y^{\bfc_i(s+1)}.
\end{align}
\end{prop}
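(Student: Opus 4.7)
The plan is to prove the formula by induction on $s$, exploiting the fact that the mutation rule defining $\tau(s)$ in \eqref{eq:tau1} is, in the exponents of $\bfy$, precisely the simplified $C$-matrix mutation rule \eqref{2eq:cmutmat4} made available by the sign-coherence in Theorem~\ref{thm:sign1}.

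For the base case, note that $C(0)=I$, so $y^{\bfc_i(0)} = y_i = y_i(0)$, which matches the convention that the empty composition $\tau(-1;0)$ is the identity on $\bbP(0)$. The case $s=0$ is then a direct check: by \eqref{eq:tau1}, $\tau(0)(y_{k_0}(1)) = y_{k_0}(0)^{-1}$ and $\tau(0)(y_i(1)) = y_i(0)\,y_{k_0}(0)^{[\varepsilon_0 b_{k_0 i}(0)]_+}$ for $i\neq k_0$; meanwhile \eqref{2eq:cmutmat4} gives $\bfc_{k_0}(1) = -\bfe_{k_0}$ and $\bfc_i(1) = \bfe_i + [\varepsilon_0 b_{k_0 i}(0)]_+ \bfe_{k_0}$, which matches after taking $y^{(\cdot)}$.

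For the inductive step, suppose $\tau(s-1;0)(y_j(s)) = y^{\bfc_j(s)}$ for all $j$. Since $\tau(s;0) = \tau(s-1;0)\circ \tau(s)$ and $\tau(s-1;0)$ is a semifield homomorphism, I would apply $\tau(s-1;0)$ to the defining formula \eqref{eq:tau1} for $\tau(s)(y_i(s+1))$. In the case $i=k_s$ this gives
\begin{align*}
\tau(s;0)(y_{k_s}(s+1)) = \tau(s-1;0)(y_{k_s}(s))^{-1} = y^{-\bfc_{k_s}(s)} = y^{\bfc_{k_s}(s+1)},
\end{align*}
using the first branch of \eqref{2eq:cmutmat4}. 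In the case $i\neq k_s$ one gets
\begin{align*}
\tau(s;0)(y_i(s+1))
&= \tau(s-1;0)(y_i(s))\, \tau(s-1;0)(y_{k_s}(s))^{[\varepsilon_s b_{k_s i}(s)]_+} \\
&= y^{\bfc_i(s) + [\varepsilon_s b_{k_s i}(s)]_+\, \bfc_{k_s}(s)} = y^{\bfc_i(s+1)},
\end{align*}
using the second branch of \eqref{2eq:cmutmat4}. This closes the induction.

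There is no substantial obstacle: the only conceptual point is that the $\varepsilon$-expression is free to take $\varepsilon=\varepsilon_s$, which by sign-coherence removes the $[-\varepsilon_s c_{\ell k_s}(s)]_+$ terms from \eqref{2eq:cmutmat3} and makes the $C$-matrix mutation match the tropical part $\tau(s)$ on the nose. Thus the content of the proposition is that the Fock--Goncharov decomposition \eqref{eq:FG1} isolates exactly the part of the mutation that implements the tropicalization \eqref{eq:ty1}; everything else is bookkeeping.
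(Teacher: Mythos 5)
Your proof is correct and is essentially identical to the paper's: both argue by induction on $s$, verify the base case $s=0$ directly against \eqref{2eq:cmutmat4}, and close the inductive step by applying the semifield homomorphism $\tau(s-1;0)$ to \eqref{eq:tau1} and matching the exponents with the sign-specialized $C$-matrix mutation. Your added remark that sign-coherence is what kills the $[-\varepsilon_s c_{\ell k_s}(s)]_+$ terms and makes \eqref{2eq:cmutmat4} align with $\tau(s)$ is exactly the point the paper makes just before stating the mutation rule.
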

\begin{proof}
We prove it by the induction on $s$.
For $s=0$,
by \eqref{2eq:cmutmat4}, we have
\begin{align}
\label{eq:tau01}
\begin{matrix}
\tau{(0)}: & \bbP(1)& \rightarrow & \bbP(0)
\\
&
y_i(1)
&
\mapsto
&
\begin{cases}
\displaystyle
y_{k_0}^{-1}=y^{\bfc_{k_0}(1)}
& i=k_0,
\\
y_i y_{k_0} ^{[\varepsilon_0 b_{k_0 i}(0)]_+} 
=y^{\bfc_{i}(1)}
&i\neq k_0.
\end{cases}
\end{matrix}
\end{align}
Similarly, assuming \eqref{eq:taus1} for $s-1$, we obtain
\begin{align*}
&\quad \
\tau{(s;0)}(y_i(s+1))
=  (\tau{(s-1;0)} \circ \tau(s))(y_i(s+1))
\\
&=
\begin{cases}
\displaystyle
y^{-\bfc_{k_s}(s)}
& i=k_s,
\\
y^{\bfc_{i}(s)} y^{\bfc_{k_s}(s)}{}^{[\varepsilon_s b_{k_s i}(s)]_+} 
&i\neq k_s
\end{cases}
\\
&= y^{\bfc_i(s+1)},
\end{align*}
where we used  \eqref{2eq:cmutmat4} again in the last equality.
\end{proof}

Below, we consider the {nontropical part} of  $\mu{(s;0)}$.
Let us fix a  skew-symmetric decomposition
\begin{align}
\label{eq:Omegas1}
B(s)=\Delta \Omega(s),
\end{align}
where $\Delta=\diag(\d_1,\dots,\d_n)$ is common for   $B(s)$ ($s=0$, \dots, $P$).
Let $\Omega=\Omega(0)$.
We introduce
a skew-symmetric bilinear form $\{\bfn,\bfn'\}_{\Omega}$
on $\bbZ^n$  by
\begin{align}
\label{eq:Omega2}
\{\bfn,\bfn'\}_{\Omega}:=
\bfn^T \Omega \bfn'.
\end{align}
Also, for each $c$-vector $\bfc_{i}(s)$ and its tropical sign $\varepsilon_i(s)$, we set
\begin{align}
\label{eq:c+1}
\bfc^+_{i}(s):= \varepsilon_i(s)  \bfc_{i}(s).
\end{align}
This is a positive vector, and we call it a \emph{$c^+$-vector}\index{$c^+$-vector}.
Then, we introduce  a semifield automorphism ($s=0,\,\dots,\, P-1$)
\begin{align}
\label{eq:fq1}
\begin{matrix}
\frakq{(s)} \colon & \bbP(0) & \rightarrow &  \bbP(0)
\\
& y^{\bfn}& \mapsto & 
y^{\bfn}(1+y^{ \bfc^+_{k_s}(s)})^{-
\{\delta_{k_s} \bfc_{k_s}(s), \bfn\}_{\Omega}}.
\end{matrix}
\end{align}
Note that $\frakq{(0)}=\rho(0)$.

\begin{prop}
[{\cite[Prop.II.4.5]{Nakanishi22a}}]
\label{prop:qs1}
(a).
The following formulas hold:
\begin{gather}
\label{eq:ccO1}
 \{\delta_i \bfc_{i}(s), \bfc_{j}(s)\}_{\Omega} = b_{ij}(s),
\\
\label{eq:q2}
\frakq{(s)}(y^{\bfc_i(s)})=
y^{\bfc_i(s)}(1+y^{ \bfc^+_{k_s}(s)})^{-
b_{k_s i}(s)}.
\end{gather}
\par
(b). The following commutative diagram holds $(s=1,\, \dots,\, P-1)$:
\begin{align}
\label{eq:cd1}
\raisebox{25pt}
{
\begin{xy}
(0,0)*+{\bbP(s)}="aa";
(25,0)*+{\bbP(0)}="ba";
(0,-15)*+{\bbP(s)}="ab";
(25,-15)*+{\bbP(0)}="bb";
{\ar "aa";"ba"};
{\ar "ab";"bb"};
{\ar "aa";"ab"};
{\ar "ba";"bb"};
(12,3)*+{\text{\small $\tau(s-1;0)$}};
(12,-12)*+{\text{\small $\tau(s-1;0)$}};
(-5,-7.5)*+{\text{\small $\rho(s)$}};
(30,-7.5)*+{\text{\small $\frakq(s)$}};
\end{xy}
}
\end{align}

\end{prop}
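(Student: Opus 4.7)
The plan is to derive part (a) directly from the already-established advanced results on $C$- and $G$-matrices and then obtain part (b) as an almost formal consequence, so that the only real content is the bilinear identity in (a).

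For part (a), I would start from Proposition \ref{prop:CG1}(2), which in the present notation reads $D B(s) = C(s)^T (D B_{t_0}) C(s)$ with $D = \Delta^{-1}$. Recognising $\Omega = \Delta^{-1} B_{t_0}$ from the skew-symmetric decomposition, this rearranges to $\Delta^{-1} B(s) = C(s)^T \Omega\, C(s)$. Reading off the $(i,j)$-entry and multiplying by $\delta_i$ gives
\begin{align*}
b_{ij}(s) \;=\; \delta_i\, \bfc_{i}(s)^T \Omega\, \bfc_{j}(s) \;=\; \{\delta_i \bfc_{i}(s), \bfc_{j}(s)\}_{\Omega},
\end{align*}
which is the first formula \eqref{eq:ccO1}. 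The second formula \eqref{eq:q2} is then obtained by substituting $\bfn = \bfc_i(s)$ into the definition \eqref{eq:fq1} of $\frakq(s)$ and applying \eqref{eq:ccO1} to the exponent.

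For part (b), since all four maps in \eqref{eq:cd1} are semifield (iso)morphisms, it suffices to check the commutativity on the generators $y_i(s)$ of $\bbP(s)$. Going down then right, and using that $\tau(s-1;0)$ is a semifield homomorphism together with Proposition \ref{prop:tauy1} (which gives $\tau(s-1;0)(y_j(s)) = y^{\bfc_j(s)}$), I compute
\begin{align*}
\tau(s-1;0)\bigl(\rho(s)(y_i(s))\bigr)
&= \tau(s-1;0)\Bigl(y_i(s)(1+y_{k_s}(s)^{\varepsilon_s})^{-b_{k_s i}(s)}\Bigr)\\
&= y^{\bfc_i(s)}\bigl(1+y^{\varepsilon_s \bfc_{k_s}(s)}\bigr)^{-b_{k_s i}(s)}
= y^{\bfc_i(s)}\bigl(1+y^{\bfc^+_{k_s}(s)}\bigr)^{-b_{k_s i}(s)}.
\end{align*}
Going right then down, by Proposition \ref{prop:tauy1} and then part (a),
\begin{align*}
\frakq(s)\bigl(\tau(s-1;0)(y_i(s))\bigr) \;=\; \frakq(s)(y^{\bfc_i(s)})
\;=\; y^{\bfc_i(s)}(1+y^{\bfc^+_{k_s}(s)})^{-b_{k_s i}(s)}.
\end{align*}
The two expressions coincide, proving commutativity.

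There is no serious obstacle: once Proposition \ref{prop:CG1}(2) and the tropicalisation identity in Proposition \ref{prop:tauy1} are taken as given, part (a) is a matrix-entry rewriting and part (b) is a one-line check on generators. The only mild subtlety is being careful with conventions—matching $\Omega = DB_{t_0}$ with the left skew-symmetrizer $D = \Delta^{-1}$, and tracking the sign $\varepsilon_s$ so that $\varepsilon_s \bfc_{k_s}(s) = \bfc^+_{k_s}(s)$—but sign-coherence (Theorem \ref{thm:sign1}) makes this substitution unambiguous.
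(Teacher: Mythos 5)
Your proof is correct and follows essentially the same route as the paper: part (a) is reduced to the duality relation \eqref{eq:db1} of Proposition \ref{prop:CG1}(2) written in the matrix form $C(s)^T\Omega\,C(s)=\Omega(s)$, and part (b) is the same two-way check on generators using Proposition \ref{prop:tauy1}, \eqref{eq:rho1}, and \eqref{eq:q2}. No gaps.
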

\begin{proof}
(a).
The equality \eqref{eq:ccO1} is written in the matrix form
\begin{align}
\Delta C(s)^{T}  \Omega C(s) = \Delta \Omega(s).
\end{align}
This is
 equivalent to the equality \eqref{eq:db1}.
The equality \eqref{eq:q2} follows from  \eqref{eq:fq1} and \eqref{eq:ccO1}.
(b).
By
Proposition \ref{prop:tauy1}, \eqref{eq:rho1}, and \eqref{eq:q2},
we have
\begin{align}
\begin{split}
y_i(s) 
&\buildrel \tau(s-1;0) \over {\mapsto}y^{\bfc_i(s)} 
\buildrel \frakq(s) \over {\mapsto}
y^{\bfc_i(s)}(1+y^{ \bfc^+_{k_s}(s)})^{-
b_{k_s i}(s)},
\\
 y_i(s)
&\buildrel \rho(s) \over {\mapsto}
{\rp y_i(s)}  (1+ y_{k_s}(s)^{\varepsilon_s})^{-b_{k_s i}(s)}
\buildrel \tau(s-1;0) \over {\mapsto}
y^{\bfc_i(s)}(1+y^{ \bfc^+_{k_s}(s)})^{-
b_{k_s i}(s)}
.
\end{split}
\end{align}
\end{proof}

All ingredients introduced so far are
 summarized in a  single commutative diagram as follows:
\begin{align}
\label{eq:com1}
\raisebox{70pt}
{
\begin{xy}
(0,0)*+{\bbP(P)}="aa";
(25,0)*+{\bbP(P-1)}="ba";
(25,-15)*+{\bbP(P-1)}="bb";
(52,-15)*+{\bbP(P-2)}="cb";
(50,-30)*+{\bbP(2)}="cc";
(75,-30)*+{\bbP(1)}="dc";
(75,-45)*+{\bbP(1)}="dd";
(100,0)*+{\bbP(0)}="ea";
(100,-15)*+{\bbP(0)}="eb";
(100,-30)*+{\bbP(0)}="ec";
(100,-45)*+{\bbP(0)}="ed";
(100,-60)*+{\bbP(0)}="ee";
(10,3)*+{\text{\small $\tau(P-1)$}};
(62,3)*+{\text{\small $\tau(P-2;0)$}};
(76,-12)*+{\text{\small $\tau(P-3;0)$}};
(38,-12)*+{\text{\small $\tau(P-2)$}};
(62,-27)*+{\text{\small $\tau(1)$}};
(88,-27)*+{\text{\small $\tau(0)$}};
(88,-42)*+{\text{\small $\tau(0)$}};
(2,-7.5)*+{\text{\small $\mu(P-1)$}};
(55,-37.5)*+{\text{\small $\mu(1)$}};
(80,-52.5)*+{\text{\small $\mu(0)$}};
(33,-7.5)*+{\text{\small $\rho(P-1)$}};
(80,-37.5)*+{\text{\small $\rho(1)$}};
(109,-7.5)*+{\text{\small $\frakq(P-1)$}};
(105,-37.5)*+{\text{\small $\frakq(1)$}};
(105,-52.5)*+{\text{\small $\frakq(0)$}};
(100,-22)*+{\vdots};
(50,-22)*+{\vdots};
(37.5,-22)*+{\ddots};
{\ar "aa";"ba"};
{\ar "ba";"bb"};
{\ar "aa";"bb"};
{\ar "ba";"ea"};
{\ar "bb";"cb"};
{\ar "cb";"eb"};
{\ar "cc";"dc"};
{\ar "cc";"dd"};
{\ar "ea";"eb"};
{\ar "dc";"dd"};
{\ar "dc";"ec"};
{\ar "ec";"ed"};
{\ar "dd";"ed"};
{\ar "ed";"ee"};
{\ar "dd";"ee"};
\end{xy}
}
\end{align}
Finally,
we introduce a semifield automorphism
$(s=0,\, \dots,\,P-1)$
\begin{align}
\label{eq:q01}
\frakq(s;0)&:=
\frakq(0)\circ \frakq(1)\circ \cdots \circ \frakq(s)\colon
\bbP(0)\rightarrow \bbP(0).
\end{align}
From the  diagram \eqref{eq:com1}, we immediately obtain
the  decomposition
\begin{align}
\label{eq:decom1}
\mu(s;0)= \frakq(s;0) \circ \tau(s;0).
\end{align}
We call it the \emph{Fock-Goncharov decomposition}\index{Fock-Goncharov decomposition!of  composite mutation} of the 
composite
mutation  $\mu(s;0)$,
where $\frakq{(s;0)}$ is the \emph{nontropical part}\index{nontropical part!of composite mutation} of  $\mu{(s;0)}$.

\section{Mutation-compatible Poisson bracket}
\label{sec:mutation1}

Let us recall the most  basic notion  in Poisson geometry
(e.g., \cite{Weinstein83}).

\begin{defn}[Poisson bracket]
Let $\calC(M)$ be the  commutative algebra of all smooth real-valued functions on
a smooth manifold $M$. 
A bilinear map $\{\cdot, \cdot \}\colon \calC(M)\times \calC(M) \rightarrow \calC(M)$ is a \emph{Poisson bracket}\index{Poisson!bracket} on $M$ (or $\calC(M)$) if the following
conditions are satisfied.
\begin{itemize}
\item skew-symmetry: $\{f, g \}=-\{g,f \}$.
\item Leibniz rule: $\{ f , gh \}=\{ f , g \} h +g \{ f, h \}.$
\item Jacobi identity: $\{ \{ f , g \}, h\} + \{ \{ g , h \}, f\} +\{ \{ h , f \}, g\} =0.$
\end{itemize}
Equipped with a Poisson bracket,
the  manifold $M$ is called a \emph{Poisson manifold}\index{Poisson!manifold},
and
the algebra $\calC(M)$ is called a \emph{Poisson algebra}\index{Poisson!algebra}.
\end{defn}

For simplicity,
consider a smooth manifold $M= \bbR^n$
with global coordinates
    $\bfy=(y_1,\dots,y_n)$.
 Then, $\calC(M)$ is identified with  the algebra $\calC(\bfy)$ of all smooth functions 
 of $\bfy$.
 The following fact is due to Lie.
 \begin{prop}
 [{e.g., \cite{Weinstein83}}]
 \label{prop:Poi1}
 (a).
 Let $\{\cdot, \cdot\}$ be any poisson bracket on $\calC(M)$.
 Then, the following formula holds:
 \begin{align}
\label{eq:Poi1}
\{f,g\}=\sum_{i,j=1}^n 
\frac{\partial f}{\partial y_i}
\frac{\partial g}{\partial y_j}
\{ y_i, y_j\}.
\end{align}
(b).
Conversely, for any  data $w_{ij} \in \calC(M)$ $(i,\,j=1,\,\dots,\,r)$ satisfying 
\begin{gather}
\label{eq:skewy1}
w_{ij}=-w_{ji},
\\
\label{eq:Jacobiw1}
\sum_{\ell=1}^n
\left(
\frac{\partial w_{ij}}{\partial y_{\ell}} w_{\ell k}
+
\frac{\partial w_{jk}}{\partial y_{\ell}} w_{\ell i}
+
\frac{\partial w_{ki}}{\partial y_{\ell}} w_{\ell j}
\right)=0,
\end{gather}
the bracket $\{\cdot,\cdot\}$ defined by $\{y_i,y_j\}:=w_{ij}$
and \eqref{eq:Poi1}
 is a Poisson bracket on 
$\calC(\bfy)$,
Moreover, \eqref{eq:Jacobiw1} is written as
\begin{align}
\label{eq:Jacobiy1}
 \{\{y_i,y_j\},y_k\}
 +
 \{\{y_j,y_k\},y_i\}
 +
\{\{y_k,y_i\},y_j\}
 =0.
\end{align}
 \end{prop}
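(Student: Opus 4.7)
\textbf{Proof plan for Proposition \ref{prop:Poi1}.}

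For part (a), the plan is to exploit the Leibniz rule to show that, for each fixed $f$, the map $g\mapsto \{f,g\}$ is a derivation of $\calC(M)$, and then to invoke the classical fact that every derivation of $\calC(\bbR^n)$ is a smooth vector field $X=\sum_i a_i\,\partial/\partial y_i$ with $a_i=X(y_i)$. Applied to our situation this gives
\begin{align*}
\{f,g\}=\sum_{i=1}^n \{f,y_i\}\,\frac{\partial g}{\partial y_i}.
\end{align*}
Applying the same argument to the derivation $f\mapsto\{y_i,f\}=-\{f,y_i\}$ yields $\{f,y_i\}=\sum_j \{y_j,y_i\}\,\partial f/\partial y_j$, and substituting and relabeling indices produces the claimed formula \eqref{eq:Poi1}. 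The main technical obstacle is the ``derivation = vector field'' step for smooth (not merely polynomial) functions; the standard route is via Hadamard's lemma, which writes $g(\bfy)-g(\bfa)=\sum_i(y_i-a_i)h_i(\bfy)$ with $h_i\in \calC(M)$ and $h_i(\bfa)=\partial g/\partial y_i(\bfa)$, so that any derivation $X$ satisfies $X(g)(\bfa)=\sum_i X(y_i)(\bfa)\,\partial g/\partial y_i(\bfa)$.

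For part (b), I would verify the three axioms separately. Skew-symmetry of $\{\cdot,\cdot\}$ is immediate from $w_{ij}=-w_{ji}$ and the definition; the Leibniz rule follows at once from the Leibniz rule for $\partial/\partial y_j$ applied to the defining formula \eqref{eq:Poi1}. The only substantive point is the Jacobi identity. The plan is first to prove the equivalence of \eqref{eq:Jacobiw1} and \eqref{eq:Jacobiy1} directly from the definition: computing $\{\{y_i,y_j\},y_k\}=\{w_{ij},y_k\}=\sum_\ell (\partial w_{ij}/\partial y_\ell)\,w_{\ell k}$ and cyclically summing yields exactly \eqref{eq:Jacobiw1}.

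Then, to get Jacobi for arbitrary $f,g,h$, I would expand $\{\{f,g\},h\}$ using \eqref{eq:Poi1}. Writing $\{f,g\}=\sum_{i,j} w_{ij}\,f_i g_j$ with the shorthand $f_i=\partial f/\partial y_i$, a direct computation gives
\begin{align*}
\{\{f,g\},h\}=\sum_{i,j,k,\ell}
\Bigl(
\frac{\partial w_{ij}}{\partial y_\ell}f_i g_j + w_{ij}f_{i\ell}g_j + w_{ij}f_i g_{j\ell}
\Bigr)w_{\ell k}\,h_k.
\end{align*}
After taking the cyclic sum over $(f,g,h)$, the terms containing second derivatives $f_{i\ell}$ (and its counterparts for $g$ and $h$) cancel in pairs by the skew-symmetry $w_{ij}=-w_{ji}$, and the remaining terms—those carrying $\partial w_{ij}/\partial y_\ell$—vanish by \eqref{eq:Jacobiw1}. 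The main obstacle is bookkeeping: one must track indices carefully to recognize the cancellation of the second-derivative terms and to group the first-derivative terms into cyclic triples matching \eqref{eq:Jacobiw1}. Both verifications are routine but require a disciplined notation.
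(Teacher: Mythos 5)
Your proof of part (b) is correct and follows essentially the same computation as the paper: expand $\{\{f,g\},h\}$ via \eqref{eq:Poi1}, observe that the cyclic sum of the second-derivative terms cancels (by the skew-symmetry \eqref{eq:skewy1} together with the symmetry of second partials), and that the cyclic sum of the terms carrying $\partial w_{ij}/\partial y_\ell$ is precisely the left-hand side of \eqref{eq:Jacobiw1}; your direct check that $\{\{y_i,y_j\},y_k\}=\sum_\ell(\partial w_{ij}/\partial y_\ell)\,w_{\ell k}$, giving the equivalence of \eqref{eq:Jacobiw1} and \eqref{eq:Jacobiy1}, is exactly what is required. For part (a) you take a genuinely different route. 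The paper derives \eqref{eq:Poi1} for polynomial $f,g$ from the Leibniz rule and then extends to all smooth functions by the Weierstrass approximation theorem; you instead note that $g\mapsto\{f,g\}$ is a derivation, identify it with a vector field via Hadamard's lemma, and apply the same observation a second time to $\{y_i,\cdot\}$. Your route is self-contained and arguably tighter: the Weierstrass step implicitly needs some continuity of the bracket to pass from polynomials to smooth limits, whereas the Hadamard argument works pointwise and uses nothing beyond the bracket axioms (together with $\{f,c\}=0$ for constants, which follows from the Leibniz rule). Both arguments are at the level of an outline, which is all the paper itself provides.
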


 \begin{proof}
 For the reader's convenience, we present an outline of the proof.
 
 (a).
 For  polynomial functions $f$, $g$ in $\bfy$,
the formula  \eqref{eq:Poi1} is deduced from 
the Leibniz rule.
Then, it is uniquely extended to smooth functions
by the Weierstrass approximation theorem.

(b).
The only nontrivial condition to be checked is the Jacobi identity.
Let us use  the abbreviations
$\partial f/\partial y_i = f_i$, $\partial^2 f/\partial y_i \partial y_j= f_{ij}$.
Then, we have
\begin{align}
\begin{split}
&\quad \
\{ \{ f , g \}, h\}
=
\biggl\{
\sum_{i,j}
f_ig_j w_{ij},
h
\biggr\}
\\
&=
\sum_{i,j,k,\ell}
(f_i g_j)_k h_{\ell}  w_{ij} w_{k\ell} 
+
\sum_{i,j,k,\ell}
f_i g_j h_{\ell} (w_{ij})_k  w_{k\ell} 
\\
&=
\sum_{i,j,k,\ell}
(f_{ik} g_j h_{\ell}  - g_{ik} h_{j}  f_{\ell}  )w_{ij}w_{k\ell},
+
\sum_{i,j,k,\ell}
f_i g_j h_{\ell} (w_{ij})_k  w_{k\ell},
\end{split}
\end{align}
where we used the skew-symmetry \eqref{eq:skewy1}
in the last equality.
The Jacobi identity follows 
from the last expression and the condition \eqref{eq:Jacobiw1}.
\end{proof}

We are interested in the following \emph{quadratic}\index{quadratic Poisson brackets}  Poisson brackets.
\begin{lem}
 \label{lem:Poi1}
Let $\Omega=(\omega_{ij})_{i,j=1}^n$ be any skew-symmetric real matrix.
Define the data   $w_{ij}\in \calC(\bfy)$ $(i,\,j=1,\,\dots,\,n)$ by
\begin{align}
\label{eq:Poi2}
w_{ij}=\{y_i,y_j\}:=\omega_{ij} y_i y_j.
\end{align}
Then, they satisfy the conditions \eqref{eq:skewy1} and \eqref{eq:Jacobiy1}.
Thus, they define a Poisson bracket on $\calC(\bfy)$.
\end{lem}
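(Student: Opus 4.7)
The plan is to verify directly the two hypotheses \eqref{eq:skewy1} and \eqref{eq:Jacobiy1} of Proposition \ref{prop:Poi1}(b) for the proposed data $w_{ij} = \omega_{ij} y_i y_j$; once these are checked, the Poisson bracket structure is supplied automatically by that proposition. The calculation is purely algebraic and mirrors the well-known fact that any constant skew-symmetric matrix gives rise to a ``log-canonical'' (or quadratic) Poisson bracket.

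The skew-symmetry \eqref{eq:skewy1} is immediate: since $y_i y_j = y_j y_i$ and $\omega_{ij} = -\omega_{ji}$, one gets $w_{ij} = \omega_{ij} y_i y_j = -\omega_{ji} y_j y_i = -w_{ji}$. For the Jacobi identity I would use the equivalent form \eqref{eq:Jacobiy1}. First compute
\begin{align*}
\frac{\partial w_{ij}}{\partial y_\ell} = \omega_{ij}(\delta_{i\ell} y_j + \delta_{j\ell} y_i),
\end{align*}
so that
\begin{align*}
\sum_{\ell=1}^n \frac{\partial w_{ij}}{\partial y_\ell}\, w_{\ell k}
= \omega_{ij}\omega_{ik}\, y_i y_j y_k + \omega_{ij}\omega_{jk}\, y_i y_j y_k.
\end{align*}
Summing cyclically over $(i,j,k)$, the common factor $y_i y_j y_k$ may be pulled out, and the remaining expression is a cyclic sum of six products of the form $\omega_{ab}\omega_{cd}$.

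The core step is then to show this scalar cyclic sum vanishes. Using $\omega_{ji} = -\omega_{ij}$ etc., each of the three ``second'' terms ($\omega_{ij}\omega_{jk}$, $\omega_{jk}\omega_{ki}$, $\omega_{ki}\omega_{ij}$) cancels exactly one of the three ``first'' terms after a sign flip, yielding $0$. I do not anticipate any real obstacle: no hypothesis beyond skew-symmetry of $\Omega$ is needed, and in particular one does not require $\Omega$ to come from a skew-symmetric decomposition of an exchange matrix $B$. The only point worth stressing in the writeup is that the quadratic ansatz $w_{ij} = \omega_{ij} y_i y_j$ is essentially forced if one wants Jacobi to hold with \emph{constant} coefficients $\omega_{ij}$ and scale-invariance $y_i \to \lambda_i y_i$ to be a Poisson automorphism; this motivates the formula but is not needed in the proof itself.
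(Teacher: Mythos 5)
Your proof is correct and follows essentially the same route as the paper: skew-symmetry is immediate, and the Jacobi identity reduces to the cyclic vanishing of $(\omega_{ij}\omega_{ik}+\omega_{ij}\omega_{jk})\,y_iy_jy_k$, which is exactly the paper's "cyclic expression" $(\omega_{ij}\omega_{ik}-\omega_{jk}\omega_{ji})\,y_iy_jy_k$. The only cosmetic difference is that you check the condition in the form \eqref{eq:Jacobiw1} while citing \eqref{eq:Jacobiy1}; since Proposition \ref{prop:Poi1}(b) identifies the two, this is harmless.
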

\begin{proof}
The condition \eqref{eq:skewy1} is clear.
The condition \eqref{eq:Jacobiy1} is verified by the  cyclic expression
\begin{align}
\{\{y_i,y_j\},y_k\}
=(\omega_{ij}\omega_{ik}-\omega_{jk}\omega_{ji})y_iy_jy_k.
\end{align}
\end{proof}

The variables $\bfy$ in 
the Poisson bracket  \eqref{eq:Poi2} 
  are called \emph{log-canonical variables}\index{log-canonical variable} 
by  \cite{Gekhtman02} because of the following  property:
\begin{align}
\label{eq:Poi3}
\{\log y_i,\log y_j\}=\omega_{ij}.
\end{align}
(The name might be a little confusing because  $\log y_i$ are not canonical in the  terminology for classical mechanics.)

Now we come back to the situation in the previous section.
Let  $\calC_+(\bfy(s))$ be the algebra of all smooth functions 
 of $\bfy(s)$
defined on $\bbR_{>0}^n$
with coordinates $\bfy(s)$.
By regarding 
the semifield isomorphism \eqref{eq:mu1}--\eqref{eq:rho1}
as  \emph{changes of variables},
we have
the  algebra isomorphisms  (denoted by the same symbols)
\begin{align}
\label{eq:mu2}
\mu(s)\colon &\ \calC_+(\bfy(s+1)) \rightarrow  \calC_+(\bfy(s)),
\\
\label{eq:tau2}
\tau(s)\colon &\ \calC_+(\bfy(s+1)) \rightarrow  \calC_+(\bfy(s)),
\\
\label{eq:rho2}
\rho(s)\colon &\ \calC_+(\bfy(s)) \rightarrow  \calC_+(\bfy(s)).
\end{align}
Now, 
we consider a smooth manifold $M\simeq\bbR_{>0}^n$
equipped with $P+1$ global positive coordinates $\bfy(s)$
($s=0,\,\dots,\,P$) such that $\bfy(s+1)$ and $\bfy(s)$ are glued 
by the tropical transformation $\tau(s)$ (\emph{not by the ordinary mutation $\mu(s)$}).
The algebra
$\calC(M)$ is identified with  $\calC_+(\bfy(s))$
in  the coordinates $\bfy(s)$ for each $s$.

Let $\Omega(s)$ be the rational skew-symmetric matrix in \eqref{eq:Omegas1}.
By  \eqref{2eq:bmut5},
we have the mutation formula
\begin{align}
\label{2eq:omut1}
\omega_{ij}(s+1)&=
\begin{cases}
-\omega_{ij}(s)
&
\text{$i=k$ or $j=k$,}
\\
\omega_{ij}(s)+
 [\varepsilon b_{kj}(s)]_+ \omega_{ik}(s)
+
[\varepsilon b_{ki}(s)]_+\omega_{kj}(s)
&
i,\,j\neq k.
\end{cases}
\end{align}
Following Gekhtman-Shapiro-Vainshtein \cite{Gekhtman02},
we define a Poisson bracket on each $\calC_+(\bfy(s))$ ($s=0$, \dots, $P-1$) by
\begin{align}
\label{eq:Poi4}
\{y_i(s),y_j(s)\}_s:=\omega_{ij}(s) y_i (s) y_j(s),
\end{align}
so that $y_i(s)$ are log-canonical variables.
They
 are \emph{mutation-compatible}\index{mutation-compatible (Poisson bracket)} in the following sense.
\begin{thm}
[{\cite[Thm.~1.4]{Gekhtman02}}]
\label{thm:comp1}
The maps
$\mu(s)$, $\tau(s)$, and $\rho(s)$ in 
 \eqref{eq:mu2}--\eqref{eq:rho2} commute
with the Poisson brackets \eqref{eq:Poi4}.
Namely, we have
\begin{align}
\label{eq:mu3}
\{\mu(s)(f), \mu(s)(g)\}_s &= \mu(s)(\{f,g\}_{s+1})
\quad
(f,g\in \calC_+(\bfy(s+1))),
\\
\label{eq:tau3}
\{\tau(s)(f), \tau(s)(g)\}_s &= \tau(s)(\{f,g\}_{s+1})
\quad
(f,g\in \calC_+(\bfy(s+1))),
\\
\label{eq:rho3}
\{\rho(s)(f), \rho(s)(g)\}_s &= \rho(s)(\{f,g\}_{s})
\quad
(f,g\in \calC_+(\bfy(s))).
\end{align}
\end{thm}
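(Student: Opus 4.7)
The plan is to exploit the Fock--Goncharov decomposition $\mu(s)=\rho(s)\circ\tau(s)$ from \eqref{eq:FG1}, so that \eqref{eq:mu3} will follow formally once \eqref{eq:tau3} and \eqref{eq:rho3} are established: a composition of Poisson algebra isomorphisms is Poisson. Moreover, each of the three maps is an algebra homomorphism and each Poisson bracket obeys the Leibniz rule; together with the formula \eqref{eq:Poi1}, this reduces the verification to the coordinate identities
\[
\{\tau(s)(y_i(s+1)),\tau(s)(y_j(s+1))\}_s \;=\; \omega_{ij}(s+1)\,\tau(s)(y_i(s+1))\,\tau(s)(y_j(s+1))
\]
and
\[
\{\rho(s)(y_i(s)),\rho(s)(y_j(s))\}_s \;=\; \omega_{ij}(s)\,\rho(s)(y_i(s))\,\rho(s)(y_j(s)),
\]
for all $i,j$. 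Equivalently, I will work in log-canonical coordinates $\log y_i$, where the brackets are just the constants $\omega_{ij}(s)$ by \eqref{eq:Poi3}.

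The tropical step \eqref{eq:tau3} is essentially tautological. The map $\tau(s)$ acts monomially on $y$-variables, hence linearly on their logarithms: $\log y_{k_s}(s+1)\mapsto-\log y_{k_s}(s)$ and $\log y_i(s+1)\mapsto\log y_i(s)+[\varepsilon_s b_{k_s i}(s)]_+\log y_{k_s}(s)$ for $i\neq k_s$. A direct bilinear computation of $\{\tau(\log y_i(s+1)),\tau(\log y_j(s+1))\}_s$ using $\{\log y_a(s),\log y_b(s)\}_s=\omega_{ab}(s)$ reproduces term-by-term the $\varepsilon$-expression \eqref{2eq:omut1} of the mutation of $\omega$ at $\varepsilon=\varepsilon_s$. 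In other words, the Gekhtman--Shapiro--Vainshtein bracket $\omega(s)$ was designed precisely to satisfy this identity.

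The nontropical step \eqref{eq:rho3} is where the substantive work lies. Writing $u=1+y_{k_s}(s)^{\varepsilon_s}$ and $b_a=b_{k_s a}(s)$, the map $\rho(s)$ sends $y_a(s)\mapsto y_a(s)\,u^{-b_a}$. Expanding $\{y_i u^{-b_i},\,y_j u^{-b_j}\}_s$ by Leibniz, the term coming from $\{u,u\}_s=0$ drops out; using the identity $\{y_a,u\}_s=\varepsilon_s\,\omega_{a k_s}(s)\, y_a\, y_{k_s}(s)^{\varepsilon_s}$ the remaining three terms can be collected into
\[
u^{-b_i-b_j}\,y_i y_j\!\left[\omega_{ij}(s)+\varepsilon_s\,\frac{y_{k_s}(s)^{\varepsilon_s}}{u}\bigl(b_i\,\omega_{j k_s}(s)-b_j\,\omega_{i k_s}(s)\bigr)\right].
\]
On the other hand, $\rho(s)(\{y_i,y_j\}_s)=\omega_{ij}(s)\,y_iy_j\,u^{-b_i-b_j}$, so compatibility is equivalent to the vanishing of the bracketed coefficient. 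Substituting $b_{k_s a}=\delta_{k_s}\,\omega_{k_s a}(s)$ from the skew-symmetric decomposition $B(s)=\Delta\,\Omega(s)$ and using the skew-symmetry of $\Omega(s)$ turns this coefficient into $\delta_{k_s}\bigl(\omega_{k_s i}\,\omega_{j k_s}-\omega_{k_s j}\,\omega_{i k_s}\bigr)=0$, whence \eqref{eq:rho3} follows.

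The main obstacle is precisely this cancellation in the $\rho$-step. Since $\rho(s)$ is a genuinely nonlinear change of variables, one expects $\{\rho(y_i),\rho(y_j)\}_s$ to contain non-monomial terms proportional to $y_{k_s}^{\varepsilon_s}/u$, and it is not a priori evident that they should cancel. Their vanishing is not a consequence of the mutation rule of $\Omega$ (which controls $\tau$) but of the very existence of a \emph{common} skew-symmetrizer $\Delta$ for all $B(s)$. In this sense the content of Theorem \ref{thm:comp1} is concentrated in the $\rho$-compatibility, and it ultimately reflects the skew-symmetrizability hypothesis built into the definition of a $Y$-seed.
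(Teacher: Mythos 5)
Your proof is correct and follows essentially the same route as the paper's: reduce via the Leibniz rule to the coordinate functions, verify $\tau(s)$-compatibility by matching the mutation rule \eqref{2eq:omut1} for $\Omega(s)$, verify $\rho(s)$-compatibility by the cancellation of the cross terms using $b_{k_s a}(s)=\delta_{k_s}\omega_{k_s a}(s)$ and skew-symmetry, and deduce \eqref{eq:mu3} from the decomposition $\mu(s)=\rho(s)\circ\tau(s)$. The only differences are cosmetic (log-canonical coordinates for the tropical step, and an explicit display of the cross-term coefficient that the paper's computation leaves implicit).
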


\begin{proof}
Let us first prove \eqref{eq:rho3}.
By Proposition \ref{prop:Poi1} (a),
it is enough to prove it for
$f(\bfy)=y_i(s)$ and $g(\bfy)=y_j(s)$.
We have
\begin{align*}
&\quad \
\{\rho(s)(y_i(s)), \rho(s)(y_j(s))\}_{s}
\\
&=
\{
y_i(s)
(1+ y_{k_s}(s)^{\varepsilon_s})^{-b_{k_s i}(s)},
y_j(s)
(1+ y_{k_s}(s)^{\varepsilon_s})^{-b_{k_s j}(s)}
\}_s
\\
&=
(1+ y_{k_s}(s)^{\varepsilon_s})^{-b_{k_s i}(s)}
(1+ y_{k_s}(s)^{\varepsilon_s})^{-b_{k_s j}(s)}
\{
y_i(s),
y_j(s)
\}_s
\\
&\qquad
+
y_i(s)
(1+ y_{k_s}(s)^{\varepsilon_s})^{-b_{k_s j}(s)}
\{
(1+ y_{k_s}(s)^{\varepsilon_s})^{-b_{k_s i}(s)}
,
y_j(s)
\}_s
\\
&\qquad
+
(1+ y_{k_s}(s)^{\varepsilon_s})^{-b_{k_s i}(s)}
y_j(s)
\{
y_i(s),
(1+ y_{k_s}(s)^{\varepsilon_s})^{-b_{k_s j}(s)}
\}_s
\\
\overset {\eqref{eq:Poi4}} & { =}
\omega_{ij}(s) 
(1+ y_{k_s}(s)^{\varepsilon_s})^{-b_{k_s i}(s)}
(1+ y_{k_s}(s)^{\varepsilon_s})^{-b_{k_s j}(s)}
y_i(s)
y_j(s)
\\
&=
\omega_{ij}(s) \rho(s)(y_i(s))\rho(s)(y_j(s))
=\rho(s)(\{ y_i(s), y_j(s)\}_{s}).
\end{align*}
The equality \eqref{eq:tau3} is proved similarly.
For example,
for $i,\,j \neq k$,
we have
\begin{align*}
&\quad \
\{\tau(s)(y_i(s+1)), \tau(s)(y_j(s+1))\}_{s}
\\
&=
\{
y_i(s)
y_{k_s}(s) ^{[\varepsilon_s b_{k_s i}(s)]_+} ,
y_j(s)
y_{k_s}(s) ^{[\varepsilon_s b_{k_s i}(s)]_+} 
\}_s
\\
&=
y_{k_s}(s) ^{[\varepsilon_s b_{k_s i}(s)]_+} 
y_{k_s}(s) ^{[\varepsilon_s b_{k_s j}(s)]_+} 
\{
y_i(s),
y_j(s)
\}_s
\\
&\qquad
+
y_i(s)
y_{k_s}(s) ^{[\varepsilon_s b_{k_s j}(s)]_+} 
\{
y_{k_s}(s) ^{[\varepsilon_s b_{k_s i}(s)]_+} 
,
y_j(s)
\}_s
\\
&\qquad
+
y_{k_s}(s) ^{[\varepsilon_s b_{k_s i}(s)]_+} 
y_j(s)
\{
y_i(s),
y_{k_s}(s) ^{[\varepsilon_s b_{k_s j}(s)]_+} 
\}_s
\\
\overset {\eqref{eq:Poi4}} & { =}
(\omega_{ij}(s) 
+ [\varepsilon_s b_{k_s i}(s)]_+ \omega_{k_s j}(s)
+[\varepsilon_s b_{k_s j}(s)]_+ \omega_{i k_s}(s)
)
\\
&\qquad
\times
y_{k_s}(s) ^{[\varepsilon_s b_{k_s i}(s)]_+} 
y_{k_s}(s) ^{[\varepsilon_s b_{k_s j}(s)]_+} 
y_i(s)
y_j(s)
\\
\overset {\eqref{2eq:omut1}}
& {=}
\omega_{ij}(s+1) \tau(s)(y_i(s+1))\tau(s)(y_j(s+1))
\\
&=\tau(s)(\{ y_i(s+1), y_j(s+1)\}_{s+1}).
\end{align*}
Other cases are similar and easier.
Lastly, the equality  \eqref{eq:mu3} follows from 
\eqref{eq:tau3} and \eqref{eq:rho3}.
\end{proof}

\section{Euler dilogarithm as Hamiltonian}
\label{sec:Euler2}
Let $\rho(s)$ be the automorphism of $\calC_+(\bfy(s))$ in \eqref{eq:rho2}.
Fock-Goncharov \cite[\S1.3]{Fock07}
made the following remarkable observation (translated in the present context):
\emph{With the   Poisson bracket \eqref{eq:Poi4},
the automorphism $\rho(s)$ is described by
the time-one  flow 
for the Hamiltonian given by the Euler dilogarithm.}
Below, we explain the meaning of this statement.

In general,
for an algebra of functions $\calC(M)$ with a Poisson bracket
$\{\cdot, \cdot\}$,
let $f_t \in \calC$ ($t\in \bbR$) be a one-parameter family of functions,
where $t$ is the \emph{continuous time}.
We specify 
a  function $\scrH\in \calC(M)$ called the \emph{Hamiltonian}\index{Hamiltonian}
that
 governs the time development of $f_t$ by the following
\emph{equation of motion}\index{equation of motion}:
\begin{align}
\label{eq:em1}
\dot f_t
 =\{\scrH, f_t\},
\quad
\dot f_t
:=
\frac{d}{dt} f_t.
\end{align}

We apply this idea
to our Poisson bracket
\eqref{eq:Poi4}
 to describe the automorphism $\rho(s)$.
We choose the {Hamiltonian} $\scrH(s)\in \calC(\bfy(s))$
that is
given
by the Euler dilogarithm as follows:
\begin{align}
\label{eq:Hs1}
\scrH(s):={\varepsilon_s}{\d_{k_s}} \mathrm{Li}_2(-y_{k_s}(s)^{\varepsilon_s})
=
-
{\varepsilon_s}{\d_{k_s}} 
\int_0^{y_{k_s}(s)^{\varepsilon_s}}
\frac{\log(1+y)}{y}\, dy.
\end{align}
We have
\begin{align}
\label{eq:em2}
\begin{split}
\{ \scrH(s), y_{i}(s)\}_s
&=
{\varepsilon_s}{\d_{k_s}}\{ \mathrm{Li}_2(-y_{k_s}(s)^{\varepsilon_s}), 
y_i(s)
 \}_s
 \\
 &=
 -
 {\d_{k_s}}
 \frac{\log(1+y_{k_s}(s)^{\varepsilon_s})}{y_{k_s}(s)^{\varepsilon_s}}
 \omega_{k_s i} (s) y_{k_s}(s)^{\varepsilon_s} y_i(s)
 \\
 &=
  y_i(s) \log(1+y_{k_s}(s)^{\varepsilon_s})^{- b_{k_s i} (s)}.
  \end{split}
\end{align}
In particular, we have
\begin{align}
\{ \scrH(s), y_{k_s}(s)\}_s=0.
\end{align}
Thus, we  have a solution of the equation of motion \eqref{eq:em1}
\begin{align}
\label{eq:traj1}
f_t = 
  y_i(s) (1+y_{k_s}(s)^{\varepsilon_s})^{- b_{k_s i} (s) t }.
\end{align}
By setting $t=0$ and $t=1$,
we obtain
\begin{align}
\label{eq:timeone1}
f_0=  y_i(s),
\quad
f_1 = 
  y_i(s) (1+y_{k_s}(s)^{\varepsilon_s})^{- b_{k_s i} (s) }
  =\rho(s)(y_i(s)).
\end{align}
This explains the meaning of ``the automorphism $\rho(s)$ is described by the time-one  flow
of $\scrH(s)$''.

By the formula  \eqref{eq:taus1}
and the compatibility \eqref{eq:tau3},
the automorphism $\frakq(s)$ in \eqref{eq:fq1} is described by the
time-one flow of the Hamiltonian
\begin{align}
\label{eq:tauH1}
\scrH_0(s):=
\tau(s-1;0)
(
\scrH(s)
)
=
{\varepsilon_s}{\d_{k_s}}
 \mathrm{Li}_2(-y^{ \bfc^+_{k_s}(s)})
\end{align}
in the initial $y$-coordinates $\bfy=\bfy(0)$ with the Poisson bracket $\{\cdot, \cdot\}=
\{\cdot, \cdot\}_0$.

More generally,
for  any integer vector $\bfc=(c_i)\in \bbZ^n$ and any real number $a$,
we may consider a Hamiltonian
\begin{align}
\label{eq:Hc1}
\scrH_{\bfc,a}= a \mathrm{Li}_2(- y^{\bfc})
\end{align}
for the Poisson bracket $\{\cdot, \cdot\}=
\{\cdot, \cdot\}_0$.
\begin{prop}
\label{prop:Hc1}
The time-one flow of
the Hamiltonian $\scrH_{\bfc,a}$ in \eqref{eq:Hc1}
is given by
\begin{align}
\label{eq:tHc1}
y^{\bfn} \mapsto y^{\bfn} (1+y^{\bfc})^{-a \{\bfc,\bfn\}_{\Omega}},
\end{align}
where $\{\bfc,\bfn\}_{\Omega}$ is the one in \eqref{eq:Omega2}.
\end{prop}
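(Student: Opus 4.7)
The plan is to reduce this to the template already used in equations \eqref{eq:em2}--\eqref{eq:timeone1}, where the special case $\bfc = \varepsilon_s \bfc^+_{k_s}(s)$, $a = \varepsilon_s \d_{k_s}$ was handled. The essential observations are that $y^{\bfc}$ is Poisson-central with respect to the Hamiltonian (so it is conserved along the flow) and that the bracket of $y^{\bfc}$ against any Laurent monomial $y^{\bfn}$ is again a multiple of $y^{\bfc} y^{\bfn}$. Together, these reduce the equation of motion to a linear ODE with constant coefficient that can be integrated in closed form.

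First I would establish the key monomial identity
\begin{align*}
\{ y^{\bfc}, y^{\bfn} \} = \{\bfc,\bfn\}_{\Omega}\, y^{\bfc} y^{\bfn},
\end{align*}
which follows from the Leibniz rule in Proposition \ref{prop:Poi1}(a) applied to $y^{\bfc} = \prod_i y_i^{c_i}$ and $y^{\bfn} = \prod_j y_j^{n_j}$, combined with the log-canonical relation \eqref{eq:Poi3}. As a consequence, $\{y^{\bfc}, y^{\bfc}\}_{\Omega} = 0$ by skew-symmetry, so any smooth function of $y^{\bfc}$ alone Poisson-commutes with $y^{\bfc}$ itself. In particular, $\{\scrH_{\bfc,a}, y^{\bfc}\} = 0$, hence $y^{\bfc}$ is constant along the Hamiltonian flow.

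Next I would compute the bracket of $\scrH_{\bfc,a}$ against $y^{\bfn}$. Since $\Li_2'(x) = -\log(1-x)/x$, the chain rule gives $\tfrac{d}{dy^{\bfc}}\Li_2(-y^{\bfc}) = -\log(1+y^{\bfc})/y^{\bfc}$, so by the Leibniz rule and the monomial identity above,
\begin{align*}
\{ \scrH_{\bfc,a}, y^{\bfn} \}
= a \cdot \frac{-\log(1+y^{\bfc})}{y^{\bfc}} \cdot \{\bfc,\bfn\}_{\Omega} y^{\bfc} y^{\bfn}
= -a\,\{\bfc,\bfn\}_{\Omega} \log(1+y^{\bfc})\, y^{\bfn}.
\end{align*}
The equation of motion $\dot f_t = \{\scrH_{\bfc,a}, f_t\}$ with initial condition $f_0 = y^{\bfn}$ thus becomes $\dot f_t = -a \{\bfc,\bfn\}_{\Omega} \log(1+y^{\bfc}(t))\, f_t$, and because $y^{\bfc}(t) = y^{\bfc}$ is constant, the coefficient is time-independent.

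Finally, integrating this linear ODE gives $f_t = y^{\bfn} (1+y^{\bfc})^{-a \{\bfc,\bfn\}_{\Omega} t}$, and setting $t = 1$ yields the claimed formula. There is no real obstacle here; the mild care needed is just to make sure the derivative of $\Li_2(-y^{\bfc})$ is evaluated correctly and that the Leibniz rule is applied consistently with the definition of $\{\cdot,\cdot\}_{\Omega}$, both of which are straightforward. The proof is in fact a direct generalization of \eqref{eq:em2}--\eqref{eq:timeone1}, with $\varepsilon_s \bfc^+_{k_s}(s)$ replaced by an arbitrary $\bfc \in \bbZ^n$ and $\varepsilon_s \d_{k_s}$ replaced by $a \in \bbR$.
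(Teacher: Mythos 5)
Your proof is correct and follows essentially the same route as the paper: compute $\{\scrH_{\bfc,a}, y^{\bfn}\} = -a\{\bfc,\bfn\}_{\Omega}\log(1+y^{\bfc})\,y^{\bfn}$ via the Leibniz rule and the log-canonical bracket, then integrate the resulting constant-coefficient linear ODE exactly as in \eqref{eq:traj1}--\eqref{eq:timeone1}. The only cosmetic difference is that the paper first computes $\{\scrH_{\bfc,a}, y_i\}$ and then applies Leibniz to pass to $y^{\bfn}$, whereas you work with the monomial bracket directly; you also make explicit the conservation of $y^{\bfc}$ and the integration step, which the paper leaves implicit.
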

\begin{proof}
In the same way as
\eqref{eq:em2},
we have
\begin{align}
\{\scrH_{\bfc,a}, y_i\}=
- a \biggl(\sum_{j=1}^n c_j  \omega_{ji}\biggr) y_i \log (1+y^{\bfc}).
\end{align}
Then, by the Leibniz rule, we have
\begin{align}
\{\scrH_{\bfc,a}, y^{\bfn}\}=
- a \{\bfc,\bfn\}_{\Omega} y^{\bfn} \log (1+y^{\bfc}).
\end{align}
Thus, we obtain the claim.
\end{proof}

In summary, \emph{the Euler dilogarithm is built into cluster algebra theory} as the Hamiltonian
for the nontropical part 
of each mutation in a  $Y$-pattern.
This gives an  \emph{intrinsic connection} between the dilogarithm and cluster algebra theory.

\notes
The $\varepsilon$-expressions  and the connection to the tropicalization
are gradually recognized
by \cite{Berenstein05,Nakanishi10c,Nakanishi11a,Nakanishi11c}.
The Fock-Goncharov decomposition was introduced
by \cite{Fock03,Fock07} for $\varepsilon=1$
in studying the cluster varieties and their quantizations.
The signed version first appeared in \cite{Nagao10, Keller11} in the study of the Donaldson-Thomas invariants.
The presentation here follows  \cite{Nakanishi22a}.
A parallel decomposition
is also available for  $x$-variables \cite{Fock03, Nakanishi22a}.

The mutation-compatible Poisson bracket was introduced by \cite{Gekhtman02}.
A more extensive treatment is given in \cite{Gekhtman10}.
For Poisson geometry, see the reviews  \cite{Weinstein83, Weinstein98} and references therein.

The appearance of the dilogarithm as the Hamiltonian
 naturally guided Fock-Goncharov \cite{Fock03,Fock07}
to quantize  cluster algebras using the  \emph{quantum dilogarithm},
which we treat in Part IV.

\chapter{Classical mechanical method for dilogarithm identities}
\label{ch:classical1}

We reformulate the Hamiltonian system for 
mutations with \emph{canonical coordinates}.
The Legendre transformation in classical mechanics
turns the \emph{Hamiltonian} given by the Euler dilogarithm
into the \emph{Lagrangian} given by the modified Rogers dilogarithm.
Based on this perspective, we give an alternative and independent proof of Theorem \ref{thm:DI1}.
This proof provides a more intrinsic understanding of the DIs for $Y$-patterns,
where the constancy of the dilogarithm sum is regarded as a variant of Noether's theorem.

\section{Hamiltonian formulation of mutations}
In  the previous chapter, we observed that the Euler dilogarithm is built into
a $Y$-pattern as the Hamiltonian \eqref{eq:Hs1} for the nontropical part
of each mutation.
We then ask the following natural questions:
\begin{itemize}
\item[Q1.] Is there any relation between this fact and the DIs in 
Theorem \ref{thm:DI1}?
\item[Q2.] If so, how and why does 
 the Euler dilogarithm
 turn into
the modified Rogers dilogarithm 
in these DIs?
\end{itemize}
It was shown by Gekhtman-Nakanishi-Rupel \cite{Gekhtman16}
that the above questions are naturally answered 
if we reformulate the   Hamiltonian system with
\emph{canonical coordinates}
and transform the Hamiltonian into the \emph{Lagrangian}
by the \emph{Legendre transformation} in classical mechanics.

To this end,
let us recall and make it precise
how  the sequence of mutations
\eqref{eq:mseq5} is formulated as a Hamiltonian system
with   \emph{continuous time}
based on the setting and results in  Chapter \ref{ch:Fock1}.
\begin{enumerate}
\item
We consider a smooth manifold $M\simeq\bbR_{>0}^n$  equipped with
$P+1$ global positive coordinates by $\bfy(s)$ ($s=0,\, \dots,\,P$)
that are glued by the tropical transformations
$\tau(s)$  in \eqref{eq:tau2} (not by the ordinary mutation $\mu(s)$).
We call $M$ the \emph{phase space}\index{phase space}.
\item
\label{en:bij1}
As usual,  the change of coordinates (variables) 
$\tau(s):\calC_+(s+1)\rightarrow \calC_+(s)$
induces the bijection $\tau^* (s):\bbR_{>0}^n \rightarrow \bbR_{>0}^n$  in the \emph{opposite direction}.
Namely, $\tau^* (s)$ maps a point $(a_1,\dots,a_n)$  in the $\bfy(s)$-coordinates
 to the point $(a'_1,\dots,a'_n)$  in the $\bfy(s+1)$-coordinates, where
\begin{align}
\label{eq:a1}
a'_i
=
\begin{cases}
\displaystyle
a_{k_s}^{-1}
& i=k_s,
\\
a_i a_{k_s}^{[\varepsilon_s b_{k_s i}(s)]_+} 
&i\neq k_s.
\end{cases}
\end{align}
The same applies to other maps $\mu(s)$,  $\rho(s)$, and $\frakq(s)$
and induces the maps $\mu^*(s)$,  $\rho^*(s)$, and $\frakq^*(s)$
all from $\bbR_{>0}^n$ to itself.

\item
The phase space $M$ is equipped with the mutation-compatible Poisson
bracket in \eqref{eq:Poi4}.
\item
We consider the time span $[0,P]$
for the continuous time  $t$.
In each time span $[s,s+1]$ ($s=0$, \dots, {\rp $P-1$}),
the Hamiltonian $\scrH(s)$  is given in the $\bfy(s)$-coordinates by
\begin{align}
\label{eq:Hs2}
\scrH(s)={\varepsilon_s}{\d_{k_s}} \mathrm{Li}_2(-y_{k_s}(s)^{\varepsilon_s})
\end{align}
as  in \eqref{eq:Hs1}.
In the initial $\bfy$-coordinates, the Hamiltonian is given by
\begin{align}
\label{eq:Hs5}
\scrH_0(s)=
{\varepsilon_s}{\d_{k_s}}
 \mathrm{Li}_2(-y^{ \bfc^+_{k_s}(s)})
\end{align}
\item
By \eqref{eq:traj1}, in each time span $[s,s+1]$,
the  trajectory (the Hamiltonian flow)  of a point in the phase space $M$
along  time $t$
is described in the $\bfy(s)$-coordinates as follows:
Let $\bfa_s=(a_1, \cdots, a_n)\in \bbR_{>0}^n$ be the position of the point at $t=s$.
Then,  the  position $\bfa_t=(a'_1, \cdots, a'_n)$ at 
$t\in [s,s+1]$
is given by
\begin{align}
\label{eq:traj2}
  a'_i = a_i (1+a_{k_s})^{- b_{k_s i} (s) (t-s) }.
\end{align}
In particular, $a'_{k_s}=a_{k_s}$ for any $t\in [s,s+1]$.

\item
For a trajectory in $M$ in the full time span $[0,P]$,
if  the initial position  in the initial coordinates $\bfy=\bfy(0)$ is $\bfa_0$,
 the final position  in the $\bfy(P)$-coordinates 
  is given by $\mu^*(P-1;0)(\bfa_0)$,
  while the final position 
 in the initial coordinates $\bfy$ 
  is given by $\frakq^*(P-1;0)(\bfa_0)$.
\end{enumerate}

See Figure \ref{fig:sche1} for the summary of the above formulation.

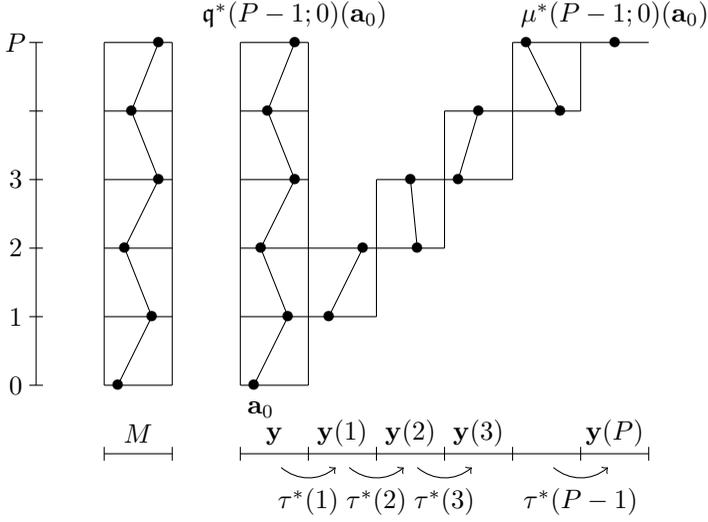
\begin{figure}
\begin{center}
\begin{tikzpicture}
[scale=0.9]
\draw (0,0)--(0,5);
\draw (-0.1,0)--(0.1,0);
\draw (-0.1,1)--(0.1,1);
\draw (-0.1,2)--(0.1,2);
\draw (-0.1,3)--(0.1,3);
\draw (-0.1,4)--(0.1,4);
\draw (-0.1,5)--(0.1,5);
\draw (1,0)--(2,0);
\draw (1,1)--(2,1);
\draw (1,2)--(2,2);
\draw (1,3)--(2,3);
\draw (1,4)--(2,4);
\draw (1,5)--(2,5);
\draw (1,0)--(1,5);
\draw (2,0)--(2,5);
\draw (1,-1)--(2,-1);
\draw (1,-1.1)--(1,-0.9);
\draw (2,-1.1)--(2,-0.9);
\draw (3,-1)--(9,-1);
\draw (3,-1.1)--(3,-0.9);
\draw (4,-1.1)--(4,-0.9);
\draw (5,-1.1)--(5,-0.9);
\draw (6,-1.1)--(6,-0.9);
\draw (7,-1.1)--(7,-0.9);
\draw (8,-1.1)--(8,-0.9);
\draw (9,-1.1)--(9,-0.9);
\draw (3,0)--(4,0);
\draw (3,1)--(5,1);
\draw (3,2)--(6,2);
\draw (3,3)--(4,3);
\draw (3,4)--(4,4);
\draw (3,5)--(4,5);
\draw (3,0)--(3,5);
\draw (4,0)--(4,5);
\draw (5,3)--(7,3);
\draw (6,4)--(8,4);
\draw (7,5)--(9,5);
\draw (5,1)--(5,3);
\draw (6,2)--(6,4);
\draw (7,3)--(7,5);
\draw (8,4)--(8,5);
\draw (1.2,0) node {$\bullet$};
\draw (1.7,1) node {$\bullet$};
\draw (1.3,2) node {$\bullet$};
\draw (1.8,3) node {$\bullet$};
\draw (1.4,4) node {$\bullet$};
\draw (1.8,5) node {$\bullet$};
\draw (1.2,0)--(1.7,1);
\draw (1.7,1)--(1.3,2);
\draw (1.3,2)--(1.8,3);
\draw (1.8,3)--(1.4,4);
\draw (1.4,4)--(1.8,5);
\draw (3.2,0) node {$\bullet$};
\draw (3.7,1) node {$\bullet$};
\draw (3.3,2) node {$\bullet$};
\draw (3.8,3) node {$\bullet$};
\draw (3.4,4) node {$\bullet$};
\draw (3.8,5) node {$\bullet$};
\draw (3.2,0)--(3.7,1);
\draw (3.7,1)--(3.3,2);
\draw (3.3,2)--(3.8,3);
\draw (3.8,3)--(3.4,4);
\draw (3.4,4)--(3.8,5);
\draw (4.3,1) node {$\bullet$};
\draw (4.8,2) node {$\bullet$};
\draw (5.6,2) node {$\bullet$};
\draw (5.5,3) node {$\bullet$};
\draw (6.2,3) node {$\bullet$};
\draw (6.5,4) node {$\bullet$};
\draw (7.7,4) node {$\bullet$};
\draw (7.2,5) node {$\bullet$};
\draw (8.5,5) node {$\bullet$};
\draw (4.3,1)--(4.8,2);
\draw (5.6,2)--(5.5,3);
\draw (6.2,3)--(6.5,4);
\draw (7.7,4)--(7.2,5);
\draw (-0.3,0) node {0};
\draw (-0.3,1) node {1};
\draw (-0.3,2) node {2};
\draw (-0.3,3) node {3};
\draw (-0.3,5) node {$P$};
\draw (1.5,-0.7) node {$M$};
\draw (3.5,-0.74) node {$\bfy$};
\draw (4.5,-0.7) node {$\bfy(1)$};
\draw (5.5,-0.7) node {$\bfy(2)$};
\draw (6.5,-0.7) node {$\bfy(3)$};
\draw (8.5,-0.7) node {$\bfy(P)$};
\draw (4,-1.7) node {{\rp $\tau^*(0)$}};
\draw (5,-1.7) node {{\rp $\tau^*(1)$}};
\draw (6,-1.7) node {{\rp $\tau^*(2)$}};
\draw (8,-1.7) node {$\tau^*(P-1)$};
\draw (3.3,-0.35) node {$\bfa_0$};
\draw (3.8,5.4) node {$\frakq^*(P-1;0)(\bfa_0)$};
\draw (8.5,5.4) node {$\mu^*(P-1;0)(\bfa_0)$};
\draw[->] (3.6,-1.2) to [bend right=40]   (4.4,-1.2);
\draw[->] (4.6,-1.2) to [bend right=40]   (5.4,-1.2);
\draw[->] (5.6,-1.2) to [bend right=40]   (6.4,-1.2);
\draw[->] (7.6,-1.2) to [bend right=40]   (8.4,-1.2);
\end{tikzpicture}
\end{center}
\vskip-10pt
\caption{Schematic diagram of Hamilton formulation of mutations.}
\label{fig:sche1}
\end{figure}

\section{Canonical coordinates}
\label{sec:canonical1}

We will ``embed'' the above Hamiltonian system into the one with
canonical coordinates.
Let us first concentrate on the initial coordinates $\bfy$, for simplicity.
Let $\bfu=(u_1,\dots,u_n)$ and $\bfp=(p_1,\dots,p_n)$
be variables.
We regard the pair $(\bfu, \bfp)$
 as global coordinates of   a manifold $\tilde M \simeq \bbR^{2n}$.
Let $\calC(\bfu,\bfp)$ be the set of all smooth functions
 of $(\bfu,\bfp)$ defined on $\bbR^{2n}$.
We consider the following standard Poisson bracket on $\tilde M$:
\begin{align}
\label{eq:Poi5}
\{f,g\}:=\sum_{i=1}^n
\left(
\frac{\partial f}{\partial p_i}
\frac{\partial g}{\partial u_i}
-
\frac{\partial g}{\partial p_i}
\frac{\partial f}{\partial u_i}
\right)
\quad
(f,g \in \calC(\bfu,\bfp)).
\end{align}
Thus, $(\bfu,\bfp)$ are \emph{canonical coordinates (variables)}\index{canonical!coordinates}\index{canonical!variables, \see{coordinates}}
in classical mechanics  (also called  \emph{Darboux coordinates}\index{Darboux coordinates, \see{canonical coordinates}} in Poisson geometry).
The following properties hold.
\begin{gather}
\label{eq:Poipu1}
\{p_i,u_j\}=\delta_{ij},
\quad
\{u_i,u_j\}=\{p_i,p_j\}=0,
\\
\{f,u_i\}= \frac{\partial f}{\partial p_i},
\quad
\{f,p_i\}= - \frac{\partial f}{\partial u_i}
\quad
(f\in \calC(\bfu,\bfp)).
\end{gather}
(Here and below, we use both
 the 
Kronecker delta $\delta_{ij}$
and the symbol   $\delta_i$ for  a component of $\Delta$.
We ask
the reader to distinguish them carefully.)

We set
\begin{align}
\label{eq:ty2}
\tilde{y}_i := \exp\left(\frac{1}{\sqrt{2}}(\d^{-1}_i p_i + w_i)\right),
\quad
w_i:=\sum_{j=1}^n b_{ji}  u_j.
\end{align}

\begin{prop}
[{\cite[Prop.~3.3]{Gekhtman16}}]
The following formula holds.
\begin{align}
\label{eq:tyty1}
\{\tilde{y}_i, \tilde{y}_j\}=\omega_{ij}\tilde{y}_i \tilde{y}_j.
\end{align}
\end{prop}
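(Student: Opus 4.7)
The plan is to reduce the computation to brackets of linear functions by exploiting the exponential form of $\tilde y_i$. For any smooth functions $f,g$, the chain rule applied coordinate by coordinate gives
\begin{align*}
\{e^f,e^g\}
&= \sum_{i=1}^n \Bigl(e^f \tfrac{\partial f}{\partial p_i}\cdot e^g\tfrac{\partial g}{\partial u_i}
- e^g\tfrac{\partial g}{\partial p_i}\cdot e^f \tfrac{\partial f}{\partial u_i}\Bigr)
= e^f e^g\,\{f,g\},
\end{align*}
so with $f_i := \tfrac{1}{\sqrt 2}(\delta_i^{-1}p_i + w_i)$ it suffices to show $\{f_i,f_j\}=\omega_{ij}$; the desired identity \eqref{eq:tyty1} then follows immediately.

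Next I would expand $\{f_i,f_j\}$ using bilinearity. Since $\{p_i,p_j\}=0$ and each $w_k=\sum_\ell b_{\ell k}u_\ell$ is a linear combination of the $u$'s, we also have $\{w_i,w_j\}=0$ by \eqref{eq:Poipu1}. Only the two cross terms survive, and they are easily evaluated:
\begin{align*}
\{p_i,w_j\} &= \sum_{\ell} b_{\ell j}\{p_i,u_\ell\} = \sum_\ell b_{\ell j}\delta_{i\ell} = b_{ij},\\
\{w_i,p_j\} &= -\{p_j,w_i\} = -b_{ji}.
\end{align*}
Hence
\[
\{f_i,f_j\} = \tfrac{1}{2}\bigl(\delta_i^{-1}\{p_i,w_j\} + \delta_j^{-1}\{w_i,p_j\}\bigr)
= \tfrac{1}{2}\bigl(\delta_i^{-1}b_{ij} - \delta_j^{-1}b_{ji}\bigr).
\]

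Finally I invoke the skew-symmetric decomposition $B=\Delta\Omega$ from \eqref{eq:BDO1}: entrywise this reads $b_{ij}=\delta_i\omega_{ij}$, so $\delta_i^{-1}b_{ij}=\omega_{ij}$ and $\delta_j^{-1}b_{ji}=\omega_{ji}=-\omega_{ij}$ by skew-symmetry of $\Omega$. Substituting gives $\{f_i,f_j\}=\tfrac{1}{2}(\omega_{ij}+\omega_{ij})=\omega_{ij}$, completing the proof.

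There is no real obstacle here; the only subtle point is simply bookkeeping — the normalization $1/\sqrt 2$ in \eqref{eq:ty2} is precisely what is needed so that the two asymmetric cross terms $\delta_i^{-1}b_{ij}$ and $-\delta_j^{-1}b_{ji}$, which are individually equal to $\omega_{ij}$, combine with the prefactor $1/2$ to reproduce $\omega_{ij}$ rather than $2\omega_{ij}$. This is the mechanism by which the canonical pair $(p_i,u_j)$, with its ``diagonal'' Poisson structure, encodes the log-canonical bracket \eqref{eq:Poi4} governed by $\Omega$.
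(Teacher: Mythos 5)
Your proof is correct and follows essentially the same route as the paper's: reduce to showing $\{\log\tilde y_i,\log\tilde y_j\}=\omega_{ij}$, observe that only the cross terms $\{p_i,w_j\}$ survive, and use $b_{ij}=\delta_i\omega_{ij}$ together with skew-symmetry of $\Omega$ so that the two cross terms combine with the prefactor $1/2$ to give $\omega_{ij}$. The only difference is that you derive the identity $\{e^f,e^g\}=e^fe^g\{f,g\}$ explicitly where the paper simply invokes the log-canonical equivalence \eqref{eq:Poi3}.
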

\begin{proof}
As noted in \eqref{eq:Poi3},
the claim is equivalent to
\begin{align}
\{\log \tilde{y}_i, \log \tilde{y}_j\}=\omega_{ij}.
\end{align}
Indeed, we have
\begin{align}
\begin{split}
\{\log \tilde{y}_i, \log \tilde{y}_j\}
&
=
\frac{1}{2} 
\{ \d^{-1}_i p_i, w_j\}
+
\frac{1}{2} 
\{ \d^{-1}_j p_j, w_i\}
\\
&
=\frac{1 }{2} \omega_{ij}
-
\frac{1 }{2} \omega_{ji}
=\omega_{ij}.
\end{split}
\end{align}
\end{proof}

The bracket \eqref{eq:tyty1} coincides with the one  \eqref{eq:Poi4} with $s=0$.
Moreover, the functions (variables) $\tilde  y_1$, \dots, $\tilde  y_n$ are algebraically independent.
Thus, we have an embedding of   the Poisson algebra
\begin{align}
\label{eq:eta1}
\begin{matrix}
\eta \colon & \calC_+(\bfy) & \rightarrow & \calC(\bfu,\bfp)
\\
&
y_i & \mapsto & \tilde{y}_i.
\end{matrix}
\end{align}

Let us concentrate on the time span $[0,1]$.
We consider the Hamiltonian\begin{align}
\label{eq:Hs3}
\scrH={\d_{k_0}} \mathrm{Li}_2(-\tilde y_{k_0})
\end{align}
corresponding to $\scrH(0)$ in \eqref{eq:Hs2}.
We have
\begin{align}
\{\scrH, \tilde y_{k_0}\}=0.
\end{align}
Therefore, the function $\tilde y_{k_0}$ is constant along the time development.
The equations of motion for the canonical coordinates are given by
\begin{align}
\dot u_i &=\{\scrH,u_i\}
=
\frac{\partial \scrH}{\partial p_i}
=-  \frac{1}{\sqrt{2}}  \delta_{k_0 i}  \log (1+ \tilde y_{k_0}),
\\
\dot p_i &=\{\scrH,p_i\}
= - \frac{\partial \scrH}{\partial u_i}
= - \frac{1}{\sqrt{2}} \d_i b_{k_0 i } \log (1+ \tilde y_{k_0} ),
\end{align}
where we used $b_{ik_0}\d_{k_0}=- b_{k_0i} \d_i$
in the second formula.
Thus, for $t\in [0,1]$, the functions $u_i$, $p_i$, $w_i$, $\d^{-1}_ip_i$ develop
linearly as
\begin{align}
\label{eq:u2}
 u_i (t)&
=  u_i -  \left(  \frac{1}{\sqrt{2}}  \delta_{k_0 i}  \log (1+ \tilde y_{k_0}) \right) t ,
\\
\label{eq:p2}
 p_i (t)&
=  p_i -  \left(  \frac{1}{\sqrt{2} } \d_i b_{k_0 i}  \log (1+ \tilde y_{k_0}) \right) t ,
\\
\label{eq:w2}
 w_i (t)&
=  w_i -  \left(  \frac{1}{\sqrt{2}}  b_{k_0 i}  \log (1+ \tilde y_{k_0}) \right) t ,
\\
\label{eq:dp2}
\d^{-1}_i  p_i (t)& = \d^{-1}_i p_i  - \left( \frac{1}{\sqrt{2}} b_{k_0 i } \log (1+ \tilde y_{k_0} ) \right) t,
\\
\label{eq:ty3}
{\rp \tilde y_i(t)}
&
=
\tilde y_i (1+ \tilde y_{k_0})^{-  b_{k_0 i} t}.
\end{align}
The last one agrees with the result \eqref{eq:traj1} as expected.
Therefore,
the time-one flow of the Hamiltonian $\scrH$ induces the following automorphism
\begin{align}
\label{eq:rho5}
\begin{matrix}
\rho(0) \colon & \calC (\bfu,\bfp) & \rightarrow &  \calC(\bfu,\bfp)
\\
&
u_i
&
\mapsto
&
\displaystyle
u_i
-   \frac{1}{\sqrt{2}}  \delta_{k_0 i}  \log (1+ \tilde y_{k_0}) ,
\\
&
p_i
&
\mapsto
&
\displaystyle
\rule{0pt}{18pt}
p_i
-   \frac{1}{\sqrt{2} }  \d_i b_{k_0 i}  \log (1+ \tilde y_{k_0}),
\\
&
w_i
&
\mapsto
&
\displaystyle
\rule{0pt}{18pt}
w_i
-   \frac{1}{\sqrt{2}}  b_{k_0 i}  \log (1+ \tilde y_{k_0}) ,
\\
&
\d^{-1}_i p_i
&
\mapsto
&
\displaystyle
\rule{0pt}{18pt}
\d^{-1}_i p_i
-   \frac{1}{\sqrt{2} }  b_{k_0 i}  \log (1+ \tilde y_{k_0}),
\\
&
\tilde y_i
&
\mapsto
&
\displaystyle
\tilde y_i (1+ \tilde y_{k_0})^{-  b_{k_0 i}}.
\end{matrix}
\end{align}
Again, the last one is compatible with the maps $\rho(0)=\frakq(0)$ in  \eqref{eq:rho2}.

\begin{prop}
The map $\rho(0)$ preserves the Poisson bracket in \eqref{eq:Poipu1}.
\end{prop}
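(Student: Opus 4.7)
My plan is to leverage the fundamental fact from Hamiltonian mechanics that \emph{any time-$t$ flow of a Hamiltonian vector field preserves the Poisson bracket it is defined with respect to}. Since $\rho(0)$ is, by construction (see the derivation of \eqref{eq:u2}--\eqref{eq:ty3} and the definition \eqref{eq:rho5}), precisely the time-one flow of the Hamiltonian $\scrH = \delta_{k_0}\mathrm{Li}_2(-\tilde y_{k_0}) \in \calC(\bfu,\bfp)$ with respect to the canonical Poisson bracket \eqref{eq:Poi5}, the statement is immediate from this general principle.

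To make the argument self-contained, I would spell out the standard derivation: writing $\phi_t$ for the time-$t$ flow, the pair of functions $F_t := \{\phi_t^* f, \phi_t^* g\}$ and $G_t := \phi_t^*\{f,g\}$ satisfy the same first-order ODE, namely
\begin{align*}
\dot F_t &= \{\{\scrH,\phi_t^*f\},\phi_t^*g\}+\{\phi_t^*f,\{\scrH,\phi_t^*g\}\}=\{\scrH,F_t\},\\
\dot G_t &= \{\scrH,G_t\},
\end{align*}
where the first line uses the Jacobi identity for $\{\cdot,\cdot\}$. Since $F_0=G_0=\{f,g\}$, uniqueness for the ODE yields $F_t=G_t$ for all $t$, and specializing to $t=1$, $f,g\in\{u_i,p_j\}$ gives the claim.

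If one prefers an explicit verification (which is more in the spirit of the chapter), I would carry out the three brackets $\{\rho(0)(u_i),\rho(0)(u_j)\}$, $\{\rho(0)(p_i),\rho(0)(p_j)\}$, and $\{\rho(0)(p_i),\rho(0)(u_j)\}$ directly, using the key auxiliary computations
\begin{align*}
\{u_i,\log(1+\tilde y_{k_0})\}&=-\tfrac{1}{\sqrt{2}}\,\delta_{k_0}^{-1}\delta_{i,k_0}\cdot\frac{\tilde y_{k_0}}{1+\tilde y_{k_0}},\\
\{p_i,\log(1+\tilde y_{k_0})\}&=-\tfrac{1}{\sqrt{2}}\,b_{k_0 i}\cdot\frac{\tilde y_{k_0}}{1+\tilde y_{k_0}},
\end{align*}
which follow from $\{p_i,u_j\}=\delta_{ij}$, the definition \eqref{eq:ty2} of $\tilde y_{k_0}$, and $b_{ik_0}\delta_{k_0}=-b_{k_0 i}\delta_i$. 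The cross-terms then cancel cleanly due to the antisymmetry between the coefficient $\tfrac{1}{\sqrt{2}}\delta_{k_0 i}$ appearing in $\rho(0)(u_i)$ and the coefficient $\tfrac{1}{\sqrt{2}}\delta_i b_{k_0 i}$ appearing in $\rho(0)(p_i)$, giving the equalities $\{\rho(0)(u_i),\rho(0)(u_j)\}=0$, $\{\rho(0)(p_i),\rho(0)(p_j)\}=0$, and $\{\rho(0)(p_i),\rho(0)(u_j)\}=\delta_{ij}$.

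I do not expect a real obstacle here: the result is a textbook consequence of the Hamiltonian formalism, and the only subtlety is bookkeeping the factors of $\delta_i$, $\delta_{k_0}$, $b_{k_0 i}$, and $1/\sqrt{2}$ introduced by the change of variables \eqref{eq:ty2}. The abstract argument avoids this bookkeeping entirely, so my preferred write-up will be the one-line invocation of the general principle, with the direct computation relegated to a parenthetical remark for readers who wish to see the cancellation explicitly.
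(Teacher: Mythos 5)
Your proposal is correct and is essentially the paper's own proof: the paper likewise disposes of the proposition by citing the general fact that a Hamiltonian flow preserves the Poisson bracket, and mentions the direct check via \eqref{eq:rho5} only as an alternative. (The only quibble is in your optional explicit sketch, where $\{p_i,\log(1+\tilde y_{k_0})\}$ should carry $b_{ik_0}$ rather than $b_{k_0 i}$ before applying the skew-symmetry $b_{ik_0}\delta_{k_0}=-b_{k_0 i}\delta_i$; this does not affect the main argument.)
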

\begin{proof}
This is a special case of a well-known fact that 
the Hamiltonian flow preserves the Poisson bracket (e.g., \cite[Pop.~3.3.4]{Abraham78}).
Alternatively,
one can directly check it by \eqref{eq:rho5}.
\end{proof}

A drawback of this formulation is that
 we have $2n$ variables $(\bfu,\bfp)$,
which are twice of the original $y$-variables $\bfy$.
To eliminate this redundancy,
let us introduce the subset $\tilde M_0$ of $\tilde M$
consisting of the points such that
 in the $(\bfu,\bfp)$-coordinates the following condition holds:
\begin{align}
\label{eq:dpw2}
p_i = \d_i w_i
\quad
(i=1,\, \dots, \, n).
\end{align}

\begin{lem}
\label{lem:small1}
The subset $\tilde M_0$ is preserved under the time-development by
the Hamiltonian $\scrH$ in \eqref{eq:Hs3}.
\end{lem}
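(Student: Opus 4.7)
The plan is to show that the constraint functions $F_i := p_i - \delta_i w_i$ (for $i=1,\dots,n$) are preserved by the Hamiltonian flow of $\scrH = \delta_{k_0} \mathrm{Li}_2(-\tilde y_{k_0})$. Since $\tilde M_0$ is cut out by $F_i = 0$, this is exactly what is needed.

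The most direct route is to use the explicit trajectories \eqref{eq:p2} and \eqref{eq:w2}. Subtracting $\delta_i$ times \eqref{eq:w2} from \eqref{eq:p2}, one obtains
\begin{align*}
p_i(t) - \delta_i w_i(t)
= (p_i - \delta_i w_i) - \frac{t}{\sqrt{2}} \bigl( \delta_i b_{k_0 i} - \delta_i b_{k_0 i} \bigr) \log(1 + \tilde y_{k_0})
= p_i - \delta_i w_i,
\end{align*}
so the linear coefficients conspire to cancel and $F_i$ is constant in $t$. In particular, if $F_i$ vanishes at $t=0$, it vanishes throughout $[0,1]$.

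Conceptually, the same fact can be derived from Poisson commutativity: a short calculation using \eqref{eq:Poipu1} and \eqref{eq:ty2} gives
\begin{align*}
\{p_i, \tilde y_{k_0}\} = \frac{1}{\sqrt{2}}\, b_{i k_0}\,\tilde y_{k_0},
\qquad
\{w_i, \tilde y_{k_0}\} = -\frac{1}{\sqrt{2}}\, \delta_{k_0}^{-1} b_{k_0 i}\,\tilde y_{k_0},
\end{align*}
and then the skew-symmetry relation $\delta_i b_{k_0 i} = -\delta_{k_0} b_{i k_0}$ from \eqref{1eq:ss1} yields $\{F_i, \tilde y_{k_0}\} = 0$. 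Since $\scrH$ is a function of $\tilde y_{k_0}$ alone, this forces $\{F_i, \scrH\} = 0$, so $F_i$ is a first integral of the flow. I expect the main (and essentially only) subtlety is keeping the $\delta$-factors and the direction of skew-symmetry straight; once one writes them out, the argument collapses to a one-line cancellation. I would present the direct computation as the proof, with the Poisson-commutativity calculation mentioned as the underlying reason.
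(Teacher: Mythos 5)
Your proof is correct and follows the same route as the paper, which simply observes that \eqref{eq:w2} and \eqref{eq:dp2} give $w_i(t)$ and $\delta_i^{-1}p_i(t)$ identical time-dependence, so the constraint $p_i=\delta_i w_i$ is preserved; your subtraction of $\delta_i$ times \eqref{eq:w2} from \eqref{eq:p2} is the same cancellation. The supplementary Poisson-bracket argument, including the use of \eqref{1eq:ss1}, is also correct and a nice conceptual remark, though not needed.
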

\begin{proof}
This is clear from \eqref{eq:w2} and \eqref{eq:dp2}.
\end{proof}

We call $\tilde M_0$ the \emph{small phase space}\index{small phase space}\index{phase space!small ---}.
The algebra $\calC(\tilde M_0)$ of the smooth functions on $\tilde M_0$ is identified with
the quotient
\begin{align}
\calC(\bfu,\bfp)_0:= \calC(\bfu,\bfp)/ I_0,
\end{align}
where $I_0$ is the ideal of $\calC(\bfu,\bfp)$ consisting of the functions
vanishing on $\tilde M_0$.

This causes, however, a new problem.
Due to the definition \eqref{eq:ty2} of $\tilde y$,
the map
\begin{align}
\label{eq:eta2}
\begin{matrix}
\eta_0 \colon & \calC_+(\bfy) & \rightarrow & \calC(\bfu,\bfp)_0
\\
&
y_i & \mapsto & \tilde{y}_i.
\end{matrix}
\end{align}
is injective (and also bijective) if and only if the matrix $B$ is nonsingular.
Equivalently, $ \tilde{y}_i$'s are algebraically independent
in $\calC(\bfu,\bfp)_0$
if and only if the matrix $B$ is nonsingular.
For example, in the extreme case $B=0$,
$\tilde y_i=1$ for any $i$.
We will come back to this issue later.
At this moment
we do not require that $B$ is nonsingular.

\section{Tropical transformations of canonical coordinates}

The  canonical coordinates for the time span $[0,1]$
in the previous section can be generalized for each time span $[s,s+1]$ ($s=0,\,\dots,\,P-1$)
just by repeating the same construction.
So, we quickly summarize the notation and results.

For each $s=0,\, \dots,\, P-1$,
we introduce  variables $\bfu(s)=(u_i(s))_{i=1}^n$ and $\bfp(s)=(p_i(s))_{i=1}^n$,
and regard the pair $(\bfu(s),\bfp(s))$ as  global coordinates of $\tilde M \simeq \bbR^{2n}$.
The Poisson bracket $\{\cdot, \cdot\}_s$ on $\tilde M$ is defined
as in \eqref{eq:Poi5} by replacing $u_i$ and $p_i$ therein with $u_i(s)$ and $p_i(s)$.
We set
\begin{align}
\label{eq:tyw1}
\tilde{y}_i(s) := \exp\left(\frac{1}{\sqrt{2}}(\d^{-1}_i p_i (s)+ w_i(s))\right),
\quad
w_i(s):=\sum_{j=1}^n b_{ji}(s)  u_j(s).
\end{align}
Then, we have
\begin{align}
\{\tilde{y}_i(s), \tilde{y}_j(s)\}_s=\omega_{ij}(s)\tilde{y}_i (s)\tilde{y}_j(s),
\end{align}
which agrees with \eqref{eq:Poi4}.
So, we have an embedding
\begin{align}
\label{eq:emb1}
\begin{matrix}
\eta(s)\colon
&
\calC_+(\bfy(s)) & \rightarrow & \calC(\bfu(s),\bfp(s))
\\
&y_i(s) & \mapsto & \tilde{y}_i(s).
\end{matrix}
\end{align}
The Hamiltonian $\scrH(s)$ is given by
\begin{align}
\label{eq:Hs4}
\scrH(s)={\varepsilon_s}{\d_{k_s}} \mathrm{Li}_2(-\tilde y_{k_s}(s)^{\varepsilon_s}).
\end{align}
The equations of motion for the canonical coordinates are given as
\begin{align}
\label{eq:us1}
\dot u_i (s)&=
-  \frac{1}{\sqrt{2}}  \delta_{k_s i}  \log (1+ \tilde y_{k_s}(s)^{\varepsilon_s}),
\\
\label{eq:ps1}
\dot p_i(s) &
= - \frac{1}{\sqrt{2} }\d_i  b_{k_s i }(s) \log (1+ \tilde y_{k_s}(s)^{\varepsilon_s} ).
\end{align}
The time-one flow of the Hamiltonian $\scrH(s)$ induces the
 automorphism
\begin{align}
\notag
\rho(s) \colon \calC (\bfu(s),&\, \bfp(s))  \rightarrow   \calC(\bfu(s),\bfp(s))
\\
\label{eq:rhou1}
u_i(s)
&\mapsto
\displaystyle
u_i(s)
-   \frac{1}{\sqrt{2}}  \delta_{k_s i}  \log (1+ \tilde y_{k_s}(s)^{\varepsilon_s}) ,
\\
\label{eq:rhop1}
p_i(s)
&
\mapsto
\displaystyle
p_i(s)
-   \frac{1}{\sqrt{2} }  \d_i b_{k_s i}(s)  \log (1+ \tilde y_{k_s}(s)^{\varepsilon_s}),
\\
\label{eq:rhow1}
 w_i (s)&
\mapsto
\displaystyle
w_i(s) -  \left(  \frac{1}{\sqrt{2}}  b_{k_s i}(s)  \log (1+ \tilde y_{k_s}(s)^{\varepsilon_s}) \right),
\\
\label{eq:rhodp1}
\d^{-1}_i  p_i (s)& \mapsto 
\displaystyle
\d^{-1}_i p_i (s) - \left( \frac{1}{\sqrt{2}} b_{k_s i }(s) \log (1+ \tilde y_{k_s}(s)^{\varepsilon_s} ) \right),
\\
\label{eq:rhoy1}
\tilde y_i (s)& \mapsto 
\tilde y_i (s) 
 (1+ \tilde y_{k_s} (s)^{\varepsilon_s})^{-b_{k_s i } (s)}.
\end{align}
Moreover, it preserves the Poisson bracket $\{\cdot, \cdot\}_s$.

Now we glue
the canonical coordinates $(\bfu(s+1),\bfp(s+1))$ and
$(\bfu(s),\bfp(s))$
 by the following linear transformation,
which is the counterpart of the tropical transformation $\tau(s)$ in \eqref{eq:tau2}.
\begin{align}
\notag
\tau{(s)}\colon   \calC(\bfu(s+1),&\, \bfp(s+1))) \rightarrow \calC(\bfu(s),\bfp(s)),
\\
\label{eq:tauu1}
 u_i(s+1)
& \mapsto
\begin{cases}
\displaystyle
-u_{k_s}(s)
 + \sum_{j=1}^n [-\varepsilon_s b_{jk_s}(s)]_+ u_j(s)
& i=k_s,
\\
u_i (s)
&i\neq k_s,
\end{cases}
\\
\label{eq:taup1}
p_i(s+1)
& \mapsto
\begin{cases}
\displaystyle
-p_{k_s}(s)
& i=k_s,
\\
p_i (s)
+[- \varepsilon_s b_{ik_s}(s)]_+ p_{k_s}(s)
&i\neq k_s,
\end{cases}
\\
\label{eq:ws4}
w_i(s+1)
&\mapsto
\begin{cases}
\displaystyle
-w_i(s)
& i=k_s,
\\
w_i (s)
+ [\varepsilon_s b_{k_s i }(s)]_+ w_{k_s}(s)
&i\neq k_s,
\end{cases}
\\
\label{eq:dps4}
\d^{-1}_i p_i(s+1)
&\mapsto
\begin{cases}
\displaystyle
-\d^{-1}_{k_s} p_{k_s}(s)
& i=k_s,
\\
\d^{-1}_i p_i (s)
+[\varepsilon_s b_{k_s i }(s)]_+  \d^{-1}_{k_s} p_{k_s}(s)
&i\neq k_s,
\end{cases}
\\
\label{eq:ys4}
 \tilde y_i(s+1)
&\mapsto
\begin{cases}
\displaystyle
\tilde y_{k_s}(s)^{-1}
& i=k_s,
\\
\tilde y_i(s) \tilde y_{k_s}(s) ^{[\varepsilon_s b_{k_s i}(s)]_+} 
&i\neq k_s.
\end{cases}
\end{align}
By \eqref{eq:ys4},
we have a  commutative diagram:
\begin{align}
\raisebox{25pt}
{
\begin{xy}
(0,0)*+{ \calC(\bfy(s+1))}="aa";
(45,0)*+{\calC(\bfu(s+1),\bfp(s+1))}="ba";
(0,-15)*+{ \calC(\bfy(s))}="ab";
(45,-15)*+{ \calC(\bfu(s),\bfp(s))}="bb";
{\ar "aa";"ba"};
{\ar "ab";"bb"};
{\ar "aa";"ab"};
{\ar "ba";"bb"};
(18,3)*+{\text{\small $\eta(s+1)$}};
(18,-12)*+{\text{\small $\eta(s)$}};
(-5,-7.5)*+{\text{\small $\tau(s)$}};
(40,-7.5)*+{\text{\small $\tau(s)$}};
\end{xy}
}
\end{align}
\begin{prop}
[{\cite[Prop.~3.9]{Gekhtman16}}]
\label{prop:tauP1}
(a).
The  map $\tau(s)$ commutes
with the Poisson brackets.
Namely, we have
\begin{align}
\label{eq:tauup1}
\{\tau(s)(f), \tau(s)(g)\}_s &= \tau(s)(\{f,g\}_{s+1})
\quad
(f,\,g\in \calC(\bfu(s+1),\bfp(s+1))).
\end{align}
\par
(b). The following property holds:
\begin{align}
\label{eq:updual1}
\tau(s)
\left(
 \sum_{i=1}^n
u_i(s+1) p_i(s+1)
\right)
= \sum_{i=1}^n
u_i(s) p_i(s).
\end{align}
\end{prop}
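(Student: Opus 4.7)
The plan is to verify both claims by direct computation, exploiting the fact that the map $\tau(s)$ is given by simple linear formulas on the canonical coordinates: $u_i(s+1)$ is sent to an integer linear combination of the $u_j(s)$'s alone, and $p_i(s+1)$ to an integer linear combination of the $p_j(s)$'s alone. This ``triangular'' structure is what makes everything clean.

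For part (a), I would invoke Proposition \ref{prop:Poi1}(a), which says a Poisson bracket is determined by its values on generators. So it suffices to check the three families of brackets $\{\tau(s)(u_i(s+1)), \tau(s)(u_j(s+1))\}_s$, $\{\tau(s)(p_i(s+1)), \tau(s)(p_j(s+1))\}_s$, and $\{\tau(s)(p_i(s+1)), \tau(s)(u_j(s+1))\}_s$, and compare them against the target values $0$, $0$, and $\delta_{ij}$. The first two are immediate from \eqref{eq:Poipu1}: since $\tau(s)(u_i(s+1))$ lies in the span of the $u_j(s)$'s and $\tau(s)(p_i(s+1))$ lies in the span of the $p_j(s)$'s, both brackets vanish. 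For the mixed bracket, I would split into the four cases according to whether $i=k_s$ or $j=k_s$; in each case the computation reduces to a finite combination of Kronecker deltas and coefficients $[-\varepsilon_s b_{\ell k_s}(s)]_+$, and one uses $b_{k_s k_s}(s)=0$ to see that the cross-terms assemble correctly to produce $\delta_{ij}$ on the nose.

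For part (b), I would substitute the defining formulas \eqref{eq:tauu1} and \eqref{eq:taup1} into $\sum_i \tau(s)(u_i(s+1))\tau(s)(p_i(s+1))$ and split the sum according to whether $i=k_s$ or not. The $i=k_s$ term produces $u_{k_s}(s)p_{k_s}(s)$ together with a cross-contribution $-\sum_m [-\varepsilon_s b_{m k_s}(s)]_+ u_m(s)p_{k_s}(s)$. The sum over $i\neq k_s$ produces $\sum_{i\neq k_s} u_i(s)p_i(s)$ together with $\sum_{i\neq k_s}[-\varepsilon_s b_{i k_s}(s)]_+ u_i(s)p_{k_s}(s)$. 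The diagonal pieces assemble into $\sum_i u_i(s)p_i(s)$, which is the desired right-hand side. The cross-contributions cancel exactly, once one observes that the $m=k_s$ summand in the first cross-term vanishes because $b_{k_s k_s}(s)=0$, so that the two cross-sums run over the same index set $\{m\neq k_s\}$ with opposite signs.

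Neither step poses a genuine obstacle; the proposition is essentially a bookkeeping lemma verifying that the tropical glueing \eqref{eq:tauu1}--\eqref{eq:taup1} was designed to be a symplectic (Poisson-preserving, Liouville-form-preserving) change of coordinates. If anything, the one point requiring care is the sign and positive-part conventions in the $i=k_s,\,j=k_s$ case of part (a), where one must track that $\tau(s)(p_{k_s}(s+1))=-p_{k_s}(s)$ and $\tau(s)(u_{k_s}(s+1))$ contains a leading $-u_{k_s}(s)$, so that the two minus signs combine to reproduce $+\delta_{k_s k_s}=1$.
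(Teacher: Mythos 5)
Your proof is correct and is essentially the argument the paper gives: both reduce to checking the brackets on the canonical generators and exploit the fact that $\tau(s)$ acts separately and linearly on the $\bfu$'s and $\bfp$'s, the paper merely packaging your four-case computation as the matrix identity $B^{T}=A=A^{-1}$ for the two representation matrices (from which part (b) also follows at once). Your explicit cancellation via $b_{k_sk_s}(s)=0$ is exactly the entrywise verification of that identity.
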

\begin{proof}
(a).
It is enough to check it for $f=p_i(s+1)$ and $g=u_j(s+1)$.
So,  let us show
$\{\tau(s)(p_i(s+1)), \tau(s)(u_j(s+1))\}_s=\delta_{ij}$.
By \eqref{eq:tauu1} and \eqref{eq:taup1},
the map $\tau(s)$ acts on
$\bfu(s+1)$ and $\bfp(s+1)$ separately and linearly.
Let ${\rp M}$ and ${\rp N}$ be their matrix representations
with respect to the bases $\bfu(s+1)$, $\bfu(s)$ and $\bfp(s+1)$, $\bfp(s)$,
respectively.
Then, it is easy to check that ${\rp N^{T}=M=M^{-1}}$ holds.
Thus, the equality \eqref{eq:tauup1} holds.
(b). The same proof is applicable.
\end{proof}

 Define the mutation $\mu(s)$ by the composition
 \begin{align}
 \label{eq:mu7}
 \mu(s):=\rho(s)\circ\tau(s)\colon
 \calC(\bfu(s+1),\bfp(s+1))\rightarrow \calC(\bfu(s),\bfp(s)).
 \end{align}
 We also define 
 $\mu(s;0)$, $\tau(s;0)$,
 $\frakq(s)$, and $\frakq(s;0)$
  in a parallel way with the ones in Section \ref{sec:Fock1}.

Let us turn to the small phase space $\tilde M_0$ in the previous section.
Let $\tilde M_0(s)$ be set of the points in the phase space $\tilde M$
such that in the $(\bfu(s),\bfp(s))$-coordinates the following condition holds:
\begin{align}
\label{eq:dpw1}
 p_i(s) = \d_iw_i(s)
 \quad
 (i=1,\, \dots,\, n).
\end{align}

\begin{lem}
\label{lem:small2}
The subset $\tilde M_0(s)$ is preserved under the time-development by
the Hamiltonian $\scrH(s)$.
Moreover,
the set $\tilde M_0(s)$ is independent of $s$, namely,
$\tilde M_0(s)=\tilde M_0$.
\end{lem}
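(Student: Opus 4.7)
The plan is to prove the two assertions separately. The first claim — invariance of $\tilde M_0(s)$ under the Hamiltonian flow of $\scrH(s)$ — is a straightforward generalization of Lemma \ref{lem:small1}. The second claim — coordinate-independence — will be reduced to an inductive step $\tilde M_0(s)=\tilde M_0(s+1)$ that follows from the form of the tropical transformation $\tau(s)$, once we invoke the skew-symmetrizability of $B(s)$ to relate the transformation laws of $w_i$ and $\delta_i^{-1}p_i$.

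For the first part, I would read off from the explicit time-one flow formulas \eqref{eq:rhow1} and \eqref{eq:rhodp1} that along the Hamiltonian flow generated by $\scrH(s)$, both $w_i(s)$ and $\delta_i^{-1}p_i(s)$ shift by the \emph{same} additive amount $-\frac{1}{\sqrt{2}}b_{k_s i}(s)\log(1+\tilde y_{k_s}(s)^{\varepsilon_s})$. Hence the defining relation $\delta_i^{-1}p_i(s)=w_i(s)$ of $\tilde M_0(s)$ is preserved for all intermediate times $t\in[s,s+1]$, not just at $t=s+1$. This extends Lemma \ref{lem:small1} from $s=0$ to general $s$ without any new input.

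For the second part, the key observation is that the transformation laws \eqref{eq:ws4} and \eqref{eq:dps4} for $w_i(s+1)$ and $\delta_i^{-1}p_i(s+1)$ under $\tau(s)$ are formally identical: both the case $i=k_s$ (sign flip) and the case $i\neq k_s$ (addition of a multiple of the $k_s$-th quantity with coefficient $[\varepsilon_s b_{k_s i}(s)]_+$) match. It therefore suffices to check that if $p_i(s)=\delta_i w_i(s)$ for all $i$, then $p_i(s+1)=\delta_i w_i(s+1)$ for all $i$. The $i=k_s$ case is immediate from \eqref{eq:taup1} and \eqref{eq:ws4}. For $i\neq k_s$, I have to verify
\begin{align*}
[-\varepsilon_s b_{ik_s}(s)]_+\,\delta_{k_s} \;=\; \delta_i\,[\varepsilon_s b_{k_s i}(s)]_+,
\end{align*}
which is where the main (though still routine) work lies. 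This identity will follow from the skew-symmetrizability relation $\delta_{k_s} b_{ik_s}(s)=-\delta_i b_{k_s i}(s)$ coming from the decomposition $B(s)=\Delta\,\Omega(s)$, together with the positivity $\delta_i/\delta_{k_s}>0$ which lets one pull the ratio outside of $[\cdot]_+$.

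The one subtlety I anticipate — and the only place that requires care — is to confirm that the transformation law used in \eqref{eq:taup1} does indeed send $p_i(s+1)$ into $\calC(\bfu(s),\bfp(s))$ with the displayed formula, and not some variant that would break the symmetry with \eqref{eq:ws4}. Given this, both assertions reduce to bookkeeping, and the lemma follows by induction on $s$: the base case is the tautology $\tilde M_0(0)=\tilde M_0$, and the inductive step $\tilde M_0(s)=\tilde M_0(s+1)$ is exactly the coordinate-change computation above, independent of the Hamiltonian flow (which is handled separately by the first part).
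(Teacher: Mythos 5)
Your proposal is correct and follows essentially the same route as the paper: the flow-invariance is read off from the fact that $w_i(s)$ and $\delta_i^{-1}p_i(s)$ receive identical additive shifts under the Hamiltonian flow (as in Lemma \ref{lem:small1}), and the $s$-independence follows from the formal coincidence of the transformation laws \eqref{eq:ws4} and \eqref{eq:dps4}. The only difference is that you spell out the verification that \eqref{eq:taup1} yields \eqref{eq:dps4} via $\delta_{k_s}[-\varepsilon_s b_{ik_s}(s)]_+=\delta_i[\varepsilon_s b_{k_si}(s)]_+$, which the paper leaves implicit in stating \eqref{eq:dps4}; this check is correct.
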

\begin{proof}
The first claim is proved in the same way as Lemma \ref{lem:small1}.
Also, by \eqref{eq:ws4} and \eqref{eq:dps4},
the condition \eqref{eq:dpw1} is preserved by $\tau(s)$.
\end{proof}

By  Lemma \ref{lem:small2},
we can restrict our Hamiltonian system on the small phase space  $\tilde M_0$
through the full time-span $[0,P]$ whenever necessary.

Again, we have a map
\begin{align}
\label{eq:eta3}
\begin{matrix}
\eta_0(s) \colon & \calC(\bfy(s)) & \rightarrow & \calC(\bfu(s),\bfp(s))_0:= \calC(\bfu(s),\bfp(s))/ I_0(s)
\\
&
y_i(s) & \mapsto & \tilde{y}_i(s),
\end{matrix}
\end{align}
where $I_0(s)$ is the ideal of $ \calC(\bfu(s),\bfp(s))$ consisting of the functions vanishing on $\tilde M_0$.
It is injective (also bijective) if and only if $B(s)$ is nonsingular.

\section{Modified Rogers dilogarithm as Lagrangian}
\label{sec:modified2}

Let us recall  basic facts on  the Legendre transformation
and the Lagrangian.
See, for example, \cite{Abraham78} for more information.

For simplicity, consider a Hamiltonian $\scrH$ on the phase space $M\simeq \bbR^{2n}$
with canonical coordinates  $(\bfu,\bfp)$ as global coordinates.
The space $M$ is  identified with the contangent bundle $\pi: T^*\bbR^n \rightarrow \bbR^n$
so that $\pi(\bfu,\bfp) \mapsto \bfu$.
Then, the Hamiltonian $\scrH$ induces the following fiber-preserving map:
\begin{align}
\label{eq:F1}
\begin{matrix}
F_{\scrH}\colon & T^*\bbR^n&\rightarrow&T^*\bbR^n
\\
& (\bfu,\bfp)& \mapsto &(\bfu, \dot \bfu),
&\quad
\dot u_i = \{\scrH, u_i\}.
\end{matrix}
\end{align}

\begin{defn}
A Hamiltonian $\scrH$ is \emph{regular}\index{regular!Hamiltonian} (resp.~\emph{singular}\index{singular!Hamiltonian}) if the map $F_{\scrH}$ is a diffeomorphism
(resp.~otherwise).
\end{defn}

Suppose that a Hamiltonian $\scrH$ is regular.
The \emph{Lagrangian}\index{Lagrangian} $\scrL$ for  $\scrH$ is defined by the \emph{Legendre transformation}\index{Legendre transformation}

\begin{align}
\label{eq:Lag1}
\scrL (\bfu, \dot{\bfu})= \sum_{i=1}^n \dot u_i p_i - \scrH(\bfu,\bfp).
\end{align}
Here and below the symbol $\scrL$ should not be confused with the Rogers dilogarithm $L(x)$.
The  RHS of \eqref{eq:Lag1} is a function of $(\bfu,\bfp)$.
However, by converting the map \eqref{eq:F1}, we may regard it as a function
 of $(\bfu,\dot\bfu)$.
The equations of motion for $\scrH$ are equivalent to
the following equations:
\begin{align}
\label{eq:EL1}
p_i &= \frac{\partial\scrL}{\partial \dot u_i},
\\
\label{eq:EL2}
\frac{d}{dt}\left(
\frac{\partial\scrL}{\partial \dot u_i}
\right)
&=
\frac{\partial \scrL}{\partial u_i}
\quad
\text{(the \emph{Euler-Lagrange equations}\index{Euler-Lagrange equations})}.
\end{align}
Moreover, the Lagrangian $\scrL$ is regular in the following sense
\cite[\S3.6]{Abraham78}.

\begin{defn}
A Lagrangian $\scrL$ is \emph{regular}\index{regular!Lagrangian} (resp.~singular\index{singular!Lagrangian}) if the map
\begin{align}
\label{eq:F2}
\begin{matrix}
F_{\scrL}\colon & T^*\bbR^n&\rightarrow&T^*\bbR^n
\\
& (\bfu,\dot \bfu)& \mapsto &(\bfu,  \bfp),
&\quad
\displaystyle
 p_i = \frac{\partial \scrL}{\partial \dot u_i}(\bfu,\dot\bfu).
\end{matrix}
\end{align}
is a diffeomorphism (resp.~otherwise).
\end{defn}

Conversely, starting from a regular Lagrangian $\scrL$, 
we obtain a regular Hamiltonian $\scrH$ by reversing the procedure.
Moreover, the two systems are equivalent again.

Coming back to the current situation,
for the Hamiltonian
$\scrH(s)$ in \eqref{eq:Hs4},
the map \eqref{eq:F1} is given by
\begin{gather}
F_{\scrH(s)}\colon  (\bfu(s),\bfp(s)) \mapsto  (\bfu(s), \dot \bfu(s)),
\\
\label{eq:ud1}
\dot u_i (s)=
\displaystyle
-  \frac{1}{\sqrt{2}}  \delta_{k_s i}  \log (1+ \tilde y_{k_s}(s)^{\varepsilon_s}).
\end{gather}
Unfortunately,
this is not surjective at all. Thus, the Hamiltonian $\scrH(s)$
is singular.
Nevertheless, let us apply the Legendre transformation to $\scrH(s)$
to see what we get. We have
\begin{align}
\label{eq:lag1}
\scrL(s) =
 \dot u_{k_s}(s) p_{k_s}(s)
- {\varepsilon_{s}}{\d_{k_s}} \mathrm{Li}_2(-\tilde y_{k_s}(s)^{\varepsilon_s}).
\end{align}
At this moment, this is a function of $(\bfu(s), \bfp(s))$.
Observe that $\scrL(s)$ depends on $\bfp(s)$ only through $p_{k_s}(s)$,
which is written by
\begin{align}
p_{k_s}(s)={\d_{k_s}}(\sqrt{2} \log \tilde y_{k_s}(s) - w_{k_s}(s)).
\end{align}
Also, we convert \eqref{eq:ud1} as 
\begin{align}
\label{eq:yu1}
  \tilde y_{k_s}(s)^{\varepsilon_s} = \exp(-\sqrt{2}\dot u_{k_s}(s))-1.
\end{align}
Thus, the Lagrangian \eqref{eq:lag1} is written in the form
\begin{gather}
\label{eq:lag2}
\scrL(s)=
{\d_{k_s}}
 \dot u_{k_s}(s)  (\sqrt{2} \log \tilde y_{k_s} (s)- w_{k_s}(s))
- {\varepsilon_{s}}{\d_{k_s}} \mathrm{Li}_2(-\tilde y_{k_s}(s)^{\varepsilon_s}),
\\
w_i(s)=\sum_{j=1}^n b_{ji}(s) u_j(s),
\quad
  \tilde y_{k_s}(s)^{\varepsilon_s} = \exp(-\sqrt{2}\dot u_{k_s}(s))-1,
\end{gather}
which is  a function of $(\bfu(s), \dot\bfu(s))$.
The Lagrangian $\scrL(s)$ is singular, because
$\partial \scrL(s)/\partial \dot u_i(s)=0$ for $i\neq k_s$.

Since both $\scrH(s)$ and $\scrL(s)$ are singular,
we  expect that the equations 
\eqref{eq:EL1} and \eqref{eq:EL2}
for $\scrL(s)$ are \emph{not} equivalent to
the equations of motion  for $\scrH(s)$,
which are \eqref{eq:us1}
 and \eqref{eq:ps1}.
Indeed, we have the following equations.

\begin{lem}
[{\cite[Prop.~4.3]{Gekhtman16}}]
\label{lem:EL1}
For the Lagrangian  $\scrL(s)$ in \eqref{eq:lag2},
the equations \eqref{eq:EL1} and \eqref{eq:EL2} are explicitly written as
follows:
For $i=k_s$,
\begin{align}
\label{eq:EL3}
p_{k_s}(s)= {\d_{k_s}}
  (\sqrt{2} \log \tilde y_{k_s} (s)- w_{k_s}(s)),
\quad
\dot p_{k_s}(s) = 0,
\end{align}
and, for $i\neq k_s$,
\begin{align}
\label{eq:EL4}
p_i(s)=0,
\quad
(1+\tilde y_{k_s}(s)^{\varepsilon_s})^{b_{k_s i }(s)}
=1.
\end{align}
\end{lem}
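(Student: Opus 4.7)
\noindent\textbf{Proof plan for Lemma \ref{lem:EL1}.} The statement is a direct computation of $\partial\scrL(s)/\partial\dot u_i(s)$ and $\partial\scrL(s)/\partial u_i(s)$ from the explicit formula \eqref{eq:lag2}, together with an application of the Euler--Lagrange equation \eqref{eq:EL2}. My plan is to exploit the fact that the dependence of $\scrL(s)$ on $\bfu(s)$ and $\dot\bfu(s)$ is extremely restricted: the variables $\dot u_i(s)$ enter only through $\dot u_{k_s}(s)$ (via the constraint $\tilde y_{k_s}(s)^{\varepsilon_s}=e^{-\sqrt{2}\dot u_{k_s}(s)}-1$), and the variables $u_j(s)$ enter only through $w_{k_s}(s)=\sum_j b_{jk_s}(s)u_j(s)$; in particular, since $b_{k_sk_s}(s)=0$, the Lagrangian is completely independent of $u_{k_s}(s)$.

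From these structural observations the equations for $i\neq k_s$ are immediate. Since $\scrL(s)$ does not depend on $\dot u_i(s)$ for $i\neq k_s$, the relation \eqref{eq:EL1} gives $p_i(s)=0$, so that $\dot p_i(s)=0$. On the other hand, $\partial\scrL(s)/\partial u_i(s)=-\d_{k_s}\dot u_{k_s}(s)\, b_{ik_s}(s)$, so \eqref{eq:EL2} forces $\dot u_{k_s}(s)\, b_{ik_s}(s)=0$; substituting the defining relation $\dot u_{k_s}(s)=-\frac{1}{\sqrt{2}}\log(1+\tilde y_{k_s}(s)^{\varepsilon_s})$ and invoking the skew-symmetrizability identity $\d_i b_{ik_s}(s)=-\d_{k_s}b_{k_si}(s)$ converts this into the second identity in \eqref{eq:EL4}. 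For $i=k_s$, the independence of $\scrH$ from $u_{k_s}(s)$ yields $\partial\scrL(s)/\partial u_{k_s}(s)=0$, so that \eqref{eq:EL2} gives $\dot p_{k_s}(s)=0$, which is the second equation of \eqref{eq:EL3}.

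The one step that requires actual calculation, and the main obstacle, is verifying the first equation of \eqref{eq:EL3}, namely
\begin{equation*}
\frac{\partial\scrL(s)}{\partial\dot u_{k_s}(s)}
=\d_{k_s}\bigl(\sqrt{2}\log\tilde y_{k_s}(s)-w_{k_s}(s)\bigr).
\end{equation*}
I will differentiate the two $\dot u_{k_s}(s)$-dependent pieces of \eqref{eq:lag2} using the chain rule, with the help of $\partial\tilde y_{k_s}(s)^{\varepsilon_s}/\partial\dot u_{k_s}(s)=-\sqrt{2}(1+\tilde y_{k_s}(s)^{\varepsilon_s})$, the identity $\log(1+\tilde y_{k_s}(s)^{\varepsilon_s})=-\sqrt{2}\dot u_{k_s}(s)$, and the derivative $\frac{d}{dz}\Li_2(-z)=-\log(1+z)/z$. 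Both the contribution coming from $\sqrt{2}\dot u_{k_s}(s)\,\partial\log\tilde y_{k_s}(s)/\partial\dot u_{k_s}(s)$ and the contribution coming from $-\varepsilon_s\d_{k_s}\,\partial\Li_2(-\tilde y_{k_s}(s)^{\varepsilon_s})/\partial\dot u_{k_s}(s)$ are individually equal (up to sign) to $2\varepsilon_s\d_{k_s}\dot u_{k_s}(s)(1+\tilde y_{k_s}(s)^{\varepsilon_s})/\tilde y_{k_s}(s)^{\varepsilon_s}$, and they cancel precisely against each other. This cancellation is the Legendre-transformation miracle underlying the whole construction, and it is the only nontrivial content of the lemma; every other statement follows from the partial-derivative pattern already described.
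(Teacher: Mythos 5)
Your proposal is correct and follows essentially the same route as the paper: compute the four partial derivatives of $\scrL(s)$, observe the cancellation in $\partial\scrL(s)/\partial\dot u_{k_s}(s)$ via the relation $\log(1+\tilde y_{k_s}(s)^{\varepsilon_s})=-\sqrt{2}\,\dot u_{k_s}(s)$, and read off \eqref{eq:EL3} and \eqref{eq:EL4} from \eqref{eq:EL1} and \eqref{eq:EL2}. The only blemish is your skew-symmetry identity, which should read $\d_{k_s}b_{ik_s}(s)=-\d_i b_{k_s i}(s)$ (and you write $\scrH$ once where you mean $\scrL$); neither slip affects the argument, since all that is needed is that $b_{ik_s}(s)$ and $b_{k_s i}(s)$ vanish simultaneously.
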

\begin{proof}
They follow from the following calculations:
\begin{align}
\begin{split}
 \frac{\partial\scrL(s)}{\partial \dot u_{k_s}(s)}
& =
 {\d_{k_s}}
  (\sqrt{2} \log \tilde y_{k_s} (s)- w_{k_s}(s))
  \\
  &\qquad
  -2
   {\varepsilon_{k_s}}{\d_{k_s}}
   \frac{\dot u_{k_s}(s)}{\tilde y_{k_s}(s)}
   -\sqrt{2}
     {\varepsilon_{k_s}}{\d_{k_s}}
     \frac{\log(1+\tilde y_{k_s}(s)^{\varepsilon_s})}{\tilde y_{k_s}(s)^{\varepsilon_s}}
     \\
 \overset {\eqref{eq:yu1}}
     & {=}
 {\d_{k_s}}
  (\sqrt{2} \log \tilde y_{k_s}(s) - w_{k_s}(s)),
  \end{split}
  \\
  \frac{\partial\scrL(s)}{\partial  u_{k_s}}
& = 0,
\end{align}
and, for $i\neq k_s$,
\begin{align}
   \frac{\partial\scrL(s)}{\partial \dot u_{i}(s)}
& =
0 ,
\\
\label{eq:Lu1}
   \frac{\partial\scrL(s)}{\partial  u_{i}(s)}
& =
-
{\d_{k_s}} b_{ik_s  }(s)
\dot u_{k_s}(s)
=
-
\frac{\d_i}{\sqrt{2}} b_{k_s i }(s)
\log
(1+\tilde y_{k_s}(s)^{\varepsilon_s})
.
\end{align}
\end{proof}

We observe that the  equations  \eqref{eq:EL3} coincide with
\eqref{eq:tyw1} and 
\eqref{eq:ps1} for $i=k_s$.
On the other hand, the  equations  \eqref{eq:EL4} impose
 unwanted constraints.
 Therefore, \emph{they are false} for our dynamical system.

Next, let us  restrict the system in the small phase space $\tilde M_0$
and set $p_{k_s}(s)=\d_{k_s} w_{k_s}(s)$ in \eqref{eq:EL3}.
Then, we have
\begin{align}
\label{eq:wy1}
w_{k_s}(s)=\frac{1}{\sqrt{2}}\log \tilde y_{k_s}(s).
\end{align}
By \eqref{eq:yu1} and \eqref{eq:wy1},
the function \eqref{eq:lag2} is rewritten
entirely as a function of $\tilde y_{k_s}(s)$ (therefore, as a function of $\dot u_{k_s}(s)$), as
\begin{align}
\begin{split}
\label{eq:lag3}
\scrL(s)&=
-
\frac{1}{2 }\varepsilon_s \d_{k_s}
 \log (1+ \tilde y_{k_s}(s)^{\varepsilon_s}) \log \tilde y_{k_s}(s)^{\varepsilon_s}
- {\varepsilon_{s}}{\d_{k_s}} \mathrm{Li}_2(-\tilde y_{k_s}(s)^{\varepsilon_s})
\\
&= {\varepsilon_s}{\d_{k_s}} \tilde L(\tilde y_{k_s}(s)^{\varepsilon_s}).
\end{split}
\end{align}
We see that the modified Rogers dilogarithm emerges as
the (false) Lagrangian.

Having  obtained  the exact function
appearing in the DIs  in Theorem \ref{thm:DI1},
our strategy for the rest of the chapter is as follows: We
continue to rely on the equations of motion for the Hamiltonian $\scrH(s)$
for our dynamical system.
On the other hand, we also use the function $\scrL(s)$ and its intrinsic property to 
prove Theorem \ref{thm:DI1}.

\section{Periodicity conditions}
\label{sec:periodicity2}

Let us return to the sequence  \eqref{eq:mseq5} for a $Y$-pattern.
Our next task is to clarify the relation between the periodicity of $y$-variables therein
and the one for the dynamical system with canonical coordinates $(\bfu,\bfp)$  in the previous section.

Before that, we resolve the issue concerning the nonsingularity of $B$.
For the initial matrix $B$ of \eqref{eq:mseq5} with a skew-symmetrizer $D$, we introduce an $2n\times 2n$ matrix 
\begin{align}
\label{eq:pext1}
\overline B
=
\begin{pmatrix}
B & -I\\
I & O\\
\end{pmatrix}
\end{align}
called the \emph{principal extension of $B$}\index{principal!extension (of exchange matrix)}.
The matrix $\overline B$ is skew-symmetrizable 
with a skew-symmetrizer $D\oplus D$.
Moreover, $\overline B$ is nonsingular whether $B$ is singular or not.
Starting from the initial extended $Y$-seed $\overline \Upsilon(0)=\overline \Upsilon
(\overline\bfy,\overline B)$ with $\overline \bfy=(y_1,\dots,y_{2n})$,
we have a lift of the sequence \eqref{eq:mseq5} in the extended $Y$-pattern of rank $2n$,
\begin{align}
\label{eq:mseq6}
&\overline\Upsilon(0) 
\
{\buildrel {k_0} \over \rightarrow}
\
\overline\Upsilon(1) 
\
{\buildrel {k_1} \over \rightarrow}
\
\cdots
\
{\buildrel {k_{P-1}} \over \rightarrow}
\
\overline\Upsilon(P),
\end{align}
where $k_s \in \{1,\, \dots, \, n\}$.

\begin{prop}
[{\cite[Thm.~4.3]{Nakanishi10c}}]
\label{prop:principal1}
Let $\nu\in S_n$ be a permutation of the first $n$ indices $1$, \dots, $n$ of 
extended $Y$-seeds in \eqref{eq:mseq6}.
Then, the sequence \eqref{eq:mseq6} is $\nu$-periodic 
if and only if  the sequence \eqref{eq:mseq5} is $\nu$-periodic.
\end{prop}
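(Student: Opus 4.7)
Plan: The ``only if'' direction will be essentially immediate. Since every mutation direction $k_s$ lies in $\{1,\dots,n\}$, the upper-left $n\times n$ block of $\overline B(s)$ always coincides with $B(s)$, and the first $n$ extended $y$-variables $\overline y_i(s)$ ($i\le n$) satisfy the same mutation recursion as $y_i(s)$ inside the subsemifield $\bbQ_{\rmsf}(y_1,\dots,y_n)\subset \bbQ_{\rmsf}(\overline\bfy)$. Interpreting the permutation $\nu$ as $\tilde\nu\in S_{2n}$ that acts as $\nu$ on the first $n$ indices and fixes the last $n$, the $\tilde\nu$-periodicity of \eqref{eq:mseq6} restricts to the $\nu$-periodicity of \eqref{eq:mseq5} on the first $n$ coordinates.

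For the ``if'' direction, the strategy is to apply Theorem~\ref{1thm:synchro1} (synchronicity) to both patterns. First, by induction on $s$, using \eqref{2eq:bmut1} and \eqref{2eq:cmutmat2} together with the standing assumption $k_s\le n$, I would establish the block formulas
$$
\overline B(s)=\begin{pmatrix} B(s) & -D^{-1}C(s)^T D \\ C(s) & O \end{pmatrix},\qquad
\overline C(s)=\begin{pmatrix} C(s) & E(s) \\ O & I_n \end{pmatrix},
$$
where $C(s)$ is the $C$-matrix of the original free $Y$-pattern and $E(s)\in M_n(\bbZ)$ with $E(0)=O$. These block identities are the natural ``extended'' analogues of the principal-coefficients/principal-extension correspondence and can be verified by a routine inductive calculation; the only subtle point is tracking how the off-diagonal blocks interact, which is controlled by the skew-symmetrizer $D$.

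Next, applying Theorem~\ref{1thm:synchro1} to the original pattern, the assumed $\nu$-periodicity $\Upsilon(P)=\nu\Upsilon(0)$ gives simultaneously $B(P)=\nu B(0)$ and $C(P)=\nu I_n$; Proposition~\ref{prop:compat1} further guarantees that $\nu$ is compatible with $D$, i.e., the permutation matrix of $\nu$ commutes with $D$. Plugging these facts into the block formula for $\overline B(s)$ immediately yields $\overline B(P)=\tilde\nu\overline B(0)$. To promote this to full $\tilde\nu$-periodicity of the extended $Y$-seed, one invokes Theorem~\ref{1thm:synchro1} again---this time for the \emph{extended} free $Y$-pattern, which is legitimate since $\overline B$ is skew-symmetrizable and $\overline\bfy$ are free at $t_0$---and it suffices to verify one further equivalent condition. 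The natural choice is $\overline C(P)=\tilde\nu I_{2n}$, which by the block form reduces to the single identity $E(P)=O$.

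The crux---and expected main obstacle---is therefore to prove $E(P)=O$. The cleanest route is through the second duality $D^{-1}\overline G(s)^T D\,\overline C(s)=I_{2n}$ of Theorem~\ref{thm:dual2} applied to the extended pattern: combined with the block form of $\overline C(s)$, it forces a matching block form for $\overline G(s)$, and the $\nu$-periodicity of the original $G$-matrix (also given by Theorem~\ref{1thm:synchro1}) then propagates to force $E(P)=O$. An alternative route is to bypass $\overline C(P)$ and verify condition (d) of Theorem~\ref{1thm:synchro1} directly, namely $\overline\bfy(P)=\tilde\nu\overline\bfy(0)$, by applying the separation formula \eqref{eq:sep2} to the last $n$ extended $y$-variables and showing that the accumulated $(1+\overline y_{k_s})$-type factors telescope to~$1$ under the hypothesised periodicity. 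Either way, once the required condition is in place, synchronicity upgrades it to full $\tilde\nu$-periodicity of the extended $Y$-seed, completing the proof.
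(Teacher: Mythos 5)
Your reduction of the problem is sound and matches the skeleton of the paper's argument: the block form $\overline C(s)=\bigl(\begin{smallmatrix} C(s) & E(s) \\ O & I \end{smallmatrix}\bigr)$ with the bottom rows constant is indeed a routine induction from \eqref{2eq:cmutmat2}, and synchronicity (Theorem \ref{1thm:synchro1}) applied to both the original and the extended free $Y$-pattern is the right tool to pass from $C$-matrix periodicity to seed periodicity in both directions. The gap is exactly where you flag the ``crux'': neither of your two routes actually establishes $E(P)=O$. The second-duality route is circular: writing $\tilde D^{-1}\overline G(s)^T\tilde D\,\overline C(s)=I_{2n}$ with $\tilde D=D\oplus D$ and inverting the block-triangular $\overline C(s)$ gives $\overline G(s)=\bigl(\begin{smallmatrix} G(s) & O \\ -D^{-1}E(s)^T(C(s)^T)^{-1}D & I \end{smallmatrix}\bigr)$, so the lower-left block of $\overline G(P)$ is precisely as unknown as $E(P)$ itself, and the $\nu$-periodicity of the \emph{original} $G$-matrix constrains only the upper-left block. (The first duality $\overline G\,\overline B=\overline B(0)\overline C$ fares no better: it yields $B\,E(s)=O$, which pins down $E(s)$ only when $B$ is nonsingular --- but the singular case is the only one where the principal extension is needed.) The separation-formula route is likewise only asserted, not carried out.

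What actually closes the gap --- and what the paper invokes --- is the nontrivial fact that $E(s)=O$ for \emph{all} $s$, i.e.\ $\overline C(s)=C(s)\oplus I$ (cited from Fujiwara), whose proof genuinely requires sign-coherence. Concretely: the lower-left block of $\overline B(s)$ is $C(s)$, hence by skew-symmetrizability its upper-right block is $-D^{-1}C(s)^TD$; using the $\varepsilon$-expression \eqref{2eq:cmutmat3} of the $C$-matrix mutation with $\varepsilon=\varepsilon_s$ the tropical sign, the update of the upper-right block reads $E'_{i,m}=E_{i,m}+c_{ik_s}(s)\,[\varepsilon_s\tilde b_{k_s,n+m}(s)]_+$, and $\varepsilon_s\tilde b_{k_s,n+m}(s)=-d_{k_s}^{-1}d_m\,\varepsilon_s c_{mk_s}(s)\le 0$ precisely because $\varepsilon_s\bfc_{k_s}(s)=\bfc^+_{k_s}(s)$ is a nonnegative vector. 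So $E(s)\equiv O$, but this is a consequence of Theorem \ref{thm:sign1}, not of the periodicity hypothesis or of a ``routine'' computation. You should either supply this argument or cite the result explicitly; as written, the proposal identifies the difficulty but does not resolve it.
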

\begin{proof}
Let $\overline C(s)$ be the $C$-matrix for the
extended $Y$-seed $\overline \Upsilon (s)$.
Then, the following fact is known (\cite[Thm.~4.2]{Fujiwara18}):
\begin{align}
\label{eq:oC1}
\overline C(s)
=
\begin{pmatrix}
C(s) & O\\
O& I\\
\end{pmatrix}
.
\end{align}
(The proof requires the sign-coherence property in Theorem \ref{thm:sign1}.
Thus, it is considered as an advanced result.)
Thus, $\overline C(P)=\nu \overline C(0)$ holds if and only if
$ C(P)=\nu  C(0)$ holds.
Therefore, by Theorem \ref{1thm:synchro1},
 $\overline \Upsilon(P)=\nu  \overline\Upsilon(0)$ holds
 if and only if  $ \Upsilon(P)=\nu  \Upsilon(0)$ holds.
\end{proof}

Observe that the DI for  the sequence \eqref{eq:mseq5}
 coincides with the one for the extended sequence \eqref{eq:mseq6}
 due to \eqref{eq:oC1}.
Thus,
thanks to Proposition \ref{prop:principal1},
to prove the DI for $B$ in Theorem \ref{thm:DI1}, we may prove the one for $\overline B$.
Therefore, 
in proving Theorem \ref{thm:DI1},
we may assume that $B$ is nonsingular 
 without loss of generality.

Now,
let us formulate the periodicity condition for
the sequence \eqref{eq:mseq5} in the current setting.
In a parallel way with \eqref{eq:mu1},
for any permutation $\nu\in S_n$, 
we define a semifield automorphism
(denoted by the same symbol)
\begin{align}
\begin{matrix}
\label{eq:sigma1}
\nu\colon &\bbP(P)& \rightarrow & \bbP=\bbP(0)
\\
&
y_i(P)
&
\mapsto
&
\displaystyle
y_{\nu^{-1}(i)}.
\end{matrix}
\end{align}
Then, the $\nu$-periodicity for the sequence
 \eqref{eq:mseq5} is rephrased as the equality
 \begin{align}
 \label{eq:mu6}
\mu(P-1;0) =\nu
 \quad \text{for}\ 
 \bbP(P) \rightarrow  \bbP.
 \end{align}
 The $\nu$-periodicity of  \eqref{eq:mseq5}
 also
 implies the same periodicity for tropical $y$-variables.
 Thus, by \eqref{eq:taus1}, we also have the following equality:
 \begin{align}
 \label{eq:taui6}
\tau(P-1;0) =\nu
\quad
\text{for}\ 
 \bbP(P) \rightarrow \bbP.
 \end{align}
 Therefore, by the decomposition \eqref{eq:decom1},
 we have
 \begin{align}
 \label{eq:qPid1}
 \frakq(P-1;0)=\rmid
 \quad
 \text{for}\ 
  \bbP \rightarrow \bbP.
 \end{align}
 Note that $\nu$ disappeared
 in this equality.
 This further induces the equality
 \begin{align}
  \label{eq:rho6}
 \frakq(P-1;0)=\rmid
 \quad
\text{for}\ 
  \calC(\bfy) \rightarrow \calC(\bfy).
 \end{align}
In particular, any trajectory $\alpha=(\alpha_t)_{t\in [0,P]}$ in the phase space $ M$ in the full time span $[0,P]$
 is periodic. (See Figure \ref{fig:sche1}.). Namely, we have
 \begin{align}
  \alpha_0=\alpha_P.
  \end{align}

Let us formulate a parallel result for the canonical coordinates $(\bfu,\bfp)$.
For any permutation $\nu\in S_n$, define
\begin{align}
\begin{matrix}
\label{eq:sigma2}
\nu\colon &\bfC(\bfu(P),\bfp(P))& \rightarrow & \bfC(\bfu,\bfp)=\bfC(\bfu(0),\bfp(0))
\\
&
u_i(P)
&
\mapsto
&
\displaystyle
u_{\nu^{-1}(i)},
\\
&
p_i(P)
&
\mapsto
&
\displaystyle
p_{\nu^{-1}(i)}.
\end{matrix}
\end{align}
Remarkably, the same periodicity still holds in the phase space $\tilde M$,
which is larger than $M$, if $B$ is nonsingular.
\begin{prop}
[{\cite[Prop.~5.11]{Gekhtman16}}]
\label{prop:periodup1}
Suppose that
  the sequence
 \eqref{eq:mseq5}
is $\nu$-periodic
and
 the initial exchange matrix $B$ is nonsingular.
Then, the following periodicities hold:
 \begin{align}
 \label{eq:periodup1}
 \mu(P-1;0)& =\nu
\quad\text{for}\ 
 \calC(\bfu(P),\bfp(P)) \rightarrow \calC(\bfu,\bfp),
 \\
   \label{eq:tau7}
 \tau(P-1;0)& =\nu
\quad \text{for}\ 
 \calC(\bfu(P),\bfp(P)) \rightarrow \calC(\bfu,\bfp),
 \\
  \label{eq:q7}
 \frakq(P-1;0)&=\rmid 
\quad \text{for}\ 
 \calC(\bfu,\bfp) \rightarrow \calC(\bfu,\bfp).
 \end{align}
In particular, any trajectory $\alpha=(\alpha_t)_{t\in [0,P]}$ in the phase space $\tilde M$ in the full time span $[0,P]$
 is periodic; namely,
 \begin{align}
  \alpha_0=\alpha_P.
  \end{align}
\end{prop}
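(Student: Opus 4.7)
The plan is to establish the three equalities of the proposition in the order \eqref{eq:tau7}, \eqref{eq:q7}, \eqref{eq:periodup1}, and then read off the trajectory periodicity as an immediate consequence. The starting point is that the analogous identities on the semifield side (\eqref{eq:taui6} and \eqref{eq:qPid1}) are already available, so by transporting them through the embedding $\eta(s)$ from \eqref{eq:emb1}---which is injective once $B$ is nonsingular---I immediately obtain $\tau(P-1;0)(\tilde y_i(P)) = \tilde y_{\nu^{-1}(i)}$ and $\frakq(P-1;0)(\tilde y_i) = \tilde y_i$. The real work is to lift these conditions, which involve only the $n$ functions $\tilde y_i$, to identities on the full $2n$-dimensional algebra $\calC(\bfu,\bfp)$.

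For \eqref{eq:tau7}, note that each $\tau(s)$ acts linearly on $\bfu$ and on $\bfp$ separately by \eqref{eq:tauu1}--\eqref{eq:taup1}, so $\tau(P-1;0)$ is determined by matrices $M_u$, $M_p$ acting on $\bfu(P), \bfp(P)$. Taking the logarithm of the identity $\tau(P-1;0)(\tilde y_i(P)) = \tilde y_{\nu^{-1}(i)}$, using $\log \tilde y_i = \frac{1}{\sqrt 2}(\d_i^{-1} p_i + w_i)$ and the compatibility $\d_{\nu^{-1}(i)} = \d_i$ from Proposition \ref{prop:compat1}, the $\bfp$- and $\bfu$-parts separate. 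The $\bfp$-part yields $M_p = P_\nu^{-1}$ at once, while the $\bfu$-part reduces to $M_u\, B(P) = B\, P_\nu^{-1}$; substituting the periodicity $B(P) = P_\nu B P_\nu^{-1}$ and canceling $B$ (here the nonsingularity of $B$ is essential) forces $M_u = P_\nu^{-1}$ as well.

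For \eqref{eq:q7}, I introduce the auxiliary functions $\phi_i := p_i - \d_i w_i$. A short direct Poisson bracket calculation from \eqref{eq:Poipu1}, using only the skew-symmetrizer identity $\d_i b_{ji} = -\d_j b_{ij}$, yields $\{\phi_i, \tilde y_j\} = 0$. Since each $\frakq(s)$ is the time-one flow of $\scrH_0(s)$, which by \eqref{eq:tauH1} is a function of the $\tilde y_k$ alone, the equation of motion forces $\frakq(s)(\phi_i) = \phi_i$ for every $s$; hence $\frakq(P-1;0)$ fixes all $\phi_i$ as well as all $\tilde y_j$. The $2n$ functions $(\phi_i, \log \tilde y_j)$ are linear in $(\bfu,\bfp)$ with Jacobian determinant proportional to $\det B$, so they form a global linear coordinate system on $\tilde M \simeq \bbR^{2n}$ precisely when $B$ is nonsingular, forcing $\frakq(P-1;0) = \rmid$. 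Then \eqref{eq:periodup1} follows from the decomposition $\mu(P-1;0) = \frakq(P-1;0) \circ \tau(P-1;0)$, and the trajectory periodicity $\alpha_0 = \alpha_P$ is the pointwise restatement of \eqref{eq:q7}, since $\frakq(P-1;0)$ is by construction the net evolution in the initial coordinates over the full time span $[0,P]$.

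The main obstacle, and the reason the naive lift from semifield to $(\bfu,\bfp)$-level is not automatic, is the need to control the automorphism on an $n$-dimensional complement to the image of $\eta$. The correct complement turns out to be spanned by the $\phi_i$, whose selection is forced by two requirements that conflict at first sight: they must Poisson-commute with the $\tilde y_j$ (so as to be preserved by every $\frakq(s)$), and they must be linearly independent from the $\log \tilde y_j$ (so that together they coordinatize $\tilde M$). Both requirements are met by the shift $p_i - \d_i w_i$---the very defining function of the small phase space $\tilde M_0$---and, not coincidentally, the nonsingularity of $B$ is precisely the condition that makes the pair $(\phi,\log\tilde y)$ a faithful system of coordinates, exactly the same hypothesis used to reduce the proof to this case via the principal extension in Proposition \ref{prop:principal1}.
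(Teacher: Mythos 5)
Your proof is correct, but it takes a genuinely different route from the paper's at the one step that actually requires care, namely lifting the periodicity of the $n$ functions $\tilde y_i$ to an identity of automorphisms of the $2n$-dimensional algebra $\calC(\bfu,\bfp)$. The paper argues by observing that $w_i(s)$, $\d_i^{-1}p_i(s)$, and $\log\tilde y_i(s)$ obey the same affine recursions under $\rho(s)$ and $\tau(s)$ (cf.\ \eqref{eq:rhow1}, \eqref{eq:rhodp1}, \eqref{eq:rhoy1} and \eqref{eq:ws4}--\eqref{eq:ys4}), concludes that $\bfw$ and $\bfp$ inherit the periodicity of $\tilde\bfy$, and then inverts $w_i=\sum_j b_{ji}u_j$ using the nonsingularity of $B$ to recover the periodicity of $\bfu$. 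You instead treat $\tau(P-1;0)$ by splitting $\log\tilde y_i$ into its $\bfp$-linear and $\bfu$-linear parts and cancelling $B$ in the resulting matrix equation, and you treat $\frakq(P-1;0)$ by exhibiting the $n$ conserved quantities $\phi_i=p_i-\d_iw_i$, whose Poisson-commutation with every $\tilde y_j$ (a two-line computation from the skew-symmetrizer identity) forces them to be fixed by every Hamiltonian flow generated by a function of the $\tilde y$'s, and which complete $\log\tilde y_1,\dots,\log\tilde y_n$ to a global linear coordinate system on $\tilde M$ precisely when $B$ is nonsingular. This is more work than the paper's two sentences, but it makes fully explicit how periodicity of $n$ functions controls an automorphism of a $2n$-dimensional algebra — the point the paper's ``same affine recursion'' remark leaves implicit, since the three families have different initial data — and as a bonus it re-derives the invariance of the small phase space $\tilde M_0=\{\phi_i=0\}$ of \eqref{eq:dpw2}. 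Two cosmetic remarks: transporting \eqref{eq:taui6} and \eqref{eq:qPid1} to the $\tilde y$-level uses only that $\eta(s)$ is a homomorphism compatible with the $\tau$'s (the commutative diagram before Proposition \ref{prop:tauP1}), not its injectivity; and your matrix identity $M_u\,B(P)=B\,P_\nu^{-1}$ should carry a transpose in the convention of \eqref{eq:tyw1}, though this does not affect the cancellation argument.
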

\begin{proof}
The variables $y_i(s)$ in $\calC(\bfy(s))$ and $\tilde y_i(s)$  in $\calC(\bfu(s),\bfp(s))$
obey the same transformations under $\rho(s)$ and $\tau(s)$.
Thus, they also obey the same transformations under $\frakq(s)$ and $\frakq(s;0)$.
Therefore, the periodicities \eqref{eq:mu6}--\eqref{eq:rho6} imply the periodicities
for $\tilde y$-variables
\begin{align}
\label{eq:mu8}
\mu(P-1;0): &\  \tilde y_i(P) \mapsto \tilde y_{\nu^{-1}(i)},
\\
\label{eq:tau8}
\tau(P-1;0):&\  \tilde y_i(P) \mapsto \tilde y_{\nu^{-1}(i)},
\\
\label{eq:q8}
\frakq(P-1;0): &\ \tilde y_i \mapsto \tilde y_i.
\end{align}
Let us show that they imply the same periodicities for $\bfu(s)$ and $\bfp(s)$ as well.
We set $z_i(s)=\log  \tilde y_i(s)$.
Then, 
by \eqref{eq:rhodp1}, \eqref{eq:rhoy1}, \eqref{eq:dps4}, \eqref{eq:ys4},
we see that
$w_i(s)$, 
$\sqrt{2} \d_i^{-1} p_i(s)$,  and $z_i(s)$ obey the same affine transformations under $\rho(s)$ and $\tau(s)$.
It follows that $\bfw(s)$ and $\bfp(s)$ obey the same periodicities 
in \eqref{eq:mu8}--\eqref{eq:q8}
for  $\tilde \bfy(s)$.
Moreover,
since $B$ is nonsingular, one can invert 
the relation between $\bfu(s)$ and $\bfw(s)$ in \eqref{eq:tyw1}.
Thus, $\bfu(s)$ also obeys the same periodicity.
\end{proof}

\section{Classical mechanical proof of DIs (I): In local picture}
\label{sec:alternative1}
Let us give an alternative proof of
Theorem \ref{thm:DI1}
based on the classical mechanical formulation presented in this chapter.
Actually, we will give two alternative proofs, and this is the first one.

We prove the DI  \eqref{eq:DI3} 
with variables $\tilde \bfy$.
However, variables $\tilde \bfy$ are algebraically independent on $\tilde M_0$
only when the matrix $B$ is nonsingular.
So, to restore the DI in \eqref{eq:DI3} 
from the DI  with variables $\tilde \bfy$,
we need to assume that $B$ is nonsingular.
On the other hand, we have already argued in the last section
that this assumption does not lose generality.
So, from now on, \emph{we assume that the initial matrix $B$ is nonsingular}.
Then, under this assumption,
the map \eqref{eq:eta3} is injective
and we may identify $\tilde \bfy(s)$ and $ \bfy(s)$.

Suppose that
the mutation sequence  \eqref{eq:mseq5}
is $\nu$-periodic.
Let us consider a trajectory $\alpha=(\alpha_t)_{t\in [0,P]}$ in the small phase space $\tilde M_0$.
By Proposition \ref{prop:periodup1},
 $\alpha$ is periodic. Namely, 
 \begin{align}
 \label{eq:alphap1}
 \alpha_0=\alpha_P
 \end{align}
 holds.
 The main idea of the proof of Theorem \ref{thm:DI1} is
 to consider the following  integral of the Lagrangian $\scrL$ given by \eqref{eq:lag3}
  along  the trajectory $\alpha$:
 \begin{align}
 \label{eq:action1}
 S[\alpha]:=
 \int_{0}^P \scrL(\alpha_t) \, dt.
 \end{align}
We call \eqref{eq:action1} the \emph{action integral}\index{action integral} along $\alpha$ following the convention in classical mechanics.
Recall that $ \tilde y_{k_s}$ is constant in the time span $[s,s+1]$.
Thus,
 by \eqref{eq:lag3},
the action integral is reduced to a sum of the modified Rogers dilogarithm
\begin{align}
\label{eq:DI5}
 S[\alpha]=
\sum_{s=0}^{P-1}
{\varepsilon_s}{\d_{k_s}} \tilde L(\tilde y_{k_s}(s)^{\varepsilon_s}).
\end{align}
Observe that
this is exactly the LHS of the DI  \eqref{eq:DI3}.

Once the constancy of \eqref{eq:DI5} is shown,
the constant term is 0 by considering the tropical limit
$\bfy \rightarrow \bfzero$ as we did when proving Theorem \ref{thm:DI1}.
Thus, we are left to prove the constancy of  \eqref{eq:DI5}.
We formulate it as the invariance of the action integral as follows.
\begin{thm}
[{\cite[Thm.~6.3]{Gekhtman16}}]
\label{thm:action1}
Under the periodicity \eqref{eq:alphap1},
the action integral  $S[\alpha]$ in \eqref{eq:action1} is independent of
a trajectory $\alpha$ in $\tilde M_0$.
\end{thm}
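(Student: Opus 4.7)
The plan is to recast the action in Hamiltonian form via the Legendre identity $\scrL(s) = \sum_i \dot u_i(s)\, p_i(s) - \scrH(s)$, so that
\[
S[\alpha] \;=\; \sum_{s=0}^{P-1}\int_{s}^{s+1}\Bigl(\sum_i \dot u_i(s)\, p_i(s) - \scrH(s)\Bigr)\, dt,
\]
and then to carry out a standard variational argument. By Proposition \ref{prop:periodup1}, every point of $\tilde M_0$ is the initial condition of a periodic trajectory, so the functional $S$ descends to a smooth function $S\colon \tilde M_0\to\bbR$; my goal is to show that $dS$ vanishes identically. Since $\tilde M_0$ is connected, this yields the desired constancy.

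First I would introduce the Liouville-type 1-form $\theta(s) := \sum_i p_i(s)\, du_i(s)$ in each local chart and verify that the tropical gluings $\tau(s)$ patch them into a single globally defined 1-form $\theta$ on $\tilde M$. This step is parallel to part (b) of Proposition \ref{prop:tauP1}: the linearity of $\tau(s)$ in $(\bfu(s),\bfp(s))$ together with the vanishing $b_{k_sk_s}(s)=0$, which made $\sum_i p_i u_i$ invariant there, produces by an entirely analogous cancellation on differentials the identity $\tau(s)^*\theta(s)=\theta(s+1)$. Because $\theta$ is manifestly symmetric in the indices, it is also invariant under the final identification by the permutation $\nu$ associated with the $\nu$-periodicity. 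Next I would take an arbitrary tangent vector $\delta \in T_q\tilde M_0$ at an initial point $q\in\tilde M_0$ and propagate it along the Hamiltonian flow to obtain a variation of the entire trajectory $\alpha$; this variation remains tangent to $\tilde M_0$ by Lemma \ref{lem:small2}. Within each segment $[s,s+1]$ the trajectory solves Hamilton's equations \eqref{eq:us1}--\eqref{eq:ps1}, so the standard integration by parts yields
\[
\int_s^{s+1}\delta\scrL(s)\, dt \;=\; \bigl[\theta(s)(\delta\alpha)\bigr]_s^{s+1},
\]
since the bulk contribution is annihilated by the equations of motion. Summing over $s$, the contributions at each interior endpoint $t=1,\dots,P-1$ arise from two adjacent segments and cancel thanks to the $\tau$-invariance of $\theta$, leaving
\[
\delta S \;=\; \theta(\delta\alpha_P) - \theta(\delta\alpha_0),
\]
which vanishes by the periodicity $\alpha_0=\alpha_P$ together with the $\nu$-invariance of $\theta$.

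The main difficulty is the careful bookkeeping of the boundary terms: one must confirm that $\theta$ really is globally defined on $\tilde M$, which reduces to the same combinatorial identity underlying Proposition \ref{prop:tauP1}(b), and one must check that the remaining boundary contribution is $\nu$-invariant. A subsidiary subtlety is that both $\scrH(s)$ and $\scrL(s)$ are singular, so the naive Euler--Lagrange equations (Lemma \ref{lem:EL1}) are misleading; the argument must therefore be routed through Hamilton's equations rather than through $\scrL(s)$ directly, exploiting the Hamiltonian form of the action that was set up in the first paragraph. In spirit, this realizes the invariance of $S$ as a version of Noether's theorem, with the periodicity of the trajectory playing the role of the hidden symmetry and $\theta$ serving as the corresponding conserved quantity.
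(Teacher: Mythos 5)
Your argument is correct, and its skeleton is the same as the paper's: compute the first variation segment by segment, show each segment contributes only the boundary pairing $\sum_i p_i\,\delta u_i$, observe that this pairing is invariant under the gluings $\tau(s)$ (and under $\nu$) by exactly the linear-algebra identity of Proposition \ref{prop:tauP1}(b), telescope, and kill the two surviving end terms with the periodicity $\alpha_0=\alpha_P$. Your global Liouville form $\theta$ is precisely the paper's quantity $R[\alpha;t]$ from Lemma \ref{lem:dS1}, and its $\tau$-invariance is part (b) of that lemma.

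The one place where you genuinely deviate is in how the per-segment formula $\delta S[\alpha;s]=\theta(\delta\alpha_{s+1})-\theta(\delta\alpha_s)$ is obtained. The paper works with the singular Lagrangian $\scrL(s)$ directly: it splits the variation into the $u_{k_s}$-direction (where the Euler--Lagrange equations are valid and the textbook integration by parts applies) and the directions $i\neq k_s$ (where the Euler--Lagrange equations \eqref{eq:EL4} are false), handling the latter by the explicit formula \eqref{eq:Lu1} together with $\dot u_i=\delta\dot u_i=0$. You instead pass to the phase-space form $\scrL(s)=\sum_i\dot u_i(s)p_i(s)-\scrH(s)$ (which coincides with the paper's Legendre expression \eqref{eq:lag1} along actual trajectories, since $\dot u_i=0$ for $i\neq k_s$) and integrate by parts there; the bulk term then vanishes because the unvaried trajectory satisfies Hamilton's equations \eqref{eq:us1}--\eqref{eq:ps1}, with no case split and no appeal to the false Euler--Lagrange equations. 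This is legitimate — the phase-space variational principle does not require regularity of the Legendre transform, and the constraint \eqref{eq:dpw1} tying $\delta p$ to $\delta u$ is harmless because the bulk integrand vanishes identically — and it is arguably a cleaner packaging of what the paper's $\delta S_1+\delta S_2$ computation does by hand. The only thing you give up is that the paper's version makes explicit which pieces of the Lagrangian formalism survive the singularity, which is the point it wants to emphasize when interpreting the result as a Noether-type statement.
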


The rest of the section is devoted to proving  Theorem \ref{thm:action1}.
We prove it by the standard \emph{variational calculus} in classical mechanics.
Consider an infinitesimal variation $\alpha + \delta \alpha$
of a trajectory $\alpha$,
where we assume that $\alpha + \delta \alpha$ is also a trajectory
in the small phase space $\tilde M_0$.
Let us concentrate on the time span $[s,s+1]$.
Let $(\overline \bfu(t), \overline \bfp(t))$ ($t\in [s,s+1]$)
be the point corresponding to $\alpha_t$   in the $(\bfu(s),\bfp(s))$-coordinates.
Due to the constraint \eqref{eq:dpw1},
 we have the variations
\begin{align}
\label{eq:uinf1}
\overline u_i (t)&\rightarrow \overline  u_i (t) + \delta \overline u_i(t),
\\
\d^{-1}_i \overline p_i (t)&\rightarrow \d^{-1}_i  \overline p_i (t) +  \sum_{j=1}^n b_{ji}(s) \delta 
\overline u_j(t).
\end{align}
Let
 \begin{align}
 \label{eq:action2}
 S[\alpha;s]:=
 \int_{s}^{s+1} \scrL(s) (\alpha_t) \, dt.
 \end{align}
 
The variation of $ S[\alpha;s]$ is evaluated by the ``boundary terms'' as follows.
 \begin{lem}
 [{\cite[Lemma.~6.4]{Gekhtman16}}]
 \label{lem:dS1}
 (a).
 In  the $(\bfu(s),\bfp(s))$-coordinates, we have
 \begin{align}
 \label{eq:dS1}
 \delta  S[\alpha;s]
 = R[\alpha;s+1] - R[\alpha;s],
 \end{align}
 where
 \begin{align}
 \label{eq:Ba1}
  R[\alpha;t]:=\sum_{i=1}^n
 \overline p_i (t) \delta \overline u_i(t).
 \end{align}
 (b). The above quantity $ R[\alpha;t]$  is  independent of the choice of
  coordinates $(\bfu(s),\bfp(s))$ for $\tilde M_0$.
 \end{lem}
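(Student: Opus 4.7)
The plan is to derive (a) by the standard first variation formula for an action written in Hamiltonian form, and (b) by exploiting the linear covariance of the tropical gluing map $\tau(s)$ already recorded in Proposition~\ref{prop:tauP1}.

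For (a), I would start from the Legendre presentation $\scrL(s) = \sum_i \dot{u}_i(s)\, p_i(s) - \scrH(s)$, evaluated along the trajectory as $\scrL(s)(\alpha_t) = \sum_i \dot{\overline u}_i(t)\,\overline p_i(t) - \scrH(s)(\alpha_t)$. Differentiating under the integral gives
\begin{align*}
\delta S[\alpha;s]
&= \int_s^{s+1} \Bigl(\sum_i \delta \dot{\overline u}_i\,\overline p_i + \sum_i \dot{\overline u}_i\,\delta\overline p_i - \delta \scrH(s)(\alpha_t)\Bigr)\,dt.
\end{align*}
Since $\alpha$ (and $\alpha+\delta\alpha$) are integral curves of the Hamiltonian $\scrH(s)$, Hamilton's equations $\dot{\overline u}_i = \partial\scrH(s)/\partial p_i$ and $\dot{\overline p}_i = -\partial\scrH(s)/\partial u_i$ hold along the trajectory, so
\begin{align*}
\delta \scrH(s)(\alpha_t) = \sum_i \dot{\overline u}_i\,\delta\overline p_i - \sum_i \dot{\overline p}_i\,\delta\overline u_i.
\end{align*}
Substituting, the two mixed terms cancel and the remaining expression assembles into a total time derivative:
\begin{align*}
\delta S[\alpha;s] = \int_s^{s+1} \frac{d}{dt}\Bigl(\sum_i \overline p_i\,\delta\overline u_i\Bigr) dt = R[\alpha;s+1] - R[\alpha;s],
\end{align*}
which is the claimed formula. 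I would emphasize that this derivation never uses the Euler--Lagrange equations, so the singularity of $\scrL(s)$ observed in Lemma~\ref{lem:EL1} causes no trouble; the restriction to $\tilde M_0$ is compatible by Lemma~\ref{lem:small2}, since $\tilde M_0$ is preserved by the flow and $\scrH(s)$ depends on the coordinates only through $\tilde y_{k_s}(s)$.

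For (b), the point is that two adjacent charts $(\bfu(s),\bfp(s))$ and $(\bfu(s+1),\bfp(s+1))$ are glued by $\tau(s)$, which acts linearly on $\bfu$ and on $\bfp$ separately. As noted in the proof of Proposition~\ref{prop:tauP1}(a), if $A$ denotes the linear map on $\bfu$-variables coming from \eqref{eq:tauu1}, then the linear map on $\bfp$-variables in \eqref{eq:taup1} is $(A^{-1})^T$. Writing the point coordinates in the two charts as $\overline u_i^{(s+1)} = \sum_j A_{ij}^{-1}\overline u_j^{(s)}$ (since the coordinate change on points is the inverse of the change on functions, cf.\ item~\ref{en:bij1} of Section~6.1 and \eqref{eq:a1}) and $\overline p_i^{(s+1)} = \sum_j A^T_{ij}\,\overline p_j^{(s)}$, a direct contraction gives
\begin{align*}
\sum_i \overline p_i^{(s+1)}(t)\,\delta \overline u_i^{(s+1)}(t)
= \sum_{i,j,k} A^T_{ij}\,A^{-1}_{ik}\,\overline p_j^{(s)}(t)\,\delta\overline u_k^{(s)}(t)
= \sum_j \overline p_j^{(s)}(t)\,\delta\overline u_j^{(s)}(t).
\end{align*}
Equivalently, the invariance of $\sum_i u_i p_i$ under $\tau(s)$ in \eqref{eq:updual1}, combined with the linearity of $\tau(s)$, yields by polarization the invariance of the bilinear pairing $(\bfu,\bfp)\mapsto \bfp\cdot\delta\bfu$. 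Either way, $R[\alpha;t]$ at $t=s+1$ takes the same value in the two adjacent charts, so $R[\alpha;t]$ is globally well defined on $\tilde M_0$.

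The main obstacle is bookkeeping rather than substance: one must track carefully the distinction between $\tau(s)$ as a map of function algebras and $\tau^*(s)$ as a map of points, and one must check that the infinitesimal variation $\delta\alpha$ can be taken tangent to $\tilde M_0$ (ensured by Lemma~\ref{lem:small2}) without breaking either the Hamiltonian equations or the constraint \eqref{eq:dpw1}. Once these compatibilities are in place, both (a) and (b) reduce to the computations sketched above.
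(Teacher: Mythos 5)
Your proof is correct, and part (a) takes a genuinely different route from the paper's. The paper stays in the Lagrangian picture: it regards $\scrL(\alpha_t)$ as a function of $(\overline\bfu(t),\dot{\overline\bfu}(t))$, uses $\delta\dot{\overline u}_i=0$ for $i\neq k_s$ to split $\delta S[\alpha;s]=\delta S_1+\delta S_2$, handles $\delta S_1$ with the \emph{reliable} Euler--Lagrange equations \eqref{eq:EL3} for the $k_s$-component, and handles $\delta S_2$ by computing $\partial\scrL(s)/\partial u_i$ directly via \eqref{eq:Lu1} and recognizing the result as $\overline p_i(s+1)-\overline p_i(s)$ through \eqref{eq:rhop1} and \eqref{eq:du1}. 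Your phase-space first-variation computation reaches the same boundary terms without ever touching the (partly false) Euler--Lagrange equations, which is shorter and arguably more conceptual; what the paper's version buys is an explicit accounting of exactly which Lagrangian equations are trustworthy, in keeping with the chapter's emphasis on the singular Legendre transform. One point you should make explicit: your functional $\int(\sum_i\dot{\overline u}_i\overline p_i-\scrH)\,dt$ agrees with the paper's $S[\alpha;s]$ (defined via \eqref{eq:lag2}--\eqref{eq:lag3}) on a curve only when that curve satisfies $\dot{\overline u}_i=0$ for $i\neq k_s$ and the identification $\overline p_{k_s}=\delta_{k_s}(\sqrt2\log\tilde y_{k_s}-w_{k_s})$, i.e.\ only on trajectories; so the hypothesis that $\alpha+\delta\alpha$ is again a trajectory in $\tilde M_0$ is what makes your $\delta S$ equal to the paper's $\delta S$, even though your first-variation identity itself holds for arbitrary variations of a Hamiltonian solution. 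Part (b) is essentially the paper's argument ($\delta\overline\bfu$ transforms by the same linear map as $\overline\bfu$, and the matrix identity $B^T A=I$ from Proposition \ref{prop:tauP1} gives invariance of the pairing).

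One small correction: point-coordinates do not transform by the inverse of the map on functions; by \eqref{eq:a1} they transform by the \emph{same} formula, so you should write $\overline u^{(s+1)}=A\,\overline u^{(s)}$ rather than $A^{-1}\overline u^{(s)}$. Your contraction still closes only because $A=A^{-1}$ here, so the slip is harmless in this setting but would matter for a non-involutive change of chart.
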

 \begin{proof}
 (a).
We regard $ \scrL(\alpha_t) $ as
a function of $(\overline \bfu(t),  \dot{\overline\bfu}(t))$
as explained in Section \ref{sec:modified2}.
  We first recall that, by \eqref{eq:us1},
  we have
 \begin{align}
 \label{eq:sdu1}
\dot{\overline u}_i (t)&=0
\quad
(i\neq k_s).
 \end{align}
 This  implies that,
 \begin{align}
 \label{eq:du1}
\delta \dot{\overline u}_i  (t)&=0
\quad
(i\neq k_s).
 \end{align}
Thus, we decompose the variation $ \delta  S[\alpha;s]$ as
\begin{gather}
 \delta  S[\alpha;s]
 =\delta S_1 + \delta S_2,
 \end{gather}
 where
 \begin{align}
 \label{eq:dS11}
 \delta S_1
 :=&\ \int_s^{s+1} 
 \left\{
 \frac{\partial \scrL(\alpha_t) }{\partial \overline u_{k_s}(t) }
 \delta \overline u_{k_s}(t)
 +
  \frac{\partial \scrL(\alpha_t) }{\partial \dot{\overline u}_{k_s}(t) }
 \delta \dot{\overline u}_{k_s}(t)
 \right\}dt,
 \\
 \label{eq:dS21}
 \delta S_2
 :=&\ 
 \sum_{ \scriptstyle i=1 \atop  \scriptstyle
 i\neq k_s}^n
 \int_s^{s+1} 
 \left\{
 \frac{\partial \scrL(\alpha_t) }{\partial \overline u_{i}(t) }
 \delta \overline u_{i}(t)
 \right\} dt.
\end{align}
As we see in Lemma \ref{lem:EL1}, 
the equations \eqref{eq:EL1} and  \eqref{eq:EL2}
for $i=k_s$ are reliable.
By using them, we have
\begin{align}
\label{eq:dS3}
\begin{split}
 \delta S_1
& =\int_s^{s+1} 
 \left\{
\dot{\overline p}_{k_s}(t)
 \delta \overline u_{k_s}(t)
 +
\overline p_{k_s}(t)
 \delta \dot{\overline u}_{k_s}(t)
 \right\} dt
 \\
 & =\int_s^{s+1} 
 \frac{d}{dt}
 \left\{
{\overline p}_{k_s}(t)
 \delta \overline u_{k_s}(t)
 \right\} dt
 \\
 & =
\overline p_{k_s}(s+1) \delta \overline u_{k_s}(s+1)-
\overline p_{k_s}(s) \delta \overline u_{k_s}(s).
\end{split}
 \end{align}
To evaluate $ \delta S_2$,
we have to be careful because
the equations \eqref{eq:EL1} and  \eqref{eq:EL2}
for $i\neq k_s$
 are false.
By \eqref{eq:Lu1}, we have
 \begin{align}
 \delta S_2
 =
 \sum_{ \scriptstyle i=1 \atop  \scriptstyle
 i\neq k_s}^n
 \int_s^{s+1} 
 \left\{
 -
 \frac{1}{\sqrt{2}} \d_{i} b_{k_s i }(s)
\log
(1+\overline y_{k_s}(s)^{\varepsilon_s})
 \delta \overline u_{i}(t)
 \right\} dt,
\end{align}
where
\begin{align}
\overline{y}_i(t) 
= \exp\left(\sqrt{2} \delta_i^{-1}\overline p_i(t)\right)
= \exp\left(\sqrt{2} \overline w_i(t)\right),
\quad
\overline w_i(t):=\sum_{j=1}^n b_{ji}(s)  \overline u_j(t).
\end{align}
in $\tilde M_0$.
By  \eqref{eq:du1}, the integrand is constant on $[s,s+1]$. So, we have
\begin{align}
 \delta S_2
 =
 \sum_{ \scriptstyle i=1 \atop  \scriptstyle
 i\neq k_s}^n
 \left(
 -
 \frac{1}{\sqrt{2}} \d_{i} b_{k_s i }(s)
\log
(1+\overline y_{k_s}(s)^{\varepsilon_s})
 \delta \overline u_{i}(s)
 \right).
\end{align}
Moreover, 
 we have
\begin{align}
\label{eq:dS2}
\begin{split}
\delta S_2
\overset {\eqref{eq:rhop1}}
& {=}
  \sum_{ \scriptstyle i=1 \atop  \scriptstyle
 i\neq k_s}^n
 \left\{
\overline p_{i}(s+1) -
\overline p_{i}(s) \right\}\delta \overline u_{i}(s)
\\
\overset {\eqref{eq:du1}}
 & =
  \sum_{ \scriptstyle i=1 \atop  \scriptstyle
 i\neq k_s}^n
 \left\{
\overline p_{i}(s+1) \delta \overline u_{i}(s+1)-
\overline p_{i}(s) \delta \overline u_{i}(s)
\right\}.
\end{split}
\end{align}
By summing up \eqref{eq:dS3} and \eqref{eq:dS2},
we obtain \eqref{eq:dS1}.

(b).
Since $\tau^*(s)$ acts on $\overline u_i(t)$ as a linear transformation,
it also acts on
 the variation $\delta \overline u_i(t)$ 
in the same way.
Then,
the claim is proved in the same way as
Proposition \ref{prop:tauP1} (b).
\end{proof}

Let us finish the proof of
 Theorem \ref{thm:action1}.
By Lemma \ref{lem:dS1} (a) and (b),
we have
\begin{align}
\label{eq:dSa1}
\begin{split}
\delta S[\alpha]&=\sum_{s=0}^{P-1} 
(R[\alpha;s+1] - R[\alpha;s])
\\
&=
R[\alpha;P]- R[\alpha;0].
\end{split}
\end{align}
By Lemma \ref{lem:dS1} (b) again,
we may evaluate both $R[\alpha;P]$ and  $R[\alpha;0]$
in the initial coordinates $(\bfu,\bfp)$.
Finally and most importantly, by the periodicity \eqref{eq:alphap1} of $\alpha$,
we have
 \begin{align}
 R[\alpha;P]
 =
  R[\alpha;0].
 \end{align}
 Therefore, $\delta S[\alpha]=0$.
 This completes the proof of 
  Theorem \ref{thm:action1} and also
  Theorem \ref{thm:DI1}.

The  method of evaluating  $\delta S[\alpha;s]$ in the proof of Lemma \ref{lem:dS1} is
essentially the same one for  celebrated \emph{Noether's theorem}\index{Noether's theorem} (e.g., \cite[Thm.~1.3]{Takhtajan08}) in classical mechanics.
Thus, we may regard 
  Theorem \ref{thm:action1}
  as a variant of Noether's theorem.
  This gives us a conceptual understanding of the constancy of the DI in Theorem \ref{thm:DI1}.

\section{Classical mechanical proof of DIs (II): In global picture}
\label{sec:initial1}

The proof of   Theorem \ref{thm:DI1} in the previous section
uses the canonical coordinates $(\bfu(s),\bfp(s))$ for each time span $[s,s+1]$.
Here we present  the second  proof of Theorem \ref{thm:DI1}
based on the same classical mechanical method, but
 using only the initial canonical coordinates.
 The proof is less intrinsic than the first one.
 On the other hand, it is applicable in a more general context.
 The main reason to present this proof is that
 we use it later
 in Chapter \ref{ch:DICSD1}.
 
 Again, we assume that $B$ is nonsingular,
 and we identify $\tilde \bfy$ and $\bfy$
 by the map \eqref{eq:eta2}.
First, we consider a little more general Hamiltonian  (see \eqref{eq:Hc1})
\begin{align}
\scrH_{\bfc,a}= a \mathrm{Li}_2(- \tilde y^{\bfc})
\end{align}
for the time span $[0,1]$
and a trajectory
 $\alpha=(\alpha_t)_{t\in [0,1]}$  in  $\tilde M_0$ by $\scrH_{\bfc,a}$,
 where $\alpha_0=(\overline \bfu, \overline \bfp)$ and 
 $\alpha_{1}=(\overline \bfu', \overline \bfp')$.
 Let
\begin{align}
\label{eq:ypw1}
\overline{y}_i := \exp\left({\sqrt{2}}\d^{-1}_i \overline p_i\right)=\exp\left(\sqrt{2}\overline w_i\right),
\quad
\overline w_i:=\sum_{j=1}^n b_{ji}  \overline u_j.
\end{align}
Then, we have
\begin{align}
\label{eq:ou1}
\overline u_i' &=
\overline u_i - \frac{a}{\sqrt{2}} c_i \d^{-1}_i \log (1+\overline y^{\bfc}),
\\
\label{eq:op1}
\overline p_i' &=
\overline p_i + \frac{a}{\sqrt{2}} 
\biggl(\sum_{j=1}^n c_j b_{ij}\biggr) \log (1+\overline y^{\bfc}),
\\
\label{eq:oy1}
\log \overline y_i' &=
\log \overline y_i - a
\biggl(\sum_{j=1}^n c_j \omega_{ji}\biggr) \log (1+\overline y^{\bfc}).
\end{align}
Consider the corresponding Lagrangian
\begin{align}
\scrL_{\bfc,a}= a \tilde {L} ( \tilde y^{\bfc}),
\end{align}
and   the action integral along
the above trajectory
 \begin{align}
 \label{eq:action4}
 S_{\bfc,a}[\alpha]:=
 \int_0^1 \scrL_{\bfc,a} (\alpha_t) \, dt.
 \end{align}
The integrand is constant because
$\{\scrH_{\bfc,a},  \tilde y^{\bfc}\}=0$.
Thus, we have
 \begin{align}
 \label{eq:action5}
 S_{\bfc,a}[\alpha]= \scrL_{\bfc,a} (\alpha_0).
 \end{align}
Then, we consider an infinitesimal variation
 $\alpha + \delta \alpha$
of a trajectory $\alpha$ in $\tilde M_0$.
 
The following result formally coincides with  Lemma \ref{lem:dS1}.
However, it is more general than 
 Lemma \ref{lem:dS1} because
 it is applicable to any  integer vector $\bfc$ other than the $c$-vectors $\bfc^+_{k_s}(s)$.

\begin{prop}
[{\cite[Lemma 3.5]{Nakanishi21d}}]
\label{prop:dL1}
The infinitesimal variation of $ S_{\bfc,a}[\alpha]$ in  \eqref{eq:action5} is given by
\begin{align}
\label{eq:LPu1}
\delta  S_{\bfc,a}[\alpha]=
\sum_{i=1}^n \overline p'_i \delta \overline u'_i
-
\sum_{i=1}^n \overline p_i \delta \overline u_i.
\end{align}
\end{prop}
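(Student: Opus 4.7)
The plan is to reduce the variation $\delta S_{\bfc,a}[\alpha]$ to a boundary term by exploiting the integrable structure of the Hamiltonian $\scrH_{\bfc,a}$. Since $\tilde y^{\bfc}$ is conserved along the flow (because $\{\scrH_{\bfc,a},\tilde y^{\bfc}\}=0$), the Lagrangian $\scrL_{\bfc,a}=a\tilde L(\tilde y^{\bfc})$ is constant along any trajectory, so $S_{\bfc,a}[\alpha]=a\tilde L(\overline y^{\bfc})$ as already recorded in \eqref{eq:action5}. The key observation I will establish first is the on-shell identity
\begin{align*}
\scrL_{\bfc,a}(\alpha_t) \;=\; \sum_{i=1}^n \dot{\overline u}_i(t)\,\overline p_i(t) \;-\; \scrH_{\bfc,a}(\alpha_t) \qquad \text{on }\tilde M_0,
\end{align*}
which is a direct Legendre-transform computation using the equations of motion \eqref{eq:ou1}--\eqref{eq:oy1}, the defining constraint $p_i=\d_i w_i$ of $\tilde M_0$, and the identity \eqref{eq:L5} expressing $\tilde L$ through $\Li2$.

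With this identity in hand, I will rewrite $S_{\bfc,a}[\alpha]$ as the phase-space action
\begin{align*}
S_{\bfc,a}[\alpha] \;=\; \int_0^1 \Bigl(\sum_{i=1}^n \dot{\overline u}_i\,\overline p_i \;-\; \scrH_{\bfc,a}(\alpha_t)\Bigr)\, dt
\end{align*}
and vary it in the standard way. Integration by parts applied to $\dot{\overline u}_i\overline p_i$ produces the boundary term $\bigl[\sum_i \overline p_i\,\delta \overline u_i\bigr]_0^1$ plus an interior integrand $\sum_i(\dot{\overline u}_i \delta \overline p_i - \dot{\overline p}_i \delta \overline u_i)$, while varying $\int_0^1 \scrH_{\bfc,a}(\alpha_t)\, dt$ produces the very same interior integrand by Hamilton's equations; the two cancel, leaving precisely \eqref{eq:LPu1}. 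As an independent verification, one may compute $\delta S_{\bfc,a}[\alpha]$ directly from $\delta[a\tilde L(\overline y^{\bfc})]$ using the elementary identity $x\tilde L'(x) = \tfrac12\log(1+x)-\tfrac{x}{2(1+x)}\log x$, and match it against the expansion of $\sum_i \overline p'_i \delta \overline u'_i - \sum_i \overline p_i \delta \overline u_i$ obtained from \eqref{eq:ou1}--\eqref{eq:op1}; the quadratic-in-$a$ cross-term in that expansion is proportional to $\sum_{i,j} c_i c_j \omega_{ij}$ and vanishes by the skew-symmetry of $\Omega$, and the remaining pieces reassemble through the identity $\sum_i c_i \overline w_i = \tfrac{1}{\sqrt{2}}\log \overline y^{\bfc}$ valid on $\tilde M_0$.

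The principal obstacle is that $\scrL_{\bfc,a}$ is a \emph{singular} Lagrangian in the sense of Section \ref{sec:modified2}, so the classical on-shell formula $\delta S = \bigl[\sum_i p_i\,\delta u_i\bigr]_0^1$ cannot be invoked through the usual Euler--Lagrange route; the corresponding Euler--Lagrange equations \eqref{eq:EL4} are in fact false. The remedy is to route the argument through the phase-space (Hamiltonian) action, where the only delicate input is the on-shell equality of $\scrL_{\bfc,a}$ with $\sum_i \dot u_i p_i - \scrH_{\bfc,a}$, and this equality hinges on the constraint $p_i=\d_i w_i$ defining $\tilde M_0$, which kills the otherwise obstructive portion of the Legendre transform. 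Unlike Lemma \ref{lem:dS1}, no mutation direction is privileged here, so the argument applies uniformly to every $\bfc\in\bbZ^n$ and every coupling $a\in\bbR$.
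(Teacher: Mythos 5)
Your main argument is correct, but it takes a genuinely different route from the paper. The paper proves \eqref{eq:LPu1} by brute force: it computes $\delta\scrL_{\bfc,a}(\alpha_0)$ from \eqref{eq:ypw1} and \eqref{eq:L4}, expands the right-hand side via \eqref{eq:ou1}--\eqref{eq:op1} into the three sums $\sum_i A_i\delta\overline u_i-\sum_i\overline p_iB_i-\sum_iA_iB_i$, kills the cross term by $\{\bfc,\bfc\}_{\Omega}=0$, and matches the remaining pieces term by term. You instead route the claim through the phase-space action: you first verify the on-shell Legendre identity $\scrL_{\bfc,a}(\alpha_t)=\sum_i\dot{\overline u}_i\overline p_i-\scrH_{\bfc,a}(\alpha_t)$ on $\tilde M_0$ (this is the general-$\bfc$ analogue of the computation leading to \eqref{eq:lag3}, and it does check out: on $\tilde M_0$ one has $\sum_i c_i\delta_i^{-1}\overline p_i=\tfrac{1}{\sqrt2}\log\overline y^{\bfc}$, so $\sum_i\dot{\overline u}_i\overline p_i=-\tfrac a2\log(1+\overline y^{\bfc})\log\overline y^{\bfc}$ and \eqref{eq:L5} gives $\scrL_{\bfc,a}$), and then perform the standard first variation of $\int_0^1(\sum_i\dot{\overline u}_i\overline p_i-\scrH_{\bfc,a})\,dt$, where the interior terms cancel by Hamilton's equations and only the boundary term $\bigl[\sum_i\overline p_i\,\delta\overline u_i\bigr]_0^1$ survives. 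This is legitimate: the cancellation of interior terms holds for arbitrary $(\delta\overline u,\delta\overline p)$, hence in particular for the constrained variations dictated by $p_i=\d_i w_i$, and the invariance of $\tilde M_0$ under the flow of $\scrH_{\bfc,a}$ (which follows from $\dot p_i=\d_i\dot w_i$, exactly as in Lemma \ref{lem:small1}) ensures the Legendre identity also applies to the varied trajectory. What your route buys is a conceptual explanation of why the answer is a pure boundary term --- the singularity of $\scrL_{\bfc,a}$ is harmless in the Hamiltonian form, and the vanishing of the cross term is automatic rather than a computational accident; what the paper's route buys is that it is entirely elementary and simultaneously exhibits the explicit expression \eqref{eq:dld1}, which your ``independent verification'' paragraph essentially reproduces.
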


\begin{proof}
Here, we compute and compare both sides explicitly.
Thanks to \eqref{eq:ypw1} and \eqref{eq:L4},
the LHS   of \eqref{eq:LPu1} is given by
\begin{align}
\label{eq:dld1}
\delta  \scrL_{\bfc,a} (\alpha_0)=
\frac{a}{\sqrt{2}} 
\biggl(\, \sum_{i,j=1}^n c_j b_{ij} \delta \overline u_i\biggr)
\biggl(
\log(1+\overline y^{\bfc})
-
\frac{  \overline y^{\bfc} \log \overline y^{\bfc}}
{1+\overline y^{\bfc}}
\biggr).
\end{align}
Let us calculate the RHS  of \eqref{eq:LPu1}. 
By \eqref{eq:ou1}, we have
\begin{align}
\label{eq:ou2}
\delta \overline u_i' &=
\delta \overline u_i - \frac{a}{\sqrt{2}} c_i \d^{-1}_i 
\biggl(\, \sum_{j,k=1}^n c_j b_{kj} \delta \overline u_k\biggr)
 \frac{ \overline y^{\bfc}}{1+\overline y^{\bfc}}.
\end{align}
Let us write \eqref{eq:op1} and \eqref{eq:ou2} as
$
\overline p_i' =
\overline p_i + A_i
$
and
$
\delta \overline u_i' =
\delta \overline u_i - B_i
$.
Then, the RHS of \eqref{eq:LPu1} is given by
\begin{align}
\label{eq:dud1}
\sum_{i=1}^n  A_i  \delta \overline u_i
-
\sum_{i=1}^n    \overline p_i B_i 
-
\sum_{i=1}^n  A_i  B_i.
\end{align}
They are calculated as
\begin{align}
\sum_{i=1}^n  A_i  \delta \overline u_i
&=
\frac{a}{\sqrt{2}}
\biggl(\, \sum_{i, j=1}^n c_j b_{ij}  \delta \overline u_i \biggr) \log (1+\overline y^{\bfc}),
\\
\sum_{i=1}^n    \overline p_i B_i 
&=
\frac{a}{\sqrt{2}}
\biggl(\, \sum_{i, j=1}^n c_j b_{ij}  \delta \overline u_i \biggr) 
\frac{  \overline y^{\bfc} \log \overline y^{\bfc}}
{1+\overline y^{\bfc}},
\\
\sum_{i=1}^n  A_i  B_i
&=
\frac{a^2}{{2}}
\{\bfc,\bfc\}_{\Omega}
\biggl(\, \sum_{i, j=1}^n c_j b_{ij}  \delta \overline u_i \biggr) 
\frac{\overline y^{\bfc} \log (1+ \overline y^{\bfc})}{1+\overline y^{\bfc}}
=0,
\end{align}
where the last equality is due to the skew-symmetry of $\{\cdot,\cdot\}_{\Omega}$
in \eqref{eq:Omega2}.
Then, we see that \eqref{eq:dud1} coincides with \eqref{eq:dld1}.
\end{proof}

The rest of the proof of  Theorem \ref{thm:action1} is the same as before.
Assume that that $B$ is nonsingular, and 
suppose that $\bar \bfy_{t=P}=\bar \bfy_{t=0}$.
Then, by \eqref{eq:ou1}--\eqref{eq:oy1},
we have
$\bar \bfu_{t=P}=\bar \bfu_{t=0}$ and $\bar \bfp_{t=P}=\bar \bfp_{t=0}$.
Therefore, for the action integral
 \begin{align}
 \label{eq:action3}
 S[a]:=
 \int_{0}^P \scrL(\alpha_t) \, dt
=
\sum_{s=0}^{P-1}
{\varepsilon_s}{\d_{k_s}} \tilde L(y^{ \bfc^+_{k_s}(s)}),
\end{align}
we have
\begin{align}
\label{eq:dSa2}
\begin{split}
\delta S[\alpha]&=\sum_{s=0}^{P-1} 
(R[\alpha;s]_+ - R[\alpha;s]_-)
=
R[\alpha;P]- R[\alpha;0]=0,
\end{split}
\end{align}
where we used Proposition \ref{prop:dL1} for the first equality.

\notes
The content of this section is mostly based on \cite{Gekhtman16}
except for Section \ref{sec:initial1}, which appeared in \cite{Nakanishi21d}.
The idea  of representing $y$-variables with canonical  coordinates (variables) originates 
in the work of \cite{Fock07,Fock07b,Kashaev11},
where the quantum version was considered.
The appearance of the modified Rogers dilogarithm as
the Lagrangian was expected by the semi-classical calculation
of quantum dilogarithm identities by \cite{Kashaev11}.

\part{Scattering diagram  method}

\chapter{Algebraic formulation of dilogarithm and mutations}
\label{ch:algebraic1}

Motivated by the cluster scattering diagram formalism in the next chapter,
we define a certain infinite group for each skew-symmetric matrix.
We focus on certain distinguished elements in the group called the \emph{dilogarithm elements}.
They are regarded as an algebraic version of the dilogarithm and mutations,
and
they satisfy the \emph{pentagon relation} corresponding to the pentagon periodicity
of the $Y$-pattern of type $A_2$.
We present several interesting relations generated by the pentagon relation, including infinite ones, 

\section{Lie algebra associated with skew-symmetric form}
\label{sec:Lie1}

Let $\Omega=(\omega_{ij})_{i,j=1}^n$
be any skew-symmetric rational matrix.
Let us fix a lattice  $N= \bbZ^n$ of rank $n$.
Let $\{\cdot, \cdot\}_{\Omega}$ be the skew-symmetric rational bilinear form on $N$ defined by
\eqref{eq:Omega2}.
Let 
\begin{align}
\label{eq:N+1}
N^+
=
 \bbZ^n_{\geq 0} \setminus \{\bfzero\}
\end{align}
be the set of the positive vectors of $N$.
The \emph{degree}\index{degree (of vector)} $\deg(\bfn)$ of $\bfn=(n_i)\in N^+$ is defined by
\begin{align}
\deg(\bfn):=\sum_{i=1}^n n_i.
\end{align}
For any integer $\ell>0$,
we write
\begin{align}
(N^+)^{\leq \ell}:=&\{\bfn \in N^+ \mid \deg(\bfn)\leq \ell\},
\\
(N^+)^{> \ell}:=&\{\bfn \in N^+ \mid \deg(\bfn)> \ell\}.
\end{align}
We say that $\bfn\in N^+$ is \emph{primitive}\index{primitive (vector)}
if there is no pair $j\in \bbZ_{>1}$
and  $\bfn'\in N^{+}$
such that $\bfn=j\bfn'$.
The set of all primitive vectors are denoted by
$N^{+}_{\rmpr}$.

Following Gross-Siebert \cite{Gross07},
Kontsevich-Soibelman \cite{Kontsevich08, Kontsevich13},
and
Gross-Hacking-Keel-Kontsevich (GHKK) \cite{Gross14},
we introduce an $N^+$-graded Lie algebra  over $\bbQ$,
\begin{align}
\frakg=\frakg_{\Omega}
=\bigoplus_{\bfn\in N^+}
\bbQ X_{\bfn}
\end{align}
with generators
$X_{\bfn}$ ($\bfn\in N^+$) and relations
\begin{align}
\label{eq:Xcom1}
[X_{\bfn}, X_{\bfn'}]=\{\bfn,\bfn'\}_{\Omega} X_{\bfn+\bfn'}.
\end{align}
Clearly, the above Lie bracket is skew-symmetric.
The Jacobi identity is also clear  from   the following cyclic expression:
\begin{align}
\label{eq:XJac1}
\begin{split}
[X_{\bfn_1}, [X_{\bfn_2},X_{\bfn_3}]]
&=
\{\bfn_2,\bfn_3\}_{\Omega} [X_{\bfn_1},X_{\bfn_2+\bfn_3}]
\\
&=
(\{\bfn_1,\bfn_2\}_{\Omega} \{\bfn_2,\bfn_3\}_{\Omega} -
\{\bfn_2,\bfn_3\}_{\Omega} \{\bfn_3,\bfn_1\}_{\Omega} )X_{\bfn_1+\bfn_2+\bfn_3}.
\end{split}
\end{align}
We consider the completion of $\frakg$ by $\deg$.
Namely, for any  integer $\ell>0$,
consider a Lie algebra ideal and its quotient
\begin{align}
\label{eq:gl1}
\frakg^{>\ell}& = \bigoplus_{\bfn\in (N^+)^{>\ell}} \bbQ X_{\bfn},
\\
\label{eq:gl2}
\frakg^{\leq \ell} &= \frakg / \frakg^{>\ell}.
\end{align}
Note that $\frakg^{\leq \ell}$ is nilpotent.
Then, we take the inverse limit
\begin{align}
\label{eq:hg1}
\widehat \frakg =\widehat {\frakg}_{\Omega}:=
\lim_{\longleftarrow} \frakg^{\leq \ell}.
\end{align}
Thus, an element of $\widehat \frakg $ is given by a formal infinite sum
\begin{align}
\sum_{\bfn
\in N^+}
c_{\bfn} X_{\bfn}
\quad
(c_{\bfn}\in \bbQ).
\end{align}

For each $\bfn\in N^+_{\rmpr}$, 
let $\widehat \frakg_{\bfn}^{\parallel}$
denote the commutative Lie subalgebra of $\widehat \frakg$ spanned by
elements
$\sum_{j=1}^{\infty} c_{j\bfn} X_{j\bfn}$.

\section{Exponential group}
\label{sec:exponential1}
For the Lie algebra $\widehat{g}$ in \eqref{eq:hg1},
we define the associated  exponential group
\begin{align}
\label{eq:exG1}
G=G_{\Omega}:=\exp(\widehat \frakg),
\end{align}
where $\exp$ is a formal exponential map.
Namely, as a set, $G$ is identified with $\widehat \frakg$
with $\exp(X)\leftrightarrow X$,
and its product is defined by the \emph{Baker-Campbell-Hausdorff (BCH) formula}\index{Baker-Campbell-Hausdorff (BCH) formula}
(e.g., \cite[Ch.2, \S6]{Bourbaki89}, \cite{Bonfiglioli12}),
\begin{align}
\begin{split}
\label{eq:BCH1}
&\ \exp(X) \exp(Y)
\\
=&\ \exp \biggl(X+Y+\frac{1}{2}[X,Y]
+\frac{1}{12}[X,[X,Y]]
-\frac{1}{12}[Y,[X,Y]]
+\cdots
\biggr)
,
\end{split}
\end{align}
where we do not need the explicit form of the higher-order
brackets.
The infinite sum in the RHS converges in $\widehat\frakg$ due to the nilpotency of $\frakg^{\leq \ell}$ for any $\ell$.
Originally, the BCH formula is a product formula
for the exponentials of noncommutative variables $X$ and $Y$,
where $[X,Y]:=XY - YX$.
So, the associativity of the product is guaranteed. See \cite[Thm.~5.42]{Bonfiglioli12} for a detailed treatment.

Alternatively, 
for the Lie algebra ${g}^{\leq \ell}$ in \eqref{eq:gl2},
we define
 the associated  exponential group
\begin{align}
G^{\leq \ell}=\exp( \frakg^{\leq\ell})
\end{align}
in the same way.
Then, $G$ is identified with  the inverse limit
\begin{align}
G=\lim_{\longleftarrow} G^{\leq \ell}.
\end{align}
For the canonical projection
\begin{align}
\label{eq:cp1}
\pi_{\ell}: G \rightarrow G^{\leq \ell},
\end{align}
we set $G^{>\ell}:=\mathrm{Ker}\, \pi_{\ell}$.

For each $\bfn\in N^+_{\rmpr}$, 
let $ G_{\bfn}^{\parallel}$ be the  exponential group of
$\widehat \frakg_{\bfn}^{\parallel}$.
It is an abelian subgroup of $G$.
We call it the \emph{parallel subgroup}\index{parallel subgroup} of $\bfn$.

An \emph{infinite} product in $G$ 
makes sense if it is reduced to a finite product by applying $\pi_{\ell}$ for any $\ell$.
For any element $g=\exp(X)$, its rational power $g^c$ ($c\in \bbQ$) is ambiguously defined 
by $g^c:=\exp(cX)$.

\section{$y$-representation}
\label{sec:y-representation1}

We introduce an important representation of the group $G=G_{\Omega}$ in the previous section.
Let $\bfy=(y_1,\dots,y_n)$ be an $n$-tuple of variables
and let $\bbQ[[\bfy]]$ be the formal power series ring of $\bfy$
over $\bbQ$.
Namely, any element of $\bbQ[[\bfy]]$
is written as a formal infinite sum
\begin{align}
\sum_{\bfn\in \bbZ^n_{\geq 0}} c_{\bfn} y^{\bfn}
\quad
(c_{\bfn} \in \bbQ),
\quad
y^{\bfe_i}=y_i.
\end{align}
The algebra $\bbQ[[\bfy]]$ is naturally equipped with the Poisson bracket
defined by
\begin{align}
\label{eq:yy1}
\{ y^{\bfn}, y^{\bfn'}\}=\{\bfn,\bfn'\}_{\Omega} y^{\bfn+\bfn'},
\end{align}
where $\{\bfn,\bfn'\}_{\Omega}$ is the same skew-symmetric rational bilinear form in \eqref{eq:Xcom1}.
The Leibniz rule  is confirmed by
\begin{align}
\label{eq:Leibnitz2}
\begin{split}
\{ y^{\bfn}, y^{\bfn_1}\} y^{\bfn_2}
+y^{\bfn_1}\{ y^{\bfn}, y^{\bfn_2}\} 
&=\{\bfn,\bfn_1\}_{\Omega} y^{\bfn+\bfn_1}y^{\bfn_2}
+y^{\bfn_1}\{\bfn,\bfn_2\}_{\Omega} y^{\bfn+\bfn_2}
\\
&=\{\bfn,\bfn_1+\bfn_2\}_{\Omega} y^{\bfn+\bfn_1+\bfn_2},
\end{split}
\end{align}
while the Jacobi identity is verified by the following cyclic expression:
\begin{align}
\label{eq:yJac1}
\{ y^{\bfn_1}, \{ y^{\bfn_2}, y^{\bfn_3}
\}\}
&=
(\{\bfn_1,\bfn_2\}_{\Omega}\{\bfn_2,\bfn_3\}_{\Omega}
-\{\bfn_2,\bfn_3\}_{\Omega}\{\bfn_3,\bfn_1\}_{\Omega})y^{\bfn_1+\bfn_2+\bfn_3}.
\end{align}
Compare \eqref{eq:yJac1} with \eqref{eq:XJac1}.
As a special case of \eqref{eq:yy1}, we have
\begin{align}
\label{eq:yy2}
\{ y_i, y_j\}=\omega_{ij} y_iy_j.
\end{align}
Thus, the Poisson bracket on  $\bbQ[[\bfy]]$ here is
essentially the same as the quadratic Poisson bracket \eqref{eq:Poi2} on $\calC(\bfy)$
  under the identification of the matrix $\Omega$
in both situations.

We define an endomorphism $\tilde X_{\bfn}$ ($\bfn\in N^+$) of $\bbQ[[\bfy]]$ by
\begin{align}
\label{eq:tildeX1}
\tilde X_{\bfn}(y^{\bfn'}):=\{y^\bfn, y^{\bfn'}\}= \{\bfn,\bfn'\}_{\Omega} y^{\bfn+\bfn'}
.
\end{align}
This is a \emph{derivation}\index{derivation}; namely,  we have
\begin{align}
\tilde X_{\bfn}(y^{\bfn_1}y^{\bfn_2})
=
\tilde X_{\bfn}(y^{\bfn_1})y^{\bfn_2}
+
y^{\bfn_1}\tilde X_{\bfn}(y^{\bfn_2})
\end{align}
due to  \eqref{eq:Leibnitz2}.
Moreover, it preserves the Poisson bracket; namely, we have
\begin{align}
\tilde X_{\bfn} (\{y^{\bfn_1},y^{\bfn_2})\})
=
 \{ \tilde X_{\bfn}(y^{\bfn_1}),y^{\bfn_2}\}
+
\{y^{\bfn_1},\tilde X_{\bfn}(y^{\bfn_2})\}
\end{align}
due to the Jacobi identity.

Let $\mathrm{Der}(\bbQ[[\bfy]])$ be 
the \emph{derivation Lie algebra}\index{derivation!Lie algebra} of $\bbQ[[\bfy]]$,
i.e.,
the Lie algebra  consisting of all derivations of $\bbQ[[\bfy]]$
\cite[\S I.2]{Jacobson79}.

\begin{prop}
[Cf. {\cite[\S 1.1]{Gross14}}]
\label{prop:gaction1}
The  linear map
\begin{align}
\begin{matrix}
{\rp \rho_y}: &
\widehat g & \rightarrow
& \mathrm{Der}(\bbQ[[\bfy]])
\\
&
\displaystyle
\sum_{\bfn\in N_+} c_{\bfn}
 X_{\bfn}
&
\mapsto
&
\displaystyle
\sum_{\bfn\in N_+} c_{\bfn} 
\tilde X_{\bfn}
\end{matrix}
\end{align}
yields a Lie algebra homomorphism.
Moreover, if $\Omega$ is nonsingular, it is injective.
\end{prop}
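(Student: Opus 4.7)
The plan is to verify three things in turn: that $\rho_y$ is well-defined on the completion $\widehat\frakg$, that it respects the Lie bracket, and that its kernel is trivial when $\Omega$ is nonsingular.

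First, I would check well-definedness. For a formal sum $X = \sum_{\bfn\in N^+} c_{\bfn} X_{\bfn}$ in $\widehat\frakg$, one has formally
\begin{align*}
\rho_y(X)(y^{\bfm}) = \sum_{\bfn\in N^+} c_{\bfn} \{\bfn,\bfm\}_{\Omega}\, y^{\bfn+\bfm}.
\end{align*}
For any fixed degree $d$, only the finitely many $\bfn$ with $\deg(\bfn) = d - \deg(\bfm)$ contribute to the coefficient of a degree-$d$ monomial, so the result is a well-defined element of $\bbQ[[\bfy]]$. Since each $\tilde X_{\bfn}$ is a derivation and the Leibniz rule is preserved under such formal infinite sums, $\rho_y(X)$ is a derivation.

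Second, I would verify the Lie bracket relation. Two derivations of $\bbQ[[\bfy]]$ agree if they agree on the generators $y_1,\dots,y_n$ (equivalently, on all monomials $y^{\bfm}$). Thus it suffices to show
\begin{align*}
[\tilde X_{\bfn}, \tilde X_{\bfn'}](y^{\bfm}) = \{\bfn,\bfn'\}_{\Omega}\, \tilde X_{\bfn+\bfn'}(y^{\bfm})
\end{align*}
for all $\bfm$. Unwinding the definition \eqref{eq:tildeX1}, the left-hand side equals
$
\{y^{\bfn},\{y^{\bfn'},y^{\bfm}\}\} - \{y^{\bfn'},\{y^{\bfn},y^{\bfm}\}\},
$
which by the Jacobi identity for the Poisson bracket \eqref{eq:yJac1} equals $\{\{y^{\bfn},y^{\bfn'}\},y^{\bfm}\} = \{\bfn,\bfn'\}_{\Omega}\{y^{\bfn+\bfn'},y^{\bfm}\}$, matching the right-hand side. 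Linearity then extends this to all of $\widehat\frakg$.

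Third, for injectivity, suppose $\Omega$ is nonsingular and $\rho_y(X) = 0$ for $X = \sum c_{\bfn} X_{\bfn}$. Fix any $\bfm \in \bbZ^n_{\geq 0}$ and consider $\rho_y(X)(y^{\bfm}) = \sum_{\bfn} c_{\bfn}\{\bfn,\bfm\}_{\Omega}\, y^{\bfn+\bfm} = 0$. The key observation is that the map $\bfn \mapsto \bfn+\bfm$ is injective on $N^+$, so the coefficient of each monomial $y^{\bfn+\bfm}$ is exactly $c_{\bfn}\{\bfn,\bfm\}_{\Omega}$, giving
\begin{align*}
c_{\bfn}\{\bfn,\bfm\}_{\Omega} = 0 \quad \text{for all } \bfn\in N^+ \text{ and all } \bfm\in \bbZ^n_{\geq 0}.
\end{align*}
If some $c_{\bfn_0}\neq 0$, then $\{\bfn_0,\bfm\}_{\Omega} = 0$ for all $\bfm\in \bbZ^n_{\geq 0}$, in particular for $\bfm = \bfe_i$ for each $i$; but this says the row $\bfn_0^T\Omega$ vanishes, forcing $\bfn_0 = 0$ by nonsingularity, a contradiction.

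The argument is essentially algebraic manipulation, and I expect no serious obstacle. The only subtle point is the uniqueness of the decomposition $\bfk = \bfn + \bfm$ used in the injectivity step, which relies on working in $N^+$ rather than in a lattice where cancellation could occur; this is exactly why restricting to positive vectors makes the $y$-representation behave cleanly.
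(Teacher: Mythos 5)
Your proof is correct and follows essentially the same route as the paper: the bracket relation is verified by exactly the same Jacobi-identity computation for the Poisson bracket, and your injectivity argument is a spelled-out version of the paper's observation that the $\tilde X_{\bfn}$ are linearly independent when $\Omega$ is nonsingular (your well-definedness check on the completion is a harmless elaboration the paper leaves implicit).
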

\begin{proof}
Thanks to the Jacobi identity of the Poisson bracket, we have
\begin{align*}
[\tilde X_{\bfn_1}, \tilde X_{\bfn_2}](y^{\bfn})
&
=\{y^{\bfn_1}, \{y^{\bfn_2},y^{\bfn}\}\}
- \{y^{\bfn_2}, \{y^{\bfn_1},y^{\bfn}\}\}
\\
&
=
\{\{y^{\bfn_1}, y^{\bfn_2}\},y^{\bfn}\}
\\
&
= \{\bfn_1,\bfn_2\}_{\Omega} \{y^{\bfn_1+\bfn_2}, y^{\bfn}\}
\\
&
= \{\bfn_1,\bfn_2\}_{\Omega} \tilde X_{\bfn_1+\bfn_2}(y^{\bfn}).
\end{align*}
Thus, it is a Lie algebra homomorphism.
If $\Omega$ is nonsingular,
$\tilde X_{\bfn}$ ($\bfn\in N^+$) are linearly independent by \eqref{eq:tildeX1}.
Therefore, ${\rp \rho_y}$ is injective.
\end{proof}

For any $X \in\widehat\frakg$,
let $\tilde{X}:={\rho}_{y}(X)$.
We define 
\begin{align}
\label{eq:exp1}
\mathrm{Exp}(\tilde{X}):=\sum_{k=0}^{\infty} \frac{1}{k!}\tilde{X}^k
\in \mathrm{GL}(\bbQ[[\bfy]]).
\end{align}
Since $\tilde{X}$ is a derivation, $\mathrm{Exp}(\tilde{X})$
is an algebra automorphism of $\bbQ[[\bfy]]$
\cite[\S I.2]{Jacobson79}.
Then, we have the  following group homomorphism,
where we abuse the symbol ${\rp \rho_y}$, for simplicity.
\begin{prop}
[Cf. {\cite[\S 1.1]{Gross14}}]
\label{prop:gaction2}
We have a  group homomorphism
\begin{align}
\label{eq:Xn2}
\begin{matrix}
{\rp \rho_y}:&  G &\rightarrow & \mathrm{Aut}(\bbQ[[\bfy]]) \\
& \exp(X) &  \mapsto &\mathrm{Exp} (\tilde{X}).
\end{matrix}
\end{align}
Moreover, if $\Omega$ is nonsingular,
it is injective.
\end{prop}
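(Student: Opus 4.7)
The plan is to establish the proposition in three stages: well-definedness and convergence of $\mathrm{Exp}(\tilde X)$, the homomorphism property via the BCH formula, and injectivity via a degree filtration argument.

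First, I would check that $\mathrm{Exp}(\tilde X)$ is a well-defined algebra automorphism of $\bbQ[[\bfy]]$. The crucial point is convergence: since each $\tilde X_{\bfn}$ sends $y^{\bfn'}$ to a scalar multiple of $y^{\bfn+\bfn'}$ and thus raises total degree by $\deg(\bfn)\geq 1$, for any fixed $\bfm$ the infinite sum $\mathrm{Exp}(\tilde X)(y^{\bfm})=\sum_k \frac{1}{k!}\tilde X^k(y^{\bfm})$ contains, modulo any degree cutoff $\ell$, only finitely many nonzero terms. Hence it converges in $\bbQ[[\bfy]]$, and the operator is an automorphism because it is the usual exponential of a derivation (as already noted in the text citing \cite{Jacobson79}).

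For the homomorphism property, the idea is to transport the BCH formula \eqref{eq:BCH1} from $\widehat{\frakg}$ to the associative algebra $\mathrm{End}(\bbQ[[\bfy]])$. In any complete topological associative algebra containing nilpotent (or pro-nilpotent) elements $A,B$, one has the formal identity $\mathrm{Exp}(A)\mathrm{Exp}(B)=\mathrm{Exp}(\mathrm{BCH}(A,B))$, where $\mathrm{BCH}$ is computed using the commutator bracket $[A,B]=AB-BA$. Applying this with $A=\tilde X$ and $B=\tilde Y$ (convergence is again controlled by the degree filtration), it suffices to show that the commutator bracket in $\mathrm{End}(\bbQ[[\bfy]])$ pulled back along $\rho_y$ agrees with the Lie bracket in $\widehat{\frakg}$. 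But this is exactly the content of Proposition \ref{prop:gaction1}: $[\tilde X,\tilde Y]=\widetilde{[X,Y]}$. Since the BCH series involves only iterated Lie brackets and rational coefficients, we conclude $\mathrm{Exp}(\tilde X)\mathrm{Exp}(\tilde Y)=\mathrm{Exp}(\widetilde{\mathrm{BCH}(X,Y)})$, which gives $\rho_y(\exp(X)\exp(Y))=\rho_y(\exp(X))\,\rho_y(\exp(Y))$.

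For injectivity when $\Omega$ is nonsingular, I would use a leading-term argument on the degree filtration. Suppose $\exp(X)\in\ker\rho_y$ with $X=\sum_{\bfn\in N^+} c_{\bfn}X_{\bfn}\neq 0$, and let $d_0$ be the smallest $d$ such that $X_0:=\sum_{\deg\bfn=d_0} c_{\bfn} X_{\bfn}$ is nonzero. Then for any $\bfm\in\bbZ^n_{\geq 0}$, the element $(\mathrm{Exp}(\tilde X)-\mathrm{id})(y^{\bfm})=\tilde X(y^{\bfm})+\tfrac12 \tilde X^2(y^{\bfm})+\cdots$ has its lowest-degree contribution equal to $\tilde X_0(y^{\bfm})$, sitting in degree $\deg(\bfm)+d_0$, while all further terms live in degrees $\geq \deg(\bfm)+2d_0$. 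The assumption $\mathrm{Exp}(\tilde X)=\mathrm{id}$ forces $\tilde X_0=0$, and by the injectivity statement of Proposition \ref{prop:gaction1} (valid since $\Omega$ is nonsingular) this forces $X_0=0$, contradicting the choice of $d_0$.

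The main obstacle I anticipate is the careful handling of convergence in the BCH step: I must ensure that the formal identity $\mathrm{Exp}(A)\mathrm{Exp}(B)=\mathrm{Exp}(\mathrm{BCH}(A,B))$ holds in $\mathrm{End}(\bbQ[[\bfy]])$ for derivations of positive degree. The cleanest way is to pass to the quotients $\frakg^{\leq \ell}$, where nilpotency makes all exponentials literal finite sums and the classical BCH identity in a nilpotent associative algebra applies directly, and then take the inverse limit using \eqref{eq:hg1}; the group homomorphism property then follows by compatibility of $\pi_\ell$ with $\rho_y$.
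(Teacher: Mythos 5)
Your proof is correct and follows essentially the same route as the paper's (very terse) argument: the homomorphism property comes from the fact that $\mathrm{Exp}$ in $\mathrm{End}(\bbQ[[\bfy]])$ also obeys the BCH formula once Proposition \ref{prop:gaction1} identifies the brackets, and injectivity reduces to the Lie-algebra-level injectivity of that proposition. Your leading-term argument for injectivity and the reduction to the nilpotent quotients $\frakg^{\leq \ell}$ for the BCH step are just careful expansions of what the paper leaves implicit.
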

\begin{proof}
The map $\rho_y$ is a group homomorphism,
because the exponential \eqref{eq:exp1} also  satisfies the 
BCH formula.
The injectivity of ${\rp \rho_y}$ follows from the one in
 Proposition  \ref{prop:gaction1}.
\end{proof}

As we see in the next section,
the representation ${\rp \rho_y}$ is
 closely related to the mutations of $y$-variables.
So, we call it the \emph{$y$-representation}\index{$y$-representation} of $G$.

\section{Dilogarithm elements and mutations}
\label{sec:dilogelements1}

Now we introduce the main player of Part III.

\begin{defn}[Dilogarithm element]
\label{3defn:diloge1}
For each $\bfn\in N^+$,
we define
\begin{align}
\label{3eq:gei1}
\Psi[\bfn]:=\exp
\Biggl(\,
\sum_{j=1}^{\infty} \frac{(-1)^{j+1}}{j^2} X_{j \bfn}
\Biggr)
\in 
G_{\bfn_0}^{\parallel},
\end{align}
where $\bfn_0\in N^+_{\rmpr}$ is the  one such that $\bfn=h \bfn_0$ for some  $h\in \bbZ_{>0}$.
We call $\Psi[\bfn]$ the \emph{dilogarithm element}\index{dilogarithm!element}
 for $\bfn$.
\end{defn}

The above definition is closely related to the following formula.

\begin{prop}[{Cf.\ \cite[Lemma 1.3]{Gross14}}]
\label{prop:gn1}
Under the $y$-represent\-ation ${\rp \rho_y}$, $\Psi[\bfn]$ acts on $\bbQ[[\bfy]]$ as
\begin{align}
\label{eq:gei2y}
\Psi[\bfn](y^{\bfn'})=y^{\bfn'} (1+y^{\bfn})^{\{ \bfn, \bfn'\}_{\Omega}}
\quad (\bfn'\in  \bbZ_{\geq 0}^n).
\end{align}
\end{prop}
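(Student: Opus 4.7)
The plan is to reduce everything to the commuting case and then compute a single exponential action explicitly. The key observation is that the derivations $\tilde X_{j\bfn}$ for $j\geq 1$ all commute with one another, since $[\tilde X_{j\bfn},\tilde X_{k\bfn}]=\{j\bfn,k\bfn\}_{\Omega}\tilde X_{(j+k)\bfn}=jk\{\bfn,\bfn\}_{\Omega}\tilde X_{(j+k)\bfn}=0$ by skew-symmetry of $\{\cdot,\cdot\}_{\Omega}$. Therefore, under $\rho_y$, the exponential of the infinite sum in \eqref{3eq:gei1} factors as an infinite product
\begin{align*}
\rho_y(\Psi[\bfn])=\prod_{j=1}^{\infty}\mathrm{Exp}\!\left(\frac{(-1)^{j+1}}{j^2}\tilde X_{j\bfn}\right),
\end{align*}
and the infinite product converges on $\bbQ[[\bfy]]$ degree-by-degree since $\tilde X_{j\bfn}$ raises degree by $j\deg(\bfn)$.

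Next, I will compute the action of a single factor $\mathrm{Exp}(c\,\tilde X_{j\bfn})$ on $y^{\bfn'}$. Setting $a:=\{\bfn,\bfn'\}_{\Omega}$, a straightforward induction using $\{j\bfn,\bfn'+mj\bfn\}_{\Omega}=\{j\bfn,\bfn'\}_{\Omega}=ja$ yields
\begin{align*}
(\tilde X_{j\bfn})^{k}(y^{\bfn'})=(ja)^{k}\,y^{kj\bfn+\bfn'},
\end{align*}
so that
\begin{align*}
\mathrm{Exp}(c\,\tilde X_{j\bfn})(y^{\bfn'})=\sum_{k=0}^{\infty}\frac{(cja)^{k}}{k!}y^{kj\bfn}\cdot y^{\bfn'}=\exp(cja\,y^{j\bfn})\cdot y^{\bfn'}.
\end{align*}

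The final step is to apply the factors successively. Since $\tilde X_{j\bfn}(y^{k\bfn})=\{j\bfn,k\bfn\}_{\Omega}y^{(j+k)\bfn}=0$, each monomial $y^{k\bfn}$ is fixed by every $\mathrm{Exp}(c\,\tilde X_{j\bfn})$, so the algebra automorphism property implies that successive applications simply multiply the previously produced prefactors. Setting $c_{j}=(-1)^{j+1}/j^{2}$ and using $c_{j}j=(-1)^{j+1}/j$, I obtain
\begin{align*}
\rho_y(\Psi[\bfn])(y^{\bfn'})=\exp\!\Biggl(a\sum_{j=1}^{\infty}\frac{(-1)^{j+1}}{j}y^{j\bfn}\Biggr)y^{\bfn'}=\exp\bigl(a\log(1+y^{\bfn})\bigr)y^{\bfn'},
\end{align*}
which equals $y^{\bfn'}(1+y^{\bfn})^{\{\bfn,\bfn'\}_{\Omega}}$, as claimed. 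The series $\sum_{j\geq 1}(-1)^{j+1}y^{j\bfn}/j$ and the formal exponentiation are interpreted in $\bbQ[[\bfy]]$ in the standard way. No serious obstacle arises; the only point requiring care is the justification that the infinite product of automorphisms converges and can be evaluated factor-by-factor, which is handled by the degree filtration and the vanishing $\tilde X_{j\bfn}(y^{k\bfn})=0$ mentioned above.
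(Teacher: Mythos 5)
Your proof is correct and follows essentially the same route as the paper's: both arguments rest on the commutativity of the $\tilde X_{j\bfn}$ within the parallel subalgebra (equivalently, $\tilde X_{j\bfn}(y^{k\bfn})=0$), the derivation/automorphism property of $\mathrm{Exp}$, and the identification $\sum_{j\geq 1}(-1)^{j+1}y^{j\bfn}/j=\log(1+y^{\bfn})$. The only difference is presentational — the paper evaluates $\mathrm{Exp}$ of the whole sum in a single step, whereas you first factor it into an infinite product of commuting exponentials; your version spells out the justification the paper leaves implicit.
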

\begin{proof}
By \eqref{eq:tildeX1}, we have
\begin{align*}
\label{eq:dilogc1}
\Psi[\bfn](y^{\bfn'})
&=
y^{\bfn'}
\exp
\biggl(
\,
\sum_{j=1}^{\infty}  \frac{(-1)^{j+1}}{j^2} y^{j \bfn}\{ j\bfn, \bfn' \}_{\Omega}
\biggr)
\\
&=
y^{\bfn'}
\exp
\biggl(\,
 \sum_{j=1}^{\infty}   \frac{(-1)^{j+1}}{j}  y^{j\bfn}
\biggr)
^{\{ \bfn, \bfn' \}_{\Omega}}
\\
&=y^{\bfn'} (1+y^\bfn)^{\{ \bfn, \bfn'\}_{\Omega}}.
\end{align*}
\end{proof}

We may compare the above calculation with the following one (see also \eqref{eq:dLx1}):
\begin{align}
\begin{split}
x\frac{d}{dx}
\biggl(
-\mathrm{Li}_2(-x)
\biggr)
&=
x\frac{d}{dx}
\biggl(
\sum_{j=1}^{\infty} {\frac{(-1)^{j+1}}{j^2}}x^j
\biggr)
\\
&=
\sum_{j=1}^{\infty} {\frac{(-1)^{j+1}}{j}}x^j
=
\log (1+x).
\end{split}
\end{align}
Thus, the dilogarithm elements are regarded as a certain algebraic formulation
of (the exponential of) the Euler dilogarithm.

Next, let us compare the formula \eqref{eq:gei2y} with
the automorphism
$\frakq(s)$ in \eqref{eq:fq1}.
Under the identification of the matrix $\Omega$ in  both situations,
we have
\begin{align}
\label{eq:dm1}
\frakq{(s)}=
{\rp \rho_y}(
\Psi[\bfc^+_{k_s}(s)]^{-\varepsilon_{s}\d_{k_s}}),
\end{align}
where we regard $\frakq(s)$ as acting on
$\bbQ[[\bfy]]$.
More generally, 
 the time-one flow   \eqref{eq:tHc1}
of the Hamiltonian $\scrH_{\bfn,a}= a \mathrm{Li}_2(-y^{\bfn})$
is identified with the action of $\Psi[\bfn]^{-a}$.

In summary,
mutations are described by
the action of dilogarithm elements.
This is also regarded as an algebraic reformulation
of the Hamiltonian picture of mutations,
where the  time-one flow of the Hamiltonian  is replaced with the
 action of dilogarithm elements.
The identification \eqref{eq:dm1}  serves as a key to relate a cluster pattern/$Y$-pattern and a forthcoming \emph{cluster scattering diagram}.

There are  several advantages of 
 this algebraic formulation as follows:
 \begin{enumerate}
 \item
 A composition of mutations is expressed
 as a product of dilogarithm elements in $G$.
 \item
 In particular, a period of mutations is expressed as a
 relation among dilogarithm elements in the group $G$.
 \item
 The infinite product of dilogarithm elements makes sense in $G$.
 This enables us to go beyond mutations in a cluster pattern,
 where only finite compositions of mutations are allowed.
\item
The group $G$ controls mutations locally and globally.
 \item
 The quantum case will be treated in a parallel way.
 \end{enumerate}

\section{DI for  dilogarithm elements}

The periodicity condition \eqref{eq:sigmap2} implies a relation of dilogarithm elements in the group $G$.

\begin{thm}
[DI for dilogarithm elements]
\label{thm:DIE1}
 \index{dilogarithm identity!for  dilogarithm element}
Suppose that
  the  sequence of  mutations
  \eqref{eq:mseq2} is  \emph{$\nu$-periodic}.
  Then, the following relation holds in $G$.
\begin{align}
\label{eq:DIE2}
\Psi[\bfc^+_{k_{P-1}}({P-1})]^{\varepsilon_{P-1}\delta_{k_{P-1}}}
\cdots
\Psi[\bfc^+_{k_0}(0)]^{\varepsilon_0 \delta_{k_0}}
=\rmid.
\end{align}
\end{thm}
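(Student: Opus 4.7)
The plan is to translate the identity in $G$ into an identity among the nontropical parts of mutations $\frakq(s)$, and then invoke the periodicity \eqref{eq:qPid1} together with the injectivity of the $y$-representation $\rho_y$ from Proposition \ref{prop:gaction2}. The key dictionary is the identification \eqref{eq:dm1}, which reads
\begin{equation*}
\frakq(s)=\rho_y\bigl(\Psi[\bfc^+_{k_s}(s)]^{-\varepsilon_s \delta_{k_s}}\bigr),
\end{equation*}
so that each factor on the LHS of \eqref{eq:DIE2} corresponds, under $\rho_y$, to the inverse of one $\frakq(s)$.

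First I would apply $\rho_y$ to the LHS of \eqref{eq:DIE2}. Since $\rho_y$ is a group homomorphism (Proposition \ref{prop:gaction2}), the product is carried to the composition of the corresponding automorphisms of $\bbQ[[\bfy]]$. Reading the factors in the order in which they appear and using \eqref{eq:dm1}, this composition is precisely the inverse of $\frakq(P-1;0)=\frakq(0)\circ\frakq(1)\circ\cdots\circ\frakq(P-1)$ defined in \eqref{eq:q01} (up to the convention for the order of composition under $\rho_y$, which one checks directly from Proposition \ref{prop:gaction2}). The $\nu$-periodicity of \eqref{eq:mseq2} implies, via the Fock-Goncharov decomposition \eqref{eq:decom1} and the tropical periodicity \eqref{eq:taui6}, the relation \eqref{eq:qPid1}, i.e.\ $\frakq(P-1;0)=\rmid$ as an automorphism of $\bbP=\bbQ_{\mathrm{sf}}(\bfy)$. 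This identity extends to $\bbQ[[\bfy]]$ since both sides act by the same formal Laurent expansion. Hence $\rho_y$ sends the LHS of \eqref{eq:DIE2} to the identity endomorphism of $\bbQ[[\bfy]]$.

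If the matrix $\Omega$ is nonsingular, Proposition \ref{prop:gaction2} guarantees that $\rho_y$ is injective, and we can immediately conclude \eqref{eq:DIE2}. The main obstacle is the singular case, where $\rho_y$ has a nontrivial kernel and the above argument does not suffice directly. To handle this, I would import the principal extension trick used in Section \ref{sec:periodicity2}. Namely, replace $B$ by its principal extension $\overline{B}$ in \eqref{eq:pext1}, which is always nonsingular and admits $D\oplus D$ as a skew-symmetrizer, and lift the sequence \eqref{eq:mseq2} to the extended sequence \eqref{eq:mseq6}. By Proposition \ref{prop:principal1}, the lifted sequence is still $\nu$-periodic (for the permutation acting on the first $n$ indices), and by \eqref{eq:oC1} the $c^+$-vectors entering the DI coincide with the original ones, padded by zeros. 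Thus \eqref{eq:DIE2} for the original data is the image of the corresponding identity for the extended data under the obvious inclusion of groups $G_{\Omega}\hookrightarrow G_{\overline{\Omega}}$, and the latter identity is provided by the nonsingular case already handled.

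In summary, the proof consists of three clean steps: (i) translate the group-theoretic claim into a statement about $\rho_y$-images by means of \eqref{eq:dm1}; (ii) identify that image with $\frakq(P-1;0)^{-1}$ and invoke \eqref{eq:qPid1}; (iii) use principal extension to bypass the singular case. The only technical point requiring care is bookkeeping the order of the product under $\rho_y$ and matching signs and powers $\varepsilon_s \delta_{k_s}$; all the substantive input---sign-coherence, separation formulas, synchronicity---has already been absorbed into \eqref{eq:qPid1}.
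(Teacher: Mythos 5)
Your steps (i)--(iii) reproduce the paper's argument for the special case where $\Upsilon(0)$ coincides with the initial $Y$-seed $\Upsilon_{t_0}$: translate via \eqref{eq:dm1}, invoke $\frakq(P-1;0)=\rmid$ from \eqref{eq:qPid1}, use injectivity of $\rho_y$ for nonsingular $B$, and pass to the principal extension otherwise. That part is correct.

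However, there is a genuine gap: the theorem is stated for the general sequence \eqref{eq:mseq2}, in which $\Upsilon(0)$ need not be the initial seed, whereas the entire Fock--Goncharov machinery you invoke --- the decomposition \eqref{eq:decom1}, the tropical periodicity \eqref{eq:taui6}, and in particular \eqref{eq:qPid1} --- is set up in Section \ref{sec:Fock1} only under the explicit assumption $\Upsilon(0)=\Upsilon_{t_0}$. You cannot escape this by relocating $t_0$ (as one does for the classical DI via Remark \ref{rem:DI1}), because the $c^+$-vectors $\bfc^+_{k_s}(s)$ appearing in \eqref{eq:DIE2} are computed relative to the fixed initial vertex and change if $t_0$ is moved; the relation to be proved is pinned to that choice. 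The missing step is a conjugation argument: extend the sequence by a path of mutations from $\Upsilon_{t_0}$ to $\Upsilon(0)$ and, using \eqref{eq:comp1}, by the $\nu$-image of the reversed path from $\nu\Upsilon(0)$ back to $\nu\Upsilon_{t_0}$; the extended sequence is $\nu$-periodic and starts at the initial seed, so your argument applies to it, and by \eqref{2eq:ck1} (the $c$-vectors of a mutation and its inverse are negatives of each other) the extra factors organize themselves into $h^{-1}(\,\cdots)h=\rmid$ for some $h\in G$, whence the original product is also $\rmid$. Without this step the proof covers only the case $\Upsilon(0)=\Upsilon_{t_0}$.
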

\begin{proof}
Case 1.
Assume $\Upsilon(0)=\Upsilon$.
Then, we are in the situation of \eqref{eq:mseq5}.
By \eqref{eq:rho6}, we have
 \begin{align}
 \label{eq:qPid2}
\frakq(P-1;0)=\rmid.
\end{align}
Thus,
thanks to the identification \eqref{eq:dm1},
\begin{align}
g=
\Psi[\bfc^+_{k_0}(0)]^{- \varepsilon_0 \delta_{k_0}}
\cdots
\Psi[\bfc^+_{k_{P-1}}({P-1})]^{- \varepsilon_{P-1}\delta_{k_{P-1}}}
\end{align}
acts trivially on $\bbZ[[\bfy]]$.
Assume that $B$ is nonsingular.
Then, by Proposition \ref{prop:gaction2},
$g$ is the identity element in $G$.
Taking its inverse, we obtain \eqref{eq:DIE2}.
If $B$ is singular,
let  $\overline B$  be the principal extension in \eqref{eq:pext1}.
 Accordingly, the matrix $\Omega$ is extended as
 \begin{align}
\label{eq:pextO1}
\overline \Omega
=
\begin{pmatrix}
\Omega & - \Delta^{-1}\\
\Delta^{-1} & O\\
\end{pmatrix}
\end{align}
By Proposition \ref{prop:principal1} and \eqref{eq:oC1},
we obtain the relation \eqref{eq:DIE2} in the group $G_{\overline\Omega}$.
Since $G=G_{\Omega}$ is a subgroup of $G_{\overline\Omega}$, we obtain the relation
\eqref{eq:DIE2} in $G$.
\par
Case 2. Assume $\Upsilon(0)\neq \Upsilon$.
By \eqref{eq:comp1}, we extend the sequence \eqref{eq:mseq2}
as 
\begin{align}
\label{eq:extmut1}
&\Upsilon
\
{\buildrel {k'_0} \over \rightarrow}
\
\Upsilon'(1)
\
{\buildrel {k'_1} \over \rightarrow}
\
\cdots
\
{\buildrel {k'_r} \over \rightarrow}
\
\Upsilon(0) 
\
{\buildrel {k_0} \over \rightarrow}
\
\cdots
\
{\buildrel {k_{P-1}} \over \rightarrow}
\
\Upsilon(P)
=\nu\Upsilon(0)
\
{\buildrel {\nu(k'_r)} \over \rightarrow}
\cdots
\
{\buildrel {\nu(k'_0)} \over \rightarrow}
\
\nu\Upsilon.
\end{align}
Then, the extended sequence belongs to Case 1.
Note that the $k_t$th $c$-vector of $\Upsilon'(t)$ coincides with
 the $\nu(k_t)$th $c$-vector of $\nu\Upsilon'(t)$.
Moreover, by \eqref{2eq:ck1},
it is the inverse of the $\nu(k_t)$th $c$-vector of $\nu\Upsilon'(t+1)$.
Thus,
the corresponding relation \eqref{eq:DIE2} has the form
\begin{align}
\label{eq:hph1}
 h^{-1} ( \Psi[\bfc^+_{k_{P-1}}({P-1})]^{\varepsilon_{P-1}\delta_{k_{P-1}}}
\cdots
\Psi[\bfc^+_{k_0}(0)]^{\varepsilon_0 \delta_{k_0}} )h
=\rmid
\end{align}
where $h\in G$ is the contribution of the mutations from $\Upsilon$ to $\Upsilon(0)$.
Therefore, we have \eqref{eq:DIE2}.
\end{proof}

We regard the relation \eqref{eq:DIE2} as an algebraic version of the dilogarithm identity
\eqref{eq:DI3}.

\begin{ex}
\label{ex:DIA23}
Let us consider the pentagon periodicity of type $A_2$.
We choose a skew-symmetric decomposition of the initial exchange matrix $B$ in \eqref{eq:BA2}
as
\begin{align}
\label{eq:Omega1}
\Delta=I,
\quad
\Omega=
\begin{pmatrix}
0 & -1 
\\
1 & 0
\end{pmatrix}.
\end{align}
The $c$-vectors are found in \eqref{eq:ys3}.
Then, the relation \eqref{eq:DIE2} is written as
\begin{align}
\Psi[\bfe_2]^{-1}
\Psi[\bfe_1+\bfe_2]^{-1}
\Psi[\bfe_1]^{-1}
\Psi[\bfe_2]
\Psi[\bfe_1]
=\rmid.
\end{align}
Equivalently, we have
\begin{align}
\label{eq:pent9}
\Psi[\bfe_2]
\Psi[\bfe_1]
=
\Psi[\bfe_1]
\Psi[\bfe_1+\bfe_2]
\Psi[\bfe_2].
\end{align}
We call it the \emph{pentagon relation}\index{pentagon!relation}.
\end{ex}

\section{Pentagon relation}

The pentagon relation in Example \ref{ex:DIA23} holds in a more general situation.
\begin{prop}
\label{3prop:pent1}
Let $\bfn_1,\bfn_2\in N^+$.
The following relations hold in $G$.
\par
(a) (Commutative relation\index{commutative relation}). If $\{\bfn_2,\bfn_1\}_{\Omega}=0$,
then, for any $c_1,c_2\in \bbQ$, we have
\begin{align}
\label{eq:Pcom1}
\Psi[\bfn_2]^{c_2} \Psi[ \bfn_1]^{c_1} =\Psi[  \bfn_1 ]^{c_1}  \Psi[ \bfn_2]^{c_2}.
\end{align}

(b) (Pentagon relation\index{pentagon!relation} \cite[Ex.~1.14]{Gross14}, \cite[Prop.~III.1.14]{Nakanishi22a}). 
If $\{\bfn_2,\bfn_1\}_{\Omega}=c\neq 0$,
we have
\begin{align}
\label{eq:pent8}
\Psi[\bfn_2 ]^{1/c} \Psi[ \bfn_1]^{1/c}=
\Psi[  \bfn_1 ]^{1/c} \Psi[\bfn_1+\bfn_2]^{1/c} \Psi[ \bfn_2]^{1/c}.
\end{align}
\end{prop}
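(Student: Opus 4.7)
My plan is to reduce both parts to elementary Lie-algebraic manipulations inside the subalgebra of $\widehat\frakg_\Omega$ spanned by $\{X_{j\bfn_1+k\bfn_2}\mid (j,k)\in\bbZ_{\geq 0}^{2}\setminus\{(0,0)\}\}$, and then pass to $G$ via the BCH formula as in Section \ref{sec:exponential1}.

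For part (a), if $\{\bfn_2,\bfn_1\}_{\Omega}=0$ then for all $j_1,j_2\geq 1$ the defining relation \eqref{eq:Xcom1} gives $[X_{j_2\bfn_2},X_{j_1\bfn_1}]=j_1j_2\{\bfn_2,\bfn_1\}_{\Omega}X_{j_1\bfn_1+j_2\bfn_2}=0$. Hence the logarithms $\sum_j\frac{(-1)^{j+1}}{j^2}X_{j\bfn_1}$ and $\sum_j\frac{(-1)^{j+1}}{j^2}X_{j\bfn_2}$, together with all their scalar multiples, commute in $\widehat\frakg$, and every term beyond the first in the BCH series \eqref{eq:BCH1} vanishes. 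So the exponentials commute in $G$, which immediately yields \eqref{eq:Pcom1} for arbitrary $c_1,c_2\in\bbQ$.

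For part (b), the idea is to exhibit the desired identity as the image of the $A_2$ pentagon relation \eqref{eq:pent9} (already established in Example \ref{ex:DIA23} via Theorem \ref{thm:DIE1} applied to the pentagon periodicity of type $A_2$) under a suitably rescaled Lie algebra homomorphism. Let $\widehat\frakg_{A_2}$ denote the completion built from the rank-2 lattice with basis $\bfe_1,\bfe_2$ and skew-symmetric form satisfying $\{\bfe_2,\bfe_1\}_{\Omega_{A_2}}=1$, and let $\frakg'\subset\widehat\frakg_\Omega$ be the Lie subalgebra spanned by the $X_{j\bfn_1+k\bfn_2}$. Define
\[
\phi\colon\widehat\frakg_{A_2}\longrightarrow\frakg',\qquad Y_{j\bfe_1+k\bfe_2}\mapsto \tfrac{1}{c}\,X_{j\bfn_1+k\bfn_2}.
\]
A direct bilinear computation using \eqref{eq:Xcom1} shows
\[
\{j_1\bfn_1+k_1\bfn_2,\,j_2\bfn_1+k_2\bfn_2\}_{\Omega}=c(k_1j_2-j_1k_2),
\]
while the corresponding bracket in the $A_2$ algebra equals $k_1j_2-j_1k_2$, so the $1/c$ prefactor in $\phi$ is precisely what is needed for $\phi$ to be a Lie algebra homomorphism. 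Taking exponentials and using functoriality of the BCH formula, $\phi$ induces a group homomorphism $G_{\Omega_{A_2}}\to G_\Omega$ sending $\Psi[\bfe_i]$ to $\exp\!\bigl(\tfrac{1}{c}\sum_j\frac{(-1)^{j+1}}{j^2}X_{j\bfn_i}\bigr)=\Psi[\bfn_i]^{1/c}$, and similarly $\Psi[\bfe_1+\bfe_2]\mapsto\Psi[\bfn_1+\bfn_2]^{1/c}$. Applying this group homomorphism to \eqref{eq:pent9} yields exactly \eqref{eq:pent8}.

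The only delicate point I expect is the bookkeeping for $\phi$: one has to check that the nilpotent quotients $\frakg_{A_2}^{\leq\ell}$ map compatibly to $(\frakg')^{\leq\ell}$ so that $\phi$ extends to the completions, and that the rational power $\Psi[\bfn_i]^{1/c}=\exp\bigl(\tfrac{1}{c}\log\Psi[\bfn_i]\bigr)$ really is the image of $\Psi[\bfe_i]$. Both are routine once one observes that $\phi$ preserves the $N^+$-grading up to the obvious substitution $j\bfe_1+k\bfe_2\mapsto j\bfn_1+k\bfn_2$, but they are the one place where some care is needed; the identities themselves are then immediate consequences of the $A_2$ pentagon relation.
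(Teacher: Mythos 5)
Your part (a) is the same one-line argument as the paper's. For part (b), however, you take a genuinely different route. The paper also restricts to the rank-2 sublattice $N'$ spanned by $\bfn_1,\bfn_2$ (where nondegeneracy of $\{\cdot,\cdot\}_{\Omega}$ makes the $y$-representation of the corresponding subgroup $G'$ faithful), but then it simply computes the action of both sides of \eqref{eq:pent8} on $y^{\bfn}$ and checks that the two rational expressions agree. You instead transport the already-established $A_2$ relation \eqref{eq:pent9} through the rescaled Lie algebra homomorphism $Y_{j\bfe_1+k\bfe_2}\mapsto \tfrac{1}{c}X_{j\bfn_1+k\bfn_2}$; your verification that $\{j_1\bfn_1+k_1\bfn_2,\,j_2\bfn_1+k_2\bfn_2\}_{\Omega}=c(k_1j_2-j_1k_2)$ is correct (note $c\neq 0$ forces $\bfn_1,\bfn_2$ to be linearly independent, so the assignment on generators is well-defined), the grading estimate $\deg(j\bfn_1+k\bfn_2)\geq j+k$ shows the map passes to the completions, and there is no circularity since \eqref{eq:pent9} is obtained in Example \ref{ex:DIA23} from Theorem \ref{thm:DIE1} and the explicit $A_2$ periodicity, not from Proposition \ref{3prop:pent1}. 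What your approach buys is that the five-term computation is done once and for all in the $A_2$ model and the general case follows by pure functoriality; what the paper's direct computation buys is self-containedness (it does not lean on the cluster-theoretic machinery behind Theorem \ref{thm:DIE1}) and it doubles as a template for the quantum analogue in Proposition \ref{prop:qPsipent2}. Both arguments ultimately rest on faithfulness of the $y$-representation on a nondegenerate rank-2 piece, yours at one remove.
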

\begin{proof}
(a).
This is clear from \eqref{eq:Xcom1}.
(b).
Let us consider the rank 2 sublattice $N'$ of $N$ generated by $\bfn_1$ and $\bfn_2$.
By the assumption $\{\bfn_2, \bfn_1 \}_{\Omega}\neq 0$, the form $\{\cdot, \cdot \}_{\Omega}$  restricted on $N'$ is nondegenerate.
Let $G'$ be the subgroup of $G$ corresponding to $N'$.
Since the relation \eqref{eq:pent8} involves only elements in $G'$,
one can prove it
by the $y$-representation of $G'$, which is faithful.
The LHS is given by
\begin{align*}
&\quad\ \Psi[\bfn_2]^{1/c} \Psi[ \bfn_1]^{1/c} (y^{\bfn})\\
&= \Psi[\bfn_2]^{1/c}(y^{\bfn} (1+y^{\bfn_1})^{\{ \bfn_1, \bfn\}_{\Omega}/c})
\\
&= y^{\bfn}  (1+y^{\bfn_2})^{ \{  \bfn_2, \bfn\}_{\Omega}/c}
 (1+y^{\bfn_1}  (1+y^{\bfn_2})
)^{\{ \bfn_1, \bfn\}_{\Omega}/c}.
\end{align*}
The RHS is given by
\begin{align*}
&\quad\ \Psi[ \bfn_1 ]^{1/c} \Psi[ \bfn_1+ \bfn_2]^{1/c} \Psi[\bfn_2]^{1/c}(y^{\bfn})\\
&=  \Psi[\bfn_1 ]^{1/c} \Psi[ \bfn_1+\bfn_2]^{1/c} (y^{\bfn} (1+y^{\bfn_2})^{\{ \bfn_2, \bfn\}_{\Omega}/c})
\\
&=  \Psi[ \bfn_1 ]^{1/c}( y^{\bfn}  (1+y^{\bfn_1+\bfn_2})^{\{  \bfn_1+\bfn_2, \bfn\}_{\Omega}/c}\\
&\qquad \times
 (1+y^{\bfn_2}  (1+y^{\bfn_1+\bfn_2})^{-1}
)^{\{   \bfn_2, \bfn\}_{\Omega}/c})
\\
&=  \Psi[ \bfn_1 ]^{1/c} (y^{\bfn}  (1+y^{\bfn_1+\bfn_2})^{\{  \bfn_1, \bfn\}_{\Omega}/c}
 (1+y^{\bfn_1+\bfn_2} +y^{\bfn_2})^{\{   \bfn_2, \bfn\}_{\Omega}/c})
\\
&=  y^{\bfn} (1+y^{\bfn_1})^{\{ \bfn_1, \bfn\}_{\Omega}/c}
  (1+y^{\bfn_1+\bfn_2} (1+y^{\bfn_1})^{-1})^{\{  \bfn_1, \bfn\}_{\Omega}/c}\\
&\qquad \times
 (1+y^{\bfn_1+\bfn_2}(1+y^{\bfn_1})^{-1}+y^{\bfn_2}
 (1+y^{\bfn_1})^{-1}
)^{\{   \bfn_2, \bfn\}_{\Omega}/c}
\\
&=  y^{\bfn}
  ( 1+y^{\bfn_1}+y^{\bfn_1+\bfn_2})^{\{  \bfn_1, \bfn\}_{\Omega}/c}
 (1+y^{\bfn_2})^{\{   \bfn_2, \bfn\}_{\Omega}/c}.
\end{align*}
Thus, two expressions agree.
\end{proof}

\begin{ex}
\label{ex:pentpent1}
For the matrix $\Omega$ in \eqref{eq:Omega1},
we have $\{\bfe_2,\bfe_1\}_{\Omega}=\omega_{21}=1$.
Then, we recover \eqref{eq:pent9} by the specialization of the relation  \eqref{eq:pent8}.

\end{ex}

\section{Ordering problem}
\label{sec:ordering1}

Let us demonstrate that the pentagon relation
\eqref{eq:pent8} generates
 several interesting relations in $G$, including infinite ones.
 
 Concentrate on the rank 2 case,
 and let $\Omega$ be the one in \eqref{eq:Omega1}.
We consider a possibly infinite product of powers of dilogarithm elements $\Psi[\bfn]^c$
($\bfn\in N^+$, $c\in \bbQ$). Such a product is well-defined in $G$ if, for each degree $\ell$,
there are only  finitely many terms in the product such that $\deg \bfn\leq \ell$.
We say that
such
a (possibly infinite) product is \emph{ordered}\index{ordered product} (resp. \emph{anti-ordered}\index{anti-ordered product})
if, for any pair $\Psi[\bfn']^{c'}$ and $\Psi[\bfn]^c$ in the product, where the former appears left to the latter,
$\{\bfn',\bfn\}_{\Omega}\leq 0$
 (resp. $\{\bfn',\bfn\}_{\Omega}\geq 0$)
holds.
For example, in the relation \eqref{eq:pent9},
the LHS is anti-ordered, while the RHS is ordered.

Then, the following problem arises.

\begin{prob}[Ordering Problem]
\label{prob:order1}\index{ordering problem}
Is it possible to order a given anti-ordered product
to an ordered one  only by applying the commutative relation \eqref{eq:Pcom1} and the
 pentagon relation  \eqref{eq:pent8} repeatedly (possibly infinitely many times)?
\end{prob}
 
The answer crucially depends on the exponents in the product. 
For example, consider the anti-ordered product $\Psi[\bfe_2]^{1/2} \Psi[ \bfe_1]$.
Then, it is easy to see that it cannot be ordered
by the pentagon relation \eqref{eq:pent9}
 due to  the fractional power of $\Psi[\bfe_2]$.

The following fact holds.

\begin{prop}[{\cite[Prop.~III.5.4]{Nakanishi22a}}]
\label{prop:order1}
For any positive integers $\delta_1$ and $\delta_2$,
the anti-ordered product $\Psi[\bfe_2]^{\delta_2} \Psi[\bfe_1]^{\delta_1}$
can be transformed into a possibly infinite ordered product
only by applying the commutative relation \eqref{eq:Pcom1} and the
 pentagon relation  \eqref{eq:pent8} possibly infinitely many times.
\end{prop}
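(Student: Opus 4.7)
My plan is to argue one truncation level at a time and then assemble the answer in the inverse limit. For each positive integer $\ell$, consider the canonical projection $\pi_{\ell}: G \to G^{\leq \ell}$ onto the finite-dimensional nilpotent group built from $\frakg^{\leq \ell}$. I would show that the image $\pi_{\ell}(\Psi[\bfe_2]^{\delta_2}\Psi[\bfe_1]^{\delta_1})$ can be rewritten as a \emph{finite} ordered product of (possibly rational) powers of dilogarithm elements using only finitely many applications of the commutative relation \eqref{eq:Pcom1} and the pentagon relation \eqref{eq:pent8}, and that the ordered products at successive levels are compatible: since every elementary move is an identity already in $G$, the product obtained at level $\ell$ projects under $\pi_{\ell'}$ to the product obtained at level $\ell' < \ell$ automatically. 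Hence the family assembles to a possibly infinite ordered product in $G = \lim_\leftarrow G^{\leq \ell}$, which is the desired rewriting.

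The elementary reordering move I have in mind is the following. Whenever the current product contains an adjacent anti-ordered pair $\Psi[\bfn_2]^{a_2}\Psi[\bfn_1]^{a_1}$ with $c := \{\bfn_2,\bfn_1\}_\Omega > 0$, use commutativity inside each parallel subgroup $G_{\bfn_i}^{\parallel}$ to peel off matched exponents,
\[
\Psi[\bfn_2]^{a_2}\Psi[\bfn_1]^{a_1} = \Psi[\bfn_2]^{a_2 - 1/c}\,\bigl(\Psi[\bfn_2]^{1/c}\Psi[\bfn_1]^{1/c}\bigr)\,\Psi[\bfn_1]^{a_1 - 1/c},
\]
then apply \eqref{eq:pent8} to the central factor, replacing the anti-ordered pair by the ordered triple $\Psi[\bfn_1]^{1/c}\Psi[\bfn_1+\bfn_2]^{1/c}\Psi[\bfn_2]^{1/c}$. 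For our initial data $\{\bfe_2,\bfe_1\}_\Omega = 1$, so $c=1$ at the first round and only integer exponents arise; in later rounds one tracks rational exponents of higher-degree elements, which still make sense in $G$. The crucial structural observation is that the newly introduced middle factor $\Psi[\bfn_1+\bfn_2]$ has strictly larger degree than both $\bfn_1$ and $\bfn_2$, so modulo $\frakg^{>\ell}$ it vanishes as soon as $\deg(\bfn_1)+\deg(\bfn_2) > \ell$; from then on, the pentagon move acts as a pure swap at level $\ell$.

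Termination at a fixed truncation level is what I expect to be the main obstacle, and it will require choosing the reordering strategy carefully rather than naively. I would define, for each $\bfn \in (N^+)^{\leq \ell}$, a monovariant recording the multiplicities and relative positions of $\Psi[\bfn]$ factors in the current product, and assemble them into a lexicographic invariant indexed by $\bfn$ sorted first by degree and then by the slope $n_2/n_1$. The strategy I have in mind is to always resolve the adjacent anti-ordered pair whose combined degree $\deg(\bfn_1)+\deg(\bfn_2)$ is minimal among all anti-ordered adjacencies. With this choice, each pentagon move either eliminates inversions at the minimal degree without creating new inversions of the same or lower degree, or (when the new middle factor has degree $>\ell$) just swaps the pair modulo $\frakg^{>\ell}$, strictly decreasing the inversion count at that degree. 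Combined with the finiteness of $(N^+)^{\leq \ell}$, this yields termination level by level.

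The delicate point of this plan is verifying that the chosen strategy genuinely strictly decreases the invariant at each step rather than producing a cycle: a naively greedy choice could, in principle, create new anti-ordered adjacencies of the same or lower combined degree by the new middle factor $\Psi[\bfn_1+\bfn_2]$ interacting with pre-existing factors. Handling this bookkeeping carefully, possibly by refining the invariant to take into account the whole \emph{sequence} of slopes occurring in the product and by interleaving pentagon moves with commutative moves to shuffle parallel blocks together, is where the real work lies. Once the monovariant argument is pinned down at each $\ell$, the passage to the limit is automatic by the inverse system structure of $G$.
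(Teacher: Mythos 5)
Your overall strategy — truncate to $G^{\leq \ell}$, reorder by pentagon moves after splitting powers inside the abelian parallel subgroups, use the fact that the new middle factor $\Psi[\bfn_1+\bfn_2]$ has strictly larger degree and hence dies modulo $G^{>\ell}$, then pass to the inverse limit — is exactly the strategy of the paper's (deferred) proof: the text reduces this proposition to the Ordering Lemma, whose proof is the explicit Ordering Algorithm generalizing the computations of Examples~\ref{ex:ordfinite1}--\ref{ex:ordnonaffine1}, and those computations are precisely your elementary move carried out modulo $G^{>\ell}$ for increasing $\ell$.

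The genuine gap is the one you flag yourself: termination at a fixed level $\ell$ is asserted via a lexicographic monovariant and a greedy ``resolve the minimal combined degree first'' rule, but you never verify that this quantity strictly decreases, and as stated the worry you raise is real — the relocated factors $\Psi[\bfn_1]^{1/c}$ and $\Psi[\bfn_2]^{1/c}$ acquire new neighbours and can create new inversions whose combined degree equals the one you just resolved. The standard way to close this (and the way the referenced algorithm is organized) is to induct on the truncation degree $\ell$ itself rather than to order all of $\frakD_\ell$ greedily: assume the product is already slope-ordered modulo $G^{>\ell-1}$; the only disorder remaining modulo $G^{>\ell}$ involves finitely many factors $\Psi[h\bfn]^{s\delta(h\bfn)}$ with $\deg(h\bfn)=\ell$, and for any such factor every pentagon move against a factor of positive degree produces a middle term of degree $>\ell$, hence acts as a bare transposition modulo $G^{>\ell}$; so these finitely many factors can be shuffled into their slope positions in finitely many moves without disturbing the already-ordered lower-degree part, and termination is immediate. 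You should also record the bookkeeping that makes the peeling step legitimate at every stage, namely that all exponents encountered remain positive integer multiples of the relevant $1/c$ (this is why the Ordering Lemma fixes the factors to be of the form $\Psi[h\bfn]^{s\delta(h\bfn)}$ with $s,h$ positive integers); with generic exponents $a_i$ as in your write-up the divisibility needed for the pentagon relation is not automatic.
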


This is a special case of a more general result in  the forthcoming
Proposition \ref{3prop:rank21} (Ordering Lemma).
Later,
the integers $\delta_1$ and $\delta_1$ will be identified with the diagonal elements 
of the matrix $\Delta=\diag(\delta_1,\delta_2)$ in the decomposition \eqref{eq:BDO1} of $B$.
Here we present some examples, which reveal a surprisingly rich structure of the ordered product obtained
in Proposition \ref{prop:order1}.
Below,
for $\bfn=(n_1,n_2)$,
we write $\Psi[\bfn]^c$ as $[n_1,n_2]^c$ (or column-wise).
By \eqref{eq:Omega1},
we have $\{\bfn',\bfn\}_{\Omega}=n_1n'_2 - n_2 n'_1$.
Thus, 
the condition $\{\bfn',\bfn\}_{\Omega}\leq 0$ is equivalent to
$n'_1/n'_2 \geq n_1/n_2$,
where $n_1/n_2=\infty$ if $n_2=0$.

\begin{ex}
[Finite type: $\delta_1\delta_2\leq 3$]
\label{ex:ordfinite1}
\
\par
(a) Type $A_2$. Let $(\delta_1,\delta_2)=(1,1)$.
This is already covered in \eqref{eq:pent9}.
In the current notation, it is written as
\begin{align}
\label{eq:order2}
\begin{split}
\begin{bmatrix}
0\\
1
\end{bmatrix}
\begin{bmatrix}
1\\
0
\end{bmatrix}
&=
\begin{bmatrix}
1\\
0
\end{bmatrix}
\begin{bmatrix}
1\\
1
\end{bmatrix}
\begin{bmatrix}
0\\
1
\end{bmatrix}
.
\end{split}
\end{align}
Observe that the vectors appearing in the RHS are
identified with the positive roots $\alpha_1$, $\alpha_1+\alpha_2$, $\alpha_2$
of the root system of type $A_2$. 
See Figure \ref{1fig:root1}.
\par
(b) Type $B_2$. 
Here we consider the case
$(\delta_1,\delta_2)=(1,2)$.
By applying
the pentagon relation \eqref{eq:pent8}  with $c=1$ repeatedly for adjacent pairs
$\bfn'$, $\bfn$
with $\{\bfn', \bfn\}_{\Omega}=1$, we have
\begin{align}
\label{eq:order3}
\begin{split}
\begin{bmatrix}
0\\
1
\end{bmatrix}
^2
\begin{bmatrix}
1\\
0
\end{bmatrix}
&=
\begin{bmatrix}
0\\
1
\end{bmatrix}
\begin{bmatrix}
1\\
0
\end{bmatrix}
\begin{bmatrix}
1\\
1
\end{bmatrix}
\begin{bmatrix}
0\\
1
\end{bmatrix}
=
\begin{bmatrix}
1\\
0
\end{bmatrix}
\begin{bmatrix}
1\\
1
\end{bmatrix}
\begin{bmatrix}
0\\
1
\end{bmatrix}
\begin{bmatrix}
1\\
1
\end{bmatrix}
\begin{bmatrix}
0\\
1
\end{bmatrix}
\\
&=
\begin{bmatrix}
1\\
0
\end{bmatrix}
\begin{bmatrix}
1\\
1
\end{bmatrix}
^
2
\begin{bmatrix}
1\\
2
\end{bmatrix}
\begin{bmatrix}
0\\
1
\end{bmatrix}
^2
.
\end{split}
\end{align}
This relation coincides with the equality \eqref{eq:DIE2}
for the periodicity of the free $Y$-pattern of type $B_2$ in \eqref{eq:B2seq1}.
The vectors appearing in the RHS are
identified with the positive roots $\alpha_1$, $\alpha_1+\alpha_2$, 
 $\alpha_1+2 \alpha_2$, $\alpha_2$
of the root system of type $B_2$.
Moreover,  the exponent is 2 if and only if the corresponding root is a short one.

\par
(c) Type $G_2$. Let $(\delta_1,\delta_2)=(1,3)$.
In the same way as above,
we have
\begin{align}
\label{eq:order5}
\begin{split}
\begin{bmatrix}
0\\
1
\end{bmatrix}
^3
\begin{bmatrix}
1\\
0
\end{bmatrix}
&=
\begin{bmatrix}
0\\
1
\end{bmatrix}
\biggl(
\begin{bmatrix}
1\\
0
\end{bmatrix}
\begin{bmatrix}
1\\
1
\end{bmatrix}
^
2
\begin{bmatrix}
1\\
2
\end{bmatrix}
\begin{bmatrix}
0\\
1
\end{bmatrix}
^2
\biggr)
\\
&=
\begin{bmatrix}
1\\
0
\end{bmatrix}
\begin{bmatrix}
1\\
1
\end{bmatrix}
^2
\begin{bmatrix}
1\\
2
\end{bmatrix}
\begin{bmatrix}
1\\
1
\end{bmatrix}
\begin{bmatrix}
1\\
2
\end{bmatrix}
^2
\begin{bmatrix}
1\\
3
\end{bmatrix}
\begin{bmatrix}
0\\
1
\end{bmatrix}
^3
\\
&=
\begin{bmatrix}
1\\
0
\end{bmatrix}
\begin{bmatrix}
1\\
1
\end{bmatrix}
^3
\begin{bmatrix}
2\\
3
\end{bmatrix}
\begin{bmatrix}
1\\
2
\end{bmatrix}
^3
\begin{bmatrix}
1\\
3
\end{bmatrix}
\begin{bmatrix}
0\\
1
\end{bmatrix}
^3
.
\end{split}
\end{align}
This relation coincides with the equality \eqref{eq:DIE2}
for the periodicity of the free $Y$-pattern of type $G_2$ in \eqref{eq:G2seq1}.
The vectors appearing in the RHS are
identified with the positive roots $\alpha_1$, $\alpha_1+\alpha_2$, 
 $2\alpha_1+3 \alpha_2$,  $\alpha_1+2 \alpha_2$,  $\alpha_1+3 \alpha_2$,
 $\alpha_2$
of the root system of type $G_2$.
Moreover, the exponent is 3 if and only if the corresponding root is a short one.
\end{ex}

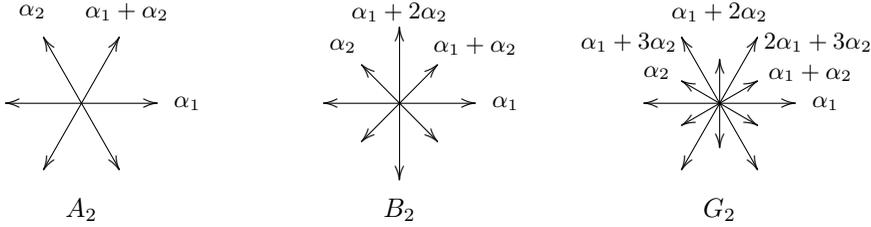
\begin{figure}
\centering
\leavevmode
\begin{xy}
(14,0)*{\mbox{\small$\alpha_1$}},
(6,12)*{\mbox{\small$\alpha_1+\alpha_2$}},
(-6.5,12)*{\mbox{\small$\alpha_2$}},
(0,-14)*{A_2}
\ar@{->} (0,0);(10,0) %
\ar@{->} (0,0);(5,8.66) %
\ar@{->} (0,0);(-5,8.66) %
\ar@{->} (0,0);(-10,0) %
\ar@{->} (0,0);(-5,-8.66) %
\ar@{->} (0,0);(5,-8.66) %
\end{xy}
\hskip43pt
\begin{xy}
(14,0)*{\mbox{\small$\alpha_1$}},
(0,12)*{\mbox{\small$\alpha_1+2\alpha_2$}},
(10,7.5)*{\mbox{\small$\alpha_1+\alpha_2$}},
(-7.5,7.5)*{\mbox{\small$\alpha_2$}},
(0,-14)*{B_2}
\ar@{->} (0,0);(10,0) %
\ar@{->} (0,0);(5,5) %
\ar@{->} (0,0);(0,10) %
\ar@{->} (0,0);(-5, 5) %
\ar@{->} (0,0);(-10,0) %
\ar@{->} (0,0);(-5,-5) %
\ar@{->} (0,0);(0,-10) %
\ar@{->} (0,0);(5,-5) %
\end{xy}
\qquad
\begin{xy}
(14,0)*{\mbox{\small$\alpha_1$}},
(-12,8)*{\mbox{\small$\alpha_1+3\alpha_2$}},
(13,8)*{\mbox{\small$2\alpha_1+3\alpha_2$}},
(0,12)*{\mbox{\small$\alpha_1+2\alpha_2$}},
(12,4)*{\mbox{\small$\alpha_1+ \alpha_2$}},
(-8.3,4)*{\mbox{\small$\alpha_2$}},
(0,-14)*{G_2}
\ar@{->} (0,0);(10,0) %
\ar@{->} (0,0);(5,8.66) %
\ar@{->} (0,0);(-5,8.66) %
\ar@{->} (0,0);(-10,0) %
\ar@{->} (0,0);(-5,-8.66) %
\ar@{->} (0,0);(5,-8.66) %
\ar@{->} (0,0);(5,2.89) %
\ar@{->} (0,0);(5,-2.89) %
\ar@{->} (0,0);(-5,2.89) %
\ar@{->} (0,0);(-5,-2.89) %
\ar@{->} (0,0);(0,5.77) %
\ar@{->} (0,0);(0,-5.77) %
\end{xy}
\caption{Rank 2 root systems of finite type.}
\label{1fig:root1}
\end{figure}

\begin{ex}
[Affine type: $\delta_1\delta_2=4$]
\label{ex:ordaffine1}
\ \par
(a) Type $A_1^{(1)}$. Let $(\delta_1,\delta_2)=(2,2)$.
By using the pentagon relation \eqref{eq:pent8}
with $c=1$,
we obtain the equality
\begin{align}
\label{3eq:com2}
\begin{bmatrix}
0\\
1
\end{bmatrix}
^2
\begin{bmatrix}
1\\
0
\end{bmatrix}
^2
&=
\begin{bmatrix}
1\\
0
\end{bmatrix}
\begin{bmatrix}
1\\
1
\end{bmatrix}
^2
\begin{bmatrix}
1\\
2
\end{bmatrix}
\begin{bmatrix}
1\\
0
\end{bmatrix}
\begin{bmatrix}
1\\
1
\end{bmatrix}
^2
\begin{bmatrix}
1\\
2
\end{bmatrix}
\begin{bmatrix}
0\\
1
\end{bmatrix}
^2.
\end{align}
The expression in the RHS
is not yet ordered. 
To make it ordered, we need to interchange
$[1,2]$ and $[1,0]$ in the middle,
where
$\{ \bfn',\bfn\}_{\Omega}=2$.
As the lowest approximation, we consider modulo
$G^{>3}$.
Then, $[1,2]$ and $[1,0]$ commute, and we have,
modulo $G^{>3}$,
\begin{align}
\begin{bmatrix}
0\\
1
\end{bmatrix}
^2
\begin{bmatrix}
1\\
0
\end{bmatrix}
^2
&
\equiv
\begin{bmatrix}
1\\
0
\end{bmatrix}
^2
\begin{bmatrix}
2\\
1
\end{bmatrix}
^2
\begin{bmatrix}
1\\
1
\end{bmatrix}
^4
\begin{bmatrix}
1\\
2
\end{bmatrix}
^2
\begin{bmatrix}
0\\
1
\end{bmatrix}
^2.
\end{align}

To proceed to a higher degree, we
now apply the pentagon relation  \eqref{eq:pent8}
with $c=2$ to the pair $[1,2]^{1/2}$ and $[1,0]^{1/2}$
in \eqref{3eq:com2},
which are obtained by factorizing the elements $[1,2]$ and $[1,0]$.
To clarify the structure among fractional powers,
we use the temporal notation $[n_1,n_2]_2:=[n_1,n_2]^{1/2}$.
Then, we have
\begin{align}
\label{3eq:com3}
\begin{bmatrix}
1\\
2
\end{bmatrix}
\begin{bmatrix}
1\\
0
\end{bmatrix}
=
\begin{bmatrix}
1\\
2
\end{bmatrix}
_2^2
\begin{bmatrix}
1\\
0
\end{bmatrix}
_2^2
&=
\begin{bmatrix}
1\\
0
\end{bmatrix}
_2
\begin{bmatrix}
2\\
2
\end{bmatrix}
_2
^2
\begin{bmatrix}
3\\
4
\end{bmatrix}
_2
\begin{bmatrix}
1\\
0
\end{bmatrix}
_2
\begin{bmatrix}
2\\
2
\end{bmatrix}
_2
^2
\begin{bmatrix}
3\\
4
\end{bmatrix}
_2
\begin{bmatrix}
1\\
2
\end{bmatrix}
_2
^2.
\end{align}
This is parallel with \eqref{3eq:com2}.
As the next approximation, we consider modulo
$G^{>7}$.
Then, $[3,4]$ and $[1,0]$ commute, and we have,
modulo $G^{>7}$,
\begin{align}
\label{3eq:fact1}
\begin{bmatrix}
1\\
2
\end{bmatrix}
\begin{bmatrix}
1\\
0
\end{bmatrix}
&
\equiv
\begin{bmatrix}
1\\
0
\end{bmatrix}
_2
^2
\begin{bmatrix}
3\\
2
\end{bmatrix}
_2
^2
\begin{bmatrix}
2\\
2
\end{bmatrix}
_2
^4
\begin{bmatrix}
3\\
4
\end{bmatrix}
_2
^2
\begin{bmatrix}
1\\
2
\end{bmatrix}
_2
^2
=
\begin{bmatrix}
1\\
0
\end{bmatrix}
\begin{bmatrix}
3\\
2
\end{bmatrix}
\begin{bmatrix}
2\\
2
\end{bmatrix}
^2
\begin{bmatrix}
3\\
4
\end{bmatrix}
\begin{bmatrix}
1\\
2
\end{bmatrix}
.
\end{align}
Then, we plug it into
\eqref{3eq:com2}
and apply the pentagon relation with $c=1$ again,
and
we have,
modulo $G^{>7}$,
\begin{align}
\label{3eq:a11mod7}
\begin{bmatrix}
0\\
1
\end{bmatrix}
^2
\begin{bmatrix}
1\\
0
\end{bmatrix}
^2
&
\equiv
\begin{bmatrix}
1\\
0
\end{bmatrix}
^2
\begin{bmatrix}
2\\
1
\end{bmatrix}
^2
\begin{bmatrix}
3\\
2
\end{bmatrix}
^2
\begin{bmatrix}
4\\
3
\end{bmatrix}
^2
\begin{bmatrix}
1\\
1
\end{bmatrix}
^4
\begin{bmatrix}
2\\
2
\end{bmatrix}
^2
\begin{bmatrix}
3\\
4
\end{bmatrix}
^2
\begin{bmatrix}
2\\
3
\end{bmatrix}
^2
\begin{bmatrix}
1\\
2
\end{bmatrix}
^2
\begin{bmatrix}
0\\
1
\end{bmatrix}
^2.
\end{align}
By repeating this procedure modulo  $G^{>2^{\l}-1}$,
the relations converge to 
\begin{align}
\label{3eq:a115}
\begin{bmatrix}
0\\
1
\end{bmatrix}
^2
\begin{bmatrix}
1\\
0
\end{bmatrix}
^2
&
=
\begin{bmatrix}
1\\
0
\end{bmatrix}
^2
\begin{bmatrix}
2\\
1
\end{bmatrix}
^2
\begin{bmatrix}
3\\
2
\end{bmatrix}
^2
\cdots
\prod_{j=0}^{\infty}
\begin{bmatrix}
2^j\\
2^j
\end{bmatrix}
^{2^{2-j}}
\cdots
\begin{bmatrix}
2\\
3
\end{bmatrix}
^2
\begin{bmatrix}
1\\
2
\end{bmatrix}
^2
\begin{bmatrix}
0\\
1
\end{bmatrix}
^2,
\end{align}
where an infinite product emerges in the RHS.
Also,  fractional powers appear due to the factorization in the procedure.
 A complete proof of the relation \eqref{3eq:a115} by the pentagon relation was given by Matsushita
  \cite{Matsushita21}.

\smallskip

(b) Type $A_2^{(2)}$. Let $(\delta_1,\delta_2)=(1,4)$.
The situation is in parallel with $A_1^{(1)}$ though a little more complicated.
The resulting relation is as follows:
\begin{align}
\label{eq:a227}
\begin{split}
&
\begin{bmatrix}
0\\
1
\end{bmatrix}
^4
\begin{bmatrix}
1\\
0
\end{bmatrix}
=
\begin{bmatrix}
1\\
0
\end{bmatrix}
\begin{bmatrix}
1\\
1
\end{bmatrix}
^4
\begin{bmatrix}
3\\
4
\end{bmatrix}
\begin{bmatrix}
2\\
3
\end{bmatrix}
^4
\begin{bmatrix}
5\\
8
\end{bmatrix}
\begin{bmatrix}
3\\
5
\end{bmatrix}
^4
\cdots
\\
&
\hskip60pt
\times
\begin{bmatrix}
1\\
2
\end{bmatrix}
^6
\prod_{j=1}^{\infty}
\begin{bmatrix}
 2^j\\
 2^{j+1}
\end{bmatrix}
^
{2^{2-j}}
\cdots
\begin{bmatrix}
5\\
12
\end{bmatrix}
\begin{bmatrix}
2\\
5
\end{bmatrix}
^4
\begin{bmatrix}
3\\
8
\end{bmatrix}
\begin{bmatrix}
1\\
3
\end{bmatrix}
^4
\begin{bmatrix}
1\\
4
\end{bmatrix}
\begin{bmatrix}
0\\
1
\end{bmatrix}
^4
.
\end{split}
\end{align}
Again,
a complete proof of the equality \eqref{eq:a227} by the pentagon relation was given by \cite{Matsushita21}.
\end{ex}

\begin{ex}
[Non-affine infinite type: $\delta_1\delta_2\geq 5$]
\label{ex:ordnonaffine1}

The explicit expression of the ordered product is not yet known,
but it is quite intriguing.
There is a program to compute the ordered product
written for SageMath 
\cite{Sage94}
whose source code is available
in \cite[III. Appendix A]{Nakanishi22a}.
The program can calculate the ordered product 
up to a given degree; however, 
in practice, once some exponent exceeds 1,000,
it is difficult to continue calculation due to the limitation of time.
As an example, for $(\delta_1,\delta_2)=(1,5)$,
the product has the following form modulo $G^{>16}$:
\begin{align}
\label{eq:a5116}
\begin{split}
\begin{bmatrix}
0\\
1
\end{bmatrix}
^5
\begin{bmatrix}
1\\
0
\end{bmatrix}
&\equiv
\begin{bmatrix}
1\\
0
\end{bmatrix}
\begin{bmatrix}
1\\
1
\end{bmatrix}
^5
\begin{bmatrix}
4\\
5
\end{bmatrix}
\begin{bmatrix}
3\\
4
\end{bmatrix}
^5
\biggl\{
\begin{bmatrix}
5\\
7
\end{bmatrix}
^{10}
\begin{bmatrix}
2\\
3
\end{bmatrix}
^{10}
\begin{bmatrix}
4\\
6
\end{bmatrix}
^{10}
\begin{bmatrix}
6\\
9
\end{bmatrix}
^{15}
\begin{bmatrix}
5\\
8
\end{bmatrix}
^{85}
\begin{bmatrix}
3\\
5
\end{bmatrix}
^{27}
\\
&\quad\times
\begin{bmatrix}
6\\
10
\end{bmatrix}
^{302}
\begin{bmatrix}
4\\
7
\end{bmatrix}
^{85}
\begin{bmatrix}
5\\
9
\end{bmatrix}
^{295}
\begin{bmatrix}
1\\
2
\end{bmatrix}
^{10}
\begin{bmatrix}
2\\
4
\end{bmatrix}
^{10}
\begin{bmatrix}
3\\
6
\end{bmatrix}
^{15}
\begin{bmatrix}
4\\
8
\end{bmatrix}
^{45}
\begin{bmatrix}
5\\
10
\end{bmatrix}
^{130}
\\
&\quad\times
\begin{bmatrix}
5\\
11
\end{bmatrix}
^{1095}
\begin{bmatrix}
4\\
9
\end{bmatrix}
^{295}
\begin{bmatrix}
3\\
7
\end{bmatrix}
^{85}
\begin{bmatrix}
2\\
5
\end{bmatrix}
^{27}
\begin{bmatrix}
4\\
10
\end{bmatrix}
^{302}
\begin{bmatrix}
3\\
8
\end{bmatrix}
^{85}
\begin{bmatrix}
4\\
11
\end{bmatrix}
^{295}
\begin{bmatrix}
1\\
3
\end{bmatrix}
^{10}
\\
&\quad\times
\begin{bmatrix}
2\\
6
\end{bmatrix}
^{10}
\begin{bmatrix}
3\\
9
\end{bmatrix}
^{15}
\begin{bmatrix}
4\\
12
\end{bmatrix}
^{45}
\begin{bmatrix}
3\\
10
\end{bmatrix}
^{27}
\begin{bmatrix}
2\\
7
\end{bmatrix}
^{10}
\biggr\}
\begin{bmatrix}
3\\
11
\end{bmatrix}
\begin{bmatrix}
1\\
4
\end{bmatrix}
^5
\begin{bmatrix}
1\\
5
\end{bmatrix}
\begin{bmatrix}
0\\
1
\end{bmatrix}
^5
.
\end{split}
\end{align}
The behavior of
the terms outside the bracket are completely determined by combining  the results by 
GHKK \cite{Gross14} and Reading
\cite{Reading12}.
Namely,
they are
\begin{align}
[\bfc_0]
[\bfc_1]
^5
[\bfc_2]
[\bfc_3]
^5
\cdots
\{\cdots\}
\cdots
[\bfc'_3]
[\bfc'_2]
^5
[\bfc'_1]
[\bfc'_0]
^5,
\end{align}
where
\begin{align}
\bfc_0
&=\begin{pmatrix}
1\\
0
\end{pmatrix}
,
\
\bfc_1
=\begin{pmatrix}
1\\
1
\end{pmatrix}
,
\
\bfc_2
=\begin{pmatrix}
4\\
5
\end{pmatrix}
,
\
\bfc_3
=\begin{pmatrix}
3\\
4
\end{pmatrix}
,
\
\bfc_4
=\begin{pmatrix}
11\\
15
\end{pmatrix}
,
\,
\cdots,
\\
\bfc'_0
&=\begin{pmatrix}
0\\
1
\end{pmatrix}
,
\
\bfc'_1
=\begin{pmatrix}
1\\
5
\end{pmatrix}
,
\
\bfc'_2
=\begin{pmatrix}
1\\
4
\end{pmatrix}
,
\
\bfc'_3
=\begin{pmatrix}
4\\
15
\end{pmatrix}
,
\
\bfc'_4
=\begin{pmatrix}
3\\
11
\end{pmatrix}
,
\,
\cdots.
\end{align}
These vectors are uniquely determined from the recurrence relations
\begin{align}
\bfc_i=
\begin{cases}
5\bfc_{i-1} - \bfc_{i-2}
& \text{$i$: even},
\\
\bfc_{i-1} - \bfc_{i-2}
& \text{$i$: odd},
\end{cases}
\quad
\bfc'_i=
\begin{cases}
\bfc'_{i-1} - \bfc'_{i-2}
& \text{$i$: even},
\\
5 \bfc'_{i-1} - \bfc'_{i-2}
& \text{$i$: odd}.
\end{cases}
\end{align}
The ratios of  the components of these vectors have the limits
\begin{align}
\bfc_i&=\begin{pmatrix}
n_{1;i}\\
n_{2;i}
\end{pmatrix}
,
\quad
\lim_{i\rightarrow \infty}
\frac{n_{1;i}}{n_{2;i}}=
\frac{2}{5-\sqrt{5}}
=0.723\dots
,
\\
\bfc'_i&=\begin{pmatrix}
n'_{1;i}\\
n'_{2;i}
\end{pmatrix}
,
\quad
\lim_{i\rightarrow \infty}
\frac{n'_{1;i}}{n'_{2;i}}=
\frac{2}{5+\sqrt{5}}
=0.276\dots
.
\end{align}
In contrast, inside the bracket
in \eqref{eq:a5116},
every $\bfn\in N^+$ with $\deg(\bfn)\leq 16$
such that
\begin{align}
\label{eq:nn1}
\frac{2}{5+\sqrt{5}}<
\frac{n_1}{n_2}
<
\frac{2}{5-\sqrt{5}}
\end{align}
appears.
It is recently shown by Gr\"afnitz and Luo \cite{Grafnitz23} that this is true for any $\bfn$
satisfying \eqref{eq:nn1}.
However, the explicit description of the exponents is known only partially (e.g., \cite{Akagi23}).
\end{ex}

The meaning of the relations in the above examples  will be  clarified in the next chapter
in view of cluster scattering diagrams.

\notes
The class of Lie algebras and their exponential groups
in this chapter
were introduced
in \cite{Gross07, Kontsevich08, Kontsevich13}
in more generality in the study of  mirror symmetry and wall-crossing structures
for the Donaldson-Thomas invariants.
The  group $G$ here appeared  in \cite{Gross14}
in the disguised form of the $x$-representation.
The dilogarithm elements and the pentagon relation 
appeared in \cite{Kontsevich08, Kontsevich13} in a slightly different
setup, and also
  in  \cite{Gross14}
in the disguised form of the $x$-representation.
Here we follow
the presentation in the companion monograph  \cite{Nakanishi22a}.
Theorem \ref{thm:DIE1} is new here, but
the rank 2 examples of DIs appeared in
 \cite{Nakanishi22a}.
The ordering problem is closely related to the wall-crossing formula in \cite{Kontsevich08}.

\chapter{Quick course on cluster scattering diagrams}
\label{ch:CSD1}

Following GHKK \cite{Gross14},
we introduce an  algebraic and geometrical object called a \emph{cluster scattering diagram
(CSD)}.
For any cluster pattern, the fan constructed from its $G$-matrices are embedded in the corresponding CSD.
Therefore, it is regarded as an extension of
a cluster pattern.
It turns out that the dilogarithm elements and the pentagon relation play fundamental roles in CSDs.
Our presentation is minimal without proofs.
The proofs and details are found in
 the companion monograph
 \cite{Nakanishi22a}.

\section{Scattering diagrams}

Continuing from the previous chapter, 
we fix a given skew-symmetric rational matrix $\Omega$.
Let $G=G_{\Omega}$ be the group
constructed in Section \ref{sec:exponential1}.

Following \cite{Gross14}, we introduce several notions related to {scattering diagrams}.
For the lattice $N=\bbZ^n$,
let
\begin{align}
M:=\mathrm{Hom}(N,\bbZ), \quad M_{\bbR}:=M\otimes \bbR.
\end{align}
Let $\langle  \bfn, m\rangle$ be the canonical paring  $N\times M\rightarrow \bbZ$
or its extension $N\times M_{\bbR} \rightarrow \bbR$, depending on the context.
For each $\bfn\in N$, $\bfn\neq \bfzero$,
let
\begin{align}
 \bfn^{\perp}:=\{z \in M_{\bbR} \mid \langle \bfn, z\rangle =0 \}
 \end{align}
 be the hyperplane in $M_{\bbR}$ which is orthogonal to $\bfn$.
For any $m_1,\, \dots,\, m_k\in M$, the positive span
\begin{align}
\sigma(m_1,\dots,m_k):=\biggl\{ \sum_{i=1}^k a_i m_i \, \bigg\vert \, a_i\in \bbR_{\geq 0}\biggr\}
\subset M_{\bbR}
\end{align}
is called a \emph{convex rational cone}\index{convex rational cone, \see{cone}}. We simple call it a \emph{cone}\index{cone}.
We also set $\sigma(\emptyset):=\{0\}$, which is also a cone.
A cone $\sigma$ is \emph{strongly convex}\index{strongly convex cone} if 
$\sigma \cap  (-\sigma) = \{0\}$.

\begin{defn}[Wall/Scattering diagram]
Let $G=G_{\Omega}$ be the group as above.
\par
(a). A \emph{wall}\index{wall} $\bfw=(\frakd, g)_{\bfn}$ is a triplet such that
$\bfn\in N^+_{\rmpr}$,  a cone  $\frakd \subset \bfn^{\perp}$  of dimension $n-1$,
and $g\in G_{\bfn}^{\parallel}$.
We call $\bfn$, $\frakd$, $g$ the \emph{normal vector}\index{normal vector}, the \emph{support}\index{support (of wall)}, and the \emph{wall element}\index{wall element}
of $\bfw$, respectively.

(b). A  \emph{scattering diagram}\index{scattering diagram} $\frakD$  is a collection of walls
 $\{ \bfw_{\lambda}=(\frakd_{\lambda}, g_{\lambda})_{\bfn_{\lambda}}\}_{\lambda\in \Lambda}$
with a (possibly infinite) index set $\Lambda$
satisfying  the following \emph{finiteness condition}\index{finiteness condition}:
\begin{itemize}
\item
For any  integer $\ell\geq 1$, there are only finitely many walls such that 
$\pi_{\ell}(g_{\lambda})\neq \rmid$, where $\pi_{\ell}:G \rightarrow G^{\leq \ell}$ is
the canonical projection \eqref{eq:cp1}.
\end{itemize}
The group $G$ is called the \emph{structure group}\index{structure group} of $\frakD$.

(c). For each integer $\ell\geq 1$, the subcollection
 $\frakD_{\ell}$ consisting of the walls with $\pi_{\ell}(g_{\lambda})\neq \rmid$
 is called the \emph{reduction}\index{reduction!of scattering diagram} of $\frakD$ at  degree $\ell$.
 By the finiteness condition, there are only finitely may walls of  $\frakD_{\ell}$.
\end{defn}

We give more related notions.

\begin{defn}[Support/Singular locus]
For a scattering diagram $\frakD=\{ \bfw_{\lambda}= (\frakd_{\lambda}, g_{{\lambda}})_{\bfn_{\lambda}}\}_{\lambda\in \Lambda}$,
the \emph{support}\index{support!of scattering diagram} and the \emph{singular locus}\index{singular locus (of scattering diagram)}  of $\frakD$ are defined by
\begin{align}
\mathrm{Supp}(\frakD)&:=\bigcup_{\lambda\in \Lambda} \frakd_{\lambda},\\ 
\mathrm{Sing}(\frakD)&:=\bigcup_{\lambda\in \Lambda} \partial\frakd_{\lambda} 
\cup
\bigcup_{ \scriptstyle\lambda,\, \lambda'\in \Lambda \atop  \scriptstyle\dim 
\frakd_{\lambda}\cap \frakd_{\lambda'}\,=\,n-2
} \frakd_{\lambda}\cap \frakd_{\lambda'}.
\end{align}
\end{defn}

\begin{defn}[Admissible curve]
\label{3defn:adm1}
A curve $\gamma:[0,1]\rightarrow M_{\bbR}$ is
\emph{admissible}\index{admissible (curve)} for a scattering diagram $\frakD$
if it satisfies the following conditions:
\begin{itemize}
\item[(1).]
The endpoints of $\gamma$ are in $M_{\bbR}\setminus \mathrm{Supp}(\frakD)$.
\item[(2).]
It is a smooth curve and intersects $\mathrm{Supp}(\frakD)$ transversally.
\item[(3).]
$\gamma$ does not intersect  $\mathrm{Sing}(\frakD)$.
\end{itemize}
\end{defn}

\begin{defn}[Path-ordered product]
\label{3defn:pop1}
Let $\frakD$ be any scattering diagram.
For any admissible curve $\gamma$ for $\frakD$,
we define an element $\frakp_{\gamma,\frakD}\in G$ as follows:
For each integer $\l>0$, let  $\frakD_\l$ be the reduction of $\frakD$ at 
degree $\l$.
Suppose that
$\gamma=\gamma(t)$ crosses   walls $\bfw_i=(\frakd_i, g_{i})_{\bfn_i}$
($i=1$, \dots, $s$)
of $\frakD_\l$
in this order at $t=t_i$ with 
\begin{align}
0<t_1\leq t_2\leq \cdots \leq t_s<1.
\end{align}
Since $\gamma(t_i)\notin \mathrm{Sing}(\frakD)$,
 when $\gamma$ crosses multiple walls  at a time,
these walls have a common normal vector.
We define the \emph{intersection sign}\index{intersection sign}  $\epsilon_i$ ($i=1$, \dots, $s$)  by 
\begin{align}
\label{3eq:factor1}
\epsilon_i
=
\begin{cases}
1 & \langle n_{i}, \gamma'(t_i)\rangle<0,\\
-1 & \langle n_{i}, \gamma' (t_i)\rangle>0,\\
\end{cases}
\end{align}
where  $\gamma'(t_i)$ is the velocity vector of $\gamma(t)$ at $t_i$.
Then, we define
\begin{align}
\frakp_{\gamma,\frakD_\l}:=&\
g_{s}^{\epsilon_s}
\cdots
g_{1}^{\epsilon_1}
, \quad
\frakp_{\gamma,\frakD}:=
\lim_{\l \rightarrow\infty}\frakp_{\gamma,\frakD_\l}
\in G.
\end{align}
We call $\frakp_{\gamma,\frakD}$
the \emph{path-ordered product}\index{path-ordered product} (of wall elements in $\frakD$)
along $\gamma$.
\end{defn}

Note that $\frakp_{\gamma,\frakD}$ only depends on the homotopy class of $\gamma$
in $M_{\bbR}\setminus \mathrm{Sing}(\frakD)$.

\begin{defn}[Equivalence]
Two scattering diagrams $\frakD$ and $\frakD'$ with  a common  structure group $G$ are
\emph{equivalent}\index{equivalent (for scattering diagram)} if, for any curve $\gamma$ that is admissible for both $\frakD$ and $\frakD'$, the equality 
$\frakp_{\gamma,\frakD}=\frakp_{\gamma,\frakD'}$ holds.
\end{defn}

For a given scattering diagram $\frakD$, one can obtain infinitely many scattering diagrams
that are equivalent to $\frakD$ by joining and splitting the supports 
and  wall elements and also by adding and removing the walls with the trivial wall element 
$g=\rmid$ (trivial walls)  possibly infinitely many times.

Finally, we introduce
a crucial notion for scattering diagrams.
\begin{defn}[Consistency]
A scattering diagram $\frakD$ is \emph{consistent}\index{consistent (for scattering diagrams)} if for any admissible curve $\gamma$
for $\frakD$,
the associated path-ordered product $\frakp_{\gamma,\frakD}$  depends
only on
the endpoints of $\gamma$.
In other words, $\frakD$ is consistent if
\begin{align}
\label{3eq:pgfi1}
\frakp_{\gamma, \frakD}=\rmid
\end{align}
for any admissible loop (i.e., closed admissible curve) $\gamma$ for $\frakD$.
\end{defn}

Let
\begin{align}
\label{eq:C+1}
\begin{split}
\calC^+ &:= \{z\in M_{\bbR} \mid \langle \bfe_i, z \rangle\geq 0 \ \text{for any $i$}\},
\\
\calC^- &:= \{z\in M_{\bbR} \mid \langle \bfe_i, z \rangle\leq 0\ \text{for any $i$}\}.
\end{split}
\end{align}
It is easy to see that
the support of a scattering diagram $\frakD$ intersects $\calC^{\pm}$
only in the boundary of $\calC^{\pm}$.
For a given consistent scattering diagram $\frakD$,
let $\gamma_{+-}$ be any admissible path with the initial point
in $\mathrm{Int}(\calC^+)$ and the final point
in $\mathrm{Int}(\calC^-)$.
Then,
 thanks to the consistency of $\frakD$,
  the unique element $g(\frakD):=\frakp_{\gamma_{+-},\frakD} \in G$ is assigned
to $\frakD$.

The following fact reveals the intrinsic nature of consistent scattering diagrams.

\begin{thm}
[{\cite[Thm.~2.1.6]{Kontsevich13}, \cite[Thm.~1.17]{Gross14}}]
\label{thm:bij1}
The assignment $\frakD \mapsto g(\frakD)$ gives a one-to-one correspondence between
the consistent scattering diagrams up to equivalence and
the elements in $G$.
\end{thm}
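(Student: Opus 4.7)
The plan is to prove the bijection by induction on the filtration degree $\ell \geq 1$, exploiting the pro-nilpotent structure $G = \lim_{\longleftarrow} G^{\leq \ell}$. At each finite stage, the reduction $\frakD_\ell$ consists of only finitely many walls by the finiteness condition, and both the equivalence class of $\frakD$ and the element $g(\frakD)$ are determined modulo $G^{>\ell}$ by $\frakD_\ell$. Hence it suffices to exhibit, for every $\ell$, a bijection between equivalence classes of consistent scattering diagrams of degree $\leq \ell$ and elements of $G^{\leq \ell}$, and then pass to the inverse limit.

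The base case $\ell = 1$ is straightforward since $G^{\leq 1}$ is abelian: any $g \in G^{\leq 1}$ factors uniquely as $\prod_{i=1}^n \exp(c_i X_{\bfe_i})$, and one takes $\frakD_1 = \{(\bfe_i^\perp, \exp(c_i X_{\bfe_i}))_{\bfe_i}\}_{i=1}^n$. For the inductive step, given $g \in G^{\leq \ell}$, let $\frakD^{(\ell-1)}$ be the consistent diagram corresponding to $\pi_{\ell-1}(g)$ by the inductive hypothesis, and lift it arbitrarily to a diagram $\widetilde\frakD$ of degree $\leq \ell$ by choosing wall elements that reduce correctly modulo $G^{>\ell-1}$. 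The discrepancy $g(\widetilde\frakD)^{-1}\cdot g$ lies in the abelian kernel $G^{>\ell-1}/G^{>\ell}$ and is corrected by adding walls of pure degree $\ell$; what must then be verified is the global consistency of the resulting diagram.

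The main technical obstacle is the local analysis at each codimension-two joint of $\widetilde\frakD$. Globally, consistency of $\frakD$ is equivalent to triviality of $\frakp_{\gamma,\frakD}$ for every small admissible loop around a codimension-two cell of $\mathrm{Sing}(\frakD)$; any such loop meets only finitely many walls whose normal vectors lie in a rank-two sublattice of $N$. At each joint and each degree, the consistency requirement therefore reduces to a rank-two problem of the type treated in Proposition~\ref{3prop:pent1} and Section~\ref{sec:ordering1}: the incoming anti-ordered product of wall elements around the joint must be rewritten as an ordered product along newly inserted degree-$\ell$ walls. The existence and essential uniqueness of this ordered decomposition modulo $G^{>\ell}$, which is ultimately generated by the commutative and pentagon relations of Proposition~\ref{3prop:pent1}, is the core lemma, and it simultaneously yields the existence of the correcting walls (surjectivity) and their uniqueness up to trivial walls (injectivity).

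Finally, for injectivity at degree $\ell$, suppose two consistent diagrams $\frakD, \frakD'$ satisfy $\pi_\ell(g(\frakD)) = \pi_\ell(g(\frakD'))$ and, by the inductive hypothesis, are equivalent at degree $\ell - 1$. Their difference is then encoded by finitely many pure-degree-$\ell$ walls whose combined contribution along every admissible loop vanishes in the abelian quotient $\frakg^{>\ell-1}/\frakg^{>\ell}$; a standard joining-and-splitting argument shows such a collection is equivalent to the empty one. Taking the inverse limit $\ell \to \infty$ then delivers the claimed bijection between consistent scattering diagrams up to equivalence and elements of $G$.
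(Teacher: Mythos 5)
The paper itself states Theorem~\ref{thm:bij1} without proof (Chapter~\ref{ch:CSD1} is explicitly a survey; the proof is deferred to \cite[Thm.~III.2.20]{Nakanishi22a} and the original sources), so I am comparing your argument against the standard one there. Your overall skeleton is the right one and matches it: induction on the degree $\ell$ via the pro-nilpotent tower $G=\varprojlim G^{\leq\ell}$, the observation that the kernel $G^{>\ell-1}/G^{>\ell}$ is central and abelian, reduction of consistency to small loops around codimension-two joints, correction by pure degree-$\ell$ walls, and passage to the inverse limit.

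The genuine gap is your identification of the ``core lemma.'' You reduce the local problem at each joint to the rank-two ordering of Proposition~\ref{3prop:pent1} and Section~\ref{sec:ordering1}, i.e.\ to rewriting an anti-ordered product as an ordered one via the commutative and pentagon relations. That machinery is specific to \emph{cluster} scattering diagrams, whose wall elements are powers of dilogarithm elements $\Psi[\bfn]$; it is what drives Theorem~\ref{thm:CSD1}, Construction~\ref{const:csd1}, and Theorem~\ref{3thm:struct1}, not Theorem~\ref{thm:bij1}. In the general bijection a wall element is an \emph{arbitrary} element of $G^{\parallel}_{\bfn}$, and the pentagon relation neither applies nor is needed: since the degree-$\ell$ graded piece is central in $G^{\leq\ell}$, the degree-$\ell$ defect of the path-ordered product around a joint is an additive, $\frakg^{>\ell-1}/\frakg^{>\ell}$-valued quantity, and the real content is that this assignment of defects to joints admits a ``primitive,'' i.e.\ piecewise-constant degree-$\ell$ wall functions on each hyperplane $\bfn^{\perp}$ whose jumps across joints cancel the defects (with the residual global constants on each $\bfn^{\perp}$ fixed by matching $g(\frakD)$ to $g$ along $\gamma_{+-}$). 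As written, your argument would only produce diagrams whose wall elements are products of dilogarithm elements, so it cannot be surjective onto all of $G$. Relatedly, in your injectivity step the implication ``contribution along every admissible loop vanishes $\Rightarrow$ equivalent to the empty collection'' is false on its own: a full-hyperplane wall $(\bfn^{\perp},\exp(cX_{\bfn}))_{\bfn_0}$ of pure degree $\ell$ is invisible to every loop but shifts $g(\frakD)$; you must invoke $\pi_{\ell}(g(\frakD))=\pi_{\ell}(g(\frakD'))$ at exactly this point to kill these components.
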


\begin{defn}
We say that a scattering diagram $\frakD$ is with \emph{minimal support}\index{minimal support}
if its support $\mathrm{Supp}(\frakD)$ is  the minimal set
among the supports of all equivalent scattering diagrams to $\frakD$.
\end{defn}

The proof of Theorem \ref{thm:bij1} also ensures the following fact.
\begin{prop}
[e.g., {\cite[Prop.~III.2.25]{Nakanishi22a}}]
For any consistent scattering diagram $\frakD$,
there exists a (not unique) scattering diagram with minimal support
that is equivalent to $\frakD$.
\end{prop}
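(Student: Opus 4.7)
The plan is to construct the desired representative degree by degree, exploiting the finiteness condition together with the inductive nature of the Kontsevich--Soibelman construction underlying Theorem \ref{thm:bij1}.

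First I would handle each finite degree separately. For each integer $\ell\geq 1$, the reduction $\frakD_\ell$ is a scattering diagram with only finitely many non-trivial walls, and its structure group is the nilpotent quotient $G^{\leq\ell}$. Let $N_\ell\subset N^+_{\rmpr}$ be the finite set of primitive normal vectors carrying non-trivial wall elements in $\frakD_\ell$; any diagram equivalent to $\frakD_\ell$ can, after refining, be taken to have all its walls supported on rational $(n-1)$-dimensional cones inside $\bigcup_{\bfn\in N_\ell}\bfn^{\perp}$. The collection of supports of all equivalent diagrams is then partially ordered by inclusion within a finite-dimensional combinatorial setting; intersecting supports along a descending chain and verifying that the intersection is still realized by an equivalent diagram (using that redundant wall elements in the excised part must lie in the kernel of the path-ordered product, hence can be removed) gives a lower bound. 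A Zorn-type argument, or a direct finite subdivision argument over the fans refining $\bigcup_{\bfn\in N_\ell}\bfn^{\perp}$, then yields a minimal element, realized by some $\frakD^{(\ell)}$.

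Second, I would ensure the $\frakD^{(\ell)}$ can be chosen compatibly in $\ell$. The standard proof of Theorem \ref{thm:bij1} constructs a consistent diagram inductively: assuming consistency modulo $G^{>\ell}$, one introduces new walls of degree exactly $\ell+1$ to cancel the obstruction supported in $\mathrm{Sing}(\frakD^{(\ell)})$, without altering any wall element modulo $G^{>\ell}$. Starting from $\frakD^{(\ell)}$ and performing this filling procedure produces an equivalent refinement whose reduction at degree $\ell$ still has support in $\mathrm{Supp}(\frakD^{(\ell)})$. Minimising again at degree $\ell+1$ among such refinements (a finite problem by the same argument as before) produces $\frakD^{(\ell+1)}$ whose $\ell$-truncation preserves the already-minimal lower-degree support.

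Finally, I would pass to the inverse limit: a wall $(\frakd,g)_\bfn$ of the desired $\frakD'$ is one that stabilises in $\frakD^{(\ell)}$ for all sufficiently large $\ell$, with wall element given by the corresponding limit in $G_\bfn^{\parallel}$. The finiteness condition for $\frakD'$ is inherited from each $\frakD^{(\ell)}$, and equivalence with $\frakD$ follows because $\frakD'$ and $\frakD$ have the same reduction at every finite degree, hence the same $g(\frakD)\in G$ by Theorem \ref{thm:bij1}. Minimality of $\mathrm{Supp}(\frakD')$ is immediate: any equivalent diagram with strictly smaller support would induce, at some finite degree $\ell$, a representative of $\frakD_\ell$ with support strictly smaller than $\mathrm{Supp}(\frakD^{(\ell)})$, contradicting minimality at that stage. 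The main obstacle I anticipate is in the compatibility step: one must verify that the Kontsevich--Soibelman filling at degree $\ell+1$ can always be carried out \emph{within the singular locus} of the already minimised $\frakD^{(\ell)}$, so that adding degree-$(\ell+1)$ walls does not force an enlargement of the lower-degree support. This requires a careful local analysis near $\mathrm{Sing}(\frakD^{(\ell)})$, together with a simultaneous Zorn-style minimisation across the two degrees, and is where the bulk of the technical work lies.
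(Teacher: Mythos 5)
Your overall strategy (reduce to each finite degree, remove locally trivial walls, pass to the limit) is the right skeleton, and it is close in spirit to the cited proof: the paper itself only remarks that the inductive construction behind Theorem \ref{thm:bij1} already yields such a representative. However, the Zorn-type minimisation in your first step does not deliver what the definition demands. ``Minimal support'' here means the \emph{smallest} support among all equivalent diagrams --- it must be contained in the support of every equivalent diagram --- whereas Zorn's lemma only produces a minimal element of the inclusion poset; and your claim that a descending chain of supports has a realizable lower bound is unproved (equivalent diagrams may carry arbitrarily many trivial walls, so the poset of supports is not obviously chain-complete, nor closed under intersection). The clean substitute is the pointwise characterization you only gesture at: for $z\in\bfn^{\perp}$ away from $\mathrm{Sing}$, the product of the wall elements of the walls through $z$, read off from $\frakp_{\gamma,\frakD_{\ell}}$ for a short transversal path $\gamma$ near $z$, is an invariant of the equivalence class. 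The degree-$\ell$ essential support is the closure of the locus where this local product is $\neq\rmid$; it is therefore contained in $\mathrm{Supp}(\frakD'_{\ell})$ for \emph{every} equivalent $\frakD'$, and it is realized by splitting the finitely many walls of $\frakD_{\ell}$ along a common refinement and discarding the cells where the local product is trivial. No choice or chain argument is needed, and minimum (not merely minimal) support follows at once.

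Second, the ``main obstacle'' you anticipate --- that the degree-$(\ell+1)$ filling might enlarge the already-minimised lower-degree support --- is not an obstacle at all. By definition the reduction $\frakD_{\ell}$ retains only walls with $\pi_{\ell}(g_{\lambda})\neq\rmid$; every wall introduced at degree $\ell+1$ has wall element in $G^{>\ell}$ and is therefore invisible in the degree-$\ell$ reduction. Hence the essential supports are automatically nested in $\ell$, the final diagram is simply the union over $\ell$ of the degree-$\ell$ essential walls, and no ``simultaneous minimisation across two degrees'' is required. Redirecting the technical effort from this vacuous compatibility issue to the genuine point --- the equivalence-invariance of the local crossing product --- is what turns your sketch into a correct proof.
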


\section{Cluster scattering diagrams}

Next, we will define a unique consistent scattering diagram $\frakD(B)$ (up to equivalence and isomorphism) associated  with
a skew-symmetrizable integer matrix $B$ called a \emph{cluster scattering diagram}.

First, we choose an arbitrary skew-symmetric decomposition \eqref{eq:BDO1} of $B$.
We then construct  the group $G=G_{\Omega}$ for  the matrix $\Omega$ as in Section \ref{sec:Lie1}.

Let us introduce an abelian group homomorphism
\begin{align}
\label{eq:p*3}
\begin{matrix}
p^*\colon & N  & \rightarrow &M_{\bbR}
\\
& \bfn & \mapsto & \{\cdot, \bfn\}_{\Omega}.
\end{matrix}
\end{align}
In particular,
\begin{align}
\langle \bfn, p^*(\bfn)\rangle = \{\bfn,\bfn\}_{\Omega}=0.
\end{align}
Thus, 
\begin{align}
\label{eq:p*2}
p^*(\bfn)\in \bfn^{\perp}
\end{align}
holds for any $\bfn\neq \bfzero$.

\begin{defn}
A wall $\bfw=(\frakd, g)_{\bfn}$ is said to be \emph{incoming}\index{incoming (wall)} (resp. \emph{outgoing}\index{outgoing (wall)})
if $p^*(\bfn)\in \frakd$ (resp. otherwise).
See figure \ref{fig:inout1}.
\end{defn}

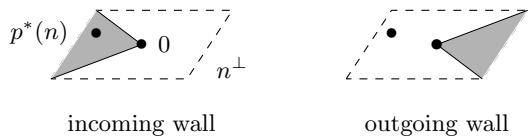
\begin{figure}
\begin{center}
\begin{tikzpicture}[scale=0.3]
\draw[dashed](0,0)--(2,3)--(8,3)--(6,0)--(0,0);
\filldraw [fill=black!30, draw=white] (4,1.5) -- (0,0) -- (2,3) -- (4,1.5);
\draw(4,1.5)--(0,0);
\draw(4,1.5)--(2,3);
\node at (4,1.5) {\small $\bullet$};
\node at (5,1.5) {\small $0$};
\node at (8,0.5) {\small $n^{\perp}$};
\node at (2,2) {\small $\bullet$};
\node at (-0.5,2) {\small $p^*(n)$};
\node at (4,-2) {\small incoming wall};
\end{tikzpicture}
\hskip30pt
\begin{tikzpicture}[scale=0.3]
\draw[dashed](0,0)--(2,3)--(8,3)--(6,0)--(0,0);
\filldraw [fill=black!30, draw=white] (4,1.5) -- (6,0) -- (8,3) -- (4,1.5);
\draw(4,1.5)--(6,0);
\draw(4,1.5)--(8,3);
\node at (4,1.5) {$\small\bullet$};
\node at (2,2) {\small $\bullet$};
\node at (4,-2) {\small outgoing wall};
\end{tikzpicture}
\end{center}
\vskip-10pt
\caption{Incoming and outgoing walls.}
\label{fig:inout1}
\end{figure}

For example, a wall $\bfw=(\bfn^{\perp}, g)_{\bfn}$ is incoming for any $\bfn\in N^+_{\rmpr}$
by \eqref{eq:p*2}.

So far, we only use the data $\Omega$ in the decomposition 
\eqref{eq:BDO1}.
The following definition involves $\Delta$ and $\Omega$ together.

\begin{defn}
\label{defn:CSD1}
For a skew-symmetrizable matrix $B$ with a skew-symmetric decomposition
\eqref{eq:BDO1},
a \emph{cluster scattering diagram} (CSD)\index{cluster!scattering diagram (CSD)} for $B$
is a consistent scattering diagram $\frakD$ 
satisfying the following properties:
\begin{itemize}
\item
The structure group of
$\frakD$ is $G_{\Omega}$.
\item
The incoming walls $\bfw_i:=(\bfe_i^{\perp}, \Psi[\bfe_i]^{\delta_i})_{\bfe_i}$ ($i=1,\dots, n$) belong to $\frakD$,
where  $\delta_i$ is the $i$th diagonal entry of $\Delta$.
\item
All other walls of $\frakD$ are outgoing.
\end{itemize}
\end{defn}

Now we present the first fundamental result on CSDs.
\begin{thm}
[{\cite[Thm.~1.21]{Gross14}}]
\label{thm:CSD1}
For any skew-symmetrizable matrix $B$ with a skew-symmetric decomposition
\eqref{eq:BDO1},
there exists a unique  CSD $\frakD(B)$ up to equivalence.

\end{thm}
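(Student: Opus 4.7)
The plan is to construct $\frakD(B)$ order by order using the degree filtration $\frakg = \bigoplus_\bfn \bbQ X_\bfn$ and to prove existence and uniqueness simultaneously by induction. At step $\ell$, I would produce a scattering diagram $\frakD^{(\ell)}$ whose wall elements, pushed to the nilpotent quotient $G^{\leq \ell}$, satisfy the consistency condition \eqref{3eq:pgfi1} modulo $G^{>\ell}$, and whose walls beyond the prescribed incoming ones $\bfw_i=(\bfe_i^\perp,\Psi[\bfe_i]^{\delta_i})_{\bfe_i}$ are all outgoing. Passing to the inverse limit then yields $\frakD(B)$, and uniqueness up to equivalence follows because the induction leaves no freedom in the outgoing walls once the incoming data are fixed.

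For the base case $\ell=1$, I would take $\frakD^{(1)}=\{\bfw_1,\dots,\bfw_n\}$. Modulo $G^{>1}$ the wall elements $\Psi[\bfe_i]^{\delta_i}$ are abelian, and any admissible loop crosses each hyperplane $\bfe_i^\perp$ an equal number of times in each direction, so $\frakp_{\gamma,\frakD^{(1)}}\equiv \rmid \pmod{G^{>1}}$. For the inductive step, assuming $\frakD^{(\ell)}$ has been built, I would measure its failure of consistency at degree $\ell+1$: for every small admissible loop $\gamma_\frakj$ encircling a codimension-two stratum $\frakj\in \mathrm{Sing}(\frakD^{(\ell)})$, the class of $\frakp_{\gamma_\frakj,\frakD^{(\ell)}}$ in the abelian group $G^{\leq \ell+1}/G^{>\ell+1}\,\cap\,\exp(\bigoplus_{\deg\bfn=\ell+1}\bbQ X_\bfn)$ records the discrepancy. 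I would then add, for each primitive $\bfn\in N^+_{\rm pr}$ with $\deg\bfn\leq \ell+1$ such that $p^*(\bfn)$ lies in one of the relevant hyperplanes, new outgoing walls $(\frakd,g)_\bfn$ chosen to cancel these discrepancies. The fact that the new walls can be chosen outgoing rests on the observation that incoming walls of primitive normal $\bfn$ are rigidly supported on $\bfn^\perp\ni p^*(\bfn)$, so any wall with support off $p^*(\bfn)$ is automatically outgoing.

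The key technical input — and the main obstacle — is the local Kontsevich–Soibelman lemma stating that at each joint $\frakj$ the discrepancy admits a \emph{unique} decomposition as a commutator-type expression in the parallel subgroups $G_\bfn^\parallel$ associated to the hyperplanes through $\frakj$. Restricting to the rank-two sublattice $N_\frakj\subset N$ orthogonal to $\frakj$, one reduces to a rank-two existence/uniqueness statement: given the two incoming wall elements $\Psi[\bfe_i]^{\delta_i}$ at degree one and an inconsistency at degree $\ell+1$, there is a unique way to insert outgoing walls supported on rays in $(N_\frakj)^+$ to restore consistency modulo $G^{>\ell+1}$. This is exactly the content of the ordering procedure illustrated in Section \ref{sec:ordering1}: at each finite degree the pentagon relation \eqref{eq:pent8} together with the commutativity relation \eqref{eq:Pcom1} generates, order by order, a unique ordered factorization. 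Verifying that the BCH expansion together with the bilinear form $\{\cdot,\cdot\}_\Omega$ makes the map ``outgoing wall data at degree $\ell+1$ $\to$ commutator correction in $\frakg^{\leq \ell+1}/\frakg^{>\ell+1}$'' a bijection onto the space of possible discrepancies is the delicate step.

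Finally, uniqueness is handled in parallel. If $\frakD$ and $\frakD'$ are two CSDs for $B$, I would show by the same induction that, modulo $G^{>\ell}$, they agree up to equivalence: at each order the incoming walls are prescribed, and the degree-$(\ell+1)$ outgoing walls are determined by the unique cancellation established above. Assembling the successive equivalences through $\pi_\ell$ and passing to the limit yields $\frakD\sim\frakD'$, matching the global one-to-one correspondence of Theorem \ref{thm:bij1}.
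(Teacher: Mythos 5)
Your overall strategy---build $\frakD(B)$ degree by degree in the nilpotent quotients $G^{\leq \ell}$, reduce consistency to small loops around codimension-two strata, and use a rank-two uniqueness statement at each joint to pin down the degree-$(\ell+1)$ outgoing walls---is exactly the strategy of the proof this text cites; the chapter itself gives no proof of Theorem \ref{thm:CSD1} and defers to \cite{Gross14} and the companion monograph, and its Construction \ref{const:csd1} follows the same inductive scheme you describe. So the route is the right one, but there is one genuine gap in your inductive step.

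You treat all codimension-two strata uniformly and claim the discrepancy at each can be cancelled by attaching new outgoing walls. This fails at \emph{parallel} joints. If $\frakj$ is a joint with $\{\cdot,\cdot\}_{\Omega}$ vanishing on the rank-two sublattice $N_{\frakj}$, then for every $\bfn\in N_{\frakj}$ one has $\langle \bfn', p^*(\bfn)\rangle=\{\bfn',\bfn\}_{\Omega}=0$ for all $\bfn'\in N_{\frakj}$, i.e.\ $p^*(\bfn)$ lies in the $(n-2)$-dimensional linear span of $\frakj$. Consequently the candidate support $\sigma(\frakj,-p^*(\bfn))$ is degenerate and cannot be an $(n-1)$-dimensional wall: \emph{no} new wall with normal vector in $N_{\frakj}^+$ can be attached at a parallel joint. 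The discrepancy there must therefore be shown to \emph{vanish} by a separate argument (using that all wall elements through such a joint commute and that each such wall is crossed in cancelling pairs), rather than be cancelled. The text flags precisely this point after Construction \ref{const:csd1}: the construction secures consistency only at perpendicular joints, while consistency at parallel joints is an additional input from the cited theorem. Your ``local Kontsevich--Soibelman lemma'' as stated---unique decomposition of the discrepancy into the parallel subgroups attached to the hyperplanes through the joint---is the correct tool only at perpendicular joints, where it reduces to the rank-two ordering statement of Proposition \ref{prop:ordering1}. A smaller inaccuracy: the normal vectors of the walls to be added at a perpendicular joint $\frakj$ are those $\bfn\in N_{\frakj}^+$ (vectors vanishing on $\frakj$), not those with ``$p^*(\bfn)$ in one of the relevant hyperplanes,'' and their supports are the cones $\sigma(\frakj,-p^*(\bfn))$; as written your selection rule does not identify the right set of walls.
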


The decomposition \eqref{eq:BDO1} is not unique at all.
For example, one may rescale $\Delta'=\lambda^{-1}\Delta$ and $\Omega'=\lambda\Omega$ 
such that  the entries of $\Delta'$ are integers.
However,  $G_{\Omega}$ and $G_{\Omega'}$ are isomorphic by the correspondence
$g\mapsto g'=g^{\lambda}$.
So, the CSDs for the two decompositions are naturally identified under this isomorphism.
Moreover, this is generalized to any decomposition of $B$  \cite[Prop.~III.1.23]{Nakanishi22a}.
Therefore, a CSD  essentially depends only on $B$.

The matrix $\Delta$ also determines the data
\begin{align}
\label{eq:Bcirc1}
N^{\circ}:=
\bigoplus_{i=1}^n \bbZ  \delta_i \bfe_i,
\quad
M^{\circ}:=\mathrm{Hom}(N^{\circ},\bbZ),
\end{align}
so that $N^{\circ}\subset N$ and $M\subset M^{\circ}\subset M_{\bbR}$.
For the standard basis $\bfe_1$, \dots, $\bfe_n$ of $N$,
let $e^*_1$, \dots, $e^*_n$ be the dual basis of $M$.
Then, we have a basis of $\delta_1 \bfe_1$, \dots, $\delta_n \bfe_n$ of $N^{\circ}$
and its dual basis $f_1=e^*_1/\delta_1 $, \dots, $f_n=e^*_n/\delta_n$ of $M^{\circ}$.
We identify $M_{\bbR} \simeq \bbR^n$ with $f_i \mapsto \bfe_i$, not with $e^*_i \mapsto \bfe_i$.
Under the identification,
the canonical pairing
$\langle \bfn, z \rangle\colon
 N\times M_{\bbR}\rightarrow \bbR$
   is given 
   by 
\begin{align}
\label{3eq:can1}
\langle \bfn, \bfz \rangle
=
\bfn^T
\Delta^{-1}
\bfz.
\end{align}
We also have
\begin{align}
\label{eq:p*1}
p^*(\bfe_j)(\delta_i \bfe_i)=
\{\delta_i \bfe_i, \bfe_j\}_{\Omega}=
\delta_i \omega_{ij}=b_{ij}.
\end{align}
This means that  the representation matrix of the map 
$p^*$ in \eqref{eq:p*3} with respect the basis
$\bfe_1$, \dots, $\bfe_n\in N$ and $f_1$, \dots, $f_n\in M^{\circ}\subset M_{\bbR}$ is given by $B$.
Thus,  $p^*(\bfn)\in M_{\bbR}$ is represented by the vector $B\bfn\in \bbR^n$.

\section{Rank 2 CSDs: finite type}
\label{sec:rank1}

The proof of Theorem \ref{thm:CSD1}
in {\cite{Gross14}}
 is based on certain abstract construction.
 For the rank 2 case, however, it is possible to describe CSDs more explicitly,
thanks to the results in Section \ref{sec:ordering1}.
 Here we concentrate on the finite type to illustrate the idea of a CSD.
 
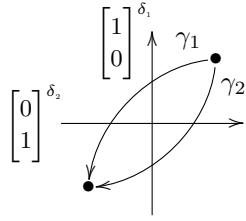
\begin{figure}
\centering
\leavevmode
\begin{xy}
0;/r1.2mm/:,
(4,9)*{\gamma_1};
(9,4)*{\gamma_2};
(7,7)*+{\bullet};(-7,-7)*+{\bullet};
(7,7)*+{};(-7,-7)*+{}
   **\crv{(0,6.7)&(-6.7,0)}
          ?>*\dir{>};
       (7,7)*+{};(-7,-7)*+{}
       **\crv{(6.7,0)&(0,-6.7)}
       ?>*\dir{>};
(-3, 9)*{\text{\small $\begin{bmatrix}1\\0\end{bmatrix}^{\delta_1}$}},
(-13, 0)*{\text{\small $\begin{bmatrix}0\\1\end{bmatrix}^{\delta_2}$}},
(0,0)="A",
\ar "A"+(0,0); "A"+(10,0)
\ar "A"+(0,0); "A"+(0,10)
\ar@{-} "A"+(0,0); "A"+(-10,0)
\ar@{-} "A"+(0,0); "A"+(0,-10)
\end{xy}
\caption{CSD of type $A_1\times A_1$.}
\label{3fig:scat1}
\end{figure}

Let us first consider the simplest case.
\begin{ex}
[Type $A_1 \times A_1$]
\label{ex:CSDA1A1}
When $B=O$, we have a decomposition
\eqref{eq:BDO1} with $\Omega=O$ and arbitrary $\Delta$.
By  the commutativity \eqref{eq:Pcom1}, we have
\begin{align}
\label{3eq:pent0}
\Psi[\bfe_2 ]^{\delta_2} \Psi[ \bfe_1]^{\delta_1}=
\Psi[  \bfe_1 ]^{\delta_1}  \Psi[ \bfe_2]^{\delta_2}.
\end{align}
This relation is naturally interpreted as
the (unique) consistency condition for a scattering diagram of rank 2 in Figure \ref{3fig:scat1}.
Namely, it consists of two incoming walls
\begin{align}
(\bfe_1^{\perp}, \Psi[\bfe_1]^{\delta_1})_{\bfe_1},
\quad
(\bfe_2^{\perp}, \Psi[\bfe_2]^{\delta_2})_{\bfe_2}.
\end{align}
The LHS of the relation \eqref{3eq:pent0}
is the path-ordered product $\frakp_{\gamma_1, \frakD}$
along $\gamma_1$,
while the RHS is the one $\frakp_{\gamma_2, \frakD}$ along $\gamma_2$.
Thus, the relation \eqref{3eq:pent0} is exactly the consistency relation
$\frakp_{\gamma_1, \frakD}=\frakp_{\gamma_1, \frakD}$.
Thus, this is a CSD for $B$.
\end{ex}

Next, let us assume  
\begin{align}
\label{eq:B3}
B=
\begin{pmatrix}
0 & -\delta_1  \\
\delta_2  & 0
\end{pmatrix}
\end{align}
for some positive integers $\delta_1$ and $\delta_2$.
We consider a skew-symmetric decomposition of $B$ with
\begin{align}
\Delta=
\begin{pmatrix}
\delta_1  & 0 \\
0  & \delta_2
\end{pmatrix},
\quad
\Omega=
\begin{pmatrix}
0 & -1  \\
1  & 0
\end{pmatrix}.
\end{align}
The advantage of this decomposition is that one can
work with the common structure group $G_{\Omega}$
for any   matrix $B$ in  \eqref{eq:B3}.
Note that the matrix $\Omega$ is the same one
in \eqref{eq:Omega1}.
So, we can use the results in Section \ref{sec:ordering1}.
Also,  by  \eqref{eq:B3},
$B\bfn$ ($\bfn \in N^+_{\rmpr}$) is in 
 the closure of the \emph{second}
quadrant excluding the origin $\bfzero$ in $\bbR^2$.
Thus, a wall $(\frakd, g)_{\bfn}$ is incoming if and only if 
the support $\frakd$ intersects with 
 the closure of the {second}
quadrant excluding the origin $\bfzero$.

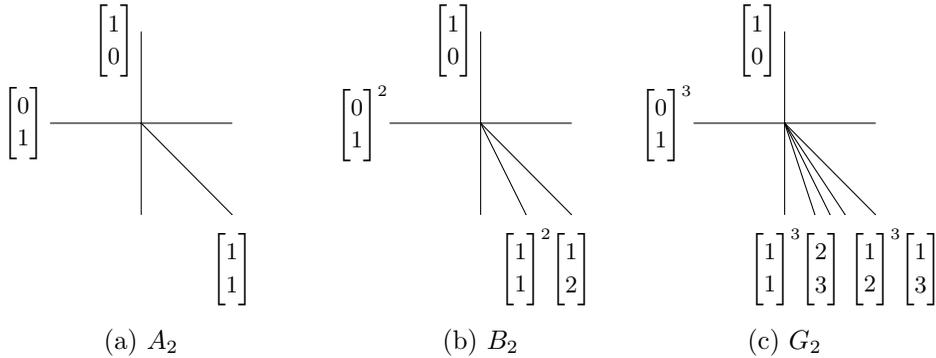
\begin{figure}
\centering
\leavevmode
\begin{xy}
0;/r1.2mm/:,
(0,-24)*{\text{(a) $A_2$}},
(-3, 9)*{\text{\small $\begin{bmatrix}1\\0\end{bmatrix}$}},
(-13, 0)*{\text{\small $\begin{bmatrix}0\\1\end{bmatrix}$}},
(10, -16)*{\text{\small $\begin{bmatrix}1\\1\end{bmatrix}$}},
(0,0)="A",
\ar@{-} "A"+(0,0); "A"+(10,0)
\ar@{-} "A"+(0,0); "A"+(0,10)
\ar@{-} "A"+(0,0); "A"+(-10,0)
\ar@{-} "A"+(0,0); "A"+(0,-10)
\ar@{-} "A"+(0,0); "A"+(10,-10)
\end{xy}
\hskip10pt
\
\hskip15pt
\begin{xy}
0;/r1.2mm/:,
(0,-24)*{\text{(b) $B_2$}},
(-3, 9)*{\text{\small $\begin{bmatrix}1\\0\end{bmatrix}$}},
(-13, 0)*{\text{\small $\begin{bmatrix}0\\1\end{bmatrix}^2$}},
(5, -15.75)*{\text{\small $\begin{bmatrix}1\\1\end{bmatrix}^2$}},
(10, -16)*{\text{\small $\begin{bmatrix}1\\2\end{bmatrix}$}},
(0,0)="A"
\ar@{-} "A"+(0,0); "A"+(10,0)
\ar@{-} "A"+(0,0); "A"+(0,10)
\ar@{-} "A"+(0,0); "A"+(-10,0)
\ar@{-} "A"+(0,0); "A"+(0,-10)
\ar@{-} "A"+(0,0); "A"+(5,-10)
\ar@{-} "A"+(0,0); "A"+(10,-10)
\end{xy}
\hskip15pt
\begin{xy}
0;/r1.2mm/:,
(0,-24)*{\text{(c) $G_2$}},
(-3, 9)*{\text{\small $\begin{bmatrix}1\\0\end{bmatrix}$}},
(-13, 0)*{\text{\small $\begin{bmatrix}0\\1\end{bmatrix}^3$}},
(-1, -15.75)*{\text{\small $\begin{bmatrix}1\\1\end{bmatrix}^3$}},
(4, -16)*{\text{\small $\begin{bmatrix}2\\3\end{bmatrix}$}},
(10, -15.75)*{\text{\small $\begin{bmatrix}1\\2\end{bmatrix}^3$}},
(15, -16)*{\text{\small $\begin{bmatrix}1\\3\end{bmatrix}$}},
(0,0)="A"
\ar@{-} "A"+(0,0); "A"+(10,0)
\ar@{-} "A"+(0,0); "A"+(0,10)
\ar@{-} "A"+(0,0); "A"+(-10,0)
\ar@{-} "A"+(0,0); "A"+(0,-10)
\ar@{-} "A"+(0,0); "A"+(3.33,-10)
\ar@{-} "A"+(0,0); "A"+(5,-10)
\ar@{-} "A"+(0,0); "A"+(6.66,-10)
\ar@{-} "A"+(0,0); "A"+(10,-10)
\end{xy}
\caption{Rank 2 CSDs of finite type.}
\label{3fig:scat2}
\end{figure}
\begin{ex}
[Finite type: $\delta_1\delta_2\leq 3$]
\label{ex:CSDfin1}
We use the results in Example \ref{ex:ordfinite1}.
\ \par
(a) Type $A_2$. Let $(\delta_1,\delta_2)=(1,1)$.
The relation \eqref{eq:order2} is  interpreted as
the consistency relation for
the  scattering diagram  in Figure \ref{3fig:scat2} (a).
It consists of three walls
\begin{align}
\label{eq:wallsA2}
(\bfe_1^{\perp}, \Psi[\bfe_1])_{\bfe_1},
\quad
(\bfe_2^{\perp}, \Psi[\bfe_2])_{\bfe_2},
\quad
(\bbR_{\geq 0} (1,-1), \Psi[(1,1)])_{(1,1)},
\end{align}
where
\begin{align}
\bbR_{\geq 0} (1,-1)\subset (1,1)^{\perp}
\end{align}
by \eqref{3eq:can1}.
This ray is contained in the closure of the fourth quadrant.
Thus, it is outgoing.
Therefore, this is a CSD for $B$.

\par
(b) Type $B_2$. 
Let $(\delta_1,\delta_2)=(1,2)$.
The relation \eqref{eq:order3} is  interpreted as
the consistency relation for
the  scattering diagram in Figure \ref{3fig:scat2} (b),
which consists of four walls
\begin{align}
\label{eq:wallsB2}
\begin{split}
&(\bfe_1^{\perp}, \Psi[\bfe_1])_{\bfe_1},
\quad
(\bfe_2^{\perp}, \Psi[\bfe_2]^2)_{\bfe_2},
\\
&(\bbR_{\geq 0} (1,-2), \Psi[(1,1)]^2)_{(1,1)},
\quad
(\bbR_{\geq 0} (1,-1), \Psi[(1,2)])_{(1,2)}.
\end{split}
\end{align}
This is a CSD for $B$.

\par
(c) Type $G_2$. Let $(\delta_1, \delta_2)=(1,3)$.
The relation  \eqref{eq:order5} is  interpreted as
the consistency relation for
the  scattering diagram in Figure \ref{3fig:scat2} (c),
which consists of six walls
\begin{align}
\begin{split}
&(\bfe_1^{\perp}, \Psi[\bfe_1])_{\bfe_1},
\quad
(\bfe_2^{\perp}, \Psi[\bfe_2]^3)_{\bfe_2},
\\
&(\bbR_{\geq 0} (1,-3), \Psi[(1,1)]^3)_{(1,1)},
\quad
(\bbR_{\geq 0} (1,-2), \Psi[(2,3)])_{(2,3)},
\\
&(\bbR_{\geq 0} (2,-3), \Psi[(1,2)]^3)_{(1,2)},
\quad
(\bbR_{\geq 0} (1,-1), \Psi[(1,3)])_{(1,3)}.
\end{split}
\end{align}
This is a CSD for $B$.
\end{ex}

We see that
the above CSDs are constructed only with dilogarithm elements and the pentagon relation.

\section{$G$-fan and CSD}
\label{sec:Gfan1}

Let us explain the  relation between
a CSD $\frakD(B)$ and
 a cluster pattern/$Y$-pattern
 with
the initial  exchange matrix $B$.

Recall  that the $G$-pattern
$\bfG=\bfG^{t_0}=\{ G_t\}_{ t\in \bbT_n}$ for
the initial exchange matrix $B$
in Definition \ref{defn:Gmat1}
encodes all essential information of the corresponding 
cluster pattern/$Y$-pattern
by Theorem \ref{1thm:synchro1}.
One can geometrically represent each $G$-matrix $G_t$ by
a \emph{$G$-cone}\index{$G$-cone} in $\bbR^n$
defined by
\begin{align}
\label{eq:Gcone1}
\sigma(G_t):=
\sigma
( \bfg_{1;t},
\dots,
\bfg_{n;t}
).
\end{align}
Due to the unimodularity \eqref{eq:uni1},
the dimension of $\sigma(G_t)$ is $n$.
Note that the cone  $\sigma(G_t)$ is invariant under the
action of a permutation $\nu$ on a $G$-matrix $G_t$
in \eqref{2eq:sigmag1}. 
 Each cone generated by a subset of 
$ \bfg_{1;t}$, \dots,
$\bfg_{n;t}$
is called a \emph{face}\index{face (of cone)} of  $\sigma(G_t)$,
including the one $\sigma(\emptyset)=\{\bfzero \}$.
In particular, 
let
\begin{align}
\label{eq:Gcone2}
\sigma_i(G_t):=
\sigma
( \bfg_{1;t},
\dots,
\check\bfg_{i;t}
\dots,
\bfg_{n;t}
),
\end{align}
which is  a face of dimension $n-1$.
Then,
by \eqref{2eq:gmut1},
 if $t$, $t'\in \bbT_n$ are $k$-adjacent, the $G$-cones
$\sigma(G_t)$ and $\sigma(G_{t'})$ intersect each other in their common face
$\sigma_k(G_t)=\sigma_k(G_{t'})$.

\begin{defn}
A set of cones $\calF$ is a \emph{fan}\index{fan}
if the intersection $\sigma\cap \sigma'$ of any pair $\sigma$, $\sigma'\in \calF$
belongs to $\calF$.
\end{defn}

The following property is a consequence
of the sign-coherence in Theorem \ref{thm:sign1}.

\begin{prop}
\cite[Thm.~8.7]{Reading12}
\label{prop:Gfan1}
The set $\calG(B)$ consisting of all $G$-cones and their faces
is a fan in $\bbR^n$.
\end{prop}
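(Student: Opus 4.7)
\emph{Plan.} The definition of $\calG(B)$ already includes every face of every $G$-cone, so the first axiom of a fan is automatic. The content of the proposition is the second axiom: for any two $G$-cones $\sigma(G_t)$ and $\sigma(G_{t'})$, the intersection $\sigma(G_t)\cap\sigma(G_{t'})$ should be a common face of both. I would reduce this to the following two claims: (i) the $G$-cones $\sigma(G_t)$ and $\sigma(G_{t'})$ have disjoint interiors whenever $G_t\neq \nu G_{t'}$ for any $\nu\in S_n$; and (ii) whenever $\sigma(G_t)$ and $\sigma(G_{t'})$ meet in a face of positive dimension, that face is built from $g$-vectors common to both $G_t$ and $G_{t'}$. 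Once (i) and (ii) are in place, the intersection of any two $G$-cones is automatically a common face, and then the intersection of faces reduces to intersecting the ambient cones.

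\emph{First step (adjacency and local fan structure).} I would start from the adjacency fact already noted in the text: when $t,t'\in\bbT_n$ are $k$-adjacent, the mutation rule \eqref{2eq:gmut1} preserves the columns $\bfg_{i;t}$ for $i\neq k$, so $\sigma_k(G_t)=\sigma_k(G_{t'})$ and this common $(n-1)$-face is where the two full-dimensional cones meet. Combined with unimodularity \eqref{eq:uni1} (which makes each $G$-cone simplicial), this shows that locally, near any $(n-1)$-face $\sigma_k(G_t)$, exactly two $G$-cones meet along it on opposite sides (the signs being controlled by the formula for $\bfg_{k;t'}$). This already gives the fan axiom for any pair of adjacent $G$-cones, and by an easy induction, for any finite collection of pairwise adjacent ones.

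\emph{Main step (global separation via sign-coherence).} The hard part is (i), the global statement that two $G$-cones coming from distant vertices $t, t'\in\bbT_n$ cannot overlap in their interiors. This is where the sign-coherence of $c$-vectors (Theorem \ref{thm:sign1}) is indispensable. The idea, following \cite{Reading12}, is to exploit the duality \eqref{eq:dual2}--\eqref{eq:dual3} between $C_t$ and $G_t$: a point $\bfv\in\bbR^n$ lies in the interior of $\sigma(G_t)$ precisely when all inner products $\bfc_{i;t}^T D \bfv$ (expressed via $\Delta^{-1} C_t^T D\cdot$) are strictly positive, using that the rows of $D^{-1}C_t^T D$ form, up to sign, the dual basis to the $g$-vectors. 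By sign-coherence, each $c$-vector at $t$ is either positive or negative, and one can attach to each $t$ a well-defined sign pattern or support-function that separates $\sigma(G_t)$ from the other $G$-cones. The synchronicity Theorem \ref{1thm:synchro1} then upgrades equality of $G$-matrices (up to permutation) to equality of seeds, so distinct $G$-cones indeed come from genuinely distinct data.

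\emph{Conclusion and main obstacle.} Once (i) is established, I would finish by using connectedness of $\bbT_n$: any two vertices $t, t'$ are joined by a path, and the sequence of $G$-cones along this path fits together as in the local argument, with the global separation of (i) ruling out any unexpected overlap. The intersection $\sigma(G_t)\cap\sigma(G_{t'})$ must then be a common face, built from the $g$-vectors shared by both $G_t$ and $G_{t'}$ along the path. The main obstacle is unambiguously step (i): writing down a clean and correct separation argument from sign-coherence to disjoint interiors. This is nontrivial because the analogue of a ``half-space defining'' a $G$-cone has to be constructed globally and compatibly across all $t\in\bbT_n$; I expect to follow the dual-basis formalism of \cite{Reading12} via Theorem \ref{thm:dual2}, which gives the cleanest route and is already available to us as an advanced result.
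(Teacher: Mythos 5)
The paper does not actually prove this proposition: it is one of the standard facts cited to \cite[Thm.~8.7]{Reading12} and deferred to the companion monograph, with only the remark that it is ``a consequence of the sign-coherence in Theorem \ref{thm:sign1}.'' So your proposal can only be measured against that intended route, and while you correctly identify the right inputs (sign-coherence, the second duality \eqref{eq:dual2}--\eqref{eq:dual3}, unimodularity, and the local adjacency of $G$-cones along $\sigma_k(G_t)=\sigma_k(G_{t'})$), there are two genuine gaps. First, your separation step (i) is left as a promissory note, and the mechanism you would need is not the one you gesture at. The statement that actually does the work is: for every pair $t,t'$ and every $i$, the quantities $\langle \delta_i\bfc_{i;t},\bfg_{j;t'}\rangle$ for $j=1,\dots,n$ all have the same weak sign, i.e.\ the hyperplane spanned by $\sigma_i(G_t)$ never separates the interior of any other $G$-cone. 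To get this you must change the initial vertex: the duality \eqref{eq:dual3} together with the transition formula for $G$-matrices identifies $D^{-1}C_t^TD\,G_{t'}$ with the $G$-matrix of $t'$ computed with initial vertex $t$, and the claim then becomes the row sign-coherence of $G$-matrices (equivalent, again by duality, to Theorem \ref{thm:sign1}). Your sketch never introduces a second base point, so the ``sign pattern attached to each $t$'' you invoke has no definition.

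Second, your closing step does not follow. Disjointness of interiors does \emph{not} imply that the intersection of two simplicial cones is a common face (two such cones can meet in a proper subset of a facet of one of them), and the cones along a path in $\bbT_n$ from $t$ to $t'$ need not contain $\sigma(G_t)\cap\sigma(G_{t'})$, so the induction along the path does not propagate the shared face. The correct way to finish is purely convex-geometric: once one knows that every facet hyperplane of every $G$-cone has \emph{all} $G$-cones on one closed side (the strengthened form of (i) above), a standard lemma says that such a collection of full-dimensional simplicial cones, together with all their faces, is a fan. Your item (ii) is therefore not a separate claim to be verified along paths but a formal consequence of the correct, hyperplane-level version of (i); as written, neither (i) nor (ii) is actually established.
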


We call the above fan the \emph{$G$-fan}\index{$G$-fan} for a skew-symmetrizable matrix $B$
or the corresponding cluster pattern/$Y$-pattern.
The \emph{support}\index{support!of $G$-fan} of the $G$-fan $\calG(B)$ is defined by
\begin{align}
|\calG(B)|:=
\bigcup_{t\in \bbT^n}
\sigma(G_t).
\end{align}
We say that the $G$-fan $\calG(B)$
 is \emph{complete}\index{complete (for $G$-fan)} if $|\calG(B)|=\bbR^n$.

The following fact is known.
\begin{prop}
[{\cite[Thm.~10.6]{Reading12}, \cite[Thm.~4.2]{Nakanishi24}}]
\label{prop:Gfinite1}
The $G$-fan $\calG(B)$ is complete if and only if the
corresponding cluster pattern/$Y$-pattern is of finite type.
\end{prop}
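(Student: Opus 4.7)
The plan is to prove both implications, bridging the combinatorial finiteness of the cluster pattern to the geometric completeness of $\calG(B)$ via the synchronicity theorem (Theorem \ref{1thm:synchro1}) together with the unimodularity of $G$-matrices (Proposition \ref{prop:CG1}).

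For the direction ``finite type $\Rightarrow$ complete,'' I would invoke the finite type classification (Theorem \ref{thm:finite1}): after a suitable sequence of mutations, $B$ is transformed into an exchange matrix whose Cartan counterpart is of some Dynkin type $X$. The $g$-vectors of cluster algebras of finite type are in explicit bijection with the almost positive roots of the root system $\Phi(X)$ by the classical work of Fomin-Zelevinsky, and under this bijection the $G$-fan is identified with the Cambrian fan (equivalently, the normal fan of the generalized associahedron) of type $X$. Since this fan is known to be complete in $\bbR^n$, one obtains $|\calG(B)| = \bbR^n$.

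For the converse, I would proceed as follows. Each maximal $G$-cone $\sigma(G_t)$ is a full-dimensional simplicial cone because $|G_t| = \pm 1$ forces $\bfg_{1;t},\dots,\bfg_{n;t}$ to be a $\bbZ$-basis of $\bbZ^n$. By the fan property (Proposition \ref{prop:Gfan1}), adjacent maximal $G$-cones share exactly a codimension-one face, so the collection of maximal $G$-cones forms a face-to-face decomposition of the support $|\calG(B)|$. Assuming completeness and local finiteness, the induced decomposition of the unit sphere $S^{n-1}$ is a finite CW complex by compactness, forcing finitely many maximal $G$-cones. Then by the equivalence (a)$\Leftrightarrow$(e) of Theorem \ref{1thm:synchro1}, two labeled seeds at $t,t'$ differ by a permutation iff their $G$-matrices do; hence the number of labeled seeds is bounded by $n!$ times the number of distinct $G$-cones, which is finite.

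The principal obstacle is establishing local finiteness of $\calG(B)$ in the complete case, since a priori infinitely many $G$-cones could accumulate along some direction even while tiling $\bbR^n$. I would address this via the cluster scattering diagram $\frakD(B)$ introduced in Chapter \ref{ch:CSD1}: the maximal $G$-cones are exactly the cluster chambers of $\frakD(B)$, and the finiteness condition built into the definition of a scattering diagram (only finitely many walls contribute to each reduction $\frakD_\ell$) propagates to local finiteness of the chamber decomposition away from the origin. An alternative is a direct packing argument using that each $G$-cone, being unimodular and simplicial, contains a primitive lattice vector of bounded norm in any prescribed compact neighborhood, precluding infinite accumulation. Either route then closes the argument together with the steps above.
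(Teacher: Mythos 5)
The paper itself contains no proof of this proposition: it is stated with a pointer to the companion monograph, and the chapter's Notes explicitly defer all proofs there. So there is nothing in the text to compare you against, and I will judge your argument on its own merits. Your forward direction (finite type $\Rightarrow$ complete) is fine as a proof by citation — the identification of the $G$-fan with the normal fan of the generalized associahedron in finite type does yield completeness — though a more self-contained route exists: with finitely many full-dimensional cones, each codimension-one face shared by exactly two chambers on opposite sides, connectedness of $\bbR^n$ minus the codimension-$\geq 2$ skeleton already forces the union to be everything.

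The converse, however, has a genuine gap exactly where you flagged it, and neither of your proposed repairs works. The chain ``complete $\Rightarrow$ locally finite $\Rightarrow$ finitely many chambers'' is false for general face-to-face fans of unimodular simplicial cones: in $\bbR^2$, the cones $\sigma((1,n),(1,n+1))$ for $n\geq 0$ together with $\sigma((0,1),(-1,0))$, $\sigma((-1,0),(0,-1))$, $\sigma((0,-1),(1,0))$ form a complete fan of unimodular simplicial cones with infinitely many chambers accumulating onto the ray $\bbR_{\geq 0}(0,1)$. The same family refutes your packing argument: a unimodular cone such as $\sigma((1,0),(N,1))$ is arbitrarily thin, and the smallest nonzero lattice vector it contains has norm growing with $N$, so there is no bounded-norm lattice point per chamber. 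The scattering-diagram repair also fails because its conclusion is simply not true: in infinite type the $G$-cones genuinely do accumulate away from the origin (already in rank $2$ affine type they accumulate onto the ray $\bbR_{>0}(1,-1)$, cf.\ Example \ref{ex:CSDaffine1}), and the finiteness condition in the definition of a scattering diagram controls only each finite-degree reduction $\frakD_{\ell}$, not the full diagram. The actual content of this direction is to show that \emph{when} infinitely many $G$-cones accumulate, the accumulation set is not covered by $G$-cones, and this needs cluster-specific input. The two-sidedness of facets (each $(n-1)$-dimensional cone $\sigma_k(G_t)=\sigma_k(G_{t'})$ is a facet of exactly the two chambers $\sigma(G_t),\sigma(G_{t'})$, which cover a neighborhood of its relative interior) does rule out accumulation at relative-interior points of cones of dimension $n$ or $n-1$ — which settles rank $2$ — but for $n\geq 3$ the accumulation set could a priori sit in the codimension-$\geq 2$ skeleton, and excluding that (for instance via Theorem \ref{thm:CSD2} and the structure of $\frakS(B)$) is precisely the work your sketch does not do.
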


Let $\calG_{n-1}(B)$ be the set 
 of all cones in $\calG(B)$ of dimension $n-1$.
 Let $|\calG_{n-1}(B)|$ be the union of the supports of all cones in $\calG_{n-1}(B)$.

Here is the second fundamental result on CSDs.

\begin{thm}
[{\cite[Construction 1.30]{Gross14}, \cite[Cor.~4.4]{Reading17}}]
\label{thm:CSD2}
Let $\frakD(B)$ be a CSD for $B$  with minimal support.
Let $\frakS(B)\subset \bbR^n$ be
the image of the  $\mathrm{Supp}(\frakD(B))\subset M_{\bbR}$
under the  identification $M_{\bbR}\simeq \bbR^n$ with $f_i\mapsto \bfe_i$.
Then, we have the inclusion
\begin{align}
\label{eq:FanG1}
|\calG_{n-1}(B)| \subset \frakS(B).
\end{align}
Also, the interior $\mathrm{Int}(\sigma(G_t))$ of  each $G$-cone 
dones not intersect $\frakS(B)$.
Moreover, 
for a cluster pattern/$Y$-pattern of finite type,
we have the equality
\begin{align}
\label{eq:FG2}
|\calG_{n-1}(B)| = \frakS(B).
\end{align}
\end{thm}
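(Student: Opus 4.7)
\end{thm}

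\begin{proof}[Proof plan]
The plan is to construct explicitly a candidate scattering diagram $\frakD'$ whose walls are supported on the codimension-one faces of the $G$-fan $\calG(B)$, and then to invoke the uniqueness of the CSD (Theorem \ref{thm:CSD1}) together with minimality of $\frakD(B)$. The geometric setup will be fixed by the second duality \eqref{eq:dual2}: combining it with $\Delta = D^{-1}$ and the canonical pairing \eqref{3eq:can1} gives $\langle \bfc_{i;t}, \bfg_{j;t}\rangle = \delta_{ij}$, so each face $\sigma_i(G_t)$ lies inside the hyperplane $(\bfc_{i;t})^\perp = (\bfc^+_{i;t})^\perp$, where $\bfc^+_{i;t} = \varepsilon_{i;t}\bfc_{i;t}$ is the positive vector supplied by sign-coherence (Theorem \ref{thm:sign1}). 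This supplies the natural candidate normal vector for a wall on this face.

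I would then define $\frakD'$ to consist of the incoming walls $(\bfe_i^\perp, \Psi[\bfe_i]^{\delta_i})_{\bfe_i}$ together with one outgoing wall $(\sigma_i(G_t), \Psi[\bfc^+_{i;t}]^{\varepsilon_{i;t}\delta_i})_{\bfc^+_{i;t}}$ for each $(n{-}1)$-dimensional face $\sigma_i(G_t)$ that is not already supporting an incoming wall. The outgoing condition follows from the first duality $G_t B_t = B C_t$, which implies that the $\bfg_{i;t}$-coefficient of $p^*(\bfc^+_{i;t}) = B\bfc^+_{i;t}$ is nonzero, so that $p^*(\bfc^+_{i;t})$ escapes $\sigma_i(G_t)$.

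The crucial step will be to verify consistency of $\frakD'$. For an admissible loop $\gamma$ which traces a sequence of adjacent $G$-cones corresponding to a $\nu$-periodic mutation sequence, the path-ordered product $\frakp_{\gamma,\frakD'}$ reads off, factor by factor, as the product of dilogarithm elements appearing in Theorem \ref{thm:DIE1}, and hence equals the identity. Equivalently, via the identification \eqref{eq:dm1}, the cumulative wall-crossings along $\gamma$ act on the $y$-representation as the composite nontropical part $\frakq(P{-}1;0)$, which is trivial by \eqref{eq:qPid1}. Since every admissible loop in $M_\bbR \setminus \mathrm{Sing}(\frakD')$ is homotopic to a concatenation of such periodic loops, consistency of $\frakD'$ follows. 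Theorem \ref{thm:CSD1} then yields $\frakD' \sim \frakD(B)$, giving the inclusion $|\calG_{n-1}(B)| \subset \frakS(B)$, while the absence of walls in any $\mathrm{Int}(\sigma(G_t))$ for $\frakD'$ combined with the minimality of $\frakD(B)$ forces $\mathrm{Int}(\sigma(G_t)) \cap \frakS(B) = \emptyset$. In the finite type case, completeness of the $G$-fan (Proposition \ref{prop:Gfinite1}) partitions $\bbR^n$ into the disjoint union of the interiors of the maximal $G$-cones and $|\calG_{n-1}(B)|$, which together with the preceding two properties forces the equality $\frakS(B) = |\calG_{n-1}(B)|$.

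The hard part will be the global consistency verification. While an admissible loop contained in a bounded portion of the $G$-fan reduces cleanly to finitely many instances of Theorem \ref{thm:DIE1}, in the non-finite type case one must also treat loops encircling regions where infinitely many $G$-cones accumulate near the boundary of $|\calG(B)|$ (compare Examples \ref{ex:ordaffine1} and \ref{ex:ordnonaffine1}); this requires taking a careful limit in the completed group $G$ via the reductions $\pi_\ell$ and keeping track of how infinitely many wall elements contribute. This technical point is also the reason why Theorem \ref{thm:CSD2} delivers an equality only in finite type, where the $G$-fan is complete and the global combinatorics remains finite.
\end{proof}
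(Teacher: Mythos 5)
The paper does not actually prove Theorem \ref{thm:CSD2}: Chapter \ref{ch:CSD1} is explicitly a survey without proofs, and this statement is quoted from Gross--Hacking--Keel--Kontsevich and Reading, with the proof deferred to the companion monograph. So there is no in-paper argument to compare against; I can only assess your proposal on its own terms, and it has two genuine gaps.

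First, your wall elements are ill-defined. An interior face $\sigma_i(G_t)=\sigma_i(G_{t'})$ is shared by two $G$-cones, and across the adjacency one has $\bfc_{i;t'}=-\bfc_{i;t}$, hence $\varepsilon_{i;t'}=-\varepsilon_{i;t}$; your formula $\Psi[\bfc^+_{i;t}]^{\varepsilon_{i;t}\delta_i}$ therefore assigns mutually inverse group elements to the same wall depending on which side you compute from. The correct element (cf.\ Lemma \ref{lem:GC1}) is $\Psi[\bfc^+_{i;t}]^{\delta_i}$, with the sign $\varepsilon_{i;t}$ entering only through the intersection sign of the path, via $\epsilon_s=-\varepsilon_{k_s;t_s}$. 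Relatedly, your justification that these walls are outgoing is wrong as stated: by the first duality $B\bfc_{i;t}=G_tB_t\bfe_i$ and $b_{ii;t}=0$, the $\bfg_{i;t}$-coefficient of $p^*(\bfc^+_{i;t})$ is \emph{zero}, so $p^*(\bfc^+_{i;t})$ lies in the hyperplane spanned by $\sigma_i(G_t)$, and whether it lies in the cone itself is a sign condition on the $b_{ji;t}$ that you have not checked.

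Second, and more seriously, the global strategy collapses outside finite type, which is exactly where the first two assertions of the theorem carry their content. The diagram $\frakD'$ supported on $|\calG_{n-1}(B)|$ is \emph{not} consistent in infinite type: already for $A_1^{(1)}$ the consistent diagram requires the extra limiting wall on $\bbR_{\geq0}(1,-1)$ carrying an infinite product of dilogarithm elements (cf.\ \eqref{3eq:a114}), which is not a face of any $G$-cone. Your claim that every admissible loop is homotopic to a concatenation of loops realizing $\nu$-periodic mutation sequences is false once $|\calG(B)|\neq\bbR^n$, so Theorem \ref{thm:CSD1} cannot be invoked to conclude $\frakD'\sim\frakD(B)$, and the inclusion \eqref{eq:FanG1} together with the chamber property remains unproven in general. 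This is not a removable technicality: one cannot complete $\frakD'$ to a consistent diagram without already knowing the missing walls. The standard route (GHKK's Construction 1.30) avoids this entirely by proving mutation invariance of CSDs --- $\frakD(\mu_k(B))$ is obtained from $\frakD(B)$ by an explicit piecewise-linear transformation under which the positive chamber $\calC^+$ maps onto the chamber adjacent across $\bfe_k^\perp\cap\calC^+$ --- and then iterating to show each $G$-cone is a chamber. Your argument is salvageable only for the finite-type equality \eqref{eq:FG2}, where completeness of the $G$-fan reduces consistency to loops around codimension-two faces and Theorem \ref{thm:DIE1} applies.
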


Therefore,
 all essential information of a cluster pattern/$Y$-pattern
 is contained in the corresponding CSD.
 
\begin{ex}
Let us check the equality \eqref{eq:FG2}
in Examples \ref{ex:CSDA1A1} and \ref{ex:CSDfin1}.
For type $A_1\times A_1$, there are four $G$-matrices
\begin{align}
\begin{pmatrix}
1 & 0  \\
0  & 1
\end{pmatrix},
\quad
\begin{pmatrix}
-1 & 0  \\
0  & 1
\end{pmatrix},
\quad
\begin{pmatrix}
-1 & 0  \\
0  & -1
\end{pmatrix},
\quad
\begin{pmatrix}
1 & 0  \\
0  & -1
\end{pmatrix}.
\end{align}
Comparing it with 
Figure \ref{3fig:scat1},
we see that the equality \eqref{eq:FG2} holds.
For type $A_2$, as we see in Example \ref{ex:typeA23},
there are five $G$-matrices (up to the action of the permutation $\tau_{12}$) 
\begin{align}
\begin{pmatrix}
1 & 0  \\
0  & 1
\end{pmatrix},
\quad
\begin{pmatrix}
-1 & 0  \\
0  & 1
\end{pmatrix},
\quad
\begin{pmatrix}
-1 & 0  \\
0  & -1
\end{pmatrix},
\quad
\begin{pmatrix}
1 & 0  \\
-1  & -1
\end{pmatrix},
\quad
\begin{pmatrix}
1 & 1 \\
-1  & 0
\end{pmatrix}.
\end{align}
Comparing it with 
Figure \ref{3fig:scat2} (a),
we see that the equality \eqref{eq:FG2} holds.
Other cases are similar
and left as exercises for the reader.
\end{ex}

\section{Rank 2 CSDs: infinite type}
Let us   present  examples of rank 2 CSDs of  \emph{infinite type}
using  the results in Section \ref{sec:ordering1}.
It turns out that the infinite type is more interesting because
CSDs contain some additional information
outside the $G$-fan.

\begin{ex}
[Affine type: $\delta_1\delta_2=4$]
\label{ex:CSDaffine1}

\ \par
(a). Type $A_1^{(1)}$. Let $(\delta_1,\delta_2)=(2,2)$.
The relation \eqref{3eq:a115}
is translated into the following consistent scattering diagram:
\begin{gather}
\label{3eq:a111}
(\bfe_1^{\perp}, \Psi[\bfe_1]^2)_{\bfe_1},
\quad
(\bfe_2^{\perp}, \Psi[\bfe_2]^2)_{\bfe_2},
\\
\label{3eq:a112}
(\bbR_{\geq 0} (p,-p-1), \Psi[(p+1,p)]^2)_{(p+1,p)}
\quad
(p\in \bbZ_{>0}),
\\
\label{3eq:a113}
(\bbR_{\geq 0} (p+1,-p), \Psi[(p,p+1)]^2)_{(p,p+1)}
\quad
(p\in \bbZ_{>0}),
\\
\label{3eq:a114}
(\bbR_{\geq 0} (1,-1), \prod_{j=0}^{\infty} \Psi[2^j(1,1)]^{2^{2-j}})_{(1,1)}.
\end{gather}
Equivalently, one can split the wall in \eqref{3eq:a114} 
into infinitely many walls
\begin{align}
\label{3eq:a116}
(\bbR_{\geq 0} (1,-1),\Psi[2^j(1,1)]^{2^{2-j}})_{(1,1)}
\quad
(j=0,\,1,\,\dots).
\end{align}
See Figure \ref{3fig:scat3} (a).
All walls except for the ones in \eqref{3eq:a111}
are outgoing. 
Thus, it is a CSD for $B$.
The equality \eqref{eq:FG2} does not hold anymore.
Indeed, the complement of $|\calG(B)|$ in  $\bbR^2$
is given by the ray $\bbR_{>0}(1,-1)$, which is depicted as
a thick ray in the figure.
As we see  in \eqref{3eq:a116},  there is some rich structure of walls in the ray.

\smallskip

(b). Type $A_2^{(2)}$. Let $(\delta_1,\delta_2)=(1,4)$.
The relation \eqref{eq:a227}
is translated into the following consistent scattering diagram:
\begin{gather}
\label{3eq:a221}
(\bfe_1^{\perp}, \Psi[\bfe_1])_{\bfe_1},
\quad
(\bfe_2^{\perp}, \Psi[\bfe_2]^4)_{\bfe_2},
\\
\label{3eq:a222}
(\bbR_{\geq 0} (p,-2p-1), \Psi[(2p+1,4p)])_{(2p+1,4p)}
\quad
(p\in \bbZ_{>0}),
\\
\label{3eq:a223}
(\bbR_{\geq 0} (2p-1,-4p), \Psi[(p,2p-1)]^4)_{(p,2p-1)}
\quad
(p\in \bbZ_{>0}),
\\
\label{3eq:a224}
(\bbR_{\geq 0} (p,-2p+1), \Psi[(2p-1,4p)])_{(2p-1,4p)}
\quad
(p\in \bbZ_{>0}),
\\
\label{3eq:a225}
(\bbR_{\geq 0} (2p+1,-4p), \Psi[(p,2p+1)]^4)_{(p,2p+1)}
\quad
(p\in \bbZ_{>0}),
\\
\label{3eq:a226}
\begin{split}
&\quad\ (\bbR_{\geq 0} (1,-2), \Psi[(1,2)]^6\prod_{j=1}^{\infty} \Psi[2^j (1,2)]^{2^{2-j}})_{(1,2)}.
\end{split}
\end{gather}
See Figure \ref{3fig:scat3} (b).
Again, it is a CSD  for $B$.
The complement of $|\calG(B)|$ in  $\bbR^2$
is given by the ray $\bbR_{>0}(1,-2)$.

\end{ex}

\begin{figure}
\begin{center}
\begin{tikzpicture}[scale=1.4]
\draw(0,0)--(1,0);
\draw(0,0)--(0,1);
\draw(0,0)--(-1,0);
\draw(0,0)--(0,-1);
\draw(0,0)--(0.5,-1);
\draw(0,0)--(0.66,-1);
\draw(0,0)--(0.75,-1);
\draw(0,0)--(0.8,-1);
\draw [very thick] (0,0)--(1,-1);
\draw(0,0)--(1,-0.5);
\draw(0,0)--(1,-0.66);
\draw(0,0)--(1,-0.75);
\draw(0,0)--(1,-0.8);
%
 \node at (0,-1.3){(a) $A_1^{(1)}$};
   \node at (0.29,-1.6){\phantom{$(b,c)=(5,1)$}};
 \end{tikzpicture}
\hskip30pt
\begin{tikzpicture}[scale=1.4]
\draw(0,0)--(1,0);
\draw(0,0)--(0,1);
\draw(0,0)--(-1,0);
\draw(0,0)--(0,-1);
\draw(0,0)--(0.25,-1);
\draw(0,0)--(0.33,-1);
\draw(0,0)--(0.375,-1);
\draw(0,0)--(0.4,-1);
\draw(0,0)--(0.625,-1);
\draw(0,0)--(0.66,-1);
\draw(0,0)--(0.75,-1);
\draw(0,0)--(1,-1);
\draw [very thick] (0,0)--(0.5,-1);
%
 \node at (0,-1.3){(b) $A_2^{(2)}$};
 \node at (0.29,-1.6){\phantom{$(a,b)=(5,1)$}};
 \end{tikzpicture}
\hskip10pt
\begin{tikzpicture}[scale=1.4]
\filldraw [fill=black!10, draw=white] (0,0) -- (0.276,-1) -- (0.723,-1) -- (0,0);
\draw(0,0)--(1,0);
\draw(0,0)--(0,1);
\draw(0,0)--(-1,0);
\draw(0,0)--(0,-1);
\draw(0,0)--(0.25,-1);
\draw(0,0)--(0.266,-1);
\draw(0,0)--(0.272,-1);
\draw(0,0)--(0.276,-1);
\draw(0,0)--(0.723,-1);
\draw(0,0)--(0.727,-1);
\draw(0,0)--(0.733,-1);
\draw(0,0)--(0.75,-1);
\draw(0,0)--(1,-1);
%
 \node at (0,-1.3){(c) Non-affine infinite type};
 \node at (0.2,-1.6){$(a,b)=(5,1)$};
 \end{tikzpicture}
 \end{center}
 \vskip-15pt
 \caption{Rank 2 CSDs of infinite type.
 The shaded area in (c) is called the Badlands.}
\label{3fig:scat3}
\end{figure}
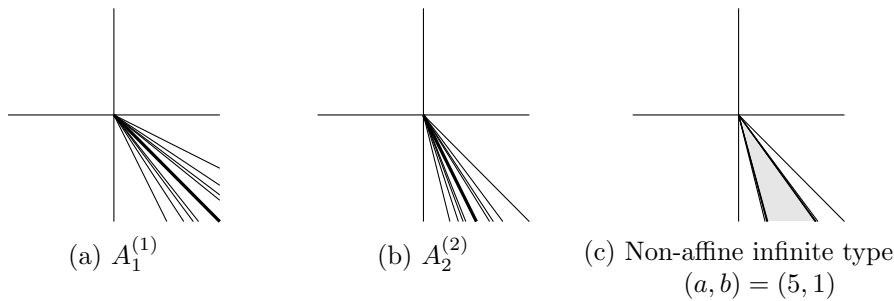

The remaining case is most intriguing,
as already seen in Example \ref{ex:ordnonaffine1},
though the entire structure of a CSD is not  known yet.
\begin{ex}
[Non-affine infinite type: $\delta_1\delta_2>4$]
We generalize and translate the results and facts in Example \ref{ex:ordnonaffine1}
into the CSD language.
Let 
$\sigma(\bfv_+, \bfv_-)$ be the irrational
cone spanned
by two
vectors 
 \begin{align}
 \label{3eq:ds1}
 \bfv_{\pm}
 =
 \begin{pmatrix}
\delta_1\delta_2\pm\sqrt{\delta_1\delta_2(\delta_1\delta_2-4)} \\
- 2\delta_2\\
\end{pmatrix}.
\end{align}
The complement of $|\calG(B)|$ in $\bbR^2$ 
is $\sigma(\bfv_+, \bfv_-)\setminus \{0\}$,
which is 
 informally called the \emph{Badlands}\index{Badlands}.
For example, in the case $(\delta_1,\delta_2)=(1,5)$,
 the  cone $\sigma(\bfv, \bfv')$ is depicted as a shaded region in Figure \ref{3fig:scat3} (c).
  As  in Example \ref{ex:ordnonaffine1},
every rational ray
 in the  cone $\sigma(\bfv, \bfv')$ appears
 as a nontrivial wall of $\frakD(B)$  \cite{Grafnitz23}.  However, their wall elements are not known yet
 except for the limited case.
\end{ex}

\section{Positive realization of CSDs and pentagon relation}
\label{sec:positive1}

To conclude this quick course,
 let us explain that any CSD of any rank can be  
constructed only by the dilogarithm elements and the pentagon relation.

Recall that, for a given skew-symmetric decomposition \eqref{eq:BDO1} of $B$,
a sublattice $N^{\circ}$ of $N$ is defined by \eqref{eq:Bcirc1}.

\begin{defn}
\label{defn:norm1}
For each $\bfn\in N^+$, the \emph{normalization factor}\index{normalization factor} $\delta(\bfn)$ of $\bfn$
is the smallest positive rational number such that
$\delta(\bfn) \bfn \in N^{\circ}$.
\end{defn}

For example,  for $\bfn=\bfe_i$, we have $\delta(\bfe_i)=\delta_i$.
Also, for any $h\in \bbZ_{>0}$ and $\bfn\in N^+$, we have
$\delta(h\bfn)=\delta(\bfn)/h$.

Now we present the third fundamental result on CSDs.
\begin{thm}
[{\cite[Theorem~1.13]{Gross14}}]
\label{3thm:pos1}
Let $\frakD(B)$ be a CSD for $B$.
Then, there is a consistent scattering diagram $\frakD$
that is equivalent to  $\frakD(B)$
such that
the wall element of any wall
of $\frakD$
has the following form
\begin{align}
\label{3eq:gpos1}
g=\Psi[h\bfn]^{s \delta (h\bfn) }
\quad
(\bfn\in N_{\rmpr}^+;\,   s,\, h \in \bbZ_{>0}).
\end{align}
\end{thm}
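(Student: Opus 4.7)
The plan is to construct a consistent scattering diagram $\frakD$ all of whose walls have the form \eqref{3eq:gpos1} by building it up degree by degree, using the pentagon relation \eqref{eq:pent8} and the commutation relation \eqref{eq:Pcom1} as the only moves, and then invoking the uniqueness in Theorems \ref{thm:bij1} and \ref{thm:CSD1} to conclude that $\frakD$ is equivalent to $\frakD(B)$. The base case is immediate: let $\frakD_1$ consist only of the $n$ incoming walls $(\bfe_i^\perp,\Psi[\bfe_i]^{\delta_i})_{\bfe_i}$, which are already of the form \eqref{3eq:gpos1} with $\bfn=\bfe_i\in N^+_\rmpr$, $h=s=1$, since $\delta(\bfe_i)=\delta_i$. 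These commute in $\frakg^{\leq 1}$, so $\frakD_1$ is consistent modulo $G^{>1}$.

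For the inductive step, suppose $\frakD^{(\ell)}$ is a scattering diagram whose walls are all of the positive form \eqref{3eq:gpos1}, which contains the incoming walls, is outgoing away from them, and is consistent modulo $G^{>\ell}$. For each joint $j$ of $\frakD^{(\ell)}$, i.e., each $(n-2)$-dimensional stratum of $\mathrm{Sing}(\frakD^{(\ell)})$, pick a small admissible loop $\gamma_j$ around $j$; by the inductive hypothesis $\pi_\ell(\frakp_{\gamma_j,\frakD^{(\ell)}})=\rmid$, so the degree-$(\ell+1)$ part of $\frakp_{\gamma_j,\frakD^{(\ell)}}$ lies in the abelian quotient $G^{>\ell}/G^{>\ell+1}$. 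I would cancel this by inserting outgoing walls emanating from $j$ whose wall elements contribute only at degree $\ell+1$. The key reduction, already visible in the rank-$2$ discussion of Section \ref{sec:rank1}, is that in a neighborhood of $j$ only walls whose normal vectors lie in a rank-$2$ sublattice $N_j\subset N$ are involved; the relevant part of $\frakp_{\gamma_j,\frakD^{(\ell)}}$ thus lives in the subgroup generated by dilogarithm elements indexed by $N_j\cap N^+$, and the local problem is literally a rank-$2$ scattering problem of the type treated in Examples \ref{ex:CSDfin1}--\ref{ex:CSDaffine1}.

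The main obstacle is therefore to establish the following rank-$2$ claim: the anti-ordered product of positive-power dilogarithm elements corresponding to the two ``incoming'' sides at $j$ can be rewritten as a possibly infinite ordered product of elements of the form \eqref{3eq:gpos1}, using only \eqref{eq:pent8} and \eqref{eq:Pcom1}. In the sublattice $N_j$ this is precisely the content of Proposition \ref{prop:order1} once the normalization is tracked correctly; the rank-$2$ computations in Examples \ref{ex:ordfinite1}--\ref{ex:ordnonaffine1} provide both the prototype and, in the finite and affine cases, an explicit realization. Two auxiliary checks are needed along the way: first, that the pentagon relation preserves the normalization ``exponent $\in \delta(h\bfn)\bbZ_{>0}$'', which follows from \eqref{eq:pent8} because for $\bfn_1,\bfn_2\in N_j$ the pairing $\{\delta(\bfn_2)\bfn_2,\delta(\bfn_1)\bfn_1\}_\Omega$ is an integer that governs the exponents of the three factors on the right-hand side; and second, that reduction modulo $G^{>\ell+1}$ leaves only finitely many nontrivial factors, each of which can be assigned to a single outgoing ray through $j$ with the required normal vector.

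Finally, I would set $\frakD:=\lim_{\ell\to\infty}\frakD^{(\ell)}$. The finiteness condition of a scattering diagram holds because only finitely many walls are added at each degree. By construction $\frakD$ is consistent, every wall is of the positive form \eqref{3eq:gpos1}, and the set of incoming walls is exactly the one prescribed in Definition \ref{defn:CSD1}; the uniqueness of $\frakD(B)$ in Theorem \ref{thm:CSD1} then identifies $\frakD$ with $\frakD(B)$ up to equivalence, completing the proof.
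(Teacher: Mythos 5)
Your overall strategy --- start from the incoming walls, add outgoing walls degree by degree by reducing to a rank-$2$ ordering problem at each joint solved by the pentagon and commutative relations, pass to the limit, and identify the result with $\frakD(B)$ by uniqueness --- is the same as the paper's Construction \ref{const:csd1}. One small point: the rank-$2$ input you need is not Proposition \ref{prop:order1} (which only orders $\Psi[\bfe_2]^{\delta_2}\Psi[\bfe_1]^{\delta_1}$) but the full Ordering Lemma, Proposition \ref{3prop:rank21}, which handles an arbitrary finite anti-ordered product of factors $\Psi[h\bfn]^{s\delta(h\bfn)}$; this is what the inductive step at degree $\ell$ actually requires, since many walls of lower degree already meet at a given joint. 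That is a citation issue rather than a mathematical one.

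The genuine gap is your uniform treatment of all joints. The local cancellation you describe works only at a \emph{perpendicular} joint $\frakj$, where the form restricted to $N_{\frakj}$ is nonzero and the Ordering Lemma rewrites the anti-ordered product as an ordered product with \emph{positive} exponents, each new factor becoming an outgoing wall through $\frakj$. At a \emph{parallel} joint the form vanishes on $N_{\frakj}$, all relevant wall elements commute, and the degree-$(\ell+1)$ defect of $\frakp_{\gamma_{\frakj},\frakD^{(\ell)}}$ is a product of commuting dilogarithm elements coming from walls (produced at other, perpendicular joints) that terminate at $\frakj$. There is no mechanism to cancel a nonzero defect of this kind by inserting outgoing walls of the prescribed form $\Psi[h\bfn]^{s\delta(h\bfn)}$ with $s>0$: in general you would need the inverse exponent, which destroys the conclusion of the theorem. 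What is actually true is that this defect vanishes automatically, but that is a nontrivial fact --- the paper explicitly notes that consistency around parallel joints ``is not obvious'' and obtains it only by invoking Theorem \ref{thm:CSD1}: one adds walls only at perpendicular joints, observes that the outgoing walls of a consistent diagram are determined degree by degree by consistency there, and concludes that the constructed diagram must coincide with $\frakD(B)$ up to equivalence, hence is consistent everywhere. Your proof needs this step (or an independent argument for parallel joints, as in the original appendix of \cite{Gross14}); as written, the assertion that ``by construction $\frakD$ is consistent'' is unsupported precisely at the parallel joints.
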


We call a  CSD with the property \eqref{3eq:gpos1} a \emph{positive realization}\index{positive!realization}
due to the positivity of the exponents $s$.
Such a CSD is always with minimal support due to the positivity of $s$.

Below we present the construction of the positive realization of a  CSD by the pentagon relation.
Let us temporarily concentrate on the rank 2 case.
The following is a more general answer to Problem \ref{prob:order1}
than  Proposition \ref{prop:order1}.

\begin{prop}[{Ordering Lemma
\cite[Prop.~III.5.4]{Nakanishi22a}}]
\label{prop:ordering1}
 \index{Ordering Lemma}
\label{3prop:rank21}
For the rank 2 case,
let 
\begin{align}
\label{3eq:init1}
C^{\mathrm{in}}=
\Psi[h'_j \bfn'_j]^{s'_j  \delta(h'_j \bfn'_j)}
\cdots
\Psi[h'_1\bfn'_1]^{s'_1\delta(h'_1 \bfn'_1)}
\quad
(\bfn'_i\in N_{\rmpr}^+;\, \ s'_i,\, h'_i \in \bbZ_{>0})
\end{align}
be any finite anti-ordered product.
 Then, 
 by applying the  relations in Proposition \ref{3prop:pent1}
 possibly infinitely many times,
  $C^{\mathrm{in}}$
  is transformed into a (possibly infinite) ordered product $C^{\mathrm{out}}$ of 
factors  of the same form
\begin{align}
\label{3eq:gpos2}
\Psi[h_i \bfn_i]^{s_i \delta(h_i \bfn_i) }
\quad
(\bfn_i\in N_{\rmpr}^+;\, \ s_i,\, h_i \in \bbZ_{>0}).
\end{align}
\end{prop}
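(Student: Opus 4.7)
\emph{Overall plan.} My plan is a degree-filtered bubble sort in the completion $G = \varprojlim G^{\le\ell}$. For each $\ell \geq 1$, the nilpotent quotient $G^{\le \ell}$ involves only finitely many generators $X_{j\bfn}$ with $j\deg(\bfn) \leq \ell$, so modulo $\frakg^{>\ell}$ the input $C^{\mathrm{in}}$ is a finite product in $G^{\le\ell}$. I will construct an algorithm that applies finitely many instances of \eqref{eq:Pcom1} and \eqref{eq:pent8} in $G^{\le \ell}$ to reach an ordered product, with the algorithm at degree $\ell+1$ refining the one at degree $\ell$; the outputs then assemble in $\varprojlim G^{\le \ell} = G$ into the desired (possibly infinite) ordered product $C^{\mathrm{out}}$.

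\emph{The swap move.} Given an adjacent inverted pair $\Psi[\bfa]^{\alpha}\,\Psi[\bfb]^{\beta}$ in the current word (with $\bfa$ on the left and $c := \{\bfa, \bfb\}_{\Omega} > 0$), I use commutativity within the parallel subgroups $G_{\bfn}^{\parallel}$ to isolate a sub-factor of exponent $1/c$ on each side, then apply \eqref{eq:pent8}:
\[
\Psi[\bfa]^{1/c}\,\Psi[\bfb]^{1/c} = \Psi[\bfb]^{1/c}\,\Psi[\bfa+\bfb]^{1/c}\,\Psi[\bfa]^{1/c}.
\]
The residual pieces $\Psi[\bfa]^{\alpha-1/c}$ and $\Psi[\bfb]^{\beta-1/c}$ stay at their original positions, to be handled by subsequent swaps. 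Iterating this move on the block $\Psi[\bfa]^\alpha\,\Psi[\bfb]^\beta$ reproduces the familiar rank-$2$ ordered expansions of Examples~\ref{ex:ordaffine1} and~\ref{ex:ordnonaffine1}; every newly created factor lies on a primitive ray between $\bfa$ and $\bfb$ with strictly higher degree, so modulo $\frakg^{>\ell}$ only finitely many survive and the block is ordered in finitely many moves.

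\emph{Termination and passage to the limit.} For a general anti-ordered product I process adjacent inversions in order of increasing total degree $\deg(\bfn'_i)+\deg(\bfn'_j)$. Each swap at total degree $d$ introduces a factor in direction $\bfa+\bfb$ of degree $d$, hence creates new inversions only of total degree $>d$; the induction on $d$ therefore terminates modulo $\frakg^{>\ell}$ as soon as all intermediate degrees exceed $\ell$. Swaps whose created factor has degree $>\ell$ act trivially in $G^{\le\ell}$, so the degree-$(\ell+1)$ output projects to the degree-$\ell$ output. The coherent system defines a unique $C^{\mathrm{out}} \in G$, which is ordered by construction.

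\emph{Main obstacle.} The crux is preserving the exponent form $s\,\delta(h\bfn)$ with $s, h \in \bbZ_{>0}$ (Definition~\ref{defn:norm1}) throughout. The pentagon move natively produces exponents of the form $1/c = 1/\{\bfa,\bfb\}_\Omega$, whereas the hypothesis and conclusion demand positive-integral multiples of $\delta(h\bfn)$ for primitive $\bfn$; I must verify that the splittings $\Psi[\bfa]^{\alpha} = \Psi[\bfa]^{1/c}\,\Psi[\bfa]^{\alpha-1/c}$ and the accumulated contributions along each primitive ray combine into admissible factors. This reduces to an arithmetic compatibility between $c = \{\bfa,\bfb\}_\Omega$ and the normalizations $\delta(\bfa), \delta(\bfb), \delta(\bfa+\bfb)$ dictated by the decomposition $B = \Delta\Omega$. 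The affine examples of Section~\ref{sec:ordering1}---where partial exponents such as $2^{2-j}$ appear and reassemble as $s\,\delta(h\bfn) = 2\cdot\delta((2^j,2^j))$ with $s=2$, $h=2^j$, $\bfn=(1,1)$---illustrate both the subtlety and the feasibility of this divisibility check, and will guide the general integrality argument.
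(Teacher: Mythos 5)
Your overall strategy --- a degree-truncated, recursive application of the pentagon relation generalizing the worked examples of Section~\ref{sec:ordering1}, assembled through the inverse limit $G=\varprojlim G^{\le\ell}$ --- is the same one the paper relies on (it defers the details to the Ordering Algorithm of \cite[Algorithm III.5.7]{Nakanishi22a}). However, as written the proposal has two genuine gaps. First, the termination claim at a fixed degree is false as stated: after one swap of $\Psi[\bfa]^{\alpha}\Psi[\bfb]^{\beta}$ (with $\{\bfa,\bfb\}_{\Omega}=c>0$) the residual pieces $\Psi[\bfa]^{\alpha-1/c}$ and the split-off $\Psi[\bfb]^{1/c}$, as well as $\Psi[\bfa]^{1/c}$ and $\Psi[\bfb]^{\beta-1/c}$, still form inversions of total degree exactly $d=\deg\bfa+\deg\bfb$, so ``each swap creates new inversions only of total degree $>d$'' does not hold and the induction on $d$ alone does not close. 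What saves the day is that $\alpha c$ and $\beta c$ are positive \emph{integers} (since $\delta(\bfa)\bfa\in N^{\circ}$ and $\{N^{\circ},N\}_{\Omega}\subseteq\bbZ$, because $\Delta\Omega=B$ is an integer matrix), so the degree-$d$ inversions between the rays of $\bfa$ and $\bfb$ are exhausted after exactly $(\alpha c)(\beta c)$ pentagon moves; one then needs a secondary induction (on the rays strictly between $\bfa$ and $\bfb$, ordered by degree) to dispose of the inversions involving the newly created middle factors. You need to state and use this finer measure.

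Second, and more seriously, the integrality and positivity of the output exponents --- that the total exponent accumulated on each ray of $C^{\mathrm{out}}$ equals $s\,\delta(h\bfn)$ with $s,h\in\bbZ_{>0}$ --- is exactly the content of the Ordering Lemma beyond the bare existence of \emph{some} ordered factorization (which already follows from Theorem~\ref{thm:bij1}), and your proposal explicitly defers it (``will guide the general integrality argument''). You correctly identify the relevant arithmetic ($c\,\delta(\bfa)\in\bbZ$, divisibility of the accumulated exponent on $\bfa+\bfb$ by $\delta(\bfa+\bfb)$, etc.), but these must be proved and, crucially, propagated inductively: at later stages the swapped pair consists of \emph{created} factors whose exponents are partial sums arising from the shuffle, and maintaining the invariant that every such exponent is an exact positive integer multiple of the relevant $1/c$ and that the per-ray totals land in $\delta(h\bfn)\,\bbZ_{>0}$ is the heart of the proof. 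Note also that intermediate words may legitimately carry fractional exponents (as in the $A_1^{(1)}$ computation with $[n_1,n_2]^{1/2}$); the invariant is about the reassembled totals, not about every intermediate factor. Until this bookkeeping is carried out, the proposal proves only that an ordered product with positive rational exponents exists, which is strictly weaker than the statement.
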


Note the positivity of exponents $s_i$.
The proof of
Proposition \ref{3prop:rank21} is based on an explicit algorithm (Ordering Algorithm) 
\cite[Algorithm III.5.7]{Nakanishi22a},
which generalizes the procedure in the examples in 
Section \ref{sec:ordering1}.
The positivity is guaranteed because the pentagon relation involves only positive
exponents throughout.

Now we go back to the general rank case.
We need the following notion from \cite{Gross14}.
\begin{defn}[Joint]
Let $\frakD$ be a scattering diagram.
\par
(a). 
For any pair of walls $\bfw_i=(\frakd_i,g_{i})_{\bfn_i}$ ($i=1,2$), the intersection
of their supports $\frakj=\frakd_1\cap \frakd_2$
is called a \emph{joint}\index{joint} of $\frakD$ if $\frakj$
is a cone of dimension $n-2$. 
\par
(b) For each joint $\frakj$, let
\begin{align}
\label{3eq:Njn1}
N_{\frakj}:=
\{ \bfn\in N \mid \langle \bfn, z\rangle=0
\
( z\in \frakj) \},
\end{align}
which is a rank 2 sublattice of $N$.
Then,
a joint $\frakj$ is \emph{parallel}\index{parallel (joint)}
(resp.\ \emph{perpendicular}\index{perpendicular (joint)}) if 
$\{\bfn_1,\bfn_2\}=0$
 (resp. otherwise).
\end{defn}

By the topological reason, the consistency condition \eqref{3eq:pgfi1}
is reduced to the one for loops around joints.

The construction of a positive realization in  Theorem \ref{3thm:pos1} is given as follows. (Here we present  the modified version in \cite{Nakanishi22a} with the help of Proposition \ref{3prop:rank21}.)
\begin{const}
[{\cite[Construction C.1, Appendix C.3]{Gross14},  \cite[Construction III.5.14]{Nakanishi22a}}]
\label{const:csd1}
The construction proceeds inductively on
the degree $\ell$.
\par
\begin{enumerate}
\item
We start with the
scattering diagram $\frakD_1$ consisting of
only incoming walls
$\bfw_i:=(\bfe_i^{\perp}, \Psi[\bfe_i]^{\delta_i})_{\bfe_i}$ ($i=1$, \dots, $n$).

\item
For each perpendicular joint $\frakj:=\bfe_i^{\perp}\cap \bfe_j^{\perp}$ ($i\neq j$)
of $\frakD_1$
with $c=\{\bfe_j,\bfe_i\}>0$, we apply Proposition \ref{3prop:rank21} for
$\Psi[\bfe_i]^{\delta_i}\Psi[\bfe_j]^{\delta_j}$.
In the ordered product, we have the degree 2 element
$\Psi[\bfe_i+\bfe_j]^{c \delta_i\delta_j}$.
Accordingly, we add a new outgoing wall 
\begin{align}
\label{eq:neww1}
(\sigma(\frakj, -p^*(\bfe_i+\bfe_j)),
\Psi[\bfe_i+\bfe_j]^{c \delta_i\delta_j}
)_{\bfe_i+\bfe_j},
\end{align}
where $\sigma(\frakj, -p^*(\bfe_i+\bfe_j))$ is the cone spanned by
$\frakj$ and $-p^*(\bfe_i+\bfe_j)$.
\item
Suppose that the procedure is completed up to degree $\ell$
and we have  the  scattering diagram $\frakD_{\ell}$.
By splitting walls if necessary,
we may assume that any pair of distinct joints of $\frakD_{\ell}$ intersect each other only in their boundaries.
For each perpendicular joint $\frakj$, we set
$N^+_{\frakj}=N_{\frakj}\cap N^+$ and 
$N^+_{\frakj,\rmpr}=N_{\frakj}\cap N^+_{\rmpr}$.
Then, there is a unique $\tilde \bfe_1, \tilde \bfe_2\in N^+_{\frakj,\rmpr}$
such that
\begin{align}
N^+_{\frakj}\subset \bbQ_{\geq 0}  \tilde \bfe_1 + \bbQ_{\geq 0}  \tilde \bfe_2,
\quad
\{ \tilde \bfe_2, \tilde \bfe_1\}>0.
\end{align}
There are walls of $\frakD_{\ell}$ containing $\frakj$ and 
intersecting the ``second quadrant''  with respect to $\tilde \bfe_1$ and $\tilde \bfe_2$.
 We take
 the anti-ordered product of wall elements of
those walls
and
apply Proposition \ref{3prop:rank21}.
Then, for each element $ \Psi[h\bfn]^{s\delta(h\bfn)}$
with $\deg(h\bfn)=\ell+1$
in the ordered product,
we add a new outgoing wall
\begin{align}
(\sigma(\frakj, -p^*(\bfn)),
\Psi[h\bfn]^{s \delta(h\bfn)}
)_{\bfn}
\end{align}
 in the ``fourth quadrant''.
\item
The desired positive realization for $\frakD(B)$
in Theorem \ref{3thm:pos1}
 is obtained as the limit of $\frakD_{\ell}$.
\end{enumerate}
\end{const}

 The consistency around each perpendicular joint
is  satisfied by the construction. On the other hand, the consistency around each  parallel joint
is not obvious; however,  it
is  guaranteed 
thanks to  Theorem \ref{thm:CSD1}.

By construction, the consistency relation around each perpendicular joint
is  reduced to a trivial one (i.e., $g=g$) by applying the pentagon relation
in the reverse way to the ordereing.
On the other hand, the one around each parallel joint
is  reduced to a trivial one by applying the commutative relation
\eqref{eq:Pcom1}.
Thus, as a by-product,
the above construction 
also reveals the fourth fundamental fact
on the structure of a CSD.
\begin{thm}
[{\cite[Thm.~III.5.17]{Nakanishi22a}
}]
\label{3thm:struct1}
Let $\frakD(B)$ be a CSD for $B$.
For any admissible loop $\gamma$ for $\frakD(B)$,
the consistency relation  $\frakp_{\gamma, \frakD(B)}=\rmid$ is reduced to a trivial one
by applying the  relations in Proposition \ref{3prop:pent1}
possibly infinitely many times.
\end{thm}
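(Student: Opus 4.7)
The plan is to reduce global consistency to local consistency at joints, and then invoke the positive realization built in Construction \ref{const:csd1}, in which the local consistency relations are by construction products of pentagon and commutative relations.

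First, by Theorem \ref{3thm:pos1}, we may replace $\frakD(B)$ by a positive realization $\frakD$ equivalent to it; since $\frakp_{\gamma,\frakD(B)} = \frakp_{\gamma,\frakD}$, it suffices to prove the statement for $\frakD$. Next, recall that $\mathrm{Sing}(\frakD)$ is a union of cones of dimension at most $n-2$, so $M_{\bbR}\setminus\mathrm{Sing}(\frakD)$ is connected and its fundamental group is generated, up to homotopy, by small admissible loops encircling joints of $\frakD$. Since $\frakp_{\gamma,\frakD}$ depends only on the homotopy class of $\gamma$ in $M_{\bbR}\setminus\mathrm{Sing}(\frakD)$, and since admissible loops compose to products of wall elements that we may freely rearrange at will via trivial concatenation, it is enough to show that for every joint $\frakj$ of $\frakD$ the consistency relation $\frakp_{\gamma_\frakj,\frakD}=\rmid$ for a small admissible loop $\gamma_\frakj$ around $\frakj$ reduces to a trivial identity by iterated use of the relations in Proposition \ref{3prop:pent1}.

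For a parallel joint $\frakj$, all walls containing $\frakj$ have normal vectors $\bfn$ lying in the rank 2 sublattice $N_\frakj$ on which $\{\cdot,\cdot\}_{\Omega}$ vanishes. Hence all their wall elements lie in mutually commuting parallel subgroups $G_{\bfn_0}^{\parallel}$, and the two path-ordered products along the two halves of $\gamma_\frakj$ differ only by the ordering of factors. The consistency relation is then literally a product of instances of the commutative relation \eqref{eq:Pcom1}, and reduces to the trivial identity after finitely many applications at each fixed truncation $\frakD_\ell$.

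For a perpendicular joint $\frakj$, the crucial observation is how $\frakD$ was built in Construction \ref{const:csd1}: at each inductive step and each perpendicular joint, one takes the anti-ordered product of the wall elements already attached to $\frakj$, applies the Ordering Lemma (Proposition \ref{3prop:rank21}) to transform it into an ordered product using only pentagon and commutative relations, and then adds new outgoing walls so that the resulting ordered product equals the anti-ordered one (which is precisely the local consistency relation around $\frakj$). Thus, \emph{by construction}, the consistency relation around $\frakj$ at each truncation $\frakD_\ell$ is obtained by concatenating a finite sequence of pentagon moves (with commutative moves interleaved), and running this sequence in reverse reduces the consistency identity to $\rmid=\rmid$. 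Passing to the limit $\ell\to\infty$ via the finiteness condition (only finitely many walls contribute modulo $G^{>\ell}$) yields the statement for $\frakD$, hence for $\frakD(B)$.

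The main obstacle I anticipate is the bookkeeping needed to justify that ``running Construction \ref{const:csd1} in reverse'' really produces a reduction of the consistency relation rather than merely certifying its truth: one must verify that every intermediate step of the Ordering Algorithm corresponds to a legitimate move of the form \eqref{eq:Pcom1} or \eqref{eq:pent8}, that the iterated truncations assemble into a well-defined (possibly infinite) reduction in $G$, and that the moves at distinct joints can be organized compatibly along the homotopy decomposition of $\gamma$. The rank 2 examples of Section \ref{sec:ordering1} (in particular the affine cases \eqref{3eq:a115} and \eqref{eq:a227}, whose pentagon-only proofs were given by Matsushita) show that this is delicate but feasible; the general case reduces joint-by-joint to exactly the rank 2 situation via $N_\frakj$, which is the content of Proposition \ref{3prop:rank21}.
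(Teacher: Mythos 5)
Your proposal is correct and follows essentially the same route as the paper: reduce to small loops around joints, dispose of parallel joints by the commutative relation, and observe that at perpendicular joints the consistency relation of the positive realization from Construction \ref{const:csd1} is by definition the output of the Ordering Lemma (Proposition \ref{3prop:rank21}), so reversing that algorithm reduces it to a trivial identity, with the limit over truncations $\frakD_\ell$ handled by the finiteness condition. The paper presents this exact argument as a by-product of Construction \ref{const:csd1}, so no further comparison is needed.
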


\notes

Scattering diagrams were introduced by \cite{Kontsevich06} and \cite{Gross07}
in the study of homological mirror symmetry, 
and developed further in \cite{Kontsevich08,Gross09, Carl10, Gross11, Kontsevich13}.
The connection to cluster algebras was established
   by \cite{Gross14},
  where CSDs were introduced.
  With the scattering diagram method,
    several important conjectures on   cluster algebras
          were proved,
          including Theorems \ref{thm:sign1} and \ref{thm:Lpos1}.
 Here, we follow
 the presentation of CSDs
 in the companion monograph \cite{Nakanishi22a},
where the roles of the dilogarithm elements and the pentagon relation are highlighted.
 Proofs of all results in this chapter 
are conveniently found in   \cite{Nakanishi22a} as specified below. (We omit the ones already cited in the text.)
\begin{itemize}
\item
Theorem \ref{thm:bij1}:
See  \cite[Thm.~III.2.20]{Nakanishi22a}.
\item
Theorem \ref{thm:CSD1}:
See  \cite[Thm.~III.3.12]{Nakanishi22a}.
\item
Proposition \ref{prop:Gfan1}:
See  \cite[Thm.~II.2.17]{Nakanishi22a}.
\item
Theorem \ref{thm:CSD2}:
See  \cite[Thm.~II.3.3 (a)]{Nakanishi22a}.
\item
Theorem \ref{3thm:pos1}:
See  \cite[Thm.~III.5.2]{Nakanishi22a}.

\end{itemize}

\chapter{Dilogarithm identities in cluster scattering diagrams}

\label{ch:DICSD1}

We derive the DI associated with a loop in a CSD
by combining the results on CSDs and the classical dynamical method in Part II.
This generalizes the DI associated with a period of a $Y$-pattern
to the ones with infinite sum.
Thanks to the structure theorem for CSDs, such a DI
is always reduced to the trivial one by applying the pentagon identity
possibly infinitely many times.
Thus, it is \emph{infinitely reducible}.
Moreover, this method provides yet another proof of the  DIs in Theorem \ref{thm:DI1}.

\section{$y$-variables in CSD}
\label{sec:yvariables1}

To formulate DIs in a CSD, we first need to extend the notion of $y$-variables
in a $Y$-pattern to the corresponding CSD.
From now on,  we assume that a CSD $\frakD(B)$ is a positive realization in
Theorem \ref{3thm:pos1}. In particular, $\frakD(B)$ is with minimal support.

We continue to identify 
$M_{\bbR}\simeq \bbR^n$ with $f_i=e^*_i/\delta_i \mapsto \bfe_i$
as in Section \ref{sec:Gfan1}.
Note that, under the identification,
the lattice $M^{\circ} \subset M_{\bbR}$ corresponds to $\bbZ^n\subset  \bbR^n$.
Through this identification, we regard a $c$-vector $\bfc_{i;t}$ and a $g$-vector $\bfg_{i;t}$  as
elements of $N$ and $M^{\circ}$, respectively.
Then,
the canonical pairing of them is given by
\begin{align}
\label{eq:can1}
\langle \bfc_{i;t}, \bfg_{j;t}\rangle
=( \bfc_{i;t})^T \Delta^{-1} \bfg_{j;t}
\end{align}
by  \eqref{3eq:can1}.
Let
$\bfc_{i;t}^+=\varepsilon_{i;t}\bfc_{i;t}$ be the $c^+$-vector in  \eqref{eq:c+1}.
Also, let 
$\sigma_i(G_t)$
be the cone of dimension $n-1$ in \eqref{eq:Gcone2}.

\begin{lem}[{\cite[\S II.5.2]{Nakanishi22a}}]
\label{lem:Gcone1}
The following facts hold.
\par
(a).  $\bfc_{i;t}^+$ is normal to 
$\sigma_i(G_t)$
with respect to the canonical pairing \eqref{eq:can1}.
\par
(b). $\bfc_{i;t}^+$ is primitive.
\par
(c). 
$\delta(\bfc_{i;t}^+)=\delta_i$.
\end{lem}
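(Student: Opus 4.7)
The plan is to derive all three parts from two facts already stated in the quick course: the second duality of Theorem \ref{thm:dual2}, written in the current notation ($D = \Delta^{-1}$) as $G_t^T \Delta^{-1} C_t = \Delta^{-1}$, and the unimodularity $|C_t|=|G_t|=\pm 1$ of Proposition \ref{prop:CG1}. Throughout, I regard $c$-vectors as elements of $N = \bbZ^n$ and $g$-vectors as elements of $M^\circ \subset M_\bbR$ via the identification $f_i \mapsto \bfe_i$, so the canonical pairing takes the matrix form $\langle \bfc, \bfg\rangle = \bfc^T \Delta^{-1} \bfg$ as in \eqref{eq:can1}.

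For (a), I will read off the duality identity in components: $G_t^T \Delta^{-1} C_t = \Delta^{-1}$ gives $\bfg_{j;t}^T \Delta^{-1} \bfc_{i;t} = \delta_{ij}/\delta_i$. Hence $\langle \bfc_{i;t}, \bfg_{j;t}\rangle = 0$ for $j \neq i$, and multiplying by the tropical sign yields $\langle \bfc_{i;t}^+, \bfg_{j;t}\rangle = 0$ for $j \neq i$. Since $\sigma_i(G_t)$ is the positive span of the $\bfg_{j;t}$ with $j \neq i$, the vector $\bfc_{i;t}^+$ is normal to it with respect to the canonical pairing.

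For (b), I will use unimodularity. Since $|C_t| = \pm 1$, we have $C_t \in \mathrm{GL}(n,\bbZ)$, so its columns form a $\bbZ$-basis of $\bbZ^n$. If $\bfc_{i;t} = j\bfn'$ for some $j > 1$ and $\bfn' \in \bbZ^n$, then expanding $\bfn'$ uniquely in the basis $\{\bfc_{k;t}\}$ and multiplying by $j$ would force $ja_i = 1$ for some integer $a_i$, which is impossible. So $\bfc_{i;t}$ is primitive, and therefore $\bfc_{i;t}^+ = \varepsilon_{i;t}\bfc_{i;t}$ is primitive as well.

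For (c), which is the subtlest part, I will rewrite the duality as $\Delta^{-1} \bfc_{i;t}^+ = \varepsilon_{i;t}\, \delta_i^{-1}\, (G_t^{-1})^T \bfe_i$. Write $\bfh_i := (G_t^{-1})^T \bfe_i$, i.e., the $i$-th row of $G_t^{-1}$ viewed as a column vector. Because $G_t^{-1}$ is unimodular, $(G_t^{-1})^T$ is as well, and the argument of (b) applied to its columns shows that $\bfh_i \in \bbZ^n$ is primitive. Now by Definition \ref{defn:norm1}, $\delta(\bfc_{i;t}^+)$ is the smallest $\delta > 0$ with $\delta \Delta^{-1} \bfc_{i;t}^+ \in \bbZ^n$, which rearranges to $(\delta/\delta_i)\bfh_i \in \bbZ^n$; primitivity of $\bfh_i$ forces $\delta/\delta_i \in \bbZ_{>0}$, whose minimum is $1$, giving $\delta(\bfc_{i;t}^+) = \delta_i$. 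I do not anticipate any real obstacle here: the whole lemma is essentially a bookkeeping consequence of the second duality once one recognizes the rows of $G_t^{-1}$ as the object controlling the normalization; the only mild care needed is to keep the two lattices $M$ and $M^\circ$, and their dual bases $e_i^*$ and $f_i$, straight when translating the duality into a statement about $\Delta^{-1}\bfc_{i;t}^+$.
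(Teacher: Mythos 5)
Your proof is correct and follows essentially the same route as the paper: all three parts rest on the second duality $\langle \delta_i\bfc_{i;t},\bfg_{j;t}\rangle=\delta_{ij}$ together with the unimodularity of $C_t$ and $G_t$. Your part (c) phrases the minimality via primitivity of the rows of $G_t^{-1}$ rather than via the $g$-vectors being a $\bbZ$-basis of $M^{\circ}$ dual to $N^{\circ}$, but this is only a cosmetic repackaging of the same argument.
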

\begin{proof}
(a). 
By the duality \eqref{eq:dual3},
the canonical pairing \eqref{eq:can1}
is written as
\begin{align}
\label{eq:dual4}
\langle \delta_i \bfc_{i;t}, \bfg_{j;t}\rangle = \delta_{ij}.
\end{align}
\par
(b).
Since the $C$-matrix $C_t$ is unimodular by \eqref{eq:uni1},
 $\bfc_{1;t}$, \dots,  $\bfc_{n;t}$ are a basis of $N$.
 Thus, $\bfc_{i;t}$ is primitive.
 \par
(c).
Since the $G$-matrix $G_t$ is unimodular by \eqref{eq:uni1},
 $\bfg_{1;t}$, \dots,  $\bfg_{n;t}$ are a basis of $M^{\circ}$.
Then,
 by the duality \eqref{eq:dual4} again, we see that $\delta_i \bfc_{i;t}^+\in N^{\circ}$. 
 Moreover, it also tells that $\delta_i$ is the minimal one
such that $\delta_i \bfc_{i;t}^+\in N^{\circ}$.
Therefore, $\delta(\bfc_{i;t}^+)=\delta_i$.
\end{proof}

Let $\sigma_i(G_t)$ be the cone in \eqref{eq:Gcone2}.
Recall that it is contained in $\mathrm{Supp}(\frakD(B))$
by 
Theorem \ref{thm:CSD2}.
This means that,
if $z\in\sigma_i(G_t)$,
then there is a wall $\bfw=(\frakd, g)_{\bfn}$ of $\frakD(B)$
such that $z\in \frakd$.

\begin{lem}[{\cite[Prop.~III.6.24]{Nakanishi22a}}]
\label{lem:GC1}
Let $z\in M_{\bbR}$ be a point in $\sigma_i(G_t)$
with $z\not\in \mathrm{Sing}(\frakD(B))$.
Suppose that $\bfw=(\frakd, g)_{\bfn}$ is a wall of $\frakD(B)$
such that $z\in \frakd$.
Then, we have
\begin{align}
\bfn=\bfc_{i;t}^+,
\quad
g=\Psi[\bfc_{i;t}^+]^{\delta_i}.
\end{align}
Moreover, there is no other wall $\bfw'=(\frakd', g')_{\bfn}$ of $\frakD(B)$
such that $z\in  \frakd' $.
\end{lem}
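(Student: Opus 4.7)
\medskip
\noindent\textbf{Proof plan.} The plan is to pin down the normal vector $\bfn$ first, then the wall element $g$, and finally uniqueness.

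First I would argue that $\bfn=\bfc_{i;t}^+$. Since $z\notin\mathrm{Sing}(\frakD(B))$, the point $z$ lies in the relative interior of the $(n-1)$-dimensional support $\frakd\subset\bfn^\perp$, so a relative neighborhood of $z$ in $\bfn^\perp$ is contained in $\frakd$. Similarly, since $\mathrm{Sing}(\frakD(B))$ contains the boundaries of all walls (in particular the $(n-2)$-dimensional faces of $\sigma_i(G_t)$ coming from $\partial\sigma(G_t)$ or from intersections with other cones of $\calG_{n-1}(B)$), the assumption $z\notin\mathrm{Sing}(\frakD(B))$ forces $z$ to lie in the relative interior of $\sigma_i(G_t)$, so a relative neighborhood of $z$ in the hyperplane $H:=(\bfc_{i;t}^+)^\perp$ is contained in $\sigma_i(G_t)$, using Lemma~\ref{lem:Gcone1}(a). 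These two relative neighborhoods overlap in an $(n-1)$-dimensional set containing $z$, which forces the two hyperplanes to coincide: $\bfn^\perp=H$. Thus $\bfn$ is a nonzero rational multiple of $\bfc_{i;t}^+$; since $\frakD(B)$ is a positive realization, $\bfn\in N^+_{\rmpr}$, and since $\bfc_{i;t}^+$ is itself primitive and positive by Lemma~\ref{lem:Gcone1}(b) together with the definition of the tropical sign, the multiple must be $+1$, giving $\bfn=\bfc_{i;t}^+$.

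Next I would determine $g$. By Theorem~\ref{3thm:pos1} the wall element has the form $g=\Psi[h\bfc_{i;t}^+]^{s\,\delta(h\bfc_{i;t}^+)}$ for some $s,h\in\bbZ_{>0}$, and by Lemma~\ref{lem:Gcone1}(c) we have $\delta(\bfc_{i;t}^+)=\delta_i$, so the target identity $g=\Psi[\bfc_{i;t}^+]^{\delta_i}$ is equivalent to $h=s=1$. To pin this down I would pick the vertex $t'\in\bbT_n$ that is $i$-adjacent to $t$, so that $\sigma_i(G_t)=\sigma_i(G_{t'})$ is the common $(n-1)$-dimensional face separating the adjacent $G$-cones $\sigma(G_t)$ and $\sigma(G_{t'})$. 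Choose a short admissible curve $\gamma$ from a point in $\mathrm{Int}(\sigma(G_t))$ to a point in $\mathrm{Int}(\sigma(G_{t'}))$ that crosses $\sigma_i(G_t)$ transversally only at $z$. Because $\mathrm{Int}(\sigma(G_t))$ and $\mathrm{Int}(\sigma(G_{t'}))$ do not meet $\mathrm{Supp}(\frakD(B))$ (Theorem~\ref{thm:CSD2}), the path-ordered product $\frakp_{\gamma,\frakD(B)}$ equals precisely the single contribution from crossing $\bfw$, namely $g^{\epsilon}$ with an explicit intersection sign. On the other hand this path-ordered product must implement the corresponding mutation step on the $y$-representation, and by the identification \eqref{eq:dm1} the mutation acts as $\rho_y\bigl(\Psi[\bfc_{i;t}^+]^{\pm\varepsilon_{i;t}\delta_i}\bigr)$. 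Matching these two expressions in $G$ (using faithfulness of $\rho_y$ after passing to the principal extension if $B$ is singular, as in the proof of Theorem~\ref{thm:DIE1}) forces $h=s=1$.

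Finally, for uniqueness, suppose another wall $\bfw'=(\frakd',g')_{\bfn'}$ of $\frakD(B)$ also satisfies $z\in\frakd'$. Repeating the first paragraph verbatim with $\bfw'$ in place of $\bfw$ gives $\bfn'=\bfc_{i;t}^+$ and shows that $\frakd$ and $\frakd'$ both contain a common relative neighborhood of $z$ in $H$. Their wall elements lie in the same parallel subgroup $G_{\bfc_{i;t}^+}^{\parallel}$, so replacing $\bfw$ and $\bfw'$ by the single wall with support $\frakd\cup\frakd'$ and wall element $gg'$ yields an equivalent scattering diagram with strictly smaller support, contradicting the fact that the positive realization $\frakD(B)$ has minimal support.

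I expect the main obstacle to be the second step: matching the exponent in $g=\Psi[h\bfc_{i;t}^+]^{s\delta_i/h}$ with $h=s=1$ really requires the bridge between CSD path-ordered products and cluster-algebraic mutations to be set up carefully, including the signed extension of \eqref{eq:dm1} that accounts for the intersection sign when crossing $\sigma_i(G_t)$, and the reduction to the nonsingular case via principal extension in order to use faithfulness of $\rho_y$. The first and third steps are essentially geometric.
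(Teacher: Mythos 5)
The paper itself omits the proof of this lemma, deferring to \cite[Prop.~III.6.24]{Nakanishi22a}, so your argument has to stand on its own; as written it has three genuine problems. The most serious is in your second step: you pin down the exponent by asserting that the path-ordered product across $\sigma_i(G_t)$ ``must implement the corresponding mutation step'' via \eqref{eq:dm1}. But \eqref{eq:dm1} is a statement purely on the cluster-pattern side (the nontropical part $\frakq(s)$ of a mutation equals the action of a dilogarithm element); the claim that crossing the wall of $\frakD(B)$ supported on $\sigma_i(G_t)$ produces that same group element is essentially the content of the lemma itself --- indeed the paper deduces Proposition \ref{prop:yCSD1} (path-ordered products compute mutated $y$-variables) \emph{from} this lemma, not the other way around. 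The proof in the companion monograph instead goes by induction on the distance of $t$ from $t_0$ in $\bbT_n$, using the mutation invariance of CSDs (the CSD for $\mu_k(B_t)$ is obtained from that for $B_t$ by an explicit piecewise-linear transformation carrying the facet $\sigma_k(G_t)$ onto the incoming wall $\bfe_k^{\perp}$), a result you never invoke and which is not stated in this paper.

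Two further steps fail as written. In step 1, the inference ``these two relative neighborhoods overlap in an $(n-1)$-dimensional set, which forces the two hyperplanes to coincide'' is a non sequitur: relative neighborhoods of $z$ in two \emph{distinct} hyperplanes overlap only in dimension $n-2$, so you are assuming the conclusion. The correct argument is that if $\bfn^{\perp}\neq(\bfc_{i;t}^{+})^{\perp}$, then the relative neighborhood of $z$ in $\frakd$ would contain points of $\mathrm{Int}(\sigma(G_t))$ or $\mathrm{Int}(\sigma(G_{t'}))$, contradicting the part of Theorem \ref{thm:CSD2} asserting that $G$-cone interiors avoid $\frakS(B)$ --- a fact you cite in step 2 but do not deploy where it is actually needed. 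In step 3, merging two overlapping walls with the same normal vector into one does not shrink $\mathrm{Supp}(\frakD(B))$ at all (the union of the supports is unchanged), so minimality of support gives no contradiction. Uniqueness, together with $h=s=1$, instead follows from positivity: once the total contribution at $z$ is known to equal $\Psi[\bfc_{i;t}^{+}]^{\delta_i}$, comparing the coefficients of $X_{j\bfc_{i;t}^{+}}$ degree by degree in the abelian parallel subgroup shows that a product of factors $\Psi[h_j\bfc_{i;t}^{+}]^{s_j\delta_i/h_j}$ with $s_j,h_j\in\bbZ_{>0}$ can equal it only if there is exactly one factor, with $h=s=1$.
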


We omit the proof.
See \cite[\S III.6.6]{Nakanishi22a} for details.

In Definition \ref{3defn:adm1} of an admissible curve $\gamma$,
we weaken Condition (1) so that
only the ending point $\gamma(1)$ is prohibited from being in $\mathrm{Supp}(\frakD)$.
Namely, the starting point $\gamma(0)$ is now allowed to be in $\mathrm{Supp}(\frakD)$
(but not allowed to be in $\mathrm{Sing(\frakD)}$ by Condition (3)).
Such a curve $\gamma:[0,1]\rightarrow M_{\bbR}$ is said to be
\emph{weakly admissible}\index{weakly admissible (curve)} for a scattering diagram $\frakD$.
The definition of the path-ordered product $\frakp_{\gamma,\frakD}$ in 
Definition \ref{3defn:pop1} is applied to a weakly admissible curve $\gamma$.
In particular,  the contribution of the wall at the starting point $\gamma(0)$ is \emph{not} counted in $\frakp_{\gamma,\frakD}$.

Let us extend the notion of $y$-variables
and  $c^+$-vectors in  \eqref{eq:c+1}
for a CSD.
\begin{defn}[$y$-variable/$c^+$-vector for CSD]
\label{defn:yvar1}
Let $\bfw=(\frakd, g)_{\bfn}$ be any wall of $\frakD(B)$
with $g=\Psi[h\bfn]^{s \delta(h\bfn)}$ as in  \eqref{3eq:gpos1}.
Let $z\in \frakd$
and $z\not\in \mathrm{Sing}(\frakD(B))$.
Let $\calC^+$ be the positive orthant in \eqref{eq:C+1}.
Let $\gamma_{z}$ be any weakly admissible curve in $\frakD(B)$
 from $z$ to any point in $\mathrm{Int}(\calC^+)$.
Then, we define a \emph{$y$-variable $y_{z}[h\bfn]$ at $z$ with  the $c^+$-vector}\index{$y$-variable!in CSD}
\index{$c^+$-vector!in CSD} $h\bfn$ by
\begin{align}
\label{eq:y1}
y_{z}[h\bfn]:=\frakp_{\gamma_{z},\frakD(B)}(y^{h\bfn}) \in \bbQ[[\bfy]],
\end{align}
where the path-ordered product $\frakp_{\gamma_{z},\frakD(B)}\in G$ acts on  $ y^{h\bfn}$
under the $y$-represent\-ation $\rho_y$ in Proposition \ref{prop:gaction2}.
(Even though $y_{z}[h\bfn]$ only depends on $z$  as we see below, to represent its $c^+$-vector $h\bfn$ explicitly is useful in our application.)
\end{defn}

\begin{rem}
Let $\gamma_{z}$ and $\gamma'_{z}$ be  weakly admissible curves.
Then, the equality $\frakp_{\gamma_{z},\frakD(B)}=\frakp_{\gamma'_{z},\frakD(B)}$
does \emph{not} hold in general because the contribution of the wall element
$\Psi[h\bfn]^{s \delta(h\bfn)}$ at $z$ is not counted.
Nevertheless, the RHS of  \eqref{eq:y1} does not depend on the choice of $\gamma_{z}$
because $\Psi[h\bfn]^{s \delta(h\bfn)}$ acts on $y^{h\bfn}$ trivially.
\end{rem}

The following fact justifies the above definition.
\begin{prop}
[{\cite[Remark 3.2 (a)]{Nakanishi21d}}]
\label{prop:yCSD1}
In  \eqref{eq:y1},
 if the
point $z$
  lies in $\sigma_i(G_t)$,
  then we have
  \begin{align}
  \label{eq:yy3}
  y_{z}[\bfc^+_{i;t}]= y_{i;t}^{\varepsilon_{i;t}},
  \end{align}
where the RHS is regarded as an element of $\bbQ[[\bfy]]$ by the mutations.
\end{prop}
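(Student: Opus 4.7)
The plan is to identify the path-ordered product $\frakp_{\gamma_z,\frakD(B)}$ (acting on $\bbQ[[\bfy]]$ via the $y$-representation $\rho_y$) with the nontropical composite mutation $\frakq(P-1;0)$ connecting $t_0$ to $t$, via the key identification \eqref{eq:dm1}. Once this is established, the claim follows by combining the Fock-Goncharov decomposition \eqref{eq:decom1}, Proposition \ref{prop:tauy1}, and the fact that $\rho_y(g)$ is a ring automorphism (hence commutes with integer powers).

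To carry this out, fix a mutation sequence $t_0=t(0),\,t(1),\,\dots,\,t(P)=t$ in $\bbT_n$ with directions $k_0,\,\dots,\,k_{P-1}$. By Theorem \ref{thm:CSD2} the associated $G$-cones $\sigma(G_{t(s)})$ form a chain of adjacent $n$-dimensional cones in $\mathrm{Supp}(\frakD(B))$, and the interiors of the $G$-cones are disjoint from all walls. By Lemmas \ref{lem:Gcone1} and \ref{lem:GC1}, the wall separating $\sigma(G_{t(s)})$ from $\sigma(G_{t(s+1)})$ has primitive normal vector $\bfc^+_{k_s}(s)$ and wall element $\Psi[\bfc^+_{k_s}(s)]^{\delta_{k_s}}$. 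Since $y_z[\bfc^+_{i;t}]$ is independent of the choice of $\gamma_z$, and the wall element at $z$ fixes the monomial $y^{\bfc^+_{i;t}}$ under $\rho_y$, we may perturb $z$ into $\mathrm{Int}(\sigma(G_t))$ and take $\gamma_z$ to pass successively through $\mathrm{Int}(\sigma(G_{t(P-1)})),\,\dots,\,\mathrm{Int}(\sigma(G_{t_0}))=\mathrm{Int}(\calC^+)$, crossing only the walls of the chain transversally (possible by genericity and by the minimal-support property of $\frakD(B)$).

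Next, compute the intersection sign at each crossing. Using the duality $\langle \delta_{k_s}\bfc_{k_s}(s),\bfg_{k_s;t(s)}\rangle=1$ from \eqref{eq:dual4} together with $\bfc^+_{k_s}(s)=\varepsilon_{k_s}(s)\bfc_{k_s}(s)$, the cone $\sigma(G_{t(s)})$ lies on the side of $(\bfc^+_{k_s}(s))^{\perp}$ where $\langle \bfc^+_{k_s}(s),\cdot\rangle$ has sign $\varepsilon_{k_s}(s)$. As $\gamma_z$ crosses from $\sigma(G_{t(s+1)})$ into $\sigma(G_{t(s)})$, the intersection sign is therefore $\epsilon_s=-\varepsilon_{k_s}(s)$, so the factor picked up is $\Psi[\bfc^+_{k_s}(s)]^{-\varepsilon_{k_s}(s)\delta_{k_s}}$, which is precisely the preimage of $\frakq(s)$ under $\rho_y$ by \eqref{eq:dm1}. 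Reading the path from $z$ to $\mathrm{Int}(\calC^+)$ in the order prescribed by Definition \ref{3defn:pop1} (crossing at $s=P-1$ is rightmost, at $s=0$ is leftmost), we conclude
\begin{align*}
\rho_y\bigl(\frakp_{\gamma_z,\frakD(B)}\bigr)=\frakq(0)\circ\cdots\circ\frakq(P-1)=\frakq(P-1;0).
\end{align*}

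Finally, apply both sides to $y^{\bfc^+_{i;t}}=y^{\varepsilon_{i;t}\bfc_{i;t}}$ and use $\mu(P-1;0)(y_i(P))=y_{i;t}$ together with $\tau(P-1;0)(y_i(P))=y^{\bfc_{i;t}}$ (Proposition \ref{prop:tauy1}) to obtain $y_{i;t}=\frakq(P-1;0)(y^{\bfc_{i;t}})$; raising to the power $\varepsilon_{i;t}$ and using that $\rho_y(g)$ is an algebra automorphism yields $y_z[\bfc^+_{i;t}]=y_{i;t}^{\varepsilon_{i;t}}$. The main obstacle is the sign bookkeeping in the third step: one must verify that the intersection-sign convention of the path-ordered product and the tropical-sign convention of the Fock-Goncharov decomposition are matched by exactly the factor $-\varepsilon_{k_s}(s)$, which rests on the primitivity of $\bfc^+_{k_s}(s)$ (Lemma \ref{lem:Gcone1}(b), ensuring the wall element is exactly $\Psi[\bfc^+_{k_s}(s)]^{\delta_{k_s}}$ with no root ambiguity) and on the duality \eqref{eq:dual4}.
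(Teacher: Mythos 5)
Your argument is correct and follows essentially the same route as the paper's proof: the same chain of adjacent $G$-cones, the same identification of the crossed walls via Lemmas \ref{lem:Gcone1} and \ref{lem:GC1}, the same computation $\epsilon_s=-\varepsilon_{k_s}(s)$ from the duality \eqref{eq:dual4}, and the same final assembly through \eqref{eq:dm1} and the Fock--Goncharov decomposition. The only cosmetic difference is that the paper keeps $z$ on the face $\sigma_i(G_t)$ and notes directly that the wall element there acts trivially on $y^{\bfc^+_{i;t}}$, rather than phrasing this as a perturbation of $z$.
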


\begin{proof}
For clarity, the proof is separated into three steps.

(i). 
Let $t_0$, $t_1$, \dots, $t_P=t$ be a  sequence in $\bbT_n$
such that $t_0$ is the initial vertex of $\bbT_n$ and $t_s$ and $t_{s+1}$ are adjacent.
  For each $s=0$, \dots, $P-1$, 
let $k_s\in \{1,\, \dots,\,  n\}$ be the one such that 
  $t_s$ and $t_{s+1}$ are $k_s$-adjacent.
We take a weakly admissible curve $\gamma_z$ in \eqref{eq:y1}
so that, starting form $z\in \sigma_i(G_{t_P})$,
 it passes through the $G$-cones
 $\sigma(G_{t_P})$,  $\sigma(G_{t_{P-1}})$, \dots, $\sigma(G_{t_0})=\calC^+$ in this order.
(To be precise, if $k_{P-1}=i$,
 the curve $\gamma_z$ may stay   in  $\sigma(G_{t_P})$ only at the starting point $\gamma_z(0)$
 depending on the choice of $\gamma_z$;
 otherwise, it stays in $\sigma(G_{t_P})$ for a while. 
See Figure \ref{fig:gamma1}.)
Let us take into account Lemma \ref{lem:GC1}.
Also,  if $k_{P-1}=i$, then  $\bfc^+_{k_{P-1};t_{P-1}}=\bfc^+_{i;t}$.
Thus, $\Psi[\bfc^+_{k_{P-1};t_{P-1}}]$ acts on $y^{\bfc^+_{i;t}}$ trivially.
  Then, in any situation, we have
  \begin{align}
 \label{eq:popg1}
   y_{z}[\bfc^+_{i;t}]
   =
   \frakp_{\gamma_{z},\frakD(B)}(y^{\bfc^+_{i;t}})
  =\Psi[\bfc_{k_0;t_0}^+]^{\epsilon_0\delta_{k_0}}\cdots \Psi[\bfc_{k_{P-1};t_{P-1}}^+]^{\epsilon_{P-1}\delta_{k_{P-1}}}
  (y^{\bfc^+_{i;t}}),
\end{align}
where $\epsilon_s$ is the intersection sign \eqref{3eq:factor1}
of  $\gamma_{z}$ at the wall $\sigma_{k_s}(G_s)=\sigma_{k_s}(G_{s+1})$.

 \begin{figure}
 \begin{center}
\begin{tikzpicture}[scale=1.5]
\coordinate (A) at (-0.1,0);
\coordinate (B) at (0,0.7);
\coordinate (C) at (-0.6,0.5);
\coordinate (D) at (-0.5,-0.3);
\coordinate (E) at (0.2,-0.6);
\coordinate (F) at (0.5,0);
\coordinate (G) at (1,-0.5);
\coordinate (H) at (1.2,0.3);
\coordinate (I) at (0.6,0.6);
\coordinate (J) at (0.7,1.1);
\coordinate (K) at (-0.2,1.2);
\filldraw [fill=black!10, draw=white] (E) -- (F) -- (G) -- (E);
\draw (A) --(B);
\draw (A) --(C);
\draw (B) --(C);
\draw (E) --(F);
\draw (E) --(G);
\draw (F) --(G);
\draw (F) --(H);
\draw (G) --(H);
\draw (F) --(I);
\draw (H) --(I);
\draw (H) --(J);
\draw (I) --(J);
\draw (I) --(B);
\draw (J) --(B);
\draw (B) --(K);
\draw (J) --(K);
\draw (C) --(K);
\draw (1.1,-0.2) node [right] {$\sigma(G_{t(P-1)})$};
\draw (0.2,-0.8) node [right] {$\sigma(G_{t(P)})$};
\draw (-0.65,0.1) node [right] {$\calC^+$};
\draw (0.6,-0.05) node [right] {$\gamma$};
\draw (0.6,-0.4) node [right] {$z$};
\draw (0.3,-1.2) node  {(a)  $k_{P-1}=i$};
\fill (0.76,-0.25) circle (1.2pt);
\fill  (-0.39,0.4) circle (1.2pt);
\draw (-0.39,0.4)arc[radius=0.7, start angle=170, end angle=-45];
\draw (0.9, -0.08)--(0.8,-0.15);
\draw (0.9, -0.08)--(0.88,-0.2);
\end{tikzpicture}
\hskip20pt
\begin{tikzpicture}[scale=1.5]
\coordinate (A) at (-0.1,0);
\coordinate (B) at (0,0.7);
\coordinate (C) at (-0.6,0.5);
\coordinate (D) at (-0.5,-0.3);
\coordinate (E) at (0.2,-0.6);
\coordinate (F) at (0.5,0);
\coordinate (G) at (1,-0.5);
\coordinate (H) at (1.2,0.3);
\coordinate (I) at (0.6,0.6);
\coordinate (J) at (0.7,1.1);
\coordinate (K) at (-0.2,1.2);
\filldraw [fill=black!10, draw=white] (E) -- (F) -- (G) -- (E);
\draw (A) --(B);
\draw (A) --(C);
\draw (B) --(C);
\draw (E) --(F);
\draw (E) --(G);
\draw (F) --(G);
\draw (F) --(H);
\draw (G) --(H);
\draw (F) --(I);
\draw (H) --(I);
\draw (H) --(J);
\draw (I) --(J);
\draw (I) --(B);
\draw (J) --(B);
\draw (B) --(K);
\draw (J) --(K);
\draw (C) --(K);
\draw (1.1,-0.2) node [right] {$\sigma(G_{t(P-1)})$};
\draw (0.2,-0.8) node [right] {$\sigma(G_{t(P)})$};
\draw (-0.65,0.1) node [right] {$\calC^+$};
\draw (0.6,-0.05) node [right] {$\gamma$};
\draw (0,-0.4) node [right] {$z$};
\draw (0.3,-1.2) node  {(b)  $k_{P-1}\neq i$};
\fill (0.3,-0.4) circle (1.2pt);
\fill  (-0.39,0.4) circle (1.2pt);
\draw (-0.39,0.4)arc[radius=0.7, start angle=170, end angle=-90];
\draw (0.9, -0.08)--(0.8,-0.15);
\draw (0.9, -0.08)--(0.88,-0.2);
\end{tikzpicture}
\end{center}
\vskip-15pt
\caption{Admissible curve $\gamma_z$.
The shaded region represents $\sigma(G_{t(P)})$.
}
\label{fig:gamma1}
\end{figure}
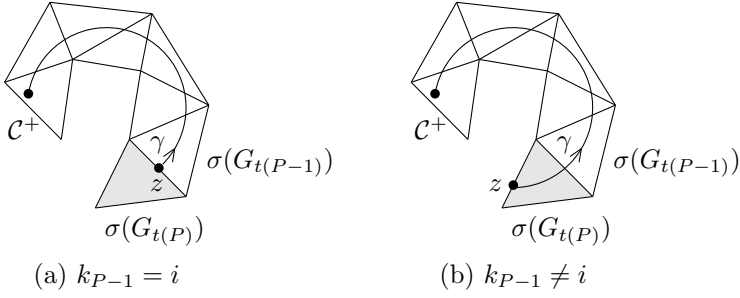

(ii). 
Consider the the mutation sequence \eqref{eq:mseq5} corresponding to the above sequence $k_0$, \dots, $k_{P-1}$.
Let us find the relation between the intersection sign $\epsilon_s$ and the tropical sign $\varepsilon_{k_s;t_s}$.
The duality \eqref{eq:dual4}
 implies that,
when the curve $\gamma_{z}$ crosses 
the wall $\sigma_{k_s}(G_s)=\sigma_{k_s}(G_{s+1})$
from $\sigma(G_{s+1})$ to $\sigma(G_{s})$,
its derivative $\gamma'_z$ satisfies
\begin{align}
\langle\bfc_{k_s;t_s},  \gamma'_z \rangle>0.
\end{align}
Since the  vector $\bfn_i$ in \eqref{3eq:factor1}
is given by $\bfc_{k_s;t_s}^+:=\varepsilon_{k_s;t_s} \bfc_{k_s;t_s}$,
the sign of 
$\langle \bfn_i,  \gamma'_z \rangle$
coincides with the tropical sign $\varepsilon_{k_s;t_s}$.
Thus, by \eqref{3eq:factor1}, we conclude that
\begin{align}
\epsilon_s=-\varepsilon_{k_s;t_s}.
\end{align}
Putting it into \eqref{eq:popg1}, we have
 \begin{align}
 \label{eq:popg2}
   y_{z}[\bfc^+_{i;t}]
  =\Psi[\bfc_{k_0;t_0}^+]^{-\varepsilon_{k_0;t_0}\delta_{k_0}}\cdots
   \Psi[\bfc_{k_{P-1};t_{P-1}}^+]^{-\varepsilon_{k_{P-1};t_{P-1}}\delta_{k_{P-1}}}  (y^{\bfc^+_{i;t}}).
\end{align}

(iii). 
Let us identify $y_{i;t_s}$ with $y_i(s)$ in the sequence of mutations in   \eqref{eq:mseq5}
and switch to the notation therein.
In the formalism of \eqref{eq:mu1},
the equality \eqref{eq:popg2}  is written as
 \begin{align*}
 y_z[\bfc^+_{i}(P)]
   \overset {\eqref{eq:dm1}}
& {=}
( \frakq(0)\circ \cdots
   \circ \frakq(P-1))
   (y^{\bfc^+_{i}(P)})
  \\
  \overset {\eqref{eq:q01}}
 & {=} \frakq(P-1;0)(y^{\bfc^+_{i}(P)})
 \\
 \overset {\eqref{eq:q2}}
& {=} (\frakq(P-1;0)\circ\tau(P-1;0))(y_i(P)^{\varepsilon_{i}(P)})
\quad
\\
\overset {\eqref{eq:decom1}}
& {=}\mu(P-1;0)(y_i(P)^{\varepsilon_{i}(P)})
).
 \end{align*}
This is the desired result.
\end{proof}

The above proof  also tells
that a sequence of   mutations of $y$-variables in a $Y$-pattern
is expressed by a path-ordered product in the corresponding CSD.
By the same token,
the periodicity condition of   a $Y$-pattern
is expressed by the consistency relation along a loop in the CSD.

\begin{ex}
\label{ex:periodloop1}  
Suppose that the sequence of mutations of $Y$-seeds in \eqref{eq:mseq2} is $\nu$-periodic; namely, $\Upsilon(P)=\nu \Upsilon (0)$.
Here we do not assume $\Upsilon(0)$ is the initial $Y$-seed $\Upsilon$.
By  Theorem \ref{1thm:synchro1}, we have $G(P)=\nu G(0)$;
hence, $\sigma(G(P))=\sigma(G(0))$.
Correspondingly, we consider an admissible loop $\gamma$ with the base point $w=\gamma(0)=\gamma(1)$ in $\frakD(B)$ such that
\begin{itemize}
   \item
   it is entirely contained in the support of  the $G$-fan $\calG(B)$,
   \item
 the base point $w$ is in the internal of $\sigma(G(0))=\sigma(G(P)) $,
   \item
   it crosses the $G$-cones $\sigma(G(P-1))$, \dots, $\sigma(G(0))=\sigma(G(P))$
   in this order.
\end{itemize}
See Figure \ref{fig:sconst1}.
We have the consistency relation along the loop $\gamma$
\begin{align}
\label{eq:gamma1}
   \frakp_{\gamma,\frakD(B)}=\rmid.
\end{align}
This relation coincides with the relation \eqref{eq:DIE2}.
\end{ex}

 \begin{figure}
 \begin{center}
\begin{tikzpicture}[scale=1.5]
\coordinate (A) at (0,0);
\coordinate (B) at (0,0.7);
\coordinate (C) at (-0.6,0.5);
\coordinate (D) at (-0.5,-0.3);
\coordinate (E) at (0.2,-0.6);
\coordinate (F) at (0.5,0);
\coordinate (G) at (1,-0.5);
\coordinate (H) at (1.2,0.3);
\coordinate (I) at (0.6,0.6);
\coordinate (J) at (0.7,1.1);
\coordinate (K) at (-0.2,1.2);
\coordinate (L) at (1.7,-0.5);
\coordinate (M) at (2.1,0.2);
\coordinate (N) at (2.5,-0.5);
\coordinate (O) at (2.9,0.2);
\draw (A) --(B);
\draw (A) --(C);
\draw (B) --(C);
\draw (C) --(D);
\draw (A) --(D);
\draw (A) --(E);
\draw (D) --(E);
\draw (A) --(F);
\draw (E) --(F);
\draw (E) --(G);
\draw (F) --(G);
\draw (F) --(H);
\draw (G) --(H);
\draw (F) --(I);
\draw (H) --(I);
\draw (H) --(J);
\draw (I) --(J);
\draw (I) --(B);
\draw (J) --(B);
\draw (B) --(K);
\draw (J) --(K);
\draw (C) --(K);
 \fill[black!10] (F)--(H)--(G)--cycle;
\draw (0.55,-0.05) node [right] {$w$};
\draw (0.05,-0.2) node [right] {$\gamma$};
\fill (0.89,-0.1) circle (1.2pt);
\draw (0.3,0.28)circle[radius=0.7];
\draw (0.48, -0.4)--(0.6,-0.3);
\draw (0.48, -0.4)--(0.63,-0.4);
\end{tikzpicture}
\end{center}
\vskip-10pt
\caption{Admissible loop corresponding to a period of a $Y$-pattern.
The shaded region represents $\sigma(G(0))=\sigma(G(P))$.
}
\label{fig:sconst1}
\end{figure}
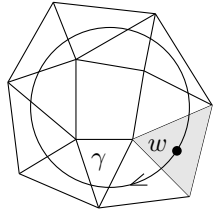

\section{DI associated with a loop in a CSD}
\label{sec:DICSD}
Now we extend our leitmotif, Theorem \ref{thm:DI1}, for an arbitrary loop in a CSD.
Since such a DI involves an infinite sum in general,
it is natural to regard it as an identity for formal power series in the initial variables $\bfy$.
However, as an additional issue,
the modified Rogers dilogarithm
$\tilde L(x)$ in \eqref{eq:L5} is not analytic at $x=0$. Instead, it has the following  
 expansion
around $x=0$ with $\log$ factor,
\begin{align}
\label{eq:Lexp1}
\tilde L(x)
&=
 \sum_{j=1}^{\infty} \frac{(-1)^{j+1} }{j^2}x^j 
-\frac{1}{2}\log x \sum_{j=1}^{\infty} \frac{(-1)^{j+1} }{j}x^j .
\end{align}
Accordingly,
for  variables $\bfy=(y_1, \dots, y_n)$,
we consider the following formal sum
\begin{align}
f(\bfy) +\sum_{i=1}^n (\log y_i) g_i(\bfy)
\quad
(f(\bfy), g_i(\bfy)\in \bbQ[[\bfy]]),
\end{align}
and call it a \emph{formal Puiseux series in  {$\bfy$} with $\log$ factor}\index{formal Puiseux series with $\log$ factor}.
For each integer $\ell\geq 1$,
let $\calF^{> \ell}$ be the set of all formal power series  $f(\bfy)\in  \bbQ[[\bfy]]$
such that  all coefficients of  $f(\bfy)$  vanish up to the total degree $\ell$.
Let
\begin{align}
\calF^{> \ell}_{\log}:=\calF^{> \ell} + \sum_{i=1}^n (\log y_i) \calF^{> \ell}.
\end{align}

Let $\gamma$ be any admissible loop in $\frakD(B)$
with the base point $w$.
Let us fix  $\ell\geq 1$.
Let $\frakD_{\ell}(B)$ the {reduction} of $\frakD(B)$ at 
degree $\ell$.
Suppose that $\gamma$ intersects walls  of $\frakD_{\ell}(B)$
at points $z_0$, \dots, $z_{P-1}$ in this order.
There might be multiple walls with a common normal vector intersected by $\gamma$ at a time.
We distinguish them by allowing the multiplicity $z_a=z_{a+1}=\cdots =z_{a+k}$.
Then, the walls crossed at $z_0$, \dots, $z_{P-1}$ are parametrized as $\bfw_0$, \dots, $\bfw_{P-1}$.
By our assumption,
each wall $\bfw_a$ has the form
 \begin{align}
 \bfw_a = (\frakd_a, \Psi[h_a \bfn_a]^{s_a \delta (h_a \bfn_a)})_{\bfn_a}.
 \end{align}
By the consistency of $\frakD(B)$,
we have $\frakp_{\gamma, \frakD(B)}=\rmid$.
Thus, we have
\begin{align}
\label{eq:gseq1}
\begin{split}
\frakp_{\gamma, \frakD_{\ell}(B)} & =
\Psi[h_{P-1} \bfn_{P-1}]^{\epsilon_ P s_{P-1} \delta (h_{P-1} \bfn_{P-1})}\cdots
\Psi[h_0 \bfn_0]^{\epsilon_0 s_0 \delta (h_0 \bfn_0)}
\equiv \rmid
\\
&
\hskip200pt
\mod G^{> \ell},
\end{split}
\end{align}
where $\epsilon_a$ is the intersection sign.
Let us turn it into an identity for the modified Rogers dilogarithm $\tilde{L}$
that is parallel with the one in Theorem \ref{thm:DI1}.

\begin{thm}
[{\cite{Nakanishi21d}}]
\label{thm:DI0}
In the situation  \eqref{eq:gseq1},
the following identity holds
as
a formal Puiseux series in  {$\bfy$} with $\log$ factor:
\begin{align}
\label{eq:sDI1}
\sum_{a=0}^{P-1}
\epsilon_a s_a
\delta(h_a \bfn_a)
\tilde L( y_{z_a}[h_a\bfn_a])
\equiv 0
\mod 
\calF^{> \ell}_{\log}.
\end{align}
\end{thm}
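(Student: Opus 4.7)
The plan is to adapt the classical-mechanical proof of Theorem~\ref{thm:DI1} given in Section~\ref{sec:initial1} (the ``global picture'' proof using only the initial canonical coordinates), since that proof was explicitly written so as to be applicable whenever a composition of dilogarithm elements acts trivially on $\bbQ[[\bfy]]$, not only in the $Y$-pattern periodicity case. The consistency of $\frakD(B)$, read modulo $G^{>\ell}$, supplies exactly such a trivial composition for the path-ordered product along $\gamma$.

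First I would reduce to the case that $B$ is nonsingular by invoking the principal extension $\overline B$ as in Section~\ref{sec:periodicity2}: the corresponding enlarged Lie algebra $\widehat\frakg_{\overline\Omega}$ contains $\widehat\frakg_\Omega$, the CSD $\frakD(\overline B)$ contains a canonical lift of $\gamma$ whose crossings carry the same $c^+$-vectors (compare \eqref{eq:oC1}), and the DI \eqref{eq:sDI1} for $\overline B$ specializes to the one for $B$. Under this reduction, the map $\eta\colon \calC_+(\bfy)\to\calC(\bfu,\bfp)$ in \eqref{eq:eta2} is injective on $\tilde M_0$, and I may identify $\tilde y_i$ with $y_i$ as in Section~\ref{sec:initial1}.

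Next I would translate the wall-crossings along $\gamma$ into a piecewise Hamiltonian flow on the phase space $\tilde M$ (restricted to the small phase space $\tilde M_0$), exactly parallel to the setup in Chapter~\ref{ch:classical1}. By \eqref{eq:dm1} and Proposition~\ref{prop:Hc1}, each factor $\Psi[h_a\bfn_a]^{\epsilon_a s_a\delta(h_a\bfn_a)}$ in $\frakp_{\gamma,\frakD_\ell(B)}$ is realised as the time-one flow of the Hamiltonian $\scrH_a:=-\epsilon_a s_a\delta(h_a\bfn_a)\,\mathrm{Li}_2(-y^{h_a\bfn_a})$. The corresponding Lagrangian, obtained by the Legendre transformation as in Section~\ref{sec:modified2}, is $\scrL_a=-\epsilon_a s_a\delta(h_a\bfn_a)\tilde L(y^{h_a\bfn_a})$, and because $y^{h_a\bfn_a}$ is preserved along its own flow, the action integral on the $a$th segment equals $\scrL_a$ evaluated at the starting point, which by Definition~\ref{defn:yvar1} is exactly $-\epsilon_a s_a\delta(h_a\bfn_a)\tilde L(y_{z_a}[h_a\bfn_a])$. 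Summing over $a$ produces (the negative of) the left-hand side of \eqref{eq:sDI1} as the total action $S[\alpha]$ along the trajectory $\alpha$ corresponding to $\gamma$.

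The main obstacle, and the heart of the proof, will be the \emph{modular} variational argument. For the constancy I would invoke Proposition~\ref{prop:dL1} segment by segment to get, as in \eqref{eq:dSa2}, the telescoping boundary expression $\delta S[\alpha]=R[\alpha;P]-R[\alpha;0]$ in the initial canonical coordinates. The consistency relation \eqref{eq:gseq1} says that $\frakp_{\gamma,\frakD_\ell(B)}$ acts as the identity on $\bbQ[[\bfy]]/\calF^{>\ell}$, hence on $\tilde y_i$ modulo degree $>\ell$, and by the argument of Proposition~\ref{prop:periodup1} (using nonsingularity of $B$ to invert $\bfw\leftrightarrow\bfu$) this lifts to the periodicity of $\bfu$ and $\bfp$ modulo degree $>\ell$. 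Thus $R[\alpha;P]\equiv R[\alpha;0]$ to the required order, and $\delta S[\alpha]\equiv 0\bmod\calF^{>\ell}_{\log}$; the $\log$ terms are needed because the boundary expression $R[\alpha;t]=\sum\bar p_i\delta\bar u_i$ contains the variables $\bar p_i,\bar u_i$ which are logarithms of $\bar y_i$ via \eqref{eq:ty2}. Finally, the constant of integration is evaluated by the tropical limit $\bfy\to\bfzero$ exactly as in Section~\ref{sec:DI1}: every $y_{z_a}[h_a\bfn_a]$ is a power series with leading monomial $y^{h_a\bfn_a}$ of degree $\geq 1$, so by the expansion \eqref{eq:Lexp1} each $\tilde L(y_{z_a}[h_a\bfn_a])$ lies in $\calF^{>0}_{\log}$, and more sharply, only walls with $\deg(h_a\bfn_a)\leq\ell$ contribute modulo $\calF^{>\ell}_{\log}$ (which is precisely the content of the reduction to $\frakD_\ell(B)$). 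The delicate point to verify carefully is that the boundary term $R[\alpha;t]$ and the action sum both land in the \emph{same} truncation ideal $\calF^{>\ell}_{\log}$ compatibly, which is why the statement is phrased modulo $\calF^{>\ell}_{\log}$ rather than as an exact identity of formal Puiseux series.
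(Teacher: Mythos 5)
Your proposal is correct and follows essentially the same route as the paper's own proof: the ``global picture'' classical mechanical argument of Section~\ref{sec:initial1}, with the wall-crossings realized as time-one Hamiltonian flows via \eqref{eq:dm1} and Proposition~\ref{prop:Hc1}, the variation controlled by Proposition~\ref{prop:dL1} and the (mod $G^{>\ell}$) periodicity of the canonical coordinates, and the constant term killed by the tropical limit. The only cosmetic deviations are that the paper first normalizes the base point of $\gamma$ into $\mathrm{Int}(\calC^+)$ by concatenation with $\gamma_w^{-1},\gamma_w$, and in the singular case it keeps the structure group $G_\Omega$ and the CSD unchanged, extending only to the principal $y$-representation on $\bbQ[[\tilde\bfy]]$ rather than passing to $\frakD(\overline B)$.
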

\begin{proof}
We give a proof based on the classical mechanical method
in Chapter \ref{ch:classical1} (especially, Proposition \ref{prop:dL1}).
Let us consider an admissible curve $\gamma_w$ from the base point $w$ of $\gamma$
to a point in $\mathrm{Int}({\calC}^+)$.
Then, we concatenate $\gamma_w^{-1}$, $\gamma$, and $\gamma_w$ in this order
to have an admissible loop $\gamma'$ with the base point in $\mathrm{Int}({\calC}^+)$.
Then, the contribution to the LHS of \eqref{eq:sDI1} from $\gamma_w^{-1}$ and the one from $\gamma_w$ cancel because the intersection signs therein are opposite.
Therefore,  we  assume that the base point $w$ of $\gamma$
is in $\mathrm{Int}({\calC}^+)$ without loss of generality.
We separate the proof into two cases.

(a). Assume that $B$ is nonsingular.
We use a parallel notation in Section \ref{sec:initial1}.
We consider the Hamiltonian system on the small phase space $\tilde M_0$
for the time span $[0,P]$.
The Hamiltonian for the time span $[a,a+1]$ ($a=0$, \dots,  $P-1$) 
is given by
\begin{align}
\scrH_a=\epsilon_a s_a
\delta(h_a \bfn_a) \mathrm{Li}_2(- y^{h_a \bfn_a}).
\end{align}
Accordingly, 
for a trajectory $\alpha=(\overline\bfu(t), \overline\bfp(t))_{t\in [0,P]}$ 
in the small phase space $\tilde M_0$,
we consider the quantity
\begin{gather}
\label{eq:Salpha1}
S[\alpha]=\sum_{a=0}^{P-1} \scrL_a,
\quad
\scrL_a=\epsilon_a s_a
\delta(h_a \bfn_a) \tilde{L}(\overline y(a)^{h_a \bfn_a}),
\\
\overline y_i(a)=\exp\left(\sqrt{2}\d^{-1}_i \overline p_i(a) \right)=
\exp\biggl(\sqrt{2} \sum_{j=1}^n b_{ji} \overline u_j(a)\biggr),
\end{gather}
which is the counterpart of the ``action integral'' in \eqref{eq:DI5}.
Recall that
the time-one flow of the Hamiltonian $\scrH_{\bfn,a}=a \mathrm{Li}_2(-y^{\bfn})$
coincides with the action of $\Psi[\bfn]^{-a}$ on the function space as explained in Section \ref{sec:dilogelements1}.
For the $i$th coordinate function  ${y}_i$ at time 0,
let ${y}'_i$ be the one at time $P$
after the above time development.
Then, it is described as a function of $\bfy$ as
\begin{align}
\label{eq:ypy1}
y'_i&=\frakp_{\gamma^{-1}, \frakD_{\ell}(B)}(y_i),
\\
\frakp_{\gamma^{-1}, \frakD_{\ell}(B)}&=
\Psi[h_0 \bfn_0]^{-\epsilon_0 s_0 \delta (h_0 \bfn_0)}
\cdots
\Psi[h_{P-1} \bfn_{P-1}]^{-\epsilon_{P-1} s_{P-1} \delta (h_{P-1} \bfn_{P-1})}.
\end{align}
Note that the dilogarithm elements act in the opposite order 
along the time development on
the coordinate functions.

For simplicity, suppose that the equality
 \eqref{eq:gseq1} is the exact one
  \emph{without} modulo $G^{> \ell}$.
  So, by \eqref{eq:ypy1}, 
 we have
\begin{align}
\label{eq:ypy2}
y'_i=y_i.
\end{align}
Then, we repeat the argument in Section \ref{sec:initial1}.
Namely, by \eqref{eq:ou1}--\eqref{eq:oy1},
we have $u'_i = u_i$ and $p'_i=p_i$.
It follows from Proposition \ref{prop:dL1} that
\begin{align}
\delta S[\alpha]=
\sum_{a=0}^{P-1}
\delta  \calL_{\bfc,a}(\alpha_a)=
\sum_{i=1}^n \overline p'_i \delta \overline u'_i
-
\sum_{i=1}^n \overline p_i \delta \overline u_i=0
\end{align}
under any infinitesimal deformation $\alpha+\delta \alpha$ in $\tilde M_0$.
 This implies that $S[\alpha]$ is constant.
 Moreover,
  the constant term is evaluated as zero in the limit $\overline \bfy
  =\overline \bfy(0) \rightarrow \bfzero$
  because, in \eqref{eq:Salpha1},
  $\overline y_i(a) \rightarrow 0$ and $h_a\bfn_a$ is positive.
By replacing $\overline y(a)^{h_a \bfn_a}$ with $y_{z_a}[h_a\bfn_a]$,
    $S[\alpha]$ can be viewed as a 
a {formal Puiseux series in  {$\bfy=\bfy(0)$} with $\log$ factor}.
Thus, we obtain the desired formula \eqref{eq:sDI1}.

Now we consider the general case in \eqref{eq:gseq1}.
The equality \eqref{eq:ypy2} is replaced with
\begin{align}
\label{eq:syy1}
y'_{i}=
y_{i} (1+ h_i(\bfy))
\end{align}
for some $h_i(\bfy)\in \calF^{>\ell}$ due to the action of elements in $G^{> \ell}$.
Then, again
by Proposition \ref{prop:dL1},
under any  infinitesimal variation $\alpha+\delta \alpha$,
we have
\begin{gather}
\delta S[\alpha]
=\sum_{i=1}^n ( \overline p'_i \delta \overline u'_i -  \overline p_i \delta  \overline u_i),
\\
\label{eq:pp2}
\d^{-1}_i \overline p_i=\sum_{j=1}^n b_{ji} \overline u_j=\frac{1}{\sqrt{2}} \log  \overline y_i,
\quad
\d^{-1}_i\overline p'_i=\sum_{j=1}^n b_{ji} \overline u'_j=\frac{1}{\sqrt{2}} \log \overline y'_i,
\end{gather}
where we set $\overline p_i=\overline p_i(0)$ and $\overline p'_i=\overline p_i(P)$, etc.
From now on, we view $\overline p_i$, $\overline p'_i$, $\overline u_i$, $\overline u'_i$
as formal Puiseux  series in  {$\bfy=\bfy(0)$} with $\log$ factor,
and we omit bars.
By \eqref{eq:syy1} and \eqref{eq:pp2}, we can write them as
\begin{align}
p'_i-p_i=f_i
\quad
u'_i-u_i=g_i
\quad
(f_i,\, g_i
\in \calF^{>\ell}).
\end{align}
Thus,
we have
\begin{align*}
\begin{split}
\quad\ \delta S[\alpha]
&=\sum_{i=1}^n \biggl((p_i + f_i) \biggl(\delta u_i + \sum_{j=1}^n \frac{\partial g_i }{\partial y_j} 
\frac{\partial y_j }{\partial u_i}\delta u_i
\biggr) -  p_i \delta u_i\biggr)
\\
&=\sum_{i=1}^n ((\log y_i) \tilde f_i + \tilde g_i)\delta u_i
\quad
(\tilde f_i,\, \tilde g_i\in \calF^{>\ell})
\\
&=\sum_{i=1}^n ((\log y_i)\hat f_i + \hat g_i) \delta y_i
\quad
(\hat f_i,\, \hat g_i\in \calF^{>\ell-1}).
\end{split}
\end{align*}
It follows that
\begin{align}
\frac{\partial S[\alpha]}{\partial y_i }  \in \calF_{\log}^{>\ell-1}.
\end{align}
Therefore, we have
\begin{align}
 S[\alpha] \in \calF_{\log}^{>\ell},
 \end{align}
 where the constant term is evaluated as $0$ as before.
 This is the desired formula \eqref{eq:sDI1}.

 (b). When $B$ is singular,
 as  discussed in Section \ref{sec:alternative1}, the problem is that
those $\bfy$ represented in $\eqref{eq:ty2}$
are not algebraically independent on the small phase space $\tilde M_0$.
So,
 let us take
 the principal extension $\overline{B}$ in \eqref{eq:pext1}, which is nonsingular.
 Let $\overline \Omega$ be the one in \eqref{eq:pextO1}.
 Meanwhile, let $G=G_{\Omega}$ be unchanged,
 so that the CSD $\frakD(B)$ is also unchanged.
 We extend $\bfy=(y_1,\dots,y_n)$ to $\tilde \bfy=(y_1,\dots,y_{2n})$.
 Then,  we extend the $y$-representation of $G$  on $\bbQ[[\bfy]]$
 in \eqref{eq:tildeX1}  to 
 the \emph{principal $y$-representation}\index{principal!$y$-representation} on $\bbQ[[\tilde \bfy]]$ by
 \begin{align}
 \label{eq:py1}
\tilde X_{\bfn}(y^{(\bfn',\bfm')}):=\{(\bfn,\bfzero),(\bfn',\bfm')\}_{\overline\Omega} \,y^{(\bfn+\bfn',\bfm')}
.
\end{align}
 Also, we introduce canonical variables 
 $\tilde \bfu=(u_1,\dots,u_{2n})$ and  $\tilde \bfp=(p_1,\dots,p_{2n})$
 and represent $\tilde \bfy$ in the same way as \eqref{eq:ty2},
 where the matrix $B$ is replaced with $\overline B$.
 Then, $\bfy=(y_1,\dots,y_{n})$ are  now algebraically independent 
 on the small phase space for $(\tilde \bfu, \tilde \bfp)$.
 After this, the proof of (a) is applicable.
\end{proof}

Now we take the limit $\ell\rightarrow \infty$.
We parametrize the intersection of $\gamma$ and
the walls in $\frakD(B)$ as $z_i$ ($i \in J$) by a countable and totally ordered set $J$
so that $\gamma$ crosses $z_i$ earlier than $z_j$ only if $i<j$.
Then, the consistent relation along $\gamma$ is presented by an infinite  product
in the increasing order in $J$ from right to left as follows:
\begin{align}
\label{eq:sgseq2}
\frakp_{\gamma, \frakD(B)}=
\prod_{i\in J}^{\leftarrow}
\Psi[h_i \bfn_i]^{\epsilon_ i s_i \delta(h_i \bfn_i)}
= \rmid.
\end{align}

\begin{thm}
[{\cite{Nakanishi21d}}]
\label{thm:sDI1}
For any admissible loop $\gamma$ with the consistent relation
\eqref{eq:sgseq2},
the following identity holds
as a formal Puiseux series in  {$\bfy$} with $\log$ factor:
\begin{align}
\label{eq:sDI2}
\sum_{i\in J}
\epsilon_i s_i
\delta(h_i \bfn_i)
\tilde L( y_{z_i}[h_i\bfn_i])
=0.
\end{align}
\end{thm}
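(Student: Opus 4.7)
My plan is to deduce Theorem \ref{thm:sDI1} from Theorem \ref{thm:DI0} by a degree-by-degree truncation argument, using the finiteness condition built into the definition of a scattering diagram. The key point is that the infinite sum in \eqref{eq:sDI2} stabilizes at each finite degree, so the identity reduces to infinitely many instances of the already-proved finite statement.

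First I would verify that the sum in \eqref{eq:sDI2} is a well-defined formal Puiseux series with $\log$ factor. For each wall $\bfw_i$ crossed by $\gamma$, the $y$-variable $y_{z_i}[h_i\bfn_i]$, obtained by acting with a path-ordered product on $y^{h_i\bfn_i}$, has the form $y^{h_i\bfn_i}(1+r_i(\bfy))$ with $r_i\in\calF^{>0}$; applying the expansion \eqref{eq:Lexp1} shows that $\tilde L(y_{z_i}[h_i\bfn_i])\in\calF^{>\deg(h_i\bfn_i)-1}_{\log}$. Fixing any degree $d$, only walls with $\deg(h_i\bfn_i)\le d$ can contribute to the coefficients of order $\le d$, and by the finiteness condition on $\frakD(B)$ there are only finitely many such walls. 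Hence for each $d$ only finitely many summands in \eqref{eq:sDI2} contribute up to order $d$, making the sum a well-defined formal object in the stated space.

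Next I would fix $\ell\ge 1$ and split the sum in \eqref{eq:sDI2} into two parts: the finite sub-sum over walls with $\deg(h_i\bfn_i)\le\ell$, which are exactly the walls surviving in the reduction $\frakD_\ell(B)$, and the remaining sub-sum over walls with $\deg(h_i\bfn_i)>\ell$. By the estimate of the previous paragraph, every term in the second sub-sum already lies in $\calF^{>\ell}_{\log}$, and by the same finiteness reasoning only finitely many of them affect any given order $\le d$ with $d$ arbitrary, so their total contribution is still a well-defined element of $\calF^{>\ell}_{\log}$. Meanwhile, the finite sub-sum is precisely the one appearing in the statement of Theorem \ref{thm:DI0} applied to $\gamma$ and $\frakD_\ell(B)$ (noting that the $y$-variables in both theorems are defined by the full CSD $\frakD(B)$, so the notation is literally consistent). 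Applying that theorem gives the congruence \eqref{eq:sDI1} modulo $\calF^{>\ell}_{\log}$, and combining with the previous observation yields that the whole right-hand side of \eqref{eq:sDI2} is congruent to $0$ modulo $\calF^{>\ell}_{\log}$. Since $\ell$ was arbitrary and $\bigcap_\ell\calF^{>\ell}_{\log}=\{0\}$, the identity \eqref{eq:sDI2} holds in the space of formal Puiseux series with $\log$ factor.

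The main subtlety I anticipate is the bookkeeping around the $\log$-factor piece of $\tilde L$: the expansion \eqref{eq:Lexp1} contains a term $\log(y_{z_i}[h_i\bfn_i])\cdot(\text{power series})$, where $\log(y_{z_i}[h_i\bfn_i]) = \sum_k (h_i\bfn_i)_k\log y_k + \log(1+r_i)$ produces both genuine $\log y_k$ contributions and an ordinary power series contribution. One must confirm that both pieces inherit the degree bound $\deg(h_i\bfn_i)$ from the power series factor multiplying them, so that the whole term lies in $\calF^{>\deg(h_i\bfn_i)-1}_{\log}$ as required; this is straightforward but deserves explicit verification. Beyond this, the argument is essentially a passage to the limit, made rigorous by the fact that the filtration by $\calF^{>\ell}_{\log}$ is separated and compatible with the finiteness condition of $\frakD(B)$ on both the group-theoretic side (through \eqref{eq:sgseq2}) and the Lie-algebra/Puiseux side (through \eqref{eq:sDI2}).
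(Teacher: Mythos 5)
Your argument is correct and is essentially the paper's own proof, which simply states that \eqref{eq:sDI2} is obtained by taking the limit $\ell\rightarrow\infty$ of \eqref{eq:sDI1}; you have merely made explicit the truncation bookkeeping (that the walls of $\frakD_{\ell}(B)$ are exactly those with $\deg(h_i\bfn_i)\leq\ell$, that the tail lies in $\calF^{>\ell}_{\log}$, and that $\bigcap_{\ell}\calF^{>\ell}_{\log}=\{0\}$) that the paper leaves implicit.
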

\begin{proof}
This is  obtained by  taking the limit $\ell\rightarrow \infty$
of \eqref{eq:sDI1}.
\end{proof}
We call the identity \eqref{eq:sDI2}, together with its reduction \eqref{eq:sDI1} at degree $\ell$,
the \emph{DI associated with a loop $\gamma$ in $\frakD(B)$}\index{dilogarithm identity!for loop in CSD}.
When the loop $\gamma$ is contained in the support of $G$-fan $\calG(B)$
as in Example \ref{ex:periodloop1},
it is regarded
as a $\nu$-period of a $Y$-pattern.
Then, the sum in the identity is finite, and the identity coincides with the one
in Theorem \ref{thm:DI1}.

\begin{ex}
[Type  $A_1^{(1)}$]
For the consistency relation \eqref{3eq:a115},
the associated  DI \eqref{eq:DI1} is written as follows:
\begin{align}
\label{eq:DIa11}
&\tilde L(y_2(1+y_1)^2)+\tilde L(y_1)
=
\tilde L(y_1(1+y_2(1+y_1)^2)^{-2})
+
\Lambda
+
\tilde L(y_2)
,
\\
\begin{split}
&\Lambda= 
\tilde L (y[(2,1)]) 
+ \tilde L (y[(3,2)])
+
\cdots
\\
&\qquad
+
 \sum_{j=0}^{\infty} 2^{1-j} \tilde L (y[2^j(1,1)])
+
\cdots
+\tilde L (y[(2,3)]) 
+ \tilde L (y[(1,2)]).
\end{split}
\end{align}
In the sum $\Lambda$, $y_{z_i}[h_i\bfn_i]$ in \eqref{eq:sDI1} is unambiguously parametrized by $h_i \bfn_i$, so that $z_i$ is omitted.
Also,  the common factor 2 is omitted.
Since the LHS is a finite sum, the RHS converges
as a function of $\bfy\in \bbR^2_{> 0}$.
Unfortunately, we do not know the explicit expressions of $y[\bfn]$ in the RHS in general.
\end{ex}

\section{Alternative proof of DIs and infinite reducibility}
\label{sec:construction1}

Let us show that
the reduction of the consistency relations in
 Theorem \ref{3thm:struct1} implies  a parallel reduction of the corresponding DIs in Theorem \ref{thm:sDI1}
 (including the ones in Theorem \ref{thm:DI1}).
 This also provides an alternative proof of Theorem  \ref{thm:sDI1} and
 also Theomem \ref{thm:DI1}.
 
First, we reformulate the pentagon identity 
\eqref{eq:pent7}
into the form that is closer to 
the  pentagon relation \eqref{eq:pent8}.
We continue to work on the situation of the previous section.

\begin{lem}
[Pentagon identity {\cite{Nakanishi21d}}]
\label{lem:pents1}
Let $\bfn_1$, $\bfn_2\in N^+$.
 If $\{\bfn_2,\bfn_1\}_{\Omega}=c\neq 0$,
the following   identity as functions of $\bfy\in \bbZ_{>0}^n$ holds:
\begin{align}
\label{eq:pentL2}
\begin{split}
& \quad\ \frac{1}{c}
 \Psi[\bfn_1 ]^{-1/c} (\tilde L ( y^{\bfn_2})) + 
\frac{1}{c}
\tilde L (y^{\bfn_1})
\\
& =
\frac{1}{c} \Psi[\bfn_2 ]^{-1/c} \Psi[\bfn_1+\bfn_2]^{-1/c}( \tilde L (y^{\bfn_1}))
+
\frac{1}{c}
 \Psi[\bfn_2 ]^{-1/c} (\tilde L(y^{\bfn_1+\bfn_2}))
+
\frac{1}{c}
\tilde L(y^{\bfn_2}).
\end{split}
\end{align}
\end{lem}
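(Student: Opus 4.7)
The plan is to reduce the identity to the classical pentagon identity for the modified Rogers dilogarithm in the form \eqref{eq:pent7}, by explicitly evaluating the action of the $\Psi$-operators on both sides via the representation formula \eqref{eq:gei2y}.

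First I would compute the arguments of $\tilde L$ term by term using Proposition \ref{prop:gn1}, which gives
\begin{align*}
\Psi[\bfn]^{-1/c}(y^{\bfn'}) = y^{\bfn'}(1+y^{\bfn})^{-\{\bfn,\bfn'\}_\Omega/c}.
\end{align*}
Since $\{\bfn_2,\bfn_1\}_\Omega = c$ and hence $\{\bfn_1,\bfn_2\}_\Omega = -c$, the skew-symmetry and bilinearity of $\{\cdot,\cdot\}_\Omega$ yield $\{\bfn_2,\bfn_1+\bfn_2\}_\Omega = c$ and $\{\bfn_1+\bfn_2,\bfn_1\}_\Omega = c$. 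Using these, I would obtain
\begin{align*}
\Psi[\bfn_1]^{-1/c}(y^{\bfn_2}) &= y^{\bfn_2}(1+y^{\bfn_1}),\\
\Psi[\bfn_2]^{-1/c}(y^{\bfn_1+\bfn_2}) &= y^{\bfn_1+\bfn_2}(1+y^{\bfn_2})^{-1},\\
\Psi[\bfn_2]^{-1/c}\Psi[\bfn_1+\bfn_2]^{-1/c}(y^{\bfn_1}) &= y^{\bfn_1}(1+y^{\bfn_2}+y^{\bfn_1+\bfn_2})^{-1},
\end{align*}
where the last equality uses the same telescoping computation as in the proof of the pentagon relation \eqref{eq:pent8} in Proposition \ref{3prop:pent1}.

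Next, I would substitute $x = y^{\bfn_1}$ and $w = y^{\bfn_2}$ (so that $y^{\bfn_1+\bfn_2} = xw$) to rewrite the desired identity, after multiplication by $c$, as
\begin{align*}
\tilde L(x) + \tilde L(w(1+x)) = \tilde L\!\left(\frac{x}{1+w+xw}\right) + \tilde L\!\left(\frac{xw}{1+w}\right) + \tilde L(w).
\end{align*}
This is precisely the pentagon identity \eqref{eq:pent7} (with the roles $y_1 = x$, $y_2 = w$), which has already been established. Hence the lemma follows.

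The main technical point is verifying the third, nested computation of $\Psi[\bfn_2]^{-1/c}\Psi[\bfn_1+\bfn_2]^{-1/c}(y^{\bfn_1})$; however, since the subgroup of $G$ generated by $\Psi[\bfn_1]$, $\Psi[\bfn_2]$, $\Psi[\bfn_1+\bfn_2]$ acts faithfully via the $y$-representation on the rank-$2$ sublattice spanned by $\bfn_1,\bfn_2$ (the form $\{\cdot,\cdot\}_\Omega$ being nondegenerate there by $c\neq 0$), this is a routine computation identical to the one already carried out in Proposition \ref{3prop:pent1}. No new obstacle arises beyond bookkeeping of the exponents, so the lemma is essentially a dilogarithm-level shadow of the algebraic pentagon relation \eqref{eq:pent8}.
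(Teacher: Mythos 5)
Your proposal is correct and follows essentially the same route as the paper's proof: compute the three actions $\Psi[\bfn_1]^{-1/c}(y^{\bfn_2})$, $\Psi[\bfn_2]^{-1/c}(y^{\bfn_1+\bfn_2})$, and $\Psi[\bfn_2]^{-1/c}\Psi[\bfn_1+\bfn_2]^{-1/c}(y^{\bfn_1})$ via \eqref{eq:gei2y}, then identify the resulting identity with the pentagon identity \eqref{eq:pent7} under $y_1=y^{\bfn_1}$, $y_2=y^{\bfn_2}$. (Your closing appeal to faithfulness of the $y$-representation is superfluous here, since one only needs to evaluate the action, not deduce anything back in $G$.)
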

\begin{proof}
We have
\begin{align*}
\Psi[\bfn_1 ]^{-1/c} (y^{\bfn_2})
&= y^{\bfn_2}  (1+y^{\bfn_1}),
\\
\Psi[\bfn_2 ]^{-1/c} (y^{\bfn_1+\bfn_2})
&= y^{\bfn_1+\bfn_2}  (1+y^{\bfn_2})^{-1},
\\
\begin{split}
\Psi[\bfn_2 ]^{-1/c} \Psi[\bfn_1+\bfn_2]^{-1/c} (y^{\bfn_1})
&=  \Psi[ \bfn_2 ]^{-1/c} (y^{\bfn_1}  (1+y^{\bfn_1+\bfn_2})^{-1})
\\
&=  y^{\bfn_1} (1+y^{\bfn_2})^{-1}
  (1+y^{\bfn_1+\bfn_2} (1+y^{\bfn_2})^{-1})^{-1}\\
&=  y^{\bfn}
  ( 1+y^{\bfn_2}+y^{\bfn_1+\bfn_2})^{-1}.
  \end{split}
  \notag
\end{align*}
Then, the pentagon identity \eqref{eq:pent7}
is written in the form
 \eqref{eq:pentL2}
by setting $y_1=y^{\bfn_1}$ and $y_2=y^{\bfn_2}$.
\end{proof}

The overall factor $1/c$ was put so that  the identity \eqref{eq:pentL2}  exactly matches  (the log form of) the
 pentagon relation \eqref{eq:pent8}.
Based on the correspondence between \eqref{eq:pentL2} and \eqref{eq:pent8},
we  obtain a parallel result with  Theorem \ref{3thm:struct1}.
\begin{thm}
[{\cite{Nakanishi21d}}]
\label{thm:DI2}
For any admissible loop $\gamma$ for $\frakD(B)$,
the associated DI in \eqref{eq:sDI2} 
is constructed from and 
reduced to a trivial one
by applying the pentagon identity in \eqref{eq:pent7}
possibly infinitely many times.
\end{thm}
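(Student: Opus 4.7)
The plan is to transfer the reduction algorithm for the group-level consistency relation (Theorem \ref{3thm:struct1}) over to the dilogarithm-sum identity \eqref{eq:sDI2}, using Lemma \ref{lem:pents1} as the dictionary between the two sides. The crucial point is that the pentagon identity \eqref{eq:pentL2} has been rewritten to match the pentagon relation \eqref{eq:pent8} factor by factor: each occurrence of $\Psi[\bfn]^{1/c}$ in the group-level relation corresponds to a summand $(1/c)\tilde L(y^{\bfn})$ on the dilogarithm side, and each left-conjugation by $\Psi[\bfn']^{-1/c'}$ on the group side corresponds to applying $\Psi[\bfn']^{-1/c'}$ (as a semifield automorphism of $\bbQ[[\bfy]]$) inside the argument of $\tilde L$.

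More precisely, I would introduce the following bookkeeping. To any ordered product $g_s^{\epsilon_s}\cdots g_1^{\epsilon_1}$ of wall elements along an admissible curve in $\frakD(B)$ with $g_i = \Psi[h_i\bfn_i]^{s_i\delta(h_i\bfn_i)}$, I associate the formal sum $\sum_{i} \epsilon_i s_i \delta(h_i\bfn_i)\, \tilde L\bigl(\frakp_i(y^{h_i\bfn_i})\bigr)$, where $\frakp_i$ is the product of the wall elements standing to the left of the $i$th factor (i.e.\ crossed earlier by the reverse curve to $\mathrm{Int}(\calC^+)$). By Definition \ref{defn:yvar1} and the definition of the path-ordered product, this sum equals precisely the LHS of \eqref{eq:sDI1} (modulo $\calF^{>\ell}_{\log}$). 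The content of Lemma \ref{lem:pents1} is then that each elementary \emph{pentagon move} on the group side — replacing a consecutive pair $\Psi[\bfn_2]^{1/c}\Psi[\bfn_1]^{1/c}$ by $\Psi[\bfn_1]^{1/c}\Psi[\bfn_1+\bfn_2]^{1/c}\Psi[\bfn_2]^{1/c}$ — induces precisely the pentagon identity \eqref{eq:pentL2} between the corresponding dilogarithm sums, after applying the ambient conjugation $\frakp_i$ to every argument. Likewise, each commutative move on the group side (two adjacent factors with $\{\bfn,\bfn'\}_\Omega=0$) swaps two summands whose $\tilde L$-arguments are unaffected, producing a trivial identity.

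By Theorem \ref{3thm:struct1}, for an admissible loop $\gamma$ the consistency relation $\frakp_{\gamma,\frakD(B)}=\rmid$ is reducible to the trivial relation $\rmid=\rmid$ by a (possibly infinite) sequence of commutative and pentagon moves. Applying the dictionary above step by step, one obtains a parallel sequence of commutative and pentagon-identity moves that reduces the associated DI to the trivial identity $0=0$. Reading the reduction in reverse shows that the DI is \emph{constructed} from the pentagon identity, proving both halves of the statement.

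The main obstacle, I expect, is handling the "possibly infinitely many" steps rigorously and verifying that the ambient conjugations $\frakp_i$ interact correctly with the pentagon identity applied inside $\tilde L$. I would address this by first working at each fixed truncation level $\ell$, where the reduction of Theorem \ref{3thm:struct1} involves only finitely many moves on $G/G^{>\ell}$; the argument-conjugation tracking then becomes a finite check, since each $\frakp_i$ acts on $y^{h_i\bfn_i} \in \bbQ[[\bfy]]/\calF^{>\ell}$ as a polynomial-level automorphism and Lemma \ref{lem:pents1} remains valid when its arguments are replaced by formal power series (the identity \eqref{eq:pentL2} is an identity of formal Puiseux series in its arguments $y^{\bfn_1}, y^{\bfn_2}$ with $\log$ factor, not merely of monomials). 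This yields the reduction of \eqref{eq:sDI1} modulo $\calF^{>\ell}_{\log}$. Passing to the limit $\ell\to\infty$, exactly as in the derivation of Theorem \ref{thm:sDI1} from Theorem \ref{thm:DI0}, gives the reduction of the full identity \eqref{eq:sDI2} and completes the proof.
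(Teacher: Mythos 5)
Your proposal is correct and follows essentially the same route as the paper: fix a truncation level $\ell$, use the factor-by-factor dictionary between the pentagon relation \eqref{eq:pent8} and the pentagon identity \eqref{eq:pentL2} supplied by Lemma \ref{lem:pents1} (with the conjugating prefixes $\frakp_i$ matching the path-ordered products defining $y_{z_i}[h_i\bfn_i]$ in \eqref{eq:dd2}), transfer each move of the reduction underlying Theorem \ref{3thm:struct1} to the dilogarithm sum while simultaneously updating the prefixes via \eqref{eq:pent8}, and pass to the limit $\ell\to\infty$. The only cosmetic difference is that the paper explicitly localizes to small loops around joints and runs the Ordering Algorithm there, whereas you invoke Theorem \ref{3thm:struct1} as a black box, which is legitimate since that theorem already covers arbitrary admissible loops.
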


\begin{figure}
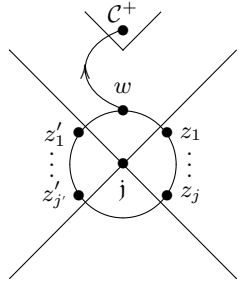

\centering
\leavevmode
\xy
(0,20)*{\text{\small $\calC^+$}};
(-9,4)*{\text{\small $z'_1$}};
(-9,-4)*{\text{\small $z'_{j'}$}};
(9,4)*{\text{\small $z_1$}};
(9,-4)*{\text{\small $z_j$}};
(0,10)*{\text{\small $w$}};
(8.8,1)*{\text{\small $\vdots$}};
(-9.4,1)*{\text{\small $\vdots$}};
(0,-3)*{\text{\small $\frakj$}};
(0,0)*+{\bullet};
(0,7)*+{\bullet};
(0,17.5)*+{\bullet};
(5.85,4.1)*+{\bullet};
(-5.85,4.1)*+{\bullet};
(-5.85,-4.1)*+{\bullet};
(5.85,-4.1)*+{\bullet};
(0,0)*\xycircle<20pt>{};
(1,18)*+{};(1,7)*+{}
   **\crv{(-7,16)&(-7,9)}
          ?>*\dir{};
\ar@{>}(-5,12);(-5,13)
\ar@{-} (-15,-15); (15,15)
\ar@{-} (15,-15); (-15,15)
\ar@{-} (0,15); (-5,20)
\ar@{-} (0,15); (5,20)
\endxy
\caption{Configuration for the consistency relation in
\eqref{eq:sconst2}.
}
\label{fig:sconst2}
\end{figure}

\begin{proof}
Fix $\ell>0$, and consider the reduction $\frakD_{\ell}(B)$ at $\ell$.
We may concentrate on a sufficiently small loop $\gamma$ around a  joint $\frakj$
with the base point $w$.
If the joint $\frakj$ is parallel, 
the consistency relation is reduced to a trivial one
by applying the commutative relation \eqref{eq:Pcom1}.
Accordingly, the corresponding DI is also trivial.
So, let us assume that the joint $\frakj$ is perpendicular.
The consistency relation around $\frakj$  has the form
\begin{align}
\label{eq:sconst2}
\begin{split}
&\quad \ \Psi[h'_{j'} \bfn'_{j'}]^{s'_{j'}  \delta (h'_{j'} \bfn'_{j'})}
\cdots
\Psi[h'_1\bfn'_1]^{s'_1\delta (h'_1 \bfn'_1)}
\\
&\equiv
\Psi[h_j \bfn_j]^{s_j  \delta (h_j \bfn_j)}
\cdots \Psi[h_1\bfn_1]^{s_1\delta (h_1 \bfn_1)}
\mod
G^{>{\l}},
\end{split}
\end{align}
where the LHS and RHS are the anti-ordered and ordered products
in Construction \ref{const:csd1} (3), respectively.
The corresponding DI   \eqref{eq:sDI2}  has the form (see Figure \ref{fig:sconst2})
\begin{align}
\label{eq:dd1}
\sum_{i=1}^{j'} s'_i \delta (h'_i \bfn'_i) \tilde L (y_{z'_i}[h'_i \bfn'_i])
\equiv
\sum_{i=1}^{j} s_i \delta (h_i \bfn_i) \tilde L (y_{z_i}[h_i \bfn_i])
\mod 
\calF^{> \ell}_{\log}.
\end{align}
In view of Figure \ref{fig:sconst2},
this is equivalent to
\begin{align}
\label{eq:dd2}
\begin{split}
&\quad \ 
\sum_{i=1}^{j'} 
 s'_i \delta (h'_i \bfn'_i)
\Psi[h'_1 \bfn'_1 ]^{- s'_1\delta (h'_1 \bfn'_1)}
\cdots
 \Psi[h'_{i-1} \bfn'_{i-1}]^{- s'_{i-1}  \delta (h'_{i-1} \bfn'_{i-1})}
 (
 \tilde L (y^{h'_i \bfn'_i})
 )
 \\
&\equiv
\sum_{i=1}^{j} 
s_i \delta (h_i \bfn_i)
\Psi[h_1 \bfn_1 ]^{- s_1\delta (h_1 \bfn_1)}
\cdots
 \Psi[h_{i-1} \bfn_{i-1}]^{- s_{i-1}  \delta (h_{i-1} \bfn_{i-1})}
 (
 \tilde L (y^{h_i \bfn_i})
 )
 \\
&
 \hskip240pt
 \mod \calF^{> \ell}_{\log}.
 \end{split}
\end{align}
Along the procedure, say, $\calP$ of obtaining \eqref{3eq:gpos2} from \eqref{3eq:init1} by
 Proposition \ref{prop:ordering1}
(more precisely, by \cite[Algorithm III.5.7]{Nakanishi22a}),
we do the following procedure:
\begin{enumerate}
\item As the initial input, we set $S(y)$ to be the LHS of \eqref{eq:dd2}.
\item If the pentagon relation \eqref{eq:pent8} is applied to the adjacent pair
$\Psi[h' \bfn']^{1/c}$ and $\Psi[t \bfn]^{1/c}$ 
in the procedure $\calP$,
we apply the pentagon identity \eqref{eq:pentL2}
to the corresponding terms in $S(y)$.
Simultaneously, we  apply  the pentagon relation \eqref{eq:pent8} itself
to the corresponding  pair
$\Psi[h' \bfn']^{1/c}$ and $\Psi[t \bfn]^{1/c}$ 
in $S(y)$.
(This does not change $S(y)$ as a formal Puiseux series in $\bfy$ with $\log$ factor.)
\item If the commutative relation modulo $G^{>\ell}$
 is applied
to the adjacent pair
$\Psi[h' \bfn']^{c'}$ and $\Psi[t \bfn]^c$
in the procedure $\calP$,
 we also apply  it
to the corresponding pair
$\Psi[h' \bfn']^{c'}$ and $\Psi[t \bfn]^c$ 
in $S(y)$.
(This occurs for the relation \eqref{eq:Pcom1}
or  the truncation of the relation \eqref{eq:pent8} modulo $G^{>\ell}$.
In the former case, this does not change $S(y)$,
while in the latter case the result equals to $S(y)$  modulo $\calF^{> \ell}_{\log}$.)
\end{enumerate}
Then, thanks to the correspondence between 
  \eqref{eq:pentL2}
and
 \eqref{eq:pent8},
 the final result is the RHS of \eqref{eq:dd2}.
 Thus, 
 the identity
\eqref{eq:dd2} is obtained from a trivial one $S(y)=S(y)$.
Also, by reversing the procedure,
 the equality  \eqref{eq:dd2} is reduced to a trivial one.
\end{proof}

Note that the above proof of Theorem \ref{thm:DI2} is completely formal and relies only on 
the results on the CSDs in Section \ref{sec:positive1}
and the correspondence between the pentagon identity
  \eqref{eq:pentL2}
and the pentagon relation
 \eqref{eq:pent8}.
 Therefore, it provides an alternative  proof of Theorem \ref{thm:sDI1},
 which is
 independent of the classical mechanical method in the previous section.
In particular, it gives yet another proof of Theorem \ref{thm:DI1}.

\begin{ex}[Type $B_2$]
\label{ex:B2}
Let us demonstrate how the procedure in the proof of Theorem \ref{thm:DI0}  works in practice.
Consider the consistency relation \eqref{eq:order3} for type $B_2$.
Here, we present again all steps of ordering by the pentagon relation therein,
where the pentagon relation is applied to pairs in the parentheses.
\begin{align*}
\begin{split}
\begin{bmatrix}
0\\
1
\end{bmatrix}
\left(
\begin{bmatrix}
0\\
1
\end{bmatrix}
\begin{bmatrix}
1\\
0
\end{bmatrix}
\right)
&=
\left(
\begin{bmatrix}
0\\
1
\end{bmatrix}
\begin{bmatrix}
1\\
0
\end{bmatrix}
\right)
\begin{bmatrix}
1\\
1
\end{bmatrix}
\begin{bmatrix}
0\\
1
\end{bmatrix}
\\
&=
\begin{bmatrix}
1\\
0
\end{bmatrix}
\begin{bmatrix}
1\\
1
\end{bmatrix}
\left(
\begin{bmatrix}
0\\
1
\end{bmatrix}
\begin{bmatrix}
1\\
1
\end{bmatrix}
\right)
\begin{bmatrix}
0\\
1
\end{bmatrix}
\\
&=
\begin{bmatrix}
1\\
0
\end{bmatrix}
\begin{bmatrix}
1\\
1
\end{bmatrix}
\begin{bmatrix}
1\\
1
\end{bmatrix}
\begin{bmatrix}
1\\
2
\end{bmatrix}
\begin{bmatrix}
0\\
1
\end{bmatrix}
\begin{bmatrix}
0\\
1
\end{bmatrix}
.
\end{split}
\end{align*}
Accordingly, we apply the procedure in the proof of Theorem \ref{thm:DI2} as follows:
\begin{align*}
&\
\uwave{
\Psi[\bfe_1]^{-1}\Psi[\bfe_2]^{-1}
}
\tilde L(y^{\bfe_2})
+
\uline{
\Psi[\bfe_1]^{-1}\tilde L (y^{\bfe_2})
}
+
\uline{
\tilde L
(y^{\bfe_1})
}
\notag
\\
=&\
\Psi[\bfe_2]^{-1} \Psi[(1,1)]^{-1} 
\uline{
\Psi[\bfe_1]^{-1}\tilde L(y^{\bfe_2})
}
+
\Psi[\bfe_2]^{-1} \Psi[(1,1)]^{-1}
\uline{
 \tilde L (y^{\bfe_1})
}
\notag
\\
&\quad
+
\Psi[\bfe_2]^{-1}\tilde L (y^{(1,1)})
+
\tilde L
(y^{\bfe_2})
\notag
\\
=&\
\Psi[\bfe_2]^{-1}
\uwave{
 \Psi[(1,1)]^{-1}  \Psi[\bfe_2]^{-1}
 }
  \Psi[(1,1)]^{-1}  \tilde L(y^{\bfe_1})
\notag
\\
&\quad
+
\Psi[\bfe_2]^{-1} 
\uwave{
\Psi[(1,1)]^{-1}  \Psi[\bfe_2]^{-1} 
}
  \tilde L(y^{(1,1)})
\\
&\quad
+
\Psi[\bfe_2]^{-1}
\uline{
 \Psi[(1,1)]^{-1} \tilde L (y^{\bfe_2})
 }
+
\Psi[\bfe_2]^{-1}
\uline{
\tilde L (y^{(1,1)})
}
+
\tilde L
(y^{\bfe_2})
\notag
\\
=&\
\Psi[\bfe_2]^{-1}  \Psi[\bfe_2]^{-1}   \Psi[(1,2)]^{-1} \Psi[(1,1)]^{-1} \Psi[(1,1)]^{-1}  \tilde L(y^{\bfe_1})
\notag
\\
&\quad
+
  \Psi[\bfe_2]^{-1}  \Psi[\bfe_2]^{-1}   \Psi[(1,2)]^{-1} \Psi[(1,1)]^{-1}  \tilde L(y^{(1,1)})
  \notag
\\
&\quad
+
   \Psi[\bfe_2]^{-1}   \Psi[\bfe_2]^{-1}   \Psi[(1,2)]^{-1}  \tilde L (y^{(1,1)})
 \notag
\\
&\quad
+
\Psi[\bfe_2]^{-1}  \Psi[\bfe_2]^{-1}  \tilde L (y^{(1,2)})
+
\Psi[\bfe_2]^{-1}\tilde L (y^{\bfe_2})
+
\tilde L
(y^{\bfe_2}).
\notag
\end{align*}
Here, the pentagon identity
\eqref{eq:pentL2} is applied to the pairs of terms with underline,
while the  pentagon relation  \eqref{eq:pent8} is applied to
the terms with wavy underline.
Thus, we obtain the associated DI
in the form \eqref{eq:dd2}.
\end{ex}

Let us specialize Theorem \ref{thm:DI0} to the case of a period of a $Y$-pattern.

\begin{cor}
[{\cite{Nakanishi21d}}]
\label{cor:DI2}
For any $\nu$-period of a $Y$-pattern,
the following facts hold.
\par
(a).
The corresponding consistency relation $\frakp_{\gamma, \frakD(B)}=\rmid$
in the CSD $\frakD(B)$
 is reduced to a trivial one
by applying the  commutative and pentagon relations in Proposition \ref{3prop:pent1}
possibly infinitely many times.
\par
(b).
The associated DI
in Theorem \ref{thm:DI1}
is
reduced to a trivial one by applying the pentagon identity  \eqref{eq:pent7}
possibly infinitely many times.
\end{cor}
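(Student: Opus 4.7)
The plan is to deduce the corollary directly from the two structural results already established, namely Theorem \ref{3thm:struct1} for CSDs and Theorem \ref{thm:DI2} for the DIs associated with loops. The bridge between a $\nu$-period of a $Y$-pattern and an admissible loop in $\frakD(B)$ has already been set up in Example \ref{ex:periodloop1}, so the main work is to verify that this bridge respects both reduction procedures.

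First I would recall the construction of Example \ref{ex:periodloop1}: given a $\nu$-period $\Upsilon(P)=\nu\Upsilon(0)$, by the synchronicity Theorem \ref{1thm:synchro1} one has $\sigma(G(P))=\sigma(G(0))$, and one chooses an admissible loop $\gamma$ entirely contained in $|\calG(B)|$ with base point in $\mathrm{Int}(\sigma(G(0)))$ that crosses the $G$-cones $\sigma(G(P-1)),\dots,\sigma(G(0))$ in reverse order. The consistency relation $\frakp_{\gamma,\frakD(B)}=\rmid$ along this particular loop is shown there to coincide with the DI relation \eqref{eq:DIE2} in the group $G$. Moreover, because $\gamma$ stays inside $|\calG(B)|$, by Lemma \ref{lem:GC1} every wall it crosses carries a wall element of the form $\Psi[\bfc^+_{k_s;t_s}]^{\delta_{k_s}}$, so only finitely many walls are encountered, and via Proposition \ref{prop:yCSD1} the associated DI from Theorem \ref{thm:sDI1} specializes exactly to the identity \eqref{eq:DI3} of Theorem \ref{thm:DI1}.

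For part (a), I would then invoke Theorem \ref{3thm:struct1} applied to this loop $\gamma$: it asserts that for \emph{any} admissible loop for $\frakD(B)$, the consistency relation is reduced to a trivial one by applying the commutative relation \eqref{eq:Pcom1} and the pentagon relation \eqref{eq:pent8} possibly infinitely many times. This immediately gives the claim. For part (b), I would similarly invoke Theorem \ref{thm:DI2}: the DI associated with the same loop $\gamma$ is reduced to a trivial one by applying the pentagon identity \eqref{eq:pent7} possibly infinitely many times, using the correspondence between \eqref{eq:pentL2} and \eqref{eq:pent8} that drives the proof of Theorem \ref{thm:DI2}.

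The main subtlety, rather than an obstacle, is a bookkeeping point: one must check that the reduction in Theorem \ref{thm:DI2}, which a priori proceeds degree-by-degree modulo $\calF^{>\ell}_{\log}$ and may involve infinitely many steps, actually terminates in the present case. This is automatic because the loop $\gamma$ lies in $|\calG(B)|$ and hence crosses only finitely many walls of $\frakD(B)$, so the DI \eqref{eq:DI3} is a finite sum and only finitely many applications of the commutative/pentagon relations are ever needed in the corresponding reduction in Construction \ref{const:csd1}. Thus the ``possibly infinitely many times'' clause in the statement is in fact tightened to ``finitely many times'' for periods of $Y$-patterns, and no analytic issue arises. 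Once these verifications are recorded, parts (a) and (b) follow with essentially no further computation.
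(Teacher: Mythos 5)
Your core argument is exactly the paper's: Corollary \ref{cor:DI2} is obtained by specializing Theorems \ref{3thm:struct1} and \ref{thm:DI2} to the admissible loop of Example \ref{ex:periodloop1}, whose consistency relation is \eqref{eq:DIE2} and whose associated DI is \eqref{eq:DI3}. Up to and including the invocation of those two theorems, the proposal is correct and needs no further computation.

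Your final paragraph, however, contains a genuine error, and it contradicts a point the paper makes explicitly right after the corollary. You claim that because $\gamma$ lies in $|\calG(B)|$ and crosses only finitely many walls, ``only finitely many applications of the commutative/pentagon relations are ever needed,'' so the clause ``possibly infinitely many times'' tightens to ``finitely many times.'' This does not follow. The reduction in Theorems \ref{3thm:struct1} and \ref{thm:DI2} is not controlled by the walls that $\gamma$ crosses but by the joints of $\frakD(B)$ that $\gamma$ sweeps over as it is contracted to a point: each small loop around a perpendicular joint is resolved by reversing the Ordering Algorithm of Proposition \ref{prop:ordering1}, and at a single joint the ordered product is in general infinite (affine and non-affine infinite type, as in Examples \ref{ex:ordaffine1} and \ref{ex:ordnonaffine1}), so that step alone already requires infinitely many applications of the pentagon relation. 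Contracting $\gamma$ through the CSD generically forces it past such joints even when $\gamma$ itself stays in the $G$-fan and the DI is a finite sum; correspondingly the degree-by-degree argument modulo $\calF^{>\ell}_{\log}$ does not stabilize at any finite $\ell$. This is precisely why the result is called \emph{infinite} reducibility, and why the paper states (remark (1) after the corollary) that the finite reducibility problem for these DIs with a finite product remains open — Example \ref{ex:B2} happens to reduce finitely, but the method does not guarantee this. Delete the claim of finiteness; the corollary as stated, with ``possibly infinitely many times,'' is what the argument actually yields.
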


Now we come back to the
reduction problem in Section \ref{sec:reduction1}
for the DI associated with a loop in a CSD.
(All DIs in this text fall into this class.)
As shown in Theorem \ref{thm:DI0},
it is reduced to the trivial one by applying the pentagon identity possibly infinitely many times.
We call this property \emph{infinite reducibility}\index{infinite reducibility}.
Of course, infinitely many operations are necessary for DIs with an infinite product.
However, our method only proves the infinite reducibility even for DIs with a finite product.
This is a somewhat unexpected answer to the problem.

\begin{rem}
(1)
Corollary \ref{cor:DI2} (b)  does not deny the finite reducibility 
for a DI with a finite sum
as we see in Example \ref{ex:B2}.
Also,
a finite reduction may occur differently from the procedure in Theorem \ref{thm:DI2}.
So,  the finite reducibility problem for the above DIs with a finite product is still open.

\par
(2)
Corollary \ref{cor:DI2} (a) does {not} imply that 
any $\nu$-period of a $Y$-pattern is reduced to the trivial one
by  the square and pentagon periodicities \emph{in the $Y$-pattern itself}.
{\rp Here, the square periodicity is the relation $\mu_i\mu_j\mu_i\mu_j=\rmid$ for
commuative mutations $\mu_i$ and $\mu_j$.}
Again,  Example \ref{ex:B2} is such a case because there is
only the hexagon periodicity
in the $Y$-pattern.
Therefore, the reducibility of a period in a $Y$-pattern and the one in a CSD are \emph{different} problems.
Some other examples of the periodicity that are not reducible in a $Y$-pattern  are known \cite{Fomin08, Kim16}.
\end{rem}

\notes
The whole content of this chapter is due to \cite{Nakanishi21d}.

\part{\text{Quantum dilogarithm identities}}

\chapter{Quantum dilogarithm}

A natural $q$-deformation of the dilogarithm
called the \emph{quantum dilogarithm} 
was introduced by Faddeev, Kashaev, Volkov \cite{Faddeev93, Faddeev94}
and modified by Fock-Goncharov \cite{Fock03}.
The function satisfies the pentagon identity for $q$-commutative variables,
which is regarded as the counterpart of the pentagon identity for the dilogarithm.

\section{$q$-exponential}

Let $q$ and $x$ be variables.
Let $j$ be a nonnegative integer.
We introduce the \emph{$q$-Pochhammer symbol}
\begin{gather}
(x;q)_{\infty}=\prod_{k=0}^{\infty} (1-xq^k),
\quad
(q)_j=\prod_{k=1}^{j} (1-q^k),
\quad
(q)_0=1,
\end{gather}
and the \emph{$q$-number}\index{$q$-number} ($a\in \bbZ$)
\begin{align}
\label{eq:qnum1}
[a]_q=\frac{q^a-q^{-a}}{q-q^{-1}},
\quad
\lim_{q\rightarrow 1} [a]_q=a.
\end{align}

We define
the \emph{$q$-exponential}\index{$q$-exponential} $e_q(x)$,
first as a formal power series in $x$ with coefficients in $\bbQ(q)$,
by the following  three equivalent expressions:
\begin{align}
\label{eq:qe1}
e_q(x)
&=\sum_{j=0}^{\infty}
\frac{1}{(q)_j} x^j
=
\sum_{j=0}^{\infty}
\frac{1}{ (1-q)\cdots (1-q^j)}
x^j
\\
\label{eq:qe2}
&=
\frac{1}{(x;q)_{\infty}}
=\prod_{k=0}^{\infty}(1-q^{k}x)^{-1}
\\
\label{eq:qe3}
&=
 \exp\biggl(
\sum_{j=1}^{\infty}
\frac{1 }{j (1 - q^{j})} x^j
\biggr).
\end{align}
Here, we follow the notation in \cite{Kirillov95}.
Observe the similarity between the third expression \eqref{eq:qe3}
and (the exponential of) the Euler dilogarithm $\mathrm{Li}_2(x)$ in \eqref{eq:Li1}.

The above three expressions are equivalent due to the following characterization of $e_q(x)$.
\begin{prop}
\label{prop:qech1}
(a). 
The  properties
\begin{alignat}{3}
\label{eq:qeini1}
e_q(0)&=1
\quad
\text{(initial condition)},
\\
\label{eq:qerec1}
e_q(q x) &=(1-x) e_q(x)
\quad
\text{($q$-difference relation)}
\end{alignat}
uniquely   determine a formal power series $e_q(x)$ in $x$  with coefficients in $\bbQ(q)$.
\par
(b). All expressions \eqref{eq:qe1}, \eqref{eq:qe2},
and \eqref{eq:qe3} satisfy
the conditions  \eqref{eq:qeini1} and \eqref{eq:qerec1}.
\end{prop}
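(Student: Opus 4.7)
The plan is to prove part (a) first, then derive the equivalence of the three expressions as an immediate consequence of part (b) combined with uniqueness.

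For part (a), I would write the unknown series as $e_q(x) = \sum_{j=0}^\infty a_j x^j$ with $a_j \in \bbQ(q)$. The initial condition \eqref{eq:qeini1} fixes $a_0 = 1$. Substituting into the $q$-difference relation \eqref{eq:qerec1} and matching coefficients of $x^j$ gives $a_j q^j = a_j - a_{j-1}$, so
\begin{equation*}
a_j = \frac{a_{j-1}}{1-q^j} \quad (j \geq 1).
\end{equation*}
Since $1 - q^j \neq 0$ in $\bbQ(q)$, this recursion has the unique solution $a_j = 1/(q)_j$, establishing both uniqueness and the fact that \eqref{eq:qe1} satisfies the two conditions.

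For part (b), the only remaining work is to verify the conditions for the expressions \eqref{eq:qe2} and \eqref{eq:qe3}. For \eqref{eq:qe2}, the key identity is
\begin{equation*}
(x;q)_\infty = (1-x)\,(qx;q)_\infty,
\end{equation*}
obtained by peeling off the $k=0$ factor; taking reciprocals yields \eqref{eq:qerec1}, while $e_q(0) = 1/(0;q)_\infty = 1$ is immediate. For \eqref{eq:qe3}, writing $e_q(x) = \exp(f(x))$ with $f(x) = \sum_{j\geq 1} x^j/(j(1-q^j))$, the initial condition follows from $f(0) = 0$, and a termwise computation gives
\begin{equation*}
f(x) - f(qx) = \sum_{j=1}^\infty \frac{x^j}{j} = -\log(1-x),
\end{equation*}
which exponentiates to the $q$-difference relation.

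The only mildly delicate point is that the product and the exponential in \eqref{eq:qe2} and \eqref{eq:qe3} must be interpreted as formal power series in $x$ with coefficients in $\bbQ(q)$, which is fine because each coefficient of $x^j$ only involves finitely many factors (respectively, because $f(x)$ has no constant term so $\exp(f(x))$ is a well-defined formal series). Beyond this bookkeeping, there is no real obstacle: once uniqueness is established in part (a), one merely has to perform the three short checks above, and the equivalence of the three formulas follows for free.
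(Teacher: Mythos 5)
Your proof is correct and follows essentially the same route as the paper: uniqueness via the coefficient recursion $a_j q^j = a_j - a_{j-1}$ forced by the $q$-difference relation, then direct verification of \eqref{eq:qerec1} for the product form by peeling off the $k=0$ factor and for the exponential form by the termwise identity $f(x)-f(qx)=-\log(1-x)$. The added remark on the formal-power-series interpretation is fine but not needed beyond what the paper already assumes.
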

\begin{proof}
(a). 
We set
\begin{align}
e_q(x)=\sum_{n=0}^{\infty} f_n(q) x^n
\quad
(f_n(q)\in \bbQ(q)),
\end{align}
where $f_0(q)=1$ by \eqref{eq:qeini1}.
Then, by \eqref{eq:qerec1}, we have the recurrence relation
\begin{align}
q^{n} f_n(q)=f_n(q)  - f_{n-1}(q)
\quad
(n\geq 1).
\end{align}
This determines $f_n(q)$ uniquely as
\begin{align}
\label{eq:fn1}
f_n(q)
=\frac{1}{1-q^{n}}
f_{n-1}(q)
=\frac{1}{(1-q)\cdots (1-q^{n})}.
\end{align}

(b).
We have already seen in \eqref{eq:fn1}
 that the expression \eqref{eq:qe1}
satisfies the conditions in (a).
Thus,
it is enough to check the relation \eqref{eq:qerec1}
for the expressions \eqref{eq:qe2} and \eqref{eq:qe3}.
For \eqref{eq:qe2}, we see it as
\begin{align}
\prod_{k=0}^{\infty}(1- q^{k} qx)^{-1}
=
(1-x)
\prod_{k=0}^{\infty}(1- q^{k} x)^{-1}.
\end{align}
For \eqref{eq:qe3}, we see it as
\begin{align}
\begin{split}
 \exp\biggl(
\sum_{j=1}^{\infty}
\frac{1 }{j (1 - q^{j})}  q^j x^j
\biggr)
&=
 \exp\biggl(
 -
\sum_{j=1}^{\infty}
\frac{1 }{j} x^j
\biggr)
 \exp\biggl(
\sum_{j=1}^{\infty}
\frac{1 }{j (1 - q^{j})} x^j
\biggr)
\\
&=
(1-x)
 \exp\biggl(
\sum_{j=1}^{\infty}
\frac{1 }{j (1 - q^{j})} x^j
\biggr).
\end{split}
\end{align}
\end{proof}

Next, we specialize $q\in \bbC^{\times}$ such that $q$ is not a root of 1.
Then,  every expression for $e_q(x)$ in \eqref{eq:qe1}--\eqref{eq:qe3} makes sense
as a formal power series in $x$ with coefficients in $\bbC$.
Furthermore, suppose that $|q|<1$.
Then, its convergence radius is 1 by \eqref{eq:qe1}.
Thus, we may also regard $e_q(x)$ as a complex function of $x\in \bbC$ with  $|x|<1$;
which is  analytically continued to the meromorphic function in \eqref{eq:qe2}.

We may consider the complex parameter $q$ ($|q|<1$) as a ``quantization
parameter'' so that the limit  $q\rightarrow 1^-$ is the \emph{classical limit}\index{classical limit}.
Equivalently,
we introduce the \emph{Planck constant}\index{Planck constant}
$\hbar \in \bbC$ with $\mathrm{Im}\, \hbar >0$ by
\begin{align}
q=\exp(  i \hbar),
\quad i = \sqrt{-1}.
\end{align}
Then, the limit $\hbar \rightarrow 0^{+i}$ is the classical limit,
where $0^{+i}$ indicates that $\hbar$ approaches to $0$ with $\mathrm{Im}\, \hbar >0$
in the complex plane.

\begin{prop}[{\cite[Eq.~(1.5)]{Faddeev94}, \cite[Cor.~10]{Kirillov95}}]
The following asymptotic  expansion holds in the limit  $\hbar \rightarrow 0^{+i}$:
\begin{align}
\label{eq:asyme1}
e_q(x) = (1-x)^{-1/2} 
\exp\biggl(
\frac{- \mathrm{Li}_2(x)}{i  \hbar}
\biggr) (1+ \mathcal{O}(\hbar)).
\end{align}
\end{prop}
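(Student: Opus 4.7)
The plan is to start from the logarithmic product expression \eqref{eq:qe3}, namely
\[
\log e_q(x) = \sum_{j=1}^{\infty} \frac{1}{j(1-q^j)}\, x^j,
\]
and to expand $1/(1-q^j)$ as a Laurent series in $\hbar$ using the Bernoulli generating function. Writing $u := i\hbar$, we have $1 - q^j = 1 - e^{ju}$, and
\[
\frac{1}{1-e^{ju}} = -\frac{1}{ju}\cdot\frac{ju}{e^{ju}-1} = -\frac{1}{ju}\sum_{n=0}^{\infty}\frac{B_n}{n!}(ju)^n = -\frac{1}{ju} + \frac{1}{2} - \frac{ju}{12} + \cdots
\]
Dividing by $j$ and substituting into the series gives, at least formally,
\[
\log e_q(x) = -\frac{1}{u}\sum_{j=1}^{\infty}\frac{x^j}{j^2} + \frac{1}{2}\sum_{j=1}^{\infty}\frac{x^j}{j} + \mathcal{O}(\hbar)
= -\frac{\mathrm{Li}_2(x)}{i\hbar} - \frac{1}{2}\log(1-x) + \mathcal{O}(\hbar),
\]
and exponentiating yields exactly the claim \eqref{eq:asyme1}.

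The first concrete step is to make the pointwise expansion of $1/(j(1-q^j))$ rigorous with a controlled remainder: for each $j\ge 1$ one has
\[
\frac{1}{j(1-q^j)} = -\frac{1}{i\hbar j^2} + \frac{1}{2j} + R_j(\hbar),
\]
where $R_j(\hbar)$ is holomorphic in $\hbar$ near $0$ (with $\mathrm{Im}\,\hbar > 0$) and satisfies a bound of the form $|R_j(\hbar)| \le C\,|\hbar|\,j$ on any compact neighborhood of $0$ in the upper half-plane where $j|\hbar|$ stays bounded. The second step is to sum this remainder against $x^j$ for fixed $|x|<1$ (or on a compact subset of $\{|x|<1\}$): since $|x|^j$ decays geometrically while $R_j$ grows only polynomially in $j$ for $j|\hbar|\lesssim 1$, a dyadic split at $j\sim 1/|\hbar|$, combined with the trivial bound $|1/(1-q^j)|\le 1/(1-|q|)=O(1/|\hbar|)$ for the tail, shows that the total error is $\mathcal{O}(\hbar)$ uniformly on compact subsets of $\{|x|<1\}$. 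Exponentiating in the final step is harmless because the two extracted leading terms are uniformly bounded on such compacta.

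The main obstacle is this uniform control of the tail, since the naive termwise expansion produces an $\mathcal{O}(j\hbar)$ error whose sum against $x^j$ is not obviously small in $\hbar$. The saving is the geometric factor $|x|^j < 1$, but to turn this into a clean estimate one must balance the ``small $j$'' regime (where the Bernoulli expansion to second order is accurate) against the ``large $j$'' regime (where one instead uses the crude bound $|1 - q^j|^{-1} = O(|\hbar|^{-1})$ together with $\sum_{j > J}|x|^j = O(|x|^J)$). With $J \sim 1/|\hbar|$ the crossover error is $|x|^{1/|\hbar|}$, which is super-polynomially small. An equally workable alternative is to apply the Euler--Maclaurin formula to $-\log e_q(x) = \sum_{k\ge 0}\log(1-q^k x)$, reinterpreted via $q^k = e^{i k\hbar}$, so that the leading integral reproduces $\mathrm{Li}_2(x)/(i\hbar)$, the first boundary correction produces $\tfrac{1}{2}\log(1-x)$, and the higher Bernoulli corrections match the expected $\mathcal{O}(\hbar)$ remainder; the bookkeeping is slightly different but the essential analytic input is the same.
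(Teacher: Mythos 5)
Your proposal is correct and follows essentially the same route as the paper: both start from the logarithmic series \eqref{eq:qe3} and expand $1/(j(1-q^j))$ termwise in $\hbar$, extracting the leading term $-1/(i\hbar j^2)$ (which sums to $-\mathrm{Li}_2(x)/(i\hbar)$) and the constant term $1/(2j)$ (which sums to $-\tfrac{1}{2}\log(1-x)$); the paper organizes this expansion via the rewriting $\frac{1}{1-q^j}=1-\frac{q^{j/2}}{q^{j/2}-q^{-j/2}}$ and $q$-numbers rather than via Bernoulli numbers, but that difference is purely bookkeeping. Your dyadic tail estimate at $j\sim 1/|\hbar|$ is an added layer of rigor that the paper's purely termwise treatment omits (it never addresses uniformity of the $\mathcal{O}(\hbar^2)$ remainders in $j$), and it is sound provided $\hbar\to 0$ non-tangentially so that $\mathrm{Im}\,\hbar\gtrsim|\hbar|$ controls $1-|q|$.
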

\begin{proof}
Let $q^{1/2}=\exp(i \hbar/2)$.
By the expression \eqref{eq:qe3}, we have
\begin{align*}
\begin{split}
\log e_q(x) 
&=
\sum_{j=1}^{\infty}
\frac{1 }{j (1 - q^{j})} x^j
\\
&=
\sum_{j=1}^{\infty}
\frac{1}{j} x^j
-
\sum_{j=1}^{\infty}
\frac{q^{j/2}}{j (q^{j/2} - q^{-j/2})} x^j
\\
&=
-\log(1-x)
-
\frac{1}{q^{1/2}-q^{-1/2}}
\sum_{j=1}^{\infty}
\frac{q^{j/2}}{j [j]_{q^{1/2}}} x^j.
\end{split}
\end{align*}
The following expansion formulas hold:
\begin{align}
q^{j/2}&=1 + \frac{j}{2} i\hbar + \calO(\hbar^2),
\\
[j]_{q^{1/2} }&=j + \calO(\hbar^2),
\\
\frac{1}{q^{1/2}-q^{-1/2}}&
=\frac{1}{i\hbar + \calO(\hbar^3)}
=\frac{1}{i\hbar} + \calO(\hbar).
\end{align}
Thus, we have
\begin{align}
\begin{split}
\log e_q(x) &=
-\log(1-x)
-
\frac{1}{i\hbar}
\mathrm{Li}_2(x)
+
\frac{1}{2}\log(1-x)
+\calO(\hbar).
\end{split}
\end{align}
This gives the desired result \eqref{eq:asyme1}.
\end{proof}

The function  $1/e_q(x)$  was called the \emph{quantum dilogarithm}
in \cite{Faddeev94}.
Here, we reserve the name for its variant introduced in the next section.

\section{Quantum dilogarithm}

Following Fock-Goncharov \cite{Fock03},
we define
the \emph{quantum dilogarithm}\index{quantum dilogarithm} $\Psi_q(x)$ by
\begin{align}
\label{eq:qd0}
\Psi_q(x) = e_{q^2}(-qx).
\end{align}
Thus, it is essentially the $q$-exponential in the previous section with a slightly different parametrization.
As we will see, this function fits our approach to the quantum dilogarithm identities.

In parallel with \eqref{eq:qe1}--\eqref{eq:qe3}, we have the following 
three equivalent expressions:
\begin{align}
\label{eq:qd1}
\Psi_q(x)
&=\sum_{j=0}^{\infty}
\frac{(-q)^j}{(q^2)_j} x^j
=
\sum_{j=0}^{\infty}
\frac{1}{(q-q^{-1})^j}
\frac{1}{[1]_q \cdots [n]_q} x^j,
\\
\label{eq:qd2}
&=
\frac{1}{(-qx;q^2)_{\infty}}
=\prod_{k=0}^{\infty}(1+q^{2k+1}x)^{-1}
\\
\label{eq:qd3}
&= \exp\biggl(
\sum_{j=1}^{\infty}
\frac{(-1)^{j+1}}{j (q^j - q^{-j})} x^j
\biggr)
=\exp\biggl(
\frac{1}{q-q^{-1}}
\sum_{j=1}^{\infty}
\frac{(-1)^{j+1}}{j [j]_q} x^j
\biggr).
\end{align}
As we see, the expressions \eqref{eq:qd1} and \eqref{eq:qd3}
are
more ``balanced''  than the $q$-exponential $e_q(x)$ under the symmetry $q \leftrightarrow q^{-1}$. 
Again, we may view $\Psi_q(x)$ as a formal power series in $x$
with coefficients in $\bbQ(q)$,
or a meromorphic function of $x$  for a given $0<|q|<1$,
depending on the context.

Proposition \ref{prop:qech1}  is restated for $\Psi_q(x)$ as follows.
(The second relation in \eqref{eq:qdrec1} is 
redundant. However, it is included here for later convenience.)

\begin{prop}
As a formal power series in $x$
with coefficients in $\bbQ(q)$, 
the function $\Psi_q(x)$
is characterized by the conditions
\begin{gather}
\label{eq:qdini1}
\Psi_q(0)=1,
\\
\label{eq:qdrec1}
\Psi_q(q^2 x) =(1+q x) \Psi_q(x),
\quad
\Psi_q(q^{-2} x) =(1+q^{-1} x)^{-1} \Psi_q(x).
\end{gather}
\end{prop}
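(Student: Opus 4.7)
The plan is to deduce this proposition directly from Proposition~\ref{prop:qech1} by invoking the substitution $\Psi_q(x) = e_{q^2}(-qx)$ from \eqref{eq:qd0}. So I would proceed as follows.

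First, I would verify that $\Psi_q(x)$ satisfies the stated conditions. Setting $x=0$ in the definition gives $\Psi_q(0)= e_{q^2}(0)=1$ by \eqref{eq:qeini1}. For the $q$-difference relation, I would apply \eqref{eq:qerec1} with $q$ replaced by $q^2$ and argument $y=-qx$, yielding
\begin{align*}
\Psi_q(q^2 x) = e_{q^2}(q^2 \cdot (-qx)) = (1-(-qx))\, e_{q^2}(-qx) = (1+qx)\,\Psi_q(x),
\end{align*}
which is the first half of \eqref{eq:qdrec1}. The second half follows by substituting $x \mapsto q^{-2}x$ and rearranging.

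Next, for uniqueness, I would either invoke the uniqueness in Proposition~\ref{prop:qech1}(a) directly (the substitution $e_{q^2}(y)=\Psi_q(-q^{-1}y)$ is invertible, so any formal power series $\Psi_q$ satisfying \eqref{eq:qdini1} and \eqref{eq:qdrec1} determines a unique candidate for $e_{q^2}(y)$ satisfying the conditions in Proposition~\ref{prop:qech1}), or give a short direct argument: writing $\Psi_q(x)=\sum_{n\geq 0} g_n(q) x^n$, the initial condition fixes $g_0=1$, and comparing coefficients of $x^n$ on both sides of $\Psi_q(q^2x)=(1+qx)\Psi_q(x)$ produces the recursion
\begin{align*}
(q^{2n}-1)\, g_n(q) = q\, g_{n-1}(q) \quad (n\geq 1),
\end{align*}
which uniquely determines $g_n(q) = (-q)^n/(q^2)_n$, in agreement with \eqref{eq:qd1}.

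There is no real obstacle here, as this is a direct translation of Proposition~\ref{prop:qech1}. The only point requiring a moment of care is checking that the two halves of \eqref{eq:qdrec1} are equivalent (rather than independent), which follows because each can be obtained from the other by the substitution $x \mapsto q^{\pm 2}x$ and trivial algebra.
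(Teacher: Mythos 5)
Your proof is correct and follows essentially the same route as the paper: both verify the conditions by substituting $y=-qx$ into the characterization of $e_{q^2}(y)$ from Proposition~\ref{prop:qech1} and observe that the two halves of \eqref{eq:qdrec1} are related by $x\mapsto q^{\pm2}x$. You are in fact slightly more careful than the paper, which only checks the relations and leaves the uniqueness half of the characterization implicit; your explicit recursion $(q^{2n}-1)g_n = -q\,g_{n-1}$ (note the sign: the coefficient of $x^n$ in $(1+qx)\Psi_q(x)$ is $g_n + q g_{n-1}$, so $q^{2n}g_n = g_n + qg_{n-1}$, giving $g_n = (-q)^n/(q^2)_n$ as you state) closes that gap cleanly.
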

\begin{proof}
The first relation in \eqref{eq:qdrec1} is obtained from \eqref{eq:qerec1} as
\begin{align}
\begin{split}
\Psi_q(q^2 x)
&=
e_{q^2}(-q^3 x)
=
e_{q^2}(q^2(-q x))
=(1+q^2 x)
e_{q^2}(-qx)
\\
&=(1+q^2 x) \Psi_q(x).
\end{split}
\end{align}
The second relation in \eqref{eq:qdrec1} is equivalent to the first one,
and it is obtained from the first one by setting $x$ to $q^{-2}x$ therein. 
\end{proof}

The asymptotic property of $\Psi_q(x)$ is simpler than \eqref{eq:asyme1},
and more directly related with the Euler dilogarithm.
\begin{prop}[{\cite[Eq.~(54)]{Fock03}}]
\label{prop:Psiasym1}
The following asymptotic  expansion holds in the limit  $\hbar \rightarrow 0^{+i}$:
\begin{align}
\label{eq:asymd1}
\Psi_q(x) = 
\exp\biggl(
\frac{- \mathrm{Li}_2(-x)}{2i  \hbar}
\biggr) (1+ \mathcal{O}(\hbar)).
\end{align}
\end{prop}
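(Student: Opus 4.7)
\bigskip
\noindent
\textbf{Proof proposal.} The cleanest route is to work directly from the product/exponential expression \eqref{eq:qd3}, rather than routing through \eqref{eq:asyme1}. My plan is as follows. Starting from
\begin{equation*}
\log\Psi_q(x) \;=\; \sum_{j=1}^{\infty}\frac{(-1)^{j+1}}{j\,(q^{j}-q^{-j})}\,x^{j},
\end{equation*}
I would expand the denominator around $\hbar=0$ using $q^{j}-q^{-j}=2i\sin(j\hbar)=2ij\hbar+\mathcal{O}(\hbar^{3})$, which gives the pointwise expansion
\begin{equation*}
\frac{1}{q^{j}-q^{-j}}=\frac{1}{2ij\hbar}\bigl(1+\mathcal{O}(\hbar^{2})\bigr).
\end{equation*}
Substituting term by term produces the leading behaviour
\begin{equation*}
\log\Psi_q(x)=\frac{1}{2i\hbar}\sum_{j=1}^{\infty}\frac{(-1)^{j+1}}{j^{2}}\,x^{j}\;+\;\mathcal{O}(\hbar),
\end{equation*}
and I would then identify $\sum_{j\ge 1}(-1)^{j+1}x^{j}/j^{2}=-\mathrm{Li}_{2}(-x)$ directly from the defining series \eqref{eq:Li1}. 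Exponentiating gives the claimed asymptotic.

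The only genuine technical point is justifying the termwise asymptotic expansion, i.e.\ interchanging the $\hbar\to 0^{+i}$ limit with the infinite sum. For $|x|$ bounded away from $1$, one can fix a compact subset of the disk $|x|<1$ and dominate $|(-1)^{j+1}x^{j}/(j(q^{j}-q^{-j}))|$ uniformly in a small wedge $\{\hbar:\mathrm{Im}\,\hbar>0,\ |\hbar|<\hbar_{0}\}$ by a convergent geometric-type series; this legitimises the termwise expansion and gives the $\mathcal{O}(\hbar)$ remainder uniformly on compacta.

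As a sanity check and alternative derivation, one could instead apply \eqref{eq:asyme1} to $e_{q^{2}}(-qx)$ (noting that the Planck constant for $q^{2}$ is $2\hbar$) to obtain
\begin{equation*}
\Psi_q(x)=(1+qx)^{-1/2}\exp\!\left(\frac{-\mathrm{Li}_{2}(-qx)}{2i\hbar}\right)\bigl(1+\mathcal{O}(\hbar)\bigr),
\end{equation*}
and then Taylor-expand $-qx$ around $-x$ using $\mathrm{Li}_{2}'(y)=-\log(1-y)/y$: the shift produces an extra factor $(1+x)^{1/2}$ inside the exponential that precisely cancels the prefactor $(1+qx)^{-1/2}\sim(1+x)^{-1/2}$. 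This cross-check is reassuring but less transparent than the direct computation, so I would use \eqref{eq:qd3} for the main argument. The main obstacle, then, is not the algebraic identity (which is immediate) but the analytic issue of controlling the error uniformly; once one fixes a domain for $x$ and a wedge for $\hbar$, everything reduces to a dominated-convergence argument on the series.
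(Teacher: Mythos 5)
Your proposal is correct and follows essentially the same route as the paper: the paper also starts from \eqref{eq:qd3}, writing the coefficient as $\frac{1}{q-q^{-1}}\cdot\frac{1}{[j]_q}$ and expanding $[j]_q=j+\mathcal{O}(\hbar^2)$ and $\frac{1}{q-q^{-1}}=\frac{1}{2i\hbar}+\mathcal{O}(\hbar)$, which is just a cosmetic repackaging of your $q^j-q^{-j}=2i\sin(j\hbar)=2ij\hbar+\mathcal{O}(\hbar^3)$. Your remarks on the $j$-uniformity of the termwise expansion and the cross-check via \eqref{eq:asyme1} go slightly beyond what the paper records, but the core argument is identical.
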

\begin{proof}
By \eqref{eq:qd3},
\begin{align}
\label{eq:asymd2}
\log \Psi_q(x) 
&=
\frac{1}{q-q^{-1}}
\sum_{j=1}^{\infty}
\frac{(-1)^{j+1}}{j [j]_q} x^j
\end{align}
The following expansion formulas hold:
\begin{align}
[j]_{q}&=j + \calO(\hbar^2),
\\
\frac{1}{q-q^{-1}}&
=\frac{1}{2i\hbar + \calO(\hbar^3)}
=\frac{1}{2i\hbar} + \calO(\hbar).
\end{align}
Thus, we have
\begin{align}
\begin{split}
\log \Psi_q(x) &=
-
\frac{1}{2i\hbar}
\mathrm{Li}_2(-x)
+\calO(\hbar).
\end{split}
\end{align}
This gives the desired result \eqref{eq:asymd1}.
\end{proof}

\section{Pentagon relation}

Let us introduce the \emph{$q$-binomial coefficient}\index{$q$-binomial!coefficient} for $n, k\in \bbZ_{\geq 0}$ ($n\geq k$),
\begin{align}
\label{eq:qb1}
\genfrac[]{0pt}{0}{n}{k}_q=\frac{ (q)_n}{(q)_k (q)_{n-k}}.
\end{align}
For example,
\begin{align}
\genfrac[]{0pt}{0}{2}{1}_q &= \frac{(1-q)(1-q^2)}{1-q}=1+q,
\\
\genfrac[]{0pt}{0}{3}{1}_q &= \frac{(1-q)(1-q^2)(1-q^3)}{(1-q)(1-q)(1-q^2)}=1+q+q^2.
\end{align}

\begin{lem}
The following recurrence relation holds ($1\leq k \leq n-1$):
\begin{align}
\label{eq:qbrec1}
\genfrac[]{0pt}{0}{n-1}{k}_q
+
q^{n-k}
\genfrac[]{0pt}{0}{n-1}{k-1}_q
=\genfrac[]{0pt}{0}{n}{k}_q.
\end{align}
In particular, $\genfrac[]{0pt}{1}{n}{k}_q$ is a polynomial in $q$.
\end{lem}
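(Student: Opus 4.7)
The plan is to verify the recursion \eqref{eq:qbrec1} by a direct computation from the definition \eqref{eq:qb1}, and then deduce the polynomiality of $\genfrac[]{0pt}{0}{n}{k}_q$ by induction on $n$.

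For the recursion, I would expand both terms on the left-hand side using \eqref{eq:qb1}, rewrite each summand over the common denominator $(q)_k (q)_{n-k}$, and then factor out $(q)_{n-1}$ from the numerator. Concretely,
\begin{align*}
\genfrac[]{0pt}{0}{n-1}{k}_q
+
q^{n-k}
\genfrac[]{0pt}{0}{n-1}{k-1}_q
&=
\frac{(q)_{n-1}}{(q)_k (q)_{n-1-k}}
+
q^{n-k}\frac{(q)_{n-1}}{(q)_{k-1}(q)_{n-k}}
\\
&=
\frac{(q)_{n-1}\bigl[(1-q^{n-k}) + q^{n-k}(1-q^k)\bigr]}{(q)_k (q)_{n-k}},
\end{align*}
where I used $(q)_{n-k} = (q)_{n-1-k}(1-q^{n-k})$ and $(q)_k = (q)_{k-1}(1-q^k)$ to bring each term to the common denominator. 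The bracketed expression collapses to $1-q^n$, and since $(q)_{n-1}(1-q^n) = (q)_n$, the sum equals $\genfrac[]{0pt}{0}{n}{k}_q$, establishing \eqref{eq:qbrec1}.

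For the ``in particular'' assertion, I would induct on $n$. The base case $n=0,1$ is trivial, and for any $n$ the boundary values $\genfrac[]{0pt}{0}{n}{0}_q = \genfrac[]{0pt}{0}{n}{n}_q = 1$ are obvious polynomials (indeed constants). For the inductive step, assume $\genfrac[]{0pt}{0}{n-1}{j}_q$ is a polynomial in $q$ for all $0\le j\le n-1$; then for $1\le k\le n-1$, the recursion \eqref{eq:qbrec1} expresses $\genfrac[]{0pt}{0}{n}{k}_q$ as a $\bbZ[q]$-linear combination of $\genfrac[]{0pt}{0}{n-1}{k}_q$ and $\genfrac[]{0pt}{0}{n-1}{k-1}_q$, which is again a polynomial in $q$.

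There is no real obstacle here: the only care needed is the algebraic manipulation $(1-q^{n-k})+q^{n-k}(1-q^k)=1-q^n$ in the identity step, and the verification that the boundary cases anchor the induction so that the recursion propagates polynomiality throughout the Pascal-type triangle of $q$-binomial coefficients.
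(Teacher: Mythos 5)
Your proof is correct and follows essentially the same computation as the paper: both bring the two terms over the common denominator $(q)_k(q)_{n-k}$, factor out $(q)_{n-1}$, and observe that $(1-q^{n-k})+q^{n-k}(1-q^k)=1-q^n$. Your explicit induction for the polynomiality claim fills in a step the paper leaves implicit, but the argument is the standard one the lemma's phrasing clearly intends.
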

\begin{proof}
We have
\begin{align}
(\text{LHS})=
\frac{ (q)_{n-1}}{(q)_{k} (q)_{n-k}} \{(1-q^{n-k}) + q^{n-k}(1-q^{k})\}
=(\text{RHS}).
\end{align}
\end{proof}

For a pair of noncommutative variables  $a$, $b$
and a commutative (scalar) variable $q$,
we consider the  relation (the \emph{$q$-commutative relation}\index{$q$-commutative relation})
\begin{align}
\label{eq:qcom1}
 ab=qba.
\end{align}

We have a $q$-analog of the binomial theorem. 

\begin{lem}[$q$-binomial theorem\index{$q$-binomial!theorem} \cite{Schutzenberger53}] Under the relation \eqref{eq:qcom1},
we have
\begin{align}
\label{eq:qbin1}
(a+b)^n = \sum_{k=0}^n \genfrac[]{0pt}{0}{n}{k}_q
b^k a^{n-k} .
\end{align}
\end{lem}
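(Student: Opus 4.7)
The plan is to prove the identity by induction on $n$, multiplying the inductive hypothesis on the right by $a+b$ so that the recursion stated in \eqref{eq:qbrec1} is exactly what appears.

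The base case $n=0$ is the tautology $1=1$ (using the convention $(q)_0=1$), and $n=1$ gives $a+b$ on both sides since $\genfrac[]{0pt}{1}{1}{0}_q=\genfrac[]{0pt}{1}{1}{1}_q=1$. For the inductive step, assume the formula for $n-1$ and compute
\begin{align*}
(a+b)^n=(a+b)^{n-1}(a+b)=\sum_{k=0}^{n-1}\genfrac[]{0pt}{0}{n-1}{k}_q b^k a^{n-1-k}(a+b).
\end{align*}
The $a$-piece contributes $\genfrac[]{0pt}{1}{n-1}{k}_q b^k a^{n-k}$. For the $b$-piece, the only nontrivial input is the consequence $a^m b=q^m ba^m$ of the $q$-commutation relation \eqref{eq:qcom1}, which follows by a trivial secondary induction on $m$; applying it with $m=n-1-k$ gives $\genfrac[]{0pt}{1}{n-1}{k}_q q^{n-1-k} b^{k+1}a^{n-1-k}$.

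After shifting the index $k\mapsto k-1$ in the $b$-piece, the coefficient of $b^k a^{n-k}$ in $(a+b)^n$ becomes
\begin{align*}
\genfrac[]{0pt}{0}{n-1}{k}_q+q^{n-k}\genfrac[]{0pt}{0}{n-1}{k-1}_q
\end{align*}
for $1\leq k\leq n-1$, with the end terms $k=0$ and $k=n$ contributing $1$ each. The middle terms collapse to $\genfrac[]{0pt}{1}{n}{k}_q$ by the recursion \eqref{eq:qbrec1}, while the boundary terms match $\genfrac[]{0pt}{1}{n}{0}_q=\genfrac[]{0pt}{1}{n}{n}_q=1$.

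There is no real obstacle: the only subtlety is choosing to expand on the right rather than the left, so that the commutation factor produced is $q^{n-k}$ (matching the recursion as stated in \eqref{eq:qbrec1}) rather than $q^k$ (which would require the equivalent but unstated symmetric recursion). Everything else is bookkeeping.
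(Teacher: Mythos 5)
Your proof is correct and follows essentially the same route as the paper: induction on $n$, right-multiplication by $(a+b)$, the commutation $a^m b = q^m b a^m$, an index shift, and the recursion \eqref{eq:qbrec1}. No issues.
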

\begin{proof}
We show it by the induction on $n$ as follows.
\begin{align*}
(a+b)^n &= \sum_{k=0}^{n-1} \genfrac[]{0pt}{0}{n-1}{k}_q  b^k a^{n-1-k} (a+b) 
\\
&= \sum_{k=0}^{n-1} 
\genfrac[]{0pt}{0}{n-1}{k}_q
b^k a^{n-k}  
+
 \sum_{k=0}^{n-1} 
 q^{n-1-k}
  \genfrac[]{0pt}{0}{n-1}{k}_q
b^{k+1} a^{n-1-k} 
\\
&= \sum_{k=0}^{n-1} 
\genfrac[]{0pt}{0}{n-1}{k}_q
b^k a^{n-k}  
+
 \sum_{k=1}^{n} 
  q^{n-k} \genfrac[]{0pt}{0}{n-1}{k-1}_q
b^{k} a^{n-k}
\\
\overset {\eqref{eq:qbrec1}}
&{=} \sum_{k=0}^n 
 \genfrac[]{0pt}{0}{n}{k}_q
 b^k a^{n-k}.
\end{align*}
\end{proof}

We regard $e_q(x)$ as a formal power series in $x$ as in \eqref{eq:qe1}.
Then, $e_q(a)$ and $e_q(b)$ make sense in a formal (noncommutative) power series in $a$ and $b$
with coefficients in $\bbQ(q)$.
By \eqref{eq:qcom1}, we have
\begin{align}
e_q(b)e_q(a)\neq  e_q(a)e_q(b).
\end{align}

The product $e_q(b)e_q(a)$ is a ``normal'' one.
\begin{lem}[\cite{Schutzenberger53}] 
\label{lem:ee1}
Under the relation \eqref{eq:qcom1},
we have
\begin{align}
\label{eq:qee1}
e_q(b)e_q(a)=e_q(a+b).
\end{align}
\end{lem}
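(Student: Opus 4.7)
The plan is to prove the identity by direct expansion of both sides as formal (noncommutative) power series in $a$ and $b$, using the $q$-binomial theorem \eqref{eq:qbin1} that was just established.

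First I would write out the right-hand side using the series expression \eqref{eq:qe1}:
\[
e_q(a+b) = \sum_{n=0}^{\infty} \frac{(a+b)^n}{(q)_n}.
\]
Since the hypothesis $ab = qba$ is exactly the condition needed to apply the $q$-binomial theorem, I can substitute \eqref{eq:qbin1} for $(a+b)^n$ to obtain
\[
e_q(a+b) = \sum_{n=0}^{\infty} \frac{1}{(q)_n} \sum_{k=0}^{n} \genfrac[]{0pt}{0}{n}{k}_q b^k a^{n-k}.
\]
The crucial telescoping is then that the factor $(q)_n$ in the definition \eqref{eq:qb1} of $\genfrac[]{0pt}{1}{n}{k}_q$ cancels against $1/(q)_n$, leaving
\[
e_q(a+b) = \sum_{n=0}^{\infty} \sum_{k=0}^{n} \frac{b^k a^{n-k}}{(q)_k (q)_{n-k}}.
\]

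Next I would reindex the double sum by introducing $m = n-k$, which ranges independently over $\bbZ_{\geq 0}$. This converts the triangular sum into a product of two independent sums:
\[
e_q(a+b) = \Biggl(\sum_{k=0}^{\infty} \frac{b^k}{(q)_k}\Biggr) \Biggl(\sum_{m=0}^{\infty} \frac{a^m}{(q)_m}\Biggr) = e_q(b) \, e_q(a),
\]
where the order of the two factors must be kept because $a$ and $b$ do not commute; the fact that $b$-powers stand to the left of $a$-powers in the $q$-binomial expansion \eqref{eq:qbin1} is precisely why $e_q(b)$ appears on the left of $e_q(a)$.

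There is no serious obstacle here: the entire argument reduces to the $q$-binomial theorem together with the defining series of $e_q$. The only point worth checking is that the rearrangement of the double sum is legitimate, which it is because we are working in the formal power series ring in $a$ and $b$ (equivalently, the completion with respect to total degree), where each homogeneous component of a fixed total degree $n$ receives only finitely many contributions from each side.
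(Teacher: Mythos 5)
Your proof is correct and follows essentially the same route as the paper: expand $e_q(a+b)$ via \eqref{eq:qe1}, apply the $q$-binomial theorem \eqref{eq:qbin1}, cancel $(q)_n$ against the $q$-binomial coefficient, and reindex the resulting double sum as $e_q(b)\,e_q(a)$. The remarks about the ordering of the factors and the legitimacy of the rearrangement in the formal power series setting are correct and only make explicit what the paper leaves implicit.
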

\begin{proof}
\begin{align*}
\text{(RHS)}&=
\sum_{n=0}^{\infty}
\frac{1}{(q)_n} (a+b)^n
\\
\overset{\eqref{eq:qbin1}}
&{=}
\sum_{n=0}^{\infty}
\frac{1}{(q)_n}
\sum_{k=0}^n \genfrac[]{0pt}{0}{n}{k}_q
b^k a^{n-k} 
\\
&=
\sum_{n=0}^{\infty}
\sum_{k=0}^n 
\frac{1}{(q)_k}
\frac{1}{(q)_{n-k}}
b^k a^{n-k} 
=\text{(LHS)}.
\end{align*}
\end{proof}

It turns out the opposite product
 $e_q(a)e_q(b)$
 contains a hidden rich structure,
 which was discovered by \cite{Faddeev93, Faddeev94}.
 Here, we follow the presentation by \cite{Kirillov95}.
 
 \begin{lem}[{\cite[Lemma 9]{Kirillov95}}]
 \label{lem:ee2}
 Under the relation \eqref{eq:qcom1},
we have
\begin{align}
\label{eq:qeid1}
e_q(a)e_q(b)&=e_q(b-ba)e_q(a).
\end{align}
\end{lem}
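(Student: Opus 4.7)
The plan is to establish the identity $e_q(a)e_q(b) = e_q(b-ba)e_q(a)$ by conjugating $e_q(b)$ by $e_q(a)$ termwise and identifying the result as $e_q(c)$ for a suitable $c$. Concretely, I would set $c := b-ba = b(1-a)$ and first check that $a$ and $c$ themselves satisfy a $q$-commutation relation: using $ab=qba$, one computes $ac = ab - aba = qba - qba^2 = qb(a-a^2)$ and $ca = ba - ba^2 = b(a-a^2)$, so $ac = qca$. This is a convenient observation (and links this lemma to the previous one), though it is not strictly needed for the argument below.

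The key computational step is to evaluate the conjugation $e_q(a)\,b\,e_q(a)^{-1}$. Using $ab = qba$ and induction on $n$, one gets $a^n b = q^n b a^n$, hence from the series expression \eqref{eq:qe1},
\begin{equation*}
e_q(a)\,b \;=\; \sum_{n=0}^{\infty} \frac{a^n b}{(q)_n} \;=\; b\sum_{n=0}^{\infty} \frac{(qa)^n}{(q)_n} \;=\; b\,e_q(qa).
\end{equation*}
Now I would invoke the $q$-difference relation \eqref{eq:qerec1}, $e_q(qa) = (1-a)\,e_q(a)$, to conclude
\begin{equation*}
e_q(a)\,b \;=\; b(1-a)\,e_q(a) \;=\; c\,e_q(a),
\end{equation*}
so that $e_q(a)\,b\,e_q(a)^{-1} = c$.

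Once this is in hand, iterating gives $e_q(a)\,b^k\,e_q(a)^{-1} = c^k$ for every $k\ge 0$, and then expanding $e_q(b)$ as a power series yields
\begin{equation*}
e_q(a)\,e_q(b)\,e_q(a)^{-1} \;=\; \sum_{k=0}^{\infty} \frac{c^k}{(q)_k} \;=\; e_q(c) \;=\; e_q(b-ba),
\end{equation*}
which is the desired identity. The only point that requires a little care is the legitimacy of the formal manipulations (multiplying the series termwise and inverting $e_q(a)$); since everything is a formal power series in $a,b$ with coefficients in $\mathbb{Q}(q)$, and $e_q(a)$ has constant term $1$ and hence is invertible in the completed algebra, this is harmless. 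The only genuine ingredients are the $q$-commutation $a^n b = q^n b a^n$ and the $q$-difference equation \eqref{eq:qerec1}, so no further obstacle is anticipated.
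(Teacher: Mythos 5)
Your proof is correct. Both your argument and the paper's hinge on the same key computation, namely $e_q(a)\,b\,e_q(a)^{-1}=b\,e_q(qa)\,e_q(a)^{-1}=b(1-a)$, obtained from $a^nb=q^nba^n$ together with the $q$-difference relation \eqref{eq:qerec1}. Where you diverge is in how you conclude: you conjugate the series for $e_q(b)$ term by term, using $e_q(a)\,b^k\,e_q(a)^{-1}=(b-ba)^k$ to get $e_q(a)\,e_q(b)\,e_q(a)^{-1}=e_q(b-ba)$ directly, whereas the paper instead verifies that both sides of \eqref{eq:qeid1}, viewed as formal power series $\varphi(b)$ in $b$, satisfy the same initial condition $\varphi(0)=e_q(a)$ and the same $q$-difference relation $\varphi(qb)=(1-b+ba)\varphi(b)$, and then invokes the uniqueness argument of Proposition \ref{prop:qech1}(a). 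Your route is shorter and more self-contained; its only extra burden is the (harmless, as you note) justification of termwise conjugation of an infinite series in the completed algebra. The paper's route reuses the characterization-by-difference-equation technique already set up for $e_q(x)$ itself, at the cost of one more computation ($e_q(qb)e_q(b)^{-1}=1-b$) and of checking the difference relation separately for each side. Your side remark that $ac=qca$ for $c=b-ba$ is also correct and is exactly what makes the subsequent combination with Lemma \ref{lem:ee1} work in the proof of the pentagon relation.
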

\begin{proof}
We regard both sides as a formal power series $\varphi(b)$ in $b$
and check that both sides satisfy the same initial condition $\varphi(0)=e_q(a)$
and the $q$-difference relation
\begin{align}
\varphi(qb)=(1-b+ba)\varphi(b),
\end{align}
which determine $\varphi(b)$  uniquely.
First, observe that
\begin{align}
\label{eq:eee1}
e_q(qb)e_q(b)^{-1} &
\overset {\eqref{eq:qerec1}}
{=}1-b,
\end{align}
and
\begin{align}
\label{eq:eee2}
\begin{split}
e_q(a) b e_q(a)^{-1}
\overset {\eqref{eq:qcom1}}
&{=}
be_q(qa)  e_q(a)^{-1}
\\
\overset {\eqref{eq:qerec1}}
 &{=}
b(1-a).
\end{split}
\end{align}
Thus, we have
\begin{align}
\begin{split}
e_q(a)e_q(qb)e_q(b)^{-1}e_q(a)^{-1}
\overset {\eqref{eq:eee1}}
&{=} e_q(a)(1-b)e_q(a)^{-1}
\\
\overset {\eqref{eq:eee2}}
& {=} 1- b + ba.
\end{split}
\end{align}
Meanwhile, we also have
\begin{align}
\begin{split}
e_q(q(b-ba))e_q(a)e_q(a)^{-1}e_q(b-ba)^{-1}
&= e_q(q(b-ba))e_q(b-ba)^{-1}
\\
\overset {\eqref{eq:eee1}}
&{=} 1- b + ba.
\end{split}
\end{align}
Therefore,  the equality  \eqref{eq:qeid1} holds.
\end{proof}

By combining Lemmas \ref{lem:ee1}
 and \ref{lem:ee2}, we obtain the following result.
 \begin{thm}[{\cite[\S3]{Faddeev93}, \cite[\S2]{Faddeev94}, \cite[Lemma 9]{Kirillov95}}]
 \label{thm:qepent1}
  Under the relation \eqref{eq:qcom1},
we have
\begin{align}
\label{eq:qepent1}
e_q(a)e_q(b)&=e_q(b)e_q(-ba)e_q(a)
\quad
\text{(pentagon relation)\index{pentagon!relation (quantum)}}
\\
\label{eq:qepent2}
&=e_q(b-ba+a).
\end{align}
\end{thm}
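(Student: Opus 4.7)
The plan is to deduce both identities directly from the two preceding lemmas (Lemma \ref{lem:ee1} on the ``normal'' product and Lemma \ref{lem:ee2} on the ``opposite'' product), using only the original $q$-commutative relation \eqref{eq:qcom1}. The entire content of the proof will be (i) applying these lemmas in the correct order and (ii) checking that each application is legitimate, i.e., that the pairs of noncommutative expressions to which the lemmas are applied themselves satisfy a $q$-commutative relation with the \emph{same} scalar $q$.

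First, I would establish the pentagon relation \eqref{eq:qepent1}. Apply Lemma \ref{lem:ee2} to the left-hand side to obtain $e_q(a)e_q(b) = e_q(b-ba)e_q(a)$. To split $e_q(b-ba)$ as $e_q(b)e_q(-ba)$ using Lemma \ref{lem:ee1}, I would verify that the pair $A=-ba$ and $B=b$ satisfies $BA = qAB$: indeed $b(-ba) = -b^2 a$ and $(-ba)b = -bab = -qb^2a$ using $ab=qba$, so $(-ba)b = q\,b(-ba)$. Lemma \ref{lem:ee1} therefore gives $e_q(b)e_q(-ba) = e_q(b-ba)$, and combining with the previous step yields \eqref{eq:qepent1}.

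Next, for the collapsed form \eqref{eq:qepent2}, I would apply Lemma \ref{lem:ee1} once more to the product $e_q(b-ba)e_q(a)$ with $A = a$ and $B = b-ba$. The required check is $a(b-ba) = q(b-ba)a$, and this is immediate: the left side is $ab - aba$, while the right side is $qba - qba\cdot a = ab - aba$ (using $ab=qba$ on both terms). Hence $e_q(b-ba)e_q(a) = e_q(a + b - ba) = e_q(b-ba+a)$, giving \eqref{eq:qepent2}.

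There is no real obstacle in this proof; the only ``hard'' part, if any, is recognizing that the correct intermediate object is $e_q(b-ba)$, which is exactly what Lemma \ref{lem:ee2} produces and on which Lemma \ref{lem:ee1} can be applied in \emph{two different directions}: once to factor it as $e_q(b)e_q(-ba)$ (yielding the three-factor pentagon form) and once to combine it with $e_q(a)$ on the right to produce the single exponential $e_q(b-ba+a)$. Both directions rely on verifying a $q$-commutation relation, which in each case reduces algebraically to the original hypothesis $ab=qba$, so the whole argument is formal and purely algebraic.
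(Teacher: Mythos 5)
Your proposal is correct and follows essentially the same route as the paper: both arguments rest entirely on Lemma \ref{lem:ee2} to produce the intermediate $e_q(b-ba)$ and on Lemma \ref{lem:ee1} (after checking the relevant $q$-commutation relations, which all reduce to $ab=qba$) to split or merge exponentials; the paper merely organizes it by showing both sides of \eqref{eq:qepent1} equal $e_q(b-ba+a)$, whereas you split $e_q(b-ba)$ directly. One cosmetic slip: you announce the target relation as $BA=qAB$ for $A=-ba$, $B=b$, but your computation correctly establishes $AB=qBA$ (i.e., $(-ba)b=qb(-ba)$), which is the relation Lemma \ref{lem:ee1} actually requires.
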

\begin{proof}
We note the following $q$-commutative relations
\begin{align}
a (b-ba) &= q (b-ba)a,
\quad
a (-ba) = q (-ba)a,
\quad
 (-ba)b = q b (-ba).
\end{align}
Then, we have
\begin{align}
e_q(a)e_q(b)
\overset {\eqref{eq:qeid1}}
{=} 
e_q(b-ba)e_q(a)
\overset {\eqref{eq:qee1}}
{=}e_q(b-ba + a).
\end{align}
On the other hand, we have
\begin{align}
e_q(b)e_q(-ba)e_q(a)
\overset {\eqref{eq:qee1}}
{=} e_q(b)e_q(-ba + a)
\overset {\eqref{eq:qee1}}
{=} e_q(b-ba +a).
\end{align}
Thus, we obtain the desired relations.
\end{proof}

The relation \eqref{eq:qepent1} is due to \cite[\S2]{Faddeev94},
while the relation \eqref{eq:qepent2} is due to \cite[\S3]{Faddeev93}.

For a pair of noncommutative variables  $u$, $v$
and a commutative (scalar) variable $q$,
we consider the $q$-commutative relation 
\begin{align}
\label{eq:qcom2}
uv =q^2vu.
\end{align}
Then, the pentagon relation \eqref{eq:qepent1} is translated to the ones
for $\Psi_q(x)$ as follows.
\begin{thm}[Pentagon relation {\cite[\S3]{Faddeev93}, \cite[\S2]{Faddeev94}}]
\label{thm:qdpent1}\index{pentagon!relation (quantum)}
 Under the relation \eqref{eq:qcom2},
we have
\begin{align}
\label{eq:qdpent1}
\Psi_q(u)\Psi_q(v)&=\Psi_q(v)\Psi_q(qvu)\Psi_q(u).
\end{align}
\end{thm}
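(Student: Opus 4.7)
\medskip

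The plan is to deduce the relation directly from the $e_q$-pentagon relation \eqref{eq:qepent1} in Theorem \ref{thm:qepent1} via the substitution rule \eqref{eq:qd0}, namely $\Psi_q(x) = e_{q^2}(-qx)$. Since Theorem \ref{thm:qepent1} takes as its ``quantization parameter'' the scalar appearing in $ab = qba$, I would first apply it with $q$ there replaced by $q^2$: for noncommuting $a,b$ with $ab = q^2 ba$,
\begin{equation*}
e_{q^2}(a)\, e_{q^2}(b) = e_{q^2}(b)\, e_{q^2}(-ba)\, e_{q^2}(a).
\end{equation*}

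The next step is the change of variables $a = -qu$ and $b = -qv$. First I would check that the hypothesis is compatible with \eqref{eq:qcom2}: the relation $ab = q^2 ba$ reads $(-qu)(-qv) = q^2(-qv)(-qu)$, which simplifies to $uv = q^2 vu$, exactly the assumption on $u,v$. Under this substitution, $e_{q^2}(a) = \Psi_q(u)$ and $e_{q^2}(b) = \Psi_q(v)$ by definition. The only nontrivial identification is the middle factor: $-ba = -(-qv)(-qu) = -q^2 vu$, so
\begin{equation*}
e_{q^2}(-ba) = e_{q^2}(-q^2 vu) = e_{q^2}\bigl(-q\cdot (qvu)\bigr) = \Psi_q(qvu),
\end{equation*}
using \eqref{eq:qd0} once more. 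Substituting everything into the displayed $e_{q^2}$-relation yields precisely \eqref{eq:qdpent1}.

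There is no real obstacle here; this is a bookkeeping argument rather than a substantive new computation. The only thing worth double-checking is that the scalar $q$ sitting inside the argument $qvu$ of $\Psi_q(qvu)$ commutes past $v$ and $u$ (it does, since $q$ is a commutative scalar), so that writing $-q^2 vu = -q\cdot(qvu)$ is unambiguous and the definition $\Psi_q(qvu) = e_{q^2}(-q(qvu))$ applies. Everything else is a direct translation of Theorem \ref{thm:qepent1}, which has already been proved. Thus the pentagon relation for $\Psi_q$ is an immediate corollary of the pentagon relation for $e_q$ under the reparametrization \eqref{eq:qd0}.
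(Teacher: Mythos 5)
Your proposal is correct and follows exactly the paper's own proof: apply the $e_{q^2}$-version of the pentagon relation \eqref{eq:qepent1} to $a=-qu$, $b=-qv$ (which satisfy $ab=q^2ba$ precisely when $uv=q^2vu$) and translate each factor via $\Psi_q(x)=e_{q^2}(-qx)$, with the middle factor $e_{q^2}(-ba)=e_{q^2}(-q^2vu)=\Psi_q(qvu)$. Nothing further is needed.
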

\begin{proof}
For $u$ and $v$ satisfying \eqref{eq:qcom2},
we set $a=-qu$ and $b=-qv$, so that they satisfy the $q$-commutative relation
\begin{align}
ab= q^2ba
\end{align}
instead of \eqref{eq:qcom1}.
Then, \eqref{eq:qepent1} reads
\begin{align}
e_{q^2}(-qu)e_{q^2}(-qv)&=e_{q^2}(-qv)e_{q^2}(-q^2 vu)e_{q^2}(-qu).
\end{align}
By \eqref{eq:qd0}, it is written as \eqref{eq:qdpent1}.
\end{proof} 

We will give an alternative proof of the identity in Example \ref{ex:QDIA24}.

\notes

The $q$-exponential $e_q(x)$ and its functional relations naturally appeared 
in the study of the lattice Virasoro algebra in \cite{Faddeev93}.
Subsequently, the pentagon identity \eqref{eq:qepent1} was discovered by \cite{Faddeev94}.
Moreover, the pentagon identity of the classical dilogarithm
was recovered in the limit $q \rightarrow 1$ of \eqref{eq:qepent1}.
This is why the function $1/e_q(x)$ was regarded as the quantum dilogarithm.
The version $\Psi_q(x)$ was introduced in studying the quantization of cluster algebras by \cite{Fock03}.
Here, we mainly follow the presentation in \cite{Kirillov95}.

\chapter{Quantum mutations and quantum dilogarithm}

Cluster patterns and $Y$-patterns admit natural
quantizations \cite{Berenstein05,Fock07}.
The quantum mutations and the quantum dilogarithm are related in the same way
as the classical mutations and the dilogarithm.
We also have a natural quantization of the dilogarithm elements and
the pentagon relations.

\section{Quantization of $Y$-patterns}

The notion of \emph{quantum cluster algebras} was developed in two directions:
\begin{itemize}
\item
The quantization of $x$-variables of geometric type
\cite{Berenstein05b, Tran09}.
\item
The quantization of free $y$-variables
\cite{Fock03, Fock07}.
\end{itemize}
What is common in them is that $x$- and $y$-variables are replaced with
\emph{$q$-commutative} variables.
They were introduced and studied more or less independently.
Nevertheless, they are closely related to each other naturally.

Let us start with the quantization of $y$-variables, which are simpler and
more directly related to quantum dilogarithm identities.
For a given free $Y$-pattern $\bfUpsilon=\{(\bfy_t,B_t)\}_{t\in \bbT_n}$, 
let 
\begin{align}
\label{eq:BDO3}
B_t= \Delta\Omega_t
\end{align}
be a skew-symmetric decomposition in \eqref{eq:BDO1},
where $\Delta=D^{-1}$ is a common positive integer diagonal matrix for all $B_t$.
Let $q$ be a variable, and let
\begin{align}
\label{eq:qi1}
q_i :=q^{1/\delta_i} = q^{d_i}.
\end{align}
The quantization of  $\bfUpsilon$ consists
of the following two modifications \cite{Fock03}:
\begin{itemize}
\item
\emph{Quantum $y$-variables}\index{quantum!$y$-variable}.
For each $Y$-seed $(\bfy_t, B_t)$,
replace
the $y$-variables $\bfy_t=(y_{1;t},\dots,y_{n;t})$ with noncommutative variables
$\bfY_t=(Y_{1;t}, \dots, Y_{n;t})$.
In particular, the initial variables 
$\bfY=\bfY_{t_0}=(Y_{1}, \dots, Y_{n})$
 obey the $q$-commutative relation
\begin{align}
\label{eq:Ycom1}
Y_{i} Y_{j} = q^{2 \omega_{ij} } Y_{j} Y_{i}
= q_i ^{2 b_{ij} } Y_{j} Y_{i},
\quad \Omega=\Omega_{t_0},
\quad B=B_{t_0}.
\end{align}
\item
\emph{Quantum mutation}\index{quantum!mutation (of $y$-variable)}.
For $t,\, t'\in \bbT_n$ which are $k$-adjacent,
replace the mutation \eqref{2eq:ymut1} by 
\begin{align}
\label{eq:qymut1}
Y_{i;t'}
&=
\begin{cases}
\displaystyle
Y_{k;t}^{-1}
& i=k,
\\
\displaystyle
q^{ \omega_{ki;t}[ b_{ki;t}]_+}
Y_{i;t} Y_{k;t}^{[ b_{ki;t}]_+}
\\
\quad\times
\prod_{u=1}^{|b_{ki;t}|}
 (1+ 
 q_k^{\sgn(b_{ki;t})(2u-1)}
 Y_{k;t})^{-\sgn(b_{ki;t})}
&i\neq k.
\end{cases}
\end{align}
\end{itemize}
We call
the resulting pattern 
$\Upsilon_q=\{(\bfY_t,B_t)\}_{t\in \bbT_n}$
a \emph{quantum $Y$-pattern}\index{quantum!$Y$-pattern}.
By the specialization $q=1$, it is reduced to the (classical) $Y$-pattern $\Upsilon$.

The following basic example demonstrates the legitimacy
of the above definition.

\begin{ex}[Type $A_2$]
\label{ex:QDIA21}
We use the same convention in Example \ref{ex:typeA23}.
We choose $\Delta=I$ so that
\begin{align}
B = \Omega
=
\begin{pmatrix}
0 & -1
\\
1 & 0
\end{pmatrix}
.
\end{align}
Thus, we have the relation
\begin{align}
Y_1 Y_2 = q^{-2} Y_2 Y_1.
\end{align}
It is equivalent to the relations
\begin{align}
Y_1^{-1}Y_2= q^2Y_2 Y_1^{-1},
\quad
Y_1 Y_2^{-1}=q^2 Y_2^{-1} Y_1 ,
\quad
Y_1^{-1} Y_2^{-1} = q^{-2} Y_2^{-1} Y_1^{-1}.
\end{align}
The mutations of  quantum $Y$-variables,
which are parallel with \eqref{eq:A2mut1}--\eqref{eq:A2mut2},
are calculated as follows:
\begin{align}
  &
  \begin{cases}
 Y_{1;1}=Y_1^{-1},\\ 
   Y_{2;1}=Y_2 (1+ q^{-1}Y_1),
 \end{cases}
 \\
  &
  \begin{cases}
 Y_{1;2}=Y_1^{-1}(1+ q^{-1} Y_2+ Y_1Y_2),\\ 
   Y_{2;2}= Y_2^{-1}(1+ qY_1)^{-1},
 \end{cases}
 \\
  &
  \begin{cases}
 Y_{1;3}=Y_1(1+ qY_2+ q^{2}Y_1Y_2)^{-1},\\ 
   Y_{2;3}=q Y_1^{-1}Y_2^{-1} (1+ q^{-1} Y_2),
 \end{cases}
 \\
  &
  \begin{cases}
 Y_{1;4}=Y_2^{-1},\\ 
   Y_{2;4}=q Y_1Y_2 (1+ qY_2)^{-1},
 \end{cases}
  \\
  &
  \begin{cases}
 Y_{1;5}=Y_2,\\ 
   Y_{2;5}=Y_1.
 \end{cases}
 \end{align}
 In particular, the pentagon periodicity is preserved.
\end{ex}

\section{Fock-Goncharov decomposition}
\label{sec:Fock2}
Let us exhibit the structure of the quantum mutation \eqref{eq:qymut1}
more transparently.
This is done by the Fock-Goncharov decomposition in parallel with the classical case in Section \ref{sec:Fock1}.
From now on, we switch to the picture in a sequence of quantum mutations
starting from the initial seed $\Upsilon_q=(\bfY,B)$,
\begin{align}
\label{eq:qmseq1}
&\Upsilon_q=\Upsilon_q(0) 
\
{\buildrel {k_0} \over \rightarrow}
\
\Upsilon_q(1) 
\
{\buildrel {k_1} \over \rightarrow}
\
\cdots
\
{\buildrel {k_{P-1}} \over \rightarrow}
\
\Upsilon_q(P),
\end{align}
where $\Upsilon_q(s)=(\bfY(s),B(s))$.
Let
\begin{align}
\label{eq:BDO4}
B(s)=\Delta \Omega(s)
\end{align}
be the decomposition corresponding to \eqref{eq:BDO3}.

First, we present the counterpart of the $\varepsilon$-expression \eqref{eq:ymut6}
for the quantum mutation,
where the case $\varepsilon=1$ corresponds to 
the second case of \eqref{eq:qymut1}.

\begin{lem}
[{\cite[Lemma 4.9]{Keller11}}]
\label{lem:qeexp1}
The following expression is independent of $\varepsilon\in \{1, -1\}$:
\begin{align}
\label{eq:eY1}
\begin{split}
&
q^{ \omega_{ki}(s)[ \varepsilon b_{ki}(s)]_+}
Y_{i}(s) Y_{k}(s)^{[ \varepsilon b_{ki}(s)]_+}
\\
& \quad \times
\prod_{u=1}^{|b_{ki}(s)|}
 (1+ 
 q_k^{\varepsilon \sgn(b_{ki}(s))(2u-1)}
 Y_{k}(s)^{\varepsilon})^{-\sgn(b_{ki}(s))}.
 \end{split}
\end{align}
\end{lem}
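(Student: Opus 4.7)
The plan is a direct case-by-case verification, using only (i) the $q$-commutation of the initial variables inherited along the mutation sequence, and (ii) the elementary identity $1+q_k^{\alpha}Y^{-1}=q_k^{\alpha}Y^{-1}(1+q_k^{-\alpha}Y)$. To lighten notation, fix $s$ and write $b=b_{ki}(s)$, $\omega=\omega_{ki}(s)$, $Y=Y_k(s)$, $Y_i=Y_i(s)$. From the decomposition \eqref{eq:BDO4} together with \eqref{eq:p*1} we have $b=\delta_k\omega$, and by \eqref{eq:qi1} we have $q_k^{\,b^2}=q^{b^2/\delta_k}=q^{\omega b}$. These two scalar identities will be the only nontrivial input at the end.

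The first step is to record the $q$-commutation between $Y_i$ and $Y$ at time $s$. Since the relation \eqref{eq:Ycom1} is preserved along mutations (this will be verified in a companion lemma, but we may take it as part of the definition of a quantum $Y$-pattern), we have $Y_i Y^m=q^{-2\omega m}Y^m Y_i$ for every $m\in\mathbb{Z}$. In particular powers of $Y$ commute with themselves, so any telescoping among the factors indexed by $u$ is legitimate.

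The second step is to split on $\sgn(b)$. If $b=0$ both expressions collapse to $Y_i$ and there is nothing to show. If $b>0$, the $\varepsilon=+1$ expression is
\begin{equation*}
E_+=q^{\omega b}\,Y_i\,Y^{b}\prod_{u=1}^{b}(1+q_k^{2u-1}Y)^{-1},
\end{equation*}
while the $\varepsilon=-1$ expression is $E_-=Y_i\prod_{u=1}^{b}(1+q_k^{-(2u-1)}Y^{-1})^{-1}$. Applying the identity $1+q_k^{-(2u-1)}Y^{-1}=q_k^{-(2u-1)}Y^{-1}(1+q_k^{2u-1}Y)$ factor by factor and collecting the scalars using $\sum_{u=1}^{b}(2u-1)=b^{2}$, one finds $E_-=Y_i\,q_k^{\,b^2}Y^{b}\prod_{u=1}^{b}(1+q_k^{2u-1}Y)^{-1}$. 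Since the scalar $q_k^{\,b^2}$ is central, $E_-=q_k^{\,b^2}Y_iY^{b}\prod_{u}(\cdots)$, and the equality $E_+=E_-$ reduces to $q^{\omega b}=q_k^{\,b^2}$, which is the scalar identity noted above. The case $b<0$ is the symmetric one: now $E_-=q^{\omega|b|}Y_iY^{|b|}\prod_{u=1}^{|b|}(1+q_k^{2u-1}Y^{-1})$ and $E_+=Y_i\prod_{u=1}^{|b|}(1+q_k^{-(2u-1)}Y)$; the same rewriting identity (with the roles of $Y$ and $Y^{-1}$ swapped) produces a factor $q_k^{\,b^2}Y^{-|b|}$ which cancels against $Y^{|b|}$, leaving the scalar condition $q^{\omega|b|}q_k^{\,b^2}=1$, equivalent to $q^{\omega b}=q_k^{\,b^2}$ since $|b|=-b$.

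The main obstacle, and the only point that requires care, is bookkeeping the order of noncommuting factors: the $Y_i$ must be moved past the $Y^{\pm|b|}$ produced by the rewriting, and this is precisely where the scalar $q^{-2\omega b}$ could in principle appear. The observation that saves us is that in both $E_+$ and $E_-$ the factor $Y_i$ sits to the \emph{left} of all the $Y$-powers, so no such recommutation is ever needed; the $q$-commutativity is only used tacitly to justify that $Y^{|b|}Y^{-|b|}=1$ inside the product. Once this is laid out cleanly the proof is a two-line verification per sign of $b$.
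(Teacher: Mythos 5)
Your proof is correct and is essentially the paper's own argument: both rest on pulling the monomial $q_k^{\pm(2u-1)}Y_k(s)^{\pm1}$ out of each factor, summing $\sum_{u=1}^{|b|}(2u-1)=b^2$, and closing with the scalar identity $q_k^{b^2}=q^{\omega b}$ coming from $b_{ki}(s)=\delta_k\omega_{ki}(s)$. The only difference is cosmetic: the paper does a single uniform computation using $[a]_+-[-a]_+=a$ in place of your case split on $\sgn(b)$.
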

\begin{proof}
We have
\begin{align*}
\begin{split}
&\ 
q^{\omega_{ki}(s)[  b_{ki}(s)]_+}
Y_{i}(s) Y_{k}(s)^{[  b_{ki}(s)]_+}
\\
&\quad
\times
\prod_{u=1}^{|b_{ki}(s)|}
 (1+ 
 q_k^{  \sgn(b_{ki}(s))(2u-1)}
 Y_{k}(s))^{-\sgn(b_{ki}(s))}
 \\
 = & \ 
q^{\omega_{ki}(s)[  b_{ki}(s)]_+}
Y_{i}(s) Y_{k}(s)^{[  b_{ki}(s)]_+}
q_k^{- b_{ki}(s) b_{ki}(s)}
Y_{k}(s)^{-b_{ki}(s)}
\\
&\quad
\times
\prod_{u=1}^{|b_{ki}(s)|}
 (
 q_k^{-\sgn(b_{ki}(s))(2u-1)}
 Y_{k}(s)^{-1} +1
 )^{-\sgn(b_{ki}(s))}
 \\
= &\ 
q^{\omega_{ki}(s)[ -  b_{ki}(s)]_+}
Y_{i}(s) Y_{k}(s)^{[ - b_{ki}(s)]_+}
\\
&\quad
\times
\prod_{u=1}^{|b_{ki}(s)|}
 (1+ 
 q_k^{ -\sgn(b_{ki}(s))(2u-1)}
 Y_{k}(s)^{-1})^{-\sgn(b_{ki}(s))},
 \end{split}
\end{align*}
where in the last equality
we used \eqref{1eq:pos1} and \eqref{eq:qi1}.
\end{proof}

In the spirit of the classical case in Section \ref{sec:Fock1},
for each $s=0$, \dots, $P$,
we regard the $Y$-variables $Y_i(s)$ 
($i=1$, \dots, $n$) as  noncommutative variables
obeying the $q$-commutative relation
\begin{align}
\label{eq:Ycom2}
Y_{i}(s) Y_{j}(s) = q^{2 \omega_{ij}(s) } Y_{j}(s) Y_{i}(s).
\end{align}

Let $\delta_0$ be the least common multiple of
$\delta_1$, \dots, $\delta_n$.
Let $N=\bbZ^n $ as in Section \ref{sec:Lie1}.
For each $s=0$, \dots, $P$,
let
$\{\bfn,\bfn'\}_{\Omega(s)}$
be the skew-symmetric bilinear form on $N$
defined in the same way as \eqref{eq:Omega2}.
Note that
\begin{align}
\label{eq:bilin1}
\{\bfn,\bfn'\}_{\Omega(s)}\in (1/\delta_0) \bbZ.
\end{align}
We introduce the noncommutative algebra
$\calA_{\Omega(s)}$
with generators $Y(s)^{\bfn}$ ($\bfn\in \bbZ_{\geq 0}^n$)
over $\bbQ(q^{1/\delta_0})$
obeying the relations
\begin{align}
\label{eq:Ycom3}
Y(s)^{\bfn }Y(s)^{\bfn'}=q^{\{\bfn,\bfn'\}_{\Omega(s)}} Y(s)^{\bfn+\bfn' }
= q^{2 \{\bfn,\bfn'\}_{\Omega(s)}}
Y(s)^{\bfn' } Y(s)^{\bfn}.
\end{align}
The product \eqref{eq:Ycom3} is associative because
\begin{align}
\begin{split}
&\
(Y(s)^{\bfn_1}Y(s)^{\bfn_2})Y(s)^{\bfn_3}
=
Y(s)^{\bfn_1}(Y(s)^{\bfn_2}Y(s)^{\bfn_3})
\\
=&\
q^{\{\bfn_1,\bfn_2\}_{\Omega(s)}+
\{\bfn_2,\bfn_3\}_{\Omega(s)}+
\{\bfn_1,\bfn_3\}_{\Omega(s)}}
Y(s)^{\bfn_1+\bfn_2+\bfn_3}.
\end{split}
\end{align}
By setting
$Y(s)^{\bfe_i}=Y_i(s)$, we recover the relation \eqref{eq:Ycom2}.
Moreover, $\calA_{\Omega(s)}$ is  generated 
by $Y_i(s)$ ($i=1$, \dots, $n$).

We say that a (noncommutative) domain $R$, i.e., an associative ring
with unit having no zero divisor other than 0, is an \emph{Ore domain}\index{Ore domain}
if it satisfies the (left) Ore condition:
\begin{align}
a R \cap b R \neq \{0\}
\quad
(a, b\in R\setminus \{0\}).
\end{align}
Any Ore domain is embedded into the 
skew-field of all (right) fractions $ab^{-1}$ with
$a,\, b \in R$, and $b\neq 0$
(e.g., \cite[\S11]{Berenstein05b}).

\begin{lem}
[{e.g., \cite[\S11]{Berenstein05b}}]
\label{lem:Ore1}
The algebra $\calA_{\Omega(s)}$ is an {Ore domain}.
\end{lem}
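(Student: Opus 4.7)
The plan is to realize $\calA_{\Omega(s)}$ as an iterated Ore extension (skew polynomial ring) over the ground field $\bbQ(q^{1/\delta_0})$, and then invoke the standard theorem that any (left and right) Noetherian domain satisfies the Ore condition and hence embeds into its skew field of fractions (Goldie's theorem).

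First, I would establish a PBW-type basis by showing that every element of $\calA_{\Omega(s)}$ is a $\bbQ(q^{1/\delta_0})$-linear combination of ordered monomials $Y_1(s)^{n_1} Y_2(s)^{n_2}\cdots Y_n(s)^{n_n}$ with $n_i\in\bbZ_{\geq 0}$. Spanning is immediate: one repeatedly rewrites $Y_j(s)Y_i(s)=q^{-2\omega_{ij}(s)}Y_i(s)Y_j(s)$ for $i<j$ to push smaller indices to the left, which always terminates. Linear independence requires a faithful representation; a convenient one is multiplication on a free rank-one module spanned by the formal monomials $Y_1(s)^{n_1}\cdots Y_n(s)^{n_n}$, where the action of $Y_i(s)$ is defined by the obvious ``insert and reorder'' rule, using \eqref{eq:Ycom3} to specify the $q$-power coefficients; associativity of this action is equivalent to the associativity already noted for $\calA_{\Omega(s)}$.

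Next, I would interpret this basis as exhibiting an iterated Ore extension: setting $R_0=\bbQ(q^{1/\delta_0})$ and $R_k=R_{k-1}[Y_k(s);\sigma_k]$, where $\sigma_k$ is the automorphism of $R_{k-1}$ determined on generators by $\sigma_k(Y_j(s))=q^{2\omega_{jk}(s)}Y_j(s)$ for $j<k$. The compatibility of $\sigma_k$ with the $q$-commutation relations on $R_{k-1}$ is immediate from the skew-symmetry of $\Omega(s)$: both sides of $Y_i(s)Y_j(s)=q^{2\omega_{ij}(s)}Y_j(s)Y_i(s)$ acquire the same overall $q$-factor under $\sigma_k$. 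Since each Ore extension of a Noetherian domain by an automorphism is again a Noetherian domain, induction on $k$ shows that $R_n=\calA_{\Omega(s)}$ is a Noetherian domain.

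Finally, applying Goldie's theorem to the Noetherian domain $\calA_{\Omega(s)}$ yields the Ore condition, proving the lemma. The only genuinely nontrivial step is the verification of the PBW-basis property, and even this is standard for quantum tori; no step is a serious obstacle, and the proof is essentially bookkeeping grounded in the skew-symmetry of $\Omega(s)$ and well-known results on skew polynomial algebras.
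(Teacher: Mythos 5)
Your proof is correct, but it takes a genuinely different route from the one in the text. You identify $\calA_{\Omega(s)}$ as an iterated Ore extension $R_k=R_{k-1}[Y_k(s);\sigma_k]$ over $\bbQ(q^{1/\delta_0})$ (i.e.\ as a quantum affine space), deduce that it is a two-sided Noetherian domain, and then invoke Goldie's theorem to obtain the Ore condition. The proof given here (following \cite[\S11]{Berenstein05b}) avoids both Noetherianity and Goldie's theorem entirely: it is a growth argument. One filters $R=\calA_{\Omega(s)}$ by the finite-dimensional subspaces $R_m$ spanned by the $Y(s)^{\bfn}$ with $0\leq n_i\leq m$, notes that $\dim R_m=(m+1)^n$ grows polynomially, and observes that a failure of the Ore condition $aR\cap bR\neq\{0\}$ would force $\dim R_{m+kp}\geq 2^k\dim R_m$, i.e.\ exponential growth --- a contradiction. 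The trade-off is the usual one: your argument is more structural, pins down $\calA_{\Omega(s)}$ as a familiar object, and yields Noetherianity as a by-product, but it imports the machinery of skew polynomial rings and Goldie's theorem; the growth argument is a few lines, completely self-contained, and applies to any domain of subexponential growth whether or not it is Noetherian. One small remark on your PBW step: since $\calA_{\Omega(s)}$ is presented with \emph{all} ordered monomials $Y(s)^{\bfn}$ ($\bfn\in\bbZ_{\geq 0}^n$) as generators and the closed, associative multiplication rule \eqref{eq:Ycom3}, these elements are a basis essentially by construction, so the faithful-representation argument you sketch, while fine, is not really needed.
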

\begin{proof}
Let $R=\calA_{\Omega(s)}$. Let $R_m$ be the subspace 
spanned by $Y(s)^{\bfn}$ with $\bfn=(n_i)$ such that
$0\leq n_i \leq  m$.
The dimension of
$R_m$ is $(m+1)^n$,
which is a polynomial in $m$.
Suppose that $a R \cap b R = \{0\}$ for some $a,\, b\in R\setminus \{0\}$.
Let $p$ be an integer such that $a,\, b\in R_p$.
Since $aR_m \cap bR_m=\{0\}$,
we have
\begin{align}
\dim R_{m+p}\geq  \dim aR_m + \dim bR_m \geq 2 \dim R_m.
\end{align}
Thus, $\dim R_{m+kp}\geq 2^{k} \dim R_m$.
This contradicts the above linear growth property of $R_m$.
\end{proof}

Let $\calF_{\Omega(s)}$ be the
 skew-field of all fractions
 of $\calA_{\Omega(s)}$.
  In parallel with the mutation $\mu(s)$ in \eqref{eq:mu1},
 we formulate the quantum mutation as a skew-field
 isomorphism
 \begin{align}
\label{eq:qmu1}
 &\mu_q{(s)}\colon \calF_{\Omega(s+1)} \rightarrow \calF_{\Omega(s)}
 \\
\notag
&Y_{i}(s+1)
 \mapsto 
\begin{cases}
\displaystyle
Y_{k_s}(s)^{-1}
& i=k_s,
\\
\displaystyle
Y(s)^{e_i(s) + [\varepsilon b_{k_si}(s)]_+ e_{k_s}(s)}
\\
\displaystyle
\times
\prod_{u=1}^{|b_{k_si}(s)|}
 (1+ 
 q_{k_s}^{\varepsilon \sgn(b_{k_si}(s))(2u-1)}
 Y_{k_s}(s)^{\varepsilon})^{-\sgn(b_{k_si}(s))}
&i\neq k_s,
\end{cases}
\end{align}
where the isomorphism property will be clarified below.
The expression is independent of $\varepsilon \in \{1, -1\}$
thanks to Lemma \ref{lem:qeexp1}.

First, we consider the tropical part corresponding to the map $\tau{(s)}$ in
\eqref{eq:tau1}.
We set $\varepsilon =\varepsilon_s$ in \eqref{eq:qmu1},
where $\varepsilon_s:=\varepsilon_{k_s}(s)$ is the tropical sign.
In parallel with \eqref{eq:tau1},
we introduce  a skew-field isomorphism
\begin{align}
\label{eq:qtau1}
\begin{matrix}
\tau_q{(s)}\colon &\calF_{\Omega(s+1)}& \rightarrow & \calF_{\Omega(s)}
\\
&
Y(s+1)^{\bfn}
&
\mapsto
&
Y(s)^{T(s)(\bfn)},
\end{matrix}
\end{align}
where $T(s)$ is a linear isomorphism defined by
\begin{align}
\label{eq:qTau1}
\begin{matrix}
T{(s)}\colon& \bbZ^n & \rightarrow & \bbZ^n
\\
&
\bfe_i 
&
\mapsto
&
\begin{cases}
\displaystyle
- \bfe_{k_s}
& i=k_s,
\\
\bfe_i + [\varepsilon_s b_{k_s i}(s)]_+ \bfe_k
&i\neq k_s.
\end{cases}
\end{matrix}
\end{align}
Under the specialization $q=1$, the map $\tau_q{(s)}$
 is reduced to the (classical) tropical
transformation $\tau(s)$.
Moreover,  both $\tau_q{(s)}$ and  $\tau{(s)}$ are essentially
the exponential form of
 the common linear map $T{(s)}$.

\begin{lem}
The map $\tau_q(s)$ is a skew-field isomorphism.
\end{lem}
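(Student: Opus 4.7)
The plan is to show that $\tau_q(s)$ extends to an algebra homomorphism between the noncommutative algebras $\calA_{\Omega(s+1)}$ and $\calA_{\Omega(s)}$, and then that it is invertible on the level of Ore skew-fields. The crux is verifying that $T(s)$ intertwines the two skew-symmetric forms.

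First I would verify the key compatibility
\begin{align*}
T(s)^T \, \Omega(s) \, T(s) = \Omega(s+1),
\end{align*}
which is equivalent to $\{T(s)(\bfn), T(s)(\bfn')\}_{\Omega(s)} = \{\bfn,\bfn'\}_{\Omega(s+1)}$ for all $\bfn,\bfn'\in \bbZ^n$. This is a direct computation: reading the matrix of $T(s)$ off from \eqref{eq:qTau1}, one checks the three cases $i=j=k_s$, $i=k_s \neq j$, and $i,j\neq k_s$ against the mutation rule \eqref{2eq:omut1} for $\Omega(s+1)$. The case $i,j\neq k_s$ is the only nontrivial one and reproduces exactly the two $[\varepsilon_s b_{k_s\cdot}]_+$-terms appearing in \eqref{2eq:omut1}.

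Granted this compatibility, the defining $q$-commutation relations \eqref{eq:Ycom3} for $\calA_{\Omega(s+1)}$ are preserved by $\bfn \mapsto T(s)(\bfn)$, so the assignment $Y(s+1)^{\bfn} \mapsto Y(s)^{T(s)(\bfn)}$ extends uniquely to an algebra homomorphism $\calA_{\Omega(s+1)} \to \calA_{\Omega(s)}$. By Lemma \ref{lem:Ore1}, both algebras are Ore domains, so this homomorphism extends to a homomorphism of their skew-fields of fractions $\calF_{\Omega(s+1)}\to \calF_{\Omega(s)}$.

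To see that $\tau_q(s)$ is an isomorphism, I would check that $T(s)$ is involutive, i.e., $T(s)^2 = \mathrm{id}$. This is easiest to verify from the explicit formula \eqref{eq:qTau1}: applying $T(s)$ twice to $\bfe_{k_s}$ returns $\bfe_{k_s}$, and for $i\neq k_s$ we get $T(s)(\bfe_i + [\varepsilon_s b_{k_si}(s)]_+ \bfe_{k_s}) = \bfe_i + [\varepsilon_s b_{k_si}(s)]_+\bfe_{k_s} - [\varepsilon_s b_{k_si}(s)]_+ \bfe_{k_s} = \bfe_i$. Combined with the fact that $T(s)$ extends to the Laurent monomials $Y(s)^{\bfn}$ for $\bfn \in \bbZ^n$ (since each generator $Y_i(s)$ is invertible in the skew-field), the inverse map is given by the same formula in the opposite direction, and $\tau_q(s)$ is a skew-field isomorphism.

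The main obstacle is the matrix identity $T(s)^T \Omega(s) T(s) = \Omega(s+1)$: while conceptually it is nothing more than reading off the $\Omega$-mutation rule, one has to handle the $\varepsilon_s$-dependence carefully and confirm that the off-diagonal term $\omega_{k_sk_s}(s)=0$ really does eliminate the cross term $[\varepsilon_s b_{k_s i}]_+ \omega_{k_s k_s}(s) [\varepsilon_s b_{k_s j}]_+$ that would otherwise spoil the comparison. A secondary technical point is to confirm that passing from $\bbZ_{\geq 0}^n$-monomials (the generators of $\calA_{\Omega(s)}$) to all $\bbZ^n$-monomials in the Ore skew-field is compatible with the tropical map $T(s)$, which does not preserve the positive orthant; this is handled by noting that each $Y_i(s)$ is invertible in $\calF_{\Omega(s)}$ and that $Y(s)^{\bfn}$ is well-defined for $\bfn\in\bbZ^n$ via \eqref{eq:Ycom3}.
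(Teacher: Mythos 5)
Your proof is correct and follows essentially the same route as the paper: the whole content of the lemma is the compatibility $\{T(s)(\bfn),T(s)(\bfn')\}_{\Omega(s)}=\{\bfn,\bfn'\}_{\Omega(s+1)}$, which is exactly the mutation rule \eqref{2eq:omut1} for $\Omega$. Your additional remarks (the involutivity $T(s)^2=\mathrm{id}$ giving the inverse, and passing through the Ore skew-fields) are correct details that the paper leaves implicit.
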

\begin{proof}
By the linearity of $T(s)$,
the desired condition is equivalent to the equality
\begin{align}
\{\bfe_i, \bfe_j\}_{\Omega(s+1)}
= \{T(s)(\bfe_i), T(s)(\bfe_j)\}_{\Omega(s)}.
\end{align}
This is the mutation formula of $\Omega(s)$ in \eqref{2eq:omut1}.
\end{proof}

Next, 
in parallel with \eqref{eq:rho1}, we introduce
 a skew-field automorphism
 \begin{gather}
\label{eq:qrho1}
 \begin{matrix}
\rho_q{(s)} \colon & \calF_{\Omega(s)} & \rightarrow &  \calF_{\Omega(s)}
\\
& Y(s)^{\bfn}& \mapsto & 
\displaystyle
Y(s)^{\bfn}
\prod_{u=1}^{|\alpha|}
 (1+ 
 q_{k_s}^{\varepsilon_s \sgn(\alpha)(2u-1)}
 Y_{k_s}(s)^{\varepsilon_s})^{-\sgn(\alpha)},
\end{matrix}
\end{gather}
where $\alpha=\{ \delta_{k_s} \bfe_{k_s}, \bfn\}_{\Omega(s)}\in \bbZ$.
Note that, for $\bfn = \bfe_i$, we have $\alpha=b_{k_s i}(s)$.

\begin{lem}
\label{lem:rqauto1}
The  map $\rho_q{(s)}$ is  
a skew-field automorphism.
\end{lem}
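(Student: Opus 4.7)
The plan is to prove the lemma in two steps: first, show that $\rho_q(s)$, specified on monomial generators, extends to a well-defined skew-field endomorphism of $\calF_{\Omega(s)}$; second, exhibit an explicit inverse.

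For the first step, the key observation is that the correction factor
\begin{align*}
C_{\bfn} := \prod_{u=1}^{|\alpha(\bfn)|}
 (1+ q_{k_s}^{\varepsilon_s \sgn(\alpha(\bfn))(2u-1)} Y_{k_s}(s)^{\varepsilon_s})^{-\sgn(\alpha(\bfn))}
\end{align*}
depends on $\bfn$ only through the integer $\alpha(\bfn) := \{\delta_{k_s}\bfe_{k_s},\bfn\}_{\Omega(s)}$, which is additive in $\bfn$. Setting $U = Y_{k_s}(s)^{\varepsilon_s}$, a short computation using \eqref{eq:Ycom3} gives the clean $q_{k_s}$-commutation $Y(s)^{\bfn'} U = q_{k_s}^{-2\varepsilon_s \alpha(\bfn')} U\, Y(s)^{\bfn'}$. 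Combining this with the telescoping identity
\begin{align*}
\prod_{u=1}^{m}(1+q_{k_s}^{2u-1}U)
=\frac{\Psi_{q_{k_s}}(q_{k_s}^{2m}U)}{\Psi_{q_{k_s}}(U)}
\end{align*}
(which is immediate from the $q$-difference equation \eqref{eq:qdrec1}, and which admits an analog for $m<0$), the map $\rho_q(s)$ is recognized as the formal inner automorphism $X\mapsto \Psi_{q_{k_s}}(U)^{-\varepsilon_s}\cdot X\cdot \Psi_{q_{k_s}}(U)^{\varepsilon_s}$. Being inner, it preserves the product and the $q$-commutation relations; moreover, although $\Psi_{q_{k_s}}$ is defined only as a formal power series, the finite-product formula in \eqref{eq:qrho1} shows that the image of a monomial lies in $\calA_{\Omega(s)}$ (after clearing denominators), so $\rho_q(s)$ genuinely maps $\calF_{\Omega(s)}$ into itself.

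The second step — invertibility — is then immediate: conjugation by $\Psi_{q_{k_s}}(U)^{\varepsilon_s}$ provides an explicit two-sided inverse, and the $\varepsilon$-independence of the defining formula (Lemma \ref{lem:qeexp1}) guarantees that this inverse fits into the same framework as $\rho_q(s)$ itself.

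I expect the main obstacle to be the bookkeeping in verifying the conjugation rule $\Psi_{q_{k_s}}(U)^{-\varepsilon_s}\cdot Y(s)^{\bfn}\cdot \Psi_{q_{k_s}}(U)^{\varepsilon_s} = Y(s)^{\bfn}\cdot C_{\bfn}$: one must carefully track the shift by $q_{k_s}^{2\varepsilon_s\alpha(\bfn)}$ when commuting $U$ past $Y(s)^{\bfn}$ and match it with the product range and sign conventions in $C_{\bfn}$ across the two cases $\alpha(\bfn) \gtrless 0$. Once this sign/index bookkeeping is in place, the rest of the argument is essentially a direct computation.
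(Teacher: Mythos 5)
Your proof is correct, but it takes a genuinely different route from the paper's. The paper proves Lemma \ref{lem:rqauto1} by a direct computation: it verifies the multiplicativity relation
$\rho_q(s)(Y(s)^{\bfn})\,\rho_q(s)(Y(s)^{\bfn'})=q^{\{\bfn,\bfn'\}_{\Omega(s)}}\rho_q(s)(Y(s)^{\bfn+\bfn'})$
by pushing the first correction product past $Y(s)^{\bfn'}$ (picking up the shift $q_{k_s}^{2\varepsilon_s\alpha'}$) and observing that the two shifted products merge into the single product attached to $\alpha+\alpha'$; this uses exactly the two ingredients you isolate (additivity of $\alpha$ and the clean $q_{k_s}$-commutation with $Y_{k_s}(s)^{\varepsilon_s}$), but never mentions $\Psi_q$. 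You instead identify $\rho_q(s)$ as the inner automorphism $\rmAd[\Psi_{q_{k_s}}(Y_{k_s}(s)^{\varepsilon_s})^{-\varepsilon_s}]$, which is precisely the content of the paper's \emph{later} Lemma \ref{lem:AdPsi1} and Proposition \ref{prop:qrhoAd1}; once that identification is in place, multiplicativity and invertibility are automatic. Your route is more conceptual (it explains \emph{why} $\rho_q(s)$ is an automorphism and gives the inverse for free as conjugation by $\Psi_{q_{k_s}}(U)^{\varepsilon_s}$), at the cost of having to work momentarily in a completion where $\Psi_{q_{k_s}}(U)$ lives and then checking, via the finite-product formula, that the restriction stabilizes $\calF_{\Omega(s)}$ — a point you correctly flag. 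The paper's computation is more elementary and self-contained at this stage of the exposition, deferring the quantum-dilogarithm interpretation to the next section. One small quibble: the appeal to the $\varepsilon$-independence of Lemma \ref{lem:qeexp1} is not needed for invertibility — the inverse of $\rmAd[g]$ is $\rmAd[g^{-1}]$ regardless — but this does not affect the validity of the argument.
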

\begin{proof}
Let us check that
\begin{align}
\label{eq:qqY2}
\rho_q{(s)}(Y(s)^{\bfn})
\rho_q{(s)}(Y(s)^{\bfn'})
=q^{\{\bfn,\bfn'\}_{\Omega(s)}}
\rho_q{(s)}(Y(s)^{\bfn + \bfn'}).
\end{align}
Let $\alpha=\{\delta_{k_s} \bfe_{k_s}, \bfn\}_{\Omega(s)}$ and
$\alpha'=\{\delta_{k_s}\bfe_{k_s}, \bfn'\}_{\Omega(s)}$.
Then, 
\begin{align*}
\begin{split}
&\
\rho_q{(s)}(Y(s)^{\bfn})
\rho_q{(s)}(Y(s)^{\bfn'})
\\
=&\ 
Y(s)^{\bfn}
\prod_{u=1}^{|\alpha|}
 (1+ 
 q_{k_s}^{\varepsilon_s \sgn(\alpha)(2u-1)}
 Y_{k_s}(s)^{\varepsilon_s})^{-\sgn(\alpha)}
\\
&\ \quad
\times
Y(s)^{\bfn'}
\prod_{u=1}^{|\alpha'|}
 (1+ 
 q_{k_s}^{\varepsilon_s \sgn(\alpha')(2u-1)}
 Y_{k_s}(s)^{\varepsilon_s})^{-\sgn(\alpha')},
\\
=&\ 
q^{\{\bfn,\bfn'\}_{\Omega(s)}}
Y(s)^{\bfn + \bfn'}
\prod_{u=1}^{|\alpha|}
 (1+ 
 q_{k_s}^{\varepsilon_s \sgn(\alpha)(2u-1)}
 q_{k_s}^{2 \varepsilon_s \alpha'}
 Y_{k_s}(s)^{\varepsilon_s})^{-\sgn(\alpha)}
\\
&\ \quad
\times
\prod_{u=1}^{|\alpha'|}
 (1+ 
 q_{k_s}^{\varepsilon_s \sgn(\alpha')(2u-1)}
 Y_{k_s}(s)^{\varepsilon_s})^{-\sgn(\alpha')}.
\end{split}
\end{align*}
The last expression is equal to the RHS of \eqref{eq:qqY2}.
\end{proof}

By Lemma \ref{lem:qeexp1}, we have the \emph{Fock-Goncharov decomposition}\index{Fock-Goncharov decomposition!of quantum mutation} of the quantum mutation $\mu_q{(s)}$
\begin{align}
\label{eq:qFG1}
\mu_q{(s)}=
\rho_q{(s)}\circ \tau_q{(s)}.
\end{align}
In particular, $\mu_q{(s)}$ is a skew-field isomorphism.

In parallel with \eqref{eq:taus01}, we introduce the composition
\begin{align}
\label{eq:qtaus01}
\tau_q(s;0)&:=
\tau_q(0)\circ \tau_q(1)\circ \cdots \circ \tau_q(s):
\calF_{\Omega(s+1)}\rightarrow \calF_{\Omega(0)}.
\end{align}

We have an analog of Proposition \ref{prop:tauy1}.
\begin{prop}
\label{prop:qtauy1}
The following formula holds:
\begin{align}
\label{eq:qtaus1}
\tau_q(s;0)(Y_i(s+1))=Y^{\bfc_i(s+1)}.
\end{align}
\end{prop}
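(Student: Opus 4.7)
The plan is to imitate the inductive proof of Proposition \ref{prop:tauy1} in the classical setting, with the extra bookkeeping needed to track the $q$-commutation relations. Observe first that $\tau_q(s)$ is, by its very definition \eqref{eq:qtau1}, the ``exponentiation'' of the linear map $T(s)$ on exponents, and that $T(s)$ from \eqref{eq:qTau1} has precisely the same piecewise-linear form as the $C$-matrix mutation \eqref{2eq:cmutmat4}. Thus the essence of the argument is purely linear-algebraic, and the only genuinely quantum content is to check that the action on exponents extends consistently to a skew-field homomorphism, which amounts to matching $q$-commutation relations via $\{\bfc_i(s),\bfc_j(s)\}_{\Omega}=\omega_{ij}(s)$ (a consequence of \eqref{eq:ccO1}).

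First I would dispose of the base case $s=0$. Here $T(0)(\bfe_i)=-\bfe_{k_0}$ for $i=k_0$ and $T(0)(\bfe_i)=\bfe_i+[\varepsilon_0 b_{k_0 i}]_+\bfe_{k_0}$ otherwise, so $\tau_q(0)(Y_i(1))=Y(0)^{T(0)(\bfe_i)}=Y^{\bfc_i(1)}$ by direct comparison with the $c$-vector mutation \eqref{2eq:cmutmat4} together with the initial condition $C_{t_0}=I$.

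For the inductive step, assume $\tau_q(s-1;0)(Y_j(s))=Y^{\bfc_j(s)}$ for every $j$. I claim this implies the stronger statement $\tau_q(s-1;0)(Y(s)^{\bfn})=Y^{C(s)\bfn}$ for every $\bfn\in\bbZ^n$, where $C(s)\bfn:=\sum_j n_j\bfc_j(s)$. The point is that $\tau_q(s-1;0)$ is a skew-field isomorphism (by \eqref{eq:qFG1} and Lemma \ref{lem:rqauto1}, each factor is), and the image $Y^{\bfc_1(s)},\dots,Y^{\bfc_n(s)}$ satisfies the $q$-commutation relations of $\calF_{\Omega(s)}$ because
\begin{equation*}
Y^{\bfc_i(s)}Y^{\bfc_j(s)}=q^{2\{\bfc_i(s),\bfc_j(s)\}_{\Omega}}Y^{\bfc_j(s)}Y^{\bfc_i(s)}=q^{2\omega_{ij}(s)}Y^{\bfc_j(s)}Y^{\bfc_i(s)},
\end{equation*}
where the second equality is \eqref{eq:ccO1} divided by $\delta_i$. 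Consequently $\tau_q(s-1;0)$ acts linearly on the exponent lattice, which yields the displayed formula. Composing with $\tau_q(s)$ then gives
\begin{equation*}
\tau_q(s;0)(Y_i(s+1))=\tau_q(s-1;0)(Y(s)^{T(s)(\bfe_i)})=Y^{C(s)T(s)(\bfe_i)},
\end{equation*}
and since $C(s)T(s)(\bfe_i)=\bfc_i(s+1)$ by the $c$-vector mutation \eqref{2eq:cmutmat4}, the induction closes.

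The step I expect to be the main obstacle is justifying rigorously that $\tau_q(s-1;0)$ extends from generators to arbitrary monomials $Y(s)^{\bfn}$ as the ``exponentiation'' of the linear map $C(s)$. Defining $Y(s)^{\bfn}$ requires an implicit ordering/normalization convention (the $q$-symmetrized monomial determined by \eqref{eq:Ycom3}), and one must verify that the Weyl-style normalization factors on both sides match, not merely the unordered products. This reduces to checking that
\begin{equation*}
q^{\{\bfn,\bfn'\}_{\Omega(s)}}\mapsto q^{\{C(s)\bfn,C(s)\bfn'\}_{\Omega}}
\end{equation*}
agrees on both sides, which is exactly the bilinear extension of the identity $\{\bfc_i(s),\bfc_j(s)\}_{\Omega}=\omega_{ij}(s)$ derived above. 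Once this compatibility is installed, the remainder of the argument is formal and directly parallels the classical proof.
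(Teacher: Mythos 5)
Your proof is correct and is essentially the argument the paper intends: the paper's proof of Proposition \ref{prop:qtauy1} simply declares that the inductive proof of the classical Proposition \ref{prop:tauy1} applies verbatim, i.e.\ the base case from $C_{t_0}=I$ and the inductive step via the $c$-vector mutation \eqref{2eq:cmutmat4}, exactly as you do. The ``main obstacle'' you flag is in fact already dispatched by the definitions --- each $\tau_q(s)$ in \eqref{eq:qtau1} is defined on \emph{all} monomials $Y(s+1)^{\bfn}\mapsto Y(s)^{T(s)(\bfn)}$, so the composite is automatically the exponentiation of $T(0)\cdots T(s-1)$ --- though your alternative justification via $\{\bfc_i(s),\bfc_j(s)\}_{\Omega}=\omega_{ij}(s)$ (from \eqref{eq:ccO1}) is also valid.
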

\begin{proof}
The proof of Proposition \ref{prop:tauy1} is applicable.
\end{proof}

Finally, 
in parallel with \eqref{eq:fq1}, 
 we introduce  a skew-field automorphism
\begin{align}
\label{eq:qfq1}
\begin{matrix}
\frakq_q{(s)} \colon & \calF_{\Omega} & \rightarrow &  \calF_{\Omega}
\\
& Y^{\bfn}& \mapsto & 
\displaystyle
Y^{\bfn}
\prod_{u=1}^{|\alpha|}
 (1+ 
 q_{k_s}^{\varepsilon_s \sgn(\alpha)(2u-1)}
Y^{\bfc^+_{k_s}(s)})^{-\sgn(\alpha)},
\end{matrix}
\end{align}
where $\alpha=\{\delta_{k_s} \bfc_{k_s}(s), \bfn\}_{\Omega}\in \bbZ$.
Note that $\frakq_q{(0)}=\rho_q(0)$.

We have an analog of 
Proposition \ref{prop:qs1}.
\begin{prop}
\label{prop:qcom1}
The following commutative diagram holds:
\begin{align}
\label{eq:qcd1}
\raisebox{25pt}
{
\begin{xy}
(0,0)*+{\calF_{\Omega(s)}}="aa";
(25,0)*+{\calF_{\Omega}}="ba";
(0,-15)*+{\calF_{\Omega(s)}}="ab";
(25,-15)*+{\calF_{\Omega}}="bb";
{\ar "aa";"ba"};
{\ar "ab";"bb"};
{\ar "aa";"ab"};
{\ar "ba";"bb"};
(12,3)*+{\text{\small $\tau_q(s-1;0)$}};
(12,-12)*+{\text{\small $\tau_q(s-1;0)$}};
(-5,-7.5)*+{\text{\small $\rho_q(s)$}};
(30,-7.5)*+{\text{\small $\frakq_q(s)$}};
\end{xy}
}
\end{align}
In particular, $\frakq_q(s)$ is a skew-field automorphism.
\end{prop}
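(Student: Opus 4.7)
\textbf{Proof plan for Proposition \ref{prop:qcom1}.}
The plan is to mirror the classical argument in Proposition \ref{prop:qs1}. Because $\tau_q(s-1;0)$ is a skew-field isomorphism (a composition of the isomorphisms $\tau_q(r)$) and $\calF_{\Omega(s)}$ is generated as a skew-field by $Y_1(s),\dots,Y_n(s)$, it suffices to verify the commutativity of the diagram on each generator $Y_i(s)$. Once this is established, the right vertical arrow $\frakq_q(s)$ agrees, on the image $\tau_q(s-1;0)(\calF_{\Omega(s)}) = \calF_{\Omega}$ of the generators, with the well-defined automorphism $\tau_q(s-1;0)\circ \rho_q(s)\circ \tau_q(s-1;0)^{-1}$, and this identification forces $\frakq_q(s)$ to be a skew-field automorphism as claimed.

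The core computation is to chase $Y_i(s)$ around both paths of the square. Going down then right, by \eqref{eq:qrho1} and then by the linearity of $\tau_q(s-1;0)$ together with Proposition \ref{prop:qtauy1},
\begin{align*}
\tau_q(s-1;0)\bigl(\rho_q(s)(Y_i(s))\bigr)
&= Y^{\bfc_i(s)}\prod_{u=1}^{|b_{k_si}(s)|}\!\!\bigl(1+q_{k_s}^{\varepsilon_s\sgn(b_{k_si}(s))(2u-1)}Y^{\varepsilon_s\bfc_{k_s}(s)}\bigr)^{-\sgn(b_{k_si}(s))},
\end{align*}
where I use $\tau_q(s-1;0)(Y_{k_s}(s)^{\varepsilon_s})=Y^{\varepsilon_s\bfc_{k_s}(s)}=Y^{\bfc^+_{k_s}(s)}$ by \eqref{eq:c+1}. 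Going right then down, Proposition \ref{prop:qtauy1} gives $\tau_q(s-1;0)(Y_i(s))=Y^{\bfc_i(s)}$, and applying $\frakq_q(s)$ from \eqref{eq:qfq1} with $\bfn=\bfc_i(s)$ yields the same expression, provided $\alpha:=\{\delta_{k_s}\bfc_{k_s}(s),\bfc_i(s)\}_{\Omega}$ equals $b_{k_si}(s)$. This is precisely the classical duality \eqref{eq:ccO1}, which is purely a statement about $\Omega$ and the $C$-matrices and so applies verbatim in the quantum setting. Comparing the two outputs term-by-term proves the diagram commutes on $Y_i(s)$.

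For the automorphism assertion, observe that $\rho_q(s)$ is a skew-field automorphism of $\calF_{\Omega(s)}$ by Lemma \ref{lem:rqauto1}, and $\tau_q(s-1;0)\colon \calF_{\Omega(s)}\to\calF_\Omega$ is a skew-field isomorphism. The just-verified commutativity on generators shows that the map $\frakq_q(s)$ of \eqref{eq:qfq1}, defined a priori only by its action on the generating family $\{Y^{\bfn}\}$, coincides on the generators $Y^{\bfc_i(s)}$ (which form a basis of $N$ by unimodularity of $C(s)$, cf.\ Lemma \ref{lem:Gcone1}(b) and its proof) with the conjugate $\tau_q(s-1;0)\circ\rho_q(s)\circ\tau_q(s-1;0)^{-1}$. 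Hence $\frakq_q(s)$ extends to a skew-field automorphism of $\calF_\Omega$, and the diagram commutes on all of $\calF_{\Omega(s)}$.

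The main obstacle I anticipate is the bookkeeping of the $q$-powers: one must make sure that the exponent $q_{k_s}^{\varepsilon_s\sgn(\alpha)(2u-1)}$ in \eqref{eq:qfq1} matches the one coming from $\tau_q(s-1;0)\circ\rho_q(s)$ on each generator, including the case $\varepsilon_s=-1$ where the substitution $Y_{k_s}(s)^{\varepsilon_s}\mapsto Y^{\bfc^+_{k_s}(s)}$ flips a sign that must be absorbed into $\sgn(\alpha)$. The alternative expression provided by Lemma \ref{lem:qeexp1} (the $\varepsilon$-independence) is exactly what makes this bookkeeping consistent, and I would invoke it to reduce to the case $\varepsilon_s=+1$ if convenient.
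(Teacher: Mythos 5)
Your proposal is correct and follows essentially the same route as the paper, whose proof simply says it is the same as that of Proposition \ref{prop:qs1}: chase the generators $Y_i(s)$ around the square using Proposition \ref{prop:qtauy1} and the duality \eqref{eq:ccO1} to identify $\{\delta_{k_s}\bfc_{k_s}(s),\bfc_i(s)\}_{\Omega}$ with $b_{k_si}(s)$, then deduce the automorphism property of $\frakq_q(s)$ from Lemma \ref{lem:rqauto1}. Your closing worry about the $q$-power bookkeeping is unfounded (the exponents match term-by-term once $\alpha=b_{k_si}(s)$ is established, with no need for Lemma \ref{lem:qeexp1}), but this does not affect the correctness of the argument.
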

\begin{proof}
The proof is the same as Proposition \ref{prop:qs1}.
\end{proof}

We introduce  the  composite mutations
\begin{align}
\label{eq:qmus01}
\mu_q(s;0)&:=
\mu_q(0)\circ \mu_q(1)\circ \cdots \circ \mu_q(s)\colon
\calF_{\Omega(s+1)} \rightarrow \calF_{\Omega},
\\
\label{eq:qq01}
\frakq_q(s;0)&:=
\frakq_q(0)\circ \frakq(1)\circ \cdots \circ \frakq_q(s)\colon
\calF_{\Omega}\rightarrow \calF_{\Omega}.
\end{align}
In parallel with \eqref{eq:decom1}, we have 
the  \emph{Fock-Goncharov decomposition}\index{Fock-Goncharov decomposition!of composite quantum mutation} of $\mu(s;0)$:
\begin{align}
\label{eq:qdecom1}
\mu_q(s;0)= \frakq_q(s;0) \circ \tau_q(s;0).
\end{align}

Let $\overline \calA_{\Omega} $ be the completion of 
the noncommutative algebra $\calA_{\Omega}$
with respect to $\deg{\bfn}$.
We see from  \eqref{eq:qfq1} that
$\frakq_q(s)$ acts also on $\overline \calA_{\Omega}$.
Thus, one can
regard $\frakq_q(s)$ as an automorphism of $\overline \calA_{\Omega}$, whenever preferred.

\section{Quantum mutations and quantum dilogarithm}

As first clarified by Fock and Goncharov \cite{Fock03},
the quantum mutation $\rho_q(s)$ is closely related
to  the quantum dilogarithm $\Psi_q(x)$ in \eqref{eq:qd0}.
For $\varepsilon\in \{1, -1\}$,
the \emph{adjoint action}\index{adjoint action} of $\Psi_{q_k}(Y_k(s)^{\varepsilon})$ on $\calF_{\Omega(s)}$
is defined by
\begin{align}
\begin{matrix}
\rmAd[\Psi_{q_k}(Y_k(s)^{\varepsilon})]
:
& \calF_{\Omega(s)} &\rightarrow & \calF_{\Omega(s)}
\\
& Y(s)^{\bfn} & \mapsto & \Psi_{q_k}(Y_k(s)^{\varepsilon}) Y(s)^{\bfn} \Psi_{q_k} (Y_k(s)^{\varepsilon})^{-1}.
\end{matrix}
\end{align}
Even though $ \Psi_{q_k}(Y_k (s)^{\varepsilon})$ itself does not belong to $\calF_{\Omega(s)}$,
the  map is well defined as shown below;
therefore, it yields a skew-field automorphism of $\calF_{\Omega(s)}$.

\begin{lem}
[{\cite[Lemma 3.4]{Fock03}}]
\label{lem:AdPsi1}
For any $\varepsilon\in \{1, -1\}$, we have
\begin{align}
\label{eq:AdPsi1}
\begin{split}
&\
\rmAd[\Psi_{q_k}(Y_{k}(s)^{\varepsilon})^{\varepsilon}](Y(s)^{\bfn})
=
Y(s)^{\bfn}
\prod_{u=1}^{|\alpha|}
 (1+ 
 q_k^{\varepsilon \sgn(\alpha)(2u-1)}
 Y_{k}(s)^{\varepsilon})^{\sgn(\alpha)},
 \end{split}
 \end{align}
 where $\alpha=\{\delta_ke_k(s), \bfn\}_{\Omega(s)}\in \bbZ$.
\end{lem}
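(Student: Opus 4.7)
The plan is to combine the $q$-commutation between $Y_k(s)$ and $Y(s)^{\bfn}$ with the $q$-difference relation \eqref{eq:qdrec1} for $\Psi_q$, iterated $|\alpha|$ times. First, from \eqref{eq:Ycom3} and the identification $\alpha = \{\delta_k\bfe_k,\bfn\}_{\Omega(s)} = \delta_k\{\bfe_k,\bfn\}_{\Omega(s)}$, together with $q_k = q^{1/\delta_k}$, I would record
\begin{align*}
Y_k(s)\, Y(s)^{\bfn} = q_k^{2\alpha}\, Y(s)^{\bfn}\, Y_k(s).
\end{align*}
Taking the $\varepsilon$-th power and extending linearly to arbitrary monomials in the single variable $Y_k(s)^{\varepsilon}$ yields the shift rule
\begin{align*}
f\bigl(Y_k(s)^{\varepsilon}\bigr)\, Y(s)^{\bfn} = Y(s)^{\bfn}\, f\bigl(q_k^{2\varepsilon\alpha}\, Y_k(s)^{\varepsilon}\bigr)
\end{align*}
for any formal power series $f$ in one variable.

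Setting $X := Y_k(s)^{\varepsilon}$, the elements $\Psi_{q_k}(X)$ and $\Psi_{q_k}(\lambda X)$ commute (both being power series in $X$ alone), so applying the shift rule gives
\begin{align*}
\rmAd\bigl[\Psi_{q_k}(X)^{\varepsilon}\bigr]\bigl(Y(s)^{\bfn}\bigr)
= Y(s)^{\bfn}\cdot \bigl(\Psi_{q_k}(q_k^{2\varepsilon\alpha}X)\,\Psi_{q_k}(X)^{-1}\bigr)^{\varepsilon}.
\end{align*}
This reduces the lemma to evaluating $\Psi_{q_k}(q_k^{2m}X)\,\Psi_{q_k}(X)^{-1}$ for the integer $m:=\varepsilon\alpha$. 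I would then prove, by straightforward induction from the two branches of \eqref{eq:qdrec1}, the auxiliary identity
\begin{align*}
\Psi_q(q^{2m}x)\,\Psi_q(x)^{-1} = \prod_{u=1}^{|m|}\bigl(1+q^{\sgn(m)(2u-1)}x\bigr)^{\sgn(m)} \qquad (m\in\bbZ),
\end{align*}
with the convention that the empty product is $1$ (so the case $\alpha=0$ is trivial).

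Substituting $q\to q_k$, $x\to X$, $m=\varepsilon\alpha$, and raising to the $\varepsilon$-th power produces exactly the product in the claim, once one observes the identities $|m|=|\alpha|$, $\sgn(m)=\varepsilon\sgn(\alpha)$, and $\varepsilon\cdot\sgn(m)=\sgn(\alpha)$. I expect the main, rather mild, obstacle to be this final sign bookkeeping—specifically, arranging the argument so that $\varepsilon$-independence (mirroring Lemma \ref{lem:qeexp1}) emerges uniformly rather than through four separate case-by-case computations indexed by the signs of $\varepsilon$ and $\alpha$. A secondary point, worth remarking but not obstructive, is that $\Psi_{q_k}(X)$ itself lies only in a completion and not in $\calF_{\Omega(s)}$; however, the resulting finite product of the $(1+q_k^{\pm(2u-1)}X^{\pm 1})$ factors displays the adjoint action as an honest element of $\calF_{\Omega(s)}$, which is precisely what is needed to underwrite Lemma \ref{lem:rqauto1}.
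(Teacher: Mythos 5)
Your proof is correct and follows essentially the same route as the paper: first push $Y(s)^{\bfn}$ through $\Psi_{q_k}(Y_k(s)^{\varepsilon})^{\varepsilon}$ via the $q$-commutation relation to shift the argument by $q_k^{2\varepsilon\alpha}$, then telescope with the difference relation \eqref{eq:qdrec1} to obtain the finite product. Isolating the one-variable identity $\Psi_q(q^{2m}x)\Psi_q(x)^{-1}=\prod_{u=1}^{|m|}(1+q^{\sgn(m)(2u-1)}x)^{\sgn(m)}$ and doing the sign bookkeeping once at the end is a clean packaging of exactly the computation the paper performs in \eqref{eq:Psidif1}.
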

\begin{proof}
By \eqref{eq:Ycom3}, we have
\begin{align}
 Y_{k}(s)^{\varepsilon} Y(s)^{\bfn}
 = q_k ^{  2  \varepsilon \alpha }  Y(s)^{\bfn} Y_{k}(s)^{\varepsilon} .
\end{align}
It follows that
\begin{align}
\label{eq:YPsiY1}
\Psi_{q_k}(Y_{k}(s)^{\varepsilon})^{\varepsilon} 
Y(s)^{\bfn}
 =
 Y(s)^{\bfn}
\Psi_{q_k}(
q_k ^{2   \varepsilon  \alpha } 
 Y_{k}(s)^{\varepsilon})^{\varepsilon}
  .
\end{align}
Here, we temporarily consider the product in the completion of $\calF_{\Omega(s)}$
with respect to $ Y_{k}(s)^{\varepsilon}$.
Meanwhile,
by repeatedly applying the difference relation \eqref{eq:qdrec1},
we have
\begin{align}
\label{eq:Psidif1}
\begin{split}
&\
\Psi_{q_k}(
q_k ^{  2 \varepsilon \alpha } Y_{k}(s)^{\varepsilon})^{\varepsilon}
\\
= &\ 
(1+q_k^{ \varepsilon  \sgn(\alpha)(2|\alpha|-1)}Y_k(s)^{\varepsilon})^{\sgn(\alpha)}
\Psi_{q_k}(
q_k ^{  2 \varepsilon \sgn(\alpha)(|\alpha|-1) } Y_{k}(s)^{\varepsilon})^{\varepsilon}
\\
= &\ 
\biggl(\, \prod_{u=1}^{|\alpha|}
(1+q_k^{\varepsilon  \sgn(a)(2u-1)}Y_k(s)^{\varepsilon})^{\sgn(a)}
\biggr)
\Psi_{q_k}(
 Y_{k}(s)^{\varepsilon})^{\varepsilon}.
\end{split}
\end{align}
By \eqref{eq:YPsiY1} and \eqref{eq:Psidif1},
we obtain the equality \eqref{eq:AdPsi1}.
\end{proof}

We have the following identification of automorphisms.
\begin{prop}
[{\cite[Lemma 3.4]{Fock03}}]
\label{prop:qrhoAd1}
We have
\begin{align}
\label{eq:qrhoAd1}
\rho_q(s)&=
\rmAd[\Psi_{q_{k_s}}(Y_{k_s}(s)^{\varepsilon_s})^{-\varepsilon_s}],
\\
\label{eq:qqAd1}
\frakq_q(s)&=
\rmAd[\Psi_{q_{k_s}}(Y^{\bfc^+_{k_s}(s)})^{-\varepsilon_s}].
\end{align}
\end{prop}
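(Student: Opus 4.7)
The plan is to deduce both identities from Lemma \ref{lem:AdPsi1} together with the Fock-Goncharov decomposition already established, without any substantially new computation.

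For part \eqref{eq:qrhoAd1}, I would repeat the argument of Lemma \ref{lem:AdPsi1} but with outer exponent $-\varepsilon_s$ in place of $\varepsilon$. The $q$-commutation $Y_{k_s}(s)^{\varepsilon_s} Y(s)^{\bfn} = q_{k_s}^{2\varepsilon_s \alpha} Y(s)^{\bfn} Y_{k_s}(s)^{\varepsilon_s}$, valid in $\calF_{\Omega(s)}$ with $\alpha = \{\delta_{k_s}\bfe_{k_s}(s),\bfn\}_{\Omega(s)}$, lets one move $\Psi_{q_{k_s}}(Y_{k_s}(s)^{\varepsilon_s})^{-\varepsilon_s}$ past $Y(s)^{\bfn}$ from left to right, at the cost of changing its argument from $Y_{k_s}(s)^{\varepsilon_s}$ to $q_{k_s}^{2\varepsilon_s\alpha}Y_{k_s}(s)^{\varepsilon_s}$. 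The residual factor $\Psi_{q_{k_s}}(q_{k_s}^{2\varepsilon_s\alpha}Y_{k_s}(s)^{\varepsilon_s})^{-\varepsilon_s}\Psi_{q_{k_s}}(Y_{k_s}(s)^{\varepsilon_s})^{\varepsilon_s}$ is then collapsed by iterating the difference relation \eqref{eq:qdrec1} exactly $|\alpha|$ times; the result is precisely the product $\prod_{u=1}^{|\alpha|}(1+q_{k_s}^{\varepsilon_s\sgn(\alpha)(2u-1)}Y_{k_s}(s)^{\varepsilon_s})^{-\sgn(\alpha)}$ that defines $\rho_q(s)$ in \eqref{eq:qrho1}. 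Matching on the multiplicative generators $Y(s)^{\bfn}$ is enough to identify the two skew-field automorphisms.

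For part \eqref{eq:qqAd1}, two routes are available. The direct route repeats the argument above verbatim with the substitution $Y_{k_s}(s)^{\varepsilon_s}\rightsquigarrow Y^{\bfc^+_{k_s}(s)}$: since $\bfc^+_{k_s}(s)=\varepsilon_s\bfc_{k_s}(s)$, the commutation relation in $\calA_\Omega$ reads $Y^{\bfc^+_{k_s}(s)} Y^{\bfn} = q_{k_s}^{2\varepsilon_s\alpha} Y^{\bfn} Y^{\bfc^+_{k_s}(s)}$ with $\alpha = \{\delta_{k_s}\bfc_{k_s}(s),\bfn\}_\Omega$, so the same chain of identities reproduces the formula for $\frakq_q(s)$ in \eqref{eq:qfq1}. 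The indirect route combines part \eqref{eq:qrhoAd1} with the commutative diagram \eqref{eq:qcd1}, rewritten as $\frakq_q(s) = \tau_q(s-1;0)\circ\rho_q(s)\circ\tau_q(s-1;0)^{-1}$; conjugation by the skew-field isomorphism $\tau_q(s-1;0)$ transports the adjoint action to the adjoint action by the image of the argument, and by Proposition \ref{prop:qtauy1} this image is $\tau_q(s-1;0)(Y_{k_s}(s)^{\varepsilon_s}) = Y^{\varepsilon_s\bfc_{k_s}(s)} = Y^{\bfc^+_{k_s}(s)}$.

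The main subtlety, rather than obstacle, is that the elements $\Psi_{q_{k_s}}(Y_{k_s}(s)^{\varepsilon_s})^{\pm\varepsilon_s}$ and $\Psi_{q_{k_s}}(Y^{\bfc^+_{k_s}(s)})^{\pm\varepsilon_s}$ do not lie in the skew-fields themselves but only in appropriate formal completions, so one must check that the corresponding adjoint actions are genuinely skew-field automorphisms. This is precisely what Lemma \ref{lem:AdPsi1} (and its reprise for part \eqref{eq:qqAd1}) accomplishes, by exhibiting the action on each monomial generator as an element of $\calF_{\Omega(s)}$ (resp.\ $\calF_\Omega$). For the indirect route to part \eqref{eq:qqAd1}, one also uses that $\tau_q(s-1;0)$, being a monomial substitution determined by the linear map $T$, extends compatibly to formal power series in a fixed monomial. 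With this bookkeeping, the proposition reduces to the direct computation already contained in Lemma \ref{lem:AdPsi1}.
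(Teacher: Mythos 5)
Your proposal is correct and follows essentially the same route as the paper: identity \eqref{eq:qrhoAd1} is obtained by specializing Lemma \ref{lem:AdPsi1} to $\varepsilon=\varepsilon_s$, $k=k_s$ and comparing with the definition \eqref{eq:qrho1}, and identity \eqref{eq:qqAd1} is obtained by transporting the first one along $\tau_q(s-1;0)$ using Proposition \ref{prop:qtauy1} (equivalently, the diagram \eqref{eq:qcd1}). The direct recomputation you offer as an alternative for \eqref{eq:qqAd1} is also fine but not needed.
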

\begin{proof}
By setting $\varepsilon=\varepsilon_s$ and $k=k_s$
  in
 \eqref{eq:AdPsi1}
 and comparing it with  \eqref{eq:qrho1},
 we obtain the equality \eqref{eq:qrhoAd1}.
 The equality \eqref{eq:qqAd1} follows from
 \eqref{eq:qrhoAd1} and Proposition \ref{prop:qtauy1}.
 \end{proof}

By combining \eqref{eq:qFG1} and \eqref{eq:qrhoAd1}, we have
\begin{align}
\label{eq:qFG2}
\mu_q{(s)}=
\rmAd[\Psi_{q_{k_s}}(Y_{k_s}(s)^{\varepsilon_s})^{-\varepsilon_s}] \circ \tau_q{(s)}.
\end{align}
In fact, this is the original definition of the quantum mutation by \cite{Fock03}
(in our convention),
where $\varepsilon_s$ was set to 1 therein.

Let us summarize the correspondence between the classical and quantum mutations.
\begin{enumerate}
\item
The Poisson bracket \eqref{eq:Poi4} is equivalently written as
\begin{align}
\{y(s)^{\bfn}, y(s)^{\bfn'}\}_s = 
\{\bfn,\bfn'\}_{\Omega(s)} y(s)^{\bfn+\bfn'}.
\end{align}
This is replaced with
the noncommutative product \eqref{eq:Ycom3}.

\item
The time-one flow
of the Hamiltonian
$- {\varepsilon_s}{\d_{k_s}}(- \mathrm{Li}_2(-y_{k_s}(s)^{\varepsilon_s}))$
in \eqref{eq:Hs1}
is replaced with
the map $\rmAd[\Psi_{q_{k_s}}(Y_{k_s}(s)^{\varepsilon_s})^{-\varepsilon_s}]$.
See  \eqref{eq:asymd1}
for the account of the minus signs for $- \mathrm{Li}_2(-x)$ in the above.
\end{enumerate}

\section{Algebraic formulation of quantum mutations}
\label{sec:algebraic1}

As given in \eqref{eq:dm1},
the description of the mutations $\frakq(s)$ by
the Poisson bracket and the Hamiltonian
was further replaced with 
the action of the dilogarithm elements in the group $G=G_{\Omega}$.
It is possible to do it in the quantum case $\frakq_q(s)$ as well;
moreover, it provides a more intrinsic formulation of the quantum mutations
as in the classical case.
Throughout the section, we set
\begin{align}
\Omega=\Omega(0).
\end{align}

\subsection{Exponential group}
We first construct the exponential group $G_q=G_{q,\Omega}$ corresponding to $G=G_{\Omega}$ in
\eqref{eq:exG1}.
This is straightforward because the group $G_q$ is still in the same class of groups
constructed in \cite{Kontsevich13}.
Namely, in the construction of $G$ in Sections \ref{sec:Lie1} and \ref{sec:exponential1}, 
we only need to replace
the Lie bracket in \eqref{eq:Xcom1} with
\begin{align}
\label{eq:Xcom2}
[X_{\bfn}, X_{\bfn'}]=[\{\bfn,\bfn'\}_{\Omega}]_q X_{\bfn+\bfn'},
\end{align}
where $[a]_q$ is the $q$-number in \eqref{eq:qnum1}
extended to any rational number $a$.
By \eqref{eq:bilin1}, $[\{\bfn,\bfn'\}_{\Omega}]_q  \in \bbQ(q^{1/\delta_0})$.
Accordingly, we  set the ground field
of the corresponding Lie algebra
$\frakg_q$  as $\bbQ(q^{1/\delta_0})$.

\subsection{$y$-representation}
We consider an analog of the $y$-representa\-tion $\rho_y$ in
Section \ref{sec:y-representation1}.
We first replace the representation space $\bbQ[[\bfy]]$
to the noncommutative algebra $\overline \calA_{\Omega}$,
which was
introduced in the end of Section \ref{sec:Fock2}.
Then, in parallel with \eqref{eq:tildeX1},  we define the action of $X_{\bfn}\in \frakg_q$ by
\begin{align}
\label{eq:qtildeX1}
\tilde X_{\bfn}(Y^{\bfn'}):=[ \{\bfn,\bfn'\}_{\Omega}]_q Y^{\bfn'+\bfn}
=\frac{q^{2\{\bfn,\bfn'\}_{\Omega}}-1}{q-q^{-1}} Y^{\bfn'} Y^{\bfn}.
\end{align}
It is easy to check that this is an action of $\frakg_q$
and also a derivation on $\overline \calA_{\Omega}$.
Thus, we have a group homomorphism \cite{Nakanishi22b}
\begin{align}
\label{eq:qXn2}
\begin{matrix}
{\rp \rho_{y}}\colon&  G_q &\rightarrow & \mathrm{Aut}(\overline \calA_{\Omega}) \\
& \exp(X) &  \mapsto &\mathrm{Exp} (\tilde{X}),
\end{matrix}
\end{align}
which is parallel with the one in \eqref{eq:Xn2}.
We call it the \emph{$y$-representation}\index{$y$-representation} of $G_q$.
It is injective if $\Omega$ is nonsingular.

\subsection{Quantum dilogarithm elements}

Following \cite{Nakanishi22b},
we introduce
an analog of the dilogarithm elements in
\eqref{3eq:gei1}
with some extra parameters related to $q$.

\begin{defn}[Quantum Dilogarithm element]
\label{3defn:qdiloge1}
For any $\bfn\in N^+$,
$a\in (1/\delta_0)\bbZ_{>0}$, and
$b\in (1/\delta_0)\bbZ$,
we define
\begin{align}
\label{3eq:qgei1}
\Psi_{a,b}[\bfn]:=\exp
\Biggl(\,
\sum_{j=1}^{\infty} \frac{(-1)^{j+1}}{j [ja]_q} q^{jb}X_{j \bfn}
\Biggr)
\in G_q,
\end{align}
We call $\Psi_{a,b}[\bfn]$ the \emph{quantum dilogarithm element}\index{quantum dilogarithm!element} for $\bfn$
with \emph{quantum data}\index{quantum!data} $a$, $b$.
We also write $\Psi_{a,0}[\bfn]$ as $\Psi_a[\bfn]$, for simplicity.
\end{defn}
Roughly speaking, the parameters $a$ and $b$
control the \emph{interval} and the \emph{shift} 
of the powers of $q$
in \eqref{3eq:qgei1}, respectively.
In contrast to the asymptotic behavior  \eqref{eq:asymd1} of the quantum dilogarithm $\Psi_q(x)$,
the element
$\Psi_{a,b}[\bfn]\in G_q$ is simply  reduced to a power of the classical one $\Psi[\bfn]\in G$
in the limit $ q\rightarrow 1$ as
\begin{align}
\label{eq:QDEtoDE1}
\lim_{q\rightarrow 1} 
\Psi_{a,b}[\bfn]=
\Psi[\bfn]^{1/a}.
\end{align}
Thus, the parameter $a$ is regarded as the ``inverse exponent'',
though the above factorization does not occur when $q\neq 1$.

By \eqref{eq:qtildeX1},
the  element $\Psi_{a,b}[\bfn]$ acts on $\overline\calA_{\Omega}$
under the $y$-representation as
\begin{align}
\label{eq:qPsiact1}
\Psi_{a,b}[\bfn](Y^{\bfn'})
=
Y^{\bfn'}
\exp
\biggl(
\sum_{j=1}^{\infty} 
\frac{q^{2j\{\bfn,\bfn'\}_{\Omega}}-1}{q^{2ja}-1}
\frac{(-1)^{j+1}}{j} q^{ja}q^{jb}Y^{j\bfn}
\biggr).
\end{align}
In particular, we have the following formula.
\begin{prop}
[{\cite[\S2.3]{Nakanishi22b}}]
\label{prop:qdmut1}
Under the $y$-representation of $G_q$, we have
\begin{align}
\label{eq:qPsiact2}
\Psi_{1/\delta_{k_s}}[\bfc^+_{k_s}(s)]^{\varepsilon_s} (Y^{\bfn})
=Y^{\bfn}
\prod_{u=1}^{|\alpha|}
(1+ q_{k_s}^{\varepsilon_s \sgn(\alpha)(2u-1)}Y^{\bfc^+_{k_s}(s)})^{\sgn(\alpha)},
\end{align}
where $\alpha=\{\delta_{k_s} \bfc_{k_s}(s), \bfn\}_{\Omega}\in \bbZ$.
\end{prop}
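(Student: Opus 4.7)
The plan is to derive the product formula directly from the defining action \eqref{eq:qPsiact1} of $\Psi_{a,b}[\bfn]$ under the $y$-representation, by specializing the parameters and recognizing a finite geometric sum. No deeper machinery is needed; the work is a careful exponent bookkeeping.

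First, I would set $\bfn_0 := \bfc^+_{k_s}(s)$, $a:=1/\delta_{k_s}$, $b:=0$ in \eqref{eq:qPsiact1}, and raise to the $\varepsilon_s$-th power, which only multiplies the exponent inside $\exp(\cdot)$ by $\varepsilon_s$. Using $\bfc^+_{k_s}(s)=\varepsilon_s\bfc_{k_s}(s)$ together with the defining relation $\{\delta_{k_s}\bfc_{k_s}(s),\bfn\}_{\Omega}=\alpha$ gives $\{\bfn_0,\bfn\}_{\Omega}=\varepsilon_s\alpha/\delta_{k_s}$. With the shorthand $Q:=q_{k_s}=q^{1/\delta_{k_s}}$ and $\beta:=\varepsilon_s\alpha$, the exponent reduces to
\begin{align*}
\varepsilon_s\sum_{j=1}^{\infty}\frac{Q^{2j\beta}-1}{Q^{2j}-1}\,\frac{(-1)^{j+1}}{j}\,Q^{j}\,Y^{j\bfn_0}.
\end{align*}

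The decisive observation is that the coefficient $(Q^{2j\beta}-1)/(Q^{2j}-1)$ is a finite geometric sum in $Q^{2j}$. For $\beta>0$ it equals $\sum_{u=1}^{\beta}Q^{2j(u-1)}$, and for $\beta<0$ it equals $-\sum_{u=1}^{|\beta|}Q^{-2ju}$. After swapping the two summations (which is legitimate in the completed algebra $\overline{\calA}_{\Omega}$), the inner $j$-sum becomes $\sum_{j\ge 1}\frac{(-1)^{j+1}}{j}(Q^{\pm(2u-1)}Y^{\bfn_0})^{j}=\log\bigl(1+Q^{\pm(2u-1)}Y^{\bfn_0}\bigr)$. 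Exponentiating then yields a finite product of factors $(1+Q^{\pm(2u-1)}Y^{\bfn_0})^{\pm 1}$.

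The remaining step is to check that the four sign combinations of $(\varepsilon_s,\sgn\alpha)\in\{\pm 1\}^{2}$ all collapse to the single uniform expression $\prod_{u=1}^{|\alpha|}(1+q_{k_s}^{\varepsilon_s\sgn(\alpha)(2u-1)}Y^{\bfc^+_{k_s}(s)})^{\sgn(\alpha)}$. Concretely, when $\beta>0$ one gets the outer exponent $\varepsilon_s$, which must be matched with $\sgn(\alpha)$ via the identity $\varepsilon_s=\sgn(\beta)\sgn(\alpha)=\sgn(\alpha)$ in this case; when $\beta<0$ the extra minus sign from the geometric expansion flips the exponent, again matching $\sgn(\alpha)$. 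The shift $Q^{\varepsilon_s\sgn(\alpha)(2u-1)}$ then follows because $\sgn(\beta)=\varepsilon_s\sgn(\alpha)$.

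The only mild obstacle is this final sign bookkeeping across the four cases, but it is entirely mechanical. Alternatively, one can bypass the case analysis by noting that both sides are continuous in the parameter $\alpha$ (through the factor $(Q^{2j\beta}-1)/(Q^{2j}-1)$ and the product), so verifying the identity for $\alpha>0$, $\varepsilon_s=1$ and then using the manifest symmetry $\Psi_a[\bfn_0]^{-1}$ computed via \eqref{eq:qPsiact1} suffices to obtain all remaining cases.
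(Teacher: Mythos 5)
Your proof is correct and follows essentially the same route as the paper's: specialize $a=1/\delta_{k_s}$, $b=0$ in \eqref{eq:qPsiact1}, recognize $(q^{2j\varepsilon_s\alpha/\delta_{k_s}}-1)/(q^{2j/\delta_{k_s}}-1)$ as a finite geometric sum, and exponentiate the resulting logarithmic series into the finite product; your sign bookkeeping across the cases checks out. The paper simply records the geometric-sum identity and concludes, so your write-up is the same argument with the final exponentiation and case analysis made explicit.
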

\begin{proof}
We set $a=1/\delta_{k_s}$, $b=0$, $\bfn=\bfc^+_{k_s}(s)$, and $\bfn'=\bfn$
in \eqref{eq:qPsiact1}.
We have
\begin{align}
\frac{q^{2j\{\bfn,\bfn'\}_{\Omega}}-1}{q^{2ja}-1}
=
\frac{q^{2j\varepsilon_{s} \alpha/\delta_{k_s}}-1}{q^{2j/\delta_{k_s}}-1}
=
\begin{cases}
\displaystyle
\sum_{u=1}^{|\alpha|} q^{2j \varepsilon_{s}  (u-1)/\delta_{k_s}}
& \varepsilon_{s}  \alpha >0,
\\
0
&   \alpha = 0,
\\
\displaystyle
-\sum_{u=1}^{|\alpha|} q^{-2j \varepsilon_{s}  u/\delta_{k_s}}
&
\varepsilon_{s}  \alpha < 0.
\end{cases}
\end{align}
Thus, we obtain the equality \eqref{eq:qPsiact2}.
\end{proof}

The adjoint action of the quantum dilogarithm in \eqref{eq:qqAd1} is 
 replaced with the action of the quantum dilogarithm elements.
\begin{prop}
\label{prop:qqs2}
As an automorphism of $\overline \calA_{\Omega}$,
we have
\begin{align}
\label{eq:qqPhi1}
\frakq_q(s)=
{\rp \rho_y}(
\Psi_{1/\delta_{k_s}}[\bfc^+_{k_s}(s)]^{-\varepsilon_s}).
\end{align}
\end{prop}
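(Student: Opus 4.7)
The plan is to verify that the two automorphisms agree on each generator $Y^{\bfn}$ of $\overline{\calA}_{\Omega}$, using Proposition \ref{prop:qdmut1} as the workhorse. Since $\overline{\calA}_{\Omega}$ is topologically generated by the $Y^{\bfn}$ (in fact by the $Y^{\bfe_i}$), establishing the equality pointwise on these elements determines the two automorphisms.

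First I would write out both sides explicitly. The left-hand side, by the definition \eqref{eq:qfq1} of $\frakq_q(s)$, acts by
\begin{align*}
\frakq_q(s)(Y^{\bfn}) = Y^{\bfn}\prod_{u=1}^{|\alpha|}(1+q_{k_s}^{\varepsilon_s\sgn(\alpha)(2u-1)}Y^{\bfc^+_{k_s}(s)})^{-\sgn(\alpha)},
\end{align*}
where $\alpha=\{\delta_{k_s}\bfc_{k_s}(s),\bfn\}_{\Omega}$. For the right-hand side, Proposition \ref{prop:qdmut1} already gives the formula for $\rho_y(\Psi_{1/\delta_{k_s}}[\bfc^+_{k_s}(s)]^{\varepsilon_s})(Y^{\bfn})$, namely the same product but with the exponent $+\sgn(\alpha)$ instead of $-\sgn(\alpha)$. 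Denote this factor by $F_{\bfn}$, so that $\rho_y(\Psi_{1/\delta_{k_s}}[\bfc^+_{k_s}(s)]^{\varepsilon_s})(Y^{\bfn}) = Y^{\bfn}\cdot F_{\bfn}$.

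The key observation is that this automorphism fixes $Y^{\bfc^+_{k_s}(s)}$: when $\bfn = \bfc^+_{k_s}(s)$, one has $\alpha = \varepsilon_s \delta_{k_s}\{\bfc_{k_s}(s),\bfc_{k_s}(s)\}_{\Omega}=0$ by the skew-symmetry of $\{\cdot,\cdot\}_{\Omega}$, so the product is empty and the action is trivial. Since $F_{\bfn}$ is a (formal) function of $Y^{\bfc^+_{k_s}(s)}$ alone, it too is fixed by $\rho_y(\Psi_{1/\delta_{k_s}}[\bfc^+_{k_s}(s)]^{\varepsilon_s})$. Applying this automorphism to the identity $Y^{\bfn} = (Y^{\bfn}\cdot F_{\bfn})\cdot F_{\bfn}^{-1}$ and then inverting yields
\begin{align*}
\rho_y(\Psi_{1/\delta_{k_s}}[\bfc^+_{k_s}(s)]^{-\varepsilon_s})(Y^{\bfn}) = Y^{\bfn}\cdot F_{\bfn}^{-1},
\end{align*}
which is precisely the formula for $\frakq_q(s)(Y^{\bfn})$ displayed above.

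I do not anticipate a genuine obstacle here: the argument is essentially bookkeeping built on Proposition \ref{prop:qdmut1}, together with the triviality of $\alpha$ at $\bfn=\bfc^+_{k_s}(s)$. The only small point to verify carefully is that $F_{\bfn}^{-1}$ makes sense in the completion $\overline{\calA}_{\Omega}$; this is fine because $F_{\bfn}$ is a finite product of factors of the form $1+(\text{positive-degree term})$, each of which is invertible as a formal power series. Thus the identification of $\frakq_q(s)$ with the action of $\Psi_{1/\delta_{k_s}}[\bfc^+_{k_s}(s)]^{-\varepsilon_s}$ under $\rho_y$ follows.
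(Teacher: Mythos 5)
Your proof is correct and follows essentially the same route as the paper, which simply states that the result is obtained by comparing the defining formula \eqref{eq:qfq1} with the action \eqref{eq:qPsiact2} from Proposition \ref{prop:qdmut1}. Your write-up merely makes explicit the small inversion step (using that $\rho_y$ is a group homomorphism and that $Y^{\bfc^+_{k_s}(s)}$, hence $F_{\bfn}$, is fixed because $\{\bfc_{k_s}(s),\bfc_{k_s}(s)\}_{\Omega}=0$) that the paper leaves implicit.
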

\begin{proof}
This is immediately obtained 
by comparing \eqref{eq:qfq1} and \eqref{eq:qPsiact2}.
\end{proof}

\subsection{Pentagon relation}
Let us present the pentagon relation among
quantum dilogarithm elements $\Psi_{a,b}[\bfn]$.

We first note a useful formula.
In \eqref{eq:qPsiact1}, suppose that $|\{\bfn, \bfn'\}_{\Omega}|=a$,
where $a\in (1/\delta_0)\bbZ_{>0}$ is the one for $\Psi_{a,b}[\bfn]$ therein.
Then, we have
\begin{align}
\frac{q^{2j\{\bfn,\bfn'\}_{\Omega}}-1}{q^{2ja}-1}
=
\begin{cases}
1 & \{\bfn, \bfn'\}_{\Omega} =a,\\
-q^{-2ja} &  \{\bfn, \bfn'\}_{\Omega} = -a.
\end{cases}
\end{align}
Thus, we obtain
\begin{align}
\label{eq:qPsilem1}
\Psi_{a,b}[\bfn](Y^{\bfn'})
=
\begin{cases}
Y^{\bfn'} (1+ q^{a+b} Y^{\bfn})
&
\{\bfn, \bfn'\}_{\Omega} =a,
\\
Y^{\bfn'} (1+ q^{-a+b} Y^{\bfn})^{-1}
&
\{\bfn, \bfn'\}_{\Omega} =-a.
\end{cases}
\end{align}

We have an analog of Proposition \ref{3prop:pent1}.
\begin{prop}
[{\cite[Theorem 2.6]{Nakanishi22b}}]
\label{prop:qPsipent2}
Let $\bfn_1,\, \bfn_2\in N^+$.
The following relations hold in $G_q$.
\par
(a) (Commutative relation). If $\{\bfn_2,\bfn_1\}_{\Omega}=0$,
then, for any $a_i$ and $b_i$, we have
\begin{align}
\label{eq:Psicom1}
\Psi_{a_2,b_2}[\bfn_2] \Psi_{a_1,b_1}[ \bfn_1] =\Psi_{a_1,b_1}[  \bfn_1 ]  \Psi_{a_2,b_2}[ \bfn_2].
\end{align}

(b) (Pentagon relation).
If $\{\bfn_2,\bfn_1\}_{\Omega}=c$ $(c\in (1/\delta_0) \bbZ_{>0})$,
then, for any $b_i$,
we have
\begin{align}
\label{eq:Psipent2}
\Psi_{c,b_2}[\bfn_2 ] \Psi_{c,b_1}[ \bfn_1]=
\Psi_{c,b_1}[  \bfn_1 ] \Psi_{c,b_1+b_2}[\bfn_1+\bfn_2] \Psi_{c,b_2}[ \bfn_2].
\end{align}
\end{prop}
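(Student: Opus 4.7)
The plan is to follow the strategy of Proposition \ref{3prop:pent1} closely, upgrading the classical pentagon to the Faddeev--Kashaev--Volkov pentagon \eqref{eq:qdpent1}. Part (a) is immediate from the Lie bracket \eqref{eq:Xcom2}: if $\{\bfn_2,\bfn_1\}_{\Omega}=0$, then by bilinearity $\{j\bfn_1,k\bfn_2\}_{\Omega}=0$ for every $j,k\geq 1$, so the generators $X_{j\bfn_1}$ and $X_{k\bfn_2}$ all commute in $\frakg_q$; consequently their exponentials commute in $G_q$, regardless of the quantum data $a_i,b_i$.

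For part (b), both sides of \eqref{eq:Psipent2} lie in the subgroup $G'_q\subset G_q$ attached to the rank $2$ sublattice $N':=\bbZ\bfn_1\oplus\bbZ\bfn_2$. Because $\{\bfn_2,\bfn_1\}_{\Omega}=c\neq 0$, the restriction $\Omega|_{N'}$ is nondegenerate, so the $y$-representation $\rho_y$ of \eqref{eq:qXn2} restricted to $G'_q$ is faithful. Hence the relation reduces to an identity between skew-field automorphisms, which can be verified by evaluating both sides on $Y^{\bfn}$ for arbitrary $\bfn\in N'$.

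The first computational step is a closed-form formula generalizing Proposition \ref{prop:qdmut1}: whenever $\{\bfn,\bfn'\}_{\Omega}=\alpha a$ with $\alpha\in\bbZ$, the factor $(q^{2j\alpha a}-1)/(q^{2ja}-1)$ in \eqref{eq:qPsiact1} telescopes to a finite geometric sum of length $|\alpha|$, yielding
\begin{align*}
\Psi_{a,b}[\bfn](Y^{\bfn'})=Y^{\bfn'}\prod_{u=1}^{|\alpha|}\bigl(1+q^{b+a\sgn(\alpha)(2u-1)}Y^{\bfn}\bigr)^{\sgn(\alpha)}.
\end{align*}
The hypothesis $\{\bfn,\bfn'\}_{\Omega}\in a\bbZ$ is automatic on $N'$ with $a=c$, because $\{\bfn_i,\bfn\}_{\Omega}\in c\bbZ$ for every $\bfn\in N'$. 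This formula controls the action of each $\Psi_{c,b_i}[\bfn_i]$ by an explicit product of binomial factors in $Y^{\bfn_i}$, governed by the $q$-commutation $Y^{\bfn_1}Y^{\bfn_2}=q^{-2c}Y^{\bfn_2}Y^{\bfn_1}$.

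The cleanest way to close the argument is to recognize \eqref{eq:Psipent2} as an avatar of the Faddeev--Kashaev--Volkov pentagon: setting $Q=q^c$, $u=q^{b_1}Y^{\bfn_1}$, $v=q^{b_2}Y^{\bfn_2}$, one checks that $vu=Q^2 uv$ and $Quv=q^{b_1+b_2}Y^{\bfn_1+\bfn_2}$, so Theorem \ref{thm:qdpent1} (with the roles of $u$ and $v$ interchanged) yields a pentagon relation among $\Psi_Q(v)$, $\Psi_Q(u)$, and $\Psi_Q(Quv)$. The main obstacle is the bookkeeping to translate between the two pictures: $\Psi_Q(u)$ and $\Psi_Q(v)$ do not literally live in $\calF_{\Omega}$, only their adjoint actions do, whereas the $\Psi_{c,b}[\bfn]$ are genuine elements of $G_q$. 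The identification must therefore be set up at the level of induced automorphisms of the completion of the subalgebra generated by $Y^{\bfn_1},Y^{\bfn_2}$, and then lifted back to $G'_q$ via the faithfulness of $\rho_y$, with the $q$-power shifts matched carefully. A direct alternative, verifying the identity by computing both sides on $Y^{k_1\bfn_1+k_2\bfn_2}$ using the closed-form formula above, avoids this translation issue but trades it for a longer combinatorial manipulation of $q$-shifted products.
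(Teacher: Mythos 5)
Your proposal is correct, and part (a) coincides with the paper's one-line argument from \eqref{eq:Xcom2}. For part (b) you set up the same framework as the paper — restriction to the rank~2 sublattice spanned by $\bfn_1,\bfn_2$, nondegeneracy of $\Omega$ there, and faithfulness of the $y$-representation — but you close the argument differently. The paper simply evaluates both sides of \eqref{eq:Psipent2} on the two algebra generators $Y^{\bfn_1}$ and $Y^{\bfn_2}$ (this suffices since both sides act as continuous algebra automorphisms), and each evaluation is a two-line computation using only the $|\alpha|=1$ case of your closed-form action formula, namely \eqref{eq:qPsilem1}, together with $Y^{\bfn_1}Y^{\bfn_2}=q^{-2c}Y^{\bfn_2}Y^{\bfn_1}$. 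Your main route instead imports the Faddeev--Kashaev--Volkov pentagon (Theorem \ref{thm:qdpent1}) for $Q=q^c$, $u=q^{b_1}Y^{\bfn_1}$, $v=q^{b_2}Y^{\bfn_2}$, and transfers it to $G_q$ via the dictionary $\rmAd[\Psi_{q^a}(q^bY^{\bfn})]=\rho_y(\Psi_{a,b}[\bfn])$ on the relevant subalgebra; your substitutions ($vu=Q^2uv$, $Quv=q^{b_1+b_2}Y^{\bfn_1+\bfn_2}$) are correct, and since Theorem \ref{thm:qdpent1} is proved independently in Chapter 10 there is no circularity, even though the paper later presents the specialization of \eqref{eq:Psipent2} as an \emph{alternative} proof of that theorem. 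What each approach buys: yours makes the conceptual link to the compact quantum dilogarithm explicit but carries exactly the translation burden you flag (power series in the skew-field versus genuine elements of $G_q$, matching of $q$-shifts); the paper's is shorter and self-contained. Your fallback of checking on all $Y^{k_1\bfn_1+k_2\bfn_2}$ is sound but heavier than necessary — checking on the two generators already suffices.
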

\begin{proof}
(a). This is clear by \eqref{eq:Xcom2}.
(b).
By the same argument in the proof of Proposition 
\ref{3prop:pent1},
it is enough to prove the equality 
for the action on $Y^{\bfn_1}$ and $Y^{\bfn_2}$
under the $y$-representation.
Let us consider the former case.
Below, we only use  \eqref{eq:qPsilem1}
and the relation $Y^{\bfn_1}Y^{\bfn_2}=q^{-2c}Y^{\bfn_2}Y^{\bfn_1}$.
We have
\begin{align}
\begin{split}
\Psi_{c,b_2}[\bfn_2 ] \Psi_{c,b_1}[ \bfn_1](Y^{\bfn_1})
&= \Psi_{c,b_2}[\bfn_2 ] (Y^{\bfn_1})
\\
&=Y^{\bfn_1}(1+q^{c+b_2}Y^{\bfn_2}).
\end{split}
\end{align}
On the other hand, we have
\begin{align*}
&\
\Psi_{c,b_1}[  \bfn_1 ] \Psi_{c,b_1+b_2}[\bfn_1+\bfn_2] \Psi_{c,b_2}[ \bfn_2]
(Y^{\bfn_1})
\notag
\\
=&\
\Psi_{c,b_1}[  \bfn_1 ] \Psi_{c,b_1+b_2}[\bfn_1+\bfn_2]
(Y^{\bfn_1}(1+q^{c+b_2}Y^{\bfn_2}))
\notag
\\
=&\
\Psi_{c,b_1}[  \bfn_1 ] (
Y^{\bfn_1}(1+q^{c+b_1+b_2}Y^{\bfn_1+\bfn_2})
\notag
\\
&\quad \times
(1+q^{c+b_2}Y^{\bfn_2}
(1+q^{-c+b_1+b_2}Y^{\bfn_1+\bfn_2})^{-1}
)
)
\notag
\\
=&\
\Psi_{c,b_1}[  \bfn_1 ] (
Y^{\bfn_1}(1+q^{c+b_2}Y^{\bfn_2}
 + q^{c+b_1 + b_2}Y^{\bfn_1+ \bfn_2})
)
\\
=&\
Y^{n_1}(1+q^{c+b_2}Y^{\bfn_2}
(1+q^{-c+b_1}Y^{\bfn_1})^{-1}
\notag
\\
&\quad
 + q^{c+b_1 + b_2}Y^{\bfn_1+ \bfn_2}
 (1+q^{-c+b_1}Y^{\bfn_1})^{-1})
 \notag
 \\
= &\  Y^{\bfn_1}(1+q^{c+b_2}Y^{\bfn_2}).
\notag
\end{align*}
Thus, they coincide. The other case is similar.
\end{proof}

\section{Quantization of principally extended cluster patterns}
\label{sec:quantumx1}

Even though our  targets are quantum dilogarithm identities,
where only quantum $y$-variables are involved,
we  also need to work  with  quantum $x$-variables
for the proof of Theorem \ref{thm:qsync2}.
The reader can safely skip the content of this section by accepting Theorem \ref{thm:qsync2}.

  \subsection{Quantum mutations of principally extended $x$-variables}

For the mutation sequence \eqref{eq:mseq1},
let $C(s)$ and $G(s)$ ($s=0,\, \dots,\, P$)
be the corresponding $C$- and $G$-matrices, respectively.
Let $\Lambda(s)$ $(s=0,\, \dots,\, P)$  be the $2n\times 2n$ skew-symmetric matrix defined by
 \begin{align}
 \label{eq:Lam1}
 \Lambda(s) =
 \begin{pmatrix}
 O & -G(s)^T D\\
 D G(s) & - \Omega
 \end{pmatrix},
 \quad
 \Omega=\Omega(0),
 \quad
 D= \Delta^{-1}.
 \end{align}
 Also, let $ \tilde B(s)$ $(s=0,\, \dots,\, P)$ and $\tilde D$ be the $2n \times n$ matrices defined by
 \begin{align}
 \label{eq:tB1}
 \tilde B(s) =
 \begin{pmatrix}
 B(s)\\
C(s) 
 \end{pmatrix}
 ,
 \quad
 \tilde D=
  \begin{pmatrix}
 D\\
 O
 \end{pmatrix}.
 \end{align}
By the dualities \eqref{2eq:dual0} and \eqref{eq:dual2}, they satisfy the matrix relation
 \begin{align}
 \label{eq:compatible1}
 -\Lambda(s) \tilde B(s) =  \tilde D.
 \end{align}
Such a  pair $(\Lambda(s), \tilde B(s))$
 is called a \emph{compatible pair}\index{compatible!pair} in \cite{Berenstein05b}.
 It follows that
 \begin{align}
 \label{eq:BLB1}
  \tilde B(s)^T \Lambda(s)\tilde B(s) = \Omega(s).
 \end{align}
{\rp  Also, by \eqref{2eq:gmut1},
 we have the mutation formula
 \begin{align}
 \label{eq:Lmut1}
 \Lambda(s+1)=\tilde Q(s)^T\Lambda(s) \tilde Q(s),
 \end{align}
 where $\tilde Q(s)=(q_{ij}(s))$ is the $2n\times 2n$ unimodular matrix defined by
 \begin{align}
 q_{ij}(s)=
 \begin{cases}
 \delta_{ij}
 & j\neq k_s,
 \\
 -1
 &
 i=j=k_s,
 \\
[- \ve_s b_{ik_s}(s)]_+
&
i\neq j = k_s.
 \end{cases}
 \end{align}
 For the above $\Lambda(s)=(\lambda_{ij}(s))$,
 let $\delta'_0$ be the smallest positive integer such that
 \begin{align}
 \sum_{i,j=1}^{2n} \bbZ \lambda_{ij}(s) = (1/\delta'_0) \bbZ.
 \end{align}
 Due to the condition \eqref{eq:compatible1}, 
 $\delta'_0$ is a multiple of $\delta_0$ in \eqref{eq:bilin1}.
 Also, it is independent of $s$, thanks to \eqref{eq:Lmut1} and the unimodularity of $\tilde Q(s)$.}
 
 For each $s=0$, \dots, $P$, we introduce the \emph{principally extended $x$-variables}\index{principally extended $x$-variable}
 $\tilde \bfX(s)=(X_1(s), \dots, X_{2n}(s))$ as  noncommutative variables obeying the $q$-commutative
 relation
 \begin{align}
 \label{eq:qXcom1}
 X_i(s) X_j(s) = q^{2\lambda_{ij}(s)} X_j(s)X_i(s),
 \end{align}
  where $X_i(s)$ ($i=n+1$, \dots, $2n$) are \emph{frozen variables}\index{frozen variable}; namely,
  they are never mutated.
  Meanwhile, the unfrozen variables $X_i(s)$ ($i=1$, \dots, $n$) are actually commutative with each other
  by \eqref{eq:Lam1}.
 For $\tilde \bfm =(m_1,\dots, m_{2n}) \in \bbZ^{2n}$,
we define
 \begin{align}
  X(s)^{\tilde \bfm}=q^{\sum_{i<j} \lambda_{ij}(s) m_i m_j} X_1(s)^{m_1} \cdots X_{2n} (s)^{m_{2n}}.
 \end{align}
 Then, the quantum mutation 
  in \cite[Eq.~(4.22)]{Berenstein05b}
is written  as follows:
 \begin{align}
\label{eq:qxmu1}
\begin{matrix}
X_i(s+1)
=
\begin{cases}
\displaystyle
X(s)^{- \bfe_{k_s} + \sum_{j=1}^{2n}  [- \varepsilon_s \tilde b_{jk_s}(s)]_+ \bfe_j }
\\
\quad
+
X(s)^{- \bfe_{k_s} + \sum_{j=1}^{2n}  [\varepsilon_s \tilde b_{jk_s}(s)]_+ \bfe_j }
&i= k_s,
\\
\displaystyle
X_{i}(s)
& i\neq k_s,
\end{cases}
\end{matrix}
\end{align}
where $1\leq k_s \leq n$ and  $\varepsilon_s=\varepsilon_{k_s}(s)$ is the tropical sign as usual.
In Section \ref{subsubsec:FG1}, we will show that the relation \eqref{eq:qXcom1}
is comptible with \eqref{eq:qxmu1}.
Due to the sign-coherence of the $C$-matrices, we have
\begin{align}
\label{eq:signco1}
 \sum_{j=1}^{2n}  [- \varepsilon_s \tilde b_{jk_s}(s)]_+ \bfe_j 
=
 \sum_{j=1}^{n}  [- \varepsilon_s \tilde b_{jk_s}(s)]_+ \bfe_j,
\end{align}
but this is not true for the opposite sign $-\varepsilon_s$.

 For each $s=0$, \dots, $P$,
 the noncommutative variables
 \begin{align}
 \label{eq:qhaty1}
 \hat Y_i (s) =X(s)^{\sum_{j=1}^{2n}   \tilde b_{ji}(s)  \bfe_j }
\quad
( i=1,\, \dots,\, n)
 \end{align}
are  identified with the quantum analogs of the
the $\hat y$-variables $\hat y_i(s)$ with principal coefficients
in Section \ref{sec:tropicalization1}.
By \eqref{eq:compatible1}
and \eqref{eq:BLB1},
we have  relations
 \begin{align}
 \label{eq:qXYrel1}
X_i (s)\hat Y_j(s) &= q_j^{-2 \delta_{ij}}\hat Y_j(s) X_i(s)
\quad
(i=1,\, \dots,\, 2n;\, j=1,\, \dots,\, n),
\\
\label{eq:qhY1}
\hat Y_i (s)  \hat Y_j (s)&= q^{2 \omega_{ij}(s)}  \hat Y_j(s)   \hat Y_i(s)
\quad
 (i,\, j=1,\, \dots,\,  n).
 \end{align}
 Note that the relation \eqref{eq:qhY1} is
 the same as the one \eqref{eq:Ycom1} for $Y_i(s)$'s.
 By \eqref{eq:qXYrel1}, the first case of 
 \eqref{eq:qxmu1} is also  written as
 \begin{align}
 \label{eq:qxmu2}
 X_{k_s}(s+1)=
 X(s)^{- \bfe_{k_s} + \sum_{j=1}^{2n}  [- \varepsilon_s \tilde b_{jk_s}(s)]_+ \bfe_j }
 (1+q_{k_s}^{- \varepsilon_s} \hat Y_{k_s}(s)^{\varepsilon_s}).
 \end{align}
 This is an analog of the first case of  \eqref{2eq:xmut5}.
  
  \begin{ex}[Type $A_2$]
\label{ex:QDIA22}
We use the same convention in Example \ref{ex:QDIA21}.
Let $X^{\tilde  \bfm}=X_{t_0}^{\tilde  \bfm}$.
Then, we have the following result.
\begin{align}
  &
  \begin{cases}
 X_{1;1}=X^{(-1,0,0,0)} + X^{(-1,1,1,0)} ,\\ 
   X_{2;1}=X^{(0,1,0,0)} ,
 \end{cases}
 \allowdisplaybreaks
 \\
  &
  \begin{cases}
 X_{1;2}=X^{(-1,0,0,0)} + X^{(-1,1,1,0)},\\ 
   X_{2;2}= X^{(0,-1,0,0)} + X^{(-1,-1,0,1)}+X^{(-1,0,1,1)},
 \end{cases}
  \allowdisplaybreaks
 \\
  &
  \begin{cases}
 X_{1;3}= X^{(1,-1,0,0)} + X^{(0,-1,0,1)},\\ 
   X_{2;3} =X^{(0,-1,0,0)} + X^{(-1,-1,0,1)}+X^{(-1,0,1,1)},
 \end{cases}
  \allowdisplaybreaks
 \\
  &
  \begin{cases}
 X_{1;4}=X^{(1,-1,0,0)} + X^{(0,-1,0,1)},\\ 
   X_{2;4}=X^{(1,0,0,0)},
 \end{cases}
  \allowdisplaybreaks
  \\
  &
  \begin{cases}
 X_{1;5}=X^{(0,1,0,0)}=X_2,\\ 
   X_{2;5}=X^{(1,0,0,0)}=X_1.
 \end{cases}
 \end{align}
 The pentagon periodicity is preserved again.
 In this example, all coefficients of monomials $X^{\tilde \bfm}$ are 1.
 However,  they are polynomials in $q^{1/\delta'_0}$, in general.
\end{ex}

The following is
the most fundamental result on the quantization of $x$-variables.
\begin{thm}
\label{thm:qLaurent1}\index{quantum!Laurent phenomenon}\index{Laurent! phenomenon (quantum)}
(Quantum Laurent phenomenon, {\cite[Cor.~5.2]{Berenstein05b}}).
Each element $X_i(s)$ is expressed as  a (noncommutative) Laurent polynomial $M_i(\tilde\bfX)$ in $\tilde\bfX=(X_1, \dots, X_{2n})$ with coefficients in $\bbZ[q^{\pm 1/{\delta'_0}}]$.
\end{thm}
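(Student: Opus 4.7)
The plan is to prove the statement by induction on the number of mutations $s$, mimicking the classical proof of the Laurent phenomenon (Theorem \ref{thm:Laurent1}). Identify $\Sigma(0)$ with $\Sigma_{t_0}$ so that the statement becomes: each $X_i(s)$ is a noncommutative Laurent polynomial in $\tilde\bfX$ with coefficients in $\bbZ[q^{\pm 1/\delta'_0}]$. The base case $s=0$ is trivial, and for $s=1$ the mutation formula \eqref{eq:qxmu1} expresses $X_i(1)$ as a sum of two monomials in $\tilde\bfX$ (up to a scalar in $\bbZ[q^{\pm 1/\delta'_0}]$), hence is already in the desired form.

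For the inductive step, the direct strategy is to express $X_i(s+1)$ using \eqref{eq:qxmu1} in the variables of $\Sigma(s)$ and then substitute the inductive Laurent expressions for each $X_j(s)$. The difficulty is that \eqref{eq:qxmu1} involves $X_{k_s}(s)^{-1}$, and inverting a general noncommutative Laurent polynomial does not remain Laurent. The classical remedy, due to Fomin-Zelevinsky, is the \emph{Caterpillar Lemma}, which reduces Laurentness for arbitrary seeds to a statement along a rank-$2$ subtree together with the divisibility assertion that the product $X_{k_s}(s+1)\,X_{k_s}(s)$, when rewritten in the variables of $\Sigma(s-1)$, is a Laurent polynomial in $\tilde\bfX$. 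Adapting this to the quantum case requires carefully tracking the ordering of factors: the $q$-commutation \eqref{eq:qXcom1} introduces $q$-power prefactors whenever two monomials are transposed, and the two summands in \eqref{eq:qxmu1} must be combined using the $q$-binomial expansion in such a way that all denominators cancel.

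An alternative and perhaps more conceptual route is to exploit the Fock-Goncharov decomposition in Section \ref{sec:algebraic1}, extended to $x$-variables. One writes the full composite mutation as $\mu_q(s;0) = \frakq_q(s;0) \circ \tau_q(s;0)$, where $\tau_q(s;0)$ is the monomial (tropical) part producing the leading $X$-monomial indexed by the $g$-vector, and $\frakq_q(s;0)$ is a product of adjoint actions of quantum dilogarithm elements $\Psi_{1/\delta_{k_r}}[\bfc^+_{k_r}(r)]^{\pm\varepsilon_r}$ acting via the $y$-representation. The $\hat Y$-variables in \eqref{eq:qhaty1} and their commutation relations \eqref{eq:qXYrel1}, \eqref{eq:qhY1} mirror the classical relation \eqref{2eq:yhat1}, so the adjoint action of each $\Psi_{1/\delta_k}$ on an $X$-monomial multiplies it by a product of factors of the form $(1+q_k^{\pm(2u-1)}\hat Y_k^{\pm})^{\pm 1}$ (Proposition \ref{prop:qdmut1}), each of which is manifestly a Laurent polynomial in $\tilde\bfX$ with coefficients in $\bbZ[q^{\pm 1/\delta'_0}]$.

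The main obstacle, in either approach, is the integrality of the coefficients in $q^{\pm 1/\delta'_0}$ as opposed to merely in $\bbQ(q^{1/\delta'_0})$: the $q$-numbers $[ja]_q$ appearing in the exponential series \eqref{3eq:qgei1} are denominators, and one must verify that after expanding the adjoint action on the $X$-monomial they collectively cancel, leaving only a finite product of factors of the type $(1+q^{\ast}\hat Y_k^{\pm})^{\pm 1}$. This is the quantum analog of the unit constant property and Laurent positivity of $F$-polynomials, and is precisely what Proposition \ref{prop:qdmut1} delivers termwise for a single mutation; the induction then threads these together along the mutation path. Once this cancellation is under control, the induction closes and the theorem follows.
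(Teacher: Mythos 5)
The paper does not actually prove this theorem: it explicitly states that the proofs of Theorem \ref{thm:qLaurent1} and Proposition \ref{prop:barinv1} are skipped, and the result is imported from Berenstein--Zelevinsky. Their argument (Cor.~5.2 of that paper) runs through the quantum analogue of the Berenstein--Fomin--Zelevinsky \emph{upper bound}: one shows that the intersection of the based quantum tori attached to a seed and to its $n$ adjacent seeds behaves well under mutation, and Laurentness falls out of the containment of the quantum cluster algebra in that intersection. Neither of your two routes is this argument, so at best you would be supplying a genuinely different proof --- but as written, both routes stop exactly at the point where the real work begins.

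Concretely: in your first route you correctly identify that the Caterpillar Lemma reduces everything to a divisibility assertion about $X_{k_s}(s+1)X_{k_s}(s)$ rewritten in earlier variables, but you never establish the noncommutative analogue of that assertion; "carefully tracking the ordering of factors" is the entire difficulty, not a remark one can defer. In your second route the claim that each factor produced by the adjoint action of a quantum dilogarithm element is "manifestly a Laurent polynomial in $\tilde\bfX$" is false for the factors $(1+q_{k}^{\ast}\hat Y_{k}^{\pm1})^{-1}$ carrying a negative exponent: these are inverses of Laurent polynomials, and the composite mutation $\frakq_q(s;0)$ accumulates many of them. Proposition \ref{prop:qdmut1} only disposes of the $q$-number denominators $[ja]_q$ arising from the exponential series of a \emph{single} dilogarithm element; it says nothing about why the inverse factors collected along the whole mutation path cancel against the numerators. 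That cancellation is precisely the quantum analogue of the statement that $F$-polynomials are polynomials, i.e.\ it \emph{is} the Laurent phenomenon, so your closing sentence "once this cancellation is under control, the induction closes" assumes the conclusion. You have also conflated two separate issues --- integrality of coefficients in $\bbZ[q^{\pm 1/\delta'_0}]$ versus membership in a Laurent polynomial ring at all --- and it is the second, not the first, that carries the content of the theorem.
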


Moreover, the following property holds.
 \begin{prop}
[{Bar-invariance, \cite[Proof of Prop.~6.2]{Berenstein05b}}]
 \label{prop:barinv1}\index{bar-invariance}
In the  Laurent polynomial $M_i(\tilde \bfX)$ in Theorem \ref{thm:qLaurent1} each monomial is written in the form
\begin{align}
c X^{\tilde \bfm}
\quad
(c\in \bbZ[q^{\pm 1/{\rp \delta'_0}}],\ \tilde \bfm \in \bbZ^{2n}),
\end{align}
where $c$ is symmetric with respect to $q$ and $q^{-1}$.
\end{prop}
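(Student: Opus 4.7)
The plan is to prove bar-invariance by defining an appropriate anti-involution on the ambient skew-field, verifying that the initial variables and their quantum mutations are fixed by it, and then reading off the statement about coefficients from the fact that the normal-ordered monomials form a bar-invariant basis of the initial quantum torus.

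First I would fix notation by letting $\mathcal{T}(s)$ denote the quantum torus generated by $\tilde\bfX(s)$ with the $q$-commutation \eqref{eq:qXcom1}, and embedding all $\mathcal{T}(s)$ into the common ambient skew-field $\mathcal{F}$ via the sequence of quantum mutations. Then I would define the \emph{bar involution} $\overline{\cdot} \colon \mathcal{F}\to\mathcal{F}$ as the $\bbQ$-linear anti-automorphism determined by $\overline{q^{1/\delta'_0}}=q^{-1/\delta'_0}$ and $\overline{X_i(s)}=X_i(s)$ for all $i$ and some fixed $s$ (say $s=0$). One must check this is well-defined: anti-multiplicativity is consistent with \eqref{eq:qXcom1} because $\overline{X_iX_j}=X_jX_i$ and $\overline{q^{2\lambda_{ij}}X_jX_i}=q^{-2\lambda_{ij}}X_iX_j$, and these agree by \eqref{eq:qXcom1}. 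A short calculation using the $q$-commutation then shows that for any $\tilde\bfm \in \bbZ^{2n}$, the normal-ordered monomial in $\mathcal{T}(0)$ satisfies $\overline{X^{\tilde\bfm}}=X^{\tilde\bfm}$, so $\{X^{\tilde\bfm}\}_{\tilde\bfm\in\bbZ^{2n}}$ is a bar-invariant $\bbZ[q^{\pm 1/\delta'_0}]$-basis of $\mathcal{T}(0)$.

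Next I would prove the key lemma: $\overline{X_i(s)}=X_i(s)$ for every $i$ and every $s$ along the mutation sequence. This is done by induction on $s$, the case $s=0$ being by definition. Assuming bar-invariance at step $s$, I apply the bar involution to the mutation formula \eqref{eq:qxmu1}. Each of the two summands is a normal-ordered monomial in $\mathcal{T}(s)$, and the same computation as in the previous paragraph (now applied in $\mathcal{T}(s)$, which also has a bar-invariant normal-ordered basis since $\Lambda(s)$ is skew-symmetric) shows that both monomials are bar-invariant. Hence $\overline{X_{k_s}(s+1)}=X_{k_s}(s+1)$, and the remaining $X_i(s+1)=X_i(s)$ are bar-invariant by the inductive hypothesis.

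Finally I would deduce the statement. By Theorem~\ref{thm:qLaurent1}, $X_i(s) = \sum_{\tilde\bfm} c_{\tilde\bfm}\, X^{\tilde\bfm}$ in $\mathcal{T}(0)$ with $c_{\tilde\bfm}\in \bbZ[q^{\pm 1/\delta'_0}]$. Applying the bar involution and using $\overline{X^{\tilde\bfm}}=X^{\tilde\bfm}$ together with $\overline{X_i(s)}=X_i(s)$ gives $\sum_{\tilde\bfm} \overline{c_{\tilde\bfm}}\, X^{\tilde\bfm} = \sum_{\tilde\bfm} c_{\tilde\bfm}\, X^{\tilde\bfm}$; since $\{X^{\tilde\bfm}\}$ is a basis this forces $\overline{c_{\tilde\bfm}}=c_{\tilde\bfm}$, which is exactly the claimed bar-symmetry of each coefficient. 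It remains to explain why $c_{\tilde\bfm}\in\bbZ[q^{\pm 1/\delta_0}]$ rather than merely $\bbZ[q^{\pm 1/\delta'_0}]$; this follows because the relations \eqref{eq:qhY1} among $\hat Y_i(s)$ only involve $q^{\pm 1/\delta_0}$, and induction on $s$ using \eqref{eq:qxmu2} shows that all exponents of $q$ that arise in the Laurent expansion sit in $(1/\delta_0)\bbZ$.

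The main obstacle, and the step that requires the most care, is verifying that both summands on the right-hand side of \eqref{eq:qxmu1} are bar-invariant inside $\mathcal{T}(s)$. This is not completely automatic because the exponent vectors in the two monomials differ, and one must use the sign-coherence identity \eqref{eq:signco1} and the compatibility relation \eqref{eq:compatible1} to see that the normal-ordering prefactors $q^{\sum_{i<j}\lambda_{ij}(s)m_im_j}$ are consistent with the bar involution. Once this bookkeeping is done, the rest of the argument is formal.
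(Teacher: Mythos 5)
Your argument is correct and is precisely the bar-involution argument of Berenstein--Zelevinsky that the paper points to (it cites [Proof of Prop.~6.2] of that reference and explicitly skips the proof), so there is nothing substantively different to compare. The one remark worth making is that the step you single out as the ``main obstacle'' is in fact automatic: once $\overline{X_i(s)}=X_i(s)$ is known for all $i$, every Weyl-ordered monomial $X(s)^{\tilde \bfm}$ is bar-fixed by the same two-line computation as at $s=0$, for an arbitrary exponent vector, so neither the sign-coherence identity \eqref{eq:signco1} nor the compatibility relation \eqref{eq:compatible1} is needed there.
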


We skip the proofs of Theorem \ref{thm:qLaurent1} and Proposition \ref{prop:barinv1}.

\subsection{Fock-Goncharov decomposition}
\label{subsubsec:FG1}
 The above quantum mutation of $x$-variables
 also admits the Fock-Goncharov decomposition
 in parallel with the $y$-variables.
We do not use the results in this subsection in the rest of the text.
Nevertheless, we present them to show the consistency of the whole picture.

 Let $\{\cdot, \cdot\}_{\Lambda(s)}$ be the skew-symmetric bilinear 
 form on $\bbZ^{2n}$
 defined in the same way as \eqref{eq:Omega2}.
We introduce the noncommutative algebra
$\calA_{\Lambda(s)}$
with generators $X(s)^{\tilde \bfm}$ ($\tilde \bfm \in \bbZ_{\geq 0}^{2n}$)
over $\bbQ(q^{1/{\rp \delta'_0}})$
obeying the relations
\begin{align}
\label{eq:qXcom3}
X(s)^{\tilde \bfm }X(s)^{\tilde \bfm'}=q^{\{\tilde \bfm,\tilde \bfm'\}_{\Lambda(s)}} X(s)^{\tilde \bfm+\tilde \bfm' }
= q^{2 \{\tilde \bfm,\tilde \bfm'\}_{\Lambda(s)}}
X(s)^{\tilde \bfm' } X(s)^{\tilde \bfm}.
\end{align}
By setting
$X(s)^{\bfe_i(s)}=X_i(s)$ ($i=1,\, \dots,\, 2n$), we recover the relation \eqref{eq:qXcom1}.

  Let $ \calF_{\Lambda(s)}$ be the 
skew-field of all fractions of $\calA_{\Lambda(s)}$.
  We formulate the quantum mutation 
  in \eqref{eq:qxmu1} (see also \eqref{eq:qxmu2}) as
  the following skew-field isomorphism
   \begin{align}
\label{eq:qxmu3}
\begin{matrix}
{\rp \mu_q^x{(s)}}\colon &
\calF_{\Lambda(s+1)}
& \rightarrow & \calF_{\Lambda(s)},
\\
&
X_{i}(s+1)
& \mapsto &
\begin{cases}
\displaystyle
X(s)^{-  \bfe_{k_s} + \sum_{j=1}^{2n}  [- \varepsilon_s \tilde b_{jk_s}(s)]_+  \bfe_j}
\\
\quad \times
 (1+q_{k_s}^{- \varepsilon_s} \hat Y_{k_s}(s)^{\varepsilon_s})
& i=k_s,
\\
X_i(s)
&i\neq k_s.
\end{cases}
\end{matrix}
\end{align}
The map $\mu_q^x(s)$ has the
\emph{Fock-Goncharov decomposition}\index{Fock-Goncharov decomposition!of quantum mutation}
\begin{align}
\label{eq:FGqx1}
\mu_q^x(s)=\rho_q^x(s)\circ\tau_q^x(s).
\end{align}
Here, $\tau_q^x(s)$ is a skew-field isomorphism 
defined by
\begin{align}
\label{eq:qxtau1}
\begin{matrix}
\tau_q^x{(s)}: &\calF_{\Lambda(s+1)}& \rightarrow & \calF_{\Lambda(s)}
\\
&
X(s+1)^{\tilde \bfm}
&
\mapsto
&
X(s)^{T^x(s)(\tilde \bfm)},
\end{matrix}
\end{align}
where $T^x(s)$ is a linear isomorphism defined by
 \begin{align}
\label{eq:qxTau1}
\begin{matrix}
T^x{(s)}\colon & \bbZ^{2n} & \rightarrow &   \bbZ^{2n}
\\
&
\bfe_i
&
\mapsto
&
\begin{cases}
\displaystyle
-  \bfe_{k_s} +
\sum_{j=1}^{2n} [- \varepsilon_s \tilde b_{j k_s }(s)]_+  \bfe_j
&i= k_s.
\\
\bfe_i
& i\neq k_s.
\end{cases}
\end{matrix}
\end{align}
By \eqref{eq:signco1}, the sum in the first case
can be restricted to $1\leq j \leq n$.
Also,
$\rho_q^x(s)$ is a skew-field automorphism 
defined by
\begin{align}
\label{eq:qxrho1}
\begin{matrix}
\rho_q^x{(s)} : &\calF_{\Lambda(s)} 
\!\!\!
& \rightarrow &  \calF_{\Lambda(s)}
\\
&
X(s)^{\tilde \bfm}
\!\!\!
&
\mapsto
&
\displaystyle
X(s)^{\tilde \bfm}
\prod_{u=1}^{|m_{k_s} |}
(1+ q_{k_s}^{\varepsilon_s \sgn(m_{k_s})(2u-1)}
  \hat Y_{k_s}(s)^{\varepsilon_s})^{-\sgn(m_{k_s})}.
\end{matrix}
\end{align}

\begin{lem}
\label{lem:rqauto2}
(a). The map $\tau_q^x(s)$ is a skew-field isomorphism.
\par
(b). The map $\rho_q^x(s)$ is a skew-field automorphism.
\end{lem}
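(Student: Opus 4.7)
Both parts run in parallel with the $y$-variable arguments already given in the excerpt: part~(a) parallels Proposition~\ref{prop:tauP1}(a), which in turn rested on the mutation formula \eqref{2eq:omut1} for $\Omega(s)$, while part~(b) parallels Lemma~\ref{lem:rqauto1} essentially verbatim under the dictionary $Y_{k_s}(s)\leftrightarrow \hat Y_{k_s}(s)$, $Y(s)^{\bfn}\leftrightarrow X(s)^{\tilde{\bfm}}$.

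For (a), because $\tau_q^x(s)$ is defined on generators by the linear map $T^x(s)$ of \eqref{eq:qxTau1}, applying it to the defining relation
\[
X(s+1)^{\tilde{\bfm}_1} X(s+1)^{\tilde{\bfm}_2} = q^{\{\tilde{\bfm}_1,\tilde{\bfm}_2\}_{\Lambda(s+1)}} X(s+1)^{\tilde{\bfm}_1+\tilde{\bfm}_2}
\]
shows that $\tau_q^x(s)$ is a well-defined skew-field homomorphism if and only if the bilinear identity
\[
\{T^x(s)(\tilde{\bfm}_1),T^x(s)(\tilde{\bfm}_2)\}_{\Lambda(s)} = \{\tilde{\bfm}_1,\tilde{\bfm}_2\}_{\Lambda(s+1)}
\]
holds, equivalently the matrix identity $M^{T}\Lambda(s) M = \Lambda(s+1)$, where $M$ is the $2n\times 2n$ representation matrix of $T^x(s)$. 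By \eqref{eq:qxTau1} and the sign-coherence \eqref{eq:signco1}, $M$ is the identity except in the $k_s$-th column, which reads $-\bfe_{k_s}+\sum_{j\neq k_s}[-\varepsilon_s b_{j k_s}(s)]_{+}\bfe_j$. I would verify the matrix identity by a direct block computation using the block form \eqref{eq:Lam1} of $\Lambda(s)$, the mutation rule \eqref{2eq:gmut2} for the $k_s$-th column of $G(s)$, the mutation rule \eqref{2eq:omut1} for $\Omega(s)$, and the first duality $G(s)B(s)=B_{t_0}C(s)$ of \eqref{2eq:dual0} to place the contributions in the correct blocks. Invertibility of $\tau_q^x(s)$ is then automatic: expanding $\det M$ along the $k_s$-th column gives $\det M = -1$, so $T^x(s)$ is a lattice automorphism.

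For (b), the task is to establish
\[
\rho_q^x(s)\bigl(X(s)^{\tilde{\bfm}_1}\bigr)\,\rho_q^x(s)\bigl(X(s)^{\tilde{\bfm}_2}\bigr) = q^{\{\tilde{\bfm}_1,\tilde{\bfm}_2\}_{\Lambda(s)}}\,\rho_q^x(s)\bigl(X(s)^{\tilde{\bfm}_1+\tilde{\bfm}_2}\bigr),
\]
which is the exact analog of \eqref{eq:qqY2}. The only input beyond the proof of Lemma~\ref{lem:rqauto1} is the commutation relation \eqref{eq:qXYrel1}, from which one immediately extracts
\[
\hat Y_{k_s}(s)\,X(s)^{\tilde{\bfm}} = q_{k_s}^{2 m_{k_s}}\,X(s)^{\tilde{\bfm}}\,\hat Y_{k_s}(s).
\]
Under the substitution $\alpha\leftrightarrow m_{k_s}$, together with the dictionary above, the telescoping manipulation of products of the factors $(1+q_{k_s}^{\varepsilon_s\sgn(\cdot)(2u-1)}\hat Y_{k_s}(s)^{\varepsilon_s})^{-\sgn(\cdot)}$ from the proof of Lemma~\ref{lem:rqauto1} transfers without change and produces the desired exchange factor $q^{\{\tilde{\bfm}_1,\tilde{\bfm}_2\}_{\Lambda(s)}}$. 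The inverse of $\rho_q^x(s)$ is obtained by flipping $\varepsilon_s\mapsto -\varepsilon_s$ in \eqref{eq:qxrho1}, which inverts each factor in the finite product.

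The bulk of the work, and the only genuine obstacle, is the block matrix calculation in~(a). The off-diagonal block $-G(s)^{T} D$ of $\Lambda(s)$ couples the unfrozen and frozen sectors, and the mutation \eqref{2eq:gmut2} of the $k_s$-th column of $G(s)$ has two pieces: a $G$-linear term with coefficients $[-\varepsilon_s b_{\ell k_s}(s)]_{+}$ and a $C$-linear correction with coefficients $[-\varepsilon_s c_{\ell k_s}(s)]_{+}$. Matching these simultaneously with the single modified column of $M$ (which, by \eqref{eq:signco1}, sees only the $B$-part) forces the $C$-piece to be produced instead by the cross-term in $M^{T}\Lambda(s) M$ between the two blocks, and this is precisely where the duality $G(s)B(s)=B_{t_0}C(s)$ must intervene. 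The calculation is elementary but notation-heavy, and it is the only step where the choice of principal coefficients is truly essential.
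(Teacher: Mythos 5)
Your overall route coincides with the paper's: part (b) is, as you say, a verbatim transfer of the proof of Lemma \ref{lem:rqauto1} using $\hat Y_{k_s}(s)\,X(s)^{\tilde\bfm}=q_{k_s}^{2m_{k_s}}X(s)^{\tilde\bfm}\,\hat Y_{k_s}(s)$ from \eqref{eq:qXYrel1}, and part (a) is exactly the verification that $T^x(s)$ preserves the bilinear form, i.e.\ $\{T^x(s)(\bfe_i),T^x(s)(\bfe_j)\}_{\Lambda(s)}=\{\bfe_i,\bfe_j\}_{\Lambda(s+1)}$.

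However, the step you single out as ``the only genuine obstacle'' is not an obstacle, and two of the ingredients you invoke play no role. First, the lower-right block of $\Lambda(s)$ in \eqref{eq:Lam1} is the \emph{constant} matrix $-\Omega=-\Omega(0)$, not $-\Omega(s)$, and $T^x(s)$ fixes every frozen basis vector; so the frozen--frozen block is trivially preserved and the mutation rule \eqref{2eq:omut1} for $\Omega(s)$ never enters. Second, the unfrozen--unfrozen block of $\Lambda(s)$ is $O$ for every $s$, and by \eqref{eq:signco1} the modified column $T^x(s)(\bfe_{k_s})$ stays in the unfrozen sector, so that block is also trivial. The only nontrivial check is therefore the mixed block $-G(s)^TD$, and there the point is that the same sign-coherence you already cite for \eqref{eq:signco1} also kills the $C$-linear correction in the $\varepsilon$-expression \eqref{2eq:gmut2} once $\varepsilon$ is specialized to the tropical sign $\varepsilon_s$: one has $[-\varepsilon_s c_{\ell k_s}(s)]_+=0$, so the $g$-vector mutation collapses to
\begin{align*}
\bfg_{k_s}(s+1)=-\bfg_{k_s}(s)+\sum_{j=1}^n[-\varepsilon_s b_{jk_s}(s)]_+\,\bfg_j(s),
\end{align*}
which is precisely the combination produced by pairing $T^x(s)(\bfe_{k_s})$ against a frozen $\bfe_j$. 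There is no leftover $C$-piece to be absorbed by a cross-term, and the first duality \eqref{2eq:dual0} is not needed anywhere in the argument. Had you carried out the block computation you describe, it would have gone through and revealed this; but as written, your diagnosis of where the difficulty lies (and of why principal coefficients are essential) is incorrect, so you should replace that discussion with the sign-coherence observation above.
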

\begin{proof}
(a).
The desired condition is equivalent to the equality
\begin{align}
\{\bfe_i, \bfe_j\}_{\Lambda(s+1)}
= \{T^x(s)(\bfe_i), T^x(s)(\bfe_j)\}_{\Lambda(s)}.
\end{align}
This is nontrivial only for (i) $i\leq n$ and $j \geq n+1$,
or (ii)  $j\leq n$ and $i \geq n+1$.
In both cases, the equality follows from
 the specialization of \eqref{2eq:gmut2} with the tropical sign $\varepsilon_s$,
 namely,
\begin{align}
\label{eq:gss1}
\bfg_i(s+1)
=
\begin{cases}
-\bfg_i(s) + 
\sum_{j=1}^n [ -\varepsilon_s b_{jk_s}(s)]_+ \bfg_j(s)
&
i = k_s,
\\
\bfg_i(s)
&
i\neq k_s.
\end{cases}
\end{align}
 (b). This is proved in the same way as
 Lemma \ref{lem:rqauto1}.
\end{proof}

By Lemma \ref{lem:rqauto2},  the map $\mu_q^x(s)$ is a skew-field isomorphism.

Let  $\bfg_{i}(s)$ be the $i$th $g$-vector of the $G$-matrix $G(s)$.  
We define $\tilde \bfg_{i}(s)\in \bbZ^{2n}$ by
\begin{align}
\tilde \bfg_{i}(s)
=
\begin{cases}
(\bfg_{i}(s), \bfzero)
&
i=1,\, \dots, \, n,
\\
\bfe_i
&
i=n+1,\, \dots, \, 2n.
\end{cases}
\end{align}
Also, let $\overline G(s)$ be the $2n \times 2n$ matrix defined by
\begin{align}
\overline G(s) = \begin{pmatrix} G(s)& O\\ O & I\end{pmatrix}.
\end{align}
Then, the first duality \eqref{2eq:dual0} is rewritten as
\begin{align}
\label{2eq:tdual0} 
\overline G(s) \tilde B(s) = \tilde B(0)C(s).
\end{align}

We define the composition
$\tau_q^x(s;0)$ in paralel to \eqref{eq:qtaus01}.
\begin{prop}
\label{prop:tauxq1}
The following formulas hold:
\begin{align}
\label{eq:qxtaus11}
\tau_q^x(s;0)(X_i(s+1))=& X^{\tilde \bfg_i(s+1)}
\quad
(i=1\, \dots, \, 2n),
\\
\label{eq:qxtaus12}
\tau_q^x(s;0)(\hat Y_i(s+1))=&\hat Y^{\bfc_i(s+1)}
:=X^{\sum_{j=1}^n c_{j i}(s+1) 
 \tilde \bfb_{j}}
\quad
(i=1\, \dots, \, n),
\end{align}
where $ \tilde \bfb_{j}$ is the $j$th column vector of the matrix $\tilde B=\tilde B(0)$
in \eqref{eq:tB1}.
\end{prop}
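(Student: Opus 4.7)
The plan is to prove both formulas by reducing them to identities in the linear algebra of exponent vectors. Since $\tau_q^x(r)$ sends each monomial $X(r+1)^{\tilde\bfm}$ to $X(r)^{T^x(r)(\tilde\bfm)}$ by \eqref{eq:qxtau1}, the composite $\tau_q^x(s;0)$ acts on $X(s+1)^{\tilde\bfm}$ as $X^{T^x(0)T^x(1)\cdots T^x(s)(\tilde\bfm)}$. Writing $\psi_{s+1}:=T^x(0)T^x(1)\cdots T^x(s)$, the two claims become
\begin{align*}
\psi_{s+1}(\bfe_i)=\tilde\bfg_i(s+1)\ \ (i=1,\dots,2n),\qquad
\psi_{s+1}\Bigl(\sum_{j=1}^{2n}\tilde b_{ji}(s+1)\bfe_j\Bigr)=\sum_{j=1}^n c_{ji}(s+1)\tilde\bfb_j.
\end{align*}

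First I would establish the identity $\psi_{s+1}(\bfe_i)=\tilde\bfg_i(s+1)$ by induction on $s$, with the trivial base $\psi_0=\rmid$ and $\tilde\bfg_i(0)=\bfe_i$. The non-mutated case $i\neq k_s$ is immediate because $T^x(s)$ fixes $\bfe_i$ and the $g$-vector $\bfg_i$ is unchanged by the mutation in direction $k_s$ (for $i>n$ this is tautological, for $i\leq n$ this is \eqref{eq:gss1}). For $i=k_s$, the key step is to invoke the sign-coherence of the $c$-vectors through \eqref{eq:signco1} to truncate the defining sum of $T^x(s)(\bfe_{k_s})$ in \eqref{eq:qxTau1} to a $\bbZ$-linear combination of $\bfe_1,\dots,\bfe_n$ with coefficients $[-\varepsilon_s b_{jk_s}(s)]_+$. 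Applying the inductive hypothesis $\psi_s(\bfe_j)=\tilde\bfg_j(s)$ to each term and comparing with the mutation formula \eqref{eq:gss1} for $g$-vectors then yields $\psi_{s+1}(\bfe_{k_s})=\tilde\bfg_{k_s}(s+1)$; the vanishing of the last $n$ components on both sides is automatic from the definition of $\tilde\bfg_j(s)$ for $j\leq n$.

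The second identity then follows formally from the first together with the first duality \eqref{2eq:tdual0}. Linearity of $\psi_{s+1}$ turns the left-hand side into $\sum_{j=1}^{2n}\tilde b_{ji}(s+1)\tilde\bfg_j(s+1)$, which is exactly the $i$-th column of the matrix product $\overline G(s+1)\tilde B(s+1)$. Invoking \eqref{2eq:tdual0} rewrites this as the $i$-th column of $\tilde B(0)C(s+1)=\tilde B\,C(s+1)$, which unfolds to $\sum_{j=1}^n c_{ji}(s+1)\tilde\bfb_j$, as required.

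The step I expect to require the most care is the $i=k_s$ case of the first induction, since it is the unique place where the sign-coherence of the $c$-vectors enters in an essential way: without it, the upper-block identity coming from \eqref{eq:gss1} would not suffice to match the full $2n$-component vector $T^x(s)(\bfe_{k_s})$. All remaining manipulations are bookkeeping at the level of monomial exponents, since the $q$-commutation factors are absorbed into the definition of the basis $X^{\tilde\bfm}$ in $\calA_{\Lambda(0)}$, and Lemma \ref{lem:rqauto2}(a) ensures that each $\tau_q^x(r)$ intertwines the skew-symmetric forms $\Lambda(r+1)$ and $\Lambda(r)$, so no residual powers of $q$ appear in the final expressions.
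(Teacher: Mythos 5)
Your proof is correct and follows essentially the same route as the paper: an induction on $s$ reducing \eqref{eq:qxtaus11} to the (sign-coherence-dependent) extension of the $g$-vector mutation formula \eqref{eq:gss1} to the vectors $\tilde\bfg_i(s)$, followed by deriving \eqref{eq:qxtaus12} from \eqref{eq:qhaty1}, linearity, and the first duality \eqref{2eq:tdual0}. You merely spell out the induction (which the paper delegates to ``in the same way as Proposition \ref{prop:tauy1}'') and make explicit where sign-coherence enters, which is consistent with the paper's use of \eqref{eq:signco1}.
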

\begin{proof}
The formula \eqref{eq:gss1} is extended as
\begin{align}
\tilde \bfg_i(s+1)
=
\begin{cases}
-\tilde\bfg_i(s) + 
\sum_{j=1}^{2n} [ -\varepsilon_s \tilde b_{jk_s}(s)]_+ \tilde\bfg_j(s)
&
i = k_s,
\\
\tilde\bfg_i(s)
&
i\neq k_s.
\end{cases}
\end{align}
Then, in the same way as Proposition \ref{prop:tauy1}, we have
\eqref{eq:qxtaus11}.
The formula \eqref{eq:qxtaus12} is obtained from \eqref{eq:qxtaus11} as
\begin{align}
\begin{split}
(\mathrm{LHS})
\overset {\eqref{eq:qhaty1}}
&{=}
\tau_q^x(s;0)( X(s+1)^{\sum_{j=1}^{2n} \tilde b_{ji}(s+1) \bfe_j})
\\
\overset {\eqref{eq:qxtaus11}}
&{=}
{\rp X}^{\sum_{j=1}^{2n} \tilde b_{ji}(s+1) \tilde \bfg_j(s+1)}
\overset {\eqref{2eq:tdual0}}
{=}
(\mathrm{RHS}).
\end{split}
\end{align}
\end{proof}

For $\Lambda=\Lambda(0)$,
we define the automorphism $\frakq_q^x(s)\colon
 \calF_{\Lambda} \rightarrow \calF_{\Lambda}$
by
 \begin{align}
\label{eq:qPsiact6}
 \frakq_q^x(s) (X^{\tilde \bfm})
=X^{\tilde \bfm}
\prod_{u=1}^{|\beta |}
(1+ q_{k_s}^{\varepsilon_s \sgn(\beta)(2u-1)}
\hat Y^{\bfc^+_{k_s}(s)})^{-\sgn(\beta)},
\end{align}
where $\beta=\langle \delta_{k_s} \bfc_{k_s}(s), \tilde \bfm \rangle
:= \delta_{k_s} \bfc_{k_s}(s)^T \tilde D^T \tilde  \bfm$.

\begin{prop}
\label{prop:qxcom1}
The following commutative diagram holds:
\begin{align}
\label{eq:qxcd1}
\raisebox{25pt}
{
\begin{xy}
(0,0)*+{\calF_{\Lambda(s)}}="aa";
(25,0)*+{\calF_{\Lambda}}="ba";
(0,-15)*+{\calF_{\Lambda(s)}}="ab";
(25,-15)*+{\calF_{\Lambda}}="bb";
{\ar "aa";"ba"};
{\ar "ab";"bb"};
{\ar "aa";"ab"};
{\ar "ba";"bb"};
(12,3)*+{\text{\small $\tau_q^x(s-1;0)$}};
(12,-12)*+{\text{\small $\tau_q^x(s-1;0)$}};
(-5,-7.5)*+{\text{\small $\rho_q^x(s)$}};
(30,-7.5)*+{\text{\small $\frakq_q^x(s)$}};
\end{xy}
}
\end{align}
\end{prop}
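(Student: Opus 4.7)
The plan is to verify the commutativity of the diagram by evaluating both compositions on the generators $X(s)^{\tilde \bfm}$ ($\tilde\bfm\in\bbZ^{2n}$) of $\calF_{\Lambda(s)}$. Since $\tau_q^x(s-1;0)$ is a skew-field isomorphism and $\rho_q^x(s)$, $\frakq_q^x(s)$ are skew-field automorphisms (Lemmas \ref{lem:rqauto1}, \ref{lem:rqauto2}, and the analogous fact already used for Proposition \ref{prop:tauxq1}), both compositions are skew-field homomorphisms, so checking them on these generators suffices.

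First I will compute the bottom-left route $\tau_q^x(s-1;0)\circ\rho_q^x(s)$. By \eqref{eq:qxrho1},
\[
\rho_q^x(s)(X(s)^{\tilde\bfm})
= X(s)^{\tilde\bfm}\prod_{u=1}^{|m_{k_s}|}\!\bigl(1+q_{k_s}^{\varepsilon_s\sgn(m_{k_s})(2u-1)}\hat Y_{k_s}(s)^{\varepsilon_s}\bigr)^{-\sgn(m_{k_s})}.
\]
Applying $\tau_q^x(s-1;0)$ factor by factor, Proposition \ref{prop:tauxq1} gives
$\tau_q^x(s-1;0)(X(s)^{\tilde\bfm})=X^{\tilde\bfm'}$ with $\tilde\bfm':=\sum_{j=1}^{2n} m_j\tilde\bfg_j(s)$,
and
$\tau_q^x(s-1;0)(\hat Y_{k_s}(s)^{\varepsilon_s})
=(\hat Y^{\bfc_{k_s}(s)})^{\varepsilon_s}=\hat Y^{\bfc^+_{k_s}(s)}$.

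Next I will compute the top-right route $\frakq_q^x(s)\circ\tau_q^x(s-1;0)$. After applying $\tau_q^x(s-1;0)$ to obtain $X^{\tilde\bfm'}$, the definition \eqref{eq:qPsiact6} produces the factor $\prod_{u=1}^{|\beta|}\bigl(1+q_{k_s}^{\varepsilon_s\sgn(\beta)(2u-1)}\hat Y^{\bfc^+_{k_s}(s)}\bigr)^{-\sgn(\beta)}$ with $\beta=\langle \delta_{k_s}\bfc_{k_s}(s),\tilde\bfm'\rangle=\delta_{k_s}\bfc_{k_s}(s)^T\tilde D^T\tilde\bfm'$. The crux is the identity $\beta=m_{k_s}$; once this is known, the two products coincide termwise and the diagram commutes.

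To establish $\beta=m_{k_s}$, I will use the second duality \eqref{eq:dual4}, which says $\langle\delta_i\bfc_{i;t},\bfg_{j;t}\rangle=\delta_{ij}$, combined with the shape of $\tilde D=\binom{D}{O}$. Expanding,
\[
\beta=\sum_{j=1}^{2n} m_j\,\delta_{k_s}\bfc_{k_s}(s)^T\tilde D^T\tilde\bfg_j(s).
\]
For $j\le n$, $\tilde\bfg_j(s)=(\bfg_j(s),\bfzero)$ and $\tilde D^T\tilde\bfg_j(s)=D\bfg_j(s)$, so the summand equals $\langle\delta_{k_s}\bfc_{k_s}(s),\bfg_j(s)\rangle=\delta_{k_s j}$ by \eqref{eq:dual4}. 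For $j>n$, $\tilde D^T\bfe_j=0$ since the lower block of $\tilde D$ is zero, so that summand vanishes. Thus $\beta=m_{k_s}$, as required.

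I expect no serious obstacle here: the argument is formally parallel to Propositions \ref{prop:qs1} and \ref{prop:qcom1}, and the only nontrivial ingredient is the duality-based identification $\beta=m_{k_s}$, which is a short matrix computation. The one point requiring mild care is the handling of noncommutativity: $\rho_q^x(s)$ multiplies $X(s)^{\tilde\bfm}$ on the left by a product that is not central, but because $\tau_q^x(s-1;0)$ is a skew-field isomorphism it transports the entire expression faithfully, and on the $\calF_\Lambda$ side the corresponding commutation relations between $X^{\tilde\bfm'}$ and $\hat Y^{\bfc^+_{k_s}(s)}$ are encoded in the quantum datum $\Lambda$ in a way compatible with $\frakq_q^x(s)$ by construction of \eqref{eq:qPsiact6}, so no additional $q$-scalars appear when comparing the two sides.
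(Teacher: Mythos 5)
Your proof is correct and follows essentially the same route as the paper: the key step in both is the duality computation $\langle \delta_{k_s}\bfc_{k_s}(s), \tilde\bfg_j(s)\rangle = \delta_{k_s j}$ (with the frozen part killed by the zero block of $\tilde D$), which identifies the exponent $\beta$ in \eqref{eq:qPsiact6} with the corresponding exponent in \eqref{eq:qxrho1}. The only difference is that you verify the identity on all monomials $X(s)^{\tilde\bfm}$, obtaining $\beta = m_{k_s}$, whereas the paper specializes to $\tilde\bfm=\tilde\bfg_i(s)$ and invokes the homomorphism property; these are the same argument.
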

\begin{proof}
 Let us specialize 
 $\tilde \bfm= \tilde \bfg_{i}(s)$ in \eqref{eq:qPsiact6}.
 By the duality \eqref{eq:dual2},
 we have
 \begin{align}
 \beta=\langle \delta_{k_s} \bfc_{k_s}(s),  \tilde \bfg_{i}(s) \rangle = \delta_{k_s i}.
 \end{align}
 Thus,
 we have
 \begin{align}
 \label{eq:qqx1}
 \frakq_q^x(s)(X^{\tilde \bfg_{i}(s)})
 =X^{\tilde \bfg_{i}(s)}
(1+ q_{k_s}^{\varepsilon_s }\hat Y^{\bfc^+_{k_s}(s)})^{- \delta_{k_s i}}.
 \end{align}
 Then, by \eqref{eq:qxrho1} and Proposition \ref{prop:tauxq1}, we have
 \eqref{eq:qxcd1}.
 \end{proof}

We define $\mu_q^x(s;0)$ and $\frakq_q^x(s;0)$ in the same way as \eqref{eq:qmus01} and \eqref{eq:qq01}.
Then, we have the \emph{Fock-Goncharov decomposition}\index{Fock-Goncharov decomposition!of composite quantum mutation} for $\mu_q^x(s;0)$,
\begin{align}
\mu_q^x(s;0)=\frakq_q^x(s;0)
\circ \tau_q^x(s;0).
\end{align}
Also, we have a parallel result with Proposition  \ref{prop:qrhoAd1}.
\begin{prop}
\label{prop:qqxAd1}
\begin{align}
\label{eq:qqxAd1}
\rho^x_q(s) &=
\rmAd[\Psi_{q_{k_s}}(\hat Y_{k_s}(s)^{\varepsilon_s})^{-\varepsilon_s}],
\\
\label{eq:qqxAd2}
\frakq_q^x(s) &=
\rmAd[\Psi_{q_{k_s}}(\hat Y^{\bfc^+_{k_s}(s)})^{-\varepsilon_s}].
\end{align}
\end{prop}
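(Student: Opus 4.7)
The plan is to mirror the proofs of Lemma \ref{lem:AdPsi1} and Proposition \ref{prop:qrhoAd1}, replacing $Y_k(s)$ by $\hat Y_{k_s}(s)$ and $Y(s)^{\bfn}$ by the principally extended monomial $X(s)^{\tilde\bfm}$. The key input is the $q$-commutation law \eqref{eq:qXYrel1}, which yields
\begin{align*}
\hat Y_{k_s}(s)^{\varepsilon_s}\, X(s)^{\tilde\bfm} = q_{k_s}^{\,2\varepsilon_s m_{k_s}}\, X(s)^{\tilde\bfm}\, \hat Y_{k_s}(s)^{\varepsilon_s},
\end{align*}
so that the integer $m_{k_s}$ now plays the role occupied by $\alpha = \{\delta_k\bfe_k,\bfn\}_{\Omega(s)}$ in Lemma \ref{lem:AdPsi1}.

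First I would establish \eqref{eq:qqxAd1}. Moving $\Psi_{q_{k_s}}(\hat Y_{k_s}(s)^{\varepsilon_s})^{\varepsilon_s}$ past $X(s)^{\tilde\bfm}$ shifts its argument by $q_{k_s}^{2\varepsilon_s m_{k_s}}$, and iterated application of the $q$-difference relation \eqref{eq:qdrec1} converts this shift into a finite product exactly as in \eqref{eq:Psidif1}. This gives
\begin{align*}
\rmAd[\Psi_{q_{k_s}}(\hat Y_{k_s}(s)^{\varepsilon_s})^{\varepsilon_s}](X(s)^{\tilde\bfm}) = X(s)^{\tilde\bfm}\prod_{u=1}^{|m_{k_s}|}\bigl(1+q_{k_s}^{\varepsilon_s\sgn(m_{k_s})(2u-1)}\hat Y_{k_s}(s)^{\varepsilon_s}\bigr)^{\sgn(m_{k_s})}.
\end{align*}
Because $\hat Y_{k_s}(s)$ is fixed by this automorphism, inverting it (using $\rmAd[g^{-1}]=\rmAd[g]^{-1}$) flips $\sgn(m_{k_s})$ to $-\sgn(m_{k_s})$, which is precisely the formula \eqref{eq:qxrho1} defining $\rho_q^x(s)$.

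For the second identity \eqref{eq:qqxAd2}, I would conjugate by $\tau_q^x(s-1;0)$. Proposition \ref{prop:qxcom1} gives $\frakq_q^x(s) = \tau_q^x(s-1;0)\circ\rho_q^x(s)\circ\tau_q^x(s-1;0)^{-1}$, and since any algebra isomorphism intertwines $\rmAd$-actions, this equals $\rmAd[\tau_q^x(s-1;0)(\Psi_{q_{k_s}}(\hat Y_{k_s}(s)^{\varepsilon_s})^{-\varepsilon_s})]$. The isomorphism commutes with the formal series $\Psi_{q_{k_s}}(\cdot)$, and by Proposition \ref{prop:tauxq1} it sends $\hat Y_{k_s}(s)^{\varepsilon_s}$ to $\hat Y^{\varepsilon_s\bfc_{k_s}(s)}=\hat Y^{\bfc_{k_s}^+(s)}$, yielding \eqref{eq:qqxAd2}.

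The main (mild) obstacle is justifying that the proof of Lemma \ref{lem:AdPsi1} transports verbatim even though $\hat Y_{k_s}(s)$ is not a generator of $\calF_{\Lambda(s)}$ but a monomial in the $X_j(s)$'s. Concretely, one must check that the only structural ingredients used are (i) the scalar $q$-commutation relation between $\hat Y_{k_s}(s)^{\varepsilon_s}$ and $X(s)^{\tilde\bfm}$, and (ii) the $q$-difference relation for $\Psi_q$; both are available. A secondary care is the interpretation of $\Psi_{q_{k_s}}(\hat Y_{k_s}(s)^{\varepsilon_s})$ acting via $\rmAd$ on $\calF_{\Lambda(s)}$, which is legitimate because the conjugation is polynomial in each finite $q$-degree even though the exponential itself only converges in a suitable completion.
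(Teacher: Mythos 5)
Your proof is correct and follows essentially the same route as the paper: derive the $q$-commutation of $\hat Y_{k_s}(s)^{\varepsilon_s}$ with the $X$-monomials from \eqref{eq:qXYrel1}, transport the argument of Lemma \ref{lem:AdPsi1} to obtain the adjoint action, compare with \eqref{eq:qxrho1}, and deduce \eqref{eq:qqxAd2} by conjugating with $\tau_q^x(s-1;0)$ via Propositions \ref{prop:qxcom1} and \ref{prop:tauxq1}. The only (harmless) difference is that you work with general monomials $X(s)^{\tilde\bfm}$ rather than the generators $X_i(s)$, and you spell out the inversion and intertwining steps that the paper leaves implicit.
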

\begin{proof}
By \eqref{eq:qXYrel1},
for any $\varepsilon\in \{1, -1\}$, we have
\begin{align}
\hat Y_k(s)^{\varepsilon} X_i(s)=
q_k^{2 {\varepsilon} \delta_{ki}} X_i (s)\hat Y_k(s)^{\varepsilon} .
\end{align}
Then, as in the proof of Lemma \ref{lem:AdPsi1}, we have
\begin{align}
\label{eq:AdPsix1}
\begin{split}
&\
\rmAd[\Psi_{q_k}(\hat Y_{k}(s)^{\varepsilon})^{\varepsilon}](X_i(s))
=
X_i(s)
 (1+ 
 q_k^{\varepsilon}
\hat Y_{k}(s)^{\varepsilon})^{\delta_{ki}}.
 \end{split}
 \end{align}
Comparing it with \eqref{eq:qxrho1}, we have the equality \eqref{eq:qqxAd1}.
The equality \eqref{eq:qqxAd2} follows from \eqref{eq:qqxAd1}
and Proposition \ref{prop:qxcom1}.
\end{proof}

In summary, even though the quantization of $x$-variables requires a little more complicated formulation
(i.e., the principal extension and the compatible pair) than $y$-variables,
they can be treated in a unified way.
In particular, 
the quantum dilogarithm is responsible for the nontropical part $\rho_q^{x}(s)$
of the quantum mutation.
The picture is further extended to the 
algebraic approach by the group $G_q$.
Namely, 
the automorphism
$ \frakq_q^x(s)$ in  \eqref{eq:qPsiact6}
is also identified with the action of the same dilogarithm element
in Proposition \ref{prop:qqs2}  but for a different representation of $G_q$
called the \emph{principal $x$-representation}\index{principal!$x$-representation}  \cite{Nakanishi22b}.

\notes
The quantization of $y$-variables was introduced in \cite{Fock07}
together with the Fock-Goncharov decomposition (with $\varepsilon=1$)
and also its relevance to the quantum dilogarithm.
The   Fock-Goncharov decomposition with the tropical sign 
was appeared in \cite{Keller11,Kashaev11}.
The quantization of $x$-variables of geometric type was introduced in \cite{Berenstein05b}
and further studied by \cite{Tran09}.
Here, we concentrated on the one with principal coefficients,
where the compatibility pair is conveniently given by \eqref{eq:Lam1} and \eqref{eq:tB1}.
Both quantizations are treated in a unified way
by the structure group $G_q$    \cite{Nakanishi22b}
based on the results in \cite{Kontsevich13, Gross14, Mandel15, Davison19}.
The pentagon relation appeared in a closely related but different setup and context in \cite{Kontsevich08}.
The pentagon relation for quantum dilogarithm elements
 in the form of Proposition \ref{prop:qPsipent2}
 is  due to   \cite{Nakanishi22b}.

\chapter{Quantum dilogarithm identities}

Remarkably any period of a classical $Y$-pattern is lifted to a period of the corresponding
quantum $Y$-pattern.
Moreover, there is an associated quantum dilogarithm identity (QDI)
for the quantum dilogarithm.
To be precise, there are two forms of QDIs, that is,
a QDI in tropical form and a QDI in universal form.
Examples of the QDI associated with a loop in a quantum cluster scattering diagram (QCSDs)
are also presented.

\section{Quantum synchronicity}
\label{sec:preservation1}

For the sequences of mutations in \eqref{eq:mseq1} and \eqref{eq:mseq2}
we consider their quantum counterparts.
Let $\Upsilon_q(s)=(\bfY(s),B(s))$ be the quantum $Y$-seed in \eqref{eq:qmseq1},
while $\Sigma_q(s)=(\tilde \bfX(s),B(s))$ be the principally extended quantum seed in
Section \ref{sec:quantumx1}.
 For a permutation $\nu\in S_n$,
 the $\nu$-periodicity condition is given by
 \begin{align}
\label{eq:qsigmap1}
\Sigma_q(P)=\nu \Sigma_q(0),
\\
\label{eq:qsigmap2}
\Upsilon_q(P)=\nu \Upsilon_q(0),
\end{align}
where in \eqref{eq:qsigmap1} the permutation $\nu$ acts on the unfrozen variables $X_1$, \dots, $X_n$ among
the principally extended variables $\tilde \bfX=(X_1,\dots,X_{2n})$. 
  If the condition \eqref{eq:qsigmap1} (resp.~\eqref{eq:qsigmap2}) holds,
 then  the condition\ \eqref{eq:sigmap1} (resp.~\eqref{eq:sigmap2}) holds
 in the limit $q\rightarrow 1$.
 Remarkably, the opposite indication also holds.

 We start from the $x$-variable case,
 which is due to Berenstein-Zelevinsky \cite{Berenstein05b}.
 
  \begin{thm}
 [Quantum synchronicity {\cite[Theorem 6.1]{Berenstein05b}}]
 \label{thm:qsync1}\index{quantum!synchronicity (for $x$-variable)}\index{synchronicity!quantum --- (for $x$-variable)}
 The condition \eqref{eq:sigmap1} implies the 
  condition \eqref{eq:qsigmap1}.
 \end{thm}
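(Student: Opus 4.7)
First, the $B$-matrix portion of the quantum seed $\Sigma_q(s)=(\tilde\bfX(s),B(s))$ mutates identically to the classical $B$-matrix, so the hypothesis $\Sigma(P)=\nu\Sigma(0)$ already forces $B(P)=\nu B(0)$. The remaining task is to establish the quantum $x$-variable identities
\begin{align*}
X_i(P)=X_{\nu^{-1}(i)} \qquad (i=1,\dots,n)
\end{align*}
in the skew-field $\calF_{\Lambda}$ with $\Lambda=\Lambda(0)$.

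The plan is to apply the Fock--Goncharov decomposition $\mu_q^x(P-1;0)=\frakq_q^x(P-1;0)\circ\tau_q^x(P-1;0)$ developed in Section~\ref{subsubsec:FG1}. By the classical synchronicity (Theorem~\ref{1thm:synchro1}, equivalence (a)$\Leftrightarrow$(c)), the hypothesis lifts to the $G$-matrix periodicity $G(P)=\nu G(0)=\nu I$, whence $\tilde\bfg_i(P)=\bfe_{\nu^{-1}(i)}$. Proposition~\ref{prop:tauxq1} then yields
\begin{align*}
\tau_q^x(P-1;0)(X_i(P))=X^{\tilde\bfg_i(P)}=X_{\nu^{-1}(i)}.
\end{align*}
Since $X_i(P)$ expressed in the initial coordinates equals $\mu_q^x(P-1;0)(X_i(P))$, the required quantum periodicity is equivalent to
\begin{align*}
\frakq_q^x(P-1;0)(X_{\nu^{-1}(i)})=X_{\nu^{-1}(i)} \qquad (i=1,\dots,n).
\end{align*}

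To approach this last identity, I would combine the quantum Laurent phenomenon (Theorem~\ref{thm:qLaurent1}) with bar-invariance (Proposition~\ref{prop:barinv1}). By Proposition~\ref{prop:qqxAd2} the nontropical part $\frakq_q^x(P-1;0)=\mathrm{Ad}[g_q]$ acts as conjugation by an element $g_q$ built from quantum dilogarithms of the $\hat Y^{\bfc^+_{k_s}(s)}$, so its output on $X_{\nu^{-1}(i)}$ is again a bar-invariant Laurent polynomial in $\tilde\bfX$ with coefficients in $\bbZ[q^{\pm 1/\delta'_0}]$. Its specialization at $q=1$ coincides with the classical $\frakq^x(P-1;0)|_{q=1}(x_{\nu^{-1}(i)})$, which equals $x_{\nu^{-1}(i)}$ because the classical $\frakq^x(P-1;0)$ reduces to the identity on principally extended $x$-variables, a consequence of the classical synchronicity applied to the principally extended pattern (cf.\ Proposition~\ref{prop:principal1}).

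The hard part---and the heart of Berenstein--Zelevinsky's argument---is to upgrade ``equal classical limit plus bar-invariance'' to the full quantum equality, since a bar-invariant Laurent polynomial is not pinned down by its $q=1$ specialization alone (witness $q+q^{-1}-2$). The resolution exploits the principal extension: the frozen variables $X_{n+1},\dots,X_{2n}$ record the $g$-vector (tropical) grading of each Laurent monomial, and a bar-invariant Laurent polynomial with a prescribed dominant monomial $X^{\tilde\bfg}$ is determined by its coefficients being symmetric Laurent polynomials in $q^{1/\delta'_0}$ whose $q=1$ values realize the prescribed classical data. In the present setting both $\frakq_q^x(P-1;0)(X_{\nu^{-1}(i)})$ and $X_{\nu^{-1}(i)}$ share the dominant monomial $X^{\bfe_{\nu^{-1}(i)}}$ (the nontropical part $\frakq_q^x$ only modifies the expansion by strictly lower-order $\hat Y$-contributions) and the same classical specialization $x_{\nu^{-1}(i)}$; invoking uniqueness forces their equality and delivers $\Sigma_q(P)=\nu\Sigma_q(0)$.
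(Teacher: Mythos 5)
Your reduction via the Fock--Goncharov decomposition is fine as far as it goes: since $\tau_q^x(P-1;0)(X_i(P))=X_{\nu^{-1}(i)}$ by Proposition \ref{prop:tauxq1} together with the classical $G$-matrix periodicity, the claim is indeed equivalent to $\frakq_q^x(P-1;0)(X_{\nu^{-1}(i)})=X_{\nu^{-1}(i)}$, which is nothing but $\mu_q^x(P-1;0)(X_i(P))=X_{\nu^{-1}(i)}$. The genuine gap is in your last step, and it is exactly the one you flagged yourself. You correctly note that a bar-invariant Laurent polynomial is not determined by its $q=1$ specialization (your own example $q+q^{-1}-2$), and then you invoke a ``uniqueness'' for bar-invariant Laurent polynomials with a prescribed dominant monomial and prescribed classical limit. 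No such uniqueness holds: $X_{\nu^{-1}(i)}+(q+q^{-1}-2)X^{\tilde\bfm}$ is bar-invariant, has the same dominant monomial and the same $q=1$ limit as $X_{\nu^{-1}(i)}$, yet is not equal to it. Nothing in your argument excludes such lower-order terms in $\frakq_q^x(P-1;0)(X_{\nu^{-1}(i)})$; observing that the nontropical part only adds ``strictly lower-order contributions'' controls the dominant monomial but says nothing about whether those lower-order coefficients vanish.

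The missing ingredient is positivity, and this is how the paper closes the gap. Because the quantum exchange relation and the $q$-commutation relations involve no subtraction, one writes $\mu_q^x(P-1;0)(X_i(P))=K_i(\bfX)L_i(\bfX)^{-1}$ with $K_i$, $L_i$ subtraction-free; by the quantum Laurent phenomenon this equals a Laurent polynomial $M_i(\bfX)$ over $\bbZ[q^{\pm1/\delta_0}]$, so $K_i=M_iL_i$. A Newton-polytope argument applied to this factorization shows that the coefficient of every \emph{vertex} monomial of $M_i$ admits a subtraction-free expression and hence does not vanish at $q=1$, so the Newton polytope cannot shrink in the classical limit. Since that limit is the single monomial $x_{\nu^{-1}(i)}$, the polytope is a point and $M_i=cX_{\nu^{-1}(i)}$ outright --- there are no lower-order terms left to rule out. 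Only then do the same argument applied to the inverse mutation sequence (giving $cc'=1$, hence $c=q^{a/\delta_0}$) and bar-invariance force $c=1$. If you wish to keep your route through $\frakq_q^x$, you still need this subtraction-freeness/Newton-polytope step, or some substitute for it, to exclude bar-invariant coefficients that vanish at $q=1$.
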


\begin{proof}
Assume that the condition \eqref{eq:sigmap1} holds.
Then, the condition \eqref{eq:qsigmap1} is equivalent to the condition
\begin{align}
\label{eq:qqxP2}
\mu_q^x(P-1;0)(X_i(P)) =X_{\nu^{-1}(i)}
\quad
(i=1,\, \dots, \,n).
\end{align}
Since the mutation \eqref{eq:qxmu2} and the $q$-commutative relation
\eqref{eq:qXcom1} do not involve any subtraction,
one can represent it as
\begin{align}
\mu_q^x(P-1;0)(X_i(P)) =K_i(\bfX) L_i(\bfX)^{-1},
\end{align}
where $K_i(\bfX)$ and $L_i(\bfX)$ are (noncommutative) polynomials in $\bfX$
with subt\-rac\-tion-free coefficients in $\bbQ(q^{1/\rp \delta'_0})$.
By Theorem \ref{thm:qLaurent1}, this equals to
a Laurent polynomial $M_i(\bfX)$ in $\bfX$ with coefficients in $\bbZ[q^{\pm 1/{\rp \delta'_0}}]$.
Thus, we have
\begin{align}
\label{eq:KML1}
K_i(\bfX)=M_i(\bfX)L_i(\bfX).
\end{align}
Let $\mathrm{NP}(M_i(\bfX))$ be the \emph{Newton polytope}\index{Newton polytope} of $M_i(\bfX)$, that is,
the convex hull of all exponents of $\bfX$ in $\bbR^{2n}$.
(This is well defined since $\bfX$ are $q$-commutative.)
Let $a_1=c_1 X^{\tilde \bfm_1}$ be a Laurent monomial belonging to $M_i(\bfX)$ such that
$\tilde \bfm_1$ is at a vertex of $\mathrm{NP}(M_i(\bfX))$.
By \eqref{eq:KML1},
there are some
vertices ${\tilde \bfm_2}$ and ${\tilde \bfm_3}$ of $\mathrm{NP}(L_i(\bfX))$ and $\mathrm{NP}(K_i(\bfX))$,
respectively, such that,
for the monomials $a_2=c_2 X^{\tilde \bfm_2}$ and
$a_3=c_3 X^{\tilde \bfm_3}$ belonging to $L_i(\bfX)$ and $K_i(\bfX)$, respectively,
the equality $a_3=a_1 a_2$ holds.
Thus, ${\rp c_1 =c_3/c_2}$ has a subtraction-free expression in $\bbQ(q^{1/{\rp \delta'_0}})$.
In particular, $\lim_{q\rightarrow 1} c_1\neq 0$.
This means that the Newton polytope $\mathrm{NP}(M_i(\bfX))$ does not shrink in the limit $q\rightarrow 1$.
On the other hand,
by the assumption \eqref{eq:sigmap1}, we have $\lim_{q\rightarrow 1} M_i(\bfX)=x_{\nu^{-1}(i)}$.
Therefore, $M_i(\bfX)=cX_{\nu^{-1}(i)}$ for some $c\in
\bbZ[q^{\pm 1/{\rp \delta'_0}}]$.
One can also apply the same argument to the inverse mutation sequence.
Then, we have
\begin{align}
\mu_q^x(P-1;0)^{-1}(X_{\nu^{-1}(i)})=c' {\rp X_i(P)}
\end{align}
for  some $c'\in
\bbZ[q^{\pm 1/{\rp \delta'_0}}]$.
Since $cc'=1$, $c$ is invertible in $\bbZ[q^{\pm 1/{\rp \delta'_0}}]$.
Thus, $c=q^{a /{\rp \delta'_0}}$ for some $a\in \bbZ$.
Finally, by Proposition \ref{prop:barinv1},  we have $c=1$.
\end{proof}

Next, we consider the $y$-variable case,
which is the basis of our approach to quantum dilogarithm identities.
 \begin{thm}
 [Quantum synchronicity {\cite[Lemma 2.22]{Fock07}, \cite[Prop.~3.4]{Kashaev11}}]
 \label{thm:qsync2}\index{quantum!synchronicity (for $y$-variable)}\index{synchronicity!quantum --- (for $y$-variable)}
 The condition \eqref{eq:sigmap2} implies the 
  condition \eqref{eq:qsigmap2}.
 \end{thm}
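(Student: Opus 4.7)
The plan is to reduce Theorem \ref{thm:qsync2} to the quantum $x$-synchronicity Theorem \ref{thm:qsync1} by using the quantum $\hat Y$-variables \eqref{eq:qhaty1} as a bridge between the two quantizations.

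First, by the classical synchronicity Theorem \ref{1thm:synchro1}, the hypothesis $\Upsilon(P)=\nu\Upsilon(0)$ is equivalent to $\Sigma(P)=\nu\Sigma(0)$, and Theorem \ref{thm:qsync1} then yields the principally extended quantum periodicity $\Sigma_q(P)=\nu\Sigma_q(0)$. In particular, $X_i(P)=X_{\nu^{-1}(i)}$ for $i=1,\dots,n$, the frozen variables $X_{n+1},\dots,X_{2n}$ are fixed, and $B(P)=\nu B(0)$, $C(P)=\nu C(0)$. Substituting these equalities into the definition \eqref{eq:qhaty1} of $\hat Y_i(s)$ immediately gives $\hat Y_i(P)=\hat Y_{\nu^{-1}(i)}$.

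Next, I would observe that since the bottom block of $\tilde B(0)$ equals $C(0)=I$, the columns of $\tilde B(0)$ are linearly independent, so the assignment $Y_i\mapsto \hat Y_i$ extends to an injective algebra homomorphism $\phi\colon \calA_\Omega\hookrightarrow \calA_\Lambda$ and further to the skew-fields of fractions $\calF_\Omega \hookrightarrow \calF_\Lambda$: the $q$-commutation relations match by \eqref{eq:qhY1}, and algebraic independence of the $\hat Y_i$'s follows from the identity block in $\tilde B(0)$. It then suffices to check inductively on $s$ that $\phi(Y_i(s))=\hat Y_i(s)$; combined with $\hat Y_i(P)=\hat Y_{\nu^{-1}(i)}$ and the injectivity of $\phi$, this forces $Y_i(P)=Y_{\nu^{-1}(i)}$, which is exactly \eqref{eq:qsigmap2}.

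The main obstacle is the inductive step, which amounts to a quantum analog of Proposition \ref{prop:yhat1}: the $\hat Y$-variables mutate according to the same formula \eqref{eq:qymut1} as the $Y$-variables. I would prove it by comparing the Fock-Goncharov decompositions $\mu_q(s)=\rho_q(s)\circ\tau_q(s)$ and $\mu_q^x(s)=\rho_q^x(s)\circ\tau_q^x(s)$ on the respective generators. For the tropical parts, Propositions \ref{prop:qtauy1} and \ref{prop:tauxq1} show that both $\tau_q(s;0)$ and $\tau_q^x(s;0)$ send the $s$-th variables to monomials with the same exponent vector $\bfc_i(s+1)$, so $\phi$ intertwines them. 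For the nontropical parts, Propositions \ref{prop:qrhoAd1} and \ref{prop:qqxAd1} identify $\rho_q(s)$ and $\rho_q^x(s)$ as the adjoint actions of $\Psi_{q_{k_s}}(Y_{k_s}(s)^{\varepsilon_s})^{-\varepsilon_s}$ and $\Psi_{q_{k_s}}(\hat Y_{k_s}(s)^{\varepsilon_s})^{-\varepsilon_s}$ respectively, which $\phi$ manifestly intertwines once the inductive hypothesis is applied to the arguments. Assembling these two pieces closes the induction.
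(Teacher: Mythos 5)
Your proof is correct and takes essentially the same route as the paper: pass through the quantum $x$-synchronicity (Theorem \ref{thm:qsync1}) to get periodicity of the $\hat Y$-variables, then transfer it back to the $Y$-variables using that $\tilde B(0)$ has rank $n$ so the $\hat Y_i$'s satisfy only the relations \eqref{eq:qhY1}. The only difference is that you spell out, via the Fock--Goncharov decompositions, the intermediate claim that $\hat\bfY(s)$ and $\bfY(s)$ obey the same mutation, which the paper asserts without proof.
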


\begin{proof}
Assume that the condition \eqref{eq:sigmap2} holds.
By the classical synchronicity (Theorem \ref{1thm:synchro1}),
the condition \eqref{eq:sigmap1} holds.
Thus, by Theorem \ref{thm:qsync1},
the condition \eqref{eq:qsigmap1} holds.
It follows that the periodicity of the quantum $\hat y$-variables
$\hat \bfY(P) = \nu \hat \bfY$ holds.
On the other hand,
$\hat \bfY$ and $\bfY$ satisfy the same $q$-commutative relation
and $\hat \bfY(s)$ and $\bfY(s)$ obey the same mutation.
Moreover, if $\hat Y^{\bfn} = \hat Y^{\bfn'}$ holds,
then $\tilde B\bfn=\tilde B\bfn'$ holds. 
Since $\tilde B$ has rank $n$ (as a matrix),
we have $\bfn=\bfn'$.
This means that $\hat Y^{\bfn}$ satisfies only relations
generated by \eqref{eq:qhY1}.
Therefore, we have $ \bfY(P) = \nu  \bfY$.
\end{proof}

\section[QDI associated with period of  cluster pattern]{QDI associated with a period of a  $Y$-pattern}

\label{sec:QDI1}

In this section, we present several  forms of quantum dilogarithm identities
associated with a period of a  $Y$-pattern.

\subsection{QDI for quantum dilogarithm elements}
We start from 
the quantum analogue of Theorem \ref{thm:DIE1} for the quantum dilogarithm elements
$\Psi_{a}[\bfn]=\Psi_{a,0}[\bfn]$ in \eqref{3eq:qgei1}.

\begin{thm}
[QDI for quantum dilogarithm elements]
\label{thm:QDI1}
 \index{quantum dilogarithm identity!for quantum dilogarithm element}
Suppose that
  the  sequence of  mutations
  \eqref{eq:mseq2} is  \emph{$\nu$-periodic}.
  Then, the following relation holds in $G_q$.
\begin{align}
\label{eq:QDI2}
\Psi_{1/\delta_{k_{P-1}}}[\bfc^+_{k_{P-1}}({P-1})]^{\varepsilon_{P-1}}
\cdots
\Psi_{1/\delta_{k_0}}[\bfc^+_{k_0}(0)]^{\varepsilon_0}
=\rmid.
\end{align}
\end{thm}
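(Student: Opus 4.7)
The plan is to mirror the proof of Theorem \ref{thm:DIE1} with the classical ingredients replaced by their quantum counterparts, the pivotal bridge being the quantum synchronicity (Theorem \ref{thm:qsync2}).

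First I would reduce to the case $\Upsilon_q(0)=\Upsilon_q$, i.e., the starting seed is the initial one. The argument of Case 2 in the proof of Theorem \ref{thm:DIE1} carries over verbatim: using the compatibility \eqref{eq:comp1} (which likewise lifts to the quantum setting since $\nu$ acts compatibly on the $Y$-variables), one extends \eqref{eq:mseq2} by prepending mutations from $\Upsilon_q$ to $\Upsilon_q(0)$ and appending the $\nu$-transformed inverses, and \eqref{eq:QDI2} for the subsequence is obtained from \eqref{eq:QDI2} for the extended sequence by conjugation by some $h\in G_q$.

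With $\Upsilon_q(0)=\Upsilon_q$ assumed, the classical $\nu$-periodicity \eqref{eq:sigmap2} implies, by Theorem \ref{thm:qsync2}, the quantum $\nu$-periodicity \eqref{eq:qsigmap2}. As in \eqref{eq:mu6}--\eqref{eq:qPid1}, this forces
\begin{align*}
\mu_q(P-1;0)=\nu,\quad \tau_q(P-1;0)=\nu,
\end{align*}
whence from the Fock-Goncharov decomposition \eqref{eq:qdecom1} one concludes $\frakq_q(P-1;0)=\mathrm{id}$ as an automorphism of $\overline{\calA}_\Omega$. Invoking the identification \eqref{eq:qqPhi1}, this is precisely the statement that
\begin{align*}
g:=\Psi_{1/\delta_{k_0}}[\bfc^+_{k_0}(0)]^{-\varepsilon_0}\cdots
\Psi_{1/\delta_{k_{P-1}}}[\bfc^+_{k_{P-1}}(P-1)]^{-\varepsilon_{P-1}}
\end{align*}
acts trivially on $\overline{\calA}_\Omega$ under the $y$-representation $\rho_y$ of $G_q$. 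When $\Omega$ is nonsingular, the $y$-representation \eqref{eq:qXn2} is injective, so $g=\mathrm{id}$ in $G_q$; taking the inverse yields \eqref{eq:QDI2}.

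To remove the nonsingularity assumption I would repeat the principal-extension trick from Case 1 of Theorem \ref{thm:DIE1}: pass to $\overline{B}$ and $\overline{\Omega}$ as in \eqref{eq:pext1}, \eqref{eq:pextO1}, whose associated $\Omega$-matrix is nonsingular. By Proposition \ref{prop:principal1}, the $\nu$-periodicity lifts to the extended sequence, and by \eqref{eq:oC1} the $c^+$-vectors appearing in the identity are unchanged (the new rows being zero in the relevant indices). Thus the relation \eqref{eq:QDI2} holds in $G_{q,\overline{\Omega}}$, and since the natural inclusion of Lie algebras $\frakg_{q,\Omega}\hookrightarrow \frakg_{q,\overline{\Omega}}$ (induced by the block form of $\overline{\Omega}$) exponentiates to an inclusion $G_q=G_{q,\Omega}\hookrightarrow G_{q,\overline{\Omega}}$, the identity descends to $G_q$.

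The main technical obstacle I anticipate is the injectivity step of $\rho_y$ in the nonsingular quantum case: one must check, as in Proposition \ref{prop:gaction1} classically, that the derivations $\tilde X_\bfn$ defined by \eqref{eq:qtildeX1} remain linearly independent so that $\rho_y\colon G_q\to \mathrm{Aut}(\overline{\calA}_\Omega)$ is faithful. This requires that the $q$-numbers $[\{\bfn,\bfn'\}_\Omega]_q$ appearing as structure constants do not conspire to introduce unexpected kernel elements; with $\Omega$ nonsingular and $q$ transcendental over $\bbQ$ this is straightforward, but it is the point where care is needed, and it is why the principal extension is essential in the singular case.
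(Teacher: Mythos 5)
Your proposal is correct and follows essentially the same route as the paper, whose proof of Theorem \ref{thm:QDI1} simply invokes the quantum synchronicity (Theorem \ref{thm:qsync2}) and then says ``repeat the proof of Theorem \ref{thm:DIE1}''; your write-up is a faithful expansion of exactly that repetition, including the reduction to $\Upsilon_q(0)=\Upsilon_q$, the use of \eqref{eq:qdecom1} and \eqref{eq:qqPhi1}, the faithfulness of $\rho_y$ for nonsingular $\Omega$, and the principal extension in the singular case.
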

\begin{proof}
By Theorem \ref{thm:qsync2}, 
the periodicity \eqref{eq:qsigmap2} holds.
{\rp
Namely, we have $\mu_q(P-1;0)=\rmid$.
Also, by Proposition \ref{prop:qtauy1}, we have $\tau_q(P-1;0)=\rmid$.
Therefore, the equality $\frakq_q(P-1;0)=\rmid$ holds.}
Then, we repeat the proof of Theorem \ref{thm:DIE1}
{\rp by using Proposition \ref{prop:qqs2}.}
\end{proof}

By \eqref{eq:QDEtoDE1},
in the limit $q\rightarrow 1$, the classical  identity \eqref{eq:DIE2} is recovered.

\begin{ex}
[Continued from Example \ref{ex:QDIA22}]
\label{ex:QDIA23}
For the pentagon periodicity of type $A_2$,
the relation \eqref{eq:QDI2} is written as
\begin{align}
\Psi_1[\bfe_2]^{-1}
\Psi_1[\bfe_1+\bfe_2]^{-1}
\Psi_1[\bfe_1]^{-1}
\Psi_1[\bfe_2]
\Psi_1[\bfe_1]
=\rmid,
\end{align}
or equivalently,
\begin{align}
\label{eq:qpentY1}
\Psi_1[\bfe_2]
\Psi_1[\bfe_1]
=
\Psi_1[\bfe_1]
\Psi_1[\bfe_1+\bfe_2]
\Psi_1[\bfe_2].
\end{align}
This is a special case of the pentagon relation \eqref{eq:Psipent2}.
\end{ex}

\subsection{QDI in tropical form}
We turn the relation \eqref{eq:QDI2}
into an identity for the quantum dilogarithm $\Psi_q(x)$.
The derivation is more direct than the classical case.

\begin{thm}
[QDI in tropical form {\cite[Theorem~5.11]{Keller11}, \cite[Theorem 3.5]{Kashaev11}}]
\label{thm:QDI2}
\index{quantum dilogarithm identity!in tropical form}
Let $\bfY$ be the initial quantum $Y$-variables
obeying the relation \eqref{eq:Ycom2} with $s=0$.
Suppose that
  the  sequence of  mutations
  \eqref{eq:mseq2} is  \emph{$\nu$-periodic}.
  Then, the following identity holds in $\overline{\calA}_{\Omega}$:
 \begin{align}
  \label{eq:QDItrop1}
\Psi_{q_{k_{P-1}}}(Y^{\bfc^+_{k_{P-1}}({P-1})})^{\varepsilon_{P-1}}
\cdots
\Psi_{q_{k_0}}(Y^{\bfc^+_{k_0}(0)})^{\varepsilon_0 }
=1.
\end{align}
\end{thm}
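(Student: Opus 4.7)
My plan is to deduce \eqref{eq:QDItrop1} from the fact that, under $\nu$-periodicity, the composed automorphism $\frakq_q(P-1;0)$ becomes the identity of $\calF_\Omega$, and then to convert this equality of automorphisms into an equality of elements in $\overline\calA_\Omega$ via the adjoint-action realization of $\frakq_q(s)$ provided by Proposition \ref{prop:qrhoAd1}.

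First, the classical $\nu$-periodicity \eqref{eq:sigmap2} of \eqref{eq:mseq2} combined with quantum synchronicity (Theorem \ref{thm:qsync2}) yields the quantum periodicity $\Upsilon_q(P) = \nu\,\Upsilon_q(0)$, which translates to $\mu_q(P-1;0) = \nu$ as a skew-field isomorphism $\calF_{\Omega(P)} \to \calF_\Omega$. By Proposition \ref{prop:qtauy1} the tropical composition $\tau_q(P-1;0)$ sends $Y_i(P)$ to $Y^{\bfc_i(P)}$, and the classical periodicity forces the $c$-matrix $C(P)=\nu C(0)=\nu I$, so $\tau_q(P-1;0)=\nu$ as well. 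The Fock--Goncharov decomposition \eqref{eq:qdecom1} then gives $\frakq_q(P-1;0) = \mu_q(P-1;0)\circ \tau_q(P-1;0)^{-1} = \rmid$.

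Next, I would apply \eqref{eq:qqAd1}, namely $\frakq_q(s) = \rmAd[\Psi_{q_{k_s}}(Y^{\bfc^+_{k_s}(s)})^{-\varepsilon_s}]$, together with $\rmAd[A]\circ\rmAd[B] = \rmAd[AB]$, and compose in the order prescribed by \eqref{eq:qq01} to obtain
\begin{align*}
\frakq_q(P-1;0)=\rmAd\bigl[\Psi_{q_{k_0}}(Y^{\bfc^+_{k_0}(0)})^{-\varepsilon_0}\cdots\Psi_{q_{k_{P-1}}}(Y^{\bfc^+_{k_{P-1}}(P-1)})^{-\varepsilon_{P-1}}\bigr].
\end{align*}
Since the left-hand side is $\rmid$, the bracketed element is central in $\calF_\Omega$; taking its inverse, so is the product in \eqref{eq:QDItrop1}.

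The main obstacle, as I see it, is promoting centrality to equality with $1$. When $\Omega$ is nonsingular, the center of the quantum torus associated to $\Omega$ and of its formal completion $\overline\calA_\Omega$ consists only of scalars in $\bbQ(q^{1/\delta_0})$: a central formal series $\sum c_\bfn Y^\bfn$ must satisfy $\bfn\in\ker\Omega$ for every nonzero $c_\bfn$, and $\ker\Omega=\{0\}$. Since every factor $\Psi_{q_{k_s}}(Y^{\bfc^+_{k_s}(s)})^{\pm 1}$ has constant term $1$ (all $c^+$-vectors being positive), the product has constant term $1$ and must equal $1$. For singular $\Omega$, I would lift the sequence to its principal extension $\overline B$ exactly as in the proof of Theorem \ref{thm:DIE1}: by Proposition \ref{prop:principal1} and \eqref{eq:oC1} the $c$-vectors are merely padded by zeros, and $\overline\Omega$ of \eqref{eq:pextO1} is automatically nonsingular, so the previous argument applies in $G_q=G_{q,\overline\Omega}$ and in $\overline\calA_{\overline\Omega}$. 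Since the resulting identity only involves the original $c$-vectors of $B$ and the original quantum $y$-variables, it restricts back to the desired identity \eqref{eq:QDItrop1} in $\overline\calA_\Omega$.
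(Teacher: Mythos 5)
Your proposal is correct and follows essentially the same route as the paper: quantum synchronicity to lift the periodicity, the Fock--Goncharov decomposition to conclude $\frakq_q(P-1;0)=\rmid$, the adjoint realization \eqref{eq:qqAd1} to turn this into centrality of the product, the nonsingularity of $\Omega$ (forcing $\ker\Omega=\{0\}$ on the exponents of a central series with constant term $1$) to conclude the product is $1$, and the principal extension to dispose of the singular case. The only difference is presentational: the paper compresses the first part into ``repeat the proof of Theorem \ref{thm:DIE1}'' and spells out only the centrality-implies-scalar step, which you reproduce verbatim in substance.
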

\begin{proof}
Again, we repeat the proof of Theorem \ref{thm:DIE1}
{\rp by using \eqref{eq:qqAd1}}.
In the proof, the following fact is essential:
Suppose that $B$ is nonsingular.
Then, the equality
\begin{align}
\label{eq:qqAd3}
\begin{split}
\rmAd[
\Psi_{q_{k_{P-1}}}( Y^{\bfc^+_{k_{P-1}}({P-1})})^{\varepsilon_{P-1}}
\cdots
\Psi_{q_{k_{0}}}( Y^{\bfc^+_{k_0}(0)})^{\varepsilon_0}
]= \rmid.
\end{split}
\end{align}
implies the equality
 \begin{align}
  \label{eq:QDItrop2}
\Psi_{q_{k_{P-1}}}( Y^{\bfc^+_{k_{P-1}}({P-1})})^{\varepsilon_{P-1}}
\cdots
\Psi_{q_{k_{0}}}( Y^{\bfc^+_{k_0}(0)})^{\varepsilon_0}
=1.
\end{align}
To see it,
let $g=1+\sum_{\bfn\in N^+} c_{\bfn} Y^{\bfn}$ be the LHS of \eqref{eq:QDItrop2}.
Then, the condition \eqref{eq:qqAd3} implies that
each term $c_{\bfn}  Y^{\bfn}$ commutes with $Y_1$, \dots, $Y_n$.
Since $\Omega$ is nonsingular, this implies  $c_{\bfn}=0$.
\end{proof}

Two identities \eqref{eq:QDI2} and \eqref{eq:QDItrop1}
can be directly translated to each other, so that one
may regard them as alternative expressions of the same identity.

\begin{ex}
[Continued from Example \ref{ex:QDIA23}]
\label{ex:QDIA24}
The corresponding identity to \eqref{eq:QDItrop1} is written as
\begin{align}
\Psi_q(Y_2)
\Psi_q(Y_1)
=
\Psi_q(Y_1)
\Psi_q(Y^{\bfe_1+\bfe_2})
\Psi_q(Y_2).
\end{align}
Note that $Y^{\bfe_1+\bfe_2}=q Y_1Y_2$.
Thus,
it coincides with the pentagon identity in \eqref{eq:qdpent1},
where $u=Y_2$ and $v=Y_1$.
In particular, it gives an alternative proof of  
the pentagon identity  \eqref{eq:qdpent1}.
\end{ex}
\subsection{QDI in universal form}

There is  another form of QDIs first discovered by Volkov \cite{Volkov11}
for the pentagon identity.
The idea is shuffling the product \eqref{eq:QDItrop1}
in the opposite order.
Below  we set
\begin{align}
\tilde Y_{k_{s}}(s) :={\rp \mu_q(s-1;0)}(Y_{k_{s}}(s))
\end{align}
and regard $\tilde Y_{k_{s}}(s)^{\varepsilon_s}$ and
$\Psi_{q_{k_{s}}}(
\tilde Y_{k_{s}}(s)^{\varepsilon_s})$ as elements of $\overline \calA_{\Omega}$.

\begin{lem}
[Shuffle formula, {\cite[Lemma 3.6]{Kashaev11}}]
\label{lem:shuffling1}
\index{shuffle formula}
The following equality holds in $\overline \calA_{\Omega}$:
\begin{align}
\label{eq:QDIU3}
\begin{split}
&\
\Psi_{q_{k_{s}}}(
Y^{\bfc^+_{k_{s}}({s})})^{\varepsilon_{s}}
\Psi_{q_{k_{s-1}}}( Y^{\bfc^+_{k_{s-1}}({s-1})})^{\varepsilon_{s-1}}
\cdots
\Psi_{q_{k_{0}}}( Y^{\bfc^+_{k_0}(0)})^{\varepsilon_0}
\\
=&\ 
\Psi_{q_{k_{0}}}(\tilde Y_{k_{0}}(0)^{\varepsilon_0})^{\varepsilon_{0}}
\Psi_{q_{k_{1}}}( \tilde Y_{k_{1}}(1)^{\varepsilon_1})^{\varepsilon_1}
\cdots
\Psi_{q_{k_{s}}}(
\tilde Y_{k_{s}}(s)^{\varepsilon_s})^{\varepsilon_{s}}.
\end{split}
\end{align}
\end{lem}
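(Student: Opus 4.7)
The plan is to prove the shuffle formula by induction on $s$, reducing the inductive step to a single-line identification via the Fock-Goncharov decomposition and the description of the non-tropical part of mutations as an inner automorphism (Proposition \ref{prop:qqs2}). Setting $\alpha_s := \Psi_{q_{k_s}}(Y^{\bfc^+_{k_s}(s)})^{\varepsilon_s}$ and $\beta_s := \Psi_{q_{k_s}}(\tilde Y_{k_s}(s)^{\varepsilon_s})^{\varepsilon_s}$, the identity to be proved becomes $L_s := \alpha_s\alpha_{s-1}\cdots\alpha_0 = \beta_0\beta_1\cdots\beta_s =: R_s$. The base case $s=0$ is immediate: since $\bfc^+_{k_0}(0) = \bfe_{k_0}$ (with $\varepsilon_0 = 1$) and $\tilde Y_{k_0}(0) = Y_{k_0}$, both sides equal $\Psi_{q_{k_0}}(Y_{k_0})$.

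For the inductive step, assuming $L_s = R_s$, I would introduce the auxiliary element
\[
\chi_{s+1} := \Psi_{q_{k_{s+1}}}(Y_{k_{s+1}}(s+1)^{\varepsilon_{s+1}})^{\varepsilon_{s+1}} \in \overline{\calA}_{\Omega(s+1)}
\]
and realize both $\alpha_{s+1}$ and $\beta_{s+1}$ as images of $\chi_{s+1}$ under suitable skew-field isomorphisms. On the tropical side, Proposition \ref{prop:qtauy1} gives $\tau_q(s;0)(Y_{k_{s+1}}(s+1)) = Y^{\bfc_{k_{s+1}}(s+1)}$, so raising to the $\varepsilon_{s+1}$-th power yields $\tau_q(s;0)(\chi_{s+1}) = \alpha_{s+1}$. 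On the non-tropical side, the composite mutation $\Phi_{s+1} := \mu_q(s;0)\colon \calF_{\Omega(s+1)} \to \calF_{\Omega}$ sends $Y_{k_{s+1}}(s+1)$ to $\tilde Y_{k_{s+1}}(s+1)$ by definition, hence $\Phi_{s+1}(\chi_{s+1}) = \beta_{s+1}$.

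Combining these two identifications via the Fock-Goncharov decomposition $\mu_q(s;0) = \frakq_q(s;0) \circ \tau_q(s;0)$ of \eqref{eq:qdecom1} yields $\beta_{s+1} = \frakq_q(s;0)(\alpha_{s+1})$. By Proposition \ref{prop:qqs2}, each $\frakq_q(t)$ acts on $\overline{\calA}_{\Omega}$ as $\rmAd(\alpha_t^{-1})$, so composing gives
\[
\frakq_q(s;0) = \rmAd(\alpha_0^{-1}\alpha_1^{-1}\cdots\alpha_s^{-1}) = \rmAd(L_s^{-1}).
\]
Therefore $\beta_{s+1} = L_s^{-1}\alpha_{s+1}L_s$, i.e., $\alpha_{s+1}L_s = L_s\beta_{s+1}$. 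Substituting the inductive hypothesis $L_s = R_s$ on the right produces $L_{s+1} = \alpha_{s+1}L_s = R_s\beta_{s+1} = R_{s+1}$, which closes the induction.

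The main subtlety I anticipate is justifying that the formal operation $\Phi_{s+1}(\chi_{s+1})$ indeed produces a well-defined element of $\overline{\calA}_{\Omega}$ equal to $\beta_{s+1}$, which requires $\Phi_{s+1}$ to commute both with the infinite summation defining $\Psi_q$ and with raising to the sign $\varepsilon_{s+1}$. This interpretation is essentially built into the preamble of the lemma, which instructs us to regard $\Psi_{q_{k_s}}(\tilde Y_{k_s}(s)^{\varepsilon_s})$ as an element of $\overline{\calA}_{\Omega}$; its legitimacy rests on the fact that the tropical leading behavior of $\tilde Y_{k_{s+1}}(s+1)^{\varepsilon_{s+1}}$ is the positive monomial $Y^{\bfc^+_{k_{s+1}}(s+1)}$ via Proposition \ref{prop:qtauy1}, so that the substitution into $\Psi_q$ converges degree-by-degree in $\overline{\calA}_{\Omega}$. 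Once these interpretations are fixed, the remainder of the argument is purely formal.
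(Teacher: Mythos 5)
Your proof is correct and follows essentially the same route as the paper: both rest on the Fock--Goncharov decomposition together with the adjoint-action description of the nontropical part $\frakq_q(s)$ to obtain the exchange relation $\alpha_{s}L_{s-1}=L_{s-1}\beta_{s}$, and then iterate. The only quibble is that the identity ``$\frakq_q(t)=\rmAd(\alpha_t^{-1})$'' you attribute to Proposition \ref{prop:qqs2} is stated directly as \eqref{eq:qqAd1} in Proposition \ref{prop:qrhoAd1}, which is the form the paper invokes.
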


\begin{proof}
By the Fock-Goncharov decomposition
\eqref{eq:qdecom1}
and the equality \eqref{eq:qqAd1}, we have
\begin{gather}
\label{eq:QDIU1}
\tilde Y_{k_{s}}(s)^{\varepsilon_s}
=
\rmAd[
\Psi_{q_{k_{s-1}}}( Y^{\bfc^+_{k_{s-1}}({s-1})})^{\varepsilon_{s-1}}
\cdots
\Psi_{q_{k_{0}}}( Y^{\bfc^+_{k_0}(0)})^{\varepsilon_0 }
]^{-1}
(Y^{\bfc^+_{k_{s}}({s})}).
\end{gather}
Thus, we have the equality
\begin{align}
\label{eq:QDIU2}
\begin{split}
&\
\Psi_{q_{k_{s}}}(
Y^{\bfc^+_{k_{s}}({s})})^{\varepsilon_{s}}
(
\Psi_{q_{k_{s-1}}}( Y^{\bfc^+_{k_{s-1}}({s-1})})^{\varepsilon_{s-1}}
\cdots
\Psi_{q_{k_{0}}}( Y^{\bfc^+_{k_0}(0)})^{\varepsilon_0 }
)
\\
=&\ 
(
\Psi_{q_{k_{s-1}}}( Y^{\bfc^+_{k_{s-1}}({s-1})})^{\varepsilon_{s-1}}
\cdots
\Psi_{q_{k_{0}}}( Y^{\bfc^+_{k_0}(0)})^{\varepsilon_0 }
)
\Psi_{q_{k_{s}}}(
\tilde Y_{k_{s}}(s)^{\varepsilon_s})^{\varepsilon_{s}}.
\end{split}
\end{align}
By repeating it, we obtain the equality \eqref{eq:QDIU3}.
\end{proof}

\begin{thm}
[QDI in universal form { \cite[\S3]{Volkov11}, \cite[Cor.~3.7]{Kashaev11}}]
\label{thm:QDI3}
\index{quantum dilogarithm identity!in universal form}
Suppose that
  the  sequence of  mutations
  \eqref{eq:mseq2} is  \emph{$\nu$-periodic}.
  Then, the following identity holds in $\overline \calA_{\Omega}$:
 \begin{align}
  \label{eq:QDIuniv1}
\Psi_{q_{k_{0}}}(\tilde Y_{k_0}(0)^{\varepsilon_0})^{\varepsilon_0 }
\cdots
\Psi_{q_{k_{P-1}}}(\tilde Y_{k_{P-1}}(P-1)^{\varepsilon_{P-1}})^{\varepsilon_{P-1}}
=1.
\end{align}
\end{thm}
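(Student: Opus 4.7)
The plan is to derive Theorem \ref{thm:QDI3} as an essentially immediate consequence of the two results that directly precede it in the text, namely Theorem \ref{thm:QDI2} (the tropical-form QDI) and Lemma \ref{lem:shuffling1} (the shuffle formula). The strategy is to apply the shuffle formula to transform the ordered product appearing in the tropical identity into the oppositely ordered product appearing in the universal identity.

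First, I would specialize the shuffle formula \eqref{eq:QDIU3} to $s=P-1$. This gives
\begin{align*}
&\ \Psi_{q_{k_{P-1}}}(Y^{\bfc^+_{k_{P-1}}(P-1)})^{\varepsilon_{P-1}}\cdots \Psi_{q_{k_0}}(Y^{\bfc^+_{k_0}(0)})^{\varepsilon_0}\\
=&\ \Psi_{q_{k_0}}(\tilde Y_{k_0}(0)^{\varepsilon_0})^{\varepsilon_0}\cdots \Psi_{q_{k_{P-1}}}(\tilde Y_{k_{P-1}}(P-1)^{\varepsilon_{P-1}})^{\varepsilon_{P-1}},
\end{align*}
so that the left-hand side is exactly the expression appearing in \eqref{eq:QDItrop1}, while the right-hand side is exactly the expression appearing in \eqref{eq:QDIuniv1}. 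Next, I would invoke Theorem \ref{thm:QDI2}: the hypothesis of $\nu$-periodicity of the mutation sequence \eqref{eq:mseq2} is the same here, so the left-hand side above equals $1$ in $\overline\calA_\Omega$. Combining the two gives \eqref{eq:QDIuniv1} directly.

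Since the proof is a two-line assembly of already-established results, there is no substantial obstacle to overcome; the real work has already been done in establishing the Fock–Goncharov decomposition \eqref{eq:qdecom1}, the adjoint-action identification \eqref{eq:qqAd1} (which powered the shuffle formula via \eqref{eq:QDIU1}), and the tropical-form identity. The only point that I would verify carefully is that both sides of \eqref{eq:QDIuniv1} genuinely live in the same completed algebra $\overline\calA_\Omega$, so that the equality is meaningful: the elements $\tilde Y_{k_s}(s)^{\varepsilon_s}$ are defined as $\mu_q(0;s-1)(Y_{k_s}(s)^{\varepsilon_s})$ and are treated as elements of $\overline\calA_\Omega$ by the remarks preceding Lemma \ref{lem:shuffling1}, so the corresponding $\Psi_{q_{k_s}}(\tilde Y_{k_s}(s)^{\varepsilon_s})$ is defined via its power-series expansion there, and the infinite (in general) product in \eqref{eq:QDIuniv1} converges in the $\deg$-adic completion for exactly the same reason as in \eqref{eq:QDItrop1}, since the shuffle formula is an equality term by term.
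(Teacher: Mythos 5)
Your proposal is correct and is exactly the paper's own argument: the paper proves Theorem \ref{thm:QDI3} in one line by combining \eqref{eq:QDItrop1} with the shuffle formula \eqref{eq:QDIU3} specialized at $s=P-1$, which is precisely the assembly you describe. Your additional check that both sides live in $\overline\calA_{\Omega}$ is a reasonable bit of care but adds nothing beyond what the paper already implicitly relies on.
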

\begin{proof}
This is obtained from \eqref{eq:QDItrop1} and \eqref{eq:QDIU3}.
\end{proof}

\begin{ex}
[Continued from Example \ref{ex:QDIA24}]
\label{ex:QDIA25}
By the result in Example  \ref{ex:QDIA21}, we have
\begin{align}
\begin{split}
&\Psi_q(Y_1)
\Psi_q(Y_2 (1+ q^{-1}Y_1))
\Psi_q((1+ q^{-1} Y_2+ Y_1Y_2)^{-1}Y_1)^{-1}
\\
& \quad \times
\Psi_q(q^{-1} (1+ q^{-1} Y_2)^{-1}Y_2Y_1)^{-1}
\Psi_q(Y_2)^{-1}=1.
\end{split}
\end{align}
\end{ex}

\section{More examples of QDIs}
\label{sec:more1}
Let us consider the QDIs for the periodicities of
types $B_2$ and $G_2$ in Example \ref{ex:BC1},
which involve nontrivial factors $\delta_i$.

(a) Type $B_2$. 
We set
\begin{align}
\label{eq:qBB2}
\Omega=\begin{pmatrix}
0 & -1
\\
1 & 0
\end{pmatrix}
,
\quad
\Delta=\begin{pmatrix}
1 & 0
\\
0 & 2
\end{pmatrix}
.
\end{align}
We use the notation
$[n_1,n_2]_{a,b}$ and $[n_1,n_2]_{a}$ (or column-wise)
 for  quantum dilogarithm elements  $\Psi[\bfn]_{a,b}$ and $\Psi[\bfn]_a$
 as in the classical case in Section \ref{sec:ordering1}.
From the data in Example \ref{ex:BC1},
we read  the following QDI for quantum dilogarithm elements:
\begin{align}
\label{eq:QDIB1}
\begin{bmatrix}
0\\
1\\
\end{bmatrix}_{\frac{1}{2}}
\begin{bmatrix}
1\\
0\\
\end{bmatrix}_{1}
=
\begin{bmatrix}
1\\
0\\
\end{bmatrix}_{1}
\begin{bmatrix}
1\\
1\\
\end{bmatrix}_{\frac{1}{2}}
\begin{bmatrix}
1\\
2\\
\end{bmatrix}_{1}
\begin{bmatrix}
0\\
1\\
\end{bmatrix}_{\frac{1}{2}}.
\end{align}
Accordingly,
the QDI in the tropical from is given by
\begin{align}
\label{eq:QDIB2}
\Psi_{q^{1/2}}(Y_2)
\Psi_{q}(Y_1)
=\Psi_{q}(Y_1)\Psi_{q^{1/2}}(Y^{(1,1)}) \Psi_{q}(Y^{(1,2)})
\Psi_{q^{1/2}}(Y_2).
\end{align}
The QDI in the universal form is given by
 \begin{align}
 \begin{split}
 \Psi_{q}(\tilde Y_1(0))
\Psi_{q^{1/2}}(\tilde Y_2(1))
 &= 
\Psi_{q^{1/2}}(\tilde Y_2(5)^{-1})
\Psi_{q}(\tilde Y_1(4)^{-1})
\\
& \qquad\times
\Psi_{q^{1/2}}(\tilde Y_2(3)^{-1})
\Psi_{q}(\tilde Y_1(2)^{-1}),
\end{split}
\end{align}
where
\begin{align*}
\begin{split}
\tilde Y_1(0)&=Y_1,\\
\tilde Y_2(1)&=Y_2(1+q^{-1}Y_1),\\
\tilde Y_1(2)&=Y_1^{-1}(1+q^{-1/2}Y_2+q^{1/2}Y_1Y_2)(1+q^{-3/2}Y_2+q^{-1/2}Y_1Y_2),\\
\tilde Y_2(3)&=q Y_1^{-1}Y_2^{-1}(1+(q^{-1/2}+q^{-3/2})Y_2+q^{-2}Y_2^2+qY_1Y_2^2),\\
\tilde Y_1(4)&=q^2 Y_1^{-1}Y_2^{-2}(1+q^{-1/2}Y_2)(1+q^{-3/2}Y_2),
\\
\tilde Y_2(5)&=Y_2^{-1}.
\end{split}
\end{align*}
As an additional perspective, 
let us demonstrate that
 the relation \eqref{eq:QDIB1} is derived only from the pentagon relation \eqref{eq:Psipent2} \cite{Nakanishi22b}.
We use the following \emph{fission-fusion formula}\index{fission-fusion formula} \cite[Prop.~2.8]{Nakanishi22b}:
\begin{align}
\Psi_{a,b}[n]=\prod_{t=1}^p \Psi_{pa,b+(2t-p-1)a}[n]
\quad
(p \in \bbZ_{\geq 1}).
\end{align}
Applying the pentagon relation \eqref{eq:Psipent2} repeatedly, we have
\begin{align}
\begin{split}
\begin{bmatrix}
0\\
1\\
\end{bmatrix}_{\frac{1}{2}}
\begin{bmatrix}
1\\
0\\
\end{bmatrix}_{1}
=&\ 
\begin{bmatrix}
0\\
1\\
\end{bmatrix}_{1,\frac{1}{2}}
\begin{bmatrix}
0\\
1\\
\end{bmatrix}_{1,-\frac{1}{2}}
\begin{bmatrix}
1\\
0\\
\end{bmatrix}_{1,0}
\\
=&\ 
\begin{bmatrix}
0\\
1\\
\end{bmatrix}_{1,\frac{1}{2}}
\begin{bmatrix}
1\\
0\\
\end{bmatrix}_{1,0}
\begin{bmatrix}
1\\
1\\
\end{bmatrix}_{1,-\frac{1}{2}}
\begin{bmatrix}
0\\
1\\
\end{bmatrix}_{1,-\frac{1}{2}}
\\
 =&\
\begin{bmatrix}
1\\
0\\
\end{bmatrix}_{1,0}
\begin{bmatrix}
1\\
1\\
\end{bmatrix}_{1,\frac{1}{2}}
\begin{bmatrix}
0\\
1\\
\end{bmatrix}_{1,\frac{1}{2}}
\begin{bmatrix}
1\\
1\\
\end{bmatrix}_{1,-\frac{1}{2}}
\begin{bmatrix}
0\\
1\\
\end{bmatrix}_{1,-\frac{1}{2}}
\\
 =&\
 \begin{bmatrix}
1\\
0\\
\end{bmatrix}_{1,0}
\begin{bmatrix}
1\\
1\\
\end{bmatrix}_{1,\frac{1}{2}}
\begin{bmatrix}
1\\
1\\
\end{bmatrix}_{1,-\frac{1}{2}}
\begin{bmatrix}
1\\
2\\
\end{bmatrix}_{1,0}
\begin{bmatrix}
0\\
1\\
\end{bmatrix}_{1,\frac{1}{2}}
\begin{bmatrix}
0\\
1\\
\end{bmatrix}_{1,-\frac{1}{2}}
\\
 =&\
\begin{bmatrix}
1\\
0\\
\end{bmatrix}_{1}
\begin{bmatrix}
1\\
1\\
\end{bmatrix}_{\frac{1}{2}}
\begin{bmatrix}
1\\
2\\
\end{bmatrix}_{1}
\begin{bmatrix}
0\\
1\\
\end{bmatrix}_{\frac{1}{2}},
\end{split}
\end{align}
which is in parallel with the classical one \eqref{eq:order3}.
In other words, the QDI \eqref{eq:QDIB1} is \emph{finitely reducible}
by the pentagon relation  \eqref{eq:Psipent2}.

(b) Type $G_2$. 
We set
\begin{align}
\label{eq:qBG2}
\Omega=\begin{pmatrix}
0 & -1
\\
1 & 0
\end{pmatrix}
,
\quad
\Delta=\begin{pmatrix}
1 & 0
\\
0 & 3
\end{pmatrix}
.
\end{align}
From the data in Example \ref{ex:BC1},
we read off the following QDI for the dilogarithm elements:
\begin{align}
\label{eq:QDIC1}
\begin{bmatrix}
0\\
1\\
\end{bmatrix}_{\frac{1}{3}}
\begin{bmatrix}
1\\
0\\
\end{bmatrix}_{1}
=
\begin{bmatrix}
1\\
0\\
\end{bmatrix}_{1}
\begin{bmatrix}
1\\
1\\
\end{bmatrix}_{\frac{1}{3}}
\begin{bmatrix}
2\\
3\\
\end{bmatrix}_{1}
\begin{bmatrix}
1\\
2\\
\end{bmatrix}_{\frac{1}{3}}
\begin{bmatrix}
1\\
3\\
\end{bmatrix}_{1}
\begin{bmatrix}
0\\
1\\
\end{bmatrix}_{\frac{1}{3}}.
\end{align}
Accordingly,
the QDI in the tropical form is given by
\begin{align}
\label{eq:QDIG2}
\begin{split}
&\
\Psi_{q^{1/3}}(Y_2)
\Psi_{q}(Y_1)
\\
=&\
\Psi_{q}(Y_1)
\Psi_{q^{1/3}}(Y^{(1,1)}) 
\Psi_{q}(Y^{(2,3)})
\Psi_{q^{1/3}}(Y^{(1,2)}) 
\Psi_{q}(Y^{(1,3)})
\Psi_{q^{1/3}}(Y_2).
\end{split}
\end{align}
The QDI in the universal form is given by
\begin{align}
 \begin{split}
 \Psi_{q}(\tilde Y_1(0))
\Psi_{q^{1/3}}(\tilde Y_2(1))
 &= 
\Psi_{q^{1/3}}(\tilde Y_2(7)^{-1})
\Psi_{q}(\tilde Y_1(6)^{-1})
\Psi_{q^{1/3}}(\tilde Y_2(5)^{-1})
\\
& \qquad\times
\Psi_{q}(\tilde Y_1(4)^{-1}),
\Psi_{q^{1/3}}(\tilde Y_2(3)^{-1})
\Psi_{q}(\tilde Y_1(2)^{-1}),
\end{split}
\end{align}
where
\begin{align*}
\notag
\tilde Y_1(0)&=Y_1,\\
\notag
\tilde Y_2(1)&=Y_2(1+q^{-1}Y_1),\\
\notag
\tilde Y_1(2)&=Y_1^{-1}(1+q^{-1/3}Y_2+q^{2/3}Y_1Y_2)(1+q^{-1}Y_2+Y_1Y_2)\\
\notag
& \quad \times (1+q^{-5/2}Y_2+q^{-2/3}Y_1Y_2),
\\
\notag
\tilde Y_2(3)&=q Y_1^{-1}Y_2^{-1}
(1+
(q^{-4/3}+q^{-2}+q^{-8/3})(qY_2 +Y_2^2+q^3Y_1Y_2^2)
\\
\notag
&\quad 
 + q^{-3}Y_2^3
+(q^2+1)Y_1Y_2^3+
q^{5}Y_1^2Y_2^3
),
\\
\tilde Y_1(4)&=q^6 Y_1^{-2}Y_2^{-3}
(1+(q^{-7/3}+q^{-3})Y_2+q^{-16/3}Y_2^2 + q^{5/3}Y_1Y_2^2)
\\
\notag
&\quad 
\times
(1+(q^{-1/3}+q^{-11/3})Y_2+q^{-4}Y_2^2 + q Y_1Y_2^2)
\\
\notag
&\quad 
\times
(1+(q^{-1}+q^{-5/3})Y_2+q^{-8/3}Y_2^2 + q^{1/3}Y_1Y_2^2),
\\
\notag
\tilde Y_2(5)&=q^2 Y_1^{-1}Y_2^{-2}(1+(q^{-1/3}+q^{-1}+q^{-5/3})Y_2
\\
\notag
&\quad
\times
(q^{-4/3}+q^{-2} + q^{-8/3})Y_2^2+q^{-3}Y_2^3+q^2Y_1Y_2^3),\\
\notag
\tilde Y_1(6)&=q^3 Y_1^{-1}Y_2^{-3}(1+q^{-1/3}Y_2)(1+q^{-1}Y_2)(1+q^{-5/3}Y_2),
\\
\notag
\tilde Y_2(7)&=Y_2^{-1}.
\end{align*}
 Again, the relation \eqref{eq:QDIC1} is 
 finitely reducible by the pentagon relation.

\section{QDI associated with a loop in a QCSD}

As in the classical case,
it is possible to extend QDIs for \emph{quantum cluster scattering diagrams (QCSDs)}\index{quantum!cluster scattering diagram (QCSD)}.

A QCSD $\frakD_{\fraks}^q$ is defined  in parallel with a CSD \cite{Mandel15,Davison19}
by replacing the structure group $G$ with $G_q$ and 
the incoming walls $\bfw_i$ in Definition \ref{defn:CSD1} with
\begin{align}
\bfw_i:=(e_i^{\perp}, \Psi_{1/\delta_i}[e_i])_{e_i}.
\end{align}
By applying the results and methods in \cite{Kontsevich13, Gross14}, 
one can establish the existence and the uniqueness (up to equivalence)
of a QCSD for each skew-symmetrizable integer matrix $B$.
However, not much is known about the structure of QCSDs.

Here, we list some open problems.

\begin{prob}
\par
(1) \emph{Positivity and nonpositivity.}
It is known \cite{Nakanishi22b} that,
any QCSD has  a realization that
every wall element has the form
\begin{align}
\label{eq:Psipower1}
\Psi_{ 1/\delta(h\bfn), b} {[h\bfn]^{s}}
\quad
(\bfn\in N^+_{\rmpr};\, b\in (1/\delta_0)\bbZ;\, h \in \bbZ_{\geq 1};\, s\in \bbZ).
\end{align}
The analog of the positivity property in Theorem \ref{3thm:pos1}
claims that the only positive powers $s$ appear in \eqref{eq:Psipower1}.
This is known  to be true by \cite[Theorem~2.15]{Davison19}
when $B$ is skew-symmetric,
Also, this is true for the types $B_2$ and $G_2$,
which are nonskew-symmetric,
as presented in Section \ref{sec:more1}.
On the other hand, several counter-examples are known in the nonskew-symmetric case \cite{Lee14,Cheung20,Nakanishi22b}.
The exact condition for the positivity is not known.
\par
(2) \emph{Preservation of the support.}
The quantum synchronicity in Section  \ref{sec:preservation1} suggests that
the support of a QCSD remains unchanged from the classical counterpart.
This is not obvious due to a possible \emph{cancellation of walls} of a QCSD in the classical limit
such as $\lim_{q\rightarrow 1} 
\Psi_{a,b}[\bfn]\Psi_{a,b'}[\bfn]^{-1}=1$.
If the positivity in (a) holds, this does not happen; therefore, the support is preserved \cite{Nakanishi22b}.
However, in general, this is yet to be proved.
\par
(3)
\emph{Infinite reducibility.}
An analog of the ordering property in Proposition \ref{prop:ordering1}
is not known yet.
Therefore, the infinite reducibility  of QDIs by the pentagon relation 
as in Theorem \ref{3thm:struct1} is  not clear.
On the other hand,
 there are examples with infinitely or finitely reducibility, including the ones in \eqref{eq:QDIB2} and \eqref{eq:QDIG2}.
\end{prob}

Despite the above problems,
the QDI associated with a loop $\gamma$ in a QCSD is formulated just as in the classical case.
We simply replace the dilogarithm elements in \eqref{eq:sgseq2} with the quantum ones 
that appear on the walls crossed by $\gamma$.
Below we present examples,
which are ``well-behaved''  in the sense that
the positivity holds and the properties 
 (2) and (3) are satisfied.
\begin{ex}
[Affine type of rank 2 \cite{Nakanishi22b}]
\label{ex:QDIaffine1}
Let us consider the affine type of rank 2.
We use the same convention in Example \ref{ex:CSDaffine1}.
\par
(a)
 Type $A_1^{(1)}$. Let $(\d_1,\d_2)=(2,2)$. This is a skew-symmetric case.
 The support of
a QCSD $\frakD^q_{\fraks}$ with minimal support is the same as
the classical one in \eqref{3eq:a111}--\eqref{3eq:a114}.
The  QDI for quantum dilogarithm elements along a loop around the origin is given by
\begin{align}
\label{eq:a115}
\begin{split}
\begin{bmatrix}
0\\
1
\end{bmatrix}
_{\frac{1}{2}}
\begin{bmatrix}
1\\
0
\end{bmatrix}
_{\frac{1}{2}}
&
=
\begin{bmatrix}
1\\
0
\end{bmatrix}
_{\frac{1}{2}}
\begin{bmatrix}
2\\
1
\end{bmatrix}
_{\frac{1}{2}}
\begin{bmatrix}
3\\
2
\end{bmatrix}
_{\frac{1}{2}}
\cdots
\Biggl(
\prod_{j=0}^{\infty}
\begin{bmatrix}
2^j\\
2^j
\end{bmatrix}
_{2^{j-1},-2^{j-1}}
\begin{bmatrix}
2^j\\
2^j
\end{bmatrix}
_{2^{j-1},2^{j-1}}
\Biggr)
\\
&\qquad \times
\cdots
\begin{bmatrix}
2\\
3
\end{bmatrix}
_{\frac{1}{2}}
\begin{bmatrix}
1\\
2
\end{bmatrix}
_{\frac{1}{2}}
\begin{bmatrix}
0\\
1
\end{bmatrix}
_{\frac{1}{2}}.
\end{split}
\end{align}
This is the simplest example of a QDI
involving an infinite product.
One can prove the infinite reducibility as in the classical case  \cite{Nakanishi22b}.
The quantum dilogarithm version of the formula coincides with the identity in \cite[Eq.\ (1.3)]{Dimofte09}
that describes the wall crossing in  $N=2$ super Yang-Mills theory.

\medskip
(b) Type $A_2^{(2)}$. Let $(\d_1,\d_2)=(1,4)$.
This is a nonskew-symmetric case.
 The support of
a QCSD $\frakD^q_{\fraks}$ with minimal support is the same as
the classical one in \eqref{3eq:a221}--\eqref{3eq:a226}.
The QDI  for quantum dilogarithm elements along a loop around the origin is given by
\begin{align}
\label{eq:qa227}
\begin{split}
\begin{bmatrix}
0\\
1
\end{bmatrix}
_{\frac{1}{4}}
\begin{bmatrix}
1\\
0
\end{bmatrix}
_{1}
&=
\begin{bmatrix}
1\\
0
\end{bmatrix}
_{1}
\begin{bmatrix}
1\\
1
\end{bmatrix}
_{\frac{1}{4}}
\begin{bmatrix}
3\\
4
\end{bmatrix}
_{1}
\begin{bmatrix}
2\\
3
\end{bmatrix}
_{\frac{1}{4}}
\begin{bmatrix}
5\\
8
\end{bmatrix}
_{1}
\begin{bmatrix}
3\\
5
\end{bmatrix}
_{\frac{1}{4}}
\cdots
\\
&
\qquad
\times
\Biggl(
\begin{bmatrix}
1\\
2
\end{bmatrix}
_{\frac{1}{2}}
\prod_{j=0}^{\infty}
\begin{bmatrix}
 2^j\\
 2^{j+1}
\end{bmatrix}
_{2^{j-1}, -2^{j-1}}
\begin{bmatrix}
 2^j\\
 2^{j+1}
\end{bmatrix}
_{2^{j-1}, 2^{j-1}}
\Biggr)
\\
&
\qquad 
\times
\cdots
\begin{bmatrix}
5\\
12
\end{bmatrix}
_{1}
\begin{bmatrix}
2\\
5
\end{bmatrix}
_{\frac{1}{4}}
\begin{bmatrix}
3\\
8
\end{bmatrix}
_{1}
\begin{bmatrix}
1\\
3
\end{bmatrix}
_{\frac{1}{4}}
\begin{bmatrix}
1\\
4
\end{bmatrix}
_{1}
\begin{bmatrix}
0\\
1
\end{bmatrix}
_{\frac{1}{4}}
.
\end{split}
\end{align}
One can prove the infinite reducibility as in the classical case  \cite{Nakanishi22b}.
\end{ex}

\begin{ex}
[Type $A_2^{(1)}$]
\label{ex:a211}
As a little more nontrivial example, we consider the
type $A_2^{(1)}$ case,
which is  of rank 3 affine type.
We take
\begin{align}
B=\Omega=\begin{pmatrix}
0 & -1 & -1
\\
1 & 0 & -1
\\
1 & 1 & 0
\end{pmatrix},
\quad
\Delta=I.
\end{align}
 The support of
a QCSD $\frakD^q_{\fraks}$ with minimal support is the same as
the classical one, and it is depicted in Figure \ref{fig:CSDA12}.
The diagram is the projection of the support of $\frakD^q_{\fraks}$
on the unit sphere $S^2$ in $\bbR^3$. Furthermore, it is projected on the tangent plane at $(1/\sqrt{3})(1,1,1)$ by the stereographic projection.
For example,
the triangle in the center represents the positive orthant in $\bbR^3$.
In this example,
all joints
are of type $A_2$ or $A^{(1)}_1$.
In particular, every QDI 
is infinitely reducible.

\begin{figure}
\centering
\includegraphics[width=100mm]{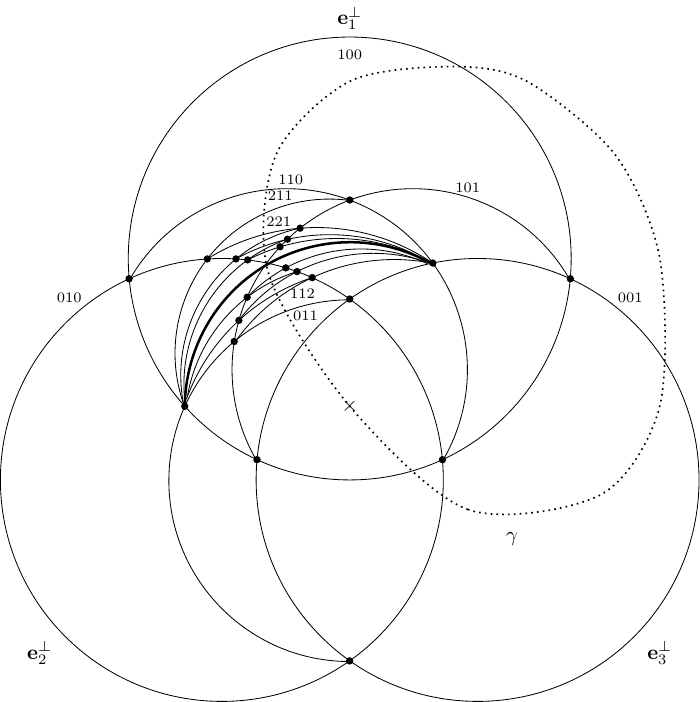}
\caption{The support of a QCSD of type $A_2^{(1)}$.
The three-digit number $101$, for example, represents the normal vector $(1,0,1)$
of a wall.
There are actually infinitely many walls converging to the thick wall
whose normal vector is $(1,1,1)$, so that
the thick wall is unreachable by cluster mutations.
}
\label{fig:CSDA12}
\end{figure}

Let us give one specific example of a QDI.
We choose a loop $\gamma$ depicted in Figure \ref{fig:CSDA12},
where $\gamma$ starts and ends in the positive orthant
and it is counterclockwise.
The loop $\gamma$ is \emph{not} admissible because it passes through the common
joints of walls with three normal vectors
\begin{align}
\label{eq:nv1}
\bfn_1=(0,1,0),
\quad
\bfn_2=(1,0,1),
\quad
\bfn_3=(1,1,1).
\end{align}
However, we have
\begin{align}
\{\bfn_i,\bfn_j\}=0.
\end{align}
So, this is a parallel joint.
Therefore, the quantum dilogarithm elements $\Psi_{a,b}[\bfn_i]$ ($i=1,\, 2,\,3)$ commute
with each other, so that the path-ordered product along $\gamma$ is well-defined.
The intersections of the loop $\gamma$ with walls are separated 
in four steps.
\begin{enumerate}
\item
First,
the loop $\gamma$ crosses the incoming walls $\bfe_1^{\perp}$,
$\bfe_2^{\perp}$, $\bfe_3^{\perp}$ in this order with  positive intersection sign.
(The intersection sign is read from the convexity of the walls at the intersection.
It is positive if and only if the path goes from the convex side to the non-convex side.)
\item
Next, $\gamma$ crosses infinitely many walls
with  negative intersection sign,
where the normal vectors of walls are
\begin{align}
(1,0,0),\
(1,1,0),\
(2,1,1),\
(2,2,1),\
(3,2,2),\
(3,3,2),\
\cdots.
\end{align}
\item
Then, $\gamma$ crosses walls
with  negative intersection sign,
where the normal vectors of walls are $\bfn_1$, $\bfn_2$, $\bfn_3$
in \eqref{eq:nv1}.
The wall with normal vector $\bfn_3$ is presented in the thick curve
in Figure \ref{fig:CSDA12}. Its wall element has
the same structure as the one with normal vector $(1,1)$  in Example \ref{ex:QDIaffine1}.
\item
Finally,
$\gamma$ crosses infinitely many walls
with  negative intersection sign,
where the normal vectors of walls are
\begin{align}
\cdots,
(2,3,3),\
(2,2,3),\
(1,2,2),\
(1,1,2),\
(0,1,1),\
(0,0,1),\
\end{align}
and returns to the initial point.
\end{enumerate}
Thus, we obtain the following QDI along $\gamma$:
\begin{align}
\label{eq:QDIA121}
\begin{split}
\begin{bmatrix}
0
\\
0
\\
1
\end{bmatrix}_1
\begin{bmatrix}
0
\\
1
\\
0
\end{bmatrix}_1
\begin{bmatrix}
1
\\
0
\\
0
\end{bmatrix}_1
&
=
\begin{bmatrix}
1
\\
0
\\
0
\end{bmatrix}_1
\begin{bmatrix}
1
\\
1
\\
0
\end{bmatrix}_1
\begin{bmatrix}
2
\\
1
\\
1
\end{bmatrix}_1
\begin{bmatrix}
2
\\
2
\\
1
\end{bmatrix}_1
\cdots
\\
&\qquad\times
\begin{bmatrix}
0
\\
1
\\
0
\end{bmatrix}_1
\begin{bmatrix}
1
\\
0
\\
1
\end{bmatrix}_1
\left(
\prod_{j=0}^{\infty}
\begin{bmatrix}
2^j
\\
2^j
\\
2^j
\end{bmatrix}_{2^j, -2^j}
\begin{bmatrix}
2^j
\\
2^j
\\
2^j
\end{bmatrix}_{2^j, 2^j}
\right)
\\
&\qquad\times
\cdots
\begin{bmatrix}
1
\\
2
\\
2
\end{bmatrix}_1
\begin{bmatrix}
1
\\
1
\\
2
\end{bmatrix}_1
\begin{bmatrix}
0
\\
1
\\
1
\end{bmatrix}_1
\begin{bmatrix}
0
\\
0
\\
1
\end{bmatrix}_1
.
\end{split}
\end{align}
There is one interesting application of this QDI.
Let
\begin{align}
P=
\begin{pmatrix}
0 & 1 & 1
\\
1 & 0 &-1
\end{pmatrix}
,
\quad
\Omega'=
\begin{pmatrix}
0 & 1 
\\
-1 & 0 
\end{pmatrix}
.
\end{align}
Note that $\mathrm{Ker}\, \Omega = \mathrm{Ker}\, P=\langle (1, -1, 1)\rangle$.
Also, we have
\begin{align}
P^T 
\Omega'
P=\Omega.
\end{align}
So, one may reduce the set of positive vectors $N^+\subset \bbZ^3$ for the structure group $G_q$
 with $N'{}^+=P(N^+)\subset \bbZ^2$,
which is spanned by $(0,1)$ and $(1,-1)$.
Then, applying the projection $P$ to the QDI in 
\eqref{eq:QDIA121}, we obtain a QDI in the reduced structured group $G'_q$
\begin{align}
\label{eq:QDIA122}
\begin{split}
\begin{bmatrix}
1
\\
-1
\end{bmatrix}_1
\begin{bmatrix}
1
\\
0
\end{bmatrix}_1
\begin{bmatrix}
0
\\
1
\end{bmatrix}_1
&
=
\begin{bmatrix}
0
\\
1
\end{bmatrix}_1
\begin{bmatrix}
1
\\
1
\end{bmatrix}_1
\begin{bmatrix}
2
\\
1
\end{bmatrix}_1
\begin{bmatrix}
3
\\
1
\end{bmatrix}_1
\cdots
\\
&\qquad\times
\begin{bmatrix}
1
\\
0
\end{bmatrix}_1
\begin{bmatrix}
1
\\
0
\end{bmatrix}_1
\left(
\prod_{j=0}^{\infty}
\begin{bmatrix}
2^{j+1}
\\
0
\end{bmatrix}_{2^j, -2^j}
\begin{bmatrix}
2^{j+1}
\\
0
\end{bmatrix}_{2^j, 2^j}
\right)
\\
&\qquad\times
\cdots
\begin{bmatrix}
4
\\
-1
\end{bmatrix}_1
\begin{bmatrix}
3
\\
-1
\end{bmatrix}_1
\begin{bmatrix}
2
\\
-1
\end{bmatrix}_1
\begin{bmatrix}
1
\\
-1
\end{bmatrix}_1
.
\end{split}
\end{align}
The quantum dilogarithm version of the formula is identified with the first identity in \cite[Eq.\ (1.5)]{Dimofte09}.
It was proved by \cite{Sugawara22} with a very different method in view of the quantum affine algebras.

\end{ex}

\notes

The quantum synchronicity for $x$-variables is due to \cite{Berenstein05b}.
The quantum synchronicity for $y$-variables is due to \cite{Fock07,Kashaev11}.
The QDIs for quantum dilogarithm elements are studied in \cite{Nakanishi22b}.
The QDIs in tropical form were given by \cite{Keller11} with the categorification method and by \cite{Kashaev11} in a more direct approach as presented here.
The QDIs in universal form were given by \cite{Volkov11} for the pentagon identity and by \cite{Kashaev11} in general.
The QCSDs were given by \cite{Mandel15,Davison19} based on the results by \cite{Kontsevich13, Gross14}.
The QDIs for QCSDs are regarded as special cases of  \emph{wall-crossing formulas} studied in a more broad context by  \cite{Kontsevich08}.
The QDIs are also studied in different approaches in \cite{Reineke08,Dimofte09, Gaiotto09, Sugawara22}.
The QDIs in Section \ref{sec:more1} and Example \ref{ex:QDIaffine1} are taken from  \cite{Nakanishi22b}, while Example \ref{ex:a211} is new here.

\fontsize{10pt}{10.5pt}\selectfont

\cleardoublepage
\addcontentsline{toc}{part}{References}

 \bibliographystyle{amsalpha}
\bibliography{../../biblist/biblist.bib}

\cleardoublepage
\addcontentsline{toc}{part}{Index}
\printindex

\end{document}